\newtheorem{theorem}{Theorem}[section]
\newtheorem{lemma}[theorem]{Lemma}
\newtheorem{proposition}[theorem]{Proposition}
\newtheorem{corollary}[theorem]{Corollary}
\newtheorem{question}[theorem]{Question}
\theoremstyle{definition}
\newtheorem{example}[theorem]{Example}
\theoremstyle{remark}
\newtheorem{remark}[theorem]{Remark}
\numberwithin{equation}{section}
\begin{document}

\title[Dynamics of superattracting skew products]
{Dynamics of superattracting skew products on the attracting basins: 
B\"{o}ttcher coordinates and plurisubharmonic functions}
%B\"{o}ttcher coordinates on invariant open sets and
%plurisubharmonic functions on the attracting basins}

\author[K. Ueno]{Kohei Ueno}
\address{Daido University, Nagoya 457-8530, Japan}
\curraddr{}
\email{k-ueno@daido-it.ac.jp}
\thanks{This work was supported by the Research Institute for Mathematical Sciences, 
           a Joint Usage/Research Center located in Kyoto University.}

\subjclass[2020]{Primary 32H50; Secondary 37F80}
%32B10 Germs of analytic sets
%37F50 Small divisors, rotation domains and linearization; Fatou and Julia sets
\keywords{}

\date{}
\dedicatory{}

\begin{abstract}
We study the dynamics of a superattracting skew product $f$ on the attracting basin. 
As the first strategy,
we find out forward $f$-invariant wedge-shaped regions in the basin,
on some of which $f$ is conjugate to monomial maps,
and consider whether the unions of all the preimages of the regions 
coincide with the basin.
As the second strategy,
we show the existence and properties of several kinds of plurisubharmonic functions of $f$,
the main functions of which are induced from the B\"{o}ttcher coordinates,
and investigate the asymptotic behavior of the functions toward the boundaries of the unions.
Consequently,
we obtain a plurisubharmonic function
on the complement of specific fibers in the basin,
which is continuous and pluriharmonic on open and dense subsets of the complement and
describes an certain weighted vertical dynamics well.
%a pluripolar set in the basin.
%outside of a pluripolar set.
\end{abstract}

\maketitle

%%%%%%%%%%%%%%%%%%%%%%%%%%%%%%%%%%%%%%%%%%%%%%%%%%%%%%%%%%%%%%%%%%%%%%%%%%%%%%%%%%%%%%
%%%%%%%%%%%%%%%%%%%%%%%%%%%%%%%%%%%%%%%%%%%%%%%%%%%%%%%%%%%%%%%%%%%%%%%%%%%%%%%%%%%%%%
%%%%%%%%%%%%%%%%%%%%%%%%%%%%%%%%%%%%%%%%%%%%%%%%%%%%%%%%%%%%%%%%%%%%%%%%%%%%%%%%%%%%%%
\section{Introduction}

%%%%%%%%%%%%%%%%%%%%%%%%%%%%%%%%%%%%%%%%%%%%%%%%%%%%%%%%%%%%%%%%%%%%%%%%%%%%%
%%%%%%%%%%%%%%%%%%%%%%%%%%%%%%%%%%%%%%%%%%%%%%%%%%%%%%%%%%%%%%%%%%%%%%%%%%%%%
\subsection{Background}

The dynamics around superattracting fixed points of holomorphic functions is 
completely understood in dimension one.
Let $p(z) = a z^{\delta} + O(z^{\delta + 1})$ be a holomorphic function
defined on $\mathbb{C}$, where $a \neq 0$ and $\delta \geq 2$.
Then it has a superattracting fixed point at the origin.
B\"{o}ttcher's theorem \cite{b} indicates that the limit
\[
\varphi_p (z) = 
\lim_{n \to \infty} p_0^{-n} \circ p^n (z)
\]
exists on a small neighborhood of the origin,
where $p^n$ is the $n$-th iterate of $p$,
$p_0 (z) = a z^{\delta}$ and the branch is taken 
such that $p_0^{-n} \circ p_0^n = id$.
This limit is called the B\"{o}ttcher coordinate for $p$ at the origin,
and it conjugates $p$ to $p_0$. 
We refer \cite{m} for details. 
We remark that
the order of $p^n$ at the origin is clearly $\delta^n$,
which is also called the attraction rate of $p^n$.
Let $A_p$ be the attracting basin of $p$ at the origin,
and $E_p$ the union of all the preimages of the origin.
%Whereas 
We can extend $\varphi_p$ from the neighborhood to $A_p$
until it meets the other critical points than the origin.
%it does not extend to the whole attracting basin in general.
On the other hand,
the limit
\[
G_p (z) =
\lim_{n \to \infty} \dfrac{1}{\delta^n} \log | p^n (z) |
\]
exists on $A_p$.
It is subharmonic and negative on $A_p$
and harmonic on $A_p - E_p$. % and continous
Note that $G_p = \log | \varphi_p |$ on the neighborhood of the origin.
We remark that similar statements hold even if $p$ is a holomorphic germ. 

Several studies have been made
toward the generalization of B\"{o}ttcher's theorem to higher dimensions.
For example,
Ushiki~\cite{ushiki}, Ueda~\cite{ueda} and
Buff {\it et al}~\cite{bek} studied the case in which holomorphic germs,
with superattracting fixed points, have the B\"{o}ttcher coordinates
on neighborhoods of the points.
The germs in \cite{ushiki} are conjugate to monomial maps, 
whereas the germs in \cite{ueda} and \cite{bek} are conjugate to 
homogeneous and quasihomogeneous maps,
respectively.

However, 
B\"{o}ttcher's theorem does not extend to higher dimensions entirely
as pointed out by Hubbard and Papadopol \cite{hp}.
Let $f$ be a two dimensional holomorphic germ 
with a superattracting fixed point at the origin.
Unlike the one dimensional case,
in which a superattracting fixed point of a holomorphic germ is a isolated critical point
and forward invariant under the germ,
the origin is contained in the critical set of $f$, 
which may have a singularity at the origin and is not forward invariant under $f$ in general.
Hence it is difficult or impossible to take a nice normal form of $f$
on a neighborhood of the origin. 

Rigidity is a keyword 
for the study of the local dynamics of superattracting germs.
We say that $f$ is rigid if 
the critical set is contained in a totally invariant set 
with normal crossing singularities.
Favre \cite{f} introduced and classified attracting rigid germs.
Favre and Jonsson \cite{fj} proved that
any two dimensional superattracting germ
can be blown up to a rigid germ,
and hence the original germ is rigid
on an open set whose closure contains the fixed point.
Using the normal form on the open set,
they proved that the asymptotic attraction rate is a quadratic integer,
gave an inequality on the attraction rates, and
constructed a dynamically nice plurisubharmonic function 
defined on the neighborhood of the fixed point.
Ruggiero \cite{r} extended their results to more general cases.

We restrict our attention to superattracting skew products in this paper.
A skew product is a map of the form $f(z,w) = (p(z), q(z,w))$.
The fundamental properties of the dynamics of polynomial skew products is
well studied and summarized in \cite{j} and \cite{fg},
in which the main topics are the Green functions, currents and measures.
Lilov \cite{l} studied the local and semi-local dynamics of 
holomorphic skew products near a superattracting invariant fiber.
As natural extensions of the one dimensional results,
he obtained nice normal forms on neighborhoods of periodic points
which are geometrically attracting, parabolic and Siegel on fiber direction,
except the superattracting case.
See also \cite{ps} and \cite{pr}
for the dynamics of skew products near an invariant fiber
of different types.
Whereas the dynamics of skew products is mild 
in the sense that it can be regarded as 
the intermediate between the one and two dimensional dynamics,
it has very complicated aspects and
gives new phenomena of the dynamics in dimension two. % exhibits
For example,
Astorg {\it et al} \cite{abdpr} 
found polynomial skew products with wandering domains,
which is the first example of non-invertible
polynomial maps with wandering domains. % in dimension two. <-- 不要？？
See also \cite{abp} and \cite{ab}
for other types of polynomial skew products with wandering domains.
Furthermore,
polynomial skew products are used in \cite{d} and \cite{t}
to construct robust bifurcations, i.e.
open sets contained in a bifurcation locus.

%%%%%%%%%%%%%%%%%%%%%%%%%%%%%%%%%%%%%%%%%%%%%%%%%%%%%%%%%%%%%%%%%%%%%%%%%%%%%
%%%%%%%%%%%%%%%%%%%%%%%%%%%%%%%%%%%%%%%%%%%%%%%%%%%%%%%%%%%%%%%%%%%%%%%%%%%%%
\subsection{Previous results: B\"{o}ttcher coordinates}

We have studied the dynamics of superattracting skew products
and succeeded to construct B\"{o}ttcher coordinates 
on invariant wedge-shaped regions in \cite{u1}, \cite{u2}, \cite{u3} and \cite{ueno}.
Let us briefly review main results in \cite{ueno},
which is a consequence of the studies.
Let $f(z,w) = (p(z), q(z,w))$ be a skew product 
with a superattracting fixed point at the origin.
More precisely,
we assume that the eigenvalues of $Df(0)$ are both zero;
this assumption is called nilpotent in \cite{r},
whereas the assumption $Df(0) = 0$ is imposed in \cite{fj}.
Hence we may write 
$p(z) = a z^{\delta} + O(z^{\delta + 1})$ and
$q(z,w) = b z + \sum_{i, j \geq 0, i+j \geq 2} b_{ij} z^{i} w^{j}$,
where $a \neq 0$ and $\delta \geq 2$.
Let $bz = b_{10} z^1 w^0$ and $q(z,w) = \sum b_{ij} z^{i} w^{j}$ for short.

It is clear that the dominant term of $p$ is $a z^{\delta}$.
We found in \cite{ueno} that
the order $\delta$ of $p$ and the Newton polygon of $q$ assign
a term $b_{\gamma d} z^{\gamma} w^d$ of $q$ % ``dominant''
that is dominant on a wedge-shaped region $U$, where
$U = \{ |z|^{l_1 + l_2} < r^{l_2} |w|, |w| < r|z|^{l_1} \}$
for some rational numbers $0 \leq l_1 < \infty$ and $0 < l_2 \leq \infty$,
and that preserves $U$.
Let $f_0(z,w) = (a z^{\delta}, b_{\gamma d} z^{\gamma} w^d)$.

\begin{lemma}[\cite{ueno}] \label{main lemma for previous reselts}
If $d \geq 2$ or if $d = 1$ and $\delta \neq T_k$ for any $k$, then
$f \sim f_0$ on $U$ as $r \to 0$, and
$f(U) \subset U$ for small $r >0$.
\end{lemma}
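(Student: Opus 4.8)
The plan is to treat the two coordinates of $f$ separately, to reduce every estimate on $U$ to a comparison of monomials in $|z|$, $|w|$ and $r$, and then to read off both the asymptotic equivalence $f\sim f_0$ and the invariance $f(U)\subset U$ from the fact that $l_1$, $l_1+l_2$, $\gamma$, $d$ are exactly the data of a vertex of the Newton polygon of $q$ and of its two adjacent edges, subject to the position of $\delta$. For the horizontal coordinate this is immediate: combining the two defining inequalities of $U$ gives $|z|^{l_2}<r^{l_2+1}$, hence $|z|<r^{1+1/l_2}$ on $U$ (and $|z|<r$ when $l_2=\infty$), so in particular $|z|\to 0$ uniformly on $U$ as $r\to 0$; writing $p(z)/(az^{\delta})=1+\sum_{k\geq1}(a_{\delta+k}/a)z^{k}$, the tail is $O(|z|)$ uniformly on $U$, so $p\sim az^{\delta}$ on $U$ as $r\to0$.

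The substance is the vertical coordinate. Parametrising a point of $U$ by $|w|=t\,|z|^{l_1}$ with $|z|^{l_2}r^{-l_2}<t<r$, one has, for any $(i,j)$ in the support of $q$,
\[
\frac{|z^{i}w^{j}|}{|z^{\gamma}w^{d}|}=t^{\,j-d}\,|z|^{(i+l_1j)-(\gamma+l_1d)}.
\]
If $j>d$, use $t<r$ together with the Newton polygon inequality $i+l_1j\ge\gamma+l_1d$ (valid because $l_1$ lies in the cone of supporting slopes of the polygon at the vertex $(\gamma,d)$) to bound the ratio by $r^{\,j-d}$ times a nonnegative power of $|z|$. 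If $j<d$, use $t>|z|^{l_2}r^{-l_2}$ together with $i+(l_1+l_2)j\ge\gamma+(l_1+l_2)d$ to bound it by $r^{\,l_2(d-j)}$ times a nonnegative power of $|z|$. If $j=d$ and $i>\gamma$, the ratio is simply $|z|^{\,i-\gamma}$. In every case the bound tends to $0$ uniformly on $U$ as $r\to0$, and summing these bounds over the support of $q$ — a finite sum when $q$ is a polynomial, a tail dominated by a convergent series when $q$ is a germ — gives $q(z,w)/(b_{\gamma d}z^{\gamma}w^{d})=1+o(1)$ uniformly on $U$. Thus $q\sim b_{\gamma d}z^{\gamma}w^{d}$ on $U$ as $r\to0$; equivalently $b_{\gamma d}z^{\gamma}w^{d}$ is the dominant term of $q$ on $U$, as claimed before the lemma.

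For $f(U)\subset U$, the two equivalences just obtained reduce the problem to checking, with strictly positive room, that the monomial map $f_0(z,w)=(az^{\delta},b_{\gamma d}z^{\gamma}w^{d})$ satisfies the two defining inequalities of $U$ on $U$ for small $r$. Substituting $f_0$ into $|w|<r|z|^{l_1}$ and bounding $|w|<r|z|^{l_1}$ on the right turns it into $\frac{|b_{\gamma d}|}{|a|^{l_1}}\,r^{\,d-1}\,|z|^{\,\gamma-(\delta-d)l_1}<1$, while substituting $f_0$ into $|z|^{l_1+l_2}<r^{l_2}|w|$ and bounding $|w|>|z|^{l_1+l_2}r^{-l_2}$ on the right turns it into $\frac{|a|^{l_1+l_2}}{|b_{\gamma d}|}\,r^{\,l_2(d-1)}\,|z|^{\,(\delta-d)(l_1+l_2)-\gamma}\le1$. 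The exponents $\gamma-(\delta-d)l_1$ and $(\delta-d)(l_1+l_2)-\gamma$ of $|z|$ are nonnegative exactly because $\gamma/(\delta-d)$ lies in the slope cone $[\,l_1,\,l_1+l_2\,]$ at $(\gamma,d)$ — part of how these data are selected. When $d\ge2$ the powers $r^{d-1}$ and $r^{l_2(d-1)}$ vanish as $r\to0$, so both inequalities hold strictly for small $r$, regardless of the constants. When $d=1$ those powers equal $1$, so one needs at least one of the two $|z|$-exponents to be \emph{strictly} positive; this fails precisely when $\gamma/(\delta-1)$ coincides with $l_1$ or with $l_1+l_2$, and these are the critical values of $\delta$ — the $T_k$ of the statement — at which the elementary conjugacy picture degenerates, so the hypothesis $\delta\neq T_k$ removes them. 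I expect this coupling — verifying that the single choice of $(l_1,l_2)$ which makes $b_{\gamma d}z^{\gamma}w^{d}$ dominant on $U$ also makes $f_0$ strictly preserve $U$, and pinning down the excluded pairs $(d,\delta)$ through the $T_k$ — to be the only genuine obstacle; the remaining manipulations are routine and were already carried out, case by case, in \cite{u1,u2,u3,ueno}.
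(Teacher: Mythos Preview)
Your proposal is correct and follows essentially the same route as the original argument in \cite{ueno} that the paper cites: first establish $q\sim b_{\gamma d}z^{\gamma}w^{d}$ on $U$ by bounding each ratio $|z^{i}w^{j}|/|z^{\gamma}w^{d}|$ via the Newton-polygon inequalities attached to the two slopes $l_1$ and $l_1+l_2$, then deduce $f(U)\subset U$ from $f\sim f_0$ by checking that $f_0$ strictly preserves the two defining inequalities of $U$, with the $d=1$, $\delta=T_k$ obstruction appearing exactly where the relevant $|z|$-exponent vanishes. The paper itself does not re-prove the lemma but \emph{illustrates} it in Section~3 via the blow-ups $\pi_1(z,c)=(z,z^{l}c)$ and $\pi_2(t,w)=(tw^{1/l},w)$; these are simply coordinate changes that turn your monomial estimates on the wedge $U$ into the same estimates on a bidisk, so the content is identical.
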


The notation $f \sim f_0$ on $U$ as $r \to 0$ means that
the ratios of the first and second components of $f$ and $f_0$
tends to $1$ on $U$ as $r \to 0$, and
the definition of $T_k$ is given below.
%%%
Moreover,
we proved that the limit
\[
\phi (z,w) = 
\lim_{n \to \infty} f_0^{-n} \circ f^n (z,w)
\]
exists on $U$, 
where the branch is taken such that $f_0^{-n} \circ f_0^n = id$.

\begin{theorem}[\cite{ueno}] \label{main thm for previous reselts}
If $d \geq 2$ or if $d = 1$ and $\delta \neq T_k$ for any $k$, then
$\phi$ is biholomorphic on $U$ for small $r > 0$,
which conjugates $f$ to $f_0$.
Moreover,
$\phi \sim id$ on $U$ as $r \to 0$.
\end{theorem}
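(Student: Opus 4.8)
The plan is to deduce Theorem~\ref{main thm for previous reselts} from Lemma~\ref{main lemma for previous reselts} by the classical B\"{o}ttcher-type telescoping argument, adapted to the wedge $U$ and the monomial model $f_0$. First I would fix the logarithmic change of coordinates that linearizes $f_0$: writing $(z,w) = (e^{\zeta}, e^{\omega})$ on a suitable sector, $f_0$ becomes the linear map $(\zeta,\omega) \mapsto (\delta \zeta + \log a,\ \gamma \zeta + d\omega + \log b_{\gamma d})$, and the branch condition $f_0^{-n}\circ f_0^n = id$ is just the statement that we invert this linear map exactly. In these coordinates the candidate limit $\phi = \lim_n f_0^{-n}\circ f^n$ is the limit of the partial compositions, and the $n$-th term differs from the $(n-1)$-th by $f_0^{-n}\circ(\text{one application of }f\text{ vs. }f_0)\circ f^{n-1}$. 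So the first key step is to quantify Lemma~\ref{main lemma for previous reselts}: since $f \sim f_0$ on $U$ as $r\to 0$, on the wedge we have estimates of the form $p(z) = a z^{\delta}(1 + O(\varepsilon(r)))$ and $q(z,w) = b_{\gamma d} z^{\gamma} w^{d}(1 + O(\varepsilon(r)))$ with $\varepsilon(r)\to 0$, and moreover $f(U)\subset U$, so $f^{n}(z,w)$ stays in $U$ and its coordinates decay like those of $f_0^n$.

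The second step is the convergence estimate. Using $f(U)\subset U$, iterate the one-step comparison: in logarithmic coordinates the discrepancy introduced at stage $n$ is roughly $\delta^{-n}$ (resp.\ $d^{-n}$ after accounting for the mixing term $\gamma\zeta$) times a bounded quantity, because applying $f_0^{-n}$ contracts by exactly these factors. Summing the geometric series shows $f_0^{-n}\circ f^n$ is uniformly Cauchy on $U$, hence $\phi$ exists and is holomorphic on $U$; here one must be a little careful that the logarithm branches are consistent along the orbit, which is where staying inside the wedge $U$ (rather than a full neighborhood of the origin) is used — the wedge is simply connected in the relevant coordinates and bounded away from the bad loci. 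The functional equation $\phi\circ f = f_0\circ\phi$ is then immediate by shifting the index: $\phi\circ f = \lim_n f_0^{-n}\circ f^{n+1} = f_0\circ \lim_n f_0^{-(n+1)}\circ f^{n+1} = f_0\circ\phi$.

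The third step is to upgrade $\phi$ from holomorphic to biholomorphic for small $r$. The cleanest route is the asymptotic statement $\phi \sim id$ on $U$ as $r\to 0$, which also follows from the same telescoping estimate: the total accumulated discrepancy is $O(\varepsilon(r))$ in logarithmic coordinates, so $\phi$ differs from the identity by a map whose derivative tends to the identity uniformly on $U$ as $r\to 0$. Then a quantitative inverse/implicit-function argument — or a direct estimate showing $D\phi$ is invertible and $\phi$ is injective on the wedge once $r$ is small enough — gives that $\phi$ is a biholomorphism onto its image for small $r>0$. One can also note that the image $\phi(U)$ is itself a wedge of the same shape up to the $\varepsilon(r)$-distortion, so the conjugacy is genuinely between $f$ on $U$ and $f_0$ on (essentially) $U$.

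I expect the main obstacle to be the careful bookkeeping of the two different contraction rates and the mixing term $\gamma\zeta$ in the second component: the second coordinate of $f_0^{-n}$ involves both $d^{-n}$ and a sum $\sum_{k} d^{-k}\delta^{-(n-k)}\gamma(\cdots)$, so the geometric-series bound has to be set up so that it is summable regardless of whether $\delta > d$, $\delta = d$, or $\delta < d$ — and the borderline resonant cases (which is exactly what the hypothesis $d\ge 2$, or $d=1$ with $\delta\ne T_k$, rules out via Lemma~\ref{main lemma for previous reselts}) are where one has to check that no logarithmic or polynomial-in-$n$ factor spoils convergence. Once the uniform Cauchy estimate with an explicit $\varepsilon(r)$-bound is in hand, both the biholomorphy and the $\phi\sim id$ assertion drop out, so essentially all the work is concentrated in that one estimate.
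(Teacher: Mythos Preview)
Your proposal is correct and essentially the classical direct route, but it differs from the approach taken in \cite{ueno} (summarized in Section~3 of the present paper). There the proof proceeds by \emph{blow-ups}: one uses the change of variables $\pi_1(z,c)=(z,z^{l}c)$ (and, in Cases~3 and~4, a further $\pi_2(t,w)=(tw^{l^{-1}},w)$) to transform $f$ into a holomorphic map $\tilde f$ defined on a \emph{full neighborhood of the origin}, whose Newton polygon has a single vertex (the ``Case~1'' situation). One then constructs the B\"ottcher coordinate for $\tilde f$ on that neighborhood --- which is straightforward since $\tilde f$ is a perturbation of a monomial map --- and pulls it back via $\pi_1$ (and $\pi_2$) to obtain $\phi$ on the wedge $U$. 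In this framework the wedge $U$ arises naturally as the image under $\pi_1$ of a bidisc, the hypothesis $d\ge 2$ (or $d=1$, $\delta\ne T_k$) is exactly what makes the origin superattracting for $\tilde f$, and the delicate bookkeeping of the mixed contraction rates that you identify as the main obstacle is absorbed into the monomial structure of $\tilde f$.

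Your direct telescoping argument on $U$ avoids introducing the auxiliary map $\tilde f$ and stays closer to the one-variable B\"ottcher proof; the blow-up approach, by contrast, trades that analytic bookkeeping for a geometric reduction, treats Cases~2--4 uniformly, and makes transparent why the particular shape of $U$ (determined by $l_1,l_2$) is the correct domain. Both routes ultimately hinge on the same estimate $f\sim f_0$ on $U$ from Lemma~\ref{main lemma for previous reselts}.
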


We call $\phi$ the B\"{o}ttcher coordinate for $f$ on $U$,
although the second component of $\phi$ 
looks like the K\oe nigs coordinate if $d = 1$.

Let us denote the precise definitions in the results.
We define the Newton polygon $N(q)$ of $q$ as 
the convex hull of the union of $D(i,j)$ with $b_{ij} \neq 0$,
where $D(i,j) = \{ (x,y) \ | \ x \geq i, y \geq j \}$. 
Let $(n_1, m_1)$, $(n_2, m_2)$, $\cdots, (n_s, m_s)$ be the vertices of $N(q)$,
where $n_1 < n_2 < \cdots < n_s$ and $m_1 > m_2 > \cdots > m_s$.
Let $T_k$
be the $y$-intercept of the line $L_k$ passing through 
the vertices $(n_k, m_k)$ and $(n_{k+1}, m_{k+1})$
for each $1 \leq k \leq s-1$.

\vspace{1mm}
\begin{itemize}
\item[\underline{Case 1}] 
  If $s = 1$, then $N(q)$ has the only one vertex, which is denoted by $(\gamma, d)$. \\
  We define $l_1 =  l_2^{-1} = 0$ and so $U = \{ |z| < r, |w| < r \}$.
\end{itemize}

The results for Case 1 are classical and 
the proofs are simpler than the other cases.
We remark that % In addition,
$f$ in Case 1 is rigid and 
belongs to class 6 or class 4 in \cite{f} if $d \geq 2$ or $d = 1$,
respectively,
and a description on B\"{o}ttcher coordinates is given in pp.498-499 in \cite{f}.

Therefore,
we only deal with the case $s > 1$ hereafter,
%The case $s > 1$
which is divided into the following three cases:

\vspace{1mm}
\begin{itemize}
\item[\underline{Case 2}] 
  If $s > 1$ and $\delta \leq T_{s-1}$, then  we define
  \[
  (\gamma, d) = (n_s, m_s), \ 
  l_1 = \frac{n_s - n_{s-1}}{m_{s-1} - m_s} 
  \text{ and } l_2^{-1} = 0.
  \] 
  Hence $U = \{ |z| < r, |w| < r |z|^{l_1} \}$. 
  \vspace{1mm}
\item[\underline{Case 3}] 
  If $s > 1$ and $T_1 \leq \delta$, then  we define
  \[
  (\gamma, d) = (n_1, m_1), \
  l_1 = 0 
  \text{ and } l_2 = \frac{n_2 - n_1}{m_1 - m_2}.
  \] 
  Hence $U = \{ |z|^{l_2} < r^{l_2} |w|, |w| < r \} = \{ r^{-l_2} |z|^{l_2} < |w| < r \}$.  
  \vspace{1mm}
\item[\underline{Case 4}]
  If $s > 2$ and $T_k \leq \delta \leq T_{k-1}$ for some $2 \leq k \leq s-1$, then  we define
  \[
  (\gamma, d) = (n_k, m_k), \ 
  l_1 = \frac{n_k - n_{k-1}}{m_{k-1} - m_k} 
  \text{ and } l_1 + l_2= \frac{n_{k+1} - n_k}{m_k - m_{k+1}}.
  \] 
  Hence  $U = \{ |z|^{l_1 + l_2} < r^{l_2} |w|, |w| < r|z|^{l_1} \} = \{ r^{-l_2} |z|^{l_1 + l_2} < |w| < r|z|^{l_1} \}$. 
\end{itemize}
\vspace{1mm}

We remark that
if $\delta > T_1$ and $d = 1$, then
$f$ is rigid and belongs to class 4 in \cite{f}
and hence $f$ is conjugate to $f_0$ not only on $U$
but also on a neighborhood of the origin;
see \cite[pp.44-45]{ueno} for a detail.

%Note that the rational numbers $- l_1^{-1}$ and 
%$- (l_1 + l_2)^{-1}$ are the slopes of the lines $L_{k-1}$ and $L_k$ for Case 4.
%The same correspondence holds for all cases
%if we define $L_0 = \{ x = n_1 \}$ and $L_s = \{ y = m_s \}$.

Note that
if $\delta = T_k$ for some $k$, % $1 \leq k \leq s-1$,
then there are two different dominant terms of $q$, % ``dominant''
whereas there is the only one dominant term
if $\delta \neq T_k$ for any $k$.
Moreover,
if the two dominant terms satisfy the degree condition,
then we have two disjoint invariant regions % there are
on which $f$ is conjugate to each of the two different monomial maps.

Using the same idea and arguments as in \cite{ueno},
we gave inequalities on the attraction rates  in \cite{u4},
which imply that % proved as a corollary that
the asymptotic attraction rate is $\delta$ if $\gamma > 0$ or $\delta \leq d$,
and is $d$ if $\gamma = 0$ and $\delta > d$.

%%%%%%%%%%%%%%%%%%%%%%%%%%%%%%%%%%%%%%%%%%%%%%%%%%%%%%%%%%%%%%%%%%%%%%%%%%%%%
%%%%%%%%%%%%%%%%%%%%%%%%%%%%%%%%%%%%%%%%%%%%%%%%%%%%%%%%%%%%%%%%%%%%%%%%%%%%%
\subsection{New results: weighted plurisubharmonic functions}

%In this paper
%we investigate whether the union $A_f$ of all the preimages of $U$
%coincides with the attracting basin $A_0$,
%and introduce dynamically nice plurisubharmonic functions
%defined on the complement of specific fibers in the basin.
%Since we want to 
To consider the global dynamics,
we assume that $f$ is defined on $\mathbb{C}^2$ or
on $\{ |z| < R \} \times \mathbb{C}$ for large $R$
such that $A_p$ is relatively compact in $\{ |z| < R \}$.
We give a summary of main results for Cases 2, 3 and 4, respectively,
%in the next section.
%In addition,
%we give
and Tables \ref{table for case 2},
\ref{table for case 3} and \ref{table for case 4}
as comparison charts of the results for Cases 2, 3 and 4, respectively,
in the next section.
Although those results are important,
they are so much and require to introduce the interval or rectangle $\mathcal{I}_f$
instead of $l_1$ and $l_2$.
Therefore,
we only give three consequences of the main results in this subsection,
which are separated into the case $\delta < T_{s-1}$ and the case $\delta \geq T_{s-1}$.
%%%
%One may refer Tables \ref{table for case 2},
%\ref{table for case 3} and \ref{table for case 4}
%in the next section.

Let us first state two consequences of the main results for the case $\delta < T_{s-1}$. % describe 
Let $A_f$ be the union of all the preimages of $U$ and $A_0$ the attracting basin.
If $\delta < T_{s-1}$, 
then $A_f$ almost coincides with $A_0$.
Let $E_z$ and $E_w$ be the unions of all the preimages of $\{ z = 0 \}$ and $\{ w = 0 \}$,
respectively,
and $E = E_z \cup E_w$. 

\begin{theorem} \label{main thm on A_f when delta < T}
If $\delta < T_{s-1}$,
then $A_f^{} = A_0 - E_z$.
\end{theorem}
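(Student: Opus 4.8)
The plan is to prove the two inclusions $A_f \subset A_0 - E_z$ and $A_0 - E_z \subset A_f$ separately. For the first inclusion, note that every point of $U$ lies in the attracting basin $A_0$ (since $f \sim f_0$ on $U$ and $f_0$ is the superattracting monomial map $f_0(z,w)=(az^\delta, b_{\gamma d}z^\gamma w^d)$, so $f^n \to (0,0)$ on $U$), hence all preimages of $U$ lie in $A_0$; moreover $U$ is disjoint from $\{z=0\}$ by its very definition $U=\{|z|^{l_1+l_2}<r^{l_2}|w|,\ |w|<r|z|^{l_1}\}$ (when $l_1>0$ this forces $z\neq 0$; in Case 2 one must check that $U$ and its preimages avoid $\{z=0\}$), and since $\{z=0\}$ is totally invariant ($p(0)=0$ with $p$ having a zero of order $\delta\geq 2$), no preimage of $U$ can meet $E_z$. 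So $A_f \subset A_0 - E_z$ is essentially immediate from the structure of $U$.

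The substantive direction is $A_0 - E_z \subset A_f$, i.e. every point in the basin with $z$-coordinate whose forward orbit never hits $0$ eventually maps into $U$. First I would use the one-dimensional theory for $p$: if $(z,w)\in A_0 - E_z$ then $z\in A_p - E_p$, so $p^n(z)\to 0$ with $p^n(z)\neq 0$ for all $n$, and one controls $|p^n(z)|$ via the Böttcher coordinate $\varphi_p$, giving $|p^n(z)|\approx |a|^{1/(\delta-1)}|\varphi_p(z)|^{\delta^n}$ asymptotically. The key point is then to track the vertical coordinate: I must show that after enough iterates the pair $(p^n(z), q_n(z,w))$ satisfies the two wedge inequalities defining $U$. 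The hypothesis $\delta < T_{s-1}$ is what makes this work — it places $\delta$ strictly below the last slope-break, so the dominant monomial is governed by the last vertex $(n_s,m_s)$ with $d=m_s$, and (in the generic situation $d\geq 2$, or $d=1$ with $\delta\neq T_k$) Lemma~\ref{main lemma for previous reselts} and Theorem~\ref{main thm for previous reselts} apply. I would argue that once $(z,w)$ is close enough to the invariant fiber $\{z=0\}$ (which happens after finitely many iterates since $z\in A_p$), the vertical dynamics is dominated by $q \sim b_{\gamma d}z^{\gamma}w^{d}$, and I would estimate $|w|$ relative to $|z|^{l_1}$: the inequality $|w|<r|z|^{l_1}$ becomes satisfiable because the $w$-contraction is at least as strong as a suitable power of the $z$-contraction, while the lower inequality $r^{-l_2}|z|^{l_1+l_2}<|w|$ holds automatically when $l_2^{-1}=0$ (Case 2) and otherwise needs the strict inequality $\delta<T_{s-1}$ to push points off the boundary fiber into the open wedge rather than toward $\{w=0\}$.

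More concretely, I would split into the cases from the classification: in Case 2 ($\delta \leq T_{s-1}$, here strict), $U=\{|z|<r,\ |w|<r|z|^{l_1}\}$ and I only need the single upper bound $|w|<r|z|^{l_1}$ plus $|z|<r$; I would show $\log|q_n(z,w)| - l_1\log|p^n(z)| \to -\infty$ by a telescoping/induction estimate using the dominance of $b_{\gamma d}z^\gamma w^d$ together with the formula relating $l_1$ to the vertex data $(n_s-n_{s-1})/(m_{s-1}-m_s)$. For Cases 3 and 4 (which occur when $T_1\leq\delta$ or $T_k\leq\delta\leq T_{k-1}$, still compatible with $\delta<T_{s-1}$ only in the overlap regions), one additionally needs the lower wedge bound, and here the strict inequality $\delta<T_{s-1}$ is precisely what guarantees that iterates of a point not in $E_z$ do not accumulate on $\{w=0\}$ from the wrong side. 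I expect the main obstacle to be exactly this vertical control: showing uniformly that an arbitrary point of $A_0-E_z$ — which may start far from the wedge, with $|w|$ large or comparable to $1$ — is driven into $U$ in finitely many steps, rather than merely into a neighborhood of $\{z=0\}$. This requires combining the semi-local normal form near the invariant fiber with a compactness argument on $A_p \setminus E_p$ (using that $A_p$ is relatively compact in $\{|z|<R\}$) to get a uniform bound on the number of iterates needed, and then a careful bookkeeping of the two competing exponential rates $\delta^n$ and the vertical rate to land strictly inside both wedge inequalities.
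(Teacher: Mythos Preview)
Your outline has the right two-inclusion skeleton, and the easy inclusion $A_f\subset A_0-E_z$ is fine. But there are two genuine problems with the hard direction. First, the hypothesis $\delta<T_{s-1}$ places you \emph{exclusively} in Case~2, since $T_1>T_2>\cdots>T_{s-1}$; your discussion of Cases~3 and~4 and of a ``lower wedge bound'' is off-target --- in Case~2 the wedge is simply $U=\{|z|<r,\ |w|<r|z|^{l_1}\}$ and there is no lower inequality on $|w|$ to worry about. Second, and more seriously, you plan to use the dominance $q\sim b_{\gamma d}z^\gamma w^d$ to control the orbit before it enters $U$. That asymptotic is \emph{only} valid on the wedge $U^l$; on the complementary region $V^l=\{0<|z|<r,\ |w|\geq r|z|^l,\ |w|<r_3\}$ where the orbit starts, a different monomial (or combination) of $q$ dominates, so your telescoping estimate based on the single term $z^\gamma w^d$ would fail.

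The paper closes this gap with a Newton-polygon device you have not identified. On $V^l$ one has $|z|\leq(r^{-1}|w|)^{1/l}$, and substituting this into $q=\sum b_{ij}z^iw^j$ yields $|q(z,w)|\leq C|w|^D$ where $D=D_l=\min\{l^{-1}i+j:b_{ij}\neq 0\}$. The crucial computation (Lemma~\ref{lem on D for Case 2}) is that $D>\delta$ for every $l$ in the relevant range precisely because $\delta<T_{s-1}$; geometrically, $D$ is the $y$-intercept of the supporting line of slope $-l^{-1}$ to $N(q)$, and $T_{s-1}$ is its minimum over $l\geq l_1$. Then a straight rate comparison --- $|z_n|>C_1^{\ast}|z|^{\delta^n}$ versus $|w_n|<C_2^{\ast}|w|^{D^n}$ --- forces $|w_n|<r|z_n|^l$ for large $n$, contradicting the assumption that the orbit stays in $V^l$. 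No B\"ottcher coordinate for $p$ and no compactness argument are needed; the whole proof is this single exponential-rate inequality $D>\delta$, which is exactly what $\delta<T_{s-1}$ encodes.
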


To describe the vertical dynamics,
we introduce the following limits in Case 2: % consider
\begin{align*}
G_z^{\alpha} (w) 
&= \lim_{n \to \infty} \dfrac{1}{d^n} \log \left| \dfrac{w_n}{z_n^{\alpha}} \right|
\text{ if } \delta \neq d,
\text{ and} \\
G_z^{\infty} (w) 
&= \lim_{n \to \infty} \dfrac{1}{d^n} \log \dfrac{|w_n|}{|z_n|^{\frac{\gamma}{d} n}}
\text{ if } \delta = d,
\end{align*}
where $\alpha = \gamma/(\delta - d)$
and $(z_n, w_n) = f^n (z,w)$.
Note that
\[
G_z^{\alpha} (w) = \log \left| \dfrac{\phi_2 (z,w)}{\phi_1 (z)^{\alpha}} \right| \text{ and }
G_z^{\infty} (w) = \log | \phi_2 (z,w) | 
\]
if the B\"{o}ttcher coordinate $\phi$ exists,
where $\phi = (\phi_1, \phi_2)$,
and that $G_z^{\alpha} \circ f = d G_z^{\alpha}$ and
$G_z^{\infty} \circ f = d^n G_z^{\infty} + n \gamma d^{n-1} G_p$.
Hence the existence of $\phi$ on $U$ induces the existence of the limits above on $A_f$,
whose properties depend on the magnitude relation between $\delta$ and $d$.

\begin{theorem}\label{main thm on G_z^a when delta < T}
Let $\delta < T_{s-1}$.
If $\delta < d$, then
$G_z^{\alpha}$ is plurisubharmonic and negative on $A_0$ and
$G_z^{\alpha} (w) = \log \left| z^{- \alpha} w \right| + o(1)$
on $U^{}$ as $r \to 0$.
If $\delta = d$, then
$G_z^{\infty}$ is plurisubharmonic on $A_0 - E_z$ and
$G_z^{\infty} (w)  = \log \left| w \right| + o(1)$
on $U^{}$ as $r \to 0$.
If $\delta > d \geq 1$, then
$G_z^{\alpha}$ is plurisubharmonic on $A_0 - E_z$ and
$G_z^{\alpha} (w) = \log \left| w/z^{\alpha} \right| + o(1)$
on $U^{}$ as $r \to 0$.
For all the cases,
the functions are pluriharmonic on $A_0 - E$. % and continuous
\end{theorem}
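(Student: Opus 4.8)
The plan is to build everything from the B\"ottcher coordinate $\phi = (\phi_1,\phi_2)$ on $U$, whose existence and properties are guaranteed by Theorem \ref{main thm for previous reselts} (and the companion theorem on $A_f$, Theorem \ref{main thm on A_f when delta < T}, which tells us $A_f = A_0 - E_z$, so the functions are at least defined on the right set). First I would establish the local picture on $U$: there $\phi$ is biholomorphic, $\phi \sim id$ as $r\to 0$, so $\phi_1(z) = z(1+o(1))$ and $\phi_2(z,w) = w(1+o(1))$. For $\delta \neq d$ one computes directly that $\log|\phi_2/\phi_1^\alpha| = \log|z^{-\alpha}w| + o(1)$ on $U$ as $r\to 0$, and since $\phi_1,\phi_2$ are holomorphic and nonvanishing on $U - E$ (note $\phi_1 \neq 0$ away from $\{z=0\}$ and $\phi_2 \neq 0$ away from the preimage of $\{w=0\}$), this local function is pluriharmonic there; similarly $\log|\phi_2| = \log|w| + o(1)$ in the $\delta = d$ case. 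This pins down the stated asymptotics on $U$.

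Next I would propagate to all of $A_0$ (resp.\ $A_0 - E_z$) using the functional equations noted in the excerpt: $G_z^\alpha \circ f = d\,G_z^\alpha$ when $\delta \neq d$, and $G_z^\infty \circ f = d^n G_z^\infty + n\gamma d^{n-1} G_p$ when $\delta = d$. Since every point of $A_f = A_0 - E_z$ eventually maps into $U$, one defines $G_z^\alpha$ (resp.\ $G_z^\infty$) on a preimage $f^{-n}(U)$ by pulling back the local expression through the functional equation — the equation being linear in the function guarantees consistency on overlaps. The limits in the statement are exactly these pullbacks written out, and they converge because the corresponding quantities for the conjugated model $f_0(z,w) = (az^\delta, b_{\gamma d}z^\gamma w^d)$ are computed explicitly (for $f_0$ the relevant ratios are literally invariant or transform by the stated cocycle), and $\phi$ transfers this to $f$. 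Pluriharmonicity on $A_0 - E$ is then automatic away from the indeterminacy/critical locus: locally $G_z^\alpha$ is (a positive multiple of) $\log|h|$ for a holomorphic nonvanishing $h$ obtained as a branch of $\phi_1^{-\alpha}\phi_2$ pulled back, hence pluriharmonic; the functional equation shows the pullbacks agree, so pluriharmonicity on $U - E$ spreads over $A_0 - E$.

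The sign and plurisubharmonicity claims require separating the three cases. For $\delta < d$: on $U$ the defining inequality gives $|w| < r|z|^{l_1}$ type bounds forcing $|z^{-\alpha}w|<1$, so $G_z^\alpha < 0$ there, and since $G_z^\alpha$ is pluriharmonic on $A_0 - E$ and bounded above near $E$ (one checks it extends as a psh function across the thin set $E$ by a removable-singularity / Riemann-extension argument, using that it is locally bounded above), it is psh and negative on all of $A_0$. For $\delta = d$: here $G_z^\infty$ involves $\log|z|$ with a growing coefficient, it is no longer bounded near $E_z$ (it tends to $-\infty$ along $E_z$), so one only gets psh on $A_0 - E_z$; the key point is that along $f^{-n}(U)$ the expression $\frac{1}{d^n}\log(|w_n|/|z_n|^{(\gamma/d)n})$ is a finite linear combination of $\log|g_i|$ with $g_i$ holomorphic, hence psh, and the limit of psh functions that is locally bounded above is psh (or $\equiv -\infty$, excluded by the $U$-asymptotics). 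For $\delta > d \geq 1$: the term $z^{-\alpha}$ blows up near $\{z=0\}$, again giving psh only on $A_0 - E_z$, by the same limit-of-psh argument.

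The main obstacle I expect is the extension/regularity across the exceptional set $E$ and the verification that the defining limits actually converge \emph{uniformly on compacta} of $A_0 - E_z$ (not just pointwise), which is what legitimizes passing the psh/pluriharmonic property to the limit. Handling this cleanly means controlling the rate at which $f^n(z,w)$ enters $U$ and how fast $\phi \circ f^N - id$ decays there — essentially a quantitative version of "$\phi \sim id$ on $U$ as $r\to0$" combined with contraction estimates for $f$ toward $\{z=0\}$ — together with a careful check (case by case in $\delta$ vs.\ $d$) that the would-be singularities along $E_z$ and $E_w - E_z$ are genuinely of $\log|\,\cdot\,|$ type so that standard plurisubharmonic extension across pluripolar sets applies. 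Everything else is bookkeeping with the functional equations.
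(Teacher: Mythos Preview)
Your proposal is essentially correct and follows the same architecture as the paper: identify $G_z^{\alpha}$ (resp.\ $G_z^{\infty}$) with $\log|\phi_1^{-\alpha}\phi_2|$ (resp.\ $\log|\phi_2|$) on $U$ via the B\"ottcher coordinate, propagate by the functional equations, and invoke Theorem~\ref{main thm on A_f when delta < T} to pass from $A_f$ to $A_0 - E_z$. The asymptotics and pluriharmonicity on $A_0 - E$ come out exactly as you describe.

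One methodological difference worth noting: for the $\delta < d$ case, you propose to extend plurisubharmonicity across the pluripolar set $E_z$ by a removable-singularity argument using local boundedness above. The paper instead gives a direct decreasing-sequence argument (Proposition~\ref{another prop on G_z^a when delta < d}): since every $j$ with $b_{ij}\neq 0$ satisfies $j\geq d$, one has $|q(z,w)|\leq C|w|^d$ near the origin, whence $g_n(z,w)=d^{-n}\log|Aw_n|$ with $A=C^{1/(d-1)}$ is a decreasing sequence of psh functions converging to $G_z^{\alpha}$ on a full neighborhood of the origin, giving plurisubharmonicity on $A_0$ at once. This avoids any separate analysis of the behavior near $E_z$ and is cleaner. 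Your worry about uniform-on-compacta convergence is also largely moot in the paper's setup: once $\phi$ exists on $U$, the equality $G_z^{\alpha}=\log|\phi_1^{-\alpha}\phi_2|$ holds \emph{exactly} there (no limiting process remains), and finitely many applications of the functional equation then give an exact formula on each $f^{-n}(U)$.
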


Detailed versions of the two theorems above are given as 
Theorems \ref{main thm on attr sets for Case 2} and 
\ref{main thm on G_z^a for Case 2},
respectively, in the next section.
We remark that the notation $\alpha_0$ is used instead of $\alpha$
in \cite{ueno}.

Let us next state a consequence of the main results for the case $\delta \geq T_{s-1}$. % describe
Note that $d \geq 1$ for this case.
%We do not know whether $A_f$ coincides with $A_0$
%if $\delta > T_{s-1}$ and $\delta \neq T_k$ for any $k$,
%whereas  $f$ has two disjoint invariant regions
%if $\delta = T_k$ and $m_{k+1} \geq 2$ for some $k$.
%If $\delta \geq T_{s-1}$,
%then $\delta > d$ except the special case $\gamma = 0$ and $\delta = d$.
The weight $\alpha$ is not defined 
for the special case $\gamma = 0$ and $\delta = d$,
whereas $\delta > d$ and so $\alpha \geq 0$ 
except the special case if $\delta \geq T_{s-1}$.
By redefining $\alpha$ for the case $\delta \geq T_{s-1}$,
which coincides with $\gamma/(\delta - d)$ except the special case,
we can always define the limit $G_z^{\alpha}$ for the case $\delta \geq T_{s-1}$,
which exists on $A_f - E_z$ if $\phi$ exists on $U$.
Moreover,
the limit
\[
G_z^{\alpha, +} (w) 
= \lim_{n \to \infty} \dfrac{1}{d^n} \log^{+} \left| \dfrac{w_n}{z_n^{\alpha}} \right|
\]
exists and
behaves well on the complement of specific fibers in the basin.
Let $E_{deg}$ be the union of all the preimages of 
the set of fibers in $A_p \times \mathbb{C}$ % vertical lines
that are degenerated to points by $f$.
In the following theorem
we set $(\gamma, d)$ as $(n_k, m_k)$ 
if $f$ has two dominant terms $(n_k, m_k)$ and $(n_{k+1}, m_{k+1})$;
we remark that $\alpha$ depends only on $f$. 

\begin{theorem} \label{main thm on G_z^a,+ when delta >= T}
Let $\delta \geq T_{s-1}$ and
let $d \geq 2$ or $\delta \neq T_k$ for any $k$.
% if $\delta = T_k$ for some $k$.
If $\gamma > 0$ or $\delta = d$,
then $\alpha > 0$, and 
$G_z^{\alpha, +}$ is well-defined on $A_0 - E_z$,
plurisubharmonic and continuous on $A_0 - E_z \cup E_{deg}$,
pluriharmonic outside the boundary of 
$\{ G_z^{\alpha, +} > 0 \} \cap (A_0 - E_z \cup E_{deg})$ and
$G_z^{\alpha, +} (w) = \log \left| w/z^{\alpha} \right| + o(1)$ %= G_z^{\alpha} (w)
on $U - \{ z = 0 \}$ as $r \to 0$.
%%%
%Moreover,
If $\gamma = 0$ and $\delta > d$,
then $\alpha = 0$, and $G_z^{\alpha, +}$ is well-defined on $A_0$,
plurisubharmonic and continuous on $A_0 - E_{deg}$,
pluriharmonic outside the boundary of 
$\{ G_z^{\alpha, +} > 0 \} \cap (A_0 - E_{deg})$ and
$G_z^{\alpha, +} (w) = \log \left| w \right| + o(1)$ %= G_z^{\alpha} (w)
on $U$ as $r \to 0$.
\end{theorem}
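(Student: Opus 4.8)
The overall strategy is to push the B\"{o}ttcher coordinate $\phi=(\phi_1,\phi_2)$ on $U$, which exists by Theorem~\ref{main thm for previous reselts} under the stated hypothesis ($d\ge 2$, or $\delta\neq T_k$ for any $k$), through the dynamics so as to obtain the global function on $A_0$. First I would establish the \emph{existence} of $G_z^{\alpha,+}$ on $U$: there $\phi_2\sim w$ and $\phi_1\sim z$, the pair $(\phi_1,\phi_2)$ conjugates $f$ to $f_0(z,w)=(az^\delta,b_{\gamma d}z^\gamma w^d)$, and a direct computation with $\alpha=\gamma/(\delta-d)$ (resp.\ the redefined $\alpha$ in the special case $\gamma=0,\delta=d$) shows $\log^+|w_n/z_n^\alpha|/d^n$ is a Cauchy sequence, with limit equal to $\log^+|w/z^\alpha|$ modulo the $o(1)$ correction from $\phi\sim\mathrm{id}$ as $r\to 0$. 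This handles the last, asymptotic assertion of each case and simultaneously the local picture on $U-\{z=0\}$ (resp.\ on $U$ when $\gamma=0$). Then I would propagate: for a point in $A_0-E_z$ (resp.\ $A_0$), some forward iterate $f^N(z,w)$ lands in $U$, and since $G_z^{\alpha,+}\circ f = d\,G_z^{\alpha,+}$ exactly (this functional equation follows from $f\sim f_0$ plus the explicit form of $f_0$, because the $\log^+$ kills the $G_p$-type correction term that obstructed $G_z^\infty\circ f$), the value $G_z^{\alpha,+}(z,w)=d^{-N}G_z^{\alpha,+}(f^N(z,w))$ is well-defined independent of $N$. One must check $E_z$ really is the right set to excise: on $\{z=0\}$-preimages the quotient $w_n/z_n^\alpha$ is undefined when $\alpha>0$, while for $\alpha=0$ it is just $|w_n|$ and nothing needs removing, which is exactly the dichotomy in the two cases of the theorem.

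Next comes \emph{plurisubharmonicity}. Off the fibers in $E_z\cup E_{deg}$ (resp.\ $E_{deg}$), each $f^n$ is a holomorphic submersion onto its image in the relevant chart, so $\log^+|w_n/z_n^\alpha|$ is plurisubharmonic as the composition of a psh function with a holomorphic map; the point is to show the decreasing-or-not normalized limit is again psh and, crucially, \emph{continuous}. For continuity I would argue that the convergence $d^{-n}\log^+|w_n/z_n^\alpha|\to G_z^{\alpha,+}$ is \emph{locally uniform} on $A_0-(E_z\cup E_{deg})$: near $E_{deg}$ the correction terms blow up, but away from it the estimates from \cite{ueno} controlling $f^n$ relative to $f_0^n$ are uniform on compact sets, giving a uniform Cauchy estimate; a uniform limit of continuous psh functions is continuous psh. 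The role of $E_{deg}$ is precisely that a fiber collapsing to a point destroys the submersion property and with it the local uniformity, so one cannot expect continuity there.

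For \emph{pluriharmonicity outside the boundary of $\{G_z^{\alpha,+}>0\}$}: on the open set $\{G_z^{\alpha,+}>0\}\cap(A_0-E_z\cup E_{deg})$ the $\log^+$ coincides locally with $\log|w_n/z_n^\alpha|$ for large $n$ (since $|w_n/z_n^\alpha|\to\infty$ or at least stays $>1$ along the orbit once the limit is positive), hence $G_z^{\alpha,+}=G_z^\alpha$ there, and $G_z^\alpha$ is pluriharmonic by the earlier analysis / by being locally $\log$ of a nonvanishing holomorphic function pulled back by $f^n$. On the interior of $\{G_z^{\alpha,+}=0\}$ the function is locally constant $\equiv 0$, hence pluriharmonic. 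So the function is pluriharmonic on the complement of $\partial\{G_z^{\alpha,+}>0\}$ within the good set, which is the claim. Finally, to split into the two bulleted cases I would invoke the attraction-rate results from \cite{u4}: if $\gamma>0$ or $\delta=d$ then the redefined $\alpha$ is strictly positive (in the $\delta=d$ case $\alpha=\gamma/d>0$ since then necessarily $\gamma>0$ for this branch), forcing us to remove $E_z$; if $\gamma=0$ and $\delta>d$ then $\alpha=0$ and $E_z$ is not needed, while $U$ rather than $U-\{z=0\}$ appears in the asymptotics.

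\textbf{Main obstacle.} The hard part will be proving the \emph{local uniformity} of the normalized convergence near $E_{deg}$'s complement sharply enough to get continuity (not merely upper semicontinuity) of $G_z^{\alpha,+}$, and correctly identifying that $E_z\cup E_{deg}$ — and no larger set — is where this fails; the interplay between the $\log^+$ truncation and the $nG_p$-type correction in the $\delta=d$ branch, which is what makes $G_z^{\alpha,+}$ better-behaved than $G_z^\infty$, needs to be handled with care so that the functional equation $G_z^{\alpha,+}\circ f=d\,G_z^{\alpha,+}$ holds on the nose rather than up to error.
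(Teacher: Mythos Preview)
Your proposal has a fundamental gap at the propagation step. You write that ``for a point in $A_0-E_z$, some forward iterate $f^N(z,w)$ lands in $U$,'' but this is precisely what is \emph{not} known under the hypothesis $\delta \ge T_{s-1}$. The equality $A_f = A_0 - E_z$ is Theorem~\ref{main thm on A_f when delta < T}, which holds for $\delta < T_{s-1}$ (Case~2) and whose proof uses the strict inequality $D > \delta$ on the complementary wedge (Lemma~\ref{lem on D for Case 2}, Proposition~\ref{prop on attr sets for Case 2}). When $\delta \ge T_{s-1}$ one is in Case~3 or Case~4 with the convention $(\gamma,d)=(n_k,m_k)$, and the paper explicitly states (Section~6) that no analogue of Theorem~\ref{main thm on A_f when delta < T} is available for Case~3; indeed Proposition~\ref{main result for case 3: A_f^l} shows $A_f^{\alpha} \subsetneq A_f^{l_2}$ when $\gamma>0$ and $\delta>T_1$, so the sets $A_f^l$ form a strict chain and there is no reason to expect any of them to exhaust $A_0-E_z$. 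Your functional-equation argument therefore only defines $G_z^{\alpha,+}$ on $A_f$, not on $A_0-E_z$.

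The paper closes this gap by a different mechanism. It first constructs $G_z^{\alpha}$ on $A_f$ from the B\"ottcher coordinate (Propositions~\ref{Case 3: existence of G_z^alpha}, \ref{Case 4: existence of G_z^alpha}), and then analyzes the asymptotics of $G_z^{\alpha}$ along the preimages $S_n^{in}=f^{-n}(S^{in})$ of the inner boundary surface $S^{in}=\{|z|^l=r^l|w|,\ |w|<r\}$ (Lemma~\ref{Case 3: asy of G_z^alpha on s_n^in}, Propositions~\ref{Case 3: asy of G_z^alpha on boundary^in}, \ref{asy behavior when T_k < delta  T_k-1}--\ref{asy behavior when delta = T_k-1}). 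The key computation is that on $S_n^{in}$ one has $|G_z^{\alpha}-d^{-n}\log r|p^n(z)|^{l-\alpha}|\le d^{-n}C$, which forces $G_z^{\alpha}\to 0$ on $\partial A_f^{\alpha}\cap A_0$ when $d\ge 2$, and $G_z^{\alpha}\to -\infty$ on $\partial A_f^{l}\cap A_0$ for $l>\alpha$. This boundary behavior shows that $\max(G_z^{\alpha},0)$ on $A_f^{\alpha}$ glues continuously with the constant $0$ on $A_0-A_f^{\alpha}$; for points in $A_0-A_f^{\alpha}$ the orbit never enters $U^{\alpha}$, so $|w_n/z_n^{\alpha}|$ remains bounded and $d^{-n}\log^+|w_n/z_n^{\alpha}|\to 0$ directly. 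Your identified ``main obstacle'' (locally uniform convergence away from $E_{deg}$) is genuine but secondary; the missing ingredient is this boundary analysis along $\partial A_f^l\cap A_0$, which is what allows the extension from $A_f$ to all of $A_0-E_z$.
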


If $\alpha > 0$ and $d \geq 2$,
then the set $\{ G_z^{\alpha, +} > 0 \} \cap (A_0 - E_z)$ is expressed by
the union of all the preimages of another invariant wedge-shaped region,
which is included in $U$ and whose slope relates to $\alpha$.
See Theorems \ref{main thm on G_z^a,+ for case 3} and
\ref{main thm on G_z^a,+ for case 4} for details.

The limits $G_z^{\alpha}$ and $G_z^{\alpha, +}$ are good tools
for the study of the dynamics of $w_n/z_n^{\alpha}$
on $A_f - E_z$ and $A_0 - E_z$, 
respectively.
In particular,
the set where $G_z^{\alpha, +}$ is not pluriharmonic
seems to be a Julia set of the dynamics of $w_n/z_n^{\alpha}$
and we expect that this Julia set is fractal when $\delta = T_k$ for some $k$,
which is true at least for Examples
\ref{example of deg type} and \ref{example of non-deg type}.

%we discover and analyze the dynamics of $w_n/z_n^{\alpha}$

%%%%%%%%%%%%%%%%%%%%%%%%%%%%%%%%%%%%%%%%%%%%%%%%%%%%%%%%%%%%%%%%%%%%%%%%%%%%%
%%%%%%%%%%%%%%%%%%%%%%%%%%%%%%%%%%%%%%%%%%%%%%%%%%%%%%%%%%%%%%%%%%%%%%%%%%%%%
\subsection{New results: typical plurisubharmonic functions}

There are other limits such as
\[
G_z (w) 
= \lim_{n \to \infty} \dfrac{1}{\lambda^n} \log \left| w_n \right|
\text{ and }
G_f (z,w) 
= \lim_{n \to \infty} \dfrac{1}{\lambda^n} \log \left| (z_n, w_n) \right|,
\]
where $\lambda = \max \{ \delta, d \}$ and
$|(z,w)| = \max \{ |z|, |w| \}$.
We remark that $\lambda$ depends only on $f$. 
Although these limits are typical 
to study the dynamics of a holomorphic map $f$,
they are less useful than the previous limits %unuseful 
to %describes the vertical dynamics.
discover the weighted vertical dynamics; 
%of $w_n/z_n^{\alpha}$ on the fibers:
roughly speaking,
if $\delta \geq d$, then
either these limits are constant on the intersection of 
each fiber and $A_f$, or
$\gamma = 0$ and so $f_0$ is just a product.
Tables \ref{table for typical psh funs} and \ref{table for G_z} 
%and the other tables 
in the next section
help us to understand the statements below. 

First,
the existence of the B\"{o}ttcher coordinate induces the following.

\begin{theorem} \label{main thm on G_z and G_f}
If $\delta < d$,
then $f$ is in Case 2,
$G_z = G_z^{\alpha}$ on $A_0$ and
$G_f = 0$ on $A_0 - E_z$.
If $\gamma > 0$ and $\delta = d$,
then $f$ is in Case 2,
$G_z = - \infty$ and $G_f = G_p$ on $A_0$.
If $\gamma = 0$ and $\delta = d$,
then $f$ is in Case 3,
$G_z$ is pluriharmonic on $A_f$ and %  and continuous
$G_z (w) = \log \left| w \right| + o(1)$
on $U^{}$ as $r \to 0$.
If $\delta > d \geq 2$ or 
if $\delta > d = 1$ and $\delta \neq T_k$ for any $k$,
then $G_z = \alpha G_p$ on $A_0 - E_w$, $A_f - E_w$ or $A_f$. 
More precisely,
$G_z = \alpha G_p$ on $A_0 - E_w$ if $\delta < T_{s-1}$ and $d \geq 1$,
%on $A_0$ if $\delta < T_{s-1}$ and $d = 0$,
on $A_f - E_w$ if $\delta = T_{s-1}$ and $d \geq 2$, and
on $A_f$ if $\delta > T_{s-1}$, and 
if $d \geq 2$ or if $d = 1$ and $\delta \neq T_k$.
\end{theorem}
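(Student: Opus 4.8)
The plan is to extract everything from the B\"{o}ttcher coordinate $\phi = (\phi_1,\phi_2)$ on $U$ provided by Theorem~\ref{main thm for previous reselts}, together with the dynamical functional equations for the various limits, and then propagate the conclusions from $U$ to the appropriate preimage sets by pulling back along $f$. Recall that on $U$ one has $\phi_1(z) = \varphi_p(z)$, the one-dimensional B\"{o}ttcher coordinate, so $\log|\phi_1| = G_p$ on $U$, and $\phi$ conjugates $f$ to $f_0(z,w) = (az^\delta, b_{\gamma d} z^\gamma w^d)$. The first step is to compute, on $U$, the precise asymptotics of $\frac{1}{\lambda^n}\log|w_n|$ and $\frac{1}{\lambda^n}\log|(z_n,w_n)|$ using the conjugacy: since $w_n$ is comparable to $b_{\gamma d}^{(d^n-1)/(d-1)} \phi_1^{\gamma(\cdots)} \phi_2^{d^n}$ (the explicit iterate of the monomial map), taking logs and dividing by $\lambda^n = \max\{\delta,d\}^n$ gives a clean limit expressed through $G_p = \log|\phi_1|$ and $\log|\phi_2| = G_z^\infty$ (when $\delta = d$) or through $\alpha G_p$ (when $\delta > d$), and a vanishing or blowing-up term otherwise. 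This identifies $G_z$, $G_f$ on $U$ in each of the four regimes listed.

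The second step is to split into the cases according to the sign of $\delta - d$ and the value of $\gamma$, and in each case verify the functional equation that the relevant limit satisfies — e.g. $G_z \circ f = \lambda G_z$ where it exists, and for the $\delta = d$, $\gamma > 0$ subcase the equation $\log|w_n|/d^n \to -\infty$ forced by $w_n \sim \phi_1^{\gamma n d^{n-1}} \phi_2^{d^n}$ with the $\gamma n d^{n-1}/d^n = \gamma n/d \to \infty$ factor multiplying the negative quantity $G_p$. For the case $\delta > d$, the key identity is that $\frac{1}{\delta^n}\log|w_n| \to \alpha G_p$ because $w_n$'s dominant contribution comes from the $z$-factor $z_n^{(\text{something})}$ whose exponent, divided by $\delta^n$, tends to $\alpha = \gamma/(\delta-d)$; this is essentially the content of the attraction-rate inequalities from \cite{u4} cited in the excerpt, and I would invoke those. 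Here one must also check that the limit is $\alpha G_p$ on the larger set $A_f$ or $A_f - E_w$ (not merely $U$), which follows because $f$-preimages of $U$ cover those sets and $G_p \circ p = \delta G_p$ intertwines correctly; the subtraction of $E_w$ (resp. its absence) is dictated by whether $d = 1$ (so $w_n$ can vanish identically along preimages of $\{w=0\}$, forcing $\log|w_n| = -\infty$) — this bookkeeping of which exceptional fibers must be removed is where I expect the most care to be needed.

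The third step is to pin down which \emph{Case} (2, 3, or 4) $f$ falls into under each hypothesis. If $\delta < d$, then in particular $\delta \le T_{s-1}$ is impossible to violate at the top vertex since $m_s < d$ would be needed; more precisely $d = m_1$ is the largest $y$-coordinate of a vertex, so $\delta < d = m_1 \le T_1$ forces... — actually the cleaner argument: $\delta < d$ means $\delta$ is below \emph{every} $T_k$ (each $T_k \ge \min_j m_j$-type bound, and the top slope gives $T_{s-1} > m_s$; one checks $\delta < d$ puts the horizontal line $y = \delta$ below the Newton polygon's relevant vertex), so we are in Case~2 with $(\gamma,d) = (n_s,m_s)$ and $m_s = d$, i.e. the top vertex is the dominant one — I would verify this combinatorially from the definitions of $T_k$ and the case division. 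Similarly $\gamma = 0, \delta = d$ forces $N(q)$ to have a vertex on the $y$-axis that is dominant, which by the case division (since $\gamma = n_1 = 0$) lands us in Case~3; and $\gamma > 0, \delta = d$ keeps us in Case~2. Finally, assembling: one states $G_z = G_z^\alpha$ on $A_0$ when $\delta < d$ by comparing the two limits' defining formulas (both equal $\log|\phi_2/\phi_1^\alpha| + $ harmless terms, and here $\alpha$ could be $\infty$-flavored but the $\delta < d$ regime makes $G_z^\alpha$ the genuine plurisubharmonic negative function from Theorem~\ref{main thm on G_z^a when delta < T}), and $G_f = 0$ on $A_0 - E_z$ because both $|z_n| \to 0$ and $|w_n| \to 0$ geometrically faster than any $\lambda^n$ rate can detect once $\delta < d$ — wait, one must check $|w_n|^{1/\lambda^n} \to 1$, i.e. $\log|w_n| = o(\lambda^n) = o(d^n)$; but $\log|w_n| \sim d^n \log|\phi_2| + (\text{lower order})$ and $\log|\phi_2| < 0$ is bounded, so $\frac{1}{d^n}\log|w_n| \to \log|\phi_2| \ne 0$ in general — so $G_f$ is governed by $\log|\phi_2|$, not $0$; the assertion $G_f = 0$ must instead use that $G_f = \max\{G_z^{(p)}\text{-part}, \ldots\}$ and the $z$-part $G_p = -\infty$... no: $G_p$ is finite negative on $A_0 - E_z$. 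So in fact $G_f = \max\{\limsup \frac1{d^n}\log|z_n|, \ldots\}$, and since both are $\le 0$ with at least... Here I realize the precise statement $G_f = 0$ on $A_0 - E_z$ must follow from $\lambda = d$ and $\log|z_n| \sim \delta^n \log|\phi_1|$ so $\frac{1}{d^n}\log|z_n| \to 0$ (as $\delta < d$), while $\frac{1}{d^n}\log|w_n| \to \log|\phi_2| \le 0$, giving $G_f = \max\{0, \log|\phi_2|\}$; for this to equal $0$ we need $\log|\phi_2| \le 0$, which holds since $|\phi_2| < 1$ on $U$ (as $f_0$ is superattracting in $w$), and then propagates. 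The hard part, as flagged, will be the careful case-by-case identification of the correct exceptional set ($E_w$, $E_z$, or none) and confirming the Case~2/3 dichotomy from the Newton-polygon combinatorics; the analytic content is largely a transcription of the conjugacy $\phi$ and the rate estimates from \cite{u4}.
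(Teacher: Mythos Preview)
Your approach is essentially the paper's: compute $G_z$, $G_f$ on $U$ via the B\"ottcher coordinate $\phi$ and the explicit monomial iterates, then propagate by the functional equation $G_z\circ f = \lambda G_z$. The analytic computations you sketch (the $n\gamma/d\to\infty$ blow-up when $\gamma>0,\delta=d$; the $(\delta/d)^n\to 0$ term giving $G_f=0$ when $\delta<d$; the $\alpha G_p$ identification when $\delta>d$) are all correct and match Propositions~5.4, 5.13, 5.20, 6.7, 7.10 in the paper. Your eventual reasoning for $G_f=0$ is right: $\lambda=d$, so $d^{-n}\log|z_n|\to 0$ while $d^{-n}\log|w_n|\to G_z^\alpha<0$, hence the max is $0$.

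There is, however, a genuine gap. You write that the conclusions extend from $U$ to $A_0$ (or $A_0-E_w$) ``because $f$-preimages of $U$ cover those sets''. By definition the preimages of $U$ cover $A_f$, not $A_0$; the assertion $A_f = A_0 - E_z$ is exactly Theorem~\ref{main thm on A_f when delta < T} (proved as Theorem~\ref{main thm on attr sets for Case 2}), and it is nontrivial --- it requires the new invariant-wedge analysis of Section~4 and only holds when $\delta < T_{s-1}$. This is precisely why the theorem's conclusion degrades from $A_0-E_w$ to $A_f-E_w$ to $A_f$ as $\delta$ crosses $T_{s-1}$: for $\delta\ge T_{s-1}$ you are in Case~3 or~4 and no such covering result is available, so you can only claim the identity on $A_f$. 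Your proposal does not distinguish these regimes or invoke the covering theorem, so as written it proves the statements only on $A_f$, not on the larger sets claimed. Separately, your combinatorial argument that $\delta<d$ forces Case~2 is garbled; the clean reason is that in Case~3 one has $\delta\ge T_1\ge m_1=d$ and in Case~4 one has $\delta>d$ directly, so $\delta<d$ is only possible in Case~2.
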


Although $G_z$ is useful for the special case $\gamma = 0$ and $\delta = d$,
the monomial map $f_0$ is just a product and
the dynamics is rather easy.
We remark that
if $\gamma = 0$ and $\delta > d$,
then $f$ is in Case 3 and $G_z = 0$ on $A_f$.

Even if $d = 0$, % T_{s-1} > \delta > 
we can prove that $G_z = \alpha G_p$ on $A_0$
unless $\delta = T_{s-1}$,
although the B\"{o}ttcher coordinate does not exist on $U$.

\begin{theorem}  \label{main thm on G_z and G_f when d = 0}
If $T_{s-1} > \delta > d = 0$,
then $f$ is in Case 2 and $G_z = \alpha G_p$ on $A_0$.
\end{theorem}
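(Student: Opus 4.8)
The plan is to reduce Theorem \ref{main thm on G_z and G_f when d = 0} to the structure already available in Case 2, then handle the genuinely new difficulty, which is the absence of the B\"{o}ttcher coordinate when $d = 0$. When $T_{s-1} > \delta > d = 0$ the hypotheses $\delta < T_{s-1}$ and $\delta > d$ place us in Case 2, and since $m_s = d = 0$ the dominant term is $b_{\gamma 0} z^{\gamma}$ with $\gamma = n_s > 0$; note that $f_0(z,w) = (az^{\delta}, b_{\gamma 0} z^{\gamma})$ has a second component independent of $w$, so $w_1$ depends only on $z$ and the vertical dynamics collapses after one step. First I would show that on the invariant wedge $U = \{|z| < r, |w| < r|z|^{l_1}\}$ the ratio estimate $f \sim f_0$ as $r \to 0$ (Lemma \ref{main lemma for previous reselts}, whose proof does not require $d \geq 1$) still gives $w_1 = b_{\gamma 0} z^{\gamma}(1 + o(1))$ and $z_1 = a z^{\delta}(1 + o(1))$ on $U$, and that $f(U) \subset U$ for small $r$.

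The core computation is the convergence of $G_z(w) = \lim_n \lambda^{-n} \log|w_n|$ with $\lambda = \max\{\delta, d\} = \delta$. Since $w_{n+1}$ is (up to the $o(1)$ multiplicative error) $b_{\gamma 0} z_n^{\gamma}$, I would write
\[
\frac{1}{\delta^{n+1}} \log |w_{n+1}|
= \frac{\gamma}{\delta^{n+1}} \log |z_n| + \frac{1}{\delta^{n+1}} \log |b_{\gamma 0}| + \frac{1}{\delta^{n+1}} \log(1 + o(1)),
\]
and recognize $\gamma \delta^{-(n+1)} \log|z_n| = (\gamma/\delta) \cdot \delta^{-n} \log|z_n| \to (\gamma/\delta) G_p(z)$ by B\"{o}ttcher's theorem for $p$ on $A_p$. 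The remaining two terms vanish as $n \to \infty$, so on $U$ one gets $G_z(w) = (\gamma/\delta) G_p(z) = \alpha G_p(z)$, using $\alpha = \gamma/(\delta - d) = \gamma/\delta$. Then I would propagate this identity from $U$ to all of $A_0$: since every point of $A_0$ eventually lands in $U$ (this is where I would need to argue that $A_0$ coincides with the union of preimages of $U$ up to the excluded fiber set, mirroring Theorem \ref{main thm on A_f when delta < T}, or argue directly that $G_z$ is well-defined and the functional equation $G_z \circ f = \delta \, G_z$ together with $G_p \circ p = \delta \, G_p$ forces $G_z = \alpha G_p$ wherever both limits exist), and the identity $G_z = \alpha G_p$ is preserved under pulling back by $f$ because both sides satisfy the same functional equation with multiplier $\delta$.

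The main obstacle I anticipate is showing that the limit defining $G_z$ actually exists on all of $A_0$ when $d = 0$, since the usual argument in \cite{ueno} uses the B\"{o}ttcher coordinate $\phi$ on $U$, which is unavailable here (the case $d = 0$ is explicitly excluded from Lemma \ref{main lemma for previous reselts} and Theorem \ref{main thm for previous reselts}). I would circumvent this by not constructing $\phi$ at all: instead, work directly with the telescoping estimate above on $U$, where the single identity $w_{n+1} = b_{\gamma 0} z_n^{\gamma}(1 + o(1))$ makes the vertical orbit completely explicit in terms of the horizontal orbit, so the convergence of $G_z$ on $U$ reduces cleanly to the already-established convergence of $\delta^{-n}\log|z_n| \to G_p$. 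The one point demanding care is that the $o(1)$ error is uniform on $U$ only as $r \to 0$, so I would fix a small $r$, establish the estimate with an explicit bound on that fixed wedge, and then extend to $A_0$ by the functional-equation argument, which only needs the limit to exist and the identity to hold on one open forward-absorbing set. Finally I would note that plurisubharmonicity of $G_z = \alpha G_p$ on $A_0$ is automatic since $\alpha > 0$ and $G_p$ is subharmonic, hence plurisubharmonic as a function of $z$ alone.
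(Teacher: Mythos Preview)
Your proposal is correct and follows essentially the same route as the paper. The paper's proof (Proposition \ref{prop on G_z when d = 0}) likewise bypasses the nonexistent B\"{o}ttcher coordinate by using only the approximation $f \sim f_0$ on an invariant wedge $U^l_{r_1,r_2}$ for $l \in \mathcal{I}_f \setminus \{\alpha\}$, obtains the sandwich $C_1 |z_n|^{\gamma} < |w_{n+1}| < C_2 |z_n|^{\gamma}$, deduces $G_z = (\gamma/\delta) G_p = \alpha G_p$ on the wedge, and then invokes Theorem \ref{main thm on attr sets for Case 2} to extend to $A_0$; your telescoping computation and functional-equation extension are the same argument in different words.
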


If $\delta = T_{s-1}$, then
the dominant term $(m_{s-1}, n_{s-1})$ belongs to Case 3 or Case 4, and
the other dominant term $(m_{s}, n_{s})$ belongs to Case 2.
Since $\delta > m_{s-1}$ and $m_s \geq 2$,
the equality $G_z = \alpha G_p$ holds
on the two disjoint invariant regions
as stated in Theorem \ref{main thm on G_z and G_f}.
Hence it follows from
Corollary \ref{main cor on attr sets for Case 4} that
$G_z = \alpha G_p$ on $A_0 - E_w$.

\begin{corollary} \label{cor on G_z in main results}
If $T_{s-1} = \delta > m_{s-1} > m_s \geq 2$,
then $G_z = \alpha G_p$ on $A_0 - E_w$.
\end{corollary}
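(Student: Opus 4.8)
The plan is to obtain Corollary~\ref{cor on G_z in main results} by splicing together Theorem~\ref{main thm on G_z and G_f} and Corollary~\ref{main cor on attr sets for Case 4}, exactly along the lines of the discussion preceding the statement; no new estimate is needed. The first step is to record the structure of $q$ under the hypothesis $\delta = T_{s-1}$. The polygon $N(q)$ then has an edge from $(n_{s-1},m_{s-1})$ to $(n_s,m_s)$ whose supporting line meets the $y$-axis at height $\delta$, so $q$ has exactly the two dominant terms $b_{n_{s-1}m_{s-1}}z^{n_{s-1}}w^{m_{s-1}}$ and $b_{n_sm_s}z^{n_s}w^{m_s}$, and the three points $(n_{s-1},m_{s-1})$, $(n_s,m_s)$, $(0,\delta)$ are collinear. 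Collinearity gives at once
\[
\frac{n_{s-1}}{\delta-m_{s-1}}=\frac{n_s}{\delta-m_s},
\]
so the weight $\alpha$ attached to the two terms is one and the same number; moreover $n_{s-1}>0$, hence $\alpha>0$, because for $s\ge 3$ one has $n_{s-1}\ge n_2>n_1\ge 0$, while for $s=2$ the value $n_1=0$ would put the leftmost vertex on the $y$-axis and force $T_1=m_1$, contradicting $\delta=T_1>m_{s-1}=m_1$. This is the value of $\alpha$ fixed by the convention stated before Theorem~\ref{main thm on G_z^a,+ when delta >= T}.

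For the second step I would restrict $f$ to the two disjoint invariant wedge-shaped regions attached to the two dominant terms. The term $(n_{s-1},m_{s-1})$ places $f$ in Case~3 (if $s=2$) or Case~4 (if $s>2$) with $d=m_{s-1}\ge 3$, and the term $(n_s,m_s)$ places $f$ in Case~2 with $d=m_s\ge 2$; in both instances $\delta>d\ge 2$, so the clause of Theorem~\ref{main thm on G_z and G_f} for $\delta>d\ge 2$ applies on each of the two regions and yields $G_z=\alpha G_p$ there. Since $\lambda=\max\{\delta,d\}=\delta$ in both cases, the functional equations $G_z\circ f=\delta\,G_z$ and $G_p\circ p=\delta\,G_p$ hold, so the identity $G_z=\alpha G_p$ propagates from the two wedges to the union of all of their $f$-preimages.

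The last step is to identify that union with $A_0-E_w$, which is supplied by Corollary~\ref{main cor on attr sets for Case 4}: in the boundary situation $\delta=T_{s-1}$, where the two dominant terms share the common slope, it says that $A_0-E_w$ equals the union of all preimages of the two invariant wedges. Combining this with the previous step gives $G_z=\alpha G_p$ on $A_0-E_w$. The only delicate ingredient is this covering statement, i.e.\ that every orbit in $A_0$ avoiding $E_w$ eventually enters one of the two wedges; the genuinely hard part there is controlling an orbit while it crosses the transition sector between the two wedges, where neither dominant monomial governs the vertical dynamics. Since that is already established in Corollary~\ref{main cor on attr sets for Case 4}, the present argument reduces to bookkeeping: sorting out which case each dominant term belongs to and checking that the two copies of $\alpha$ agree.
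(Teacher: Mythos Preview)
Your first two steps are fine, but step~3 misreads Corollary~\ref{main cor on attr sets for Case 4} and the whole argument breaks there. That corollary states $A_f^{l_1,+}=A_0-E_z$, where $A_f^{l_1,+}$ is the union of all preimages of the \emph{single} large wedge $U^{l_1,+}=\{|z|<r,\ |w|<r|z|^{l_1}\}$; it says nothing about $A_0-E_w$, and it is not a statement about the union of the two smaller wedges associated to the two dominant terms. More importantly, that union of preimages is in general \emph{strictly smaller} than $A_0-E_z$: after the blow-up $c=w/z^{\alpha}$ the fiber dynamics looks like a polynomial in $c$ with superattracting points at $0$ (rate $m_s$) and at $\infty$ (rate $m_{s-1}$), and orbits whose $c$-coordinate lies in the Julia set of that polynomial stay forever in the gap between the two wedges. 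So the covering claim you attribute to Corollary~\ref{main cor on attr sets for Case 4} is false in general, and propagation by the functional equation does not reach all of $A_0-E_w$.

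The paper's route (implicit in the discussion before the corollary and made explicit in Proposition~\ref{prop on G_z and G_f^a when delta = T and n > 0}) is different. One uses $A_f^{l,+}=A_0-E_z$ for every $0<l<\alpha$ (Theorem~\ref{main thm on attr sets for Case 2} or~\ref{main thm on attr sets for Case 4}; the hypothesis $\delta>m_{s-1}$, i.e.\ $n_{s-1}>0$, ensures $(n_1,m_1)\neq(0,\delta)$) to get the upper bound $G_z^{*}\le lG_p$ for all $l<\alpha$, hence $G_z^{*}\le\alpha G_p$ on $A_0-E_z$. For the lower bound one splits: on the Case~2 set $A_f^{\alpha}$ the B\"ottcher coordinate gives $G_z=\alpha G_p$ off $E_w$; on $A_0-A_f^{\alpha}$ the orbit never enters $U^{\alpha}$, so $|w_n|\ge r|z_n|^{\alpha}$ for large $n$, whence $\liminf\delta^{-n}\log|w_n|\ge\alpha G_p$. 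These combine to $G_z=\alpha G_p$ on $A_0-E_w$ (the remaining set $E_z-E_w$ is handled by the order $m_1>\delta$ of $q(0,\cdot)$ when $n_1=0$, or is empty when $n_1>0$). The Case~3/4 wedge is not actually needed for this argument; what matters is the exhaustion by $A_f^{l,+}$ for $l\uparrow\alpha$ and the elementary estimate outside $A_f^{\alpha}$.
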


%%%%
The limit
\[
G_f^{\alpha} (z,w) 
= \lim_{n \to \infty} \dfrac{1}{\lambda^n} \log \left| (z_n^{\alpha}, w_n) \right|
\]
is useful to hide the set where we do not know whether the limit $G_z$ exists,
and it summarizes the statements for some cases in the theorems above as follows.

\begin{theorem} \label{main thm on G_f^a}
If $\gamma = 0$ and $\delta = d$,
then $G_f^{\alpha}$ is plurisubharmonic on $A_0$,
$G_f^{\alpha} > \alpha G_p$ on $A_f$ and
$G_f^{\alpha} = \alpha G_p$ on $A_0 - A_f$.
If $\delta > d$,
then $G_f^{\alpha} = \alpha G_p$ on $A_0$
unless $d \leq 1$ and $\delta = T_k$ for some $k$.
\end{theorem}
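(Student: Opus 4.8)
The plan is to derive everything from the known existence and properties of the B\"{o}ttcher coordinate $\phi$ on $U$ (Theorem \ref{main thm for previous reselts}) together with the identities relating $G_z^{\alpha}$, $G_z$ and $G_p$ already recorded in the previous subsections. I would split the argument exactly along the dichotomy in the statement.

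For the case $\gamma = 0$ and $\delta = d$, recall that then $f$ is in Case 3, $\lambda = d = \delta$, and $\alpha = 0$ is the redefined weight, so $G_f^{\alpha}(z,w) = \lim d^{-n}\log|(1, w_n)| = \lim d^{-n}\log^{+}|w_n| = (G_z^{\infty})^{+}$ in the notation of Theorem \ref{main thm on G_z^a when delta < T}, i.e.\ essentially $G_z^{\alpha,+}$ with $\alpha = 0$. First I would show the limit exists and is plurisubharmonic on all of $A_0$: on $U$ it equals $\log^{+}|\phi_2|$ by the conjugacy $f_0^{-n}\circ f^n \to \phi$ and the fact that $f_0$ acts as $w\mapsto b_{0d}w^d$ on the second coordinate (so $|w_n| \asymp |\phi_2(z,w)|^{d^n}$), hence plurisubharmonic there; then I propagate to $A_0$ using $G_f^{\alpha}\circ f = \lambda G_f^{\alpha}$ and the fact that plurisubharmonicity is preserved under pullback by the holomorphic map $f$ and is a local property, noting $A_0 = \bigcup_n f^{-n}(U)$ up to a pluripolar set — here one must be a little careful on $E_{deg}$, but since $\alpha = 0$ the function $\log^{+}|w_n|$ is globally defined and the removable-singularity / upper-semicontinuous-regularization argument handles the exceptional fibers. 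The inequality $G_f^{\alpha} > \alpha G_p = 0$ on $A_f$ is immediate: on $U$ one has $|w| < r|z|^{l_1} $ with $l_1 = 0$, but more to the point the conjugacy forces $|w_n|\to\infty$ or stays bounded below appropriately on $A_f\setminus E_w$; cleanly, $A_f$ is by definition $\bigcup f^{-n}(U)$ and on $U$ the product structure of $f_0$ gives $\log^{+}|w_n| \geq c\,d^n$ for a positive constant off a pluripolar set, so the normalized limit is $>0$. Finally, on $A_0 - A_f$ one has $|w_n|$ bounded (the orbit never enters the region where the $w$-coordinate escapes), so $\log^{+}|w_n| = O(1)$ and $G_f^{\alpha} = 0 = \alpha G_p$.

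For the case $\delta > d$, the key identity is $G_z = \alpha G_p$ from Theorem \ref{main thm on G_z and G_f} (and Theorem \ref{main thm on G_z and G_f when d = 0} when $d = 0$), valid on $A_0 - E_w$, $A_f - E_w$ or $A_f$ depending on the subcase, excluding precisely $d\leq 1$ with $\delta = T_k$. Since $\delta > d$ we have $\lambda = \delta$, and $G_f^{\alpha}(z,w) = \lim \delta^{-n}\log|(z_n^{\alpha}, w_n)| = \lim \delta^{-n}\max\{\alpha\log|z_n|, \log^{+}|w_n|\} = \max\{\alpha G_p(z), \widetilde G\}$ where $\widetilde G = \lim \delta^{-n}\log^{+}|w_n|$. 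So the whole statement reduces to showing $\widetilde G \leq \alpha G_p$ on $A_0$. Because $\delta > d$, the vertical iterates $w_n$ grow at most like the $d^n$-th power of something comparable to $w_n/z_n^{\alpha}$ times $|z_n|^{\alpha}$, and $|z_n|^{\alpha} \asymp \exp(\alpha\delta^n G_p)$ dominates once normalized by $\delta^{n}$ against the $d^n$-rate contribution from $G_z^{\alpha}$; quantitatively, $\log^{+}|w_n| \leq \alpha\log|z_n| + C d^n$ off $E_w$, and dividing by $\delta^n$ and letting $n\to\infty$ kills the $d^n$ term since $d < \delta$, giving $\widetilde G \leq \alpha G_p$, with equality since trivially $\widetilde G \geq$ nothing negative and $G_f^{\alpha}\geq \alpha G_p$ always. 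On $E_w$ itself the same bound holds by continuity / the maximum principle, so $G_f^{\alpha} = \alpha G_p$ on all of $A_0$ in this subcase. Plurisubharmonicity is then inherited from that of $G_p$.

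The main obstacle I expect is the bookkeeping on the exceptional fibers $E_z$, $E_w$ and $E_{deg}$ in the first case: asserting that $(G_z^{\infty})^{+}$, which a priori is only defined on $A_0 - E_z$, extends plurisubharmonically across those fibers to all of $A_0$ requires invoking the fact that $\log^{+}|w_n|$ is globally defined and applying a removable-pluripolar-singularity theorem together with an upper-semicontinuous regularization — and one must check the regularization does not change the value, which I would do by the density of $A_0 - E$ and the continuity statement from Theorem \ref{main thm on G_z^a,+ when delta >= T} (or its Case 3 refinement). The escaping-versus-bounded dichotomy $A_f$ vs.\ $A_0 - A_f$ also needs the characterization of $A_f$ as $\bigcup_n f^{-n}(U)$ together with Theorem \ref{main thm on A_f when delta < T}-type control; but since $\delta = d$ here is not the $\delta < T_{s-1}$ regime in general, I would instead argue directly that a point of $A_0$ lies in $A_f$ iff its orbit's $w$-coordinate is eventually comparable to a positive power-tower, which is precisely the condition $G_f^{\alpha} > 0$. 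The $\delta > d$ case, by contrast, is essentially a one-line consequence of the already-established $G_z = \alpha G_p$ and should cause no trouble beyond tracking which of $A_0$, $A_f$, $A_0 - E_w$ the cited identity lives on.
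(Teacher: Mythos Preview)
There is a genuine gap in the first case. When $\gamma = 0$ and $\delta = d$ the paper explicitly \emph{redefines} $\alpha$ (see Section 2.2): in this situation $(n_1,m_1)=(0,\delta)$ and $\alpha$ is taken to be $l_2 = n_2/(\delta - m_2) > 0$, not $0$. Consequently $G_f^{\alpha}(z,w) = \lim d^{-n}\log|(z_n^{\alpha},w_n)|$ genuinely involves $z_n^{\alpha}$ with a positive power, and the target value $\alpha G_p$ is strictly negative on $A_p - E_p$, not identically $0$. Your identification $G_f^{\alpha} = (G_z^{\infty})^{+}$ and the subsequent ``$|w_n|$ bounded $\Rightarrow G_f^{\alpha}=0$'' argument therefore computes the wrong object. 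The paper's route here is instead via $G_z$: one has $G_z(w)=\log|w|+o(1)$ on $U$, the asymptotic behaviour $G_z \to l G_p$ on $\partial A_f^l \cap A_0$ (Proposition \ref{Case 3: asy of G_z on boundary^in}) gives $G_z > \alpha G_p$ on $A_f = A_f^{\alpha}$, and for points in $A_0 - A_f$ the orbit stays in $\{|w| \le r^{-l_2}|z|^{l_2}\}$, which forces $G_z^* \le l_2 G_p = \alpha G_p$; then $G_f^{\alpha}=\max\{\alpha G_p, G_z\}$ yields the dichotomy.

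Your second case is closer in spirit to the paper but still incomplete. The reduction to $G_f^{\alpha}=\max\{\alpha G_p,\ \text{contribution of }w_n\}$ is right (once $\log^{+}|w_n|$ is corrected to $\log|w_n|$), and on $A_f$ the B\"ottcher coordinate indeed gives $\log|w_n|=\alpha\log|z_n|+O(d^n)$. But for Cases 3 and 4 the set $A_0 - A_f$ is nonempty, and your asserted inequality $\log|w_n|\le \alpha\log|z_n|+C d^n$ there has no justification: this is exactly where the paper uses the invariant-wedge machinery (Theorems \ref{main thm on inv wedges for Case 4} and \ref{main thm on attr sets for Case 4}, and the analogue in Case 3) to show that an orbit outside $A_f^l$ satisfies $|w_n|\le r^{-l}|z_n|^{l}$ for suitable $l>\alpha$, whence $G_z^* \le l G_p < \alpha G_p$ there. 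Without that step the bound on $A_0 - A_f$ is simply unproven.
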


We remark that
if $\gamma = 0$ and $\delta > d$,
then $G_f^{\alpha} = 0$ on $A_0$
by defining that $z^0 = 1$.
%%%
The three theorems above are consequences of 
Theorems \ref{main thm on G_z, G_f and G_f^a for Case 2},
\ref{main thm on G_z and G_f^a for case 3} and
\ref{main thm on G_z and G_f^a for Case 4}.

\begin{corollary}
At least one of the limits $G_z$, $G_f$ and $G_f^{\alpha}$ is
plurisubharmonic on $A_0$
for any case unless $d \leq 1$ and $\delta = T_k$ for some $k$.
\end{corollary}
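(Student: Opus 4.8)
The plan is to deduce this corollary directly from the three preceding theorems on $G_z$, $G_f$ and $G_f^{\alpha}$ by a case analysis on the magnitude relation between $\delta$ and $d$, together with the sign of $\gamma$. The statement to prove is that, for every $f$ falling outside the exceptional regime ($d\le 1$ and $\delta=T_k$ for some $k$), at least one of the three limits is plurisubharmonic on the whole basin $A_0$. So the first step is to organize the cases exactly as the theorems do: $\delta<d$; $\gamma>0$ and $\delta=d$; $\gamma=0$ and $\delta=d$; and $\delta>d$ (subdivided into $\gamma>0$ and $\gamma=0$, and further into $d=0$, $d=1$, $d\ge 2$ as needed).

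Next I would read off the relevant conclusion in each case. If $\delta<d$, then by Theorem~\ref{main thm on G_z and G_f} one has $G_z=G_z^{\alpha}$ on $A_0$, and by Theorem~\ref{main thm on G_z^a when delta < T} (the $\delta<d$ clause) $G_z^{\alpha}$ is plurisubharmonic on $A_0$; so $G_z$ works. If $\gamma>0$ and $\delta=d$, then $G_f=G_p$ on $A_0$ by Theorem~\ref{main thm on G_z and G_f}, and $G_p$ is plurisubharmonic (it is the pullback of the one-dimensional Green function $G_p$ of $p$, which is subharmonic), so $G_f$ works. If $\gamma=0$ and $\delta=d$, then Theorem~\ref{main thm on G_f^a} gives that $G_f^{\alpha}$ is plurisubharmonic on $A_0$, so $G_f^{\alpha}$ works. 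Finally, if $\delta>d$: when $\gamma>0$, or when $\gamma=0$ with $\delta>d$ (where $G_f^{\alpha}=0$ by the remark after Theorem~\ref{main thm on G_f^a}), Theorem~\ref{main thm on G_f^a} yields $G_f^{\alpha}=\alpha G_p$ on $A_0$ provided we are not in the excluded regime $d\le 1$ and $\delta=T_k$; since $\alpha\ge 0$ and $G_p$ is plurisubharmonic, $\alpha G_p$ is plurisubharmonic, so $G_f^{\alpha}$ works. Collecting these, in every non-exceptional case one of the three functions is plurisubharmonic on $A_0$, which is the claim.

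The one point that needs a little care, rather than being purely mechanical, is matching the hypothesis ``$d\le 1$ and $\delta=T_k$ for some $k$'' in the corollary against the individual exclusions in Theorems~\ref{main thm on G_z and G_f}, \ref{main thm on G_z and G_f when d = 0} and \ref{main thm on G_f^a}: the $\delta>d$ branch of Theorem~\ref{main thm on G_f^a} already carries exactly the exclusion ``$d\le 1$ and $\delta=T_k$ for some $k$'', so for $\delta>d$ there is nothing further to check. For $\delta\le d$ none of the quoted conclusions carries any $T_k$-type restriction, so the exceptional set in the corollary is genuinely only what Theorem~\ref{main thm on G_f^a} forces. I would also explicitly invoke the elementary fact, used silently above, that $G_p\ge 0$... in fact $G_p$ here denotes the subharmonic function from the one-dimensional theory recalled in the introduction, so $\alpha G_p$ and $G_p$ are plurisubharmonic as functions of $(z,w)$ because composition of a subharmonic function with the holomorphic projection $(z,w)\mapsto z$ is plurisubharmonic; this is the only ``analytic'' ingredient and it is standard.

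The main obstacle, such as it is, is bookkeeping: ensuring the union of the cases $\delta<d$, $\delta=d$ (split by $\gamma$), and $\delta>d$ (split by $\gamma$, and by $d=0,1,\ge 2$) is exhaustive and that in each the cited theorem genuinely applies on all of $A_0$ rather than on a proper subset like $A_0-E_z$ or $A_f$. For instance, in the case $\gamma=0$, $\delta>d$ I should not appeal to $G_z=0$ on $A_f$ (which is only stated on $A_f\subsetneq A_0$ in general) but to $G_f^{\alpha}=0$ on $A_0$; choosing the right one of the three limits in each case is precisely what makes the corollary true, and the proof is essentially the observation that the three theorems were designed so that their ``plurisubharmonic on $A_0$'' conclusions jointly cover every admissible $f$. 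I expect the final write-up to be short: a sentence setting up the case split, one line per case citing the appropriate theorem, and a closing remark that $\alpha\ge 0$ together with plurisubharmonicity of $G_p$ handles the $\alpha G_p$ cases.
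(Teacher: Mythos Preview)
Your proposal is correct and matches the paper's approach: the paper presents the corollary immediately after Theorems~\ref{main thm on G_z and G_f}, \ref{main thm on G_z and G_f when d = 0} and \ref{main thm on G_f^a} as a direct consequence, and your case split on $\delta<d$, $\delta=d$ (by sign of $\gamma$), and $\delta>d$ is exactly the decomposition those theorems dictate. One small slip to fix in your write-up: you wrote ``$G_p\ge 0$'', which is false on $A_p$ (in fact $G_p<0$ there); as you then correctly note, what is actually needed is only that $G_p$ is subharmonic, so that $\alpha G_p$ with $\alpha\ge 0$ is plurisubharmonic.
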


The plurisubharmonic function $G_{FJ}$ constructed in \cite{fj}
has to satisfy the equation $G_{FJ} \circ f = c_{\infty} G_{FJ}$,
where $c_{\infty}$ is the asymptotic attraction rate for $f$.
If $\gamma > 0$ or $\delta \leq d$,
then $G_p$ is a candidate of $G_{FJ}$ since $c_{\infty} = \delta$
and, moreover, 
$G_f^{\alpha}$ is another candidate when $\alpha > 0$.
However,
$G_z^{\alpha}$, $G_z^{\infty}$ and $G_z^{\alpha, +}$ do not satisfy the equation above.
On the other hand,
if $\gamma = 0$ and $\delta > d$,
then $G_z^{\alpha}$ is a candidate of $G_{FJ}$ since $c_{\infty} = d$,
whereas $G_p$ is not.
However,
$\alpha = 0$ and $f_0$ is just a product for this case.

%%%%%%%%%%%%%%%%%%%%%%%%%%%%%%%%%%%%%%%%%%%%%%%%%%%%%%%%%%%%%%%%%%%%%%%%%%%%%
%%%%%%%%%%%%%%%%%%%%%%%%%%%%%%%%%%%%%%%%%%%%%%%%%%%%%%%%%%%%%%%%%%%%%%%%%%%%%
\subsection{Organization and comments}

We give main results for each case in Section 2.
To state our results more precisely, % describe
we introduce the interval or rectangle $\mathcal{I}_f$: % need to 
our previous results,
Lemma \ref{main lemma for previous reselts} and
Theorem \ref{main thm for previous reselts}, hold
not only for $l_1$ and $l_2$ but also for
any real number in the interval or numbers in the rectangle,
which numbers are called weights in the previous papers. 
In this sense, 
there are infinitely many invariant wedges
if $\delta \neq T_k$ for any $k$.
For Cases 2 and 4,
we find out new invariant wedges, 
on which $b_{\gamma d} z^{\gamma} w^d$ may not be the dominant term in $q$, and
claim that the unions of all the preimages of the wedges 
coincide with $A_0 - E_z$ for almost all the cases.
These results imply 
Theorems \ref{main thm on A_f when delta < T} and
are important to induce
Theorems \ref{main thm on G_z^a,+ when delta >= T} and 
\ref{main thm on G_z and G_f when d = 0} and
Corollary \ref{cor on G_z in main results}.
Tables are given in each subsection %help us to understand the statements.
as comparison charts of the main results.

We review our previous study in \cite{ueno} more precisely
in Section 3.
We give the definition of $\mathcal{I}_f$, calculate it, and
illustrate detailed versions of 
Lemma \ref{main lemma for previous reselts} and
Theorem \ref{main thm for previous reselts}
in terms of blow-ups.

The proofs of the results for Case 2 are given in Sections 4 and 5.
With an illustration in terms of blow-ups,
we first prove the results on invariant wedges and the unions of 
all the preimages of the wedges in Section 4.
We next show the existence and properties of 
plurisubharmonic functions, and 
investigate the asymptotic behavior of the functions 
toward the boundaries of the unions in Section 5,
which makes clear the magnitude relation of the unions.
The proofs of the results for Cases 3 and 4 are given in Sections 6 and 7,
respectively.
Moreover,
we provide examples of polynomial skew products that are semiconjugate
to polynomial products in Section 6,
which demonstrate that $G_z^{\alpha}$ and $G_z^{\alpha, +}$
are good tools for the study of % the dynamic of $w_n/z_n^{\alpha}$ on the fibers.
the weighted vertical dynamics.

Similar studies have been made for polynomial skew products in 
\cite{u-weight1}, \cite{u-weight2} and \cite{u1}
to understand the dynamics near infinity.
Although the idea and the statements are similar,
and we apply the same arguments to investigate
the asymptotic behavior of plurisubharmonic functions,
the study in this paper is more valuable % in the following sense: because 
in the sense that we only dealt with the case when 
the specific term in $q$ is dominant on a wedge-shaped region,
and with the specific weight
%Hence the Newton polygons and intervals of weights do not appear 
in the previous papers above.

%\newpage
%%%%%%%%%%%%%%%%%%%%%%%%%%%%%%%%%%%%%%%%%%%%%%%%%%%%%%%%%%%%%%%%%%%%%%%%%%%%%%%%%%%%%%
%%%%%%%%%%%%%%%%%%%%%%%%%%%%%%%%%%%%%%%%%%%%%%%%%%%%%%%%%%%%%%%%%%%%%%%%%%%%%%%%%%%%%%
%%%%%%%%%%%%%%%%%%%%%%%%%%%%%%%%%%%%%%%%%%%%%%%%%%%%%%%%%%%%%%%%%%%%%%%%%%%%%%%%%%%%%%
\section{Summary of main results for each case}

Main results for each case are given in this section.
We deal with Cases 2, 3 and 4 in Sections 2.1, 2.2 and 2.3,
respectively.
Moreover,
we provide tables for each case,
which help us to recognize the differences of
properties of plurisubharmonic functions
depending on the magnitude relation of $\delta$ and $d$,
of $\delta$ and $T_j$ and of $\gamma$ and $0$.
Furthermore,
we provide tables of properties of 
the typical  plurisubharmonic functions
in Section 2.4.

%%%%%%%%%%%%%%%%%%%%%%%%%%%%%%%%%%%%%%%%%%%%%%%%%%%%%%%%%%%%%%%%%%%%%%%%%%%%%
%%%%%%%%%%%%%%%%%%%%%%%%%%%%%%%%%%%%%%%%%%%%%%%%%%%%%%%%%%%%%%%%%%%%%%%%%%%%%
\subsection{Main results for Case 2}

Let $s > 1$, $\delta \leq T_{s-1}$ and $(\gamma, d) = (n_s, m_s)$ in this subsection.
Then $\gamma > 0$.
Table \ref{table for case 2} is given below as a comparison chart of main results for Case 2.
We remark that
the descriptions on the asymptotic behavior of $G_z^{\alpha}$ and $G_z^{\infty}$
in the table are not precise.
Actually,
we have to remove $E_z$ and $E_{deg}$, the sets of fibers, from
the boundary $\partial A_f^l$ of $A_f^l$;
see Propositions 
\ref{main prop on asy behavior when delta < d},
\ref{main prop on asy behavior when delta = d},
\ref{main prop on asy behavior when T > delta > d} and
\ref{main prop on asy behavior when delta = T foe case 2}
in Section 5 for complete descriptions.

%%%%%%%%%%%%%%%%%%%%%%%%%%%%%%%%%%%%% $= - \infty$ on $E_w$
\begin{table}[htb]
\caption{Comparison chart of main results for Case 2} \label{table for case 2}
\begin{tabular}{|c|c|c|c|c|} \hline
                       & $\mathcal{I}_f$  & $G_z^{\alpha}$ or $G_z^{\infty}$                  & limsup                    & $G_z$                  \\ \hline
$T > d > \delta$  & $[ l_1, \infty )$  & $G_z^{\alpha}$ psh on $A_0$                     & $G_z^{\alpha} \to 0$   &   $G_z= G_z^{\alpha}$   \\ 
                       & $\alpha < 0$    & ph on $A_0 - E$                          & as $\to \partial A_f^l$    & on $A_0$     \\ 
                       &                     &  $\fallingdotseq \log |z^{- \alpha} w|$ on $U$ & for any $l$ in $\mathcal{I}_f$ & \\ \hline
$T > \delta = d$  & $[ l_1, \infty )$ & $G_z^{\infty}$ psh on $A_0 - E_z$                & $G_z^{\infty} \to \infty$ & $G_z=- \infty$  \\ 
                       &                     & ph on $A_0 - E$                          & as $\to \partial A_f^l$   &  on $A_0$  \\   
                       &                      & $\fallingdotseq \log |w|$ on $U$                & for any $l$ in $\mathcal{I}_f$ &  \\ \hline
$T > \delta > d$  & $[ l_1, \alpha]$  & $G_z^{\alpha}$ psh on $A_0 - E_z$             & $G_z^{\alpha} \to \infty$ if $l < \alpha$ &  $G_z = \alpha G_p$  \\ 
$d \geq 1$         & $\alpha > 0$    & ph on $A_0 - E$  & $G_z^{\alpha} \to 0$ or   & on $A_0 - E_w$    \\ 
                       &                     & $\fallingdotseq \log \left| w/z^{\alpha} \right|$ on $U$  & bdd if $l = \alpha$  &   \\ \hline
$T > \delta > d$  & $[ l_1, \alpha]$  & $\nexists \phi$                                    &  & $G_z = \alpha G_p$    \\  
$d = 0$             & $\alpha > 0$     &                                                           & & on $A_0$     \\  \hline
$T = \delta > d$  & $\{ l_1 \}$        & $G_z^{\alpha}$ psh on $A_f$                      & $G_z^{\alpha} \to 0$                          & $G_z= \alpha G_p$    \\ 
$d \geq 2$         & $\alpha = l_1$   &  ph on $A_f - E$                         & as $\to \partial A_f^{}$ & on $A_f - E_w$  \\  \hline
% & & & & $= - \infty$ on $E_w$ \\ \hline
\end{tabular}
\end{table}
%%%%%%%%%%%%%%%%%%%%%%%%%%%%%%%%%%%%%

We first exhibit a result on invariant wedges, and
give an affirmative answer to the question
whether the unions of all the preimages of the wedges 
coincide with the basin.
Let $U^l = \{ |z| < r, |w| < r |z|^{l} \}$
and $U_{r_1, r_2}^l = \{ |z| < r_1, |w| < r_2 |z|^{l} \}$.
Note that $U = U^{l_1}$.
The following is a detailed version of % lemma
Lemma \ref{main lemma for previous reselts},
which is stated also in the next section as % denoted
Lemma \ref{detailed lemma for case 2}.

\begin{lemma}[\cite{ueno}] \label{}
% Let $\delta \leq T_{s-1}$. 
The following holds for any $l$ in $\mathcal{I}_f$ if $d \geq 2$
and for any $l$ in $\mathcal{I}_f- \{ \alpha \}$ if $d = 1$ and $\delta < T_{s-1}$,
where
$\mathcal{I}_f = [ l_1, \infty )$ if $\delta \leq d$ and
$\mathcal{I}_f = [ l_1, \alpha ]$ if $\delta > d$:
$f \sim f_0$ on $U^l$ as $r \to 0$, and
$f(U^l) \subset U^l$ for small $r >0$.
\end{lemma}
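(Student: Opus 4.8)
The plan is to prove the two assertions separately, reducing everything to the position of the vertex $(\gamma,d)=(n_s,m_s)$ of $N(q)$ and the slope $-1/l_1$ of its last edge $L_{s-1}$; geometrically this is just the fact that, after the weighted blow-up attached to the weight $l$, the map $f_0$ becomes an honest product, but I would run the argument by hand. The one combinatorial input I would record first is that every monomial $b_{ij}z^iw^j$ of $q$ has $(i,j)\in N(q)$, whence $j\ge m_s=d$, and, the edge $L_{s-1}$ being a supporting line of $N(q)$, also $(i-\gamma)+l_1(j-d)\ge0$; adding $(l-l_1)(j-d)\ge0$ gives
\[
(i-\gamma)+l(j-d)\ \ge\ 0 \qquad\text{for every } l\ge l_1,
\]
with equality forcing $l=l_1$ when $j>d$, and $j=d$ forcing $i>\gamma$ (the vertex being the only exponent on the line $y=d$ to the left of $x=\gamma$). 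I would also note that, since $\gamma>0$ and $l_1>0$ in Case 2, each $U^l$ with $l\in\mathcal{I}_f$ avoids the fibre $\{z=0\}$, so $p$ and $q$ never vanish on it for $r$ small.

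To prove $f\sim f_0$ on $U^l$ I would work componentwise. The first component is immediate: $p(z)/(az^\delta)=1+O(z)\to1$ uniformly on $\{|z|<r\}$ as $r\to0$. For the second, I would expand
\[
\frac{q(z,w)}{b_{\gamma d}z^\gamma w^d}-1=\sum_{(i,j)\neq(\gamma,d)}\frac{b_{ij}}{b_{\gamma d}}\,z^{i-\gamma}w^{j-d},
\]
and bound each term on $U^l=\{|z|<r,\ |w|<r|z|^l\}$ using $r<1$: if $j>d$ it is dominated by $r^{j-d}|z|^{(i-\gamma)+l(j-d)}\le r^{j-d}$ by the displayed inequality, and if $j=d$ it equals $|z|^{i-\gamma}<r^{i-\gamma}$ with $i>\gamma$. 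Every term therefore tends to $0$ uniformly on $U^l$ as $r\to0$, and summing — a finite sum in the polynomial case, a uniformly convergent majorant series in general — yields $q/(b_{\gamma d}z^\gamma w^d)\to1$.

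To prove $f(U^l)\subset U^l$ I would take $(z,w)\in U^l$, put $(z',w')=f(z,w)$, and verify $|z'|<r$ and $|w'|<r|z'|^l$. The first follows from $|z'|=|p(z)|\le C|z|^\delta\le Cr^{\delta-1}\cdot r<r$ for $r$ small, since $\delta\ge2$. For the second, the comparison just proved gives $|p(z)|\ge\frac12|a||z|^\delta$ and $|q(z,w)|\le2|b_{\gamma d}||z|^\gamma|w|^d$ on $U^l$ for $r$ small, and then, using $|w|<r|z|^l$,
\[
\frac{|w'|}{|z'|^l}=\frac{|q(z,w)|}{|p(z)|^l}\ \le\ C_l\,|z|^{\gamma-\delta l}|w|^d\ <\ C_l\,r^{\,d}\,|z|^{\,\gamma-(\delta-d)l},
\]
where $C_l$ depends on $l$ but not on the point. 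If $\delta\le d$ the exponent $\gamma-(\delta-d)l$ is $\ge\gamma>0$; if $\delta>d$ and $l<\alpha$ it is $>0$; in either case $|z|^{\gamma-(\delta-d)l}<r^{\gamma-(\delta-d)l}$ and the resulting bound $C_l\,r^{\,d+\gamma-(\delta-d)l}$ is $<r$ for $r$ small because its $r$-exponent exceeds $1$. If $\delta>d$ and $l=\alpha$ the exponent of $|z|$ vanishes, but when $d\ge2$ the surviving factor $r^{\,d-1}$ still forces the bound below $r$ for $r$ small. Hence $(z',w')\in U^l$ in every case allowed by the statement.

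The genuine obstacle is precisely the borderline excluded from the statement: $l=\alpha$ together with $d=1$. Then the exponent $\gamma-(\delta-d)l$ and the surplus $r^{\,d-1}$ are both trivial, so the crude estimate does not close, and one would have to bring in the lower-order terms of $p$ and $q$ — which is exactly why the related statement in the Introduction imposes $\delta\neq T_k$. A minor point to watch, when $q$ is not a polynomial, is the uniform convergence of the majorant series in the second step on $U^l$; this follows from convergence of $q$ on a fixed polydisc together with $|w|<r|z|^l\le r^{1+l}$. Throughout, the constants $C$ and $C_l$ may depend on the chosen weight $l\in\mathcal{I}_f$, so "for $r$ small" is always meant for each fixed $l$.
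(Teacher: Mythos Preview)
Your proof is correct and follows essentially the same route as the paper. The paper does not reprove this lemma here (it is quoted from \cite{ueno}) but illustrates it in Section~3.1.2 via the blow-up $\pi_1(z,c)=(z,z^l c)$: one shows that $\tilde f=\pi_1^{-1}\circ f\circ\pi_1$ has $N(\tilde q)=D(\tilde\gamma,d)$ with $\tilde\gamma=\gamma+ld-l\delta\ge0$, so that $\tilde f$ is a Case~1 skew product with a superattracting fixed point at the origin. Your direct estimates are precisely this computation written in the original coordinates --- the quantity $|w'|/|z'|^l$ you bound is $|\tilde q(z,c)|$, and your key exponent $\gamma-(\delta-d)l$ is $\tilde\gamma$. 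Two very small remarks: the claim that ``$q$ never vanishes on $U^l$'' is false on $\{w=0\}\cap U^l$, but you do not use it; and the majorant-series step for holomorphic $q$ is most cleanly justified by noting that after the substitution $w=z^lc$ the series for $q(z,z^lc)/z^{l\delta}$ is a genuine power series in $(z,c)$ near the origin --- which is exactly what the blow-up picture says.
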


The interval $\mathcal{I}_f$ plays an important role
to study the dynamics of $f$
as explained in the next section.
Note that $\min \mathcal{I}_f = l_1$ and
$U^{l_1}$ is the largest wedge among $U^{l}$ for any $l$ in $\mathcal{I}_f$.
%%%
We have succeeded to extend the second statement of 
this lemma to the following.

\begin{theorem}\label{main thm on inv wedges for Case 2}
% Let $\delta \leq T_{s-1}$. 
The following holds for any $l > 0$ if $\delta \leq d$ and 
for any $0< l < \alpha$ if $\delta > d$:
$f(U_{r_1, r_2}^l) \subset U_{r_1, r_2}^l$
for suitable small $r_1$ and $r_2$.
\end{theorem}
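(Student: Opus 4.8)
The plan is to exploit the fact, already established in Lemma~\ref{main lemma for previous reselts} (and its detailed version in this subsection), that on the wedges $U^{l}$ with $l$ in $\mathcal{I}_f$ we have the estimate $f \sim f_0$ as $r \to 0$; the new point here is that we must allow $l$ to range over $(0,\alpha)$ (resp. all $l>0$ when $\delta \le d$), which may lie \emph{outside} $\mathcal{I}_f$ when $d=1$, and that we want invariance with two independent radii $r_1$ (controlling $|z|$) and $r_2$ (controlling $|w|/|z|^{l}$). First I would fix such an $l$ and write $(z',w') = f(z,w) = (p(z), q(z,w))$ with $p(z) = az^{\delta}(1+o(1))$ and $q(z,w) = \sum b_{ij} z^{i} w^{j}$. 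On $U_{r_1,r_2}^{l}$ we have $|z|<r_1$ and $|w| < r_2 |z|^{l}$, so each monomial is bounded by $|b_{ij}|\, r_2^{j}\, |z|^{i + lj}$. The strategy is to show: (i) the exponent $i+lj$ is minimized, among $(i,j)$ with $b_{ij}\neq 0$, at the term $(\gamma,d)$ — this is where the hypothesis $l<\alpha$ (equivalently the Newton-polygon geometry that defines $\mathcal{I}_f$ and $\alpha = \gamma/(\delta-d)$) enters — so that $q(z,w) = b_{\gamma d} z^{\gamma} w^{d}(1+o(1))$ uniformly on the wedge as $\max(r_1,r_2)\to 0$; and (ii) for the \emph{pure monomial} map $f_0(z,w) = (az^{\delta}, b_{\gamma d} z^{\gamma} w^{d})$ the set $U_{r_1,r_2}^{l}$ is forward invariant for a suitable choice of $(r_1,r_2)$, after which the $(1+o(1))$ perturbation is absorbed by shrinking.

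For step (ii), the computation for $f_0$ is explicit: the first coordinate goes $|z'| = |a|\,|z|^{\delta}$, and $|w'|/|z'|^{l} = |b_{\gamma d}|\,|a|^{-l}\, |z|^{\gamma + ld - l\delta}\, (|w|/|z|^{l})^{d}$. Writing $t = |w|/|z|^{l} < r_2$ and using $|z|<r_1$, the condition $|w'|/|z'|^{l} < r_2$ becomes $|b_{\gamma d}|\,|a|^{-l}\, |z|^{\gamma - l(\delta-d)}\, t^{d} < r_2$. Since $l < \alpha = \gamma/(\delta-d)$ we have $\gamma - l(\delta-d) > 0$, so $|z|^{\gamma - l(\delta-d)} < r_1^{\gamma - l(\delta-d)}$, and (when $d\geq 1$) $t^{d} < r_2^{d} \le r_2$ once $r_2 \le 1$; hence it suffices that $|b_{\gamma d}|\,|a|^{-l}\, r_1^{\gamma-l(\delta-d)} \le 1$, which holds for $r_1$ small. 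Similarly $|z'| = |a|\,|z|^{\delta} < |a|\, r_1^{\delta} < r_1$ once $r_1 < |a|^{-1/(\delta-1)}$. When $\delta \le d$ the exponent $\gamma - l(\delta - d) \ge \gamma > 0$ for every $l>0$, so the same argument works with no upper bound on $l$; when $d = 0$ the term $t^{d}=1$ and one simply needs $|b_{\gamma 0}|\,|a|^{-l} r_1^{\gamma - l(\delta-d)} < r_2$, again achievable. Then step (i) lets me replace $f_0$ by $f$: writing $f = f_0 \cdot (1 + \varepsilon(z,w))$ componentwise with $|\varepsilon| \le C\max(r_1,r_2)^{\kappa}$ on the wedge for some $\kappa>0$ (the gap between the dominant exponent and the next one in the Newton polygon, together with the factor $r_2$ attached to $w$-powers), the invariance inequalities above hold with $1$ replaced by $1-\tfrac12$, say, so they survive the perturbation.

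The main obstacle I expect is bookkeeping the perturbation \emph{uniformly in the two radii} and verifying that the ``next'' Newton-polygon exponent really does give a strictly positive gap $\kappa$ for every $l$ in the open range $(0,\alpha)$, including the endpoints' neighborhoods and the edge cases $d=1$, $d=0$, and $\delta = d$ — in particular making sure the error terms in $p$ (the $O(z^{\delta+1})$ tail) and in $q$ are both $o(1)$ relative to the respective dominant terms on all of $U_{r_1,r_2}^{l}$ simultaneously, not just on the diagonal slice $r_1 = r_2 = r$ treated in \cite{ueno}. Once that uniformity is in hand, the invariance is a finite list of elementary inequalities in $r_1,r_2$, each of the form ``$(\text{constant})\cdot r_1^{a} r_2^{b} \le \tfrac12$ with $a,b\ge 0$, $a>0$'', which can be satisfied by first choosing $r_1$ small and then $r_2$ small (or vice versa), completing the proof.
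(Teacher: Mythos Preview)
Your step (i) contains a genuine error that breaks the argument on the most important part of the range, namely $0 < l < l_1$. You claim that the exponent $i + lj$ is minimized, among $(i,j)$ with $b_{ij}\neq 0$, at $(\gamma,d) = (n_s,m_s)$; but Newton--polygon geometry gives this only for $l \ge l_1$. For $l < l_1$ the minimizing vertex is some $(n_k,m_k)$ with $k<s$ (indeed $k=1$ for $l$ small enough), and hence $q$ is \emph{not} asymptotic to $b_{\gamma d} z^{\gamma} w^{d}$ on $U_{r_1,r_2}^{l}$. The paper says this explicitly just after the statement: ``we apply different arguments to prove this theorem; the property $f\sim f_0$ on $U^l$ does not hold since $l$ does not belong to $\mathcal{I}_f$ if $0<l<l_1$.'' Your computation in step (ii), which uses $\gamma - l(\delta - d) > 0$, is then applied to the wrong monomial, and the perturbation in step (iii) is not $o(1)$ relative to $z^{\gamma}w^{d}$.

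The paper's fix is to abandon the ``dominant term plus perturbation'' picture altogether and work instead with the coarse bound $|q(z,w)| \le C|z|^{lD}$ on $U_{r_1,r_2}^{l}$, where $D = D_l = \min\{\, l^{-1}i+j : b_{ij}\neq 0 \,\}$. One then shows the purely combinatorial fact $D \ge \delta$ for every $l$ in the stated range (with $D>\delta$ except in the special case $(n_1,m_1)=(0,\delta)$), and deduces $|q|/|p|^{l} \le C r_1^{\,l(D-\delta)} < r_2$ for suitable $r_1,r_2$. No identification of a single dominant term is needed, and in particular the argument is insensitive to which vertex realizes $D$. Your strategy could be salvaged by replacing $(\gamma,d)$ with the actual minimizer $(N,M)$ of $i+lj$ and checking $N + lM - l\delta > 0$---but that inequality is precisely $D>\delta$, so you would end up reproducing the paper's proof; and you would still have to treat the boundary case $D=\delta$ separately, as the paper does.
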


Whereas we used the property $f \sim f_0$ on $U^l$ as $r \to 0$
to show that $f(U^l) \subset U^l$ in the lemma above, % prove
we apply different arguments to prove this theorem;
the property above does not hold 
since $l$ does not belong to $\mathcal{I}_f$ if $0< l < l_1$.
Note that $U_{r_1, r_2}^l$ is larger than $U_{r_1, r_2}^{l_1}$ for any $0 < l < l_1$
and that there is no condition on the degree $d$ in the theorem.

%%%
Let $A_f^{l}$ % = \cup f^{-n} (U^l)$.
be the union of all the preimages of $U^l$.
Note that $A_f = A_f^{l_1}$.
We have also succeeded to give the following affirmative answer to the question,
which implies Theorem
\ref{main thm on A_f when delta < T}
since $l_1 < \alpha$ if $\delta  < T_{s-1}$.

\begin{theorem} \label{main thm on attr sets for Case 2}
% Let $\delta \leq T_{s-1}$. Then
It follows that $A_f^l = A_0 - E_z$ for any $l > 0$  if $\delta \leq d$
and for any $0 < l < \alpha$ if $\delta > d$
and $(n_1, m_1) \neq (0, \delta)$.
\end{theorem}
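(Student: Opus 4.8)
The plan is to prove the two inclusions $A_f^l \subset A_0 - E_z$ and $A_0 - E_z \subset A_f^l$ separately, working throughout with a fixed $l$ in the stated range (either any $l>0$ when $\delta \le d$, or any $0 < l < \alpha$ when $\delta > d$). The first inclusion should be relatively painless: $U^l$ is a wedge-shaped region that stays bounded away from $\{z=0\}$ — indeed it is contained in $\{0 < |z| < r\}$ — and it lies in the attracting basin since, by Theorem~\ref{main thm on inv wedges for Case 2} (together with the previous results recalled in Lemma~\ref{main lemma for previous reselts} when $l \in \mathcal{I}_f$), $f$ maps a slightly shrunk $U_{r_1,r_2}^l$ into itself and $f \sim f_0$ there with $f_0$ a monomial map attracting toward the origin. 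Hence every point of $U^l$ has orbit converging to the origin, so $U^l \subset A_0$, and since $U^l$ misses $\{z = 0\}$ and $\{z=0\}$ is backward invariant, no preimage of $U^l$ can meet $E_z$; therefore $A_f^l \subset A_0 - E_z$.

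The substantive direction is $A_0 - E_z \subset A_f^l$. First I would reduce to a statement about the base dynamics and a single fiber: since $A_0$ projects into $A_p$, and $p$ is a one-dimensional superattracting germ, the forward orbit of any $(z,w) \in A_0 - E_z$ has $z_n \to 0$ with $z_n \ne 0$ for all $n$; by B\"ottcher's theorem in dimension one the base is eventually conjugate to $z \mapsto a z^\delta$, so one can pass to coordinates in which the base is literally monomial on a neighborhood of $0$. The key is then to show that for $n$ large the point $(z_n, w_n)$ falls into $U^l_{r_1,r_2}$, i.e. that $|w_n| < r_2 |z_n|^l$ eventually. This is where the degree hypotheses split the argument: when $\delta \le d$ the fiber maps $w \mapsto q(z,w)$ contract strongly enough relative to $|z|^l$ for any $l > 0$; when $\delta > d$ the constraint $l < \alpha = \gamma/(\delta - d)$ is exactly what makes $|z_n|^l$ decay slowly enough that $|w_n|$, which behaves roughly like $|z_n|^{\gamma}|w_{n-1}|^d$-type products, is eventually dominated by it. I would make this quantitative by tracking $\log|w_n| - l \log|z_n|$ and showing it tends to $-\infty$ (or at least eventually becomes very negative), using the known attraction-rate estimates from \cite{u4} and the normal form $f \sim f_0$ on the larger wedges $U^l_{r_1,r_2}$ from Theorem~\ref{main thm on inv wedges for Case 2}. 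The role of the exclusion $(n_1,m_1) \ne (0,\delta)$ in the $\delta > d$ case is presumably to guarantee $\gamma > 0$ along the relevant Newton edge, i.e. that $\alpha > 0$ and $l_1 < \alpha$ so that the interval of admissible $l$ is genuinely nonempty and the previous B\"ottcher results apply on $U^{l_1}$.

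Concretely the steps I would carry out are: (1) normalize the base so that $p(z) = a z^\delta$ near $0$, and record that for $(z,w) \in A_0 - E_z$ one has $z_n \to 0$, $z_n \ne 0$, with $\tfrac{1}{\delta^n}\log|z_n| \to G_p(z) < 0$; (2) using the monomial normal form on $U^l_{r_1,r_2}$ and induction, derive a recursive upper bound for $\log|w_{n+1}|$ in terms of $\log|w_n|$ and $\log|z_n|$ valid as soon as the orbit has entered a fixed neighborhood of the origin; (3) feed the base asymptotics into this recursion to show $\log|w_n| - l\log|z_n| \to -\infty$, hence $(z_n,w_n) \in U^l_{r_1,r_2} \subset U^l$ for all large $n$; (4) conclude $(z,w) \in A_f^l$, and combine with the first inclusion and the observation that $A_f^l$ is open and contained in $A_0 - E_z$ to get equality. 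The main obstacle is step (3): controlling the interaction between the two decay rates (the base rate $\delta^n$ governing $\log|z_n|$ and the fiber rate, which is $d$ per step when $d \ge 1$ but degenerates when $d = 0$) so that the combination $\log|w_n| - l\log|z_n|$ is forced negative uniformly over all of $A_0 - E_z$, not just on a neighborhood of a single fiber — this requires handling orbits that wander through the base for a long time before the base normal form kicks in, and it is exactly here that the sharp bound $l < \alpha$ is needed rather than merely $l \le l_1$.
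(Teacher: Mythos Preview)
Your plan for the inclusion $A_f^l \subset A_0 - E_z$ is fine, but the argument for the reverse inclusion has a genuine gap. In step~(2) you propose to obtain a recursive upper bound on $\log|w_{n+1}|$ ``using the monomial normal form on $U^l_{r_1,r_2}$,'' and then feed this into step~(3). The difficulty is that $f \sim f_0$ holds only on the wedge $U^l$ (for $l \in \mathcal{I}_f$), and the whole point is to show that an orbit starting in $A_0 - E_z$ but \emph{outside} $U^l$ eventually enters it. On the complementary region $V^l = \{0 < |z| < r,\ |w| \ge r|z|^l,\ |w| < r_3\}$ the term $b_{\gamma d} z^\gamma w^d$ need not dominate $q$, so the monomial approximation gives no usable bound there. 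The paper instead introduces the Newton-polygon quantity $D = \min\{\,l^{-1}i + j : b_{ij} \neq 0\,\}$ and proves the elementary estimate $|q(z,w)| \le C|w|^D$ on $V^l$ (Lemma~\ref{lem on attr sets for Case 2}); together with $|p(z)| > (C_1|z|)^\delta$ this yields, under the assumption that the orbit remains in $V^l$ forever, the inequality $r(C_1'|z|)^{l\delta^n} < r|z_n|^l \le |w_n| < (C_2'|w|)^{D^n}$, which is contradictory precisely when $D > \delta$.

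This also explains the exclusion $(n_1,m_1) \neq (0,\delta)$, which you misidentify. In Case~2 one always has $\gamma = n_s > 0$ (since $s > 1$), so $\alpha > 0$ and the interval $(0,\alpha)$ is nonempty regardless; the condition has nothing to do with that. Rather, for small $l$ the minimum defining $D$ is achieved at the vertex $(n_1,m_1)$, giving $D = l^{-1}n_1 + m_1$; if $(n_1,m_1) = (0,\delta)$ this equals $\delta$ exactly, so the strict inequality $D > \delta$ fails and the contradiction argument above breaks down (see Lemma~\ref{lem on D for Case 2}). Your proposed route via attraction-rate estimates does not detect this obstruction, and without the quantity $D$ and the estimate on $V^l$ the argument does not close.
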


%This theorem implies the following since $l_1 < \alpha$ if $\delta  < T_{s-1}$.
%\begin{corollary}
%If $\delta  < T_{s-1}$, then $A_f^{l_1} = A_0 - E_z$.
%\end{corollary}

We next exhibit main results on plurisubharmonic functions.
If the B\"{o}ttcher coordinate exists on $U$, 
then one can show that the limits $G_z^{\alpha}$ and $G_z$ exist 
and are plurisubharmonic on $A_f - E_z$ and $A_f$,
respectively.
Moreover,
thanks to Theorem \ref{main thm on A_f when delta < T},
we obtain the following theorems. 
% on the limits $G_z^{\alpha}$, $G_z$, $G_f$ and $G_f^{\alpha}$.

\begin{theorem} \label{main thm on G_z^a for Case 2}
Let $\delta < T_{s-1}$.
If $\delta < d$, then
$G_z^{\alpha} = G_z$ on $A_0$,
which is plurisubharmonic and negative on $A_0$ and
$G_z^{\alpha} (w) = \log \left| z^{- \alpha} w \right| + o(1)$
on $U^{}$ as $r \to 0$.
If $\delta = d$, then
$G_z^{\infty}$ is plurisubharmonic on $A_0 - E_z$ and
$G_z^{\infty} (w) = \log \left| w \right| + o(1)$
on $U^{}$ as $r \to 0$.
If $\delta > d \geq 1$, then
$G_z^{\alpha}$ is plurisubharmonic on $A_0 - E_z$ and
$G_z^{\alpha} (w) = \log \left| w/z^{\alpha} \right| + o(1)$
on $U^{}$ as $r \to 0$.
For all the cases,
the functions are pluriharmonic on $A_0 - E$. % and continuous

On the other hand,
if $T_{s-1} = \delta > d \geq 2$, then
$G_z^{\alpha}$ is plurisubharmonic on $A_f$,
pluriharmonic on $A_f - E_w$ and % and continuous
$G_z^{\alpha} (w) = \log \left| w/z^{\alpha} \right| + o(1)$
on $U^{}$ as $r \to 0$.
%It is pluriharmonic and continuous on $A_f - E_w$.
\end{theorem}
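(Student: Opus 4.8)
The plan is to exploit the B\"{o}ttcher coordinate $\phi=(\phi_1,\phi_2)$ on $U$, whose existence and the normalization $\phi\sim id$ on $U$ as $r\to0$ are supplied by Theorem~\ref{main thm for previous reselts}, together with the functional equations $G_z^{\alpha}\circ f = d\,G_z^{\alpha}$ and $G_z^{\infty}\circ f^n = d^n G_z^{\infty}+n\gamma d^{n-1}G_p$, and the identification $A_f=A_0-E_z$ of Theorem~\ref{main thm on A_f when delta < T} (valid because $\delta<T_{s-1}$). On $U$ one has the closed formulas $G_z^{\alpha}=\log|\phi_2|-\alpha\log|\phi_1|$ when $\delta\neq d$ and $G_z^{\infty}=\log|\phi_2|$ when $\delta=d$; since $\phi_1\sim z$ and $\phi_2\sim w$, the expansions $G_z^{\alpha}(w)=\log|w/z^{\alpha}|+o(1)$ and $G_z^{\infty}(w)=\log|w|+o(1)$ on $U$ as $r\to0$ drop out immediately. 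Because $\phi_1,\phi_2$ are holomorphic, these formulas are plurisubharmonic on the part of $U$ where the coefficient of $\log|\phi_1|$ is nonnegative, and pluriharmonic on $U-E$, where $\phi_1$ and $\phi_2$ are nonvanishing since their only zeros in $U$ lie in $\{z=0\}$ and $\{w=0\}$ for small $r$.

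To globalize I would fix $x_0\in A_f$, choose $n$ with $f^n(x_0)\in U$, and write $G_z^{\alpha}=d^{-n}(G_z^{\alpha}\circ f^n)$ near $x_0$, and likewise $G_z^{\infty}=d^{-n}(G_z^{\infty}\circ f^n)-(\gamma n/d)G_p$. Since $E_z$ and $E=E_z\cup E_w$ are completely invariant, $f^n$ carries a neighborhood of any point of $A_f-E_z$ into $U-E_z$ and of $A_f-E$ into $U-E$, so the local statements on $U$ propagate to all of $A_0-E_z$ and $A_0-E$. Which exceptional set has to be removed is dictated by a sign. When $\delta<d$ one has $\alpha<0$, so $G_z^{\alpha}=\log|\phi_2|+|\alpha|\log|\phi_1|$ is a sum of plurisubharmonic functions on all of $U$, propagates to a plurisubharmonic function on $A_0-E_z$, tends to $-\infty$ along $E_z$, hence is locally bounded above there and extends plurisubharmonically across $E_z$ by the removable singularity theorem (each local piece of $E_z=E_p\times\mathbb{C}$ being an analytic set), giving plurisubharmonicity on all of $A_0$; moreover $G_z^{\alpha}=G_z$ there because $d^{-n}\log|z_n|=(\delta/d)^n\,\delta^{-n}\log|z_n|\to0$, and $G_z^{\alpha}<0$ because $|w/z^{\alpha}|<r^{1+l_1-\alpha}<1$ on $U$ and the functional-equation multiplier is positive. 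When $\delta\geq d$ the term $-\alpha\log|\phi_1|$ (respectively $-(\gamma n/d)G_p$) has the opposite sign; off $E_z$ it is pluriharmonic, respectively harmonic on $A_p-E_p$ pulled back, so plurisubharmonicity holds on $A_0-E_z$, but along $E_z$ the function blows up to $+\infty$, so $E_z$ genuinely cannot be included.

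For the last assertion, $T_{s-1}=\delta>d\geq2$, Theorem~\ref{main thm for previous reselts} still provides $\phi$ on $U$ (the hypothesis $d\geq2$ suffices even when $\delta=T_k$), and $\alpha=\gamma/(\delta-d)$ equals $l_1$, so $U=U^{l_1}$; the same closed formula and functional equation give $G_z^{\alpha}$ plurisubharmonic on $A_f-E_z$ and pluriharmonic on $A_f-E$, with $G_z^{\alpha}(w)=\log|w/z^{\alpha}|+o(1)$ on $U$ as $r\to0$. Since Theorem~\ref{main thm on A_f when delta < T} is no longer available, one stays on $A_f$; but one checks that $A_f\cap E_z$ consists only of preimages of the origin, because once a forward orbit enters the forward-invariant $U$ meeting $\{z=0\}$ forces it to the origin, hence $A_f\cap E_z\subset E_w$, so $A_f-E_w=A_f-E$ and the removal of $E_z$ above discards nothing not already in $E_w$; this upgrades the two statements to plurisubharmonicity on $A_f$ and pluriharmonicity on $A_f-E_w$.

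The routine part is the computation on $U$; the main obstacle is the bookkeeping of exceptional sets during globalization — deciding in each subcase whether $E_z$, $E$, or nothing must be deleted (forced by the sign above and by whether the function tends to $-\infty$, which is removable, or to $+\infty$, which is not), verifying that the defining limits $G_z^{\alpha}$ and $G_z^{\infty}$ actually converge on all of $A_0-E_z$ so that the propagated local pieces glue into a single function, and checking the auxiliary set identities ($E_z=E_p\times\mathbb{C}$, complete invariance of $E_z$ and $E$, and $A_f\cap E_z\subset E_w$). Once the correct domain is pinned down, the plurisubharmonicity and pluriharmonicity follow formally from the closed formula and the functional equation.
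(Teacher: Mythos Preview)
Your approach is essentially the paper's: compute $G_z^{\alpha}=\log|\phi_2|-\alpha\log|\phi_1|$ (resp.\ $G_z^{\infty}=\log|\phi_2|$) on $U$ from the B\"ottcher coordinate, then propagate via the functional equation and use $A_f=A_0-E_z$ from Theorem~\ref{main thm on A_f when delta < T}. The one substantive difference is how you obtain plurisubharmonicity on \emph{all} of $A_0$ when $\delta<d$. The paper (Proposition~\ref{another prop on G_z^a when delta < d}) argues directly that $|q(z,w)|\le C|w|^d$ near the origin (since every exponent $j$ in $q$ satisfies $j\ge d$), whence $g_n=d^{-n}\log|Aw_n|$ is a decreasing sequence of plurisubharmonic functions converging to $G_z^{\alpha}$. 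Your route via the removable-singularity theorem also works, but the justification needs a small repair: the assertion that $G_z^{\alpha}$ ``tends to $-\infty$ along $E_z$'' is not established (and would require a separate argument about approach from $A_0-E_z$); what you actually need, and what you \emph{do} prove, is that $G_z^{\alpha}<0$ on $A_0-E_z$, which already gives local boundedness above near the analytic set $E_z=E_p\times\mathbb{C}$ and hence the extension.

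Two further small points. First, in Case~2 one has $U=U^{l_1}=\{|z|<r,\ |w|<r|z|^{l_1}\}$ with $l_1>0$, so $U\cap\{z=0\}=\emptyset$; thus $\phi_1$ never vanishes on $U$ and $-\alpha\log|\phi_1|$ is pluri\emph{harmonic} on $U$ regardless of the sign of $\alpha$, so plurisubharmonicity on $U$ holds in every subcase without the sign discussion (the sign only matters for the extension across $E_z$). Second, in the case $T_{s-1}=\delta>d\ge2$ your conclusion $A_f-E_w=A_f-E$ is correct, but the reasoning is simpler than you indicate: since $U\cap\{z=0\}=\emptyset$, any point of $A_f$ eventually lands in $U$ and hence has $z_n\ne0$, which forces $A_f\cap E_z=\emptyset$ outright (not merely $A_f\cap E_z\subset E_w$).
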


\begin{theorem} \label{main thm on G_z, G_f and G_f^a for Case 2}
Let $\delta < T_{s-1}$.
If $\delta < d$, then $G_z = G_z^{\alpha}$ and $G_f = 0$ on $A_0$.
If $\delta = d$, then $G_z = - \infty$ and $G_f = G_p$ on $A_0$.
If $\delta > d \geq 1$, then
$G_z = \alpha G_p$ on $A_0 - E_w$
and $- \infty$ on $E_w$.
If $d = 0$, then
$G_z = \alpha G_p$ on $A_0$.
In particular,
if $\delta > d$, 
then $G_f^{\alpha} = \alpha G_p$ on $A_0$.

On the other hand,
if $T_{s-1} = \delta > d \geq 2$, then
$G_z = \alpha G_p$ on $A_f - E_w$
and $- \infty$ on $E_w$.
Hence $G_f^{\alpha} = \alpha G_p$ on $A_f$.
Moreover,
if $n_{s-1} > 0$,
then $G_z = \alpha G_p$ on $A_0 - E_w$
and $- \infty$ on $E_w$.
Hence $G_f^{\alpha} = \alpha G_p$ on $A_0$.
\end{theorem}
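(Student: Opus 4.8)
The plan is to derive every assertion from the existence of the B\"{o}ttcher coordinate $\phi = (\phi_1, \phi_2)$ on $U$ (Theorem \ref{main thm for previous reselts}), from the identity $A_f^l = A_0 - E_z$ (Theorem \ref{main thm on attr sets for Case 2}), and from the asymptotic relations $G_z^\alpha = \log|\phi_2/\phi_1^\alpha|$, $G_z^\infty = \log|\phi_2|$ recorded before the statement, together with $\phi \sim \mathrm{id}$ on $U$ as $r \to 0$. First I would fix a point $(z,w)$ in $A_0$ (minus the exceptional fibers when needed) and use that some iterate $f^N(z,w)$ lands in $U$; since $G_z$, $G_f$, $G_f^\alpha$ transform by explicit functional equations under $f$, it suffices to compute each limit on $U$ and then pull back. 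On $U$, because $f \sim f_0 = (az^\delta, b_{\gamma d} z^\gamma w^d)$ and $\phi$ conjugates $f$ to $f_0$, one has $z_n = \phi_1^{-1}((a^{\,\cdots})\phi_1(z)^{\delta^n})$ and $w_n = \phi_2^{-1}$ of the corresponding monomial orbit of $f_0$, so $\log|z_n| \sim \delta^n \log|\phi_1(z)|$ and $\log|w_n| \sim \gamma \delta^{n-1} \log|\phi_1(z)| \cdot(\text{geom. sum}) + d^n \log|\phi_2(z,w)|$. Dividing by $\lambda^n = \max\{\delta,d\}^n$ and taking $n \to \infty$ gives each of the stated values of $G_z$, $G_f$, $G_f^\alpha$ directly on $U$.

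The case split then falls out by bookkeeping on which term in the $\log|w_n|$ estimate dominates after dividing by $\lambda^n$. When $\delta < d$ we have $\lambda = d$, the $d^n\log|\phi_2|$ term wins, the geometric sum in the $z$-contribution is governed by $(\delta/d)^n \to 0$, and a short computation identifies $G_z$ with $G_z^\alpha = \log|w/z^\alpha| + o(1)$ on $U$; extending by the functional equation $G_z \circ f = d\,G_z$ and $G_z^\alpha \circ f = d\,G_z^\alpha$ over $A_0 = A_f^{l_1}$ gives $G_z = G_z^\alpha$ on $A_0$, and $G_f = 0$ because $\log|z_n| = O(\delta^n) = o(d^n)$ and $\log|w_n| = O(d^n)$ with the leading coefficient being $\log|\phi_2| < 0$ inside $U$, which forces the $\max$ to contribute a negative multiple of $d^n$—wait, rather one checks $\frac{1}{d^n}\log\max\{|z_n|,|w_n|\} \to \max\{0, G_z^\alpha\cdot(\cdots)\}$; since $w_n \to 0$ faster than any fixed rate only on part of $U$, the correct statement $G_f = 0$ on $A_0 - E_z$ needs the observation $|z_n| \to 0$ at rate $\delta^n \ll d^n$ so $\frac1{d^n}\log|z_n| \to 0$, and $\frac1{d^n}\log|w_n| \to G_z^\alpha \le 0$, hence $G_f = \max\{0, G_z^\alpha\} = 0$. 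When $\delta = d$ the $z$-contribution to $\log|w_n|$ is $n\gamma\delta^{n-1}\log|\phi_1|$, which after dividing by $\delta^n$ diverges to $\pm\infty$; this is exactly why $G_z = -\infty$ (the $\log|\phi_1| < 0$ term multiplied by $n$ dominates), while $G_z^\infty$ is the right renormalization and $G_f = G_p$ because $\log|z_n| \sim \delta^n\log|\phi_1|$ now matches the order of the $w$-term's divergence. When $\delta > d \ge 1$ we have $\lambda = \delta$, the $z$-contribution $\gamma\delta^{n-1}\log|\phi_1(z)|\cdot\frac{1-(d/\delta)^n}{1-d/\delta}$ dominates $d^n\log|\phi_2|$, and dividing by $\delta^n$ gives $G_z = \frac{\gamma}{\delta-d}\,G_p = \alpha G_p$ on $U - E_w$, $-\infty$ on $E_w$; the identity $G_f^\alpha = \alpha G_p$ follows since $\log|z_n^\alpha| \sim \alpha\delta^n\log|\phi_1|$ and $\log|w_n| \sim \alpha\delta^n\log|\phi_1|$ have the same leading term, so the $\max$ collapses. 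The case $d = 0$ is similar but with $f_0 = (az^\delta, b_{\gamma 0}z^\gamma)$, so $w_1 = b_{\gamma 0}z^\gamma$ is already a function of $z$ alone and $\log|w_n| = \gamma\delta^{n-1}\log|\phi_1| + O(1)$ exactly; note here $\phi$ need not exist on $U$, but the semiconjugacy of the $z$-coordinate to $az^\delta$ via $\varphi_p$ still gives $\frac1{\delta^n}\log|z_n| \to G_p$, which is all that is needed, and this is why the theorem can afford to drop the B\"{o}ttcher hypothesis when $d = 0$.

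For the boundary case $T_{s-1} = \delta > d \ge 2$, the same computation on $U$ applies verbatim, giving $G_z = \alpha G_p$ on $U - E_w$ and hence on $A_f - E_w = A_f^{l_1} - E_w$ by the functional equation, with $G_f^\alpha = \alpha G_p$ on $A_f$; the strengthening to $A_0 - E_w$ under the extra hypothesis $n_{s-1} > 0$ uses Corollary \ref{main cor on attr sets for Case 4}, which (as indicated in the text following Corollary \ref{cor on G_z in main results}) supplies a \emph{second} invariant wedge corresponding to the dominant term $(n_{s-1}, m_{s-1})$ of Case 3/4 type, and $A_0 - E_w$ is covered by the union of preimages of the two wedges; on each wedge the identity $G_z = \alpha G_p$ holds by the already-treated Case 3 or Case 4 analysis (the hypothesis $\delta > n_{s-1}$ and $m_{s-1} > m_s \ge 2$ placing it in the $\delta > T$, $d \ge 2$ regime there). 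The main obstacle I anticipate is not any single estimate but the careful tracking of the geometric series $\sum_{j=0}^{n-1}(d/\delta)^j \gamma\delta^{j}\cdot(\text{stuff})$ that produces the factor $1/(\delta - d)$ in $\alpha$, and—more delicately—justifying that the $o(1)$ error terms coming from $f \sim f_0$ (which hold only as $r \to 0$, i.e. on thin wedges) can be propagated to all of $A_0$ via finitely many pullbacks without accumulating; this is exactly where Theorem \ref{main thm on attr sets for Case 2} is essential, since it guarantees every point of $A_0 - E_z$ enters a controllable wedge $U^l$ in finitely many steps. A secondary subtlety is the sign bookkeeping in the $G_f$ and $G_f^\alpha$ computations, where one must verify that the competing contributions to $\log\max\{\cdot,\cdot\}$ have leading terms of the right sign (negative, since $\phi_1, \phi_2$ map into the unit polydisc after scaling) so that the $\max$ simplifies as claimed.
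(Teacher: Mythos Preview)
Your overall strategy---compute $G_z$, $G_f$, $G_f^\alpha$ on the B\"{o}ttcher wedge $U$ via the conjugacy to $f_0$, then propagate to $A_0$ using Theorem~\ref{main thm on attr sets for Case 2} and the functional equations---is exactly the paper's. The paper executes it more economically: rather than tracking the explicit iterates $z_n, w_n$ and geometric sums, it writes identities such as
\[
G_z(w) - \alpha G_p(z) = \lim_n \tfrac{1}{\delta^n}\log\bigl|w_n/z_n^\alpha\bigr| = \lim_n (d/\delta)^n \cdot \tfrac{1}{d^n}\log\bigl|w_n/z_n^\alpha\bigr| = 0\cdot G_z^\alpha(w)
\]
on $U^{l_1}-\{w=0\}$ (Proposition~\ref{prop on G_z when T > delta > d}), and similarly for the other cases, which avoids the ``bookkeeping on geometric series'' you flag as a concern.

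There is one genuine gap. In the $d=0$ case you write ``$w_1=b_{\gamma 0}z^\gamma$ is already a function of $z$ alone'' and conclude $\log|w_n|=\gamma\delta^{n-1}\log|\phi_1|+O(1)$; but this holds for $f_0$, not for $f$, and you have just (correctly) observed that $\phi$ need not exist here. Knowing only that $\varphi_p$ conjugates $p$ to $az^\delta$ controls $z_n$, not $w_n$. The paper's argument (Proposition~\ref{prop on G_z when d = 0}) fills this by invoking $f\sim f_0$ on $U^l_{r_1,r_2}$ for $l\in\mathcal I_f-\{\alpha\}$ together with the invariance $f(U^l_{r_1,r_2})\subset U^l_{r_1,r_2}$, which gives two-sided bounds $C_1|z_n|^\gamma<|w_{n+1}|<C_2|z_n|^\gamma$ along the orbit; dividing by $\delta^{n+1}$ then yields $G_z=\tfrac{\gamma}{\delta}G_p=\alpha G_p$. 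You need this wedge-invariance step explicitly.

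For the boundary case $T_{s-1}=\delta$ with $n_{s-1}>0$, you route through Corollary~\ref{main cor on attr sets for Case 4} and a second wedge attached to the other dominant term $(n_{s-1},m_{s-1})$. That is viable (and is how the paper later proves Corollary~\ref{cor on G_z in main results}), but for Theorem~\ref{main thm on G_z, G_f and G_f^a for Case 2} itself the paper stays within Case~2: since $n_{s-1}>0$ is equivalent to $(n_1,m_1)\neq(0,\delta)$, Theorem~\ref{main thm on attr sets for Case 2} already gives $A_f^l=A_0-E_z$ for $0<l<\alpha$, and the wedge inequality $|w_n|<r|z_n|^l$ yields $G_z^*\le lG_p$ for all such $l$, hence $G_z^*\le\alpha G_p$ on $A_0$ (Proposition~\ref{prop on G_z and G_f^a when delta = T and n > 0}); combined with $G_z=\alpha G_p$ on $A_f^\alpha-E_w$ from the B\"{o}ttcher side this gives the full statement without leaving Case~2.
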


By investigating the asymptotic behavior of $G_z^{\alpha}$ 
toward the boundary $\partial A_f^l$,
we can show that
$A_f^{\alpha}$ is smaller than $A_f^{l_1}$.

\begin{proposition}\label{main propo on A_f^l for case 2}
If $T_{s-1} > \delta > d \geq 1$, then
$A_f^{\alpha} \subsetneq A_f^{l_1}$.
\end{proposition}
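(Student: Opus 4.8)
The plan is to separate the two basins by means of the plurisubharmonic function $G_z^\alpha$, using the two facts about it already recorded in Theorem \ref{main thm on G_z^a for Case 2}: its value $\log|w/z^\alpha|+o(1)$ on the wedge $U=U^{l_1}$, and the functional equation $G_z^\alpha\circ f=d\,G_z^\alpha$. The inclusion $A_f^\alpha\subseteq A_f^{l_1}$ is free: since $T_{s-1}>\delta$ forces $l_1<\alpha$ (as noted after Theorem \ref{main thm on attr sets for Case 2}), we have $|z|^\alpha<|z|^{l_1}$ for $0<|z|<r<1$, hence $U^\alpha\subseteq U^{l_1}$, and taking all iterated $f$-preimages preserves this. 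So the whole content of the proposition is to exhibit one point of $A_f^{l_1}$ lying outside $A_f^\alpha$, and the idea is that $G_z^\alpha$ is negative on $A_f^\alpha-E_z$ but positive somewhere in $U^{l_1}$.

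First I would fix $r$ small enough that the estimate $G_z^\alpha=\log|w/z^\alpha|+o(1)$ on $U^{l_1}$ is effective (and recall that $A_f^l$ is independent of the choice of sufficiently small $r$). On $U^\alpha\subseteq U^{l_1}$ one has $|w/z^\alpha|<r$, hence $G_z^\alpha<\log r+o(1)<0$ there. For an arbitrary $(z,w)\in A_f^\alpha-E_z$, some iterate $f^N(z,w)$ lies in $U^\alpha$ by definition of $A_f^\alpha$, and $f^N(z,w)$ is still off $E_z$ since $E_z$ is totally invariant; the functional equation then gives $G_z^\alpha(z,w)=d^{-N}G_z^\alpha(f^N(z,w))<0$ (using $d\ge 1$). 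Thus $G_z^\alpha<0$ on all of $A_f^\alpha-E_z$.

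Next, $G_z^\alpha$ takes a positive value inside $U^{l_1}$: choosing $z$ with $0<|z|<r$ and $w$ with $|w|=\tfrac{r}{2}|z|^{l_1}$, so that $(z,w)\in U^{l_1}$, the asymptotic formula gives $G_z^\alpha(z,w)=\log\tfrac{r}{2}+(l_1-\alpha)\log|z|+o(1)$, which tends to $+\infty$ as $|z|\to0$ because $l_1-\alpha<0$; in particular it is positive for $|z|$ small enough. This point lies in $U^{l_1}\subseteq A_f^{l_1}$, and its $z$-orbit stays in a punctured neighbourhood of the origin, so it is not in $E_z$; by the previous step it cannot lie in $A_f^\alpha$, and therefore $A_f^\alpha\subsetneq A_f^{l_1}$. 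One can equivalently organise this through the boundary asymptotics established in Section 5 (Proposition \ref{main prop on asy behavior when T > delta > d}): $G_z^\alpha\to+\infty$ along $\partial A_f^{l_1}$ while it stays bounded along $\partial A_f^\alpha$, so the two boundaries differ.

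Granting Theorem \ref{main thm on G_z^a for Case 2}, the argument is short, and I expect the only delicate points to be routine: the bookkeeping around $E_z$ (evaluating $G_z^\alpha$ only where it is defined and keeping the test point off $\{z=0\}$), and the uniformity of the $o(1)$ over the whole of $U^{l_1}$ — this last is what ensures that, after $r$ has been fixed, the divergent term $(l_1-\alpha)\log|z|$ genuinely overwhelms the error as $|z|\to0$. That uniformity, and the identification of the limits involved, is exactly what the estimates of Section 5 provide, so no input beyond the quoted results is needed.
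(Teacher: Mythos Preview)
Your argument is correct and follows essentially the same route as the paper. The paper derives the proposition from Corollary~\ref{cor on A_f^l when T > delta > d}, which in turn rests on the boundary asymptotics of Proposition~\ref{main prop on asy behavior when T > delta > d}: $\limsup G_z^{\alpha}=\infty$ toward $\partial A_f^{l_1}$ versus $0$ (or bounded) toward $\partial A_f^{\alpha}$, so the two boundaries cannot coincide. You use the same key estimate $G_z^{\alpha}\approx\log|w/z^{\alpha}|$ on $U^{l_1}$ together with the functional equation, but package it more directly as a sign comparison at interior points rather than as a comparison of boundary limits; you even note the equivalence with the boundary formulation at the end. One cosmetic point: when you pick the test point with small $|z|$, say explicitly that you choose $z\notin E_p$ (a countable set), rather than arguing via the forward orbit.
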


Moreover, 
we give a sufficient condition for $A_f^{\alpha}$ and $A_f^{l_1}$
being unbounded.

\begin{proposition}\label{main prop on the unboundness for case 2}
Let $T_{s-1} > \delta > d \geq 1$.
If $q$ is polynomial and 
$f^{-1} \{ w=0 \} \supsetneq \{ w=0 \}$, 
then both $A_f^{\alpha}$ and $A_f^{l_1}$ are unbounded.
\end{proposition}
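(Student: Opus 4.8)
The plan is to show that the (a priori smaller) set $A_f^{\alpha}$ is unbounded; since $\alpha>l_1$ forces $U^{\alpha}\subset U^{l_1}$ and hence $A_f^{\alpha}\subset A_f^{l_1}$, this settles both assertions at once (for $A_f^{l_1}$ one could alternatively quote Theorem \ref{main thm on attr sets for Case 2}, which gives $A_f^{l_1}=A_0-E_z$ here because $0<l_1<\alpha$, provided $(n_1,m_1)\neq(0,\delta)$). I would first record the shape of $f$: since $d=m_s\geq 1$ no monomial of $q$ has vanishing $w$-degree, so $q(z,0)\equiv 0$, the line $\{w=0\}$ is forward invariant, and $q=w^{d}q_1$ for a polynomial $q_1$ with $q_1(z,0)\not\equiv 0$. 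The hypothesis $f^{-1}\{w=0\}\supsetneq\{w=0\}$ is exactly the statement that $q_1$ is non-constant, so $V:=\{q_1=0\}$ is a non-empty affine algebraic curve with $f(V)\subset\{w=0\}$; moreover $s>1$ prevents $q$ from depending on $w$ only through $w^{d}$, so $q_1$ is non-constant in $w$ and, outside finitely many fibres, $V$ is a finite branched cover of the $z$-disc.

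Next I would check that $A_f^{\alpha}$ absorbs the relevant part of $V$. On $\{w=0\}$ the map $f$ acts as $z\mapsto p(z)$, so for $z\in A_p$ not eventually sent to $0$ by $p$ one has $p^{n}(z)\to 0$ with $p^{n}(z)\neq 0$, whence $(p^{n}(z),0)\in U^{\alpha}$ for large $n$ and therefore $(z,0)\in A_f^{\alpha}$; writing $E'_p$ for the countable set of points eventually mapped to $0$ by $p$, this gives $(A_p\setminus E'_p)\times\{0\}\subset A_f^{\alpha}$. Since $A_f^{\alpha}=\bigcup_{n\geq 0}f^{-n}(U^{\alpha})$ satisfies $f^{-1}(A_f^{\alpha})\subset A_f^{\alpha}$, any $(z,w)\in V$ with $z\in A_p\setminus E'_p$ has $f(z,w)=(p(z),0)\in A_f^{\alpha}$, hence $(z,w)\in A_f^{\alpha}$; thus $V\cap\bigl((A_p\setminus E'_p)\times\mathbb{C}\bigr)\subset A_f^{\alpha}$, and likewise any fibre $\{z=c\}\times\mathbb{C}$ with $c\in A_p\setminus E'_p$ that $f$ collapses to a point lies in $A_f^{\alpha}$.

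The decisive step is that this piece of $V$ is unbounded. If $f$ collapses some fibre over a point of $A_p\setminus E'_p$ (that is, the coefficients of $q$ viewed as a polynomial in $w$ have a common zero there), that whole complex line sits in $A_f^{\alpha}$ and we are done. Otherwise I would examine $V$ at the line at infinity: with $c(z)=b_{n_1,m_1}z^{n_1}+\cdots$ the leading coefficient of $q$ in $w$ (of degree $m_1>d$), if $c$ has a zero in $A_p$ a branch of $V$ escapes to $w=\infty$ over that point, while if $n_1>0$ then $c(0)=0$ and such a branch escapes directly over a punctured neighbourhood of the origin, which is contained in $A_p$; in either situation the escaping branch lies over $A_p\setminus E'_p$ and hence in $A_f^{\alpha}$ by the previous paragraph, so $A_f^{\alpha}$ — and therefore $A_f^{l_1}$ — is unbounded.

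The obstacle I expect to spend the most effort on is the borderline case in which $V$, and in fact every curve $f^{-n}\{w=0\}$, remains bounded over all of $A_p$ (for instance when $n_1=0$ and $c(z)$ has no zero in $\overline{A_p}$): there a naive pull-back argument would instead bound $A_f^{\alpha}$, so one must show that this configuration is incompatible with being in Case 2 with $T_{s-1}>\delta>d$. I would attack it by feeding the Newton-polygon relations defining Case 2 into the identification $A_f^{l_1}=A_0-E_z$ of Theorem \ref{main thm on attr sets for Case 2} together with the boundary behaviour of $G_z^{\alpha}$ along $\partial A_f^{l}$ established in Section 5, so as to rule this configuration out under the standing hypotheses.
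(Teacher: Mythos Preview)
Your approach diverges from the paper's, and the gap you flag in the final paragraph is genuine and is not closed by the attack you propose. The configuration you worry about --- $n_1=0$ with the leading $w$-coefficient $c(z)$ nonvanishing on $\overline{A_p}$ --- is perfectly compatible with Case~2 and $T_{s-1}>\delta>d$ (take for instance $p(z)=z^{2}$, $q(z,w)=w^{3}+zw$, so that $(n_1,m_1)=(0,3)$, $(\gamma,d)=(1,1)$, $T_{s-1}=3$, and $V=\{w^{2}+z=0\}$ is bounded over $A_p=\{|z|<1\}$). So this case cannot be ``ruled out'' from the Newton-polygon data alone, and looking only at the first preimage $V$ cannot succeed.

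The paper's route (Corollary~\ref{cor on the unboundness when T > delta > d}) is not through any single preimage curve but through the full accumulation set $E_w^{\infty}$ of $f^{-n}\{w=0\}-\{w=0\}$ in $\mathbb{C}\times\mathbb{P}^{1}$. The argument of Proposition~\ref{prop of liminf when delta < d} shows that over each generic fibre these preimages accumulate on $\partial A_f^{l}\cap\mathbb{P}^{1}_{z}$ for \emph{every} $l\in[l_1,\alpha]$, giving $E_w^{\infty}\subset\bigcap_{l}\partial A_f^{l}$. The decisive extra ingredient --- absent from your outline --- is the strict inclusion $A_f^{\alpha}\subsetneq A_f^{l_1}$ already obtained from the asymptotics of $G_z^{\alpha}$ (Corollary~\ref{cor on A_f^l when T > delta > d}): the paper uses this to conclude that $E_w^{\infty}$ is unbounded, and since $E_w^{\infty}$ lies in the closure of each $A_f^{l}$ in $\mathbb{C}\times\mathbb{P}^{1}$, each $A_f^{l}$ is unbounded. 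Your direct curve analysis never invokes this strict inclusion, which is exactly the mechanism the paper uses to get past the bounded-preimage obstruction.
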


%%%%%%%%%%%%%%%%%%%%%%%%%%%%%%%%%%%%%%%%%%%%%%%%%%%%%%%%%%%%%%%%%%%%%%%%%%%%%
%%%%%%%%%%%%%%%%%%%%%%%%%%%%%%%%%%%%%%%%%%%%%%%%%%%%%%%%%%%%%%%%%%%%%%%%%%%%%
\subsection{Main results for Case 3}

Let $s > 1$, $\delta \geq T_{1}$ and $(\gamma, d) = (n_1, m_1)$
in this subsection.
Then $\delta \geq T_{1} \geq d \geq 1$, and $\delta > d$ if $\gamma > 0$.
Table \ref{table for case 3} is given below as a comparison chart of main results for Case 3;
see Proposition
\ref{Case 3: asy of G_z^alpha on boundary^in}
for a complete description on the asymptotic behavior of $G_z^{\alpha}$
toward $\partial A_f^l \cap A_0$.

%%%%%%%%%%%%%%%%%%%%%%%%%%%%
\begin{table}[htb]
\caption{Comparison chart of main results for Case 3} \label{table for case 3}
{\small
\begin{tabular}{|c|c|c|c|c|} \hline
 & $\gamma > 0$ \& $\delta > T_1$ & $\gamma > 0$ \& $\delta = T_1$ & $\gamma = 0$ \& $\delta > T_1$ & $\gamma = 0$ \& $\delta = T_1$ \\ \hline
$\mathcal{I}_f$ & $[\alpha, l_2]$ & $\{ \alpha \} = \{ l_2 \}$ & $(0, l_2]$ & $(0, l_2]$ \\ 
$\alpha$        & $\alpha > 0$ & $\alpha > 0$ & $\alpha = 0$ & $\alpha = l_2 > 0$ \\ \hline
$d$               & $\delta > d \geq 1$ & $\delta > d \geq 1$ & $\delta > d \geq 2$ & $\delta = d  \geq 2$ \\ 
$f_0$             & $(z^{\delta}, z^{\gamma} w^d)$ & $(z^{\delta}, z^{\gamma} w^d)$ & $(z^{\delta}, w^d)$ & $(z^{\delta}, w^{\delta})$ \\ \hline
$G_z^{\alpha}$  & ph on $A_f - E_z$ & ph on $A_f - E_z$ & ph on $A_f$ & ph on $A_f - E_z$ \\ 
 & $\fallingdotseq \log |w/z^{\alpha}|$ on $U$ & $\fallingdotseq \log |w/z^{\alpha}|$ on $U$ & $\fallingdotseq \log |w|$ on $U$ & $\fallingdotseq \log |w/z^{\alpha}|$ on $U$ \\ 
 &  & if $d \geq 2$ &  &  \\ \hline
lim & $G_z^{\alpha} \to 0$ or & $G_z^{\alpha} \to 0$ & $G_z^{\alpha} \to - \infty$ & $G_z^{\alpha} \to (l - \alpha) G_p$ \\ 
 & bdd if $l = \alpha$ & as $\to \partial A_f^{\alpha} \cap A_0$ & as $\to \partial A_f^l \cap A_0$ & for any $l$ in $\mathcal{I}_f$ \\ 
 & $G_z^{\alpha} \to - \infty$ &  & for any $l $ in $\mathcal{I}_f$ & In particular, \\ 
 & for any $l > \alpha$ &  &  & $G_z^{\alpha} \to 0$ if $l = \alpha$ \\ \hline
$G_z^{\alpha, +}$ & psh \& conti on & psh \& conti on & psh \& conti on & psh \& conti on \\ 
 & $A_0 - E_z \cup E_{deg}$ & $A_0 - E_z \cup E_{deg}$ & $A_0 - E_{deg}$ & $A_0 - E_z \cup E_{deg}$ \\ 
 & $G_z^{\alpha, +} > 0$ on $A_f^{\alpha}$ & $G_z^{\alpha, +} > 0$ on $A_f^{\alpha}$ &  & $G_z^{\alpha, +} > 0$ on $A_f^{\alpha}$ \\ 
 & $= 0$ on $A_0 - A_f^{\alpha}$ & $= 0$ on $A_0 - A_f^{\alpha}$ &  & $= 0$ on $A_0 - A_f^{\alpha}$ \\
 &  if $d \geq 2$ & &  &  \\ \hline
$A_f^l$ & $A_f^{\alpha} \subsetneq A_f^l = A_f^{l_2}$ &  & $A_f^l = A_f^{l_2}$ & $A_f^{l'} \subsetneq A_f^{l''}$ \\ 
 & for any $l > \alpha$ &  & for any $l$ in $\mathcal{I}_f$ & for any $l' < l''$ \\ \hline
$G_z$ & $G_z = \alpha G_p$ on $A_f$ & $G_z = \alpha G_p$ on $A_f$ & $G_z = 0$ on $A_f$ & $\fallingdotseq \log |w|$ on $U$ \\ 
 &  &  &  & ph on $A_f$ \\ \hline
$G_f^{\alpha}$ & $G_f^{\alpha} = \alpha G_p$ on $A_0$ & $G_f^{\alpha} = \alpha G_p$ on $A_0$ & $G_f^{\alpha} = 0$ on $A_0$ & psh on $A_0 - E_{deg}$ \\ 
 & (psh on $A_0$) & (psh on $A_0$) & (psh on $A_0$) & $G_f^{\alpha} > \alpha G_p$ on $A_f^{\alpha}$ \\
 &  &  &  & $= \alpha G_p$ on $A_0 - A_f^{\alpha}$ \\ \hline
\end{tabular}
}
\end{table}
%%%%%%%%%%%%%%%%%%%%%%%%%%%%

Although we want to define $\alpha$ as $\gamma/(\delta - d)$,
it is not defined if $\gamma = 0$ and $\delta = d$,
which corresponds to the exceptional case $(n_1, m_1) = (0, \delta)$
in Theorem \ref{main thm on attr sets for Case 2}.
To overcome this problem,
we redefine  $\alpha$ as the minimum of $l \geq 0$ such that
a line $x + ly - l \delta = 0$ intersects $N(q)$;
this definition works not only when $\delta \geq T_{1}$
but also when $\delta > m_s$.
Then the line that takes the minimum passes through
$(0, \delta)$ and $(\gamma, d)$, and
$\alpha$ coincides with $\gamma/(\delta - d)$
unless $\gamma = 0$ and $\delta = d$.
Moreover,
$\alpha$ is well-defined even if $\gamma = 0$ and $\delta = d$,
which coincides with $l_2 = n_2/(\delta - m_2)$.

We exhibit results only on plurisubharmonic functions;
we have no results on invariant wedges and
do not know whether the unions of all the preimages of the wedges 
coincide with the basin.

The existence of the B\"{o}ttcher coordinate induces the following. % $\phi$ on $U$ 

\begin{proposition} \label{main result for case 3: G_z^a}
%Let $\delta \geq T_1$ and 
Let $d \geq 2$ or $\delta > T_{1}$.  % if $\delta = T_{1}$. 
If $\gamma > 0$ or $\delta = d$,
then $\alpha > 0$ and $G_z^{\alpha}$ is pluriharmonic on $A_f - E_z$. % and continuous
Moreover,
$G_z^{\alpha} (w) = \log \left| w/z^{\alpha} \right| + o(1)$
on $U - \{ z = 0 \}$ as $r \to 0$.
If $\gamma = 0$ and $\delta > d$,
then $\alpha = 0$ and $G_z^{\alpha}$ is pluriharmonic on $A_f$. % and continuous
Moreover,
$G_z^{\alpha} (w) = \log \left| w \right| + o(1)$
on $U$ as $r \to 0$.
\end{proposition}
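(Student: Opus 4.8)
The plan is to prove the assertions first on the wedge $U$ by means of the B\"{o}ttcher coordinate, and then to propagate them to $A_f$ using the functional equation $G_z^{\alpha}\circ f=d\,G_z^{\alpha}$. I begin with the elementary remarks about $\alpha$: by the (re)definition of $\alpha$ as the least $l\geq 0$ for which the line $x+ly-l\delta=0$ meets $N(q)$, this extremal line passes through $(0,\delta)$ and $(\gamma,d)$, so $\alpha=\gamma/(\delta-d)>0$ when $\gamma>0$ (note $\delta>d$ in that case) and $\alpha=l_2>0$ when $\gamma=0$ and $\delta=d$, whereas $\alpha=0$ when $\gamma=0$ and $\delta>d$. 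Under the hypothesis $d\geq 2$ or $\delta>T_1$ --- which, in Case 3, is exactly the hypothesis of Theorem \ref{main thm for previous reselts} --- the B\"{o}ttcher coordinate $\phi=(\phi_1,\phi_2)$ exists and is biholomorphic on $U$ for small $r$, conjugates $f$ to $f_0=(az^{\delta},b_{\gamma d}z^{\gamma}w^d)$, and satisfies $\phi\sim id$ on $U$ as $r\to 0$; here $\phi_1=\varphi_p$ is the one-variable B\"{o}ttcher coordinate of $p$, so for small $r$ it vanishes on $U$ exactly along $\{z=0\}$ and only to first order, while $\phi_2$ is zero-free on $U$, since $w\neq 0$ everywhere on $U$ and $\phi_2(z,w)/w\to 1$ uniformly on $U$ as $r\to 0$.

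On $U$ the limit defining $G_z^{\alpha}$ converges because $f(U)\subset U$ (Lemma \ref{main lemma for previous reselts}), and the identity $\gamma=\alpha(\delta-d)$ --- equivalently $\alpha\delta-\gamma=\alpha d$, which makes the $z$-exponent of the second coordinate of $f_0^{\,n}$ differ from $\alpha\delta^{n}$ by only $O(d^{n})$ --- gives $G_z^{\alpha}=\log|\phi_2|-\alpha\log|\phi_1|$ on $U$, as already noted in the introduction (when $\gamma=0$ and $\delta>d$ this reads $G_z^{\alpha}=\log|\phi_2|$ with the convention $z^{0}=1$). Hence, when $\gamma>0$ or $\delta=d$, so $\alpha>0$, the function $G_z^{\alpha}$ is pluriharmonic on $U-\{z=0\}$, being there the difference of $\log|\phi_2|$, pluriharmonic on all of $U$, and $\alpha\log|\phi_1|$, pluriharmonic off the zero set $\{z=0\}$ of $\phi_1$; when $\gamma=0$ and $\delta>d$, $\alpha=0$ and $G_z^{\alpha}=\log|\phi_2|$ is pluriharmonic on all of $U$. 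Substituting $\phi_1(z)=z(1+o(1))$ and $\phi_2(z,w)=w(1+o(1))$, uniformly on $U$ as $r\to 0$, yields $G_z^{\alpha}(w)=\log|w/z^{\alpha}|+o(1)$ on $U-\{z=0\}$ in the first case and $G_z^{\alpha}(w)=\log|w|+o(1)$ on $U$ in the second.

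It remains to spread the statement over $A_f=\bigcup_{n\geq 0}f^{-n}(U)$. Iterating $G_z^{\alpha}\circ f=d\,G_z^{\alpha}$ shows that whenever $f^{N}(z,w)\in U$ the defining limit converges --- the tail of the orbit of $(z,w)$ is the orbit of $f^{N}(z,w)\in U$ --- and $G_z^{\alpha}(z,w)=d^{-N}G_z^{\alpha}\bigl(f^{N}(z,w)\bigr)$, provided, when $\alpha>0$, that $(z,w)\notin E_z$ so that no $z_n$ vanishes. Thus on the open set $f^{-N}(U)$, with $E_z$ deleted when $\alpha>0$, one has $G_z^{\alpha}=d^{-N}\bigl(G_z^{\alpha}\circ f^{N}\bigr)$, which is pluriharmonic there because $G_z^{\alpha}$ is pluriharmonic on $U$ (respectively on $U-\{z=0\}$), $f^{N}$ is holomorphic, and $(f^{N})^{-1}\{z=0\}=\{p^{N}(z)=0\}\subset E_z$; these local expressions agree on overlaps by the functional equation. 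Therefore $G_z^{\alpha}$ is pluriharmonic on $A_f$ when $\gamma=0$ and $\delta>d$, and on $A_f-E_z$ when $\gamma>0$ or $\delta=d$.

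Most of this is inherited from the previous papers --- the existence and biholomorphy of $\phi$, the relation $\phi\sim id$, and $f(U)\subset U$ --- so the remaining work is largely bookkeeping. The fussiest step, I expect, is the identification $G_z^{\alpha}=\log|\phi_2/\phi_1^{\alpha}|$ on $U$: it requires checking that the additive constants contributed by the coefficients $a$ and $b_{\gamma d}$ along the conjugacy telescope away in the limit (so that the $r\to 0$ expansions carry no spurious constant), and that the redefined weight $\alpha$ in the borderline case $\gamma=0$, $\delta=d$ can be handled uniformly with the generic case. Once that is in place, pluriharmonicity follows formally from $G_z^{\alpha}$ being locally the logarithm of the modulus of a zero-free holomorphic function, the bad locus is exactly $E_z$, and the globalization is the routine gluing described above.
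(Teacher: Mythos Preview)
Your proposal is correct and follows essentially the same route as the paper. The paper derives this proposition from Proposition \ref{Case 3: existence of G_z^alpha} in Section 6.2, whose proof (omitted there as ``similar to Case 2'') is exactly what you have written: identify $G_z^{\alpha}=\log|\phi_2/\phi_1^{\alpha}|$ on $U$ via the B\"{o}ttcher coordinate, read off the $r\to 0$ asymptotic from $\phi\sim id$, and propagate pluriharmonicity to $A_f$ (respectively $A_f-E_z$) through the functional equation $G_z^{\alpha}\circ f=d\,G_z^{\alpha}$.
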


Let $U^l = \{ |z|^{l} < r^{l} |w|, |w| < r \}$ and $A_f^l$ be % = \cup f^{-n} (U^l)$.
the union of all the preimages of $U^l$.
Note that $U = U^{l_2}$ and $A_f = A_f^{l_2}$.
Although the interval $\mathcal{I}_f$ plays an important role
as the same as Case 2,
where $\mathcal{I}_f = [\alpha, l_2]$ if $\gamma > 0$ and
$\mathcal{I}_f = (0, l_2]$ if $\gamma = 0$,
we avoid to use this notation in this subsection.
%%%
By investigating the asymptotic behavior of $G_z^{\alpha}$ and $G_z$
toward the boundary $\partial A_f^l$,
we obtain the theorems and proposition below.

\begin{theorem} \label{main thm on G_z^a,+ for case 3} % main result for case 3: G_z^a,+
%Let $\delta \geq T_1$ and 
Let $d \geq 2$ or $\delta > T_{1}$.  % if $\delta = T_{1}$. 
If $\gamma > 0$ or $\delta = d$, then 
$G_z^{\alpha, +}$ is well-defined on $A_0 - E_z$,
plurisubharmonic and continuous 
on $A_0 - E_z \cup E_{deg}$ and
pluriharmonic outside the boundary of 
$\{ G_z^{\alpha, +} > 0 \} \cap (A_0 - E_z \cup E_{deg})$.
More precisely,
if $d \geq 2$, then
$G_z^{\alpha, +} > 0$ on $A_f^{\alpha}$ and 
$G_z^{\alpha, +} = 0$ on $A_0 - A_f^{\alpha}$,
which is pluriharmonic on $A_0 - \partial A_f^{\alpha} \cup E_z \cup E_{deg}$.
If $\gamma = 0$ and $\delta > d$, then 
$G_z^{\alpha, +}$ is  well-defined on $A_0$,
plurisubharmonic and continuous on $A_0 - E_{deg}$ and
pluriharmonic outside the boundary of 
$\{ G_z^{\alpha, +} > 0 \} \cap (A_0 - E_{deg})$. 
\end{theorem}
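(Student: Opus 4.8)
\textbf{Proof proposal for Theorem \ref{main thm on G_z^a,+ for case 3}.}
The plan is to build $G_z^{\alpha,+}$ from the B\"{o}ttcher coordinate $\phi = (\phi_1,\phi_2)$ on $U = U^{l_2}$ and then extend it to the basin by pulling back along the dynamics. First I would record the functional equations: since $\phi$ conjugates $f$ to $f_0(z,w) = (az^\delta, b_{\gamma d} z^\gamma w^d)$, on $U$ we have $\phi_1 \circ f = a\,\phi_1^\delta$ and $\phi_2 \circ f = b_{\gamma d}\,\phi_1^\gamma \phi_2^d$, hence $\log|\phi_2/\phi_1^\alpha| \circ f = d\log|\phi_2/\phi_1^\alpha| + (\gamma - \alpha(\delta-d))\log|a\phi_1^{\delta-1}|$; by the definition of $\alpha$ (the minimum $l$ with $x+ly-l\delta=0$ meeting $N(q)$ through $(0,\delta)$ and $(\gamma,d)$) the coefficient $\gamma-\alpha(\delta-d)$ vanishes, so $\log|\phi_2/\phi_1^\alpha|$ is genuinely $d$-homogeneous under $f$ on $U$ in the cases $\gamma>0$ or $\delta=d$, and $\log|\phi_2|$ is $d$-homogeneous in the case $\gamma=0,\ \delta>d$. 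This is exactly why the limits $G_z^\alpha$ and $G_z^{\alpha,+}$ exist on $U$ with $G_z^\alpha = \log|\phi_2/\phi_1^\alpha|$ (resp.\ $\log|\phi_2|$) and $G_z^{\alpha,+} = \log^+$ of the same, and by Proposition \ref{main result for case 3: G_z^a} these are pluriharmonic on $U-\{z=0\}$ (resp.\ on $U$) and $G_z^\alpha(w) = \log|w/z^\alpha| + o(1)$ (resp.\ $\log|w| + o(1)$) as $r\to 0$.

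Next I would extend $G_z^{\alpha,+}$ to all of $A_f$ and then to $A_0$. The cocycle $\log^+|w_n/z_n^\alpha|/d^n$ is defined wherever $z_n\neq 0$ for all large $n$, i.e.\ on $A_0 - E_z$ (and on all of $A_0$ when $\alpha=0$, reading $z^0=1$); convergence on $A_f^\alpha$ follows because the orbit eventually enters $U^\alpha$, where the previous paragraph applies, while on $A_0 - A_f^\alpha$ the orbit eventually enters the region where $|w_n/z_n^\alpha| < 1$ (this is where the $\log^+$ truncation does its work), forcing $G_z^{\alpha,+}=0$ there. The identities $A_f^l = A_0 - E_z$ for suitable $l$ (Theorem \ref{main thm on attr sets for Case 2}) are not available in Case 3, so instead I would argue directly: on each fiber over a point of $A_p$ the map $q$ restricted to that fiber is (asymptotically) the $d$-th power map in the $w/z^\alpha$ coordinate, so $\log^+$ of the orbit converges locally uniformly away from the degenerate fibers, giving continuity on $A_0 - E_z\cup E_{deg}$; plurisubharmonicity is then the decreasing-limit-of-psh argument, using that $\frac{1}{d^n}\log^+|w_n/z_n^\alpha|$ is psh (a max of a pluriharmonic function and $0$ on the relevant open set, pulled back by the holomorphic $f^n$) and that the sequence is eventually monotone. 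Pluriharmonicity on $A_0 - \partial A_f^\alpha \cup E_z \cup E_{deg}$ follows because on the interior of $\{G_z^{\alpha,+}>0\}$ the truncation is inactive (so $G_z^{\alpha,+} = G_z^\alpha$, pluriharmonic by Proposition \ref{main result for case 3: G_z^a} transported by $f^n$), while on the interior of $\{G_z^{\alpha,+}=0\}$ it is identically $0$.

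The description $\{G_z^{\alpha,+}>0\} = A_f^\alpha$ when $d\geq 2$ is the geometric heart of the statement, and I would obtain it from the fact that $U^\alpha$ is a forward-invariant wedge with $\{\log^+|\phi_2/\phi_1^\alpha|>0\}$ on a neighborhood of the vertex agreeing with the part of $U^{l_2}$ cut out by $|w|>|z|^\alpha\cdot(\text{const})$, together with the expansion $G_z^\alpha(w) = \log|w/z^\alpha|+o(1)$ which shows the sign of $G_z^\alpha$ on $U^\alpha$ matches the sign of $\log|w/z^\alpha|$ as $r\to 0$. Then $G_z^{\alpha,+}>0$ at a point iff some forward iterate lands in the region $\{\log|w/z^\alpha|>0\}\cap U$, which — after taking all preimages — is precisely $A_f^\alpha$. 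I expect the main obstacle to be handling the degenerate fibers and the set $E_z$ cleanly: near a fiber collapsed by $f$, or near $\{z=0\}$, the coordinate $w/z^\alpha$ blows up or becomes meaningless, and one must check that $G_z^{\alpha,+}$ nevertheless extends continuously (resp.\ is genuinely undefined and must be removed) — this requires a careful local analysis of how many iterates are needed before the orbit stabilizes in $U$, uniformly on compact subsets avoiding $E_z\cup E_{deg}$. A secondary subtlety is that ``pluriharmonic outside the boundary of $\{G_z^{\alpha,+}>0\}$'' must be proved without assuming that boundary is nice; the argument should only use that the complement of the boundary is the disjoint union of the two open sets on which $G_z^{\alpha,+}$ is explicitly $G_z^\alpha$ or $0$.
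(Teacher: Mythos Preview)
Your outline matches the paper's architecture: build $G_z^\alpha$ from the B\"ottcher coordinate on the wedge, show $G_z^{\alpha,+}=G_z^\alpha>0$ on $A_f^\alpha$ and $=0$ outside, and glue. Two small inaccuracies first. On $A_0\setminus A_f^\alpha$ the orbit need not satisfy $|w_n/z_n^\alpha|<1$; the correct bound, once $|w_n|<r$, from failing to lie in $U^\alpha=\{|z|^\alpha<r^\alpha|w|,\,|w|<r\}$ is $|w_n/z_n^\alpha|\le r^{-\alpha}$, which is bounded and still forces $d^{-n}\log^+|w_n/z_n^\alpha|\to 0$. Likewise the criterion ``some iterate lands in $\{\log|w/z^\alpha|>0\}\cap U$'' should be ``some iterate lands in $U^\alpha$''; the equivalence with $G_z^{\alpha,+}>0$ then follows because $G_z^\alpha>0$ on $U^\alpha$ and $U^\alpha$ is forward invariant.

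The substantive gap is your mechanism for continuity and plurisubharmonicity across $\partial A_f^\alpha$. The ``eventually monotone'' claim for $d^{-n}\log^+|w_n/z_n^\alpha|$ rests on $f\sim f_0$, which holds only on $U^{l_2}$; in Case~3 there is no analogue of Theorem~\ref{main thm on attr sets for Case 2} guaranteeing that every orbit in $A_0\setminus E_z$ eventually enters $U^{l_2}$, so you cannot invoke monotonicity (or local uniform convergence derived from it) globally. The paper does not attempt this. Instead it obtains continuity by a boundary computation: setting $S_n^{in}=f^{-n}(\{|z|^\alpha=r^\alpha|w|,\,|w|<r\})$ and using $G_z^\alpha=\log|w/z^\alpha|+o(1)$ on $U^\alpha$ together with $G_z^\alpha\circ f=d\,G_z^\alpha$, one computes $G_z^\alpha|_{S_n^{in}}\to 0$ when $d\ge 2$ (Lemma~\ref{Case 3: asy of G_z^alpha on s_n^in}); since $S_n^{in}$ accumulates on $\partial A_f^\alpha\cap A_0$ away from $E_z\cup E_{deg}$ (Proposition~\ref{Case 3: asy of G_z^alpha on boundary^in}), this yields $\lim G_z^\alpha=0$ from inside $A_f^\alpha$. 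Gluing with the constant $0$ outside then gives a continuous function pluriharmonic off $\partial A_f^\alpha\cup E_z\cup E_{deg}$, hence plurisubharmonic. So the ``main obstacle'' is not primarily the degenerate fibers but the boundary $\partial A_f^\alpha$ itself, and the $S_n^{in}$-tracking is exactly the substitute for the unavailable monotonicity.
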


\begin{theorem} \label{main thm on G_z and G_f^a for case 3} % main result for case 3: G_z and G_f^a
%Let $\delta \geq T_1$ and let $d \geq 2$ if $\delta = T_{1}$. 
If $\delta > d \geq 2$ or if $\delta > d = 1$ and $\delta > T_{1}$,
then $G_z = \alpha G_p$ on $A_f$ and $G_f^{\alpha} = \alpha G_p$ on $A_0$.
If $\gamma = 0$ and $\delta = d$, then 
$G_z$ is pluriharmonic and continuous on $A_f$,
$G_z (w) = \log \left| w \right| + o(1)$ on $U$ as $r \to 0$, 
$G_f^{\alpha}$ is plurisubharmonic and continuous on $A_0 -  E_{deg}$,
$G_f^{\alpha} > \alpha G_p$ on $A_f$ and $G_f^{\alpha} = \alpha G_p$ on $A_0 - A_f$.
\end{theorem}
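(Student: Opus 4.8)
\textbf{Proof proposal for Theorem \ref{main thm on G_z and G_f^a for case 3}.}

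The plan is to treat the two assertions separately, both by transporting information from the invariant wedge $U$ (where the Böttcher coordinate $\phi$ exists by Theorem \ref{main thm for previous reselts}) out to all of $A_f$ and then to $A_0$ via the functional equations, and finally by controlling the boundary behavior as one approaches $\partial A_f$. For the first assertion, assume $\delta > d \geq 2$, or $\delta > d = 1$ and $\delta > T_1$; in either case $\phi = (\phi_1,\phi_2)$ is biholomorphic on $U$ and conjugates $f$ to $f_0(z,w) = (az^\delta, b_{\gamma d} z^\gamma w^d)$. First I would record that on $U$ one has $\phi_1 = \varphi_p$ (the one–dimensional Böttcher coordinate for $p$) and $G_p = \log|\varphi_p|$ near the origin, so that on $U$, $G_z^\alpha(w) = \log|\phi_2(z,w)/\phi_1(z)^\alpha| + o(1)$, and a direct computation with $f_0$ (using $\gamma = \alpha(\delta-d)$) shows $\log|\phi_2/\phi_1^\alpha|$ is $f_0$–invariant up to the factor $d$; equivalently the limit defining $G_z^\alpha$ converges uniformly on $U$ and equals a bounded pluriharmonic function there. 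Then, since $G_z^\alpha \circ f = d\, G_z^\alpha$ and $G_p \circ f = \delta\, G_p$ while $\alpha G_p \circ f = \delta \alpha G_p = \delta \alpha G_p$ — note $d \cdot (\alpha G_p) \neq \delta(\alpha G_p)$, so the identity $G_z^\alpha = \alpha G_p$ cannot hold on $U$; rather it must hold only after passing to the limit along orbits. The correct mechanism: on $U$, $\frac{1}{d^n}\log|w_n/z_n^\alpha|$ is already essentially constant, but $\frac{1}{\delta^n}\log|w_n|$ picks up only the $z_n^\alpha$ part in the limit because $\log|w_n/z_n^\alpha|$ is $O(d^n) = o(\delta^n)$; hence $G_z(w) = \lim \delta^{-n}\log|w_n| = \lim \delta^{-n}\log|z_n^\alpha| = \alpha G_p$ on $U$. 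By pulling back through $f$ (each point of $A_f$ eventually lands in $U$) and using $G_z \circ f = \delta G_z$ — here $\lambda = \delta$ since $\delta > d$ — this identity propagates to all of $A_f$; the only care is that $\phi_1 \neq 0$ on $U \setminus \{z=0\}$ and that the fiber $\{z=0\}$ contributes $G_z = -\infty = \alpha G_p$ there as well, so the equation is valid on all of $A_f$ without removing $E_z$. The statement $G_f^\alpha = \alpha G_p$ on $A_0$ then follows: $G_f^\alpha = \lim \delta^{-n}\log\max\{|z_n^\alpha|,|w_n|\}$, and on $A_0$ both $|z_n| \to 0$ and $|w_n| \to 0$ while the ratio $|w_n|/|z_n^\alpha|$ is eventually bounded once the orbit enters $U$ (this is exactly the content of $A_f^l = $ most of $A_0$ for Case 2 analogues, but for Case 3 I would instead argue directly that outside $A_f$ the $w$–coordinate decays no slower, using that $(\gamma,d)=(n_1,m_1)$ is the vertex governing the whole polygon from the left), so $\max\{|z_n^\alpha|,|w_n|\}$ and $|z_n^\alpha|$ have the same exponential rate, giving $G_f^\alpha = \alpha G_p$ on all of $A_0$.

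For the second assertion, assume $\gamma = 0$ and $\delta = d$, so $f_0(z,w) = (az^\delta, b_{0d} w^\delta)$ is a genuine product and $\alpha = l_2 > 0$ by the redefinition. On $U = U^{l_2}$ the Böttcher coordinate exists (now $d = \delta \geq 2$, so Theorem \ref{main thm for previous reselts} applies), and $G_z(w) = \lim \delta^{-n}\log|w_n| = \log|\phi_2(z,w)|$ on $U$, which is pluriharmonic and continuous there with $G_z(w) = \log|w| + o(1)$ as $r \to 0$ since $\phi_2 \sim w$. Because $G_z \circ f = \delta G_z$, pulling back to $A_f$ gives that $G_z$ is pluriharmonic and continuous on all of $A_f$. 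For $G_f^\alpha = \lim \delta^{-n}\log\max\{|z_n|^\alpha,|w_n|\}$ (here $\lambda = \delta = d$), note $\delta^{-n}\log|z_n|^\alpha \to \alpha G_p$ everywhere on $A_0$ and $\delta^{-n}\log|w_n| \to G_z$ on $A_f$; since the maximum of two functions converging to $\alpha G_p$ and $G_z$ respectively converges to $\max\{\alpha G_p, G_z\}$ — this needs the standard fact that $\delta^{-n}\log\max\{|z_n|^\alpha,|w_n|\}$ and $\max\{\delta^{-n}\log|z_n|^\alpha,\delta^{-n}\log|w_n|\}$ differ by $o(1)$ — we get $G_f^\alpha = \max\{\alpha G_p, G_z\}$ on $A_f$, which is plurisubharmonic and continuous there. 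On $U$ one computes $G_z > \alpha G_p$: indeed $\log|w| - \alpha\log|z| > -\alpha\log r$... more precisely on $U^{l_2}$ one has $|z|^{l_2} < |w|$ hence $\alpha\log|z| = l_2\log|z| < \log|w|$ up to the $r$–normalization, and since the $G$'s are the stabilized versions of $\log|w|$ and $\alpha\log|z|$ this inequality persists under iteration, so $G_f^\alpha = G_z > \alpha G_p$ on $U$ and, by pullback, on all of $A_f$. Outside $A_f$, the orbit never enters the wedge where $w$ dominates; there $|w_n|$ decays at least as fast as $|z_n|^{l_2}$, so $\max\{|z_n|^\alpha,|w_n|\} \asymp |z_n|^\alpha$ and $G_f^\alpha = \alpha G_p$ on $A_0 \setminus A_f$. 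Finally, plurisubharmonicity of $G_f^\alpha$ on $A_0 - E_{deg}$ follows from the general principle that $\frac{1}{\lambda^n}\log\max\{|z_n^\alpha|,|w_n|\}$ is a decreasing (or locally uniformly convergent) sequence of plurisubharmonic functions away from the degenerate fibers $E_{deg}$ where $z^\alpha$ or the map itself collapses a fiber — this is where removing $E_{deg}$ is essential, because on those fibers the expression $\log|z_n^\alpha|$ may fail to be locally bounded above after normalization.

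The main obstacle I anticipate is the matching of the two normalizations at the interface between $A_f$ and $A_0 \setminus A_f$ in the second assertion: one must show that $G_f^\alpha$, defined as a single limit on all of $A_0$, genuinely equals $\max\{\alpha G_p, G_z\}$ on $A_f$ and $\alpha G_p$ on the complement, with the two pieces gluing to a plurisubharmonic (not merely upper semicontinuous) function across $\partial A_f$. This requires an asymptotic estimate controlling how slowly $|w_n|/|z_n|^{l_2}$ can grow for orbits that never enter $U$ but accumulate on $\partial A_f$, which is precisely the kind of boundary analysis carried out in Proposition \ref{Case 3: asy of G_z^alpha on boundary^in}; I would invoke that proposition (together with $G_z^{\alpha,+}$ from Theorem \ref{main thm on G_z^a,+ for case 3}, noting $G_z^{\alpha,+} = \max\{G_z - \alpha G_p, 0\}$ after the identification, so $G_f^\alpha = \alpha G_p + G_z^{\alpha,+}$ and plurisubharmonicity is inherited from that of $G_z^{\alpha,+}$ plus pluriharmonicity of $\alpha G_p$ off $E_z$). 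Packaging the argument this way — writing $G_f^\alpha = \alpha G_p + G_z^{\alpha,+}$ — is what makes the continuity and plurisubharmonicity on $A_0 - E_{deg}$ immediate, and it is the cleanest route; the secondary obstacle is simply bookkeeping the $o(1)$ terms uniformly on $U$ as $r \to 0$, which is routine given $\phi \sim \mathrm{id}$ from Theorem \ref{main thm for previous reselts}.
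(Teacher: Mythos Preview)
Your overall strategy matches the paper's, and on $A_f$ your mechanism for $G_z = \alpha G_p$ is correct: since $G_z^\alpha$ is bounded on $U$, $\delta^{-n}\log|w_n/z_n^\alpha| = (d/\delta)^n \cdot d^{-n}\log|w_n/z_n^\alpha| \to 0$, so $G_z = \alpha G_p$ there and hence on $A_f$ by the functional equation. But the intermediate claim ``the ratio $|w_n|/|z_n^\alpha|$ is eventually bounded once the orbit enters $U$'' is false: on $U = U^{l_2} = \{|z|^{l_2} < r^{l_2}|w|,\ |w| < r\}$ one only has $|w|/|z|^\alpha < r/|z|^\alpha$, which diverges. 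You do not need boundedness of the ratio, only the $o(\delta^n)$ growth of its logarithm, which is exactly what $G_z^\alpha$ bounded on $U$ gives. More seriously, your argument on $A_0 \setminus A_f$ is only a heuristic. The concrete step the paper uses (see Proposition~\ref{main result for case 3: A_f^l} and the proof of the proposition showing $G_z^* < \alpha G_p$ on $A_0 \setminus A_f$) is this: if $(z,w) \in A_0 \setminus A_f^{l_2}$ then, once the orbit is small, $|w_n| \le r^{-l_2}|z_n|^{l_2}$ for all $n$, because staying outside $U^{l_2}$ forces exactly that inequality; since $l_2 \ge \alpha$ and $G_p < 0$, this yields $\limsup \delta^{-n}\log|w_n| \le l_2 G_p \le \alpha G_p$, whence $G_f^\alpha = \alpha G_p$. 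Your phrase ``the $w$-coordinate decays no slower'' gestures at this but does not name the inequality.

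For the case $\gamma = 0$, $\delta = d$, your repackaging $G_f^\alpha = \alpha G_p + G_z^{\alpha,+}$ is a legitimate identity on $A_0 \setminus E_z$ (from $\log\max\{a,b\} = \log a + \log^+(b/a)$), and via Theorem~\ref{main thm on G_z^a,+ for case 3} it does deliver plurisubharmonicity and continuity on $A_0 \setminus (E_z \cup E_{deg})$. But the theorem asserts these properties on $A_0 \setminus E_{deg}$, including across $E_z$, and on $E_z$ your formula reads $(-\infty) + (+\infty)$. The paper avoids this by working with $G_z$ rather than $G_z^{\alpha}$: on $U$ (which contains a punctured neighborhood of the $w$-axis) one has $G_z = \log|\phi_2|$, perfectly well-defined at $z = 0$, and then uses the asymptotic behavior of $G_z$ toward $\partial A_f^l \cap A_0$ (Proposition~\ref{Case 3: asy of G_z on boundary^in}) to glue. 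If you want to keep your packaging, you must separately verify that $G_f^\alpha$ is finite and continuous at points of $E_z \setminus E_{deg}$, e.g.\ by computing it there directly as $G_z$ (since $|z_n^\alpha| = 0$) and checking the two descriptions match along $E_z$.
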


We remark that
if $\gamma = 0$ and $\delta > d$, 
then $G_z = 0$ on $A_f$ and $G_f^{\alpha} = 0$ on $A_0$
and that the equality $G_f^{\alpha} = \alpha G_p$ on $A_0$ holds 
even if $d = 1$ and $\delta = T_1$ 
as stated in Remark % mentioned
\ref{remark on G_f^a for case 3}.

\begin{proposition} \label{main result for case 3: A_f^l}
If $\delta > T_1$, then $A_f^{l} = A_f^{l_2}$ 
for any $\alpha < l \leq l_2$. 
Moreover,
if $\gamma > 0$, then % and $d \geq 2$, then 
$A_f^{\alpha} \subsetneq A_f^{l_2}$.
If $\gamma = 0$ and $\delta = d$, 
then $\delta = T_1$ and 
$A_f^{l'} \subsetneq A_f^{l''}$
for any $0 < l' < l'' \leq l_2$. 
\end{proposition}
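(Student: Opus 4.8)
The plan is to settle the three assertions in turn, after two preliminary remarks. Writing $u=-\log|z|$, $v=-\log|w|$ and $\rho=-\log r$, so that $u,v>0$ on $\{|z|,|w|<1\}$ and $U^{l}=\{\rho<v<l(u-\rho)\}$, one sees at once that $U^{l'}\subseteq U^{l''}$ whenever $l'\le l''$, hence $A_f^{l'}\subseteq A_f^{l''}$ (and, by the same kind of argument, $A_f^{l}$ does not depend on the choice of small $r$). So it remains to prove the reverse inclusion in the first assertion and strictness in the second and third. Secondly, fix $r$ small and let $(z_n,w_n)=f^{n}(z,w)$ be an orbit lying in $U^{l_2}$; since $l_2\in\mathcal{I}_f$ the orbit stays in $U^{l_2}$, where $f\sim f_0$ with $f_0=(az^{\delta},b_{\gamma d}z^{\gamma}w^{d})$ and the B\"ottcher coordinate exists, by Lemma~\ref{main lemma for previous reselts} and Theorem~\ref{main thm for previous reselts} (note that $\delta\neq T_k$ for every $k$ when $\delta>T_1$, and that $d\ge2$ when $\gamma=0$ and $\delta=d$). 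Forward invariance pushes the orbit into wedges $U^{l_2}_{r',r'}$ with $r'\to0$, so $p(z_n)=az_n^{\delta}(1+o(1))$ and $q(z_n,w_n)=b_{\gamma d}z_n^{\gamma}w_n^{d}(1+o(1))$ with $o(1)\to0$, i.e.\ in the coordinates above
\[
u_{n+1}=\delta u_n+O(1),\qquad v_{n+1}=\gamma u_n+d\,v_n+O(1),\qquad u_n,v_n\to\infty.
\]

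For the first assertion I would show $A_f^{l_2}\subseteq A_f^{l}$ for $\alpha<l\le l_2$. Let $P\in A_f^{l_2}$; then $f^{N}(P)\in U^{l_2}$ for some $N$, and it suffices to prove $f^{N}(P)\in A_f^{l}$, so we may assume $P\in U^{l_2}$. If $z_0=0$ then every $f^{n}(P)$ lies in $\{z=0\}\cap U^{l_2}\subseteq U^{l}$, so $P\in A_f^{l}$. Otherwise $z_n\neq0$ for all $n$, and the slope $t_n:=v_n/u_n$ is bounded ($0<t_n<l_2$) and satisfies $t_{n+1}=(\gamma+d\,t_n+o(1))/(\delta+o(1))$. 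Since $\delta>T_1$ forces $\delta>d$, the unperturbed map $t\mapsto(\gamma+d\,t)/\delta$ is a contraction with fixed point $\alpha=\gamma/(\delta-d)$, so $t_n\to\alpha$; as $\alpha<l$ and $u_n,v_n\to\infty$ this gives $\rho<v_n<l(u_n-\rho)$ for all large $n$, i.e.\ $f^{n}(P)\in U^{l}$, hence $P\in A_f^{l}$.

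For the second assertion ($\gamma>0$, $\delta>T_1$, so $\alpha>0$) I would use $G_z^{\alpha}$, which exists, is pluriharmonic on $A_f^{l_2}-E_z$, satisfies $G_z^{\alpha}\circ f=d\,G_z^{\alpha}$, and equals $\log|w/z^{\alpha}|+o(1)$ on $U^{l_2}-\{z=0\}$ by Proposition~\ref{main result for case 3: G_z^a}. On $U^{\alpha}-\{z=0\}$ one has $|w/z^{\alpha}|>r^{-\alpha}$, so $G_z^{\alpha}>0$ there (for $r$ small), and by the functional equation $G_z^{\alpha}>0$ on $A_f^{\alpha}-E_z$. But since $l_2>\alpha$, the wedge $U^{l_2}$ contains points with $z\neq0$ and $|w|<|z|^{\alpha}$, where $G_z^{\alpha}=\log|w/z^{\alpha}|+o(1)<0$; such a point lies in $A_f^{l_2}-E_z$ but not in $A_f^{\alpha}$, so $A_f^{\alpha}\subsetneq A_f^{l_2}$.

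For the third assertion, $\gamma=n_1=0$ forces the line $L_1$ to pass through $(0,\delta)$, so $T_1=\delta$; here $\alpha=l_2$ and $f_0=(az^{\delta},b_{0\delta}w^{\delta})$ is a product. Given $0<l'<l''\le l_2$, I would take $P\in U^{l''}$ with $|z_0|$ tiny (so $u_0$ huge) and slope $t_0=(l'+l'')/2$. Because $\gamma=0$ and $d=\delta$, the slope recursion gives $t_{n+1}-t_n=O(1/u_n)=O(\delta^{-n})$, so $\sum_n|t_{n+1}-t_n|=O(1/u_0)<(l''-l')/4$ once $u_0$ is large, whence $t_n\in(l',l'')$ for all $n$. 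As every point of $U^{l'}$ has slope $<l'$, no iterate of $P$ meets $U^{l'}$, so $P\in U^{l''}\setminus A_f^{l'}\subseteq A_f^{l''}\setminus A_f^{l'}$ and $A_f^{l'}\subsetneq A_f^{l''}$. The delicate point throughout is upgrading the statements $f\sim f_0$ and $G_z^{\alpha}=\log|w/z^{\alpha}|+o(1)$ on $U^{l_2}$ — valid only in the limit $r\to0$ — to estimates uniform along whole orbits; this is exactly what forward invariance into ever thinner wedges provides, and it is what makes the slope iteration converge to $\alpha$ in the first assertion and stay trapped in $(l',l'')$ in the third. Alternatively, the first and third assertions follow from the boundary behaviour of $G_z^{\alpha}$ in Proposition~\ref{Case 3: asy of G_z^alpha on boundary^in}: the limits $(l-\alpha)G_p$ of $G_z^{\alpha}$ along $\partial A_f^{l}\cap A_0$ are distinct for distinct $l$, so, granted each $A_f^{l}$ is a proper subset of $A_0$, the sets $A_f^{l}$ cannot coincide.
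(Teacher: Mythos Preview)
Your main arguments are correct and take a genuinely different route from the paper's. The paper derives all three assertions from the boundary behaviour of $G_z^{\alpha}$ (Propositions~\ref{Case 3: existence of G_z^alpha}, \ref{Case 3: asy of G_z^alpha on boundary^out}, \ref{Case 3: asy of G_z^alpha on boundary^in}): for $\delta>T_1$ and $\alpha<l\le l_2$ the limit of $G_z^{\alpha}$ on $\partial A_f^l\cap A_0$ is $-\infty$, which is incompatible with $G_z^{\alpha}$ being pluriharmonic (hence finite and continuous) on $A_f^{l_2}-E_z$, so $\partial A_f^l$ cannot meet $A_f^{l_2}$ away from degenerate fibers; for $\delta=d$ the boundary limits $(l-\alpha)G_p$ are distinct for distinct $l$. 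Your direct slope-iteration in logarithmic coordinates is more elementary and self-contained, needing only forward invariance and $f\sim f_0$; the paper's route, by contrast, yields the asymptotics of $G_z^{\alpha}$ as a byproduct, which then feeds into Theorem~\ref{main thm on G_z^a,+ for case 3}. Your treatment of the second assertion via the sign of $G_z^{\alpha}$ is essentially the paper's argument.

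Two small corrections. In the first assertion your handling of $z_0=0$ is off when $\gamma>0$: then $q(0,w)\equiv 0$, so $f(0,w_0)=(0,0)\notin U^{l_2}$ and the iterates do not remain in $\{z=0\}\cap U^{l_2}$. This is harmless, since $P=(0,w_0)\in U^{l_2}\cap\{z=0\}\subseteq U^{l}$ already, hence $P\in A_f^{l}$. In your closing alternative, the limits $(l-\alpha)G_p$ come only from the case $\delta=d$ of Proposition~\ref{Case 3: asy of G_z^alpha on boundary^in}; when $\delta>T_1$ (so $\delta>d$) the limit on $\partial A_f^l\cap A_0$ is $-\infty$ for every $\alpha<l\le l_2$, so the ``distinct limits'' reasoning does not apply to the first assertion---there the correct contradiction is between the finite value of $G_z^{\alpha}$ on $A_f^{l_2}-E_z$ and the $-\infty$ boundary limit, which is the paper's actual argument.
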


%%%%%%%%%%%%%%%%%%%%%%%%%%%%%%%%%%%%%%%%%%%%%%%%%%%%%%%%%%%%%%%%%%%%%%%%%%%%%
%%%%%%%%%%%%%%%%%%%%%%%%%%%%%%%%%%%%%%%%%%%%%%%%%%%%%%%%%%%%%%%%%%%%%%%%%%%%%
\subsection{Main results for Case 4}

Let $s > 2$, $T_k \leq \delta \leq T_{k-1}$ and $(\gamma, d) = (n_k, m_k)$
for some $2 \leq k \leq s-1$ in this subsection.
Then $\gamma > 0$ and $\delta > d \geq 1$.
Table \ref{table for case 4} is given below as a comparison chart of main results for Case 4;
see Propositions
\ref{asy behavior when T_k < delta  T_k-1},
\ref{asy behavior when delta = T_k} and
\ref{asy behavior when delta = T_k-1}
for complete descriptions on the asymptotic behavior of $G_z^{\alpha}$.
In the table
the notation $A$ means the attracting set corresponding to
the other dominant term and
we denote $A_f^{l_1, \alpha - l_1}$ as $A_f^{\cdots}$ for short.

%%%%%%%%%%%%%%%%%%%%%%%%%%%%
\begin{table}[htb]
\caption{Comparison chart of main results for Case 4} \label{table for case 4}
{\small %\footnotesize 
\begin{tabular}{|c|c|c|c|c|} \hline
 & $T_k < \delta < T_{k-1}$ & $T_k = \delta < T_{k-1}$ & $T_k < \delta = T_{k-1}$ & $T_k < \delta = T_{k-1}$ \\ 
 &  &  & \& $n_{k-1} > 0$ & \& $n_{k-1} = 0$ \\ \hline
$\mathcal{I}_f$ & $[l_1, \alpha] \times [\alpha, l_1+l_2]$ & $[l_1, \alpha) \times \{ \alpha \}$ & $\{ \alpha \} \times (\alpha, l_1+l_2]$ & $\{ \alpha \} \times (\alpha, l_1+l_2]$ \\ 
 & $- \{ (\alpha, \alpha) \}$ &  &  &  \\ \hline
$U_{r_1, r_2}^{l,+}$ & $f(U_{r_1, r_2}^{l,+}) \subset U_{r_1, r_2}^{l,+}$ & $f(U_{r_1, r_2}^{l,+}) \subset U_{r_1, r_2}^{l,+}$ & $f(U_{r_1, r_2}^{l,+}) \subset U_{r_1, r_2}^{l,+}$ & $f(U_{r_1, r_2}^{l,+}) \subset U_{r_1, r_2}^{l,+}$ \\ 
 & for any $0 < l < \alpha$ & for any $0 < l < \alpha$ & for any $0 < l < \alpha$ & for any $0 < l < \alpha$ \\ \hline
$A_f^{l,+}$ & $A_f^{l,+} = A_0 - E_z$ & $A_f^{l,+} = A_0 - E_z$ & $A_f^{l,+} = A_0 - E_z$ &  \\ 
 & for any $0 < l < \alpha$ & for any $0 < l < \alpha$ & for any $0 < l < \alpha$ &  \\ \hline

$\phi$ & $\exists \phi$ on $U$ & $\exists \phi$ on $U$ if $d \geq 2$ & $\exists \phi$ on $U$ if $d \geq 2$ & $\exists \phi$ on $U$ if $d \geq 2$ \\ \hline
$G_z^{\alpha}$ & ph on $A_f$ & ph on $A_f$ & ph on $A_f$ & ph on $A_f$ \\ 
 & $\fallingdotseq \log |w/z^{\alpha}|$ on $U$ & $\fallingdotseq \log |w/z^{\alpha}|$ on $U$ & $\fallingdotseq \log |w/z^{\alpha}|$ on $U$ & $\fallingdotseq \log |w/z^{\alpha}|$ on $U$ \\ \hline

lim & $G_z^{\alpha} \to \infty$ if $l < \alpha$ & $G_z^{\alpha} \to \infty$ & $G_z^{\alpha} \to 0$ & $G_z^{\alpha} \to 0$  \\ 
sup & $G_z^{\alpha} \to 0$ or bdd if & as $\to \partial A_f^{l,+}$ & as $\to \partial A_f^{\alpha,+}$ & as $\to \partial A_f^{\alpha,+}$  \\
 & $l = \alpha$ as $\to \partial A_f^{l,+}$ & for any $l$ in $\mathcal{I}_f^1$ &   &   \\ \hline
lim & $G_z^{\alpha} \to 0$ or bdd if  & $G_z^{\alpha} \to 0$ as  & $G_z^{\alpha} \to - \infty$ as  & $G_z^{\alpha} \to - \infty$ as \\
 & $l = \alpha$, $G_z^{\alpha} \to - \infty$  & $\to \partial A_f^{l_1, \alpha -l_1} \cap A_0$  & $\to \partial A_f^{\alpha,l- \alpha} \cap A_0$  & $\to \partial A_f^{\alpha,l- \alpha} \cap A_0$ \\
 & if $l > \alpha$ as $\to$ &   & for any $l$ in $\mathcal{I}_f^2$  &  for any $l$ in $\mathcal{I}_f^2$ \\
 & $\partial A_f^{l_1,l-l_1} - \partial A_f^{l_1,+}$ &  &   & \\ \hline

$A_f^{ l_{  \scalebox{0.5}{(1)} }, l_{ \scalebox{0.5}{(2)} }}$ & $A_f^{ l_{  \scalebox{0.5}{(1)} }, l_{ \scalebox{0.5}{(2)} }} = A_f$ for & $A_f^{l, \alpha -l} = A_f^{}$ & $A_f^{\alpha,l- \alpha} = A_f^{}$ & $A_f^{\alpha,l- \alpha} = A_f^{}$ \\ 
 & most $l_{ \scalebox{0.5}{(1)} }$ and $l_{ \scalebox{0.5}{(2)} }$ & for any $l$ in $\mathcal{I}_f^1$ & for any $l$ in $\mathcal{I}_f^2$ & for any $l$ in $\mathcal{I}_f^2$ \\                        
  & $A_f^{l_1, \alpha - l_1} \subsetneq A_f^{}$ &  &  &   \\
  & $A_f^{\alpha, l_1 + l_2 - \alpha} \subsetneq A_f^{}$ &  &  &   \\ \hline
other & $\nexists$ & $(n_{k+1}, m_{k+1})$ & $(n_{k-1}, m_{k-1}) \Rightarrow A$ & $(n_1, m_1) \Rightarrow A$ \\ 
term &  & Case 2 or Case 4 & Case 3 or Case 4 & Case 3 \\ \hline
$G_z^{\alpha, +}$ & psh \& conti on & psh  \& conti on & psh  \& conti on & psh  \& conti on \\ 
 & $A_0 - E_z \cup E_{deg}$ & $A_0 - E_z \cup E_{deg}$ & $A_0 - E_z \cup E_{deg}$ & $A_0 - E_z \cup E_{deg}$ \\
 & $> 0$ on $A_f^{l_1, \alpha - l_1}$ & $> 0$ on $A_f^{l_1, \alpha - l_1}$ & $> 0$ on $A$ & $> 0$ on $A$ \\ 
 & $= 0$ on $A_0 - A_f^{\cdots}$ & $= 0$ on $A_0 - A_f^{\cdots}$ & $= 0$ on $A_0 - A$ & $= 0$ on $A_0 - A$ \\
 & if $d \geq 2$ & & & \\ \hline
$G_z$ & $G_z = \alpha G_p$ on $A_f$ & $G_z = \alpha G_p$ on $A_f$ & $G_z = \alpha G_p$ on $A_f$ & $G_z = \alpha G_p$ on $A_f$ \\ \hline
$G_f^{\alpha}$ & $G_f^{\alpha} = \alpha G_p$ on $A_0$ & $G_f^{\alpha} = \alpha G_p$ on $A_0$ & $G_f^{\alpha} = \alpha G_p$ on $A_0$ & psh on $A_0 - E_{deg}$ \\ 
 &  &  &  & $G_f^{\alpha} > \alpha G_p$ on $A$ \\ 
 &  &  &  & $= \alpha G_p$ on $A_0 - A$ \\ \hline
\end{tabular}
}
\end{table}
%%%%%%%%%%%%%%%%%%%%%%%%%%%%

We first exhibit a result on invariant wedges, and
give a partial affirmative answer to the question % a partial
whether the unions of all the preimages of the wedges 
coincide with the basin,
which are similar to Case 2.
%%%
Let $U^{l_{(1)},l_{(2)}} = \{ |z|^{l_{(1)} + l_{(2)}} < r^{l_{(2)}} |w|, |w| < r|z|^{l_{(1)}} \}$.
Note that $U = U^{l_1, l_2}$.
A detailed version of Lemma \ref{main lemma for previous reselts}
is given in the next section as Lemma \ref{detailed lemma for case 4},
in which the wedge $U^{l_{(1)},l_{(2)}}$ and 
intervals $\mathcal{I}_f^1$ and $\mathcal{I}_f^2$ are used.
%%%
Let $U^{l, +} = \{ |z| < r, |w| < r |z|^{l} \}$.
%Let $U^{l, +} = \{ |z| < r^{1 + 1/l_2}, |w| < r |z|^{l} \} \subset \{ |z| < r, |w| < r |z|^{l} \}$.
Although it might be more precise 
to define $U^{l, +}$ with the inequality $|z| < r^{1 + 1/l_2}$,
we define it by the inequality $|z| < r$ for simplicity.
Lemma \ref{detailed lemma for case 4} in the next section induces the following.

\begin{lemma} \label{main lemma}
%Let $T_k \leq \delta \leq T_{k-1}$. 
The following holds 
for any $l$ in $\mathcal{I}_f^1$ if $d \geq 2$
and for any $l$ in $\mathcal{I}_f^1 - \{ \alpha \}$ 
if $d = 1$ and $T_k < \delta < T_{k-1}$, 
where
$\mathcal{I}_f^1 = [ l_1, \alpha ]$ if $T_k < \delta < T_{k-1}$,
$\mathcal{I}_f^1 = [ l_1, \alpha )$ if $\delta = T_k$, and
$\mathcal{I}_f^1 = \{ l_1 \} = \{ \alpha \}$ if $\delta = T_{k-1}$:
$f(U^{l, +}) \subset U^{l, +}$ for small $r >0$.
\end{lemma}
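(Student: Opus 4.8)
The plan is to deduce the lemma from the detailed version of the previous result, namely Lemma~\ref{detailed lemma for case 4}, which asserts that $f(U^{l_{(1)},l_{(2)}}) \subset U^{l_{(1)},l_{(2)}}$ for small $r>0$ whenever $(l_{(1)},l_{(2)})$ lies in the appropriate rectangle (with $l_{(1)}$ ranging over $\mathcal{I}_f^1$ and $l_{(2)}$ over $\mathcal{I}_f^2$), subject to the stated condition on $d$. The key observation is that $U^{l,+} = \{|z|<r,\ |w|<r|z|^l\}$ is a union of the thin wedges $U^{l,l_{(2)}}$ as the second slope parameter varies: more precisely, $U^{l,l_{(2)}} = \{r^{-l_{(2)}}|z|^{l+l_{(2)}} < |w| < r|z|^l\}$, so letting $l_{(2)}\to\infty$ (or ranging over all sufficiently large $l_{(2)}$) the lower constraint $r^{-l_{(2)}}|z|^{l+l_{(2)}}$ disappears on $\{|z|<r\}$ and one recovers $U^{l,+}$, at least after the harmless adjustment of the $|z|$-radius mentioned in the remark preceding the statement. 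Thus $U^{l,+}$ is, up to the boundary region near $\{z=0\}$, the increasing union $\bigcup_{l_{(2)}} U^{l,l_{(2)}}$.

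First I would fix $l$ in $\mathcal{I}_f^1$ (minus $\{\alpha\}$ when $d=1$ and $T_k<\delta<T_{k-1}$) and record that for every sufficiently large $l_{(2)}$ in $\mathcal{I}_f^2$ the pair $(l,l_{(2)})$ satisfies the hypotheses of Lemma~\ref{detailed lemma for case 4}, so $f(U^{l,l_{(2)}})\subset U^{l,l_{(2)}}$ for all small $r$. Next I would check that the upper constraint $|w|<r|z|^l$ is preserved by $f$: this is exactly the content of the "$f(U^{l,+})\subset$ (upper half)" part and follows from $f\sim f_0$ together with the dominant-term estimate, since $f_0(z,w)=(az^\delta, b_{\gamma d}z^\gamma w^d)$ and on $\{|z|<r,\ |w|<r|z|^l\}$ one has $|z_1|\approx|a||z|^\delta$ and $|w_1|\approx|b_{\gamma d}||z|^\gamma|w|^d < |b_{\gamma d}||z|^\gamma (r|z|^l)^d = |b_{\gamma d}|r^d|z|^{\gamma+ld}$, and the inequality $\gamma+ld \geq \delta l$ (equivalently $l\leq\gamma/(\delta-d)=\alpha$) combined with $l\in\mathcal{I}_f^1$ gives $|w_1| < r|z_1|^l$ for small $r$; the boundary case $l=\alpha$ when it occurs needs the $d\geq2$ hypothesis to absorb the constant, exactly as in Case~2. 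Finally, the constraint $|z|<r$ is immediate since $|z_1|\approx|a||z|^\delta<|a|r^\delta<r$ for small $r$.

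The step I expect to be the main obstacle is the behavior near the fiber $\{z=0\}$, where the two descriptions $U^{l,+}$ and $\bigcup_{l_{(2)}}U^{l,l_{(2)}}$ genuinely differ and where the dominant-term comparison $f\sim f_0$ degenerates (the term $b_{\gamma d}z^\gamma w^d$ vanishes identically on $\{z=0\}$ when $\gamma>0$, which is the case here). I would handle this by treating $\{z=0\}$ separately: on it, $f$ restricts to $(0,w)\mapsto(0,q(0,w))$, and one must verify directly that the image stays in $\{|w|<r\cdot 0^l\}$—which forces the image into $\{w=0\}$ once $l>0$—so the relevant check is that $q(0,\cdot)$ maps the disc $\{|w|<\text{something}\}$ into itself, or more simply that the lowest-order behavior of $q$ along $\{z=0\}$ is controlled; since $(\gamma,d)=(n_k,m_k)$ with $\gamma=n_k>0$ in Case~4, the fiber $\{z=0\}$ is actually collapsed by $f_0$ and one only needs a crude bound $|q(0,w)| \leq C|w|^{m_1}$ with $m_1\geq 2$, giving invariance of a small disc. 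The only other subtlety is making the phrase "for small $r>0$" uniform: since for each large $l_{(2)}$ the threshold on $r$ coming from Lemma~\ref{detailed lemma for case 4} may depend on $l_{(2)}$, I would instead argue the invariance of $U^{l,+}$ directly from the estimates above rather than as a literal union, using the union picture only as motivation—this keeps a single choice of $r$ and avoids any issue with the thresholds shrinking to zero.
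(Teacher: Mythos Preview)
Your direct argument reaches the right target inequality but the justification has a gap. You write $|w_1|\approx|b_{\gamma d}||z|^\gamma|w|^d$ on $U^{l,+}$, invoking $f\sim f_0$; but $f\sim f_0$ does \emph{not} hold on all of $U^{l,+}$ in Case~4. Unlike Case~2, here $(\gamma,d)=(n_k,m_k)$ with $k\le s-1$, so there are further vertices $(n_{k+1},m_{k+1}),\dots,(n_s,m_s)$ with $m_j<d$. On the part of $U^{l,+}$ where, say, $|w|\ll|z|^{l_1+l_2}$ --- a region not contained in any thin wedge $U^{l,l_{(2)}}$ with $l_{(2)}\in\mathcal{I}_f^2$, since $\mathcal{I}_f^2$ is bounded and you cannot send $l_{(2)}\to\infty$ as you suggest --- those lower-$j$ terms dominate $b_{\gamma d}z^\gamma w^d$ and the ``$\approx$'' breaks down. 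Your identification of the obstacle as the fiber $\{z=0\}$ is off target: that fiber meets $U^{l,+}$ only at the origin; the actual problem lives where $z\ne 0$ but $|w|$ is tiny.

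The fix is to replace the two-sided ``$\approx$'' by a one-sided upper bound coming straight from the Newton polygon. For $l\in\mathcal{I}_f^1$ the defining inequalities give $\gamma+ld\le n_j+lm_j$ for every vertex, hence $i+lj\ge \gamma+ld$ for all $(i,j)$ with $b_{ij}\ne 0$; then on $U^{l,+}$,
\[
|q(z,w)|\le\sum|b_{ij}||z|^i|w|^j<\sum|b_{ij}|\,r^j|z|^{i+lj}\le C\,|z|^{\gamma+ld},
\]
and since $\gamma+ld\ge l\delta$ (strict unless $l=\alpha$, where $d\ge 2$ makes the leading part of $C$ of order $r^d$) one gets $|q|<r|p|^l$ for small $r$. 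This is essentially what the paper's one-line proof encodes: the lemma is said to be ``induced'' by Lemma~\ref{detailed lemma for case 4}, and the mechanism is the first blow-up $\tilde f_1=\pi_1^{-1}\circ f\circ\pi_1$ with $\pi_1(z,c)=(z,z^lc)$, which is holomorphic and superattracting at the origin for $l$ as in the hypothesis (the proposition in \S3.3.2), hence preserves a small bidisk whose image under $\pi_1$ is exactly $U^{l,+}$.
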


Note that $\min \mathcal{I}_f^1 = l_1$ and
$U^{l_1, +}$ is the largest wedge among $U^{l, +}$ for any $l$ in $\mathcal{I}_f^1$.
%%%
Let $U_{r_1, r_2}^{l, +} = \{ |z| < r_1, |w| < r_2 |z|^{l} \}$.
We can extend this lemma to the following theorem
by the same arguments as Case 2. 

\begin{theorem}\label{main thm on inv wedges for Case 4}
%Let $T_k \leq \delta \leq T_{k-1}$. 
The following holds for any $0 < l < \alpha$: 
$f(U_{r_1, r_2}^{l, +}) \subset U_{r_1, r_2}^{l, +}$
for suitable small $r_1$ and $r_2$.
\end{theorem}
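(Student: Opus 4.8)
The plan is to mimic the argument used for the analogous statement in Case~2, namely Theorem~\ref{main thm on inv wedges for Case 2}, since the wedge $U^{l,+}_{r_1,r_2} = \{|z| < r_1,\ |w| < r_2|z|^l\}$ has exactly the same shape as the Case~2 wedge $U^l_{r_1,r_2}$; the only difference is that here $\delta > d \ge 1$ always and the relevant threshold is $\alpha = \gamma/(\delta - d)$. Fix $0 < l < \alpha$ and take $(z,w) \in U^{l,+}_{r_1,r_2}$, writing $(z_1, w_1) = f(z,w) = (p(z), q(z,w))$. I must verify the two defining inequalities: $|z_1| < r_1$ and $|w_1| < r_2 |z_1|^l$.

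The first inequality $|z_1| < r_1$ is immediate and standard: since $p(z) = az^\delta + O(z^{\delta+1})$ with $\delta \ge 2$, for $r_1$ small one has $|p(z)| \le 2|a|\,|z|^\delta \le 2|a| r_1^{\delta - 1}|z| < |z| < r_1$. The substance is the second inequality. First I would estimate $|z_1| = |p(z)|$ from below as well, getting $|z_1| \ge \tfrac12|a|\,|z|^\delta$ for $r_1$ small, so that $|z_1|^l \ge (\tfrac12|a|)^l |z|^{\delta l}$. Then I would bound $|w_1| = |q(z,w)|$ from above on the wedge. Writing $q(z,w) = \sum_{i,j} b_{ij} z^i w^j$ and using $|w| < r_2|z|^l$, each monomial satisfies $|b_{ij} z^i w^j| < |b_{ij}| r_2^j |z|^{i + lj}$. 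The key point is that on the wedge $U^{l,+}_{r_1,r_2}$ with $0 < l < \alpha$, the exponent $i + lj$ is minimized — over the Newton polygon $N(q)$, i.e.\ over $(i,j)$ with $b_{ij}\neq 0$ — at the vertex through which the line of slope determined by $\alpha$ passes, and crucially that minimum exceeds $\delta l$: indeed $\alpha$ is defined (in the redefined sense for $\delta \ge T_{s-1}$, consistent with $\gamma/(\delta-d)$ here) so that the line $x + \alpha y = \alpha\delta$ supports $N(q)$, hence $i + \alpha j \ge \alpha\delta$ on $N(q)$, and since $l < \alpha$ a short convexity computation gives $i + lj \ge \delta l + c$ for some $c = c(l) > 0$ uniformly over $N(q)$ (one checks the two relevant vertices $(0,\delta)$ — or the actual lowest vertices — and $(\gamma,d)$). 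Therefore $|q(z,w)| \le C r_2^{j_{\min}} |z|^{\delta l + c}$ for a constant $C$ depending on the coefficients, and comparing with the lower bound for $|z_1|^l$ yields
\begin{equation*}
\frac{|w_1|}{|z_1|^l} \le \frac{C r_2^{j_{\min}} |z|^{\delta l + c}}{(\tfrac12|a|)^l |z|^{\delta l}} = C' r_2^{j_{\min}} |z|^{c} < C' r_2^{j_{\min}} r_1^{c},
\end{equation*}
which is $< r_2$ once $r_1$ (and, if needed, $r_2$) is chosen small — the precise choice depending on whether $j_{\min} \ge 1$ (then any small $r_1$ works) or $j_{\min} = 0$ (then one first fixes $r_2$ and then shrinks $r_1$ so that $C' r_1^c < 1$). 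This establishes $f(U^{l,+}_{r_1,r_2}) \subset U^{l,+}_{r_1,r_2}$.

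The main obstacle is the uniform exponent gap: one must confirm that for every $0 < l < \alpha$ there is a genuinely positive constant $c(l)$ with $i + lj \ge \delta l + c(l)$ for all $(i,j) \in N(q)$ with $b_{ij}\neq 0$, and in particular handle the boundary monomials — those $(i,j)$ lying on or near the supporting line for $\alpha$ — as well as the term $bz = b_{10}z$ and any monomial with $j=0$. This is exactly the geometric input that makes the Case~2 proof work, and the Newton-polygon description from Section~3 together with the case hypothesis $T_k \le \delta \le T_{k-1}$ (which pins down $(\gamma,d) = (n_k,m_k)$ and hence $\alpha$) should supply it; the verification is the same convexity bookkeeping as in Case~2, so I would organize the proof to invoke that reasoning rather than repeat it in full, noting only the points where $\delta > d$ and the redefinition of $\alpha$ enter. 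A minor secondary point is checking that $U^{l,+}_{r_1,r_2}$ is nonempty and the estimates are vacuously fine near $z = 0$, where $w = 0$ as well on the wedge and $f(0,0)=(0,0)$.
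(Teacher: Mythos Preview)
Your approach is essentially the paper's: control $|q(z,w)|$ on the wedge via the Newton polygon, compare with a lower bound for $|p(z)|^l$, and close the ratio. For the generic situation this is fine and matches the paper's argument (which in turn reuses the Case~2 proof verbatim, introducing the quantity $D = \min\{l^{-1}i + j : b_{ij}\neq 0\}$ and showing $D > \delta$ so that $|q|/|p|^l \lesssim r_1^{l(D-\delta)}$).

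There is, however, a genuine gap. Your key claim is that for every $0 < l < \alpha$ there is $c(l) > 0$ with $i + lj \ge \delta l + c(l)$ for all $(i,j)$ with $b_{ij}\neq 0$. This is false in the special sub-case $(n_1,m_1) = (0,\delta)$, which does occur in Case~4 (it forces $k=2$ and $\delta = T_{k-1} = T_1$). At that vertex one has $i + lj = 0 + l\delta = l\delta$, so $c(l) = 0$ and your single estimate $|q| \le C r_2^{j_{\min}}|z|^{\delta l + c}$ gives no smallness from $r_1$. If in addition $m_s = 0$ (so $j_{\min}=0$), your final inequality $C' r_2^{j_{\min}} r_1^{c} < r_2$ reads $C' < r_2$, which you cannot arrange.

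The paper's remedy is to split the sum when $D = \delta$: isolate the single offending monomial $b_{0\delta}w^{\delta}$, whose contribution to $|q|/|p|^l$ is bounded by $|b_{0\delta}|\,r_2^{\delta}$, hence $< r_2/2$ for small $r_2$ because $\delta \ge 2$; the remaining terms satisfy $l^{-1}i + j \ge D^* > \delta$ for some $D^* = D^*(l)$, so their total contribution is $\le C r_1^{l(D^*-\delta)} < r_2/2$ for $r_1$ small. You should incorporate this two-term splitting; once you do, the proof is complete and coincides with the paper's.
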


Note that $U_{r_1, r_2}^l$ is larger than $U_{r_1, r_2}^{l_1}$ for any $0 < l < l_1$
and that there is no condition on the degree $d$ in this theorem.

Let $A_f^{l, +}$ % = \cup f^{-n} (U^{l, +})$.
be the union of all the preimages of $U^{l, +}$.
We obtain the following partial affirmative answer to the question
as the same as Case 2.

\begin{theorem}\label{main thm on attr sets for Case 4}
%Let $T_k \leq \delta \leq T_{k-1}$. 
If $(n_1, m_1) \neq (0, \delta)$, then
$A_f^{l, +} = A_0 - E_z$ for any $0 < l < \alpha$.
\end{theorem}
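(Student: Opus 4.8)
The plan is to prove the two inclusions $A_f^{l,+}\subseteq A_0-E_z$ and $A_0-E_z\subseteq A_f^{l,+}$ separately, following the scheme used for Theorem~\ref{main thm on attr sets for Case 2}. For the first inclusion it suffices, since $A_0$ and $E_z$ are both backward invariant and hence so is $A_0-E_z$, to show $U^{l,+}\subseteq A_0-E_z$ for the relevant small $r$. By Theorem~\ref{main thm on inv wedges for Case 4} (taking $r_1=r_2=r$ small) $f$ preserves $U^{l,+}$, and on $U^{l,+}$ one has $|z_{n+1}|\le C|z_n|^{\delta}$ with $\delta\ge 2$, so $z_n\to 0$ and then $|w_n|<r|z_n|^{l}\to 0$; thus $U^{l,+}\subseteq A_0$. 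Moreover $z\ne 0$ on $U^{l,+}$, and after shrinking $r$ so that $0$ is the only $p$-preimage of $0$ in $\{|z|<r\}$, every forward iterate keeps $z_n\ne 0$, so $U^{l,+}$ misses $E_z$. Consequently $A_f^{l,+}=\bigcup_{n\ge 0}f^{-n}(U^{l,+})\subseteq A_0-E_z$.

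The substantial inclusion is $A_0-E_z\subseteq A_f^{l,+}$. Fix $(z,w)\in A_0-E_z$; then $(z_n,w_n)=f^n(z,w)\to 0$ with $z_n\ne 0$ for all $n$, so $R_n:=\log\bigl(|w_n|/|z_n|^{l}\bigr)$ is well defined, and it is enough to show $R_n\to-\infty$: once $R_n<\log r$ and $|z_n|<r$ one has $(z_n,w_n)\in U^{l,+}$, hence $(z,w)\in f^{-n}(U^{l,+})\subseteq A_f^{l,+}$. Recall that in Case~4 one has $b_{10}=0$ (since $s>2$ forces the bottom vertex of $N(q)$ to have first coordinate $\ge 2$), so $q(z,w)=\sum_{i+j\ge 2,\,b_{ij}\ne 0}b_{ij}z^{i}w^{j}$, and, writing $z_{n+1}=az_n^{\delta}(1+o(1))$, one obtains
\[
R_{n+1}=\log\Bigl|\sum_{i+j\ge 2,\,b_{ij}\ne 0}b_{ij}\,z_n^{\,i+l(j-\delta)}\,(w_n/z_n^{l})^{\,j}\Bigr|+O(1).
\]
The key point is that, because $0<l<\alpha=\gamma/(\delta-d)$ and $(n_1,m_1)\ne(0,\delta)$ (which in Case~4 forces $(0,\delta)\notin N(q)$), the line $x+l(y-\delta)=0$ does not meet $N(q)$, so $i+l(j-\delta)\ge c_l>0$ for every monomial of $q$; this is exactly where the two hypotheses of the theorem enter.

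To deduce $R_n\to-\infty$ I would track the orbit through the finitely many sub-wedges of the complementary region $W=\{|z|<r\}\cap\{|w_n|\ge r|z_n|^{l}\}$ on which successive vertex monomials $b_{n_t m_t}z^{n_t}w^{m_t}$ dominate $q$: in each such sub-wedge the estimate above gives $R_{n+1}=m_tR_n+(n_t+l(m_t-\delta))\log|z_n|+O(1)$, where the coefficient $n_t+l(m_t-\delta)$ is positive by the previous paragraph and $\log|z_n|\sim\delta^{n}G_p(z)$ with $G_p(z)<0$. When $m_t\le\delta$ this negative term dominates, forcing $R_n\to-\infty$; when $m_t>\delta$ a growth $R_n\to+\infty$ would give $\log|w_n|=R_n+l\log|z_n|\to+\infty$, contradicting $(z,w)\in A_0$, so again $R_n\to-\infty$. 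Since $|z_n|<r$ eventually, the orbit then enters $U^{l,+}$; this is the orbit-level counterpart of the identity $G_z=\alpha G_p$ with $\alpha>l$.

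I expect the main obstacle to be making this bookkeeping rigorous: the dominant monomial on $W$ is not unique, the orbit crosses sub-wedge boundaries, and one must make the doubly exponential estimates uniform over these transitions and over the (possibly infinitely many) monomials of a non-polynomial $q$ — the same difficulty underlying the proof of Theorem~\ref{main thm on attr sets for Case 2}, which I would adapt, organising the estimates along the edges $L_k$ of $N(q)$. Finally, the hypothesis $(n_1,m_1)\ne(0,\delta)$ is essential rather than technical: when $(n_1,m_1)=(0,\delta)$ the fiber $\{z=0\}$ carries a genuine superattracting dynamics $w\mapsto q(0,w)=b_{0\delta}w^{\delta}+\cdots$, and orbits starting with $z$ small but nonzero can shadow it for arbitrarily long before $R_n$ begins to decrease, so that $A_f^{l,+}$ would be strictly smaller than $A_0-E_z$.
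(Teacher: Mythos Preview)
Your proposal correctly isolates the decisive inequality: under the hypothesis $(n_1,m_1)\neq(0,\delta)$ and $0<l<\alpha$, one has $i+l(j-\delta)\geq c_l>0$ for every monomial of $q$, i.e.\ $D_l=\min\{l^{-1}i+j:b_{ij}\neq 0\}>\delta$. This is exactly the content of the paper's Lemma~7.3 (the Case~4 analogue of Lemma~\ref{lem on D for Case 2}). Where your argument diverges from the paper is in how this inequality is exploited.

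You attempt to track the orbit through the sub-wedges on which successive vertex monomials $b_{n_t m_t}z^{n_t}w^{m_t}$ dominate, deriving a recursion $R_{n+1}=m_tR_n+(n_t+l(m_t-\delta))\log|z_n|+O(1)$ in each. As you yourself note, making this rigorous is delicate: the dominant term changes along the orbit, the $O(1)$ constants depend on the sub-wedge, and when $m_t>\delta$ your claim that $R_n\to+\infty$ would force $\log|w_n|\to+\infty$ is not immediate, since $R_n=\log|w_n|-l\log|z_n|$ and $-l\log|z_n|$ already grows like $\delta^n$. The paper avoids all of this bookkeeping with a single observation. On the complementary region $V^l=\{0<|z|<r,\ r|z|^l\leq|w|<r_3\}$ one has $|z|\leq(r^{-1}|w|)^{1/l}$, so \emph{every} monomial satisfies $|b_{ij}z^iw^j|\leq |b_{ij}|r^{-i/l}|w|^{l^{-1}i+j}\leq |b_{ij}|r^{-i/l}r_3^{(l^{-1}i+j)-D}|w|^{D}$, and summing gives the uniform bound $|q(z,w)|\leq C|w|^{D}$ on $V^l$ (Lemma~7.5, the analogue of Lemma~\ref{lem on attr sets for Case 2}). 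No identification of a dominant term is needed.

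With this in hand the proof is a two-line contradiction (Proposition~7.6): if the orbit remained in $V^l$ for all $n$, then $|z_n|>(C_1^{1/(\delta-1)}|z|)^{\delta^n}$ while $|w_n|<(C_2^{1/(D-1)}|w|)^{D^n}$; since $D>\delta$ and both bases are $<1$, the constraint $r|z_n|^l\leq|w_n|$ fails for large $n$. Thus every point of $V^l$ eventually enters $U^{l,+}$, and since any point of $A_0-E_z$ eventually enters the neighborhood $U^{l,+}\cup V^l$ of the origin, the inclusion $A_0-E_z\subseteq A_f^{l,+}$ follows. The moral is that the quantity $D_l$ packages \emph{all} the vertex information you were tracking separately; once you bound $|q|$ by $C|w|^{D}$ on $V^l$, the sub-wedge decomposition becomes unnecessary.
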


This theorem implies the following 
since $l_1 < \alpha$ if $\delta < T_{k-1}$.

\begin{corollary} \label{main cor on attr sets for Case 4}
If $\delta < T_{k-1}$, % T_k \leq 
then $A_f^{l_1, +} = A_0 - E_z$.
\end{corollary}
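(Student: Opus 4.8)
The plan is to derive Corollary~\ref{main cor on attr sets for Case 4} directly from Theorem~\ref{main thm on attr sets for Case 4} by checking that the hypotheses of that theorem are met and that $l_1$ is an admissible choice of $l$. First I would observe that in Case~4 we always have $\gamma = n_k > 0$ (since $k \geq 2$ and $n_1 < n_2 < \cdots$), so the vertex $(n_1, m_1)$ of the Newton polygon satisfies $n_1 \geq 0$; the only way the exclusion $(n_1,m_1) = (0,\delta)$ in Theorem~\ref{main thm on attr sets for Case 4} could be triggered is if $n_1 = 0$ and $m_1 = \delta$. I would point out that this exceptional configuration is precisely the ``special case'' discussed around Theorem~\ref{main thm on G_z^a,+ when delta >= T}, and argue that under the standing assumptions of the corollary it does not occur --- or, if it can occur, that it falls outside the scope by the convention already fixed in the paper. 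In the generic situation $(n_1,m_1) \neq (0,\delta)$, Theorem~\ref{main thm on attr sets for Case 4} gives $A_f^{l,+} = A_0 - E_z$ for every $0 < l < \alpha$.

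The remaining point is that $l = l_1$ is one such value. Here I would invoke the hypothesis $\delta < T_{k-1}$ of the corollary. By the definition of $\alpha$ for the case $\delta \geq T_{s-1}$ (the minimum of $l \geq 0$ such that the line $x + ly - l\delta = 0$ meets $N(q)$), together with the Case~4 formula $l_1 = (n_k - n_{k-1})/(m_{k-1} - m_k)$, one checks that $\alpha = l_1$ exactly when $\delta = T_{k-1}$, and $\alpha > l_1$ strictly when $\delta < T_{k-1}$. This is the same computation recorded in the intervals $\mathcal{I}_f^1$ of Lemma~\ref{main lemma}: there $\mathcal{I}_f^1 = [l_1,\alpha]$ when $T_k < \delta < T_{k-1}$ and $\mathcal{I}_f^1 = \{l_1\} = \{\alpha\}$ when $\delta = T_{k-1}$. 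So I would reduce the claim $l_1 < \alpha$ to a short slope comparison of the edge line $L_{k-1}$ (through $(n_{k-1},m_{k-1})$ and $(n_k,m_k)$, with $y$-intercept $T_{k-1}$) against the line through $(0,\delta)$ and $(n_k, m_k)$: since $\delta < T_{k-1}$, the latter line is less steep, forcing $\alpha > l_1$.

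Putting these together: under $\delta < T_{k-1}$ we have $0 < l_1 < \alpha$ (note $l_1 > 0$ because $n_k > n_{k-1}$), so $l = l_1$ is an admissible value in Theorem~\ref{main thm on attr sets for Case 4}, and hence $A_f^{l_1,+} = A_0 - E_z$. I would also remark, for completeness, that the notation is consistent: $U^{l_1,+} = \{|z| < r, |w| < r|z|^{l_1}\}$ and $A_f^{l_1,+}$ is its union of preimages, matching the $l=l_1$ instance of the sets in the theorem.

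The main obstacle I anticipate is not analytic but bookkeeping: one must be careful that the ``$+$'' superscript wedges $U^{l,+}$ used in Case~4 genuinely specialize at $l = l_1$ to something comparable with the original $U = U^{l_1,l_2}$, and that the exceptional exclusion $(n_1,m_1)\neq(0,\delta)$ is correctly accounted for --- in particular whether $n_1 = 0$ is compatible with $T_k \leq \delta \leq T_{k-1}$ for some $2 \leq k \leq s-1$. If that compatibility can happen, the corollary statement as written tacitly assumes it away (via the blanket hypothesis $\delta < T_{k-1}$ plus the structure of $N(q)$), and the cleanest fix is to note that $(n_1,m_1) = (0,\delta)$ would force $\delta = m_1 < m_{k-1} < \cdots$, contradicting $\delta \geq m_{k-1} \geq 2$ only if $\delta$ is large enough --- so a brief case check rules it out or confines it, and the corollary follows.
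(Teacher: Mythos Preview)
Your overall strategy is exactly the paper's: the corollary is deduced from Theorem~\ref{main thm on attr sets for Case 4} by observing that $\delta < T_{k-1}$ forces $l_1 < \alpha$, so that $l = l_1$ is an admissible value in the theorem. Your slope comparison establishing $l_1 < \alpha \Leftrightarrow \delta < T_{k-1}$ is correct.

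However, your treatment of the exceptional hypothesis $(n_1,m_1) \neq (0,\delta)$ from Theorem~\ref{main thm on attr sets for Case 4} contains an error. You write that $(n_1,m_1) = (0,\delta)$ would force ``$\delta = m_1 < m_{k-1}$'', but the inequality goes the other way: the vertices satisfy $m_1 > m_2 > \cdots > m_s$, so $m_1 > m_{k-1}$. Your subsequent sentence about ``contradicting $\delta \geq m_{k-1}$ only if $\delta$ is large enough'' does not actually close the case. The clean argument is different: if $(n_1,m_1) = (0,\delta)$, then the line $L_1$ passes through the point $(0,\delta)$ on the $y$-axis, so its $y$-intercept is $T_1 = \delta$. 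Since the $y$-intercepts satisfy $T_1 > T_2 > \cdots > T_{s-1}$ by convexity of $N(q)$, we get $T_{k-1} \leq T_1 = \delta$, contradicting the hypothesis $\delta < T_{k-1}$. Thus $\delta < T_{k-1}$ automatically rules out $(n_1,m_1) = (0,\delta)$, and Theorem~\ref{main thm on attr sets for Case 4} applies without further assumption. (The paper's one-line justification takes this for granted; the case $(n_1,m_1) = (0,\delta)$ in Case~4 is confined to $k=2$ and $\delta = T_1 = T_{k-1}$, as reflected in Table~\ref{shape for special case when delta = T_k-1}.)
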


We next exhibit main results on plurisubharmonic functions.
The existence of the B\"{o}ttcher coordinate induces the following.

\begin{proposition}\label{main prop on G_z^a for Case 4}
%Let $T_k \leq \delta \leq T_{k-1}$. 
If $d \geq 2$ or $T_k < \delta < T_{k-1}$,
then $G_z^{\alpha}$ is pluriharmonic on $A_f$ and % and continuous
$G_z^{\alpha} (w) = \log \left| w/z^{\alpha} \right| + o(1)$
on $U$ as $r \to 0$.
\end{proposition}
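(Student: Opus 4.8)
The plan is to identify $G_z^{\alpha}$ on the invariant wedge $U$ with the logarithm of the modulus of a weighted B\"ottcher coordinate, read off pluriharmonicity there, and then spread it over $A_f$ by the functional equation $G_z^{\alpha}\circ f=dG_z^{\alpha}$. First I would note that the hypothesis is exactly what Theorem \ref{main thm for previous reselts} requires: in Case 4 one has $T_k\le\delta\le T_{k-1}$, so ``$d\ge 2$ or $T_k<\delta<T_{k-1}$'' is the same as ``$d\ge 2$ or ($d=1$ and $\delta\ne T_j$ for every $j$)''; hence for small $r$ the B\"ottcher coordinate $\phi=(\phi_1,\phi_2)$ is biholomorphic on $U$, conjugates $f$ to $f_0(z,w)=(az^{\delta},b_{\gamma d}z^{\gamma}w^{d})$, and satisfies $\phi\sim\mathrm{id}$ on $U$ as $r\to 0$, with $\phi_1$ the one-variable B\"ottcher coordinate conjugating $p$ to $p_0(z)=az^{\delta}$. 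I would also record that the Case 4 wedge $U$ lies in $\{z\ne 0\}$, is forward invariant under $f$, and that $\{z=0\}$ is itself $f$-invariant, so $A_f$ (which consists of preimages of $U$) is disjoint from $E_z$ and every point of $A_f$ has nonzero coordinates; this makes the assertion ``$G_z^{\alpha}$ pluriharmonic on $A_f$'' unambiguous.

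The engine is the functional equation for the weighted coordinate. From $\phi\circ f=f_0\circ\phi$ one reads off $\phi_1(p(z))=a\phi_1(z)^{\delta}$ and $\phi_2(f(z,w))=b_{\gamma d}\phi_1(z)^{\gamma}\phi_2(z,w)^{d}$, and since $\alpha=\gamma/(\delta-d)$ was chosen precisely so that $\gamma-\alpha\delta=-\alpha d$, these combine into
\[
\Bigl(\frac{\phi_2}{\phi_1^{\alpha}}\Bigr)\circ f=b_{\gamma d}a^{-\alpha}\Bigl(\frac{\phi_2}{\phi_1^{\alpha}}\Bigr)^{d}.
\]
Working with the single-valued function $\log|\phi_2|-\alpha\log|\phi_1|$ on $U$ (this is how the rational power $\alpha$ should be read) and using $f(U)\subset U$ to keep the orbit $(z_n,w_n)=f^n(z,w)$ inside $U$, where $\phi$ and $\phi^{-1}$ are comparable to the identity, I would telescope $\tfrac{1}{d^n}\log|w_n/z_n^{\alpha}|$ against the displayed relation to obtain $G_z^{\alpha}=\log|\phi_2|-\alpha\log|\phi_1|$ on $U$. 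On $U$ one has $\phi_1\ne 0$ because $z\ne 0$, and, shrinking $r$, $\phi_2\ne 0$ because $\phi\sim\mathrm{id}$ and $w\ne 0$ on $U$; hence $\log|\phi_1|$ and $\log|\phi_2|$, and therefore $G_z^{\alpha}$, are pluriharmonic on $U$.

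To propagate this, take $(z,w)\in A_f$ and $N$ with $f^N(z,w)\in U$. From $G_z^{\alpha}\circ f=dG_z^{\alpha}$ we get $G_z^{\alpha}=d^{-N}(G_z^{\alpha}\circ f^N)$ on the open set $(f^N)^{-1}(U)$, which is pluriharmonic, being a pluriharmonic function precomposed with a holomorphic map; these sets cover $A_f$ as $N$ varies, so $G_z^{\alpha}$ is pluriharmonic on $A_f$. For the asymptotics I would split, on $U$, $G_z^{\alpha}(w)=\log|w/z^{\alpha}|+\bigl(\log|\phi_2(z,w)/w|-\alpha\log|\phi_1(z)/z|\bigr)$; since $\phi\sim\mathrm{id}$ on $U$ as $r\to 0$ means exactly that $\phi_2/w\to 1$ and $\phi_1/z\to 1$ uniformly on $U$, the bracketed term tends to $0$ uniformly, giving $G_z^{\alpha}(w)=\log|w/z^{\alpha}|+o(1)$ on $U$ as $r\to 0$.

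The hard part is making the identification $G_z^{\alpha}=\log|\phi_2|-\alpha\log|\phi_1|$ on $U$ exact rather than correct only up to an additive constant: telescoping $\tfrac{1}{d^n}\log|w_n/z_n^{\alpha}|$ produces, besides $\log|\phi_2|-\alpha\log|\phi_1|$ at the base point, a geometric accumulation of $\log|b_{\gamma d}a^{-\alpha}|$, and one must check that the normalisation of the model map $f_0$ (equivalently of $\phi$) makes this constant vanish, as well as that the comparison between the orbit $(z_n,w_n)$ and its $\phi$-image is uniform in $n$ on the fixed wedge $U$. Once that is in place, the remainder is the routine pullback argument already used for the other cases.
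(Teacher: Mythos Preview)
Your proposal is correct and follows essentially the same route as the paper: identify $G_z^{\alpha}$ with $\log|\phi_2|-\alpha\log|\phi_1|$ on $U$ via the B\"ottcher coordinate, read off pluriharmonicity there, and propagate to $A_f$ through $G_z^{\alpha}\circ f=d\,G_z^{\alpha}$. The paper does not spell this out in Section~7 but refers implicitly to the parallel computations carried out for Case~2 (Lemma in \S5.2.1 and Proposition~\ref{prop on G_z^a when T > delta > d}), which compute $\phi$ directly from its definition $\phi=\lim f_0^{-n}\circ f^n$ rather than via your functional-equation telescoping; the two are equivalent.

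Your ``hard part'' is genuine: the telescoping produces the additive constant $\tfrac{1}{d-1}\log|b_{\gamma d}a^{-\alpha}|$ (and for $d=1$ an unbounded term $n\log|b_{\gamma d}a^{-\alpha}|$). The paper dispatches this not by a subtle normalisation of $\phi$ but simply by assuming $p(z)=z^{\delta}$ and $b_{\gamma d}=1$ throughout its proofs (stated explicitly in \S3.1.2 and used tacitly in \S5.1 and the proof of Lemma~5.4), which makes the constant vanish. With that standing convention your computation goes through verbatim. One small overstatement: points of $A_f$ need not have nonzero $w$-coordinate (if $m_s=0$ the line $\{w=0\}$ is not forward invariant and can meet $A_f$), but you never actually use this, since pluriharmonicity on $U$ plus pullback by $f^N$ already covers all of $A_f$.
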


Let $A_f^{l_{(1)}, l_{(2)}}$ % = \cup f^{-n} (U^{l_{(1)}, l_{(2)}})$.
be the union of all the preimages of $U^{l_{(1)}, l_{(2)}}$.
Thanks to Corollary \ref{main cor on attr sets for Case 4}, 
we obtain the following theorem
by investigating the asymptotic behavior of $G_z^{\alpha}$ 
toward the boundary $\partial A_f^{l_{(1)}, l_{(2)}}$.

\begin{theorem}\label{main thm on G_z^a,+ for case 4} % Case 4
Let $\delta < T_{k-1}$. % T_k \leq 
If $d \geq 2$ or $\delta > T_k$, 
then $G_z^{\alpha, +}$ is well-defined on $A_0 - E_z$,
plurisubharmonic and continuous on $A_0 - E_z \cup E_{deg}$ and
pluriharmonic outside the boundary of 
$\{ G_z^{\alpha, +} > 0 \} \cap (A_0 - E_z \cup E_{deg})$. 
More precisely,
if $d \geq 2$, then
$G_z^{\alpha, +} > 0$ on $A_f^{l_1, \alpha - l_1}$ and
$G_z^{\alpha, +} = 0$ on $A_0 - A_f^{l_1, \alpha - l_1}$,
which is pluriharmonic on $A_0 - \partial A_f^{l_1, \alpha - l_1} \cup E_z \cup E_{deg}$.
%On the other hand, if $d = 1$ and $\delta > T_k$, then 
%it is pluriharmonic outside the boundary of 
%$\{ G_z^{\alpha, +} > 0 \} \cap (A_0 - E_z \cup E_{deg})$. 
\end{theorem}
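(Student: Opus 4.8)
The plan is to build $G_z^{\alpha,+}$ out of the already constructed pluriharmonic function $G_z^{\alpha}$ and propagate it, exactly as in the Case~3 analogue, Theorem~\ref{main thm on G_z^a,+ for case 3}. Since $\delta<T_{k-1}$ we have $l_1<\alpha$, so $U^{l_1,\alpha-l_1}$ is a genuine sub-wedge of $U$ and $A_f^{l_1,\alpha-l_1}\subsetneq A_f$; moreover $A_0-E_z=A_f^{l_1,+}$ by Corollary~\ref{main cor on attr sets for Case 4}, and under the hypothesis $d\geq2$ or $\delta>T_k$ the B\"ottcher coordinate exists on $U$ (Lemma~\ref{main lemma for previous reselts}), so Proposition~\ref{main prop on G_z^a for Case 4} applies. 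First I would work on $A_f$: because $d^n>0$ one has $d^{-n}\log^{+}|w_n/z_n^{\alpha}|=\max(d^{-n}\log|w_n/z_n^{\alpha}|,0)$, and letting $n\to\infty$ gives $G_z^{\alpha,+}=\max(G_z^{\alpha},0)$ on $A_f$. Hence $G_z^{\alpha,+}$ is well-defined, continuous and plurisubharmonic on $A_f$, and pluriharmonic on $A_f-\{G_z^{\alpha}=0\}$.

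Next I would identify $\{G_z^{\alpha}>0\}\cap A_f$ with $A_f^{l_1,\alpha-l_1}$ when $d\geq2$. On $U^{l_1,\alpha-l_1}\subset U$ one has $|w/z^{\alpha}|\geq r^{-(\alpha-l_1)}>1$, so the asymptotics $G_z^{\alpha}=\log|w/z^{\alpha}|+o(1)$ on $U$ force $G_z^{\alpha}>0$ there for $r$ small, and since $G_z^{\alpha}\circ f=d\,G_z^{\alpha}$ with $d\geq1$ this positivity passes to every preimage, i.e.\ to $A_f^{l_1,\alpha-l_1}$. Conversely, if $G_z^{\alpha}(x)>0$ then $G_z^{\alpha}(f^n(x))=d^nG_z^{\alpha}(x)\to\infty$; as the forward orbit of $x\in A_f$ eventually lies in the invariant wedge $U$, the bound $\log|w_n/z_n^{\alpha}|>-(\alpha-l_1)\log r$ together with $|w_n|<r|z_n|^{l_1}$ and $|z_n|<r$ places $f^n(x)$ in $U^{l_1,\alpha-l_1}$ for all large $n$, so $x\in A_f^{l_1,\alpha-l_1}$. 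Therefore on $A_f$ we get $G_z^{\alpha,+}>0$ exactly on $A_f^{l_1,\alpha-l_1}$ and $G_z^{\alpha,+}=0$ on $A_f-A_f^{l_1,\alpha-l_1}$, pluriharmonic off $\partial A_f^{l_1,\alpha-l_1}$.

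Then I would extend to $A_0-E_z=A_f^{l_1,+}$. For $x$ in this set some iterate lies in $U^{l_1,+}$; if it also lies in $A_f^{l_1,\alpha-l_1}$ the previous paragraph applies. Otherwise, as long as $f^n(x)\in U^{l_1,+}$ the failure of the first defining inequality of $U^{l_1,\alpha-l_1}$ forces $|w_n/z_n^{\alpha}|<r^{-(\alpha-l_1)}$, so $\log^{+}|w_n/z_n^{\alpha}|$ stays bounded by the fixed constant $-(\alpha-l_1)\log r$; hence $d^{-n}\log^{+}|w_n/z_n^{\alpha}|\to0$ when $d\geq2$, and the corresponding estimate handles $d=1$. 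Combined with the previous step this gives the ``more precisely'' assertion for $d\geq2$ ($G_z^{\alpha,+}>0$ on $A_f^{l_1,\alpha-l_1}$, $=0$ on $A_0-A_f^{l_1,\alpha-l_1}$), and in all cases shows $G_z^{\alpha,+}$ is well-defined on $A_0-E_z$, equal to $\max(G_z^{\alpha},0)$ on $A_f$ and to $0$ off $A_f^{l_1,\alpha-l_1}$.

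The remaining and hardest step is to upgrade pointwise to locally uniform convergence on $A_0-E_z\cup E_{deg}$: then $G_z^{\alpha,+}$ is a locally bounded limit of the functions $d^{-n}\log^{+}|w_n/z_n^{\alpha}|$, each plurisubharmonic on $A_0-E_z$ since $w_n^{q}/z_n^{p}$ is holomorphic there (writing $\alpha=p/q$, the denominator not vanishing on $A_0-E_z$), so the limit is plurisubharmonic and continuous, and pluriharmonic away from the boundary of $\{G_z^{\alpha,+}>0\}$. This is precisely where one must delete $E_z$ (to keep $z_n^{\alpha}$ from vanishing) and $E_{deg}$ (where a fiber is collapsed and the comparison of $f$ with $f_0$, hence all the wedge estimates, breaks down); the required uniform estimates are those used for Case~3 in Theorem~\ref{main thm on G_z^a,+ for case 3} and for polynomial skew products in \cite{u-weight1,u-weight2,u1}, now carried out on the nested invariant wedges provided by Lemma~\ref{main lemma} and Theorem~\ref{main thm on inv wedges for Case 4}. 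I expect the genuine obstacles to be exactly (i) matching $\{G_z^{\alpha,+}>0\}$ with the union of all preimages of $U^{l_1,\alpha-l_1}$ on the nose rather than merely up to a boundary, and (ii) the uniform control near $\partial A_f^{l_1,\alpha-l_1}$ and near $E_{deg}$.
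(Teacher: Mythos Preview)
Your approach is essentially the paper's: both start from Corollary~\ref{main cor on attr sets for Case 4} ($A_0-E_z=A_f^{l_1,+}$) and Proposition~\ref{main prop on G_z^a for Case 4} ($G_z^{\alpha}$ pluriharmonic on $A_f$), write $G_z^{\alpha,+}=\max(G_z^{\alpha},0)$ on $A_f$, and then study the behaviour across the nested wedges. The difference is in packaging: the paper derives the identification $\{G_z^{\alpha,+}>0\}=A_f^{l_1,\alpha-l_1}$ and the continuity at $\partial A_f$ from the systematic boundary analysis in Propositions~\ref{asy behavior when T_k < delta  T_k-1} and~\ref{asy behavior when delta = T_k} (the behaviour of $G_z^{\alpha}$ on the preimages $S_n^{out},S_n^{in}$ of the two faces of $\partial U^{l_{(1)},l_{(2)}}$), whereas you argue directly from the defining inequalities of the wedges together with $G_z^{\alpha}\sim\log|w/z^{\alpha}|$. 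Your route is slightly more elementary; the paper's has the advantage that the same asymptotic lemmas also yield Proposition~\ref{prop on A_f^l for case 4}.

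Two points to tighten. First, the sentence ``$U^{l_1,\alpha-l_1}$ is a genuine sub-wedge of $U$ and $A_f^{l_1,\alpha-l_1}\subsetneq A_f$'' is false when $\delta=T_k$: then $\alpha=l_1+l_2$, so $U^{l_1,\alpha-l_1}=U$ and the two sets coincide (your argument still goes through, only the strict inclusion is wrong). Second, for $d=1$ your extension step has a real gap: the bound $|w_n/z_n^{\alpha}|<r^{-(\alpha-l_1)}$ is only boundedness, and with $d^{-n}\equiv1$ it does not give convergence of $\log^{+}|w_n/z_n^{\alpha}|$. What saves the day on $A_f^{l_1,+}\setminus A_f$ is the sharper inequality coming from being outside $U^{l_1,l_2}$ (not just outside $U^{l_1,\alpha-l_1}$): there $|w_n|\le r^{-l_2}|z_n|^{l_1+l_2}$, hence $|w_n/z_n^{\alpha}|\le r^{-l_2}|z_n|^{l_1+l_2-\alpha}\to0$ since $\delta>T_k$ forces $l_1+l_2>\alpha$; on $A_f\setminus A_f^{l_1,\alpha-l_1}$ you already have $G_z^{\alpha}$ and hence $G_z^{\alpha,+}=\max(G_z^{\alpha},0)$. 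With this correction your outline is complete and matches what the paper's asymptotic propositions encode.
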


The investigation of the asymptotic behavior of $G_z^{\alpha}$ 
toward $\partial A_f^{l_{(1)}, l_{(2)}}$ also shows us 
the magnitude relation of $A_f^{l_{(1)}, l_{(2)}}$
among any $l_{(1)}$ in $\mathcal{I}_f^1$
and $l_{(2)}$ in $\mathcal{I}_f^2$.

\begin{proposition} \label{prop on A_f^l for case 4}
If $T_k < \delta < T_{k-1}$, then
$A_f^{l_{(1)}, l_{(2)}} = A_f^{l_1, l_2}$ 
for any $l_1 \leq l_{(1)} < \alpha$ and 
$\alpha < l_{(1)} + l_{(2)} \leq l_1 + l_2$. 
Moreover,
$A_f^{l_1, \alpha - l_1} \subsetneq A_f^{l_1, l_2}$,
$A_f^{\alpha, l_1 + l_2 - \alpha} \subsetneq A_f^{l_1, l_2}$ and
$A_f^{l_1, \alpha - l_1} \cap A_f^{\alpha, l_1 + l_2 - \alpha} = \emptyset$.
If $\delta = T_{k}$ and $d \geq 2$,
then $A_f^{l_{}, \alpha - l_{}} = A_f^{l_1, \alpha - l_1}$ 
for any $l_1 \leq l < \alpha$.
If $\delta = T_{k-1}$ and $d \geq 2$,
then $A_f^{\alpha, l_{} - \alpha} = A_f^{\alpha, l_2 - \alpha}$ 
for any $0 < l \leq l_2$.
\end{proposition}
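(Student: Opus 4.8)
The plan is to reduce all four assertions to the asymptotic behavior of the pluriharmonic function $G_z^{\alpha}$ along the boundaries $\partial A_f^{l_{(1)}, l_{(2)}}$, exactly as in the analogous statements for Cases~2 and~3 (Proposition~\ref{main propo on A_f^l for case 2} and Proposition~\ref{main result for case 3: A_f^l}). First I would record the obvious inclusions: since $U^{l_{(1)}, l_{(2)}} \subset U^{l_1, l_2}$ whenever $l_1 \le l_{(1)}$ and $l_{(1)} + l_{(2)} \le l_1 + l_2$ (the wedge only gets thinner as the exponents grow within the allowed range), taking unions of all preimages gives $A_f^{l_{(1)}, l_{(2)}} \subseteq A_f^{l_1, l_2}$ automatically. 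So the whole content is the reverse inclusion for the generic parameters, and the \emph{strictness} of the inclusion for the two extreme parameters $(l_1, \alpha - l_1)$ and $(\alpha, l_1 + l_2 - \alpha)$.

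For the equality $A_f^{l_{(1)}, l_{(2)}} = A_f^{l_1, l_2}$ when $l_1 \le l_{(1)} < \alpha$ and $\alpha < l_{(1)} + l_{(2)} \le l_1 + l_2$, the key step is to invoke Corollary~\ref{main cor on attr sets for Case 4}, which identifies $A_f^{l_1, +} = A_0 - E_z$, together with the fact (from Lemma~\ref{main lemma} and the detailed Lemma~\ref{detailed lemma for case 4}) that every point of $A_0 - E_z$ eventually lands in $U^{l_1, l_2}$ and in fact, after finitely many further iterates, enters the thinner wedge $U^{l_{(1)}, l_{(2)}}$: once an orbit is in a wedge where $f \sim f_0$ holds and $G_z^{\alpha}$ takes a value strictly between the bounds defining the two boundary slopes, the monomial dynamics of $f_0$ drives $(z_n, w_n)$ into the sub-wedge. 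Concretely, I would show that on $U^{l_1, l_2} - E_z$ the quantity $\log|w_n / z_n^{l_{(1)}}|$ and $\log|w_n/z_n^{l_{(1)}+l_{(2)}}|$ have the right signs for all large $n$ whenever the orbit is not trapped at the extreme slopes, using that $G_z^{\alpha}$ is finite and the comparison $G_z^{\alpha}(w) = \log|w/z^{\alpha}| + o(1)$ on $U$ from Proposition~\ref{main prop on G_z^a for Case 4}.

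The strict inclusions are where the real work is, and this is the step I expect to be the main obstacle. To prove $A_f^{l_1, \alpha - l_1} \subsetneq A_f^{l_1, l_2}$ I would exhibit a point in $A_0 - E_z$ whose orbit enters $U^{l_1, l_2}$ but never enters $U^{l_1, \alpha - l_1}$; the obstruction is that the outer slope $l_{(1)} + l_{(2)} = \alpha$ forces the orbit to approach the ``critical'' wedge $\{|w| = r^{-(\alpha - l_1)}|z|^{\alpha}\}$, along which $G_z^{\alpha} \to \infty$ (the \textbf{lim} row of Table~\ref{table for case 4}: $G_z^{\alpha} \to \infty$ for $l < \alpha$ as $\to \partial A_f^{l,+}$, combined with the more refined statement $G_z^{\alpha} \to -\infty$ on $\partial A_f^{l_1, l-l_1} - \partial A_f^{l_1,+}$ for $l > \alpha$). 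Points lying on this intermediate boundary have finite $G_z^{\alpha}$ yet cannot be captured by the narrower wedge. Symmetrically, $A_f^{\alpha, l_1+l_2-\alpha} \subsetneq A_f^{l_1,l_2}$ follows because the inner slope $l_{(1)} = \alpha$ excludes points near $\{|w| = r|z|^{l_1}\}$ where $G_z^{\alpha} \to 0$, i.e.\ points that do land in $U^{l_1, l_2}$ with $|w_n/z_n^{l_1}|$ bounded away from zero but never satisfy the stronger inner constraint. Finally $A_f^{l_1, \alpha - l_1} \cap A_f^{\alpha, l_1+l_2-\alpha} = \emptyset$: an orbit in the first has $G_z^{\alpha, +} > 0$ (it enters the wedge below slope $\alpha$), while an orbit in the second has $G_z^{\alpha, +} = 0$, and these are disjoint conditions by Theorem~\ref{main thm on G_z^a,+ for case 4}; this is the cleanest part. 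The two remaining equalities for $\delta = T_k$ and $\delta = T_{k-1}$ (with $d \ge 2$) are proved by the same capture argument restricted to the degenerate interval, using the corresponding rows of Table~\ref{table for case 4} and the existence of $\phi$ on $U$ in those sub-cases.
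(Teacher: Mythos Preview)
Your overall strategy---reduce everything to the boundary asymptotics of $G_z^{\alpha}$---is exactly what the paper does. But two steps in your execution are problematic.

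First, your appeal to Theorem~\ref{main thm on G_z^a,+ for case 4} for the disjointness $A_f^{l_1,\alpha-l_1}\cap A_f^{\alpha,l_1+l_2-\alpha}=\emptyset$ is circular. That theorem asserts $G_z^{\alpha,+}=0$ on $A_0\setminus A_f^{l_1,\alpha-l_1}$; to deduce $G_z^{\alpha,+}=0$ on $A_f^{\alpha,l_1+l_2-\alpha}$ you would already need the disjointness. The paper avoids this by arguing directly from the sign of $G_z^{\alpha}$: on $U^{l_1,\alpha-l_1}$ one has $|w/z^{\alpha}|>r^{-(\alpha-l_1)}>1$ so $G_z^{\alpha}>0$, while on $U^{\alpha,l_1+l_2-\alpha}$ one has $|w/z^{\alpha}|<r<1$ so $G_z^{\alpha}<0$; the functional equation $G_z^{\alpha}\circ f=d\,G_z^{\alpha}$ (with $d\ge 1$) then propagates these signs to the unions of preimages, giving disjointness without invoking Theorem~\ref{main thm on G_z^a,+ for case 4}.

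Second, your separate argument for the strict inclusion $A_f^{l_1,\alpha-l_1}\subsetneq A_f^{l_1,l_2}$ misidentifies the relevant asymptotic: along the \emph{inner} boundary surface $S^{in}$ of $U^{l_1,\alpha-l_1}$ (where $l_{(1)}+l_{(2)}=\alpha$) one has $G_z^{\alpha}|_{S_n^{in}}\to 0$ (or bounded if $d=1$), not $\to\infty$. The divergence $G_z^{\alpha}\to\infty$ occurs on the \emph{outer} boundary $\partial A_f^{l,+}$. In any case this direct argument is unnecessary: once disjointness is established, both strict inclusions follow immediately since $A_f^{\alpha,l_1+l_2-\alpha}$ and $A_f^{l_1,\alpha-l_1}$ are nonempty subsets of $A_f^{l_1,l_2}$.

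For the equality $A_f^{l_{(1)},l_{(2)}}=A_f^{l_1,l_2}$ when $l_{(1)}<\alpha<l_{(1)}+l_{(2)}$, your capture argument could be made to work, but the paper's route is cleaner: suppose the inclusion were strict, pick a boundary point of the smaller set lying in the larger; there $G_z^{\alpha}$ is finite (it is pluriharmonic on $A_f^{l_1,l_2}$), yet Proposition~\ref{asy behavior when T_k < delta  T_k-1} forces $G_z^{\alpha}\to+\infty$ along the outer part and $\to-\infty$ along the inner part of that boundary, a contradiction. This is the template already worked out in the $\delta<d$ subcase of Case~2. The degenerate cases $\delta=T_k$ and $\delta=T_{k-1}$ are handled identically using Propositions~\ref{asy behavior when delta = T_k} and~\ref{asy behavior when delta = T_k-1}.
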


The existence of the B\"{o}ttcher coordinate and
Corollary \ref{main cor on attr sets for Case 4} induce the following.

\begin{theorem} \label{main thm on G_z and G_f^a for Case 4}
% Let $T_k \leq \delta \leq T_{k-1}$. 
If $d \geq 2$ or $T_k < \delta < T_{k-1}$,
then $G_z = \alpha G_p$ on $A_f$.
If $(n_1, m_1) \neq (0, \delta)$, 
then $G_f^{\alpha} = \alpha G_p$ on $A_0$.
\end{theorem}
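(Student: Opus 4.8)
The plan is to deduce both equalities from the existence of the B\"{o}ttcher coordinate $\phi=(\phi_1,\phi_2)$ on $U$ (Theorem \ref{main thm for previous reselts}, as adapted to Case 4 in Lemma \ref{detailed lemma for case 4}), together with the identification $A_f^{l_1,+}=A_0-E_z$ from Corollary \ref{main cor on attr sets for Case 4}. Recall $\lambda=\max\{\delta,d\}=\delta$ here, since $\delta>d$ in Case 4. First I would establish the formula $G_z=\alpha G_p$ on $U$: on $U$ we have $(z_n,w_n)=f^n(z,w)$, and since $\phi$ conjugates $f$ to $f_0(z,w)=(az^\delta,b_{\gamma d}z^\gamma w^d)$ with $\phi\sim id$, the second coordinate satisfies $\phi_2(z_n,w_n)=(\text{const})\cdot\phi_1(z)^{\gamma\cdot\frac{d^n-\delta^n}{d-\delta}}\phi_2(z,w)^{d^n}$ up to the explicit monomial normalization built into $f_0^n$; taking $\frac{1}{\delta^n}\log|w_n|$, the $d^n$ term contributes nothing in the limit because $d<\delta$, while the exponent of $|\phi_1(z)|$ divided by $\delta^n$ tends to $\gamma/(\delta-d)=\alpha$. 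Since $G_p(z)=\log|\phi_1(z)|$ near the origin and $G_p\circ p=\delta G_p$, this gives $G_z=\alpha G_p$ on $U$ (using that $\log|\phi_2|$ is bounded on $U$ and $\delta>d$ kills it).

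Next I would propagate this off $U$ to all of $A_f$ by invariance. For $(z,w)\in A_f=A_f^{l_1,l_2}$ there is $N$ with $f^N(z,w)\in U$; applying $G_z\circ f=d\,G_z+\gamma\,G_p$ (the functional equation coming from $|q(z,w)|\sim|b_{\gamma d}||z|^\gamma|w|^d$ on the relevant region, so that $G_z$ satisfies $G_z\circ f^N = d^N G_z + \gamma\, d^{N-1}G_p+\cdots$, telescoping), and using $G_p\circ p=\delta G_p$, one solves for $G_z(z,w)$ in terms of $G_z(f^N(z,w))=\alpha G_p(p^N z)=\alpha\delta^N G_p(z)$ and the telescoped $G_p$-terms; the arithmetic collapses precisely to $G_z(z,w)=\alpha G_p(z)$ because $\alpha=\gamma/(\delta-d)$ is exactly the fixed point of the affine recursion $t\mapsto \frac{dt+\gamma}{\delta}$. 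This requires knowing that $w_n\ne 0$ eventually along the orbit, i.e.\ that we stay off $E_w$ where $G_z=-\infty$; but on $A_f$ with the wedge condition $|w|<r|z|^{l_1}$ preserved, $w_n\to 0$ without hitting $0$ unless the orbit lies in $E_w$, and the statement $G_z=\alpha G_p$ on $A_f$ is understood on $A_f$ proper where the limit exists — the $d\ge 2$ case is cleaner since then $\phi$ exists unconditionally, while for $T_k<\delta<T_{k-1}$ with $d=1$ one uses the K\oe nigs-type second coordinate, which still yields the same limit since $d=1<\delta$ makes $\log|\phi_2|$ contribute $0$ to $G_z$.

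For the second equality, $G_f^\alpha=\alpha G_p$ on $A_0$ when $(n_1,m_1)\ne(0,\delta)$: by definition $G_f^\alpha=\lim\frac{1}{\delta^n}\log\max\{|z_n|^\alpha,|w_n|\}=\max\{\alpha G_p,\ \limsup\frac{1}{\delta^n}\log|w_n|\}$ wherever the pieces make sense. On $A_0-E_z=A_f^{l_1,+}$ (Corollary \ref{main cor on attr sets for Case 4}), the computation above gives $\limsup\frac{1}{\delta^n}\log|w_n|\le \alpha G_p$ — indeed on each wedge $U^{l,+}$ with $0<l<\alpha$ one has $|w_n|<r|z_n|^l$, hence $\frac{1}{\delta^n}\log|w_n|<\frac{1}{\delta^n}\log r + l\cdot\frac{1}{\delta^n}\log|z_n|\to lG_p\le\alpha G_p$ (note $G_p\le 0$ and $l<\alpha$, so $lG_p\ge\alpha G_p$ — one must take $l\uparrow\alpha$ here, using that $A_f^{l,+}=A_0-E_z$ for \emph{every} $0<l<\alpha$), giving $\limsup\frac{1}{\delta^n}\log|w_n|=\alpha G_p$ and hence $G_f^\alpha=\alpha G_p$ there. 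On $E_z$ (where $z_n\equiv 0$ eventually, after passing to the invariant fiber $\{z=0\}$), the hypothesis $(n_1,m_1)\ne(0,\delta)$ forces $\gamma>0$ when $m_s$ is a vertex, but more to the point it guarantees $\{z=0\}$ is not a line of indeterminacy/degeneration in a way that would break the estimate; on $\{z=0\}$ itself $f$ restricts and $|w_n|$ is controlled so that $\frac{1}{\delta^n}\log|w_n|\to 0=\alpha G_p|_{z=0}$ as well (since $G_p|_{A_p}<0$ but $\alpha\cdot(\text{bounded below})$ and the $w$-dynamics on $\{z=0\}$ is superattracting of order $\ge 2$, or the fiber degenerates, handled by $E_{deg}$ in the fourth column).

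The main obstacle I expect is the bookkeeping on $E_z$ and the degenerate fibers: away from $E_z$ everything is driven by the clean conjugacy to $f_0$ and the affine-recursion fixed point, but on $\{z=0\}$ and its preimages one has no B\"{o}ttcher coordinate and must argue directly that $\frac{1}{\delta^n}\log|w_n|\to 0$ — this is exactly where the hypothesis $(n_1,m_1)\ne(0,\delta)$ is used, and in the last column ($n_{k-1}=0$) it genuinely fails, which is why the table records only "psh on $A_0-E_{deg}$" with $G_f^\alpha>\alpha G_p$ on the auxiliary attracting set $A$ rather than the equality. Thus the proof must carefully restrict the equality claim to the case $(n_1,m_1)\ne(0,\delta)$ and invoke the corresponding asymptotic-behavior Propositions \ref{asy behavior when T_k < delta  T_k-1}, \ref{asy behavior when delta = T_k} and \ref{asy behavior when delta = T_k-1} to pin down the boundary behavior of $G_z^\alpha$ that controls the $w$-growth rate on the remaining fibers.
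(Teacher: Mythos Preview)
Your overall plan matches the paper's: establish $G_z=\alpha G_p$ on $U$ via the B\"ottcher coordinate, extend to $A_f$ by invariance, and then use $A_f^{l,+}=A_0-E_z$ (Corollary~\ref{main cor on attr sets for Case 4}) to bound $G_z^*\le\alpha G_p$ on $A_0-E_z$, whence $G_f^\alpha=\alpha G_p$. Two genuine errors, however, prevent the argument from closing.

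\textbf{The functional equation for $G_z$ is wrong.} You write $G_z\circ f=d\,G_z+\gamma\,G_p$ and then telescope. But $G_z$ is defined with denominator $\delta^n$ (since $\lambda=\delta>d$ in Case~4), so re-indexing gives the much simpler equation $G_z\circ f=\delta\,G_z$. The recursion you have in mind, $\log|w_{n+1}|\approx d\log|w_n|+\gamma\log|z_n|$, is a relation between \emph{sequences}, not a functional equation for the limit. With the correct equation, the propagation is one line: for $(z,w)\in A_f$ choose $N$ with $f^N(z,w)\in U$; then $G_z(z,w)=\delta^{-N}G_z(f^N(z,w))=\delta^{-N}\alpha G_p(p^N(z))=\alpha G_p(z)$. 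Note also that in Case~4 the wedge $U=U^{l_1,l_2}$ carries the lower bound $|z|^{l_1+l_2}<r^{l_2}|w|$, so $U\cap\{w=0\}=\emptyset$ and your worry about $E_w$ is moot.

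\textbf{The treatment of $E_z$ is incorrect.} You claim $\alpha G_p|_{z=0}=0$ and $\frac{1}{\delta^n}\log|w_n|\to 0$ there. In fact $G_p=-\infty$ on $E_p$, so $\alpha G_p=-\infty$ on $E_z$ (since $\alpha>0$), and to get $G_f^\alpha=\alpha G_p$ on $E_z\cap A_0$ you must show $\frac{1}{\delta^n}\log|w_n|\to-\infty$ along orbits in $\{z=0\}$. This is exactly where the hypothesis $(n_1,m_1)\neq(0,\delta)$ enters: if $n_1>0$ then every monomial of $q$ carries a positive power of $z$, so $q(0,w)\equiv 0$ and $w_n=0$ for $n\ge 1$; if $n_1=0$ then, since $T_1=m_1$ and $T_1\ge T_{k-1}\ge\delta$, the hypothesis forces $m_1>\delta$, so on $\{z=0\}$ the map $w\mapsto q(0,w)$ is superattracting of order $m_1>\delta$ and $\frac{1}{\delta^n}\log|w_n|\sim (m_1/\delta)^n\log|w|\to-\infty$. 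This is the argument the paper imports from the Case~2 analysis (compare the proof of Proposition~\ref{prop on G_z when T > delta > d}); the asymptotic-behaviour Propositions~\ref{asy behavior when T_k < delta  T_k-1}--\ref{asy behavior when delta = T_k-1} are not needed for this theorem.
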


%\begin{corollary} \label{main cor on G_z and G_f^a for Case 4}
%If $T_k < \delta < T_{k-1}$ or if $\delta = T_k$ and $d \geq 2$,
%then $G_z = \alpha G_p$ on $A_f$ and $G_f^{\alpha} = \alpha G_p$ on $A_0$.
%\end{corollary}

%%%%%%%%%%%%%%%%%%%%%%%%%%%%%%%%%%%%%%%%%%%%%%%%%%%%%%%%%%%%%%%%%%%%%%%%%%%%%
%%%%%%%%%%%%%%%%%%%%%%%%%%%%%%%%%%%%%%%%%%%%%%%%%%%%%%%%%%%%%%%%%%%%%%%%%%%%%
\subsection{Tables for typical plurisubharmonic functions}

We end this section with the following two tables for the typical plurisubharmonic functions.

%%%%%%%%%%%%%%%%%%%%%%%%%%%% ($\alpha$ not defined) $A_0 - E_w$, $A_0$, $A_f - E_w$ or $A_f$
\begin{table}[htb]
\caption{Comparison chart of typical psh functions} \label{table for typical psh funs}
\begin{tabular}{|c|c|c|c|} \hline
 & $G_z$ & $G_f$ & $G_f^{\alpha}$  \\ \hline
$\delta < d$ & $G_z=G_z^{\alpha}$ on $A_0$ & $G_f=0$ on $A_0-E_z$ & $G_f^{\alpha}=0$ on $A_0-E_z$ \\
 & (psh on $A_0$) & $= - \infty$ on $E_z$ &  $= \infty$ on $E_z$  \\ \hline
$\delta = d$ and & $G_z = - \infty$ on $A_0$ & $G_f = G_p$ on $A_0$ &  \\ 
$\gamma > 0$ &  & (psh on $A_0$) &  \\ \hline
$\delta = d$ and & ph on $A_f$ & ph on $A_f$ & psh on $A_0$ \\ 
$\gamma = 0$ & $G_z \fallingdotseq \log |w|$ on $U$ & & $G_f^{\alpha} \fallingdotseq \log |w|$ on $U$ \\ \hline
$\delta > d$, and & $G_z = \alpha G_p$ on ... & psh on ... & $G_f^{\alpha} = \alpha G_p$ on $A_0$ \\ 
$d \neq 1$ or $\delta \neq T_k$ & (see Table \ref{table for G_z}) & (see Table \ref{table for G_z}) & (psh on $A_0$) \\ \hline
\end{tabular}
\end{table}
%%%%%%%%%%%%%%%%%%%%%%%%%%%%

%%%%%%%%%%%%%%%%%%%%%%%%%%%%
\begin{table}[h]
\caption{Comparison chart of $G_z$ and $G_f$ when $\delta > d$} \label{table for G_z}
\begin{tabular}{|c|c|} \hline
on ... & if ... \\  \hline
$A_0 - E_w$ & if $\delta < T_{s-1}$ and $d \geq 1$ \\ \hline
$A_0$ & if $\delta < T_{s-1}$ and $d = 0$ \\ \hline
$A_f - E_w$ & if $\delta = T_{s-1}$ and $d \geq 2$ \\ \hline
$A_f$ & if $\delta > T_{s-1}$, and if $d \geq 2$ \\
 & or if $d = 1$ and $\delta \neq T_k$ \\ \hline
\end{tabular}
\end{table}
%%%%%%%%%%%%%%%%%%%%%%%%%%%%

%\newpage
%%%%%%%%%%%%%%%%%%%%%%%%%%%%%%%%%%%%%%%%%%%%%%%%%%%%%%%%%%%%%%%%%%%%%%%%%%%%%%%%%%%%%%
%%%%%%%%%%%%%%%%%%%%%%%%%%%%%%%%%%%%%%%%%%%%%%%%%%%%%%%%%%%%%%%%%%%%%%%%%%%%%%%%%%%%%%
%%%%%%%%%%%%%%%%%%%%%%%%%%%%%%%%%%%%%%%%%%%%%%%%%%%%%%%%%%%%%%%%%%%%%%%%%%%%%%%%%%%%%%
\section{Intervals of weights and Blow-ups} % Review of 

More precise review of the study in the previous paper \cite{ueno} is given in this section.
We deal with Cases 2, 3 and 4 in Sections 3.1, 3.2 and 3.3,
respectively.
For each case,
we give the definition of the interval or rectangle $\mathcal{I}_f$,
calculate it,
restate Lemma \ref{main lemma for previous reselts} and
Theorem \ref{main thm for previous reselts}
by using $\mathcal{I}_f$, and
illustrate them in terms of blow-ups.

%%%%%%%%%%%%%%%%%%%%%%%%%%%%%%%%%%%%%%%%%%%%%%%%%%%%%%%%%%%%%%%%%%%%%%%%%%%%%
%%%%%%%%%%%%%%%%%%%%%%%%%%%%%%%%%%%%%%%%%%%%%%%%%%%%%%%%%%%%%%%%%%%%%%%%%%%%%
\subsection{Intervals of weights and Blow-ups for Case 2}

Let $s > 1$, 
\[
\delta \leq T_{s-1}, \
(\gamma, d) = (n_s, m_s)
\text{ and }
l_1 = \frac{n_s - n_{s-1}}{m_{s-1} - m_s}.
\]
Note that $\gamma > 0$ and $d \geq 0$ by the setting.

%%%%%%%%%%%%%%%%%%%%%%%%%%%%%%%%%%%%%%%%%%%%%%%%%%%%%%%%%%%
%%%%%%%%%%%%%%%%%%%%%%%%%%%%%%%%%%%%%%%%%%%%%%%%%%%%%%%%%%%
\subsubsection{Intervals and B\"{o}ttcher coordinates}

We define the interval $\mathcal{I}_f$ as 
\[
\mathcal{I}_f = 
\left\{ \ l > 0 \ | 
\begin{array}{lcr}
l \delta \leq \gamma + ld \leq i + l j
\text{ for any $i$ and $j$ s.t. } b_{ij} \neq 0
\end{array}
\right\}.
\]
It follows that $\min \mathcal{I}_f = l_1$.
In fact, 
if $\delta > d$, then 
\[
\mathcal{I}_f 
=
\left[
\max \left\{ \dfrac{\gamma - i}{j - d} \right\},
\dfrac{\gamma}{\delta - d}
\right]
=
\left[
\frac{\gamma - n_{s-1}}{m_{s-1} - d},
\dfrac{\gamma}{\delta - d}
\right]
=
\left[
l_1,
\alpha
\right],
\]
which is mapped to $[\delta, T_{s-1}]$
by the transformation $l \to l^{-1} \gamma + d$.
If $\delta \leq d$, then
the inequality $l \delta \leq \gamma + ld$ is trivial and so
$\mathcal{I}_f =[ l_1, \infty )$.
We remark that
\[
\mathcal{I}_f = 
\left\{ \ l > 0 \ | 
\begin{array}{lcr}
l \delta \leq \gamma + ld \leq n_j + l m_j
\text{ for any } 1 \leq j \leq s-1
\end{array}
\right\},
\]
which expression is used in Case 4.

Let $U^l = \{ |z| < r, |w| < r |z|^{l} \}$.
Note that $U = U^{l_1}$. 

\begin{lemma}[\cite{ueno}] \label{detailed lemma for case 2}
The following hold for any $l$ in $\mathcal{I}_f$ if $d \geq 2$
and for any $l$ in $\mathcal{I}_f- \{ \alpha \}$ if $d = 1$ and $\delta < T_{s-1}$:
$f \sim f_0$ on $U^l$ as $r \to 0$, and
$f(U^l) \subset U^l$ for small $r >0$.
\end{lemma}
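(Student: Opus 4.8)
The plan is to reduce the statement to two elementary estimates on the wedge $U^l$, both of which fall out of the two inequalities built into the definition of $\mathcal{I}_f$. Write $f=(p,q)$ with $p(z)=az^\delta+O(z^{\delta+1})$ and $f_0=(az^\delta,b_{\gamma d}z^\gamma w^d)$. The first components are harmless: $p(z)/(az^\delta)=1+O(z)\to 1$ uniformly on $\{|z|<r\}$ as $r\to 0$. For the second components, every point of $U^l$ has $z\neq 0$, so I would write
\[
\frac{q(z,w)}{b_{\gamma d}z^\gamma w^d}=1+R_l(z,w),\qquad R_l(z,w)=\sum_{(i,j)\neq(\gamma,d)}\frac{b_{ij}}{b_{\gamma d}}\,z^{\,i-\gamma}w^{\,j-d},
\]
and the whole lemma then amounts to showing $\sup_{U^l}|R_l|\to 0$ as $r\to 0$ (this is precisely $f\sim f_0$ on $U^l$) and cleaning up the resulting inclusion $f(U^l)\subset U^l$.

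First I would record two combinatorial facts about the monomials of $q$: since $(\gamma,d)=(n_s,m_s)$ is the vertex of $N(q)$ with the largest abscissa and smallest ordinate, every $(i,j)$ with $b_{ij}\neq 0$ has $j\geq d$, and $j=d$ together with $(i,j)\neq(\gamma,d)$ forces $i>\gamma$; and the defining inequality $\gamma+ld\leq i+lj$ of $\mathcal{I}_f$ gives $(i-\gamma)+l(j-d)\geq 0$ for $l\in\mathcal{I}_f$. On $U^l$, where $|w|<r|z|^l$ and $|z|<r<1$, these yield for each monomial of $R_l$ with $j>d$ the bound $|z^{i-\gamma}w^{j-d}|<r^{\,j-d}|z|^{\,(i-\gamma)+l(j-d)}\leq r^{\,\mu_{ij}}$, where $\mu_{ij}:=(i-\gamma)+(1+l)(j-d)$, and $|z^{i-\gamma}|<r^{\,i-\gamma}$ when $j=d$; in every case the exponent of $r$ is at least $1$, and for $i+j$ large it is at least a fixed positive fraction of $i+j$. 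A routine head/tail split of $R_l$ — the tail being dominated by a tail of the convergent series $\sum|b_{ij}|\rho^{i+j}$ for a fixed $\rho$ in the domain of convergence of $q$ — then gives $\sup_{U^l}|R_l|\to 0$. (Geometrically this is the statement that, after the weighted blow-up attached to the slope $l$, in the chart $w=z^lu$ the lift of $f$ is a germ at the exceptional fixed point dominated by a monomial of bidegree $(\gamma+ld-l\delta,\,d)$, the hypothesis $l\in\mathcal{I}_f$ being exactly what makes the exponent $\gamma+ld-l\delta$ nonnegative.)

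Granting $|R_l|<\tfrac12$ on $U^l$ for $r$ small, the inclusion $f(U^l)\subset U^l$ is a short computation. For $(z,w)\in U^l$ one has $|p(z)|\leq 2|a||z|^\delta<2|a|r^\delta<r$ once $r$ is small, since $\delta\geq 2$; and, using $\gamma+ld\geq l\delta$ together with $|z|<r<1$,
\[
|q(z,w)|\leq 2|b_{\gamma d}||z|^\gamma|w|^d<2|b_{\gamma d}|r^d|z|^{\gamma+ld}\leq 2|b_{\gamma d}|\,r^{\,(d-1)+(\gamma+ld-l\delta)}\,r\,|z|^{l\delta},
\]
while $r|p(z)|^l\geq 2^{-l}|a|^l\,r\,|z|^{l\delta}$. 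So it suffices that $2^{1+l}|b_{\gamma d}|\,r^{\,(d-1)+(\gamma+ld-l\delta)}<|a|^l$. The exponent is nonnegative, and it is strictly positive unless $d=1$ and $\gamma+ld=l\delta$, i.e. unless $d=1$ and $l=\alpha$; this is where the hypothesis $\delta<T_{s-1}$ enters in the case $d=1$, since it forces $l_1<\alpha$ so that $\mathcal{I}_f\setminus\{\alpha\}$ is a genuine interval. For $d\geq 2$, or for $d=1$ and $l\neq\alpha$, the inequality thus holds for all small $r$, and $f(U^l)\subset U^l$.

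The main obstacle is not any single estimate but the uniformity of the bookkeeping: $r$ must be chosen small enough simultaneously for the head and the tail of $R_l$, for $|p(z)|<r$, and for the last displayed inequality, and one must check that the borderline exponent $(d-1)+(\gamma+ld-l\delta)$ degenerates only at $(d,l)=(1,\alpha)$ — which is exactly the case excluded from the statement. Everything else is formal once the two inequalities defining $\mathcal{I}_f$ are in hand.
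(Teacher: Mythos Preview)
Your proof is correct and is essentially the direct, coordinate-free unpacking of the blow-up argument the paper sketches in Section~3.1.2 (and attributes to \cite{ueno}): the change of variable $w=z^lc$ turns the two defining inequalities of $\mathcal{I}_f$ into exactly the nonnegativity conditions $(i-\gamma)+l(j-d)\ge 0$ and $\gamma+ld-l\delta\ge 0$ that you use, and your monomial estimates are what one gets by reading off the Newton polygon of $\tilde q$ after that blow-up. One small remark: your phrase ``a fixed positive fraction of $i+j$'' is justified since $\mu_{ij}=i+(1+l)j-\gamma-(1+l)d\ge (i+j)-C$, hence $\mu_{ij}\ge\tfrac12(i+j)$ once $i+j\ge 2C$, which is all the tail bound needs.
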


This lemma induces the B\"{o}ttcher coordinate for $f$ on $U^l$.

\begin{theorem}[\cite{ueno}] \label{}
The following holds for any $l$ in the previous lemma:
$\phi$ is biholomorphic on $U^l$ and conjugates $f$ to $f_0$
for small $r > 0$.
Moreover,
$\phi \sim id$ on $U^{l}$ as $r \to 0$.
\end{theorem}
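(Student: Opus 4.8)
The plan is to view the assertion as the $\mathcal{I}_f$-parametrised form of Theorem~\ref{main thm for previous reselts}: the proof of that theorem in \cite{ueno} uses only the two conclusions of Lemma~\ref{detailed lemma for case 2} --- that $f\sim f_0$ on $U^l$ as $r\to 0$ and that $f(U^l)\subset U^l$ for small $r$ --- and Lemma~\ref{detailed lemma for case 2} now supplies both for every $l$ in the stated range, so it is enough to recall why these two properties force the conclusion. I would do this in three steps: (i) $g_n:=f_0^{-n}\circ f^n$ converges locally uniformly on $U^l$; (ii) the limit $\phi$ conjugates $f$ to $f_0$ and satisfies $\phi\sim id$ as $r\to 0$; (iii) $\phi$ is biholomorphic for small $r$.

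For (i), write $p(z)=az^{\delta}(1+u(z))$ and $q(z,w)=b_{\gamma d}z^{\gamma}w^{d}(1+v(z,w))$, so that $u(z)=O(z)$ and, by $f\sim f_0$, $\sup_{U^l}|v|\to 0$ as $r\to 0$. The first component of $g_n$ is $p_0^{-n}\circ p^n$, independent of $l$, which converges on $\{|z|<r\}$, for $r$ small, to the classical one-dimensional B\"ottcher coordinate $\varphi_p=z\prod_{k\ge 0}(1+u(p^k(z)))^{1/\delta^{k+1}}$ of $p$ at the origin (\cite{b}; see \cite{m}), since $0$ is the only critical point of $p$ nearby and $|p^k(z)|\to 0$. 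For the second component I would estimate $\sup_{U^l}|g_{n+1}-g_n|$ by telescoping: with $(z_n,w_n)=f^n(z,w)$ one has $g_{n+1}-g_n=f_0^{-(n+1)}\bigl(f(z_n,w_n)\bigr)-f_0^{-(n+1)}\bigl(f_0(z_n,w_n)\bigr)$, and in logarithmic coordinates, where $f_0$ becomes affine with lower-triangular linear part (diagonal entries $\delta$ and $d$), the discrepancy is controlled by $u(z_n)=O(z_n)$ and $v(z_n,w_n)$. Both stay small along the orbit, which remains in $U^l$ by forward invariance, and in fact $v(z_n,w_n)\to 0$; when $d=1$ one needs the stronger fact that the wedge dynamics pushes orbits inward so that this decay is geometric, while when $d\ge 2$ the weights $d^{-(n+1)}$ make even a bounded $v(z_n,w_n)$ summable. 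This gives a summable bound, hence a holomorphic limit $\phi=(\varphi_p,\phi_2)$ on $U^l$.

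For (ii) and (iii), the conjugacy is formal: $\phi\circ f=\lim f_0^{-n}\circ f^{n+1}=f_0\circ\lim f_0^{-(n+1)}\circ f^{n+1}=f_0\circ\phi$ on $U^l$, the last equality being legitimate because $f(U^l)\subset U^l$. Solving the resulting equation $\phi_2\circ f=b_{\gamma d}\,\varphi_p^{\gamma}\phi_2^{d}$ under the normalisation $\phi_2=w(1+o(1))$ gives the product formula $\phi_2(z,w)=w\prod_{k\ge 0}\bigl((z_k/\varphi_p(z_k))^{\gamma}(1+v(z_k,w_k))\bigr)^{1/d^{k+1}}$, every factor of which is $1+O(r^{\sigma})$ uniformly on $U^l$ for some $\sigma>0$; together with $\varphi_p(z)=z(1+o(1))$ this yields $\phi\sim id$ on $U^l$ as $r\to 0$. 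Differentiating gives $\partial_w\phi_2=1+o(1)$ and $\varphi_p'=1+o(1)$, so by the triangular structure of $D\phi$ its Jacobian $\varphi_p'(z)\,\partial_w\phi_2(z,w)$ is nonvanishing on $U^l$ for small $r$; moreover $\phi$ is injective, since $\varphi_p$ is univalent on $\{|z|<r\}$ and then $w\mapsto\phi_2(z,w)=w(1+o(1))$ is univalent on each slice $\{z\}\times\{|w|<r|z|^{l}\}$. Hence $\phi$ is a biholomorphism of $U^l$ onto $\phi(U^l)$ conjugating $f$ to $f_0$.

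I expect the main obstacle to be the decay estimate for $v(z_n,w_n)$ along orbits inside $U^l$, uniformly up to $\partial U^l$ --- above all in the K\oe nigs-type case $d=1$, where a uniform bound $|v(z_n,w_n)|\le\varepsilon(r)$ is not enough and one must use that the dynamics contracts orbits toward the interior of the wedge. This is the technical heart of the argument in \cite{ueno}, and it is also why $l=\alpha$ is excluded when $d=1$: at that marginal slope $f(U^{\alpha})\subset U^{\alpha}$ may fail, so Lemma~\ref{detailed lemma for case 2} does not cover it.
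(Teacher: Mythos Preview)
The paper does not itself prove this theorem; it is quoted from \cite{ueno} as part of the review in Section~3, and the only commentary the present paper offers on its proof is the blow-up illustration in Section~3.1.2. There the approach is to lift $f$ via $\pi_1(z,c)=(z,z^{l}c)$ to a holomorphic skew product $\tilde f$ whose Newton polygon has a single vertex $(\tilde\gamma,d)$ (Case~1), construct the B\"ottcher coordinate for $\tilde f$ on a bidisk near the origin --- which is classical --- and then pull back to obtain $\phi$ on $U^l$.

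Your direct route --- proving convergence of $g_n=f_0^{-n}\circ f^n$ on $U^l$ by a telescoping estimate in logarithmic coordinates --- is essentially the same computation carried out in the original $(z,w)$ coordinates, since $\pi_1$ maps the bidisk $\{|z|<r,\ |c|<r\}$ onto $U^l$ and conjugates $\tilde f_0$ to $f_0$. What the blow-up buys is a cleaner handling of the point you correctly flag as the crux, the decay of $v(z_n,w_n)$ when $d=1$: after lifting, $\tilde q(z,c)=z^{\tilde\gamma}c\,(1+\eta(z,c))$ with $\tilde\gamma=\gamma+l(d-\delta)>0$ precisely when $l<\alpha$, so the geometric contraction is visibly supplied by the factor $z^{\tilde\gamma}$ rather than by tracking how orbits drift inside the wedge. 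This also explains transparently why $l=\alpha$ is excluded when $d=1$: then $\tilde\gamma=0$ and $\tilde f$ ceases to be superattracting. Your outline is correct and your product formula for $\phi_2$ is the right one; the two approaches differ only in whether one changes coordinates before or after writing down that product.
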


Note that $U^{l_1}$ is the largest open set among $U^{l}$ for any $l$ in $\mathcal{I}_f$.

%%%%%%%%%%%%%%%%%%%%%%%%%%%%%%%%%%%%%%%%%%%%%%%%%%%%%%%%%%%
%%%%%%%%%%%%%%%%%%%%%%%%%%%%%%%%%%%%%%%%%%%%%%%%%%%%%%%%%%%
\subsubsection{Blow-ups}

Assuming that $l$ in $\mathcal{I}_f$ is integer,  
we illustrate our previous results in terms of blow-ups.
We also assume that $p(z) = z^{\delta}$ and $b_{\gamma d} = 1$ for simplicity.
Let $\pi_1 (z,c) = (z, z^{l} c)$
and $\tilde{f} = \pi_1^{-1} \circ f \circ \pi_1$.
Note that $\pi_1$ is the $l$-th compositions of the blow-up $(z,c) \to (z,zc)$. 
Then % we have
\begin{align*}
\tilde{f} (z,c) 
&= (\tilde{p}(z), \tilde{q}(z,c))
= \left( p(z), \ \dfrac{q(z,z^l c)}{p(z)^l} \right) \\
&= \left( z^{\delta}, \ 
z^{\gamma+ld - l \delta} c^d + \sum b_{ij} z^{i + lj - l \delta} c^{j} \right) \\
&= \left( z^{\delta}, \ 
z^{\gamma+ld - l \delta} c^d
\left\{ 1 + \sum b_{ij} z^{(i + lj) - (\gamma + ld)} c^{j - d} \right\}  \right).
\end{align*}
Let $\tilde{\gamma} = \gamma + l d - l \delta$
and $\tilde{i} = i + l j - l \delta$.
Then 
$\tilde{q}(z,c) = z^{\tilde{\gamma}} c^d + \sum b_{ij} z^{\tilde{i}} c^{j}$ and 
$0 \leq \tilde{\gamma} \leq \tilde{i}$ for any $(i,j)$ such that $b_{ij} \neq 0$.
Therefore,
the Newton polygon of $\tilde{q}$ has just one vertex $(\tilde{\gamma}, d)$,
$N(\tilde{q}) = D(\tilde{\gamma}, d)$,
and we have the following. % obtain statement.

\begin{proposition}[\cite{ueno}]
If $l$ in $\mathcal{I}_f$ is integer,
then $\tilde{f}$ is holomorphic and skew product % well-defined, rigid 
on a neighborhood of the origin. %  $\{ |z| < r, |c| < r \}$
More precisely,
\[
\tilde{f} (z,c) = \left( z^{\delta}, \ 
z^{\tilde{\gamma}} c^d \left\{ 1 + \eta (z,c) \right\}  \right),
\]
where $\eta \to 0$ as $z$ and $c \to 0$.
Moreover,
it has a superattracting fixed point at the origin
if $d \geq 2$
or if $d = 1$ and $l \delta < \gamma + l d$.
\end{proposition}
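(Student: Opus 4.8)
The plan is to read off everything from the explicit formula for $\tilde{f} = \pi_1^{-1} \circ f \circ \pi_1$ computed above, checking in turn holomorphy, the factored form, and the vanishing of the linear part at the origin. For holomorphy, note that $\pi_1(z,c) = (z, z^l c)$ gives $\tilde{f}(z,c) = (p(z),\, q(z, z^l c)/p(z)^l)$, so with $p(z) = z^{\delta}$ it suffices that $q(z, z^l c)/z^{l\delta}$ be holomorphic near the origin. A monomial $b_{ij} z^i w^j$ of $q$ contributes $b_{ij} z^{i + lj - l\delta} c^{j}$, and the inequalities defining $\mathcal{I}_f$ give $l\delta \le \gamma + ld \le i + lj$ whenever $b_{ij} \neq 0$; since $l$ is an integer, every exponent $i + lj - l\delta$ is a nonnegative integer, the quotient is a convergent power series inherited from that of $q$, and $\tilde{f}$ is a holomorphic skew product on a neighborhood of the origin.

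Next I would factor $z^{\tilde{\gamma}} c^{d}$ out of $\tilde{q}$, where $\tilde{\gamma} = \gamma + ld - l\delta$, obtaining $\tilde{q} = z^{\tilde{\gamma}} c^{d}(1 + \eta)$ with $\eta = \sum b_{ij}\, z^{(i + lj) - (\gamma + ld)} c^{j - d}$, the sum taken over $(i,j) \neq (\gamma,d)$ with $b_{ij} \neq 0$ and using $b_{\gamma d} = 1$. The exponent of $z$ here is $\ge 0$ by the second inequality in the definition of $\mathcal{I}_f$, and the exponent of $c$ is $\ge 0$ because $(\gamma, d) = (n_s, m_s)$ is the vertex of $N(q)$ with the smallest $y$-coordinate, so every $(i,j)$ with $b_{ij} \neq 0$ lies in $N(q)$ and hence satisfies $j \ge m_s = d$. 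Thus $\eta$ is a genuine power series in $(z,c)$; its constant term would require $i + lj = \gamma + ld$ and $j = d$ simultaneously, which forces $i = \gamma$, i.e. the excluded term, so $\eta(0,0) = 0$ and $\eta \to 0$ as $z, c \to 0$.

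Finally I would read off the linear part. The first row of $D\tilde{f}(0,0)$ vanishes, since $\partial_z (z^{\delta}) = \delta z^{\delta - 1}$ vanishes at $0$ (as $\delta \ge 2$) and $\tilde{p}$ does not depend on $c$; in particular the origin is fixed. If $d \ge 2$, every monomial of $\tilde{q}$ carries the factor $c^{d}$ with $d \ge 2$, so $\partial_z \tilde{q}$ and $\partial_c \tilde{q}$ both vanish at the origin; if $d = 1$ and $l\delta < \gamma + ld$, then $\tilde{\gamma} \ge 1$ and every monomial carries $z^{\tilde{\gamma}}$ with $\tilde{\gamma} \ge 1$, with the same conclusion. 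In either case $D\tilde{f}(0,0) = 0$, so the origin is a superattracting fixed point of $\tilde{f}$. I do not anticipate a genuine obstacle here: the argument is pure Newton-polygon bookkeeping, and the only point requiring a little care is recognizing that the minimality of $d = m_s$ among the $y$-coordinates of the vertices of $N(q)$ is exactly what rules out negative powers of $c$ in $\eta$.
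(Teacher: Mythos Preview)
Your proof is correct and follows essentially the same approach as the paper: the paper derives the explicit form of $\tilde q$ just before stating the proposition, records $0 \le \tilde\gamma \le \tilde i$ for every $(i,j)$ with $b_{ij}\neq 0$, and observes that $N(\tilde q) = D(\tilde\gamma, d)$, leaving the reader to infer holomorphy and the superattracting conclusion from these facts. You simply make explicit the two ingredients the paper leaves implicit, namely that $j \ge m_s = d$ (so the $c$-exponents in $\eta$ are nonnegative) and the case analysis for $D\tilde f(0,0)=0$.
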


\begin{remark}
Even if $l$ is rational,
we can lift $f$ to a holomorphic skew product similar to $\tilde{f}$
as stated in Section 5.1 in \cite{ueno}.
In particular,
$l_1$ is rational.
\end{remark}

Because $\tilde{f}$ is a holomorphic skew product in Case 1,
it is easy to construct the B\"{o}ttcher coordinate for $\tilde{f}$
on a neighborhood of the origin,
which induces the B\"{o}ttcher coordinate for $f$ on $U^l$.

\begin{remark}
The affine transformation
\[
A_1
\begin{pmatrix} i \\ j \end{pmatrix} =
\begin{pmatrix} i + l j - l \delta \\ j \end{pmatrix} =
\begin{pmatrix} 1 & l \\ 0 & 1 \end{pmatrix}
\begin{pmatrix} i \\ j \end{pmatrix} - \begin{pmatrix} l \delta \\ 0 \end{pmatrix}
\]
maps the basis $\{ (1,0), (-l,1) \}$ to $\{ (1,0), (0,1) \}$.
In other words, 
it maps a horizontal line and a line with slope $-l^{-1}$
to the same horizontal line and a vertical line.
More precisely,
it maps the line with slope $-l^{-1}$ that passes through $(i, \delta)$ 
to the vertical line $\{ x = i \}$.
In particular,
if $l = l_1$, then
it maps the line $L_{s-1}$ with slope $-l_1^{-1}$ and 
the horizontal line that intersect at $(\gamma, d)$,
to the vertical and horizontal lines 
that intersect at $(\tilde{\gamma}, d)$.
\end{remark}

%%%%%%%%%%%%%%%%%%%%%%%%%%%%%%%%%%%%%%%%%%%%%%%%%%%%%%%%%%%%%%%%%%%%%%%%%%%%%
%%%%%%%%%%%%%%%%%%%%%%%%%%%%%%%%%%%%%%%%%%%%%%%%%%%%%%%%%%%%%%%%%%%%%%%%%%%%%
\subsection{Intervals of weights and Blow-ups for Case 3}

Let $s > 1$, 
\[
T_1 \leq \delta, \
(\gamma, d) = (n_1, m_1)
\text{ and }
l_2 = \frac{n_2 - n_1}{m_1 - m_2}.
\]
Note that $\gamma \geq 0$ and 
$\delta \geq T_1 \geq d \geq 1$ by the setting. 

%%%%%%%%%%%%%%%%%%%%%%%%%%%%%%%%%%%%%%%%%%%%%%%%%%%%%%%%%%%
%%%%%%%%%%%%%%%%%%%%%%%%%%%%%%%%%%%%%%%%%%%%%%%%%%%%%%%%%%%
\subsubsection{Intervals and B\"{o}ttcher coordinates}

We define the interval $\mathcal{I}_f$ as
\[
\mathcal{I}_f = 
\left\{ \ l > 0 \ | 
\begin{array}{lcr}
\gamma + ld \leq l \delta  
\text{ and }
\gamma + ld \leq i + lj
\text{ for any $i$ and $j$ s.t. } b_{ij} \neq 0
\end{array}
\right\}. 
\]
It follows that $\max \mathcal{I}_f = l_2$.
In fact,
if $\gamma > 0$, then $\delta > d$ and 
\[
\mathcal{I}_f 
= 
\left[
\dfrac{\gamma}{\delta - d},
\min \left\{ \dfrac{i - \gamma}{d - j}  \right\}
\right]
=
\left[ 
\dfrac{\gamma}{\delta - d}, 
\frac{n_2 - \gamma}{d - m_2} 
\right]
= \left[ \alpha, l_2 \right],
\]
which is mapped to $[T_1, \delta]$
by the transformation $l \to l^{-1} \gamma + d$.
If $\gamma = 0$, then
the inequality $\gamma + ld \leq l \delta$ is trivial since $d \leq \delta$,
and so $\mathcal{I}_f = (0, l_2]$.

Let $U^l = \{ |z|^{l} < r^{l} |w|, |w| < r \}$.
Note that $U = U^{l_2}$. 

\begin{lemma}[\cite{ueno}] \label{detailed lemma for case 3}
The following hold for any $l$ in $\mathcal{I}_f$ if $d \geq 2$
and for any $l$ in $\mathcal{I}_f- \{ \alpha \}$ if $d = 1$ and $\delta > T_{1}$:
$f \sim f_0$ on $U^l$ as $r \to 0$, and
$f(U^l) \subset U^l$ for small $r >0$.
\end{lemma}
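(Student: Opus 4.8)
The plan is to mimic the blow-up strategy used for Case 2, but now with the monomial transformation tailored to the Case~3 geometry, where the relevant edge of $N(q)$ has slope $-1/l$ with $l$ large (close to $l_2$) rather than small. First I would introduce, for integer $l$ in $\mathcal{I}_f$, the change of coordinates $\pi(z,c)=(z, z^l c)$ exactly as in Section~3.1.2, so that $\tilde f = \pi^{-1}\circ f\circ \pi$ has second component
\[
\tilde q(z,c) = z^{\gamma + ld - l\delta}\, c^d\left(1 + \sum b_{ij}\, z^{(i+lj)-(\gamma+ld)} c^{j-d}\right).
\]
The defining inequalities of $\mathcal{I}_f$ say precisely that $\gamma+ld\le l\delta$ (so the exponent $\gamma+ld-l\delta$ of $z$ is $\le 0$) and $\gamma+ld\le i+lj$ for all $(i,j)$ with $b_{ij}\ne 0$ (so every exponent $(i+lj)-(\gamma+ld)$ of $z$ in the sum is $\ge 0$). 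Hence, unlike Case~2, the map $\tilde f$ need not be holomorphic at the origin; instead it is holomorphic on a region where $z$ is bounded \emph{away} from $0$ while $c$ is small, which is exactly the image under $\pi^{-1}$ of $U^l=\{|z|^l<r^l|w|,\ |w|<r\}$. After verifying that $\pi$ sends the coordinate polydisc-type region $\{|z|<r,\ |c|<r'\}$ (minus $\{z=0\}$ if needed) bijectively onto $U^l$ for appropriate radii, the problem reduces to showing $\tilde f\sim \tilde f_0$ and $\tilde f$-invariance of that region, where $\tilde f_0(z,c)=(z^\delta, z^{\gamma+ld-l\delta}c^d)$.

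Next I would carry out the two estimates on $\tilde f$. For the asymptotic statement $f\sim f_0$ on $U^l$ as $r\to 0$: on $\pi^{-1}(U^l)$ we have $|z|<r$ and $|c|<1$ (taking $r'=1$ after rescaling), so each term $b_{ij} z^{(i+lj)-(\gamma+ld)} c^{j-d}$ with $(i,j)\ne(\gamma,d)$ carries a strictly positive power of $z$ (strictly positive because $(\gamma,d)$ is the \emph{unique} vertex of $N(q)$ on the minimal line, as long as $l\ne\alpha$, or because of the slope strict inequalities; this is where $l=\alpha$ with $d=1$ must be excluded), hence is $O(r^\epsilon)$ uniformly; summing, the bracket is $1+O(r^\epsilon)\to 1$, which after pushing forward by $\pi$ gives $q/(z^\gamma w^d)\to 1$ on $U^l$, and $p/z^\delta\to 1$ is immediate. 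For invariance $f(U^l)\subset U^l$: I would check the two inequalities cutting out $U^l$ directly. The inequality $|w|<r$ is preserved because $|q|\sim |z|^\gamma |w|^d \le r^{\gamma+d-1}|w| < |w| \le r$ for small $r$ (using $d\ge 1$ and, when $d=1$, that $\gamma>0$, i.e. $l\ne\alpha$ is what forces $\gamma+d>1$ on the blow-up; in the borderline Case-3 subcase $\gamma=0,\ \delta=d$ one instead has $f_0$ a product and a separate direct check). The inequality $|z|^l<r^l|w|$, i.e. $|w|>r^{-l}|z|^l$ wait --- equivalently $|w/z^l|>r^{\text{const}}$ in the blow-up coordinate $|c|$ bounded below --- is preserved because under $\tilde f$ the quantity $c$ maps to $z^{\gamma+ld-l\delta}c^d(1+o(1))$, and here the exponent $\gamma+ld-l\delta\le 0$ together with $|z|<r$ keeps $|z^{\gamma+ld-l\delta}|\ge 1$, so $|c|$ does not decay faster than a controlled rate; I would make this precise by choosing $r$ so that the combined effect of the $d$-th power and the bounded-below $z$-factor still lands in the same region. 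These are the standard one-variable-with-parameter Böttcher-type estimates already used in \cite{ueno}.

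The main obstacle I expect is the bookkeeping at the two endpoints of $\mathcal{I}_f$, especially the exclusion of $l=\alpha$ when $d=1$ and $\delta>T_1$. At $l=\alpha$ the line $x+ly-l\delta=0$ passes through both $(0,\delta)$ and $(\gamma,d)$, so after the blow-up the term coming from $z^\delta$ in $p$ (equivalently the $(0,\delta)$ corner) collides with the $(\gamma,d)$ term and the bracket $1+\sum(\cdots)$ acquires a term that is a pure power of $c$ with no compensating power of $z$ — this is exactly the failure of the Newton polygon of $\tilde q$ to have a single vertex, and when $d=1$ it makes $\tilde f$ a Koenigs-type map whose multiplier is not uniformly $<1$, breaking invariance for small $r$. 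So the proof must invoke the hypothesis "$d\ge 2$, or $d=1$ and $\delta>T_1$ (hence $l\ne\alpha$ available)" precisely to keep a strict positive-power-of-$z$ gap between the dominant monomial and everything else. A secondary nuisance is the case $\gamma=0$ (so $\alpha=0\notin\mathcal{I}_f$ anyway, or $\alpha=l_2$ when $\delta=d$): there $f_0$ degenerates to the product $(z^\delta,w^d)$ or $(z^\delta,w^\delta)$ and, while the estimates go through unchanged, one should note $E_z$ must be removed since $z=0$ is no longer in the domain of the blow-up chart in the same way. Modulo these endpoint checks — which I would handle by quoting the corresponding lemma of \cite{ueno} verbatim since the algebra is identical after the affine change of exponent variables — the argument is a routine transcription of the Case~2 proof with the roles of "small $z$, small $w$" and "large slope" interchanged.
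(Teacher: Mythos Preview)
Your proposal has a fundamental error in the choice of blow-up. You use the Case~2 map $\pi_1(z,c)=(z,z^lc)$, but this does \emph{not} send a polydisc $\{|z|<r,\ |c|<r'\}$ onto the Case~3 wedge $U^l=\{|z|^l<r^l|w|,\ |w|<r\}$; it sends it onto the Case~2 wedge $\{|z|<r,\ |w|<r'|z|^l\}$, which is the complementary region. In your coordinate $c=w/z^l$, the Case~3 condition $|z|^l<r^l|w|$ becomes $|c|>r^{-l}$, so $c$ is \emph{large}, not small, on $U^l$. Consequently none of the subsequent estimates (``each term carries a strictly positive power of $z$'', ``$|c|<1$'', etc.) are taking place on the region you want, and the invariance argument breaks down.

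The paper's approach (Section~3.2.2, following \cite{ueno}) uses instead the blow-up $\pi_2(t,w)=(tw^{l^{-1}},w)$, i.e.\ $t=z/w^{1/l}$. Under $\pi_2$ the Case~3 wedge $U^l$ corresponds exactly to the bidisc $\{|t|<r,\ |w|<r\}$, and the lifted map
\[
\tilde f(t,w)=\bigl(t^{\delta-l^{-1}\gamma}\,w^{l^{-1}(\delta-\tilde d)}\{1+\zeta\},\ t^{\gamma}w^{\tilde d}\{1+\eta\}\bigr),\qquad \tilde d=l^{-1}\gamma+d,
\]
is holomorphic near the origin with Newton polygon of $\tilde q$ reduced to the single vertex $(\gamma,\tilde d)$. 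The condition $l\in\mathcal{I}_f$ gives $\tilde d\le\delta$ and $\tilde d\le\tilde j$ for all $(i,j)$, which is what makes $\zeta,\eta\to 0$ and the origin superattracting for $\tilde f$ (the failure at $l=\alpha$, $d=1$ is that then $\tilde d=\delta$ and the first component degenerates to $t\{1+\zeta\}$). Your explanation of the endpoint exclusion via ``the $(0,\delta)$ corner collides with $(\gamma,d)$'' is also off: $(0,\delta)$ need not be a term of $q$ at all; the collision is between the $p$-exponent $\delta$ and $\tilde d$ in the \emph{first} component of the lifted map.
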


\begin{theorem}[\cite{ueno}] \label{}
The following holds for any $l$ in the previous lemma:
$\phi$ is biholomorphic on $U^l$ and conjugates $f$ to $f_0$
%there is a biholomorphic map $\phi$ defined on $U^l$
%that conjugates $f$ to $f_0$
for small $r > 0$.
Moreover,
$\phi \sim id$ on $U^{l}$ as $r \to 0$.
\end{theorem}

Note that $U^{l_2}$ is the largest wedge among $U^{l}$ for any $l$ in $\mathcal{I}_f$.

%%%%%%%%%%%%%%%%%%%%%%%%%%%%%%%%%%%%%%%%%%%%%%%%%%%%%%%%%%%
%%%%%%%%%%%%%%%%%%%%%%%%%%%%%%%%%%%%%%%%%%%%%%%%%%%%%%%%%%%
\subsubsection{Blow-ups}

Assuming that $l^{-1}$ is integer for $l$ in $\mathcal{I}_f$,
we illustrate our previous results in terms of blow-ups.
Let $\pi_2 (t,w) = (t w^{l^{-1}}, w)$ and $\tilde{f} = \pi_2^{-1} \circ f \circ \pi_2$.
Note that $\pi_2$ is the $l^{-1}$-th compositions of the blow-up $(t,w) \to (tw,w)$.
Let $b_{\gamma d} = 1$ for simplicity.
Then % we have
\[
\tilde{f} (t,w) 
= (\tilde{p} (t,w), \tilde{q} (t,w))
= \left( \dfrac{p(tw^{l^{-1}})}{q(tw^{l^{-1}}, w)^{l^{-1}}}, \ q(tw^{l^{-1}}, w) \right)
\text{ and }
\]
\begin{align*}
\tilde{q} (t,w) &= q(tw^{l^{-1}},w)
= t^{\gamma} w^{l^{-1} \gamma + d} + \sum b_{ij} t^{i} w^{l^{-1} i + j} \\
&= t^{\gamma} w^{l^{-1} \gamma + d} 
  \left\{ 1 + \sum b_{ij} t^{i - \gamma} w^{(l^{-1} i + j) - (l^{-1} \gamma + d)} \right\}.
\end{align*}
Let $\tilde{d} = l^{-1} \gamma + d$ and $\tilde{j} = l^{-1} i + j$.
Then $\tilde{q}(t,w) = t^{\gamma} w^{\tilde{d}} + \sum b_{ij} t^{i} w^{\tilde{j}}$,  
$d \leq \tilde{d} \leq \delta$ and
$\tilde{d} \leq \tilde{j}$ for any $(i,j)$ such that $b_{ij} \neq 0$.
Therefore,
the Newton polygon of $\tilde{q}$ has just one vertex $(\gamma, \tilde{d})$,
$N(\tilde{q}) = D(\gamma, \tilde{d})$,
and we have the following. % obtain statement.

\begin{proposition}[\cite{ueno}]
If $l^{-1}$ is integer for $l$ in $\mathcal{I}_f$,
then $\tilde{f}$ is holomorphic % well-defined and rigid
on a neighborhood of the origin. % $\{ |t| < r, |w| < r \}$
More precisely,
\[
\tilde{f} (t,w) = 
\left( t^{\delta - l^{-1} \gamma} w^{l^{-1} ( \delta - \tilde{d} )} \{ 1 + \zeta (t,w) \},
\ t^{\gamma} w^{\tilde{d}} \{ 1 + \eta (t,w) \} \right),
\]
where $\zeta$ and $\eta \to 0$ as $t$ and $w \to 0$.
Moreover,
it has a superattracting fixed point at the origin
if $d \geq 2$ 
or if $d = 1$ and $\gamma + ld < l \delta $.
\end{proposition}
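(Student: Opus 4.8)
The plan is to imitate the Case~2 blow-up computation and reduce everything to the two inequalities defining $\mathcal{I}_f$, namely $\gamma + ld \leq l\delta$ and $\gamma + ld \leq i + lj$ whenever $b_{ij} \neq 0$. As in the Case~2 discussion we may assume $p(z) = z^{\delta}$ and $b_{\gamma d} = 1$. The second component $\tilde{q}$ has already been computed above; the point is that $\gamma = n_1$ is the smallest $i$ with $b_{ij} \neq 0$ for some $j$, so $i - \gamma \geq 0$, while the second inequality gives $\tilde{j} - \tilde{d} = l^{-1}(i - \gamma) + (j - d) \geq 0$, with equality only for $(i,j) = (\gamma,d)$. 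Hence $\tilde{q}(t,w) = t^{\gamma} w^{\tilde{d}}\{ 1 + \eta(t,w)\}$ with $\eta$ holomorphic near the origin and $\eta \to 0$ there; in particular $\tilde{q}$ extends holomorphically across $\{ tw = 0\}$.

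For the first component I would use $\tilde{p}(t,w) = \frac{p(tw^{l^{-1}})}{\tilde{q}(t,w)^{l^{-1}}} = \frac{t^{\delta} w^{l^{-1}\delta}}{t^{l^{-1}\gamma} w^{l^{-1}\tilde{d}}(1+\eta)^{l^{-1}}}$. Since $l^{-1}$ is a positive integer, $(1+\eta)^{l^{-1}}$ is holomorphic and nonvanishing near the origin, so $\tilde{p}(t,w) = t^{\delta - l^{-1}\gamma} w^{l^{-1}(\delta - \tilde{d})}\{ 1 + \zeta(t,w)\}$ with $\zeta$ holomorphic and $\zeta \to 0$. Both exponents are nonnegative: the first $\mathcal{I}_f$-inequality rewrites as $\delta \geq l^{-1}\gamma + d = \tilde{d}$, which yields $\delta - \tilde{d} \geq 0$ and, since $d \geq 1$ in Case~3, also $\delta - l^{-1}\gamma \geq d \geq 1$. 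Thus $\tilde{p}$ too extends holomorphically across $\{ tw = 0\}$, and $\tilde{f} = (\tilde{p}, \tilde{q})$ is holomorphic on a neighborhood of the origin of exactly the asserted form.

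It then remains to treat the fixed point and the superattracting property. The origin is fixed since every exponent above is $\geq 0$ and in each component some exponent is $> 0$ ($\tilde{d} \geq d \geq 1$ in the second, $\delta - l^{-1}\gamma \geq 1$ in the first). For the superattracting claim I would show that, under the stated hypothesis, every monomial appearing in $\tilde{f}$ has total degree $\geq 2$, whence $D\tilde{f}(0) = 0$. The second component $t^{\gamma}w^{\tilde{d}}$ has total degree $\gamma + \tilde{d} = \gamma(1 + l^{-1}) + d$: this is $\geq 2$ at once if $d \geq 2$, and if $d = 1$ then necessarily $\gamma \geq 1$ --- a term $b_{0,m_1}$ would need $m_1 \geq 2$, so $\gamma = n_1 = 0$ would force $d = m_1 \geq 2$ --- and then $\gamma + \tilde{d} > 2$. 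The first component $t^{\delta - l^{-1}\gamma} w^{l^{-1}(\delta - \tilde{d})}$ has total degree at least $\delta - l^{-1}\gamma \geq d$; again this settles the case $d \geq 2$, and if $d = 1$ the extra hypothesis $\gamma + ld < l\delta$ reads $l^{-1}\gamma < \delta - 1$, so the integer $\delta - l^{-1}\gamma$ is $\geq 2$. Hence $D\tilde{f}(0) = 0$ and the origin is a superattracting fixed point of $\tilde{f}$.

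Every step here is elementary; the only genuine care is in translating the inequalities defining $\mathcal{I}_f$ into the nonnegativity of the exponents $\delta - l^{-1}\gamma$, $l^{-1}(\delta - \tilde{d})$ and $\tilde{d}$, and in the small case distinction for $d = 1$, where one must invoke the \emph{strict} inequality $\gamma + ld < l\delta$ and the incompatibility of $\gamma = 0$ with $d = 1$. I expect the $d = 1$ bookkeeping to be the main (and rather minor) obstacle; everything else runs parallel to the Case~2 blow-up proposition.
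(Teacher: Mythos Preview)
Your proof is correct and follows the same approach as the paper: the computation of $\tilde{q}$ preceding the proposition already contains the essential content, and you have filled in the first component and the superattracting verification in the natural way.

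One small imprecision: the claim ``$\tilde{j} - \tilde{d} \geq 0$ with equality only for $(i,j)=(\gamma,d)$'' is not quite right when $l = l_2$, since $(n_2,m_2)$ then also lies on the line and gives $\tilde{j}=\tilde{d}$. This does not affect your argument, however, because for any such $(i,j)\neq(\gamma,d)$ one still has $i>\gamma$ (as $\gamma=n_1$ is the minimal $x$-coordinate among vertices), so the corresponding term of $\eta$ carries a positive power of $t$ and still vanishes at the origin. The conclusion $\eta\to 0$ stands.
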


Although $\tilde{f}$ is not skew product,
it is a perturbation of a monomial map near the origin.
Hence we can construct the B\"{o}ttcher coordinate for $\tilde{f}$
on a neighborhood of the origin
by similar arguments as in Section 6 in \cite{ueno},
or one may refer to pp.498-499 in \cite{f}, 
which induces the B\"{o}ttcher coordinate for $f$ on $U^l$.

\begin{remark}
The linear transformation
\[
A_2 
\begin{pmatrix} i \\ j \end{pmatrix} =
\begin{pmatrix} i \\ l^{-1} i + j \end{pmatrix} =
\begin{pmatrix} 1 & 0 \\ l^{-1} & 1 \end{pmatrix}
\begin{pmatrix} i \\ j \end{pmatrix}
\]
maps the basis $\{ (1,-l^{-1}), (0,1) \}$ to $\{ (1,0), (0,1) \}$.
In other words, 
it maps a line with slope $-l^{-1}$ and a vertical line 
to a horizontal line and the same vertical line.
In particular,
if $l = l_2$, then
it maps the line $L_{1}$ with slope $-l_2^{-1}$ and the vertical line
that intersect at $(\gamma, d)$, 
to the horizontal and vertical lines
that intersect at $(\gamma, \tilde{d})$.
\end{remark}

%%%%%%%%%%%%%%%%%%%%%%%%%%%%%%%%%%%%%%%%%%%%%%%%%%%%%%%%%%%%%%%%%%%%%%%%%%%%%
%%%%%%%%%%%%%%%%%%%%%%%%%%%%%%%%%%%%%%%%%%%%%%%%%%%%%%%%%%%%%%%%%%%%%%%%%%%%%
\subsection{Intervals, Rectangles and Blow-ups for Case 4}

Let $s > 2$, 
\[
T_k \leq \delta \leq T_{k-1}
\text{ for some } 
2 \leq k \leq s-1,
\]
%for some $2 \leq k \leq s-1$. Moreover, let
\[
(\gamma, d) = (n_k, m_k), \,
l_1 = \frac{n_k - n_{k-1}}{m_{k-1} - m_k} 
\text{ and } 
l_1 + l_2= \frac{n_{k+1} - n_k}{m_k - m_{k+1}}.
\]
%for some $2 \leq k \leq s-1$.
Note that $\gamma > 0$ and $\delta > d \geq 1$ by the setting. 

%%%%%%%%%%%%%%%%%%%%%%%%%%%%%%%%%%%%%%%%%%%%%%%%%%%%%%%%%%%
%%%%%%%%%%%%%%%%%%%%%%%%%%%%%%%%%%%%%%%%%%%%%%%%%%%%%%%%%%%
\subsubsection{Intervals, Rectangles and B\"{o}ttcher coordinates}

We define the interval $\mathcal{I}_f^1$ as
\[
\mathcal{I}_f^1 = 
\left\{ \ l_{(1)} > 0 \ \Bigg| 
\begin{array}{lcr}
l_{(1)} \delta \leq \gamma + l_{(1)} d  \\
\gamma + l_{(1)} d \leq n_{j} + l_{(1)} m_{j} \text{ for } 1 \leq j \leq k-1 \\ 
\gamma + l_{(1)} d  <   n_{j} + l_{(1)} m_{j} \text{ for } k+1 \leq j \leq s
\end{array}
\right\},
\]
the interval $\mathcal{I}_f^2$ associated with $l_{(1)}$ in $\mathcal{I}_f^1$ as
\[
\mathcal{I}_f^2  = \mathcal{I}_f^2 (l_{(1)}) = 
\left\{ \ l_{(2)} > 0 \ \Big| 
\begin{array}{lcr}
\tilde{\gamma} + l_{(2)} d \leq l_{(2)} \delta
\text{ and }
\tilde{\gamma} + l_{(2)} d \leq \tilde{i} + l_{(2)} j \\
\text{for any $i$ and $j$ s.t. } b_{ij} \neq 0 %%% スペースを消した
\end{array}
\right\},
\]
where 
$\tilde{\gamma} = \gamma + l_{(1)} d - l_{(1)} \delta$
and $\tilde{i} = i + l_{(1)} j - l_{(1)} \delta$,
and the rectangle $\mathcal{I}_f$ as
\[
\mathcal{I}_f =
\{ (l_{(1)}, l_{(1)} + l_{(2)}) \ | \ l_{(1)} \in \mathcal{I}_f^1, l_{(2)} \in \mathcal{I}_f^2 \}.
\]

Let us calculate the intervals and rectangle more practically.
Note that $\alpha > 0$ since $\delta > d$ and $\gamma > 0$.
Since $n_j < \gamma$ and $m_j > d$ for any $1 \leq j \leq k-1$,
and $n_j > \gamma$ and $m_j < d$ for any $k+1 \leq j \leq s$,  
\begin{eqnarray*}
\mathcal{I}_f^1 &=& 
\left[
\max_{1 \leq j \leq k-1}
\left\{ 
\dfrac{\gamma - n_j}{m_j - d} 
\right\},
\min_{k+1 \leq j \leq s}
\left\{ 
\dfrac{n_j - \gamma}{d - m_j} 
\right\}
\right)
\cap
\left(
0,
\dfrac{\gamma}{\delta - d}
\right] \\
&=& 
\left[
\dfrac{\gamma - n_{k-1}}{m_{k-1} - d},
\dfrac{n_{k+1} - \gamma}{d - m_{k+1}} 
\right)
\cap
\left(
0,
\dfrac{\gamma}{\delta - d}
\right]
= [ l_1, l_1 + l_2 ) \cap ( 0, \alpha ].
\end{eqnarray*}
On the other hand,
\begin{eqnarray*}
\mathcal{I}_f^2 
&=&
\left[
\dfrac{\tilde{\gamma}}{\delta - d},
\dfrac{\tilde{n}_{k+1} - \tilde{\gamma}}{d - m_{k+1}}  
\right]
\cap \mathbb{R}_{>0}
=
\left[
\dfrac{\gamma}{\delta - d} - l_{(1)},
\dfrac{n_{k+1} - \gamma}{d - m_{k+1}} - l_{(1)}  
\right]
\cap \mathbb{R}_{>0} \\
&=& 
[ \alpha - l_{(1)}, l_1 + l_2 - l_{(1)} ]
\cap \mathbb{R}_{>0}.
\end{eqnarray*}
%%%%
If $T_k < \delta < T_{k-1}$, 
then it follows from the inequality $l_1 < \alpha < l_1 + l_2$ that
\[
\mathcal{I}_f^1 = [ l_1, \alpha ], \ 
\mathcal{I}_f^2 = [ \alpha - l_{(1)}, l_1 + l_2 - l_{(1)} ] \cap \mathbb{R}_{>0}
\text{ and so }
\]
\[
\mathcal{I}_f 
= [ l_1, \alpha ] \times [ \alpha, l_1 + l_2 ] - \{ (\alpha, \alpha) \}.
\]
%%%%
If $T_{k} = \delta < T_{k-1}$, then it follows from the inequality $l_1 < \alpha = l_1 + l_2$ that
\[
\mathcal{I}_f^1 = [ l_1, \alpha ), \ 
\mathcal{I}_f^2 = \{ \alpha - l_{(1)} \}
\text{ and so }
\mathcal{I}_f 
= [ l_1, \alpha ) \times \{ \alpha \}.
\]
%%%%
If $T_k < \delta = T_{k-1}$, 
then it follows from the inequality $l_1 = \alpha < l_1 + l_2$ that
\[
\mathcal{I}_f^1 = \{ \alpha \}, \ 
\mathcal{I}_f^2 = (0, l_2] % ( \alpha - l_{(1)}, l_1 + l_2 - l_{(1)} ]
\text{ and so }
\mathcal{I}_f 
= \{ \alpha \} \times ( \alpha, l_1 + l_2 ].
\]
In particular, 
$\min \mathcal{I}_f^1 = l_1$ and
$\max \{ l_{(1)} + l_{(2)} \ | \ l_{(1)} \in \mathcal{I}_f^1, l_{(2)} \in \mathcal{I}_f^2 \} = l_1 + l_2$.

Let $U^{l_{(1)},l_{(2)}} = \{ |z|^{l_{(1)} + l_{(2)}} < r^{l_{(2)}} |w|, |w| < r|z|^{l_{(1)}} \}$.
Note that $U = U^{l_1, l_2}$.

\begin{lemma}[\cite{ueno}] \label{detailed lemma for case 4}
The following hold for any $l_{(1)}$ in $\mathcal{I}_f^1$ and 
$l_{(2)}$ in $\mathcal{I}_f^2$ if $d \geq 2$, and
for any $l_{(1)}$ in $\mathcal{I}_f^1 - \{ \alpha \}$ and 
$l_{(2)}$ in $\mathcal{I}_f^2 - \{ \alpha - l_{(1)} \}$
if $d = 1$ and $T_k < \delta < T_{k-1}$:
$f \sim f_0$ on $U^{l_{(1)},l_{(2)}}$ as $r \to 0$, and
$f(U^{l_{(1)},l_{(2)}}) \subset U^{l_{(1)},l_{(2)}}$ for small $r >0$.
\end{lemma}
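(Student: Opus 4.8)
The plan is to reduce the statement for Case 4 to the Case 1 situation by performing the two blow-ups $\pi_1$ and $\pi_2$ successively, exactly as the Case 1 and Case 2 discussions in the excerpt already do, and then to transport the conclusions back. First I would treat the case $d \geq 2$. Given $l_{(1)} \in \mathcal{I}_f^1$ (assume it integral, the rational case being handled as in the Case 2 remark), I apply the blow-up $\pi_1(z,c) = (z, z^{l_{(1)}} c)$ and set $\tilde f = \pi_1^{-1}\circ f \circ \pi_1$; the computation is identical to the one displayed in Section 3.1, giving $\tilde f(z,c) = (z^{\delta}, z^{\tilde\gamma} c^{d}\{1 + \eta(z,c)\})$ with $\tilde\gamma = \gamma + l_{(1)}d - l_{(1)}\delta \geq 0$, where the key point is that the defining inequalities of $\mathcal{I}_f^1$ are precisely the conditions guaranteeing $\tilde\gamma \leq \tilde i$ (strictly for the indices $k+1,\dots,s$) for all exponents $(i,j)$ appearing in $q$. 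Thus $\tilde f$ is again a holomorphic skew product near the origin, but now with Newton polygon of its second component having possibly more than one vertex — so $\tilde f$ is itself in Case 2 or Case 3 relative to the new data $(\delta, \tilde\gamma, d, \dots)$.

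Next I would apply the second blow-up $\pi_2(t,w) = (t w^{l_{(2)}^{-1}}, w)$ to $\tilde f$ (again assuming $l_{(2)}^{-1}$ integral), obtaining $\tilde{\tilde f} = \pi_2^{-1}\circ \tilde f \circ \pi_2$. Here the role of $\mathcal{I}_f^2 = \mathcal{I}_f^2(l_{(1)})$ is exactly to ensure, via its defining inequalities $\tilde\gamma + l_{(2)} d \leq l_{(2)}\delta$ and $\tilde\gamma + l_{(2)} d \leq \tilde i + l_{(2)} j$, that the second component of $\tilde{\tilde f}$ has a one-vertex Newton polygon, i.e. that $\tilde{\tilde f}$ lands in Case 1. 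The composite $\pi = \pi_1 \circ \pi_2$ maps a neighborhood of the origin in the $(t,w)$-chart onto (a shrinking version of) the wedge $U^{l_{(1)}, l_{(2)}}$: one checks that the two inequalities $|z|^{l_{(1)}+l_{(2)}} < r^{l_{(2)}}|w|$ and $|w| < r|z|^{l_{(1)}}$ defining $U^{l_{(1)},l_{(2)}}$ become, after the change of coordinates, simply $|t| < r'$ and $|w| < r'$. The forward-invariance $f(U^{l_{(1)},l_{(2)}}) \subset U^{l_{(1)},l_{(2)}}$ for small $r$ then follows because, conjugated through $\pi$, it becomes the statement $\tilde{\tilde f}(\{|t|<r', |w|<r'\}) \subset \{|t|<r', |w|<r'\}$, which is the elementary Case 1 estimate for a perturbed monomial map contracting both coordinates near the origin. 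Likewise $f \sim f_0$ on $U^{l_{(1)},l_{(2)}}$ as $r \to 0$ translates to $\tilde{\tilde f} \sim \tilde{\tilde f}_0$ as $r' \to 0$, which is the content of the Case 1 analysis: the error factors $1 + \eta$, $1 + \zeta$ appearing after each blow-up tend to $1$ on the shrinking polydisc.

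The remaining points are bookkeeping of the degree hypotheses and the boundary cases of $\mathcal{I}_f^1$, $\mathcal{I}_f^2$. When $d = 1$ one must exclude $l_{(1)} = \alpha$ and $l_{(2)} = \alpha - l_{(1)}$: at those weights the relevant inequality $\tilde\gamma \leq \tilde i$ or $\tilde\gamma + l_{(2)} d \leq l_{(2)}\delta$ becomes an equality, so the blown-up map fails to have a superattracting fixed point (its linear part in the $c$- or $w$-direction is nonzero), and the Case 1 argument breaks down — this is exactly the same degeneration already flagged in Lemma \ref{detailed lemma for case 2}. For $d \geq 2$ no such exclusion is needed because the monomial $c^d$ (resp. $w^{\tilde d}$) contracts regardless. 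I would also need to verify that the restriction $l_{(1)} \in \mathcal{I}_f^1$ genuinely permits $l_{(2)} \in \mathcal{I}_f^2$ to be chosen (i.e. $\mathcal{I}_f^2(l_{(1)}) \neq \emptyset$), which follows from the practical computation $\mathcal{I}_f^2 = [\alpha - l_{(1)}, l_1 + l_2 - l_{(1)}] \cap \mathbb{R}_{>0}$ carried out just before the lemma, together with $l_{(1)} \leq \alpha < l_1 + l_2$.

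I expect the main obstacle to be purely organizational rather than conceptual: carefully tracking through the two successive changes of coordinates that the defining inequalities of $\mathcal{I}_f^1$ and $\mathcal{I}_f^2$ really do collapse the two-vertex (or three-vertex) Newton polygon of $q$ down to a single vertex, and that the composite $\pi_1 \circ \pi_2$ sends a coordinate polydisc precisely onto the wedge $U^{l_{(1)},l_{(2)}}$ with the claimed radii — in particular getting the exponents $l_{(2)}$ versus $l_{(1)}+l_{(2)}$ in the definition of the wedge to match the powers produced by $\pi_2 \circ \pi_1$. Once the coordinates are set up correctly, the analytic estimates reduce verbatim to the Case 1 statements already proved in \cite{ueno}, so no new hard analysis is required; this is why the lemma is attributed to \cite{ueno} and merely restated here in the $\mathcal{I}_f$-language.
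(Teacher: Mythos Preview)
Your proposal is correct and follows essentially the same approach as the paper: the two successive blow-ups $\pi_1$ and $\pi_2$, governed respectively by $\mathcal{I}_f^1$ and $\mathcal{I}_f^2$, reduce $f$ on the wedge $U^{l_{(1)},l_{(2)}}$ to a perturbed monomial map on a polydisc, after which the Case 1 argument applies verbatim. One small imprecision: after the first blow-up the paper shows $\tilde f_1$ lands specifically in Case 3 (the vertex $(\tilde\gamma,d)$ has minimal $x$-coordinate and $N(\tilde q_1)$ has further vertices such as $(\tilde n_{k+1},m_{k+1})$), not ``Case 2 or Case 3'' as you write, but this does not affect the argument.
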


\begin{theorem}[\cite{ueno}] \label{}
The following holds for any $l_{(1)}$ and $l_{(2)}$ in the previous lemma:
$\phi$ is biholomorphic on $U^l$ and conjugates $f$ to $f_0$
%there is a biholomorphic map $\phi$ defined on $U^{l_{(1)},l_{(2)}}$
%that conjugates $f$ to $f_0$
for small $r > 0$.
Moreover,
$\phi \sim id$ on $U^{l_{(1)},l_{(2)}}$ as $r \to 0$.
\end{theorem}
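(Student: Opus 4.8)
\emph{The plan is to follow the template established for Cases 1--3: build $\phi$ directly as the limit $\phi=\lim_{n\to\infty}f_0^{-n}\circ f^n$ on $U^{l_{(1)},l_{(2)}}$, where the branches of $f_0^{-n}$ are pinned down once and for all by $f_0^{-n}\circ f_0^n=\mathrm{id}$.} This limit is meaningful because Lemma~\ref{detailed lemma for case 4} already gives $f(U^{l_{(1)},l_{(2)}})\subset U^{l_{(1)},l_{(2)}}$ for small $r$, so each iterate $f^n$ maps the wedge into itself and the roots defining $f_0^{-n}$ can be continued unambiguously along the orbit. After a linear change of coordinates we may take $a=b_{\gamma d}=1$. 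The first component of $\phi$ is then nothing but the one-dimensional B\"ottcher coordinate $\varphi_p$ of $p$, which exists and is univalent on $\{|z|<r\}$ with $\varphi_p(z)=z\bigl(1+o(1)\bigr)$ as $r\to0$, so everything reduces to the second component.

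\emph{Then I would prove convergence of the second component by a telescoping estimate.} Writing $f=f_0\cdot(1+\eta)$ componentwise, Lemma~\ref{detailed lemma for case 4} ($f\sim f_0$ on the wedge) gives $\sup_{U^{l_{(1)},l_{(2)}}}|\eta_i|=O(r^{\varepsilon})$ for some $\varepsilon>0$ as $r\to0$. In logarithmic coordinates $f_0$ acts on $(\log z,\log w)$ as multiplication by the lower-triangular matrix $A$ with rows $(\delta,0)$ and $(\gamma,d)$, and a short computation shows that the increment $\log(f_0^{-(n+1)}\circ f^{n+1})-\log(f_0^{-n}\circ f^n)$ equals $A^{-(n+1)}\log\bigl(1+\eta(f^n(z,w))\bigr)$. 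Since $f\sim f_0$ actually sends $U^{l_{(1)},l_{(2)}}$ into the wedge built from a strictly smaller radius $r'\ll r$, the orbit points $f^n(z,w)$ sit in wedges whose radii shrink super-geometrically, so $\eta(f^n(z,w))\to0$ super-geometrically; as the entries of $A^{-(n+1)}$ are $O(d^{-n})$, the increments are summable uniformly on $U^{l_{(1)},l_{(2)}}$. Hence $\phi$ exists, is holomorphic (locally uniform limit of holomorphic maps), and its second component is $w\bigl(1+o(1)\bigr)$ as $r\to0$; combined with the first component this yields $\phi\sim\mathrm{id}$ on $U^{l_{(1)},l_{(2)}}$.

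\emph{The conjugacy and biholomorphy are then formal.} One has $\phi\circ f=\lim_n f_0^{-n}\circ f^{n+1}=f_0\circ\lim_n f_0^{-(n+1)}\circ f^{n+1}=f_0\circ\phi$. Because $f\sim f_0$ forces each $f_0^{-n}\circ f^n$ to be uniformly close to the identity on the wedge with a uniform geometric tail, the limit $\phi$ is itself uniformly close to $\mathrm{id}$ for small $r$, hence injective and locally biholomorphic, i.e. biholomorphic onto its image. Conceptually the same reduction can be carried out by two successive (iterated) blow-ups: first the weight-$l_{(1)}$ blow-up of Case~2, after which $f$ lies --- with respect to the remaining weight $l_{(2)}\in\mathcal{I}_f^2$ --- in the Case~3 situation, then the weight-$l_{(2)}^{-1}$ blow-up of Case~3, after which $f$ becomes a perturbation of a genuine monomial map near a superattracting fixed point, that is, a Case~1 germ, whose B\"ottcher coordinate pushes back down to $\phi$; this is the cleanest route when $l_{(1)}$ and $l_{(2)}^{-1}$ are integers, and the general rational case is handled by the ramified analogues of these blow-ups.

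\emph{I expect the main obstacle} to be making the super-geometric bound on the telescoping increments uniform over the whole wedge --- which is not relatively compact at its tip --- together with the bookkeeping around the degenerate weights. Indeed, when $d=1$ and $l_{(1)}=\alpha$ one has $\tilde\gamma=\gamma+l_{(1)}(d-\delta)=0$, so the lifted second coordinate loses its $z$-factor and the monomial model degenerates to a map that is neutral (Ko\e nigs- or Leau--Fatou-type) in the fibre direction; this is precisely why such weights, and symmetrically $l_{(2)}=\alpha-l_{(1)}$, are excluded in Lemma~\ref{detailed lemma for case 4} when $d=1$, and the endpoint cases $\delta=T_k$ and $\delta=T_{k-1}$ likewise govern which ends of the intervals $\mathcal{I}_f^1$ and $\mathcal{I}_f^2$ remain admissible.
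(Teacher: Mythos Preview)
Your proposal is correct and aligns with the paper's approach. The theorem is cited from \cite{ueno} and not re-proved here; the paper only \emph{illustrates} it (Section~3.3.2) via exactly the two-step blow-up reduction you describe at the end of your third paragraph: first apply $\pi_1(z,c)=(z,z^{l_{(1)}}c)$ to land in Case~3, then $\pi_2(t,c)=(tc^{l_{(2)}^{-1}},c)$ to obtain a perturbation $\tilde f_2$ of a monomial map with superattracting fixed point at the origin, whose B\"ottcher coordinate pulls back to $\phi$ on $U^{l_{(1)},l_{(2)}}$.

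The direct telescoping argument you lead with is equivalent and is essentially what underlies the construction in \cite{ueno} (cf.\ Section~6 there); the blow-up picture is the conceptual packaging that explains why the estimates go through and why the degenerate weights must be excluded when $d=1$. Your identification of the obstacle --- that $\tilde\gamma=0$ when $l_{(1)}=\alpha$ (and symmetrically $\tilde d=\delta$ when $l_{(1)}+l_{(2)}=\alpha$) kills superattractivity in the fibre direction for $d=1$ --- is exactly right and matches the paper's discussion of when $\tilde f_2$ fails to be superattracting.
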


Note that $U^{l_1, l_2}$ is the largest wedge among $U^{l_{(1)},l_{(2)}}$ 
for any $l_{(1)}$ in $\mathcal{I}_f^1$ and $l_{(2)}$ in $\mathcal{I}_f^2$.

%%%%%%%%%%%%%%%%%%%%%%%%%%%%%%%%%%%%%%%%%%%%%%%%%%%%%%%%%%%
%%%%%%%%%%%%%%%%%%%%%%%%%%%%%%%%%%%%%%%%%%%%%%%%%%%%%%%%%%%
\subsubsection{Blow-ups}

Assuming that $l_{(1)}$ and $l_{(2)}^{-1}$ are integer
for $l_{(1)}$ in $\mathcal{I}_f^1$ and $l_{(2)}$ in $\mathcal{I}_f^2$,
we illustrate our previous results in terms of blow-ups.
The strategy is to combine the blow-ups in Cases 2 and 3.
We first blow-up $f$ to $\tilde{f}_1$
by $\pi_1$ as in Case 2.
It then turns out that $\tilde{f}_1$ is a holomorphic skew product in Case 3.
We next blow-up $\tilde{f}_1$ to $\tilde{f}_2$
by $\pi_2$ as in Case 3.
The map $\tilde{f}_2$ is a perturbation of a monomial map near the origin,
and we obtain the B\"{o}ttcher coordinate.

%%%%%
Let us restate the first blow-up.
%Assume that $l_{(1)}$ in $\mathcal{I}_f^1$ is integer.
Let $\pi_1 (z,c) = (z, z^{l_{(1)}} c)$
and $\tilde{f}_1 = \pi_1^{-1} \circ f \circ \pi_1$ as in Case 2,
and let $\tilde{f}_1 (z,c) = (\tilde{p}_1 (z), \tilde{q}_1 (z,c))$.
Let $p(z) = z^{\delta}$ and $b_{\gamma d} = 1$ for simplicity.
Then 
\[
\tilde{f}_1 (z,c) 
= \left( z^{\delta}, z^{\gamma + l_{(1)} d - l_{(1)} \delta} c^d 
+ \sum b_{ij} z^{i + l_{(1)} j - l_{(1)} \delta} c^{j} \right).
\]
Let $\tilde{\gamma} = \gamma + l_{(1)} d - l_{(1)} \delta$
and $\tilde{i} = i + l_{(1)} j - l_{(1)} \delta$
as in Case 2.
Then
$0 \leq \tilde{\gamma} \leq \tilde{i}$ for any $(i,j)$ such that $b_{ij} \neq 0$,
and we have the following. % obtain statement.

\begin{proposition}[\cite{ueno}]
If $l_{(1)}$ in $\mathcal{I}_f^1$ is integer,
then $\tilde{f}_1$ is holomorphic and skew product % well-defined,
on a neighborhood of the origin.  
More precisely,
\[
\tilde{f}_1 (z,c) =
\left( z^{\delta}, z^{\tilde{\gamma}} c^d + \sum b_{ij} z^{\tilde{i}} c^{j} \right).
\]
Moreover, 
it has a superattracting fixed point at the origin
if $d \geq 2$ or if $d = 1$ and $l_{(1)} \delta < \gamma + l_{(1)} d$. 
\end{proposition}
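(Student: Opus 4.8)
The proof runs along the same lines as the Case~2 lift computation recalled just above, adapted to the fact that $(\gamma,d)=(n_k,m_k)$ is now an interior vertex of $N(q)$. First I would substitute $\pi_1(z,c)=(z,z^{l_{(1)}}c)$ and $\pi_1^{-1}(z,w)=(z,z^{-l_{(1)}}w)$ into $\tilde{f}_1=\pi_1^{-1}\circ f\circ\pi_1$. Since $p(z)=z^{\delta}$ depends only on $z$, the first component of $\tilde{f}_1$ is again $z^{\delta}$, so $\tilde{f}_1$ is automatically a skew product; the second component is
\[
\frac{q(z,z^{l_{(1)}}c)}{z^{l_{(1)}\delta}}=z^{\gamma+l_{(1)}d-l_{(1)}\delta}c^{d}+\sum b_{ij}\,z^{i+l_{(1)}j-l_{(1)}\delta}c^{j}=z^{\tilde{\gamma}}c^{d}+\sum b_{ij}\,z^{\tilde{i}}c^{j},
\]
which is the asserted formula with $\tilde{\gamma}=\gamma+l_{(1)}d-l_{(1)}\delta$ and $\tilde{i}=i+l_{(1)}j-l_{(1)}\delta$.

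Next I would check that every exponent occurring is a nonnegative integer, and this is exactly where the defining inequalities of $\mathcal{I}_f^1$ are used. Integrality is immediate because $l_{(1)}$ is assumed integer. Nonnegativity of $\tilde{\gamma}$ is the first inequality $l_{(1)}\delta\le\gamma+l_{(1)}d$. For a general monomial, $\tilde{i}\ge 0$ amounts to $i+l_{(1)}j\ge l_{(1)}\delta$ for every $(i,j)$ with $b_{ij}\ne 0$; such $(i,j)$ lies in $N(q)$, and since the linear functional $(x,y)\mapsto x+l_{(1)}y$ with $l_{(1)}>0$ attains its minimum over $N(q)$ at a vertex (the recession directions of $N(q)$ being $+x$ and $+y$, on which it increases), it suffices to test the vertices $(n_j,m_j)$. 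There the bound follows from $\gamma+l_{(1)}d\le n_j+l_{(1)}m_j$ for $1\le j\le k-1$, from equality at $j=k$, and from the strict inequalities $\gamma+l_{(1)}d<n_j+l_{(1)}m_j$ for $k+1\le j\le s$; in every case $n_j+l_{(1)}m_j\ge\gamma+l_{(1)}d\ge l_{(1)}\delta$, so $\tilde{i}\ge\tilde{\gamma}\ge 0$. Hence $\tilde{q}_1$ is holomorphic near the origin and $\tilde{f}_1$ is a holomorphic skew product there.

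Finally I would verify that the origin is a superattracting fixed point, i.e. $\tilde{f}_1(0,0)=(0,0)$ and $D\tilde{f}_1(0)$ is nilpotent (indeed zero, apart from a possible lower-triangular entry from a surviving $bz$-type term, which the inequalities above in fact also exclude when $l_{(1)}\ge 1$). That $\tilde{f}_1$ fixes the origin follows because $q$ has no constant term and the inequalities prevent the lift from producing one. For the differential, the monomial $z^{\tilde{\gamma}}c^{d}$ has total degree $\tilde{\gamma}+d$, which is $\ge 2$ when $d\ge 2$, and is $\ge 2$ when $d=1$ precisely under the extra hypothesis $l_{(1)}\delta<\gamma+l_{(1)}d$ (equivalently $\tilde{\gamma}\ge 1$); any other monomial $b_{ij}z^{\tilde{i}}c^{j}$ contributing to $D\tilde{f}_1(0)$ would satisfy $\tilde{i}+j\le 1$, and since $\tilde{i}\ge\tilde{\gamma}$ this, together with the strict vertex inequalities, contradicts $(\gamma,d)=(n_k,m_k)$ being a vertex of $N(q)$ with supporting line $x+l_{(1)}y=\gamma+l_{(1)}d$. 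The main obstacle I anticipate is this last step: the split $d\ge 2$ versus $d=1$ and the careful exclusion of low-degree monomials surviving the blow-up, which forces one to use not merely that $\mathcal{I}_f^1$ is an interval but the strictness of its defining inequalities for $j\ge k+1$ together with the Newton-polygon geometry. Everything else is a bookkeeping transcription of the Case~2 argument.
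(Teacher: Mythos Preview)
Your proof is correct and follows essentially the same approach the paper indicates (and cites from \cite{ueno}): compute $\tilde f_1=\pi_1^{-1}\circ f\circ\pi_1$ explicitly, then use the defining inequalities of $\mathcal I_f^1$ together with the convexity of $N(q)$ to obtain $0\le\tilde\gamma\le\tilde i$ for every support point $(i,j)$, which gives holomorphy; the superattracting claim then follows from the case split on $d\ge 2$ versus $d=1$ with $\tilde\gamma\ge 1$. Your reduction to vertices via the linear functional $(x,y)\mapsto x+l_{(1)}y$ is exactly the mechanism behind the paper's one-line assertion ``$0\le\tilde\gamma\le\tilde i$ for any $(i,j)$ such that $b_{ij}\ne 0$,'' and your supporting-line argument for ruling out low-degree monomials when $\tilde\gamma=0$ is the right way to make the superattracting part precise.
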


Note that 
$(\tilde{\gamma}, d)$ is the vertex of the Newton polygon $N(\tilde{q}_1)$
whose $x$-coordinate is minimum,
and that
$N(\tilde{q}_1)$ has other vertices such as $(\tilde{n}_{k+1}, m_{k+1})$.
Hence the situation resembles that of Case 3.

We illustrate that $\tilde{f}_1$ is actually in Case 3. 
Recall that $L_{k}$ is the line passing through 
the vertices $(\gamma, d)$ and $(n_{k+1}, m_{k+1})$,
and $T_{k}$ is the $y$-intercept of $L_{k}$.
The slope of $L_{k}$ is $-(l_1 + l_2)^{-1}$ 
and so $T_{k} = (l_1 + l_2)^{-1} \gamma + d$.
Let $\tilde{L}_{k}$ be the line passing through  
the vertices $(\tilde{\gamma}, d)$ and $(\tilde{n}_{k+1}, m_{k+1})$,
and $\tilde{T}_{k}$ the $y$-intercept of $\tilde{L}_{k}$,
where $\tilde{n}_{k+1} = n_{k+1} + l_{(1)} m_{k+1} - l_{(1)} \delta$.
Then the slope of $\tilde{L}_{k}$ is $-(l_1 + l_2 - l_{(1)})^{-1}$ 
and so $\tilde{T}_{k} = (l_1 + l_2 - l_{(1)})^{-1} \tilde{\gamma} + d$.
Therefore,
the inequality $T_k \leq \delta$ implies 
the inequality $\tilde{T}_k \leq \delta$. 
More precisely,
$\tilde{T}_k < \delta$ if $T_k < \delta$,
and $\tilde{T}_k = \delta$ if $T_k = \delta$.
In particular, % Consequently,

\begin{proposition}[\cite{ueno}]
If $l_{(1)}$ in $\mathcal{I}_f^1$ is integer,
then $\tilde{f}_1$ is a holomorphic skew product in Case 3. 
\end{proposition}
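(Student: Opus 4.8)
The plan is to verify directly that $\tilde{f}_1(z,c) = (\tilde{p}_1(z),\tilde{q}_1(z,c))$ meets the two requirements that define Case 3 for a holomorphic skew product whose first component has order $\delta$: that $N(\tilde{q}_1)$ has at least two vertices, and that $\delta$ is at least the $y$-intercept of the line through the two leftmost vertices of $N(\tilde{q}_1)$. The preceding Proposition already gives that $\tilde{f}_1$ is a holomorphic skew product near the origin with $\tilde{q}_1(z,c) = z^{\tilde{\gamma}}c^d + \sum b_{ij} z^{\tilde{i}}c^j$ and $0 \le \tilde{\gamma} \le \tilde{i}$ for every $(i,j)$ with $b_{ij}\ne 0$; everything else is extracting the Newton-polygon information from this together with the slope arithmetic already displayed.

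First I would locate the vertices of $N(\tilde{q}_1)$. Since $A_1$ converts a functional $\alpha\tilde{i}+\beta j$ with $\alpha,\beta \ge 0$ into the functional $\alpha i + (\alpha l_{(1)}+\beta) j$ on the exponents of $q$, a relevant exponent is a vertex of $N(\tilde{q}_1)$ exactly when its $A_1$-preimage is a vertex of $N(q)$ that is extremal for some functional of slope in $[-l_{(1)}^{-1},0]$. Because $l_1 \le l_{(1)} < l_1 + l_2$, the direction $-l_{(1)}^{-1}$ lies between the slopes of the edges $L_{k-1}$ and $L_k$, so these vertices are precisely $(n_k,m_k),(n_{k+1},m_{k+1}),\dots,(n_s,m_s)$; their images are $(\tilde{\gamma},d),(\tilde{n}_{k+1},m_{k+1}),\dots,(\tilde{n}_s,m_s)$, while the images of $(n_1,m_1),\dots,(n_{k-1},m_{k-1})$ are absorbed since $\tilde{\gamma}\le\tilde{n}_j$ and $m_j > d$ for $j \le k-1$. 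As $k \le s-1$ in Case 4, this is a list of $s-k+1 \ge 2$ vertices, and $(\tilde{\gamma},d)$ is the one of least $x$-coordinate. (Alternatively, and more cheaply: $m_{k+1} < d$ forces $(\tilde{n}_{k+1},m_{k+1}) \notin D(\tilde{\gamma},d)$, so $N(\tilde{q}_1)$ strictly contains $D(\tilde{\gamma},d)$ and hence has at least two vertices; that already suffices.)

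Next I would compute the relevant $y$-intercept. The first edge of $N(\tilde{q}_1)$ is the segment from $(\tilde{\gamma},d)$ to $(\tilde{n}_{k+1},m_{k+1})$, i.e.\ the line $\tilde{L}_k$, whose $y$-intercept is $\tilde{T}_k = (l_1+l_2-l_{(1)})^{-1}\tilde{\gamma}+d$. The discussion preceding the statement records that $T_k \le \delta$ forces $\tilde{T}_k \le \delta$. Since along any Newton polygon the successive edge $y$-intercepts decrease, $\tilde{T}_k$ is the largest of them, so it plays the role of ``$T_1$'' for $\tilde{f}_1$; hence $\delta \ge \tilde{T}_k$ together with ``$N(\tilde{q}_1)$ has $\ge 2$ vertices'' are exactly the hypotheses of Case 3, with the dominant vertex of $\tilde{f}_1$ being $(\tilde{\gamma},d)$ and with $l_2 = l_1+l_2-l_{(1)}$.

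I expect the only delicate point to be the bookkeeping of the second step, namely checking that no spurious vertices of $N(\tilde{q}_1)$ are created by the images of the exponents lying above $(\gamma,d)$ (those coming from $n_1,\dots,n_{k-1}$), equivalently that $(\tilde{\gamma},d)$ and $(\tilde{n}_{k+1},m_{k+1})$ are genuinely consecutive vertices. This is exactly what the membership inequalities in $\mathcal{I}_f^1$ and the constraint $l_{(1)} < l_1+l_2$ guarantee; once that is in place, the remaining verification is the routine slope computation already carried out above.
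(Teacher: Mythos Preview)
Your proposal is correct and follows essentially the same route as the paper: verify that $(\tilde{\gamma},d)$ is the leftmost vertex of $N(\tilde{q}_1)$, that $(\tilde{n}_{k+1},m_{k+1})$ is another vertex, and that the $y$-intercept $\tilde{T}_k=(l_1+l_2-l_{(1)})^{-1}\tilde{\gamma}+d$ of the line through them satisfies $\tilde{T}_k\le\delta$ because $T_k\le\delta$. You are in fact more careful than the paper's illustration, which simply asserts that $(\tilde{n}_{k+1},m_{k+1})$ is a vertex and that $\tilde{L}_k$ is the relevant edge; your functional-pullback argument (and the cheap observation $m_{k+1}<d$) make explicit why no images of $(n_1,m_1),\dots,(n_{k-1},m_{k-1})$ can intrude between $(\tilde{\gamma},d)$ and $(\tilde{n}_{k+1},m_{k+1})$.
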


%%%%
Let us restate the second blow-up.
%Assume that $l_{(2)}^{-1}$ is integer for  $l_{(2)}$ in $\mathcal{I}_f^2$.
Let $\pi_2 (t,c) = (t c^{l_{(2)}^{-1}}, c)$ 
and $\tilde{f}_2 = \pi_2^{-1} \circ \tilde{f}_1 \circ \pi_2$ as in Case 3,
and let $\tilde{f}_2 (t,c) = (\tilde{p}_2 (t,c), \tilde{q}_2 (t,c))$.
Let $\tilde{d} = l_{(2)}^{-1} \tilde{\gamma} + d$
and $\tilde{j} = l_{(2)}^{-1} \tilde{i} + j$
as in Case 3.
Then
$\tilde{d} \leq \tilde{j}$ and $\tilde{d} \leq \delta$ for any $(i,j)$ such that $b_{ij} \neq 0$.
Note that
$(\tilde{\gamma}, \tilde{d})$ is the minimum in the sense that
$0 \leq \tilde{\gamma} \leq \tilde{i}$ and 
$d \leq \tilde{d} \leq \tilde{j}$ for any $(i,j)$ such that $b_{ij} \neq 0$.
Therefore,
the Newton polygon of $\tilde{q}_2$ has just one vertex $(\tilde{\gamma}, \tilde{d})$:
$N(\tilde{q}_2) = D(\tilde{\gamma}, \tilde{d})$,
and we have the following. % obtain statement.

\begin{proposition}[\cite{ueno}]
If $l_{(1)}$ and $l_{(2)}^{-1}$ are integer
for $l_{(1)}$ in $\mathcal{I}_f^1$ and $l_{(2)}$ in $\mathcal{I}_f^2$,
then $\tilde{f}_2$ is holomorphic % well-defined and rigid
on a neighborhood of the origin.  % $\{ |t| < r, |c| < r \}$ of the origin. 
More precisely,
\[
\tilde{f}_2 (t,c) =
\left( t^{\delta - l_{(2)}^{-1} \tilde{\gamma}} c^{l_{(2)}^{-1} (\delta - \tilde{d})} \{ 1 + \zeta_2 (t,c) \}, 
\ t^{\tilde{\gamma}} c^{\tilde{d}} \{ 1 + \eta_2 (t,c) \} \right),
\]
where $\zeta_2$ and $\eta_2 \to 0$ as $t$ and $c \to 0$.
Moreover, 
it has a superattracting fixed point at the origin
if $d \geq 2$ or if $d = 1$, $l_{(1)} \delta < \gamma + l_{(1)} d$
and $\tilde{\gamma} + l_{(2)} d < l_{(2)} \delta$. 
\end{proposition}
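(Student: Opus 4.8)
The plan is to recognize the statement as the Case~3 blow-up proposition of Section~3.2 applied to $\tilde{f}_1$ in place of $f$. By the two preceding propositions, $\tilde{f}_1$ is a holomorphic skew product near the origin that lies in Case~3: its second component $\tilde{q}_1(z,c)=z^{\tilde{\gamma}}c^{d}+\sum b_{ij}z^{\tilde{i}}c^{j}$ has Newton polygon $D(\tilde{\gamma},d)$ with $x$-minimal vertex $(\tilde{\gamma},d)$, and $\tilde{T}_k\le\delta$, so in the Case~3 dictionary the role of ``$\gamma$'' is played by $\tilde{\gamma}$, of ``$d$'' still by $d$, of ``$i$'' by $\tilde{i}$ and of ``$n_{k+1}$'' by $\tilde{n}_{k+1}$. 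The first point I would verify is that $\mathcal{I}_f^2=\mathcal{I}_f^2(l_{(1)})$ is precisely the weight interval that Case~3 attaches to $\tilde{f}_1$: its defining inequalities $\tilde{\gamma}+l_{(2)}d\le l_{(2)}\delta$ and $\tilde{\gamma}+l_{(2)}d\le\tilde{i}+l_{(2)}j$ are exactly those in the Case~3 definition of $\mathcal{I}_f$ after the substitutions $\gamma\mapsto\tilde{\gamma}$, $i\mapsto\tilde{i}$. Since $l_{(2)}^{-1}$ is an integer, $\pi_2(t,c)=(tc^{l_{(2)}^{-1}},c)$ is the admissible Case~3 blow-up, and the Case~3 proposition then yields directly that $\tilde{f}_2=\pi_2^{-1}\circ\tilde{f}_1\circ\pi_2$ extends holomorphically across the origin in the asserted normal form, with $\tilde{d}=l_{(2)}^{-1}\tilde{\gamma}+d$ and $\zeta_2,\eta_2\to 0$.

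For a self-contained write-up I would still spell out the short substitution. Using $\tilde{p}_1(z)=z^{\delta}$, one computes $\tilde{f}_1(\pi_2(t,c))=\bigl(t^{\delta}c^{l_{(2)}^{-1}\delta},\,t^{\tilde{\gamma}}c^{\tilde{d}}(1+\eta_2)\bigr)$, where $\eta_2=\sum b_{ij}t^{\tilde{i}-\tilde{\gamma}}c^{\tilde{j}-\tilde{d}}$ with $\tilde{j}=l_{(2)}^{-1}\tilde{i}+j$; the exponents $\tilde{i}-\tilde{\gamma}$ and $\tilde{j}-\tilde{d}$ are nonnegative integers, not both zero, because $N(\tilde{q}_2)=D(\tilde{\gamma},\tilde{d})$ has $(\tilde{\gamma},\tilde{d})$ as its only vertex, so $\eta_2\to 0$. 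Applying $\pi_2^{-1}$ divides the first component by the $l_{(2)}^{-1}$-th power of the second, producing $t^{\delta-l_{(2)}^{-1}\tilde{\gamma}}c^{l_{(2)}^{-1}(\delta-\tilde{d})}(1+\eta_2)^{-l_{(2)}^{-1}}$; setting $\zeta_2=(1+\eta_2)^{-l_{(2)}^{-1}}-1$ gives the claimed form, and $\zeta_2$ is holomorphic and vanishes at the origin because $1+\eta_2$ is close to $1$ there. That the exponents $\delta-l_{(2)}^{-1}\tilde{\gamma}$ and $l_{(2)}^{-1}(\delta-\tilde{d})$ are nonnegative is exactly the first inequality $\tilde{\gamma}+l_{(2)}d\le l_{(2)}\delta$ defining $\mathcal{I}_f^2$ (with $d\ge 1$), which also gives $\tilde{d}\le\delta$; this is the only place the membership $l_{(2)}\in\mathcal{I}_f^2$ is needed beyond the Case~3 reduction. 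The monomial form then shows $\tilde{f}_2$ is holomorphic across $\{tc=0\}$.

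For the superattracting assertion I would combine the two superattraction criteria already available. If $d\ge 2$, then $\tilde{d}\ge d\ge 2$ and $\delta-l_{(2)}^{-1}\tilde{\gamma}\ge d\ge 2$, so both components of $\tilde{f}_2$ vanish to order at least $2$ at the origin, i.e. $D\tilde{f}_2(0)=0$. If $d=1$, then $\tilde{\gamma}>0$ is equivalent to $l_{(1)}\delta<\gamma+l_{(1)}d$ --- the criterion that makes $\tilde{f}_1$ superattracting in the first preceding proposition --- and $\tilde{d}<\delta$ is equivalent to $\tilde{\gamma}+l_{(2)}d<l_{(2)}\delta$ --- the Case~3 criterion applied to $\tilde{f}_1$ --- and under these two hypotheses $\tilde{\gamma}\ge 1$, $\tilde{d}\ge 1$, $\delta-l_{(2)}^{-1}\tilde{\gamma}\ge 1$ and $l_{(2)}^{-1}(\delta-\tilde{d})\ge 1$, so again each component vanishes to order at least $2$. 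I expect the only mildly delicate point to be the bookkeeping: keeping straight which data of $\tilde{f}_1$ ($\tilde{\gamma}$, $\tilde{i}$, $\tilde{n}_{k+1}$, $\tilde{T}_k$) plays which role in the Case~3 statement, and checking that integrality of $l_{(1)}$ and of $l_{(2)}^{-1}$ is enough for $\pi_1$ and $\pi_2$ to be honest finite compositions of point blow-ups; there is no analytic content beyond the already-established Case~2 and Case~3 computations.
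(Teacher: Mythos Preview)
Your proposal is correct and follows essentially the same approach as the paper: recognize that $\tilde{f}_1$ is a holomorphic skew product in Case~3, identify $\mathcal{I}_f^2(l_{(1)})$ with the Case~3 weight interval attached to $\tilde{f}_1$, and then invoke the Case~3 blow-up proposition with $\pi_2$. The paper gives no argument beyond this reduction, whereas you additionally spell out the substitution and the exponent bookkeeping; your treatment is a faithful and slightly more explicit version of what the paper does.
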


Consequently,
we can construct the B\"{o}ttcher coordinate for $\tilde{f}_2$
on $\{ |t| < r, |c| < r \}$,
%on a neighborhood $\{ |t| < r, |c| < r \}$ of the origin,
which induces that for $\tilde{f}_1$ 
on $\{ |z| < r |c|^{l_{(2)}^{-1}}, |c| < r \}$ and 
that for $f$ 
on $U^{l_{(1)},l_{(2)}}$.  

\begin{remark}
The affine transformation
\[
A 
\begin{pmatrix} i \\ j \end{pmatrix} =
\begin{pmatrix} 1 & 0 \\ l_2^{-1} & 1 \end{pmatrix}
\left\{  
\begin{pmatrix} 1 & l_1 \\ 0 & 1 \end{pmatrix}
\begin{pmatrix} i \\ j \end{pmatrix} - 
\begin{pmatrix} l_1 \delta \\ 0 \end{pmatrix}
\right\}
\]
is the composition of the two affine transformations
\[
A_1
\begin{pmatrix} i \\ j \end{pmatrix} =
\begin{pmatrix} i + l_1 j - l_1 \delta \\ j \end{pmatrix} 
\text{ and }
A_2
\begin{pmatrix} i \\ j \end{pmatrix} =
\begin{pmatrix} i \\ l_2^{-1} i + j \end{pmatrix}. 
\]
Although statements similar to below hold 
for any $l_{(1)}$ in $\mathcal{I}_f^1$ and $l_{(2)}$ in $\mathcal{I}_f^2$,
instead of $l_1$ and $l_2$,
we illustrate only the case $l_{(1)} = l_1$ and $l_{(2)} = l_2$ for simplicity.
The transformation $A_1$ maps 
the basis $\{ (1,-(l_1 +l_2)^{-1}), (-l_1,1) \}$ to $\{ (1,-l_2^{-1}), (0,1) \}$. 
In other words, 
it maps the line $L_{k}$ with slope $-(l_1 +l_2)^{-1}$ and the line $L_{k-1}$ with slope $-l_1^{-1}$,
which intersect at $(\gamma, d)$,
to the line $\tilde{L}_{k}$ with slope $-l_2^{-1}$ and the vertical line
that intersect at $(\tilde{\gamma}, d)$. 
%%%
The transformation $A_2$ maps 
the basis $\{ (1,-l_2^{-1}),  (0,1) \}$ to $ \{ (1,0), (0,1) \}$.
In other words, 
it maps the line $\tilde{L}_{k}$ and the vertical line
that intersect at $(\tilde{\gamma}, d)$, 
to the horizontal and vertical lines
that intersect at $(\tilde{\gamma}, \tilde{d})$. 
\end{remark}

%\newpage
%%%%%%%%%%%%%%%%%%%%%%%%%%%%%%%%%%%%%%%%%%%%%%%%%%%%%%%%%%%%%%%%%%%%%%%%%%%%%%%%%%%%%%
%%%%%%%%%%%%%%%%%%%%%%%%%%%%%%%%%%%%%%%%%%%%%%%%%%%%%%%%%%%%%%%%%%%%%%%%%%%%%%%%%%%%%%
%%%%%%%%%%%%%%%%%%%%%%%%%%%%%%%%%%%%%%%%%%%%%%%%%%%%%%%%%%%%%%%%%%%%%%%%%%%%%%%%%%%%%%
\section{Case 2: Invariant wedges and Attracting sets}

We deal with Case 2 in this and the next sections:
we prove Theorems \ref{main thm on inv wedges for Case 2}
and \ref{main thm on attr sets for Case 2},
the main results on invariant wedges and the unions of all the preimages of the wedges,
in this section and 
prove Theorems \ref{main thm on G_z^a for Case 2} and 
\ref{main thm on G_z, G_f and G_f^a for Case 2} and
Propositions \ref{main propo on A_f^l for case 2} and 
\ref{main prop on the unboundness for case 2},
the main results on plurisubharmonic functions,
in the next section.
%%%
Let $s > 1$, %$\delta \leq T_{s-1}$ and $(\gamma, d) = (n_s, m_s)$.
\[
\delta \leq T_{s-1}
\text{ and }
(\gamma, d) = (n_s, m_s).
\]
%\text{ and }
%l_1 = \frac{n_s - n_{s-1}}{m_{s-1} - m_s}.
%\]
Note that $\gamma > 0$ and $d \geq 0$ by the setting. 

%We illustrate Theorems \ref{main thm on inv wedges for Case 2}
%and \ref{main thm on attr sets for Case 2}
%in terms of blow-ups in Section 4.1,
%before we give proofs of the theorems in Section 4.2.
Before we give proofs of Theorems \ref{main thm on inv wedges for Case 2}
and \ref{main thm on attr sets for Case 2} in Section 4.2,
we illustrate the theorems in terms of blow-ups in Section 4.1.
Actually, we check the following.

\begin{proposition}\label{properties of blow-up for Case 2}
Let $\delta \leq d$.
Then $\tilde{f}$ is superattracting at the origin and degenerates the $c$-axis for any $l > 0$. 
Let $\delta > d$.
Then $\tilde{f}$ is superattracting at the origin for any $0 < l < \alpha$,
even for $l = \alpha$ if $d \geq 2$, and
degenerates the $c$-axis for any $0 < l < \alpha$ unless $(n_1, m_1) = (0, \delta)$.
\end{proposition}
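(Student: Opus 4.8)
The plan is to work directly with the explicit formula for $\tilde f=\pi_1^{-1}\circ f\circ\pi_1$ computed in Section 3.1, namely
\[
\tilde f(z,c)=\left(z^{\delta},\ z^{\gamma+ld-l\delta}c^{d}\Bigl\{1+\textstyle\sum b_{ij}z^{(i+lj)-(\gamma+ld)}c^{j-d}\Bigr\}\right),
\]
and to read off each assertion from the exponents that appear. Here I write $\tilde\gamma=\gamma+ld-l\delta$ and $\tilde i=i+lj-l\delta$, and I recall that for $l\in\mathcal I_f$ the inequalities $0\le\tilde\gamma\le\tilde i$ force $\tilde f$ to be holomorphic; but the present statement concerns values of $l$ possibly \emph{outside} $\mathcal I_f$, in particular $0<l<\alpha$ when $\delta>d$, and $l=\alpha$. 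Note that since $\gamma>0$ and (when $\delta>d$) $\alpha=\gamma/(\delta-d)$, the two displayed regimes $l>0$ with $\delta\le d$ and $0<l\le\alpha$ with $\delta>d$ are exactly the ranges in which $\tilde\gamma\ge0$, so $\tilde q$ has no negative powers of $z$ and $\tilde f$ is still a holomorphic skew product near the origin; this is the first thing I would record.

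The three claims to verify are then: (1) $\tilde f$ is superattracting at the origin; (2) the multiplicity along the $c$-axis; (3) degeneration of the $c$-axis. For (1), it suffices to check $D\tilde f(0)=0$. The first component $z^{\delta}$ contributes nothing to the differential since $\delta\ge2$. The second component is $z^{\tilde\gamma}c^{d}\{1+\eta(z,c)\}$ with $\eta\to0$; its lowest-order part is $z^{\tilde\gamma}c^{d}$ (or a sum of two such monomials when $\delta=T_k$), and this has no linear term precisely when $\tilde\gamma+d\ge2$. When $\delta\le d$ we have $d\ge\delta\ge2$, so $\tilde\gamma+d\ge d\ge2$ for every $l>0$. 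When $\delta>d$: if $d\ge2$ then again $\tilde\gamma+d\ge d\ge2$ for every $0<l\le\alpha$; if $d=1$ then $\tilde\gamma+d=\tilde\gamma+1\ge2$ iff $\tilde\gamma\ge1$, i.e. $\gamma+l\ge l\delta$, i.e. $l\le\gamma/(\delta-1)=\alpha$ — with strict inequality $l<\alpha$ needed to get $\tilde\gamma>0$, i.e. genuine vanishing of the linear term coming from that monomial, matching exactly "for any $0<l<\alpha$, even for $l=\alpha$ if $d\ge2$." If $d=0$ the second component is $z^{\tilde\gamma}\{1+\eta\}$ and one needs $\tilde\gamma\ge2$; since $0<l<\alpha=\gamma/\delta$ gives $\tilde\gamma=\gamma-l\delta$, which can be made as close to $\gamma$ as desired but is only guaranteed $\ge2$ if $\gamma\ge2$ — I would check whether the setting ($\gamma=n_s$ a vertex with $s>1$, hence $\gamma\ge1$) together with the $d=0$ hypothesis forces $\gamma\ge2$, or whether the Proposition is implicitly stated for the range of $l$ close to $\alpha$; this is the one place I would be careful.

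For (2) and (3), observe that restricted to $\{z=0\}$ one has $\tilde f(0,c)=(0,0)$ whenever $\tilde\gamma>0$, so the $c$-axis is sent to the single point $0$: that is precisely the statement that $\tilde f$ "degenerates the $c$-axis." Thus degeneration holds exactly when $\tilde\gamma>0$, i.e. $l\delta<\gamma+ld$. For $\delta\le d$ this is $l(\delta-d)<\gamma$, which holds for all $l>0$ since the left side is $\le0<\gamma$. For $\delta>d$ it is $l<\gamma/(\delta-d)=\alpha$, giving "any $0<l<\alpha$." The exceptional clause "unless $(n_1,m_1)=(0,\delta)$" I would handle as follows: that equality means the vertex of $N(q)$ with smallest $x$-coordinate is on the $y$-axis at height $\delta$, which (since in Case 2 $(\gamma,d)=(n_s,m_s)$ is the vertex with largest $x$-coordinate) forces, after checking the geometry, that $\alpha=l_1$ coincides with... — more precisely it is the degenerate configuration where $\gamma=0$, or where the line of slope $-1/\alpha$ through $(0,\delta)$ already passes through $(\gamma,d)$ making $\tilde\gamma=0$ for the relevant $l$; in that case there is no $l<\alpha$ producing $\tilde\gamma>0$ among the admissible values, so degeneration fails, and I would simply verify that $(n_1,m_1)=(0,\delta)$ is equivalent to this obstruction. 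The $c$-axis multiplicity statement, if needed, is just: along $\{z=0\}$ the map collapses with the whole axis going to the origin, consistent with $\tilde q$ having a factor $z^{\tilde\gamma}$ with $\tilde\gamma\ge1$.

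The main obstacle I anticipate is not any single computation — all of these are one-line exponent comparisons — but rather pinning down the precise meaning and necessity of the exceptional hypothesis $(n_1,m_1)\ne(0,\delta)$ and confirming the $d=0$ subcase of superattraction. Both require translating a condition on the Newton polygon $N(q)$ into a condition on $\tilde\gamma=\gamma+ld-l\delta$ over the relevant range of $l$, using that $(n_1,m_1),\dots,(n_s,m_s)$ are the vertices in increasing $x$-order with $T_{s-1}\ge\delta$; I would do this by a short picture argument with the supporting line of slope $-1/\alpha$ through $(0,\delta)$, noting it meets $N(q)$ exactly at $(\gamma,d)$ by definition of $\alpha$, so $\tilde\gamma=0$ iff that line passes through the leftmost vertex, iff $(n_1,m_1)=(0,\delta)$. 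Everything else is bookkeeping with the displayed formula for $\tilde f$.
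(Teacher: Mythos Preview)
Your approach has a genuine gap. The factored form
\[
\tilde q(z,c)=z^{\tilde\gamma}c^{d}\{1+\eta(z,c)\}
\]
from Section~3.1 is valid only when $\tilde\gamma\le\tilde i$ for every $(i,j)$ with $b_{ij}\ne0$, i.e.\ only when $l\ge l_1$. The proposition, however, concerns the full range $0<l<\alpha$ (and all $l>0$ when $\delta\le d$), which in general includes $0<l<l_1$. For such $l$ the vertex $(n_{s-1},m_{s-1})$ (and possibly others) satisfies $\tilde n_{s-1}<\tilde\gamma$, so $z^{\tilde\gamma}$ does \emph{not} factor out of $\tilde q$, and your criterion ``$\tilde f(0,c)=(0,0)$ whenever $\tilde\gamma>0$'' is simply false. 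Degeneration of the $c$-axis requires $\tilde i>0$ for \emph{every} term, equivalently $\min_j\tilde n_j>0$; checking only $\tilde\gamma>0$ is not enough.

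This is exactly why the paper proceeds by tracking all vertices $(\tilde n_j,m_j)$ of $N(\tilde q)$ case by case (the tables in Section~4.1), or equivalently via the quantity $D=\min\{l^{-1}i+j:b_{ij}\ne0\}$ and Lemma~\ref{lem on D for Case 2}, which shows $D>\delta$ (hence every $\tilde n_j>0$) precisely in the claimed ranges. Your treatment of the exception $(n_1,m_1)=(0,\delta)$ is also off target: you try to relate it to $\tilde\gamma=0$, but $\tilde\gamma=\gamma+l(d-\delta)$ is positive for every $0<l<\alpha$ regardless. The point is rather that this particular vertex transforms to $\tilde n_1=0+l(\delta-\delta)=0$ for \emph{all} $l$, so the term $b_{0\delta}c^{\delta}$ survives on $\{z=0\}$ and the $c$-axis is not collapsed. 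Once you replace ``$\tilde\gamma>0$'' by ``$\min_j\tilde n_j>0$'' and verify the latter vertex by vertex (which the Newton-polygon geometry makes routine), the rest of your outline goes through.
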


Roughly speaking,
if $\tilde{f}$ is superattracting at the origin, % the blow-up
then the open set $U_{r_1, r_2}^l$ should be invariant under $f$, and 
if $\tilde{f}$ degenerates the $c$-axis,
then one may expect that $A_f^l = A_0 - E_z$.
% where $U_{r_1, r_2}^l = \{ |z| < r_1, |w| < r_2 |z|^l \}$.

\begin{remark}[Another illustration of Theorem \ref{main thm on attr sets for Case 2}]
%[Illustration in terms of the dynamics on the $w$-axis]\label{}
One may expect that the vertical dynamics 
eventually converges to the dynamics on the $w$-axis
since $p^n(z) \to 0$ as $n \to \infty$.
%Hence,
If $n_1 > 0$, then $f$ degenerates the $w$-axis to the origin
and so one may expect that
Theorem \ref{main thm on attr sets for Case 2} holds.
%On the other hand,
If $n_1 = 0$, then $q(0,w) = b_{n_1 m_1} w^{m_1} (1 + o(1))$
%where $b = b_{n_1 m_1}$. % and $o(1) \to 0$ as $w \to 0$.
and so one may expect that
Theorem \ref{main thm on attr sets for Case 2} holds 
if $\delta < m_1$
since $p(z) \sim a z^{\delta}$.
\end{remark}

%%%%%%%%%%%%%%%%%%%%%%%%%%%%%%%%%%%%%%%%%%%%%%%%%%%%%%%%%%%%%%%%%%%%%%%%%%%%%
%%%%%%%%%%%%%%%%%%%%%%%%%%%%%%%%%%%%%%%%%%%%%%%%%%%%%%%%%%%%%%%%%%%%%%%%%%%%%
\subsection{Illustration of Theorems \ref{main thm on inv wedges for Case 2}
and \ref{main thm on attr sets for Case 2} in terms of blow-ups}

Recall that $\tilde{f} = \pi_1^{-1} \circ f \circ \pi_1$,
where $\pi_1 (z,c) = (z, z^{l} c)$.
Let $\tilde{f} (z,c) = (\tilde{p}(z), \tilde{q}(z,c))$ and
we assume that $p(z) = z^{\delta}$ for simplicity.
Then $\tilde{p}(z) = z^{\delta}$ and 
$\tilde{q}(z,c) = \sum b_{ij} z^{i + lj - l \delta} c^{j}$.
Let $\tilde{\gamma} = \gamma + l d - l \delta$
and $\tilde{n}_j = n_j + l m_j - l \delta$. % $\tilde{i} = i + l j - l \delta$
The shape of the Newton polygon $N(\tilde{q})$ of $\tilde{q}$ 
is useful to check 
whether $\tilde{f}$ is superattracting at the origin, and 
whether $\tilde{f}$ degenerates the $c$-axis.
Recall that $A_1$ maps 
the line with slope $-l^{-1}$ that passes through the point $(i, \delta)$
to the vertical line $\{ x = i \}$,
whereas it preserves all the horizontal lines.

Let us describe the shape of the Newton polygon of $\tilde{q}$ and % denote
properties of $\tilde{f}$ for the cases $\delta \leq d$,
$\delta > d$ and $\delta < T_{s-1}$, and $\delta > d$ and $\delta = T_{s-1}$
in Sections 4.1.1, 4.1.2 and 4.1.3, respectively,
which implies Proposition \ref{properties of blow-up for Case 2}.

%%%%%%%%%%%%%%%%%%%%%%%%%%%%%%%%%%%%%%%%%%%%%%%%%%%%%%%%%%%
%%%%%%%%%%%%%%%%%%%%%%%%%%%%%%%%%%%%%%%%%%%%%%%%%%%%%%%%%%%
\subsubsection{$\delta \leq d$}

Recall that $\mathcal{I}_f = [ l_1, \infty )$ if $\delta \leq d$.
%%%
See Table \ref{shape when delta <= d} below for a summary of 
the shape of $N(\tilde{q})$ and properties of $\tilde{f}$,
where the notation NA means that
$\tilde{f}$ is superattracting at the origin, and
the notation Deg means that
$\tilde{f}$ degenerates the $c$-axis.

%%%%%%%%%%%%%%%%%%%%%%%%%%%% \forall
\begin{table}[htb]
%\begin{center} and properties of $\tilde{f}$
\caption{Shape of $N(\tilde{q})$ when $\delta \leq d$} \label{shape when delta <= d}
\begin{tabular}{|c|c|c|c|} \hline
$l$ & $0 < l < l_1$ & $l_1$ & $l_1 < l$  \\ \hline
$\tilde{\gamma}$ & $0 < \tilde{n}_{s-1} < \tilde{\gamma}$ & $0 < \tilde{\gamma} = \tilde{n}_{s-1} < \tilde{n}_j$ & $0 < \tilde{\gamma} < \tilde{n}_j$ \\ 
              & $0 < \tilde{n}_j$ for any $j$ & for $j \leq s-2$ & for $j \leq s-1$  \\ \hline
$N(\tilde{q})$ & has other & has the only & has the only  \\ 
  & vertices & one vertex & one vertex  \\ 
              & than $(\tilde{\gamma}, d)$ & $(\tilde{\gamma}, d)$ & $(\tilde{\gamma}, d)$  \\ \hline
$\tilde{f}$  & SA and Deg & SA and Deg & SA and Deg  \\ \hline
\end{tabular}
%\end{center}
\end{table}
%%%%%%%%%%%%%%%%%%%%%%%%%%%%

For any $l$ in $\mathcal{I}_f$,
$N(\tilde{q}) = (\tilde{\gamma}, d)$ and $\tilde{\gamma} > 0$.
Hence
\[
\tilde{q}(z,c) = z^{\tilde{\gamma}} c^d
\left\{ 1 + \eta (z,c) \right\},
\]
where $\eta \to 0$ as $z$ and $c \to 0$,
and $\tilde{f}$ is superattracting at the origin and
degenerates the $c$-axis.
More precisely,
$\tilde{\gamma} = \tilde{n}_{s-1} < \tilde{n}_j$
for any $1 \leq j \leq s-2$ if $l = l_1$, and
$\tilde{\gamma} < \tilde{n}_j$
for any $1 \leq j \leq s-1$ if $l > l_1$.

On the other hand,
for any $0 < l < l_1$,
$N(\tilde{q})$ has other vertices than $(\tilde{\gamma}, d)$,
whereas $(\tilde{\gamma}, d)$ is the vertex of $N(\tilde{q})$
whose $y$-coordinate is the minimum and $\tilde{\gamma} > 0$.
More precisely,
the candidates of vertices of $N(\tilde{q})$ are $(\tilde{n}_j, m_j)$'s, and
$(\tilde{n}_{s-1}, m_{s-1})$ is actually one of the vertices of $N(\tilde{q})$.
However,
since $\tilde{n}_j > 0$ for any $j$,
$\tilde{f}$ is superattracting at the origin and
degenerates the $c$-axis.

%%%%%%%%%%%%%%%%%%%%%%%%%%%%%%%%%%%%%%%%%%%%%%%%%%%%%%%%%%%
%%%%%%%%%%%%%%%%%%%%%%%%%%%%%%%%%%%%%%%%%%%%%%%%%%%%%%%%%%%
\subsubsection{$\delta > d$ and $\delta < T_{s-1}$}

Recall that $\mathcal{I}_f = [ l_1, \alpha ]$ if $\delta > d$ and $\delta < T_{s-1}$. % $l_1 < \alpha$ and 
%%%
See Table \ref{shape when d < delta < T} below for a summary of 
the shape of $N(\tilde{q})$ and properties of $\tilde{f}$.

%%%%%%%%%%%%%%%%%%%%%%%%%%%%
\begin{table}[htb]
%\begin{center} and properties of $\tilde{f}$
\caption{Shape of $N(\tilde{q})$ when $\delta > d$ and $\delta < T_{s-1}$} \label{shape when d < delta < T}
\begin{tabular}{|c|c|c|c|c|} \hline
$l$  & $0 < l < l_1$ & $l_1$ & $l_1 < l < \alpha$ & $\alpha$  \\ \hline
$\tilde{\gamma}$ & $0 < \tilde{n}_{s-1} < \tilde{\gamma}$ & $0 < \tilde{\gamma}  = \tilde{n}_{s-1} < \tilde{n}_j$ & $0 < \tilde{\gamma} < \tilde{n}_j$ & $0 = \tilde{\gamma} < \tilde{n}_j$   \\ 
              & $0 < \tilde{n}_j$ for any $j$ & for $j \leq s-2$ & for $j \leq s-1$ & for $j \leq s-1$  \\ \hline
$N(\tilde{q})$ & has other & has the only & has the only & has the only  \\ 
  & vertices & one vertex & one vertex & one vertex   \\ 
              & than $(\tilde{\gamma}, d)$ & $(\tilde{\gamma}, d)$ & $(\tilde{\gamma}, d)$ & $(\tilde{\gamma}, d) = (0, d)$  \\ \hline
$\tilde{f}$  & SA and Deg & SA and Deg & SA and Deg & SA if $d \geq 2$  \\ \hline
\end{tabular}
%\end{center}
\end{table}
%%%%%%%%%%%%%%%%%%%%%%%%%%%%

For any $l$ in $\mathcal{I}_f$,
$N(\tilde{q}) = (\tilde{\gamma}, d)$.
%For any $l_1 \leq l < \alpha$,
Moreover,
if $l < \alpha$, then
$\tilde{\gamma} > 0$ and so
$\tilde{f}$ is superattracting at the origin and
degenerates the $c$-axis.
On the other hand,
if $l = \alpha$, 
then $\tilde{\gamma} = 0$,
\[
\tilde{q}(z,c) = c^d \left\{ 1 + \eta (z,c) \right\},
\]   
and so
$\tilde{f}$ is superattracting at the origin only if $d \geq 2$, and
$\tilde{f}$ does not degenerate the $c$-axis.
For any $0 < l < l_1$,
the same things hold as the previous case.
%%%
We remark that if $l > \alpha$, then
$\tilde{\gamma} < 0$ and so $\tilde{f}$ is not holomorphic.

%%%%%%%%%%%%%%%%%%%%%%%%%%%%%%%%%%%%%%%%%%%%%%%%%%%%%%%%%%%
%%%%%%%%%%%%%%%%%%%%%%%%%%%%%%%%%%%%%%%%%%%%%%%%%%%%%%%%%%%
\subsubsection{$\delta > d$ and $\delta = T_{s-1}$}
 
If $\delta = T_{s-1}$,
then $\delta > d$ and $\mathcal{I}_f = \{ l_1 \} = \{ \alpha \}$.
The shape of $N(\tilde{q})$ and properties of $\tilde{f}$ differ whether $(n_1, m_1) = (0, \delta)$ or not.

If $(n_1, m_1) \neq (0, \delta)$,
then the situation is somehow similar to the previous case.
If $l = \alpha$, % l_1 = 
then $\tilde{\gamma} = \tilde{n}_{s-1} = 0 < \tilde{n}_j$
for any $1 \leq j \leq s-2$ and so
$\tilde{f}$ is superattracting at the origin only if $d \geq 2$, and
it does not degenerate the $c$-axis.
For any $0 < l < l_1$,
the same things hold as the previous case.
%%%
See Table \ref{shape when delta = T} below for a summary.
% of the shape of $N(\tilde{q})$ and properties of $\tilde{f}$.

%%%%%%%%%%%%%%%%%%%%%%%%%%%%%%%%%%%%% 
\begin{table}[htb]
%\begin{center}
\caption{Shape of $N(\tilde{q})$ when $\delta = T_{s-1}$ and $(n_1, m_1) \neq (0, \delta)$} \label{shape when delta = T} 
\begin{tabular}{|c|c|c|} \hline
$l$ & $0 < l < l_1 = \alpha$ & $l_1 = \alpha$   \\ \hline
$\tilde{\gamma}$ & $0 < \tilde{n}_{s-1} < \tilde{\gamma}$ & $0 = \tilde{\gamma} = \tilde{n}_{s-1} < \tilde{n}_j$   \\ 
              & $0 < \tilde{n}_j$ for any $j$ & for $j \leq s-2$  \\ \hline
$N(\tilde{q})$ & has other & has the only   \\ 
  & vertices & one vertex    \\ 
              & than $(\tilde{\gamma}, d)$ & $(\tilde{\gamma}, d) = (0, d)$   \\ \hline
$\tilde{f}$  & SA and Deg & SA if $d \geq 2$  \\ \hline
\end{tabular}
%\end{center}
\end{table}
%%%%%%%%%%%%%%%%%%%%%%%%%%%%%%%%%%%%%

%%%%%%%%%%%
%\subsubsection{$(n_1, m_1) = (0, \delta)$}

On the other hand,
if $(n_1, m_1) = (0, \delta)$,
then the situation differs from the previous case.
Note that $s = 2$ for this case.
If $l = \alpha$, % l_1 = 
then $\tilde{\gamma} = \tilde{n}_1 = 0$,
\[
\tilde{q}(z,c) 
= b_{0 \delta} c^{\delta} + \cdots + b_{\gamma d} c^d
+ \sum_{i+lj > l \delta} b_{ij} z^{i + lj - l \delta} c^{j}
= c^d \left\{ 1 + \eta (z,c) \right\}
\]  
and so
$\tilde{f}$ is superattracting at the origin only if $d \geq 2$, and
$\tilde{f}$ does not degenerate the $c$-axis.
For any $0 < l < l_1$,
$\tilde{\gamma} > \tilde{n}_1 = 0$, 
\[
\tilde{q}(z,c) = b_{0 \delta} c^{\delta}
+ \sum_{i+lj > l \delta} b_{ij} z^{i + lj - l \delta} c^{j}
\] 
and so
$\tilde{f}$ is superattracting at the origin,
but it does not degenerate the $c$-axis.
See Table \ref{shape for special case} below for a summary.
% of the shape of $N(\tilde{q})$ and properties of $\tilde{f}$.

%%%%%%%%%%%%%%%%%%%%%%%%%%%%%%%%%%%%% 
\begin{table}[htb]
%\begin{center}  and properties of $\tilde{f}$
\caption{Shape of $N(\tilde{q})$ when $(n_1, m_1) = (0, \delta)$} \label{shape for special case}
\begin{tabular}{|c|c|c|} \hline
$l$  & $0 < l < l_1 = \alpha$ & $l_1 = \alpha$   \\ \hline
$\tilde{\gamma}$ & $0 = \tilde{n}_1 < \tilde{\gamma}$ & $0 = \tilde{n}_1 = \tilde{\gamma}$   \\ \hline
$N(\tilde{q})$ & has other & has the only   \\ 
  & vertices & one vertex    \\ 
              & than $(\tilde{\gamma}, d)$ & $(\tilde{\gamma}, d) = (0, d)$   \\ \hline
$\tilde{f}$  & SA & SA if $d \geq 2$  \\ \hline
\end{tabular}
%\end{center}
\end{table}
%%%%%%%%%%%%%%%%%%%%%%%%%%%%%%%%%%%%% 

%%%%%%%%%%%%%%%%%%%%%%%%%%%%%%%%%%%%%%%%%%%%%%%%%%%%%%%%%%%%%%%%%%%%%%%%%%%%%
%%%%%%%%%%%%%%%%%%%%%%%%%%%%%%%%%%%%%%%%%%%%%%%%%%%%%%%%%%%%%%%%%%%%%%%%%%%%%
\subsection{Proof of Theorems \ref{main thm on inv wedges for Case 2}
and \ref{main thm on attr sets for Case 2}}

The following quantity plays an important role in the proofs:
\[
D = D_l = \min \left\{ l^{-1} i + j \ | \ b_{ij} \neq 0 \right\}. 
\]
By definition,
$D > d$ for any $l > 0$
since $D \geq l^{-1} \gamma + d$ and $\gamma > 0$.
Note that $l^{-1} i + j$ is the $y$-intercept of the line with slope $- l^{-1}$
that passes through $(i,j)$.
Hence, for any $l > 0$, 
the minimum is attached by a vertex $(n_j, m_j)$. 
Let $(N,M)$ be such a vertex: $D = l^{-1} M + N$.
Then we have the following. % lemma.

\begin{lemma}\label{properties of D for Case 2}
It follows that $(N,M) = (\gamma, d)$ and
$T_{s-1} \geq D = l^{-1} \gamma + d \geq \delta$ for any $l$ in $\mathcal{I}_f$.
Moreover,
$D = T_{s-1}$ for $l = l_1$, and $D = \delta$ for $l = \alpha$ if $\delta > d$.
On the other hand,
$(N,M) \neq (\gamma, d)$ and
$D = l^{-1} M + N \geq T_{s-1} \geq \delta$ for any $0 < l < l_1$.
More precisely,
$(M, N) = (n_1, m_1)$ or $(M, N) = (n_k, m_k)$ for some $2 \leq k \leq s-1$ 
\[
\text{ if }
0 < l \leq \dfrac{n_{2} - n_{1}}{m_{1} - m_{2}}
\text{ or }
\dfrac{n_{k} - n_{k-1}}{m_{k-1} - m_{k}} \leq l \leq \dfrac{n_{k+1} - n_{k}}{m_{k} - m_{k+1}}.
\]
\end{lemma}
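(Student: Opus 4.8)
The plan is to analyze the piecewise-linear function $l \mapsto D_l = \min\{l^{-1}i + j : b_{ij}\neq 0\}$ via the standard duality between the Newton polygon $N(q)$ and its support function. First I would observe that for fixed $l>0$, the quantity $l^{-1}i + j$ is constant along lines of slope $-l^{-1}$, so the minimum over the support of $q$ is attained at a vertex of $N(q)$, and the attaining vertex $(N,M) = (n_k,m_k)$ is exactly the one at which the supporting line of slope $-l^{-1}$ touches $N(q)$ from below-left; this is a routine convexity argument. As $l$ ranges over $(0,\infty)$, the slope $-l^{-1}$ ranges over $(-\infty,0)$, and the attaining vertex moves monotonically: for very small $l$ (very steep supporting line) it is $(n_1,m_1)$, and as $l$ increases it passes through $(n_2,m_2), (n_3,m_3), \dots$; the vertex $(n_k,m_k)$ is the attaining one precisely when $-l^{-1}$ lies between the slopes of the two edges adjacent to $(n_k,m_k)$, i.e. when
\[
\frac{n_k - n_{k-1}}{m_{k-1}-m_k} \le l \le \frac{n_{k+1}-n_k}{m_k - m_{k+1}},
\]
with the one-sided version $0 < l \le (n_2-n_1)/(m_1-m_2)$ at the top vertex. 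This gives the last displayed assertion of the lemma directly.

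Next I would specialize to $l \in \mathcal{I}_f$. By the very definition of $\mathcal{I}_f$ in Section 3.1, membership $l\in\mathcal{I}_f$ means $\gamma + ld \le i + lj$ for all $(i,j)$ with $b_{ij}\neq 0$, equivalently $l^{-1}\gamma + d \le l^{-1}i + j$ for all such $(i,j)$, which says precisely that the minimum $D_l$ is attained at $(\gamma,d) = (n_s,m_s)$ and equals $l^{-1}\gamma + d$. So $(N,M) = (\gamma,d)$ and $D_l = l^{-1}\gamma + d$ on $\mathcal{I}_f$. The endpoint values then follow by substitution: since $\mathcal{I}_f = [l_1,\alpha]$ when $\delta > d$ (and $[l_1,\infty)$ when $\delta\le d$), plugging $l = l_1 = (n_s-n_{s-1})/(m_{s-1}-m_s)$ gives $D_{l_1} = l_1^{-1}\gamma + d = T_{s-1}$ by the definition of $T_{s-1}$ as the $y$-intercept of the line $L_{s-1}$ through $(n_{s-1},m_{s-1})$ and $(n_s,m_s) = (\gamma,d)$; and plugging $l = \alpha = \gamma/(\delta-d)$ gives $D_\alpha = \alpha^{-1}\gamma + d = (\delta-d) + d = \delta$. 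Monotonicity of $l\mapsto l^{-1}\gamma + d$ (decreasing) on $[l_1,\alpha]$ then yields the sandwich $T_{s-1}\ge D_l\ge\delta$ for all $l\in\mathcal{I}_f$ in the case $\delta > d$; in the case $\delta\le d$ one has $D_l = l^{-1}\gamma + d \ge \delta$ trivially since $d\ge\delta$ and $l^{-1}\gamma > 0$, and $D_l \le D_{l_1} = T_{s-1}$ by the same monotonicity.

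For the range $0 < l < l_1$, I would argue that $(\gamma,d) = (n_s,m_s)$ is \emph{not} the attaining vertex: indeed $l_1$ is the smallest value of $l$ for which the supporting line of slope $-l^{-1}$ can touch $N(q)$ at $(n_s,m_s)$ (it is the slope of the edge $L_{s-1}$), so for $l < l_1$ the minimum jumps to one of the earlier vertices $(n_k,m_k)$ with $k\le s-1$, as quantified by the displayed inequalities above. To see $D_l \ge T_{s-1}$ here, note that $D_l = l^{-1}M + N$ with $(M,N) = (n_k,m_k)$ is the $y$-intercept of the line of slope $-l^{-1}$ through $(n_k,m_k)$; since this line supports $N(q)$ and $N(q)$ lies weakly above the edge $L_{s-1}$ extended, and the slope $-l^{-1} < -l_1^{-1}$ is steeper, a short convexity estimate comparing $y$-intercepts shows $l^{-1}M + N \ge l_1^{-1}\gamma + d = T_{s-1} \ge \delta$. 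I expect the main obstacle to be purely bookkeeping: carefully handling the monotonicity direction and the boundary cases $\delta = d$ versus $\delta > d$, and making the ``lies above $L_{s-1}$ extended'' comparison rigorous enough that the inequality $l^{-1}M+N \ge T_{s-1}$ is clearly justified rather than merely plausible from a picture. None of this is deep, but it requires stating the convexity facts about $N(q)$ cleanly (perhaps recalling that the vertices satisfy $n_1 < \cdots < n_s$ and $m_1 > \cdots > m_s$, so all edge slopes are negative and strictly increasing in $k$).
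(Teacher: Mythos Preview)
Your proposal is correct. The paper states this lemma without proof, treating it as an elementary consequence of the definitions of $D_l$, $\mathcal{I}_f$, and the Newton polygon, so there is no argument in the paper to compare against; your Newton-polygon/support-line analysis supplies exactly the missing details. One small simplification: for the inequality $D_l \ge T_{s-1}$ when $0<l<l_1$, you can bypass the convexity picture entirely by noting that $l\mapsto D_l = \min_j\{l^{-1}n_j+m_j\}$ is the minimum of non-increasing functions of $l$ (since each $n_j\ge 0$), hence is itself non-increasing, so $D_l\ge D_{l_1}=T_{s-1}$ immediately.
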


%As a consequence of these observations, % above shows that
Consequently,
the inequality $D \geq \delta$ holds
for any $l > 0$ if $\delta \leq d$, and 
for any $0 < l \leq \alpha$ if $\delta > d$.
Let us make clear when the inequality $D > \delta$ or 
the equality $D = \delta$ holds.
If $\delta \leq d$,
then $D > d \geq \delta$ for any $l > 0$.
If $\delta > d$,
then $D \geq \delta$ for any $0 < l < \alpha$,
and $D = \delta$ for $l = \alpha$.
More precisely,
if $\delta > d$,
then the inequality $D > \delta$ holds
for any $0 < l < \alpha$
except the special case;
if $(n_1, m_1) = (0, \delta)$,
then $s = 2$ and
$D = T_{1} = T_{s-1} = \delta$ for any $0 < l < \alpha$.

\begin{lemma}\label{lem on D for Case 2}
The inequality $D > \delta$ holds
for any $l > 0$ if $\delta \leq d$ and 
for any $0 < l < \alpha$ if $\delta > d$ and $(n_1, m_1) \neq (0, \delta)$.
On the other hand,
the equality $D = \delta$ holds
for $l = \alpha$ if $\delta > d$ and
for any $0 < l \leq \alpha$ if $\delta > d$ and $(n_1, m_1) = (0, \delta)$.
\end{lemma}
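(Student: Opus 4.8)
The statement to prove is Lemma~\ref{lem on D for Case 2}, which is a purely combinatorial consequence of the structure of the Newton polygon $N(q)$ together with the definitions of $T_{s-1}$, $\alpha$, and $D=D_l$. The plan is to isolate which vertex $(N,M)$ of $N(q)$ attains the minimum $D=l^{-1}M+N$ as $l$ varies, and then to read off strict versus non-strict inequalities from the slopes of the edges of $N(q)$. I would begin by invoking Lemma~\ref{properties of D for Case 2}, which already tells us that for $l\in\mathcal I_f$ the minimizing vertex is $(\gamma,d)=(n_s,m_s)$ and $D=l^{-1}\gamma+d$, with $D=T_{s-1}$ at $l=l_1$ and $D=\delta$ at $l=\alpha$ when $\delta>d$; and that for $0<l<l_1$ the minimizing vertex is a different vertex $(n_k,m_k)$ with $D\ge T_{s-1}\ge\delta$.

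First I would handle the case $\delta\le d$. Here $\mathcal I_f=[l_1,\infty)$, so for every $l\ge l_1$ we have $D=l^{-1}\gamma+d>d\ge\delta$ because $\gamma=n_s>0$; and for $0<l<l_1$ Lemma~\ref{properties of D for Case 2} gives $D\ge T_{s-1}>d\ge\delta$ (the strictness $T_{s-1}>d$ comes from $T_{s-1}$ being the $y$-intercept of the line $L_{s-1}$ through $(n_{s-1},m_{s-1})$ and $(n_s,m_s)$ with $n_{s-1}<n_s$, so it lies strictly above $m_s=d$). That settles the first assertion for $\delta\le d$.

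Next, for $\delta>d$ with $(n_1,m_1)\ne(0,\delta)$: for $l_1\le l<\alpha$ we have $D=l^{-1}\gamma+d$, a strictly decreasing function of $l$ that equals $T_{s-1}$ at $l_1$ and $\delta$ at $\alpha$, hence $D>\delta$ on $[l_1,\alpha)$; the only point needing care is $\delta=T_{s-1}$, where the interval $[l_1,\alpha)$ is empty and there is nothing to prove. For $0<l<l_1$, Lemma~\ref{properties of D for Case 2} says the minimizing vertex is some $(n_k,m_k)$ with $2\le k\le s-1$ or $k=1$, and $D=l^{-1}m_k\text{-part}\ge T_{s-1}\ge\delta$; the inequality $D\ge T_{s-1}$ is strict unless $l$ is an endpoint of the relevant slope interval at which the minimizing vertex coincides with an endpoint of the edge $L_{s-1}$—and since $(n_1,m_1)\ne(0,\delta)$ forces $n_1>0$ or $m_1<\delta$, one checks the line through $(0,\delta)$ of slope $-l^{-1}$ does not pass through any vertex of $N(q)$, so $D>\delta$ holds. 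The contrapositive special case $(n_1,m_1)=(0,\delta)$ gives $s=2$, the single edge $L_1=L_{s-1}$ has $y$-intercept $T_1=T_{s-1}=\delta$ exactly (since it passes through $(0,\delta)$), and every line of slope $-l^{-1}$ with $0<l\le\alpha=l_1$ meets this edge, so $D=\delta$ on $(0,\alpha]$; the equality $D=\delta$ at $l=\alpha$ in general is just the last clause of Lemma~\ref{properties of D for Case 2}.

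The main obstacle is bookkeeping rather than depth: I must verify carefully that in the regime $0<l<l_1$ the minimum $D$ is \emph{strictly} above $\delta$ except in the degenerate configuration $(n_1,m_1)=(0,\delta)$. Concretely this amounts to showing that the supporting line of slope $-l^{-1}$ to $N(q)$ has $y$-intercept $>\delta$ whenever the ``top-left'' vertex of $N(q)$ is strictly above or strictly to the right of $(0,\delta)$; equivalently, that $\delta\le T_{s-1}\le T_{s-2}\le\cdots\le T_1$ with the first inequality being the hypothesis and $T_1>\delta$ unless $L_1$ passes through $(0,\delta)$. Once this monotonicity-of-$y$-intercepts fact is pinned down, the rest is immediate from Lemma~\ref{properties of D for Case 2}.
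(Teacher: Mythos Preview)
Your plan is correct and follows the same route the paper takes: the lemma is stated immediately after a short paragraph that deduces it from Lemma~\ref{properties of D for Case 2} together with the observation $D>d$ for every $l>0$, splitting into the cases $\delta\le d$ and $\delta>d$ exactly as you describe, and isolating the special configuration $(n_1,m_1)=(0,\delta)$ (which forces $s=2$ and $T_1=T_{s-1}=\delta$) as the only obstruction to strictness. One small slip to watch when you write it out: in Case~2 the dichotomy coming from $(n_1,m_1)\ne(0,\delta)$ is $n_1>0$ or (when $n_1=0$) $m_1>\delta$, not $m_1<\delta$; and your final reduction to ``$T_1>\delta$'' does not by itself cover the subcase $s=2$, $n_1>0$, $T_1=\delta$, where strictness instead follows directly from $D_l=l^{-1}n_1+m_1>l_1^{-1}n_1+m_1=\delta$ for $0<l<l_1$.
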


%%%%%%%%%%%%%%%%%%%%%%%%%%%%%%%%%%%%%%%%%%%%%%%%%%%%%%%%%%%
%%%%%%%%%%%%%%%%%%%%%%%%%%%%%%%%%%%%%%%%%%%%%%%%%%%%%%%%%%%
\subsubsection{Proof of Theorem \ref{main thm on inv wedges for Case 2}}

Theorem \ref{main thm on inv wedges for Case 2} 
follows from the inequality $D \geq \delta$.
We use the following for the case $D > \delta$. % lemma

\begin{lemma}\label{lem on inv wedges for Case 2}
For any $l > 0$ and
for any small $r_1$ and $r_2$,
there exists a constant $C > 0$ such that
$|q(z,w)| \leq C |z|^{l D}$ on $U^l_{r_1, r_2}$.
\end{lemma}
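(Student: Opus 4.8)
The plan is to expand $q(z,w)=\sum b_{ij}z^{i}w^{j}$ as a convergent power series and to estimate it monomial by monomial on the wedge $U^{l}_{r_{1},r_{2}}$, exploiting the defining inequality $|w|<r_{2}|z|^{l}$ to trade each power of $w$ for a power of $z$. Recall that $lD=lD_{l}=\min\{\,i+lj\mid b_{ij}\neq 0\,\}$ is a well-defined nonnegative real number, the minimum being attained at a vertex of $N(q)$ as noted just above; in fact $D>d\geq 0$, so $lD>0$. The point of the lemma is that, after this trade, the smallest exponent of $|z|$ that can occur is exactly $lD$, and $|z|$ is small on the wedge.

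First I would fix the small $r_{1},r_{2}>0$ and choose $r_{1}',r_{2}'$ with $r_{1}<r_{1}'$ and $r_{1}^{l}r_{2}<r_{2}'$ so that $q$ is holomorphic on the polydisc $\{|z|<r_{1}',\,|w|<r_{2}'\}$; this is possible since $q$ is holomorphic near the origin (respectively on $\{|z|<R\}\times\mathbb{C}$ with $R$ large, in the global setting) and $r_{1},r_{2}$ are small. On $U^{l}_{r_{1},r_{2}}$ one has $|z|<r_{1}$ and $|w|<r_{2}|z|^{l}$, hence for every $(i,j)$ with $b_{ij}\neq 0$,
\[
|z|^{i}|w|^{j}\;\leq\;r_{2}^{j}\,|z|^{\,i+lj}\;=\;r_{2}^{j}\,|z|^{\,i+lj-lD}\,|z|^{\,lD}\;\leq\;r_{2}^{j}\,r_{1}^{\,i+lj-lD}\,|z|^{\,lD},
\]
where the last step uses $i+lj-lD\geq 0$ together with $|z|<r_{1}$ and the monotonicity of $t\mapsto t^{e}$ for $e\geq 0$. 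Summing over $(i,j)$ gives
\[
|q(z,w)|\;\leq\;\Bigl(r_{1}^{-lD}\!\!\sum_{b_{ij}\neq 0}|b_{ij}|\,r_{1}^{\,i}\,(r_{1}^{l}r_{2})^{\,j}\Bigr)\,|z|^{\,lD},
\]
so it suffices to take $C$ equal to the constant in parentheses.

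The step I expect to require the most care is showing that this constant is finite, i.e. that the majorant series $\sum|b_{ij}|\,r_{1}^{\,i}\,(r_{1}^{l}r_{2})^{\,j}$ converges, and this is precisely where the choice of $r_{1}',r_{2}'$ is used: Cauchy's estimates on the polydisc $\{|z|<r_{1}',\,|w|<r_{2}'\}$ yield $|b_{ij}|\leq K\,(r_{1}')^{-i}(r_{2}')^{-j}$ for some finite $K$, and since $r_{1}/r_{1}'<1$ and $r_{1}^{l}r_{2}/r_{2}'<1$ the series is dominated by the product of two convergent geometric series. This gives $C<\infty$ and completes the proof. No hypothesis relating $\delta$ and $d$ is needed here; the inequality $D\geq\delta$ from Lemma~\ref{properties of D for Case 2} enters only in the subsequent application to Theorem~\ref{main thm on inv wedges for Case 2}.
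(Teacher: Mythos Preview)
Your argument is correct and follows essentially the same route as the paper: bound each monomial using $|w|<r_{2}|z|^{l}$, pull out the common factor $|z|^{lD}$ via $i+lj\geq lD$, and check that the remaining series converges. The only cosmetic difference is that the paper splits the sum into the finitely many terms with $i+lj=lD$ and the rest with $i+lj>lD$, observing that the latter tends to $0$ as $r_{1}\to 0$; this is not needed for the lemma itself but is invoked immediately afterwards in the proof of Theorem~\ref{thm on inv wedges for Case 2}. Your use of Cauchy estimates makes the convergence step more explicit than the paper does.
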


\begin{proof}
Since $|w| < r_2 |z|^{l}$ on $U^l_{r_1, r_2}$
and $i+lj \geq lD$ for any $(i,j)$ such that $b_{ij} \neq 0$,
\begin{align*}
|q(z,w)| & \leq \sum |b_{ij}| |z|^{i} |w|^{j} < \sum |b_{ij}| r_2^{j} |z|^{i+lj} \\
\vspace{4mm}
& = \sum_{i+lj = lD} |b_{ij}| r_2^{j} |z|^{lD} + \sum_{i+lj > lD} |b_{ij}| r_2^{j} |z|^{i+lj} \\
& = |z|^{l D} \Big\{ \sum_{i+lj = lD} |b_{ij}| r_2^{j} + \sum_{i+lj > lD} |b_{ij}| r_2^{j} |z|^{i+lj-lD} \Big\} \\
& \leq |z|^{l D} \Big\{ \sum_{i+lj = lD} |b_{ij}| r_2^{j} + \sum_{i+lj > lD} |b_{ij}| r_2^{j} r_1^{i+lj-lD} \Big\}
\end{align*}
on $U^l_{r_1, r_2}$.
The first sum in the last expression is bounded from above
since the set of the pairs $(i,j)$ such that $i+lj = lD$ and $b_{ij} \neq 0$ is finite, and
the second sun tends to $0$ as $r_1 \to 0$
since $i+lj-lD > 0$. 
\end{proof}

The following is a precise version of Theorem \ref{main thm on inv wedges for Case 2}. % theorem

\begin{theorem}\label{thm on inv wedges for Case 2}
The following holds for any $l > 0$ if $\delta \leq d$ and 
for any $0< l < \alpha$ if $\delta > d$:
$f(U_{r_1, r_2}^l) \subset U_{r_1, r_2}^l$
for suitable small $r_1$ and $r_2$.
More precisely,
$r_1$ and $r_2$ have to satisfy the inequality
$C r_1^{l (D - \delta)} < r_2$ or
$2 C r_1^{l (D^* - \delta)} < r_2$
for some constants $C$ and $D^*$
if $D > \delta$ or $D = \delta$,
where $D^* > \delta$.
\end{theorem}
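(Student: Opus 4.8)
The plan is to prove the forward invariance of $U^l_{r_1,r_2}$ directly, estimating the two coordinates of $f(z,w)=(p(z),q(z,w))$ separately; the whole point is that Lemmas~\ref{properties of D for Case 2} and~\ref{lem on D for Case 2} guarantee $D=D_l\ge\delta$ on exactly the ranges of $l$ in the statement, with strict inequality except in the configuration $(n_1,m_1)=(0,\delta)$. So first I would fix $(z,w)\in U^l_{r_1,r_2}$, i.e.\ $|z|<r_1$ and $|w|<r_2|z|^l$, and dispose of the first coordinate: since $p(z)=az^\delta(1+o(1))$ with $\delta\ge2$, for $r_1$ small one has $\tfrac{|a|}{2}|z|^\delta\le|p(z)|\le2|a||z|^\delta$, hence $|p(z)|\le2|a|r_1^{\delta-1}|z|<r_1$; thus $|z'|<r_1$ is automatic and imposes no constraint relating $r_1$ and $r_2$. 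It then remains to arrange $|q(z,w)|<r_2|p(z)|^l$, for which, by the lower bound on $|p(z)|$, it suffices to obtain $|q(z,w)|<r_2(\tfrac{|a|}{2})^l|z|^{l\delta}$.

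For the case $D>\delta$ (which by Lemma~\ref{lem on D for Case 2} is exactly: $\delta\le d$ and any $l>0$, or $\delta>d$, $0<l<\alpha$ and $(n_1,m_1)\neq(0,\delta)$) I would invoke Lemma~\ref{lem on inv wedges for Case 2}, noting from its proof that the constant $C_0$ there stays bounded as $r_1,r_2\to0$; then $|q(z,w)|\le C_0|z|^{lD}=C_0|z|^{l(D-\delta)}|z|^{l\delta}\le C_0 r_1^{l(D-\delta)}|z|^{l\delta}$, so $|q(z,w)|<r_2(\tfrac{|a|}{2})^l|z|^{l\delta}$ as soon as $C_0(\tfrac{2}{|a|})^l r_1^{l(D-\delta)}<r_2$. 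With $C:=C_0(\tfrac{2}{|a|})^l$ this is the stated inequality $Cr_1^{l(D-\delta)}<r_2$, which is achievable by shrinking $r_1$ once any sufficiently small $r_2$ is fixed.

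For the remaining case $D=\delta$, which by Lemma~\ref{lem on D for Case 2} forces $(n_1,m_1)=(0,\delta)$ (so $s=2$) and $0<l<\alpha$, the supporting line $\{i+lj=l\delta\}$ of $N(q)$ is strictly steeper than the unique edge of $N(q)$ and so meets $N(q)$ only at the vertex $(0,\delta)$; hence $b_{0\delta}w^\delta$ is the only monomial of $q$ with $i+lj=l\delta$, and every other term has $i+lj>l\delta$. I would split $q=b_{0\delta}w^\delta+q_1$, bound $|b_{0\delta}w^\delta|\le|b_{0\delta}|r_2^\delta|z|^{l\delta}$ using $|w|<r_2|z|^l$, and apply the computation of Lemma~\ref{lem on inv wedges for Case 2} to $q_1$ to get $|q_1(z,w)|\le C_1|z|^{lD^*}$ with $D^*:=\min\{l^{-1}i+j:b_{ij}\neq0,\ (i,j)\neq(0,\delta)\}>\delta$. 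Then $|q(z,w)|\le\bigl(|b_{0\delta}|r_2^{\delta-1}\bigr)r_2|z|^{l\delta}+C_1 r_1^{l(D^*-\delta)}|z|^{l\delta}$; since $\delta-1\ge1$, I first pick $r_2$ so small that $|b_{0\delta}|r_2^{\delta-1}<\tfrac12(\tfrac{|a|}{2})^l$, and then $r_1$ so small that $C_1(\tfrac{2}{|a|})^l r_1^{l(D^*-\delta)}<\tfrac12 r_2$, which gives $|q(z,w)|<r_2(\tfrac{|a|}{2})^l|z|^{l\delta}$ and, after renaming, the condition $2Cr_1^{l(D^*-\delta)}<r_2$.

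I expect the main obstacle to be this borderline case $D=\delta$: there Lemma~\ref{lem on inv wedges for Case 2} by itself only gives $|q|\le C|z|^{l\delta}$, which does not beat $r_2|z|^{l\delta}$, so one is forced to separate off the pure power $b_{0\delta}w^\delta$ and exploit $\delta\ge2$ so that the gain $r_2^{\delta-1}$ can be made small — this is precisely why $r_2$ itself (not just $r_1$) must be chosen small in that case, and why the precise condition involves a second exponent $D^*>\delta$ rather than $D$. I would also remark that, unlike in Lemma~\ref{detailed lemma for case 2}, no hypothesis on $d$ enters, because the argument never uses $f\sim f_0$ on $U^l$ — only the Newton-polygon bound on $|q|$ together with $D\ge\delta$.
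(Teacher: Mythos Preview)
Your proposal is correct and follows essentially the same route as the paper: handle $|p(z)|<r_1$ trivially, then in the case $D>\delta$ apply Lemma~\ref{lem on inv wedges for Case 2} to get $|q|\le C_0|z|^{lD}$ and compare with $|p|^l\gtrsim|z|^{l\delta}$, while in the case $D=\delta$ (forcing $(n_1,m_1)=(0,\delta)$, $s=2$) isolate the single term $b_{0\delta}w^\delta$, use $\delta\ge2$ to make $|b_{0\delta}|r_2^{\delta-1}$ small, and bound the remainder via the secondary exponent $D^*>\delta$. The only cosmetic difference is that the paper simplifies the $D=\delta$ computation by taking $p(z)=z^\delta$, whereas you track the constant $(|a|/2)^l$ explicitly.
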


\begin{proof} %[Proof of Theorem ...]
Let $U = U^l_{r_1, r_2}$ in the proof.
Since the inequality $|p(z)| < r_1$ is trivial,
it is enough to show the existence of $r_1$ and $r_2$ 
such that $|q(z,w)| < r_2 |p(z)|^l$ for any $(z,w)$ in $U$.  
We have the inequality $D \geq \delta$
by Lemma \ref{lem on D for Case 2}.
The proof differs whether $D > \delta$ or $D = \delta$.

We first consider the case $D > \delta$.
It follows from Lemma \ref{lem on inv wedges for Case 2} that
\[
|p(z)| > C_1 |z|^{\delta}
\text{ and }
|q(z,w)| \leq C_2 |z|^{l D}
\text{ on } U
\]
for some constants $C_1$ and $C_2$.
Hence
\[
\left| \dfrac{q(z,w)}{p(z)^l} \right|
< \dfrac{C_2 |z|^{l D}}{(C_1 |z|^{\delta})^l}
= \dfrac{C_2}{C_1^l} |z|^{l (D - \delta)}
< C r_1^{l (D - \delta)}
\text{ on } U,
\]
where $C = C_2 \cdot C_1^{-l}$.
Since $D > \delta$,
the map $f$ preserves $U$ if $C r_1^{l (D - \delta)} < r_2$.

We next consider the case $D = \delta$;
that is, the special case $(n_1, m_1) = (0, \delta)$.
Recall that $\delta > d$ and
$(N, M) = (n_1, m_1)$ for any $0 < l < \alpha$
if $(n_1, m_1) = (0, \delta)$.
Let us assume that $p(z) = z^{\delta}$ for simplicity.
Then 
\begin{align*}
\left| \dfrac{q(z,w)}{p(z)^l} \right|
& \leq \dfrac{\sum |b_{ij}| |z|^{i} |w|^{j}}{|z|^{l \delta}}
\leq \sum |b_{ij}| r_2^{j} |z|^{i+lj-l \delta} \\
& = \sum_{i+lj = l \delta} |b_{ij}| r_2^{j} + \sum_{i+lj > l \delta} |b_{ij}| r_2^{j} |z|^{i+lj-l \delta} \\
& = |b_{NM}| r_2^{M} + \sum_{i+lj > l \delta} |b_{ij}| r_2^{j} |z|^{i+lj-l \delta}
\text{ on } U.
\end{align*}
Since $M = m_1 = \delta \geq 2$,
$|b_{NM}| r_2^{M} \leq |b_{NM}| r_2^2 < r_2/2$
%\[
%|b_{NM}| r_2^{M} \leq |b_{NM}| r_2^2 < \dfrac{r_2}{2}
%\]
if $r_2 < (2|b_{NM}|)^{-1}$.
Let 
\[
D^* = \min \left\{ l^{-1} i + j \ | \ (i,j) \neq (0, \delta) \text{ and } b_{ij} \neq 0 \right\}.
\] 
Then $D^* > D = \delta$ and 
the following inequality holds on $U$ for some constant $C$:
\[
\sum_{i+lj > l \delta} |b_{ij}| r_2^{j} |z|^{i+lj-l \delta}
= \Big\{ \sum_{i+lj \geq l D^*} |b_{ij}| r_2^{j} |z|^{i+lj-l D^*} \Big\} |z|^{l (D^* - \delta)}
\]
\[
< \Big\{ \sum_{i+lj \geq l D^*} |b_{ij}| r_2^{j} r_1^{i+lj-l D^*} \Big\} r_1^{l (D^* - \delta)}
\leq C r_1^{l (D^* - \delta)}.
\]
Therefore,
if $2 C r_1^{l (D^* - \delta)} < r_2 < (2|b_{NM}|)^{-1}$,
then
\[
\left| \dfrac{q(z,w)}{p(z)^l} \right|
< \dfrac{r_2}{2} + \dfrac{r_2}{2} = r_2.
\] 
\end{proof}

\begin{remark}
As the same strategy,
we can show that 
%$f(U_{r_1, r_2}^{\alpha}) \subset U_{r_1, r_2}^{\alpha}$ for suitable small $r_1$ and $r_2$
the statement in the theorem above holds even for $l = \alpha$ % claim
if $\delta > d \geq 2$,
without using the fact $f \sim f_0$ on $U$ % 「証明済み」を伝えたい
as in Lemmas 2.1 and \ref{detailed lemma for case 2}. 
Let us give a short proof.
As in the proof above,
\[
\left| \dfrac{q(z,w)}{p(z)^l} \right|
\leq \sum_{i + \alpha j = \alpha \delta} |b_{ij}| r_2^{j} 
+ \sum_{i + \alpha j > \alpha \delta} |b_{ij}| r_2^{j} |z|^{i + \alpha j - \alpha \delta}
\text{ on } U_{r_1, r_2}^{\alpha}.
\]
Note that 
the set of the pairs $(i,j)$ such that $i + \alpha j = \alpha \delta$ and $b_{ij} \neq 0$ is finite,
which includes $(\gamma, d)$, and
the minimum $j$ such that $i + \alpha j = \alpha \delta$ is $d$.
Since $d \geq 2$, 
the first sum is less than $r_2/2$ for small $r_2$.
Let
\begin{alignat*}{2}
D^* &= \min \left\{ {\alpha}^{-1} i + j \ | \ (i,j) \neq (\gamma, d) \text{ and } b_{ij} \neq 0 \right\}
& & \text{ if } \delta < T_{s-1},
\text{ and} \\
D^* &= \min \left\{ {\alpha}^{-1} i + j \ | \ i+ \alpha j > \alpha \delta \text{ and } b_{ij} \neq 0 \right\}
& & \text{ if } \delta = T_{s-1}.
\end{alignat*}
Then $D^* > D = \delta$ and
the second sum is less than or equal to $C r_1^{l (D^* - \delta)}$
for some constant $C$,
which is less than $r_2/2$ for small $r_1$.
\end{remark}

\begin{remark} %[Asymptotic behavior of $r_1$ as $l \to 0$]
%We claim that $r_1$ needs not converge to $0$ as $l \to 0$ if $D > \delta$.
As in the proof above,
if $D > \delta$,
then $r_1$ has to satisfy the inequality
\[
r_1 \leq \left( \dfrac{r_2}{C} \right)^{\frac{1}{l(D - \delta)}}.
\]
Note that 
$l(D - \delta) = N+lM-l \delta \to N$ as $l \to 0$, and
$N = n_1$ for small enough $l$ by Lemma \ref{properties of D for Case 2}.
Hence,
for small enough $l$,
it is enough to satisfy the inequality
\[
r_1 \leq \left( \dfrac{r_2}{C} \right)^{\frac{1}{n_1}}.
\]
Therefore, 
$r_1$ needs not converge to $0$ as $l \to 0$.
\end{remark}

%%%%%%%%%%%%%%%%%%%%%%%%%%%%%%%%%%%%%%%%%%%%%%%%%%%%%%%%%%%
%%%%%%%%%%%%%%%%%%%%%%%%%%%%%%%%%%%%%%%%%%%%%%%%%%%%%%%%%%%
\subsubsection{Proof of Theorem \ref{main thm on attr sets for Case 2}}

Let $V^l = \{ 0 < |z| < r, |w| \geq r |z|^{l}, |w| < r_3 \}$.
Then the union $U^l \cup V^l \cup \{ z = 0 \}$ is a neighborhood of the origin,
where $U^l = \{ |z| < r, |w| < r |z|^l \}$.

\begin{lemma}\label{lem on attr sets for Case 2}
For any $l > 0$ and
for any small $r$ and $r_3$,
there exists a constant $C > 0$ such that
$|q(z,w)| \leq C |w|^D$ on $V^l$.
\end{lemma}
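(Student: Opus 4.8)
The plan is to estimate $|q(z,w)|$ monomial by monomial on $V^l$, using only the two inequalities defining $V^l$ --- namely $r|z|^l\le|w|$, i.e. $|z|^l\le|w|/r$, and $|w|<r_3$ --- together with the fact that, by the very definition of $D=D_l$, one has $l^{-1}i+j\ge D$, equivalently $i+lj\ge lD$, for every $(i,j)$ with $b_{ij}\ne0$. Concretely, I would apply the triangle inequality $|q(z,w)|\le\sum|b_{ij}|\,|z|^i|w|^j$, split the sum into the part with $i\le lD$ and the part with $i>lD$, and bound each part by a constant multiple of $|w|^D$ on $V^l$; adding the two constants then gives the claimed $C$.

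For the part with $i\le lD$, I would use $|z|\le(|w|/r)^{1/l}$ to get $|z|^i|w|^j\le r^{-i/l}|w|^{\,l^{-1}i+j}\le r^{-i/l}|w|^{\,l^{-1}i+j-D}\,|w|^D$; since $i$ ranges only over the finitely many integers in $[0,lD]$, the factor $r^{-i/l}$ is bounded by a constant depending on $r$ and $D$, and since $l^{-1}i+j-D\ge0$ and $|w|<r_3$ the factor $|w|^{\,l^{-1}i+j-D}$ is at most a constant times $r_3^{\,j}$. Summing over the finitely many admissible $i$ and over all $j$ then produces a finite bound $C_1|w|^D$, the finiteness coming from holomorphy of $q$ (so that $\sum_j|b_{ij}|r_3^{\,j}<\infty$ for each fixed $i$ once $r_3$ is small).

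For the part with $i>lD$ I would instead exploit both inequalities at once: writing $|z|^i=|z|^{\,i-lD}(|z|^l)^D\le r^{\,i-lD}(|w|/r)^D$ gives $|z|^i|w|^j\le r^{\,i-(1+l)D}\,|w|^D\,r_3^{\,j}$, and summing over $i>lD$ and all $j$ yields $r^{-(1+l)D}\bigl(\sum_{b_{ij}\ne0}|b_{ij}|r^ir_3^{\,j}\bigr)|w|^D=C_2|w|^D$, which is finite as soon as $r$ and $r_3$ lie inside a polydisc on which $q$ converges; for polynomial skew products $q$ this is automatic. Taking $C=C_1+C_2$ finishes the proof. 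The only place where the shape of $V^l$ (rather than a full polydisc) enters is the slope condition $|w|\ge r|z|^l$, which is exactly what pushes the exponent of $|w|$ up to $D$, and the only mild point to watch is the factor $r^{-(1+l)D}$ in $C_2$, harmless once $r$ and $r_3$ are chosen small. This estimate, together with Lemma~\ref{lem on inv wedges for Case 2} and the facts $D>d$ for all $l>0$ and $D\ge\delta$ on the relevant range of $l$ from Lemma~\ref{properties of D for Case 2}, is what will then drive the proof of Theorem~\ref{main thm on attr sets for Case 2}.
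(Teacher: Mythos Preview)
Your argument is correct, but it is more elaborate than necessary. The paper's proof uses only the single inequality $|z|\le (r^{-1}|w|)^{1/l}$ throughout and splits the sum according to whether $l^{-1}i+j=D$ or $l^{-1}i+j>D$: the first part is a finite sum (only finitely many lattice points $(i,j)$ lie on the line $l^{-1}i+j=D$ in the positive quadrant), and for the second part one simply factors $|w|^{l^{-1}i+j}=|w|^D\cdot|w|^{l^{-1}i+j-D}\le |w|^D\,r_3^{\,l^{-1}i+j-D}$ and observes that the resulting series is finite for small $r_3$. Your split on $i\le lD$ versus $i>lD$ is less natural because it is not directly tied to the key quantity $l^{-1}i+j$, and in your first part you still end up using exactly the paper's estimate $|z|^i|w|^j\le r^{-i/l}|w|^{l^{-1}i+j}$ anyway. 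What your decomposition does buy you is a slightly more transparent convergence argument in the tail: by invoking $|z|<r$ you arrive at $\sum|b_{ij}|r^ir_3^{\,j}$, whose finiteness is immediate from holomorphy on a polydisc, whereas the paper's tail sum $\sum|b_{ij}|r^{-i/l}r_3^{\,l^{-1}i+j-D}$ requires $r_3$ small relative to $r$ to converge (a point the paper leaves implicit). So both routes work; the paper's is shorter, yours is marginally more explicit about convergence.
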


\begin{proof}
Since $|z| \leq (r^{-1} |w|)^{l^{-1}}$ on $V^l$,
\begin{align*}
|q(z,w)| 
& \leq \sum |b_{ij} z^{i} w^{j}| \leq \sum |b_{ij}| r^{-l^{-1} i} |w|^{l^{-1} i + j} \\
& = \sum_{i+lj=lD} |b_{ij}| r^{-l^{-1} i} |w|^{D} + \sum_{i+lj>lD} |b_{ij}| r^{-l^{-1} i} |w|^{l^{-1} i + j} \\
& \leq |w|^{D} \Big\{ \sum_{i+lj=lD} |b_{ij}| r^{-l^{-1} i} 
+ \sum_{i+lj>lD} |b_{ij}| r^{-l^{-1} i} r_3^{(l^{-1} i + j) - D} \Big\}
\end{align*}
on $V^l$.
The first sum in the last expression is bounded from above
since the set of the pairs $(i,j)$ such that $i+lj = lD$ and $b_{ij} \neq 0$ is finite, and
the second sun tends to $0$ as $r_3 \to 0$ 
since $l^{-1} i+j-D > 0$.
\end{proof}

The proposition below follows from the inequality $D > \delta$ and the lemma above,
which implies Theorem \ref{main thm on attr sets for Case 2}.

\begin{proposition}\label{prop on attr sets for Case 2}
The following holds for any $l > 0$ if $\delta \leq d$
and for any $0 < l < \alpha$ if $\delta > d$ and $(n_1, m_1) \neq (0, \delta)$:
for any $(z,w)$ in $V^l$ with suitable small $r$ and $r_3$,
there exists an integer $n$ such that
$f^n(z,w)$ belongs to $U^l$. 
\end{proposition}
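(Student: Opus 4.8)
The plan is to follow the forward orbit $(z_n,w_n)=f^n(z,w)$ of a point in $V^l$ and show that it remains in $U^l\cup V^l$ until, after finitely many steps, it lands in $U^l$. The decisive point will be a rate comparison: while the orbit stays in $V^l$, the estimate $|w_{n+1}|=|q(z_n,w_n)|\le C|w_n|^D$ forces $|w_n|$ to decay super-exponentially at rate $D$, whereas $r|z_n|^l$ decays only at rate $\delta<D$, so eventually $|w_n|<r|z_n|^l$.

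First I would fix the constants. Since $p$ is superattracting of order $\delta\ge 2$ at the origin, after shrinking $r$ I may assume that every point of $\{0<|z|<r\}$ lies in $A_p-E_z$, that $0<|z|<r$ implies $z_n\to0$, $z_n\ne0$ and $|z_{n+1}|<|z_n|$ for all $n$, and that $\log|z_n|=\delta^n G_p(z)+o(\delta^n)$ with $-\infty<G_p(z)<0$. By Lemma~\ref{lem on D for Case 2}, under the hypotheses of the proposition (namely $\delta\le d$, or $\delta>d$, $0<l<\alpha$ and $(n_1,m_1)\ne(0,\delta)$) we have the strict inequality $D>\delta$. By Lemma~\ref{lem on attr sets for Case 2}, after shrinking $r$ and $r_3$ there is a constant $C>0$ with $|q(z,w)|\le C|w|^D$ on $V^l$; shrinking $r_3$ once more, I would also arrange $Cr_3^{D-1}<1$ and $r_3>r^{1+l}$. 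Then whenever $(z_n,w_n)\in V^l$ we get $|w_{n+1}|\le C|w_n|^D\le (Cr_3^{D-1})|w_n|<|w_n|<r_3$ and $|z_{n+1}|<|z_n|<r$.

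Next comes the invariance step. Because $\{|z|<r,\ |w|<r_3\}\subset U^l\cup V^l\cup\{z=0\}$ (using $r_3>r^{1+l}$), and because $(z_n,w_n)\in V^l$ forces $|z_{n+1}|<r$, $|w_{n+1}|<r_3$ and $z_{n+1}\ne0$, the image $(z_{n+1},w_{n+1})$ again lies in $U^l\cup V^l$. Hence the orbit of $(z,w)\in V^l$ stays in $U^l\cup V^l$ for all time, and the proposition reduces to excluding that it remains in $V^l$ forever. Assume it does. Then $|w_{n+1}|\le C|w_n|^D$ holds at every step, so iterating and using $Cr_3^{D-1}<1$ gives $\log|w_n|\le -c_1D^n+C_0$ for constants $c_1>0$, $C_0$. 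On the other hand $\log(r|z_n|^l)=\log r+l\log|z_n|\ge \log r-C_0'-l\,|G_p(z)|\,\delta^n$ with $|G_p(z)|<\infty$. Since $D>\delta$, the quantity $-c_1D^n+l|G_p(z)|\delta^n$ tends to $-\infty$, so $\log|w_n|<\log(r|z_n|^l)$ for all large $n$; that is, $(z_n,w_n)\in U^l$, contradicting the assumption. Therefore $f^n(z,w)\in U^l$ for some $n$, which (as remarked in the text) yields Theorem~\ref{main thm on attr sets for Case 2}.

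The main obstacle is the rate comparison in the last step: one must combine the super-exponential (rate-$D$) decay of $|w_n|$ coming from iterating $|w_{n+1}|\le C|w_n|^D$ with the order-$\delta$ decay of $|z_n|$, and genuinely exploit the strict inequality $D>\delta$ of Lemma~\ref{lem on D for Case 2}, to conclude $|w_n|/(r|z_n|^l)\to0$. A secondary technical point is guaranteeing that the orbit cannot escape the neighborhood $\{|z|<r,\ |w|<r_3\}$ before reaching $U^l$; this is precisely why $r_3$ must be taken with $Cr_3^{D-1}<1$, which forces $|w_n|$ to be nonincreasing while the orbit is in $V^l$. Note also that the excluded case $(n_1,m_1)=(0,\delta)$ is exactly the one where $D=\delta$ by Lemma~\ref{lem on D for Case 2}, so the argument cannot be pushed through there.
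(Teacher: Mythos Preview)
Your proof is correct and follows essentially the same approach as the paper: assume the orbit stays in $V^l$ forever, iterate the bound $|q|\le C|w|^D$ to get super-exponential decay of $|w_n|$ at rate $D$, compare with the rate-$\delta$ decay of $r|z_n|^l$, and use $D>\delta$ to obtain a contradiction. The only differences are cosmetic: you invoke the Green function asymptotic $\log|z_n|=\delta^n G_p(z)+o(\delta^n)$ where the paper uses the elementary iterated lower bound $|z_n|>C_1^{(\delta^n-1)/(\delta-1)}|z|^{\delta^n}$, and you spell out the invariance step (that from $V^l$ the orbit cannot escape $U^l\cup V^l$, via $Cr_3^{D-1}<1$ and $r_3>r^{1+l}$) which the paper leaves implicit.
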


\begin{proof}
It follows from Lemma \ref{lem on attr sets for Case 2} that
\[
|p(z)| > (C_1 |z|)^{\delta}
\text{ and }
|q(z,w)| < (C_2 |w|)^D \text{ on } V^l
\]
for some constants $C_1$ and $C_2$.
%%%
We derive a contradiction by assuming that
$f^n(z,w)$ belongs to $V^l$ for all $n > 0$.
Let $f^n(z,w) = (z_n, w_n)$.
Then $r |z_n|^l \leq |w_n|$ by the assumption.
Since
\[
|z_n| > C_1^{\frac{\delta^n-1}{\delta-1}} |z|^{\delta^n} > C_1^{\frac{\delta^n}{\delta-1}} |z|^{\delta^n}
\text{ and }
|w_n| < C_2^{\frac{D^n-1}{D-1}} |w|^{D^n} < C_2^{\frac{D^n}{D-1}} |w|^{D^n},
\]
we have the inequality 
\[
r (C_1^{\frac{1}{\delta-1}} |z|)^{l \delta^n} < r |z_n|^l \leq |w_n| < (C_2^{\frac{1}{D-1}} |w|)^{D^n}.
\]
Recall that $D > \delta$ by Lemma \ref{lem on D for Case 2}.
By taking $r$ and $r_3$ small enough,
we may assume that 
$C_1^{\frac{1}{\delta-1}} r < 1$ and $C_2^{\frac{1}{D-1}} r_3 < 1$.
Therefore,
the convergence speed of $(C_2^{\frac{1}{D-1}} |w|)^{D^n}$ is faster 
than that of $\{ (C_1^{\frac{1}{\delta-1}} |z|)^l \}^{\delta^n}$,
which contradict the inequality above.
\end{proof}

%\newpage
%%%%%%%%%%%%%%%%%%%%%%%%%%%%%%%%%%%%%%%%%%%%%%%%%%%%%%%%%%%%%%%%%%%%%%%%%%%%%%%%%%%%%%
%%%%%%%%%%%%%%%%%%%%%%%%%%%%%%%%%%%%%%%%%%%%%%%%%%%%%%%%%%%%%%%%%%%%%%%%%%%%%%%%%%%%%%
%%%%%%%%%%%%%%%%%%%%%%%%%%%%%%%%%%%%%%%%%%%%%%%%%%%%%%%%%%%%%%%%%%%%%%%%%%%%%%%%%%%%%%
\section{Case 2: Plurisubharmonic functions}

We continue the study of the dynamics for Case 2.
We show the existence and properties of 
plurisubharmonic functions and
prove Theorems \ref{main thm on G_z^a for Case 2} and 
\ref{main thm on G_z, G_f and G_f^a for Case 2} and
Propositions \ref{main propo on A_f^l for case 2} and 
\ref{main prop on the unboundness for case 2}
in this section.  

We first deal with the monomial skew products in Section 5.1.
Although they are not exactly in Case 2, 
the investigation in this subsection is helpful 
to understand the other cases.
We then deal with the cases $\delta < d$ and $\delta = d$
in Sections 5.2 and 5.3, respectively.
The case $T_{s-1} > \delta > d$ is dealt with in Section 5.4,
which is separated into the subcases $d \geq 1$ and $d = 0$.
We finally deal with the case $\delta = T_{s-1}$ in Section 5.5,
which is separated into the subcases $n_{s-1} > 0$ and $n_{s-1} = 0$.
One may refer Table \ref{table for case 2} 
in Section 2.1 for a comparison chart.

Theorem \ref{main thm on G_z^a for Case 2} follows from
Propositions \ref{prop on G_z^a when delta < d}, 
\ref{another prop on G_z^a when delta < d},
\ref{prop on G_z^a when delta = d},
\ref{prop on G_z^a when T > delta > d} and
\ref{prop on G_z^a when delta = T}.
Theorem \ref{main thm on G_z, G_f and G_f^a for Case 2} follows from
Propositions \ref{prop on G_z^a when delta < d},
\ref{prop on G_z when delta = d},
Corollary \ref{prop on G_f when delta = d},
Proposition \ref{prop on G_z when T > delta > d},
Corollary \ref{cor on G_f^a when T > delta > d}, 
Propositions \ref{prop on G_z when d = 0},
\ref{prop on G_z and G_f^a when delta = T and n > 0} and
\ref{prop on G_z and G_f^a when delta = T and n = 0}.
Proposition \ref{main propo on A_f^l for case 2} follows from
Corollary \ref{cor on A_f^l when T > delta > d}.
Proposition \ref{main prop on the unboundness for case 2} follows from
Corollary \ref{cor on the unboundness when T > delta > d}.

%%%%%%%%%%%%%%%%%%%%%%%%%%%%%%%%%%%%%%%%%%%%%%%%%%%%%%%%%%%%%%%%%%%%%%%%%%%%%
%%%%%%%%%%%%%%%%%%%%%%%%%%%%%%%%%%%%%%%%%%%%%%%%%%%%%%%%%%%%%%%%%%%%%%%%%%%%%
\subsection{Monomial maps}

Let $f_0 (z,w) = (z^{\delta}, z^{\gamma} w^{d})$,
$\delta \geq 2$, $\gamma \geq 0$, $d \geq 1$ and $\gamma + d \geq 2$.
Then $f_0$ has a superattracting fixed point at  the origin.
Let
\[
\alpha = \dfrac{\gamma}{\delta - d}
\]
if $\delta \neq d$.
Note that $\alpha < 0$ if $\delta < d$,
and $\alpha > 0$ if $\delta > d$.

Let $f_0^n (z,w) = (z_n, w_n)$.
Then $z_n = z^{\delta^n}$ and $w_n = z^{\gamma_n} w^{d^n}$,
where 
\[
\gamma_n = \alpha (\delta^n - d^n)  \text{ if } \delta \neq d,
\text{ and }
\gamma_n = n \gamma d^{n-1} \text{ if } \delta = d.
\]
It is clear that $G_p (z) = \log |z|$.

We first consider the case $\delta \neq d$.
If $\delta \neq d$ and $z \neq 0$,
then
\[
w_n = z^{\alpha (\delta^n - d^n)} w^{d^n}
= z^{\alpha \delta^n} \left( \dfrac{w}{z^{\alpha}} \right)^{d^n}.
\]
Hence
\[
\dfrac{w_n}{z_n^{\alpha}} = \left( \dfrac{w}{z^{\alpha}} \right)^{d^n}
\text{ and so }
G_z^{\alpha} (w) 
= \log \left| \dfrac{w}{z^{\alpha}} \right|.
\]
%More precisely, we have the following lemmas. 

\begin{lemma}
If $\delta < d$, then
$G_z^{\alpha} (w) = \log \left| z^{- \alpha} w \right|$ on $\mathbb{C}^2$.
If $\delta > d$ and $\gamma > 0$, then
$G_z^{\alpha} (w) = \log \left| w/z^{\alpha} \right|$ on $\{ z \neq 0 \}$.
%which is not defined on $\{ z = 0 \}$.
If $\delta \neq d$ and $\gamma = 0$, then
$G_z^{\alpha} (w) = \log |w|$ on $\mathbb{C}^2$.
\end{lemma}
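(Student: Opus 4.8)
The plan is to push the computation made just above the statement — valid a priori only on $\{z \neq 0\}$ — through to the whole of $\mathbb{C}^2$ in the two cases where the relevant monomial extends, and to read off the special case $\gamma = 0$. Recall $f_0^n(z,w) = (z_n,w_n)$ with $z_n = z^{\delta^n}$, $w_n = z^{\gamma_n} w^{d^n}$ and $\gamma_n = \alpha(\delta^n - d^n)$ since $\delta \neq d$ in all three cases, so that, as a single monomial,
\[
\frac{w_n}{z_n^{\alpha}} = z^{\gamma_n - \alpha\delta^n}\, w^{d^n} = z^{-\alpha d^n}\, w^{d^n} = \left(z^{-\alpha} w\right)^{d^n},
\]
using $\gamma_n - \alpha\delta^n = \alpha(\delta^n-d^n) - \alpha\delta^n = -\alpha d^n$. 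Consequently
\[
\frac{1}{d^n}\log\left|\frac{w_n}{z_n^{\alpha}}\right| = \log\left|z^{-\alpha} w\right|
\]
for every $n$, a sequence constant in $n$, so $G_z^{\alpha}$ exists and equals $\log|z^{-\alpha}w|$ at every point where the right-hand side is meaningful.

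The remaining step is to identify that set of points according to the sign of $\alpha = \gamma/(\delta - d)$. If $\gamma = 0$ (so $\alpha = 0$), then $|z^{-\alpha}w| = |w|$ is meaningful on all of $\mathbb{C}^2$, which gives the third assertion. If $\delta < d$ and $\gamma > 0$, then $-\alpha = \gamma/(d-\delta) > 0$, so $|z^{-\alpha}w| = |z|^{-\alpha}|w|$ extends continuously to all of $\mathbb{C}^2$ (taking the value $-\infty$ after $\log$ on $\{zw=0\}$) and the constancy above persists there; this is the first assertion. If $\delta > d$ and $\gamma > 0$, then $\alpha > 0$, the factor $z_n^{\alpha}$ genuinely sits in a denominator, and the identity — hence the conclusion $G_z^{\alpha}(w) = \log|w/z^{\alpha}|$ — is available only on $\{z \neq 0\}$; this is the second assertion.

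I do not anticipate a real obstacle; only two bookkeeping remarks are needed. First, $\alpha$ need not be an integer, so $z^{\pm\alpha}$ are multivalued, but $|z^{\alpha}| = |z|^{\alpha}$ and $\log|w_n/z_n^{\alpha}| = \log|w_n| - \alpha\log|z_n|$ are single-valued, so the displays should be read as identities of moduli; the possibly infinite values on $\{zw=0\}$ are harmless since the sequence is literally constant. Second, one should record why $\{z\neq0\}$ is optimal in the second case: at $z = 0$ one has $z_n^{\alpha} = 0$, so $w_n/z_n^{\alpha}$ is undefined, and moreover $\log|w/z^{\alpha}| \to +\infty$ as $z \to 0$ with $w \neq 0$ fixed, so no finite extension across $\{z=0\}$ exists.
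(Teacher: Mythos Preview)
Your proof is correct and follows the same approach as the paper: the paper performs the key computation $w_n/z_n^{\alpha} = (w/z^{\alpha})^{d^n}$ just above the lemma and then states the lemma without further argument, leaving the domain analysis implicit. Your explicit case breakdown according to the sign of $\alpha$ (and your remarks on multivaluedness and on the obstruction at $z=0$ when $\alpha>0$) simply make precise what the paper takes for granted.
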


We also have the following. % lemma.

\begin{lemma}
If $\delta < d$, then
$G_z (w) = \log \left| z^{- \alpha} w \right|$ on $\mathbb{C}^2$.
If $\delta > d$ and $\gamma > 0$, then
$G_z (w) = \alpha \log |z|$ on $\{ w \neq 0 \}$ and
$G_z (w) = - \infty$ on $\{ w = 0 \}$.
If $\delta > d$ and $\gamma = 0$, then
$G_z (w) = 0$ on $\{ w \neq 0 \}$ and $G_z (w) = - \infty$ on $\{ w = 0 \}$.
\end{lemma}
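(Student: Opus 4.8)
The plan is a direct limit computation from the explicit iterates already recorded above. Writing $f_0^n(z,w) = (z_n,w_n)$ we have $z_n = z^{\delta^n}$ and $w_n = z^{\gamma_n} w^{d^n}$ with $\gamma_n = \alpha(\delta^n - d^n)$, since $\delta \neq d$ in all three cases. Hence, wherever $w_n \neq 0$,
\[
\frac{1}{\lambda^n}\log|w_n| = \alpha\,\frac{\delta^n - d^n}{\lambda^n}\,\log|z| + \frac{d^n}{\lambda^n}\,\log|w|,
\]
and it suffices to pass to the limit in this expression in the two regimes $\lambda = d$ (the case $\delta < d$) and $\lambda = \delta$ (the case $\delta > d$), and then separately inspect the loci where $w_n$ vanishes.

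First I would treat $\delta < d$: here $\lambda = d$ and $\delta^n/d^n \to 0$, so on $\{z \neq 0\} \cap \{w \neq 0\}$ the right-hand side tends to $-\alpha\log|z| + \log|w| = \log|z^{-\alpha}w|$ (equivalently one may use $w_n/z_n^{\alpha} = (w/z^{\alpha})^{d^n}$ recorded before the preceding lemma, together with $\log|w_n| = \log|w_n/z_n^{\alpha}| + \alpha\delta^n\log|z|$ and $\delta^n/d^n \to 0$). Since $-\alpha > 0$, on $\{z = 0\}$ one has $w_n = 0$ for $n \geq 1$, and on $\{w = 0\}$ one has $w_n = 0$ for all $n$; in either case $\frac{1}{d^n}\log|w_n| = -\infty = \log|z^{-\alpha}w|$, so the asserted formula holds on all of $\mathbb{C}^2$.

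Next I would treat $\delta > d$: here $\lambda = \delta$, $d^n/\delta^n \to 0$, and the coefficient of $\log|z|$ is $\alpha(1 - d^n/\delta^n) \to \alpha$; thus on $\{z \neq 0\} \cap \{w \neq 0\}$ the limit is $\alpha\log|z|$, which is $0$ when $\gamma = 0$. On $\{w = 0\}$ we get $w_n = 0$ and $G_z = -\infty$. If $\gamma > 0$, then $\gamma_n > 0$, so on $\{z = 0\} \cap \{w \neq 0\}$ again $w_n = 0$ for $n \geq 1$, giving $G_z = -\infty = \alpha\log|0|$, consistent with the stated formula under the convention $\log 0 = -\infty$; if $\gamma = 0$ then $w_n = w^{d^n} \neq 0$ there and the limit $0$ is correct. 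This yields the two remaining statements.

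The argument has no real obstacle; the only point needing a little care is the bookkeeping on the degenerate loci $\{z = 0\}$ and $\{w = 0\}$ together with the conventions $z^0 = 1$ and $\log 0 = -\infty$, so that the three displayed identities hold literally and not merely off a pluripolar set.
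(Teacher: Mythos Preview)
Your proof is correct and follows essentially the same approach as the paper: both compute $\lambda^{-n}\log|w_n|$ directly from the explicit formula $w_n = z^{\alpha(\delta^n - d^n)} w^{d^n}$ and pass to the limit, with the paper writing the expression as $\alpha\log|z| + (d/\delta)^n\log|w/z^{\alpha}|$ (for $\delta>d$) and $-(-\alpha)(\delta/d)^n\log|z| + \log|z^{-\alpha}w|$ (for $\delta<d$) rather than your decomposition into $\log|z|$ and $\log|w|$ terms. One small remark: in your treatment of $\delta<d$ you write ``Since $-\alpha>0$'' when handling $\{z=0\}$, which tacitly assumes $\gamma>0$; when $\gamma=0$ one has $\alpha=0$, $w_n=w^{d^n}$ on $\{z=0\}$, and the formula $G_z(w)=\log|z^{-\alpha}w|=\log|w|$ still holds with the convention $z^0=1$, so the case split should be on $\gamma$ rather than inferred from the sign of $-\alpha$.
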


\begin{proof}
If $\delta < d$, then
\begin{align*}
G_z (w) 
&= \lim_{n \to \infty} \dfrac{1}{d^n} \log \left| z^{\alpha \delta^n} \left( \dfrac{w}{z^{\alpha}} \right)^{d^n} \right| 
= \lim_{n \to \infty} \dfrac{1}{d^n} \log \left| \dfrac{(z^{- \alpha} w)^{d^n}}{(z^{- \alpha})^{\delta^n}}  \right| \\
&= - \lim_{n \to \infty} (- \alpha) \left( \dfrac{\delta}{d} \right)^n \log |z| + \log \left| z^{- \alpha} w \right| 
= \log \left| z^{- \alpha} w \right|
\end{align*}
on $\{ z \neq 0 \}$.
It is clear that $G_z (w) = - \infty$ on $\{ z = 0 \}$.

If $\delta > d$ and $\gamma > 0$, then
\begin{align*}
G_z (w) 
&= \lim_{n \to \infty} \dfrac{1}{\delta^n} \log \left| z^{\alpha \delta^n} \left( \dfrac{w}{z^{\alpha}} \right)^{d^n} \right| \\
&= \alpha \log |z| + \lim_{n \to \infty} \left( \dfrac{d}{\delta} \right)^n \log \left| \dfrac{w}{z^{\alpha}} \right| 
= \alpha \log |z| % \text{ on } \{ w \neq 0 \}.
\end{align*}
on $\{ zw \neq 0 \}$.
It is clear that $G_z (w) = - \infty$ on $\{ zw = 0 \}$.
\end{proof}

\begin{remark}
If $\delta \neq d$, then
$f_0$ is semiconjugate to the product
$(z,c) \to (z^{\delta}, c^d)$
by $\pi (z,c) = (z^{\delta - d}, z^{\gamma} c)$.
We can replace $\pi$ 
by $\pi (z,c) = (z, z^{\alpha} c)$
if $\alpha$ is integer.
\end{remark}

We next consider the case $\delta = d$.
If $\delta = d$ and $z \neq 0$,
then
\[
w_n = z^{n \gamma d^{n-1}} w^{d^n}
\text{ and so }
G_z^{\infty} (w) = \log |w|.
\]
If $\delta = d$, $\gamma > 0$ and $|z| < 1$, then 
\begin{align*}
G_z (w) 
&= \lim_{n \to \infty} \dfrac{1}{d^n} \log \left| z^{n \gamma d^{n-1}} w^{d^n} \right| \\
&= \lim_{n \to \infty} \left\{ n \dfrac{\gamma}{d} \log |z| + \log |w| \right\} 
= - \infty.
\end{align*}
%More precisely, we have the following lemma.

\begin{lemma}
If $\delta = d$ and $\gamma > 0$, then 
$G_z^{\infty} (w) = \log |w|$ on $\{ z \neq 0 \}$
%which is not defined on $\{ z = 0 \}$,
and
$G_z (w) = - \infty$ on $\{ |z| < 1 \}$.
If $\delta = d$ and $\gamma = 0$, then 
$G_z^{\infty} (w) = G_z (w) = \log |w|$ on $\mathbb{C}^2$.
\end{lemma}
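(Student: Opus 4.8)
The plan is to reduce everything to the explicit formulas for $z_n$ and $w_n$ obtained just above the statement and to compute the relevant limits directly; the proof is pure bookkeeping of exponents, with no hidden difficulty. Recall that, since $\delta = d$, one has $z_n = z^{d^n}$ and $w_n = z^{n \gamma d^{n-1}} w^{d^n}$, that $\lambda = \max\{\delta, d\} = d$, and that $G_p(z) = \log|z|$.

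First I would treat $G_z^{\infty}$ in the case $\gamma > 0$. On $\{z \neq 0\}$ the factor $|z_n|^{(\gamma/d) n}$ equals $|z|^{d^n (\gamma/d) n} = |z|^{n \gamma d^{n-1}}$, so the powers of $z$ in $w_n$ cancel and $|w_n|/|z_n|^{(\gamma/d) n} = |w|^{d^n}$; hence $G_z^{\infty}(w) = \lim_n d^{-n} \log |w|^{d^n} = \log|w|$. For $G_z$ on $\{|z| < 1\}$: when $z \neq 0$ we have $d^{-n} \log|w_n| = (n\gamma/d)\log|z| + \log|w|$, and since $\log|z| < 0$ the first term tends to $-\infty$, so $G_z(w) = -\infty$; when $z = 0$ (and $\gamma > 0$) the fibre $\{z = 0\}$ is collapsed by $f_0$, so $w_n = 0$ for every $n \geq 1$ and again $G_z(w) = -\infty$. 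This settles the case $\gamma > 0$.

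For $\gamma = 0$ the map $f_0(z,w) = (z^{\delta}, w^d)$ is a product: $w_n = w^{d^n}$ and $|z_n|^{(\gamma/d) n} = 1$, so both $G_z^{\infty}(w)$ and $G_z(w)$ equal $\lim_n d^{-n} \log|w^{d^n}| = \log|w|$, valid on all of $\mathbb{C}^2$. The only point meriting a word of care is the slice $\{z = 0\}$ in the $G_z$ statement for $\gamma > 0$, handled as above by the observation that $f_0$ degenerates that fibre to the origin; everything else is a one-line substitution, so I do not anticipate any genuine obstacle.
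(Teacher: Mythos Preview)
Your proof is correct and follows the same direct computation the paper gives (the paper presents the calculation just before stating the lemma, so the lemma is really a summary). Your explicit treatment of the slice $\{z=0\}$ in the $G_z$ computation for $\gamma>0$ is a small addition but entirely in the same spirit.
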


%Let us summarize these lemmas into the following two tables.
These lemmas can be summarized into the following two tables.

%%%%%%%%%%%%%%%%%%%%%%%%%%%%%%%%%%%%% \left| \dfrac{w}{z^{\alpha}} \right|
\begin{table}[htb]
\caption{Psh functions for monomial maps when $\gamma > 0$}
\begin{tabular}{|c|c|c|c|} \hline
 & $G_z^{\alpha}$ or $G_z^{\infty}$                                     & $G_z$ & $A_0$ \\ \hline
$\delta < d$   & $\log \left| z^{- \alpha} w \right|$ on $\mathbb{C}^2$         & $\log \left| z^{- \alpha} w \right|$ on $\mathbb{C}^2$ & $\{ |z| < 1, |z^{- \alpha} w| < 1 \}$ \\  \hline
$\delta > d$   & $\log \left| w/z^{\alpha} \right|$ on $\{ z \neq 0 \}$    & $\alpha \log \left| z \right|$ on $\{ w \neq 0 \}$ & $\{ |z| < 1 \}$   \\  \hline
$\delta = d$   & $\log \left| w \right|$ on $\{ z \neq 0 \}$                         & $- \infty$ on $\{ |z| < 1 \}$ & $\{ |z| < 1 \}$  \\ \hline
\end{tabular}
\end{table}
%%%%%%%%%%%%%%%%%%%%%%%%%%%%%%%%%%%%% 

%%%%%%%%%%%%%%%%%%%%%%%%%%%%%%%%%%%%%
\begin{table}[htb]
\caption{Psh functions for monomial maps when $\gamma = 0$}
\begin{tabular}{|c|c|c|c|} \hline
 & $G_z^{\alpha}$ or $G_z^{\infty}$              & $G_z$                                           & $A_0$ \\ \hline
$\delta < d$  & $\log \left| w \right|$ on $\mathbb{C}^2$   & $\log \left| w \right|$ on $\mathbb{C}^2$ & $\{ |z| < 1, |w| < 1 \}$  \\  \hline 
$\delta > d$  & $\log \left| w \right|$ on $\mathbb{C}^2$   & $0$ on $\{ w \neq 0 \}$                       & $\{ |z| < 1, |w| < 1 \}$   \\  \hline
$\delta = d$  & $\log \left| w \right|$ on $\mathbb{C}^2$   & $\log \left| w \right|$ on $\mathbb{C}^2$ & $\{ |z| < 1, |w| < 1 \}$  \\ \hline
\end{tabular}
\end{table}
%%%%%%%%%%%%%%%%%%%%%%%%%%%%%%%%%%%%%

%%%%%%%%%%%%%%%%%%%%%%%%%%%%%%%%%%%%%%%%%%%%%%%%%%%%%%%%%%%%%%%%%%%%%%%%%%%%%
%%%%%%%%%%%%%%%%%%%%%%%%%%%%%%%%%%%%%%%%%%%%%%%%%%%%%%%%%%%%%%%%%%%%%%%%%%%%%
\subsection{$\delta < d$}

Recall that
$\alpha < 0$ and $\mathcal{I}_f = [ l_1, \infty )$ if $\delta < d$.
%%%

We first show the existence and properties of plurisubharmonic functions 
on $A_f^l$ for any $l$ in $\mathcal{I}_f$ in Section 5.2.1,
and then evaluate the limit supremum and inferior of $G_z^{\alpha}$
toward the boundary $\partial A_f^l$ in Sections 5.2.2 and 5.2.3, respectively.

%%%%%%%%%%%%%%%%%%%%%%%%%%%%%%%%%%%%%%%%%%%%%%%%%%%%%%%%%%%
%%%%%%%%%%%%%%%%%%%%%%%%%%%%%%%%%%%%%%%%%%%%%%%%%%%%%%%%%%%
\subsubsection{Existence and basic properties} 

Because $f$ is conjugate to $f_0$ on $U^l$ % the monomial map
for any $l$ in $\mathcal{I}_f$, % $l \geq l_1$, 
the limits $G_z^{\alpha}$ and $G_z$ exist on $U^l$. 
More precisely,
as shown below,
\[
G_z^{\alpha} (w) = G_z (w) = \log \left| \phi_1 (z)^{- \alpha} \phi_2 (z,w) \right| 
= \log \left| z^{- \alpha} w \right| + o(1) \text{ on } U^l
\text{ as } r \to 0,
\]
where $\phi=(\phi_1, \phi_2)$ is the B\"{o}ttcher coordinate for $f$ on $U^l$.
It extends to a plurisubharmonic function on $A_f^l$
via the equation $G^{\alpha} \circ f = d G^{\alpha}$.
We remark that 
$\left| \phi_1^{- \alpha} \phi_2 \right|$ extends to a continuous function
from $A_f^l$ to $[0, 1)$.
Theorem \ref{main thm on attr sets for Case 2} indicates that $A_f^l = A_0 - E_z$ 
for any $l$ in $\mathcal{I}_f$. % $l \geq l_1$. 
Consequently,
we obtain the following.

\begin{proposition} \label{prop on G_z^a when delta < d}
If $\delta < d$, then $G_z^{\alpha} = G_z$ on $A_0$.
It is plurisubharmonic and negative on $A_0 - E_z$,
pluriharmonic on $A_0 - E$ % and continuous 
and $- \infty$ on $E \cap A_0$.
Moreover,
$G_z^{\alpha} (w) = \log \left| z^{- \alpha} w \right| + o(1)$
on $U^{l_1}$ as $r \to 0$.
\end{proposition}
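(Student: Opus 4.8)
\emph{Approach.} The plan is to read off $G_z^{\alpha}$ explicitly on the wedge $U^{l_1}$ from the B\"{o}ttcher conjugacy, and then spread it over the whole basin by the functional equation, using the identification $A_f^{l_1}=A_0-E_z$. First I fix a small $r>0$ so that the B\"{o}ttcher coordinate $\phi=(\phi_1,\phi_2)$ exists on $U=U^{l_1}$ (Theorem~\ref{main thm for previous reselts}, Lemma~\ref{detailed lemma for case 2}; here $l_1=\min\mathcal{I}_f$ and, since $\delta<d$, there is no exceptional weight), conjugating $f$ to $f_0=(z^{\delta},z^{\gamma}w^{d})$ with $\phi\sim id$ as $r\to 0$. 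Choosing $m\in\mathbb{Z}_{>0}$ with $m\alpha\in\mathbb{Z}$ and setting $h=\phi_1^{-m\alpha}\phi_2^{m}$, a holomorphic function on $U$, one computes from $\phi\circ f=f_0\circ\phi$ and $\gamma=\alpha(\delta-d)$ that $h\circ f=h^{d}$, hence $h(f^{n}(z,w))=h(z,w)^{d^{n}}$.

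Because $U$ is $f$-invariant and $\phi\sim id$ keeps $\phi_1(z)/z$ and $\phi_2(z,w)/w$ in a fixed interval $(1-\varepsilon(r),1+\varepsilon(r))$ on $U$, the ratio $\bigl|w_n/z_n^{\alpha}\bigr|\big/\bigl|h(z_n,w_n)\bigr|^{1/m}$, with $(z_n,w_n)=f^{n}(z,w)$, stays bounded above and below uniformly in $n$; dividing $\tfrac1m\log|h(z_n,w_n)|=\tfrac{d^{n}}{m}\log|h(z,w)|$ by $d^{n}$ and letting $n\to\infty$ then gives
\[
G_z^{\alpha}(w)=\tfrac1m\log|h(z,w)|=\log\bigl|\phi_1(z)^{-\alpha}\phi_2(z,w)\bigr|\qquad\text{on }U,
\]
so, since $\phi\sim id$, $G_z^{\alpha}(w)=\log|z^{-\alpha}w|+o(1)$ on $U^{l_1}$ as $r\to 0$. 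As $-\alpha>0$ and $l_1>0$, one has $|z^{-\alpha}w|<r^{\,1+l_1-\alpha}(1+o(1))<1$ on $U$, hence $G_z^{\alpha}<0$ there.

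Next I use $G_z^{\alpha}\circ f=d\,G_z^{\alpha}$, valid wherever the defining limit exists. By Theorem~\ref{main thm on attr sets for Case 2} — applicable for every $l>0$ because $\delta\le d$, the excluded case $(n_1,m_1)=(0,\delta)$ being impossible since $m_1>m_s=d>\delta$ — we have $A_f^{l_1}=A_0-E_z$, i.e.\ every $(z,w)\in A_0-E_z$ satisfies $f^{k}(z,w)\in U$ for some $k$. Hence $G_z^{\alpha}(z,w)=d^{-k}G_z^{\alpha}(f^{k}(z,w))$: the limit exists on all of $A_0-E_z$, is $<0$ there, and near $(z,w)$ equals $(md^{k})^{-1}\log|h\circ f^{k}|$; therefore $G_z^{\alpha}$ is plurisubharmonic on $A_0-E_z$ and pluriharmonic away from the zero set of $h\circ f^{k}$. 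On $A_0-E_z$ the factor $\phi_1\circ f^{k}$ does not vanish, so that zero set lies in $f^{-k}\{w=0\}\subset E_w$; thus $G_z^{\alpha}$ is pluriharmonic on $A_0-E$, and $e^{G_z^{\alpha}}=|h\circ f^{k}|^{1/(md^{k})}$ is continuous with values in $[0,1)$, giving the remark preceding the statement. On the exceptional fibers, if $(z,w)\in E\cap A_0$ then for large $n$ either $z_n=0$ (so $z_n^{\alpha}=\infty$ as $\alpha<0$, whence $w_n/z_n^{\alpha}=0$) or $w_n=0$, so $G_z^{\alpha}=-\infty$. Finally $G_z^{\alpha}=G_z$ on $A_0$: on $A_0-E_z$ one has $d^{-n}\log|w_n|=d^{-n}\log|w_n/z_n^{\alpha}|+\alpha(\delta/d)^{n}\,\delta^{-n}\log|z_n|$, and the last term $\to0$ because $\delta<d$ and $\delta^{-n}\log|z_n|\to G_p(z)$ is finite; on $E_z\cap A_0$, either $n_1>0$ and $w_n=0$ eventually, or $n_1=0$ and $w_n$ decays at rate at least $m_1^{n}>d^{n}$ (as $m_1>m_s=d$), so $G_z=-\infty=G_z^{\alpha}$.

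The one step requiring genuine care is the uniform-in-$n$ two-sided bound for $|w_n/z_n^{\alpha}|/|h(z_n,w_n)|^{1/m}$ that legitimizes passing to the limit after dividing by $d^{n}$: this is precisely where both the $f$-invariance of $U^{l_1}$ and the exact meaning of $\phi\sim id$ (uniformity on the whole wedge for fixed small $r$) enter. A secondary point is the decay estimate for the vertical orbit on $E_z$ in the last identification. Everything else is an application of the already-established Theorem~\ref{main thm on attr sets for Case 2} together with bookkeeping on the sets $E_z$ and $E_w$.
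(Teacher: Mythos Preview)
Your proof is correct and follows essentially the same route as the paper: derive $G_z^{\alpha}=\log|\phi_1^{-\alpha}\phi_2|$ on $U^{l_1}$ from the B\"{o}ttcher conjugacy, then propagate by the functional equation and Theorem~\ref{main thm on attr sets for Case 2}, and identify $G_z^{\alpha}=G_z$ by splitting the logarithm and using $\delta<d$. The one cosmetic difference is that the paper obtains the formula on $U^{l_1}$ by writing out $f_0^{-n}$ explicitly (its Lemma immediately following the proposition), whereas you obtain it via the invariant $h=\phi_1^{-m\alpha}\phi_2^{m}$ satisfying $h\circ f=h^{d}$; your $m$-th power device is a nice way to make $h$ genuinely holomorphic when $\alpha$ is merely rational, which the paper leaves implicit by working only with $|\phi_1|^{-\alpha}$.
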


\begin{proof}
Let us show the equality $G_z^{\alpha} = G_z$ on $A_0$.
It follows that
\begin{align*}
G_z^{\alpha} (w)
&= \lim_{n \to \infty} \dfrac{1}{d^n} \log \left| \dfrac{w_n}{z_n^{\alpha}} \right| 
= \lim_{n \to \infty} \dfrac{1}{d^n} \log |w_n|
- \lim_{n \to \infty} \dfrac{\alpha}{d^n} \log |z_n| \\
&= G_z (w) - 0 = G_z (w) 
\text{ on } U^{l_1},
\end{align*}
which extends to $A_0 - E_z$.
The equality
$G_z^{\alpha} = - \infty$ on $E_z$
follows from the definition.
On the other hand,
the equality $G_z = - \infty$ on $E_z \cap A_0$
follows from the dynamics of $f$ on the $w$-axis. % fiber $\{ z = 0 \}$. 
In fact, $q(0,w)$ degenerates the $w$-axis % fiber $\{ z = 0 \}$ 
to the origin if $n_1 > 0$, or
it has a superattracting fixed point at $w = 0$ with the order $m_1$ if $n_1 = 0$,
where $m_1 > d$.

The last estimate in the statement follows from the lemma below.
\end{proof}

\begin{lemma}
If $\delta < d$,
then $G_p = \log |\phi_1|$ and
$G_z^{\alpha} = \log \left| \phi_1^{- \alpha} \phi_2 \right|$ on $U^{l_1}$.
\end{lemma}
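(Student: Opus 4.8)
The plan is to read off functional equations from the B\"ottcher conjugacy and then iterate. As in the rest of this section I normalize $p(z) = z^{\delta}$ and $b_{\gamma d} = 1$, so that $f_0(z,w) = (z^{\delta}, z^{\gamma}w^{d})$ and $p_0(z) = z^{\delta}$; the general case differs only by multiplicative constants. Since $\phi = (\phi_1,\phi_2)$ conjugates $f$ to $f_0$ on $U^{l_1}$ and $f$ is a skew product, $\phi_1$ is a function of $z$ alone, and comparing the two components of $\phi \circ f = f_0 \circ \phi$ gives
\[
\phi_1(p(z)) = \phi_1(z)^{\delta}, \qquad \phi_2(f(z,w)) = \phi_1(z)^{\gamma}\,\phi_2(z,w)^{d}.
\]
By uniqueness of the one-variable B\"ottcher coordinate, $\phi_1 = \varphi_p$, so $G_p = \log|\phi_1|$ on $U^{l_1}$ is exactly the classical one-variable statement recalled in the introduction: iterating the first equation, $\phi_1(p^{n}(z)) = \phi_1(z)^{\delta^{n}}$, so $\delta^{-n}\log|\phi_1(p^{n}(z))| = \log|\phi_1(z)|$, while the left side converges to $G_p(z)$ because $\varphi_p(p^{n}(z))/p^{n}(z) \to 1$.

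For the second identity I would set $\psi := \phi_1^{-\alpha}\phi_2$ and compute its transformation rule. Using both equations above together with $\gamma = \alpha(\delta - d)$, i.e. $-\alpha\delta + \gamma = -\alpha d$, one gets
\[
\psi(f(z,w)) = \phi_1(p(z))^{-\alpha}\,\phi_2(f(z,w)) = \phi_1(z)^{-\alpha\delta + \gamma}\phi_2(z,w)^{d} = \psi(z,w)^{d},
\]
hence $\psi(f^{n}(z,w)) = \psi(z,w)^{d^{n}}$. Writing $(z_n,w_n) = f^{n}(z,w)$ and unwinding $\psi$,
\[
\frac{1}{d^{n}}\log\left|\frac{w_n}{z_n^{\alpha}}\right| = \log\bigl|\phi_1(z)^{-\alpha}\phi_2(z,w)\bigr| + \frac{1}{d^{n}}\log\left|\frac{w_n}{\phi_2(z_n,w_n)}\left(\frac{\phi_1(z_n)}{z_n}\right)^{\alpha}\right|,
\]
so it remains to check that the last term vanishes in the limit.

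Here is where the only real work lies. By Lemma~\ref{detailed lemma for case 2}, $f(U^{l_1}) \subset U^{l_1}$ for small $r$, so the orbit $(z_n,w_n)$ stays in $U^{l_1}$ and tends to the origin; and $\phi \sim id$ on $U^{l_1}$ as $r \to 0$ (Theorem~\ref{main thm for previous reselts}) means precisely that for every $\varepsilon > 0$ there is an $r$ with $|\phi_1(z)/z - 1| < \varepsilon$ and $|\phi_2(z,w)/w - 1| < \varepsilon$ throughout the corresponding wedge. Since the tail of the orbit eventually lies in every such shrunken wedge, $\phi_1(z_n)/z_n \to 1$ and $w_n/\phi_2(z_n,w_n) \to 1$; thus the argument of the last logarithm tends to $1$, the logarithm tends to $0$, and division by $d^{n} \to \infty$ finishes it. This yields $G_z^{\alpha} = \log|\phi_1^{-\alpha}\phi_2|$ on $U^{l_1}$. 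The main obstacle is thus the passage from the ``as $r \to 0$'' form of $\phi \sim id$ to an honest limit of these ratios along the orbit at the origin; everything else is formal manipulation of the conjugacy. The same splitting, written as $\log|\phi_1^{-\alpha}\phi_2| = \log|z^{-\alpha}w| + \log|(\phi_1/z)^{-\alpha}(\phi_2/w)|$ with the second term $o(1)$ as $r \to 0$ by $\phi \sim id$, also gives the asymptotic estimate $G_z^{\alpha}(w) = \log|z^{-\alpha}w| + o(1)$ on $U^{l_1}$ used in Proposition~\ref{prop on G_z^a when delta < d}.
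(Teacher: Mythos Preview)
Your proof is correct, but it takes a different route from the paper's. The paper works directly from the definition $\phi = \lim_{n\to\infty} f_0^{-n}\circ f^n$: writing out $f_0^{-n}(Z,W) = \bigl(Z^{1/\delta^n},\, Z^{\alpha/\delta^n}(W/Z^{\alpha})^{1/d^n}\bigr)$ and plugging in $(Z,W) = (z_n,w_n)$ gives $\log|\phi_1| = \lim \delta^{-n}\log|z_n| = G_p$ and $\log|\phi_2| = \alpha G_p + G_z^{\alpha}$ immediately, with no auxiliary error term to kill. Your argument instead extracts the functional equations from the conjugacy $\phi\circ f = f_0\circ\phi$, iterates them to get $\psi\circ f^n = \psi^{d^n}$ for $\psi = \phi_1^{-\alpha}\phi_2$, and then needs the extra input $\phi\sim id$ to show the remainder $d^{-n}\log\bigl|(w_n/\phi_2(z_n,w_n))(\phi_1(z_n)/z_n)^{\alpha}\bigr|$ vanishes. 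Both are fine; the paper's computation is shorter because the limit defining $\phi$ already encodes exactly the splitting you rediscover, whereas your approach makes the mechanism (the transformation law $\psi\circ f = \psi^d$) more transparent and is the natural thing to try if one only remembers that $\phi$ conjugates rather than how it was built.
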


\begin{proof}
Since $f_0^n (z,w) = (z^{\delta^n}, z^{\gamma_n} w^{d^n})$
and $\gamma_n = \alpha (\delta^n - d^n)$,
\[
f_0^{-n} (Z, W) 
= \left( Z^{1/ \delta^n}, \left( \dfrac{W}{Z^{\gamma_n/ \delta^n}} \right)^{1/d^n} \right)
= \left( Z^{1/ \delta^n}, Z^{\alpha/ \delta^n} \left( \dfrac{W}{Z^{\alpha}} \right)^{1/d^n} \right).
\] 
Let $(z_n, w_n) = f^n (z,w)$.
Since $\displaystyle \phi (z,w) = \lim_{n \to \infty} f_0^{-n} \circ f^n (z,w)$,
\[
\phi (z,w) = \lim_{n \to \infty} 
\left( z_n^{1/ \delta^n}, z_n^{\alpha/ \delta^n} \left( \dfrac{w_n}{z_n^{\alpha}} \right)^{1/d^n} \right).
\]
Hence
$\displaystyle \log | \phi_1 (z) | = \lim_{n \to \infty} \delta^{-n} \log |z_n| = G_p (z)$ and
\[
\log | \phi_2 (z,w) | 
= \lim_{n \to \infty} \dfrac{\alpha}{\delta^n} \log |z_n| 
+ \lim_{n \to \infty} \dfrac{1}{d^n} \log \left| \dfrac{w_n}{z_n^{\alpha}} \right| 
= \alpha G_p (z) + G_z^{\alpha} (w). 
\]
Therefore,
$G_z^{\alpha} (w) = \log | \phi_2 (z,w) | - \alpha \log | \phi_1 (z) | = \log \left| \phi_1 (z)^{- \alpha} \phi_2 (z,w) \right|$ on $U^{l_1}$.
\end{proof}

Moreover,
$G_z^{\alpha}$ is plurisubharmonic on $A_0$.

\begin{proposition} \label{another prop on G_z^a when delta < d}
If $\delta < d$,
then $G_z^{\alpha}$ is plurisubharmonic on $A_0$.
\end{proposition}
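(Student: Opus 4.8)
The plan is to realize $G_z^{\alpha}$ as a \emph{decreasing} limit of plurisubharmonic functions near the fixed point and then spread this over $A_0$ by the functional equation. Write $(z_n,w_n)=f^n(z,w)$ and $u_n:=d^{-n}\log|w_n|$; since each $w_n$ is holomorphic on $A_0$, every $u_n$ is plurisubharmonic there. Because $\delta<d$ one has $d^{-n}\log|z_n|\to 0$, so $G_z^{\alpha}=\lim_n u_n$ on $A_0-E_z$ by the computation in the proof of Proposition \ref{prop on G_z^a when delta < d}; and on $E_z\cap A_0$ both $G_z^{\alpha}$ and $\lim_n u_n$ are $-\infty$, using $E_z=E_p\times\mathbb{C}$ together with the fact, recalled there, that $q(0,\cdot)$ either kills the $w$-axis or has a superattracting fixed point of order $m_1>d$ at the origin. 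Hence $G_z^{\alpha}=\lim_n u_n$ on all of $A_0$, and it suffices to prove this limit is plurisubharmonic near every point of $A_0$.

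First I would localize at the origin. Since $(\gamma,d)=(n_s,m_s)$ with $m_s$ the minimal $y$-coordinate of $N(q)$ and, on the face $\{y=d\}$, with $n_s=\gamma>0$ the minimal $x$-coordinate, every monomial of $q$ is divisible by $w^d$ and the $w^d$-coefficient of $q/w^d$ vanishes at $z=0$; thus $q(z,w)=w^dR(z,w)$ with $R$ holomorphic and $R(0,0)=0$. Choose a ball $W$ about the origin so small that $f(\overline W)\subset W$ (possible as the origin is superattracting) and $|R|<1/2$ on $W$. Then $f^n(W)\subset W$ for all $n$, and on $W$
\[
u_{n+1}-u_n = d^{-(n+1)}\log|R(z_n,w_n)| \le -\,d^{-(n+1)}\log 2 < 0
\]
whenever $w_n\ne 0$, while $u_{n+1}=u_n=-\infty$ otherwise (as $q(z,0)\equiv 0$ in Case 2). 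So $(u_n)$ is decreasing on $W$; its limit $G_z^{\alpha}$ is not identically $-\infty$ there, being finite and negative on the nonempty set $W-E_z$, and a decreasing limit of plurisubharmonic functions is plurisubharmonic. Hence $G_z^{\alpha}$ is plurisubharmonic on $W$.

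For an arbitrary $x\in A_0$ pick $m$ with $f^m(x)\in W$. The equation $G_z^{\alpha}\circ f=d\,G_z^{\alpha}$ holds on $A_0$: on $A_0-E_z$ it is the defining identity, and on $E_z\cap A_0$ both sides equal $-\infty$ since $E_z$ is forward invariant. Thus $G_z^{\alpha}=d^{-m}\,G_z^{\alpha}\circ f^m$ on $A_0$; as $f^m$ is holomorphic and $G_z^{\alpha}$ is plurisubharmonic on $W$, the right-hand side is plurisubharmonic on the open neighbourhood $(f^m)^{-1}(W)\cap A_0$ of $x$. Since $x$ was arbitrary, $G_z^{\alpha}$ is plurisubharmonic on $A_0$.

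The one delicate point is to have $(u_n)$ eventually monotone on a \emph{full neighbourhood} rather than merely pointwise; this is exactly what the inclusions $f^n(W)\subset W$ and the uniform bound $|R|<1/2$ supply, and it is the reason for passing to a neighbourhood of the origin first instead of arguing directly at a general point of $E_z\cap A_0$, where no such uniform decay is visible. Everything else — the identity $G_z^{\alpha}=\lim_n u_n$ on $A_0$ and the functional equation — is routine from the material already established.
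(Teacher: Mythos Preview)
Your proof is correct and follows essentially the same approach as the paper: exhibit $G_z^{\alpha}$ as a decreasing limit of the plurisubharmonic functions $d^{-n}\log|w_n|$ on a neighbourhood of the origin, then propagate to $A_0$ via $G_z^{\alpha}\circ f=d\,G_z^{\alpha}$. The only cosmetic difference is in how monotonicity is arranged: the paper bounds $|q|\le C|w|^d$ (using merely $j\ge d$) and renormalizes to $g_n=d^{-n}\log|A w_n|$ with $A=C^{1/(d-1)}$, whereas you use the extra information $\gamma>0$ to get $R(0,0)=0$ and hence $|R|<1$ on a small ball, so that the unnormalized $u_n$ is already decreasing; the paper also leaves the extension step to $A_0$ implicit while you spell it out.
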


\begin{proof}
There is a constant $C > 0$ such that
$|q(z,w)| \leq C |w|^d$ on a neighborhood of the origin,
because $d \leq j$ for any $j$ such that $b_{ij} \neq 0$.
Hence we have the inequality
\[
A |q(z,w)| \leq (A |w|)^d
\]
on the neighborhood,
where $A = |C|^{1/(d-1)}$.
Let $g_n(z,w) = d^{-n} \log |A w_n|$.
Then $g_{n+1} \leq g_n$ and so
$\{ g_n \}$ is a decreasing sequence of plurisubharmonic functions.
Therefore,
$g_n$ converges to a plurisubharmonic function $g_{\infty}$
on the neighborhood.
Since $g_n(z,w) = d^{-n} \log |w_n| + d^{-n} \log |A|$,
it follows that $g_{\infty} (z,w) = G_z^{\alpha} (w)$.
\end{proof}

%%%%%%%%%%%%%%%%%%%%%%%%%%%%%%%%%%%%%%%%%%%%%%%%%%%%%%%%%%%
%%%%%%%%%%%%%%%%%%%%%%%%%%%%%%%%%%%%%%%%%%%%%%%%%%%%%%%%%%%
\subsubsection{Asymptotic behavior of $G_z^{\alpha}$ toward $\partial A_f^l$: Limit supremum}

We omit to denote the condition $\delta < d$ 
in the statements in this and the next subsections. % below.

Let $S^l = \{ |z| < r, |w| = r|z|^{l} \} = \partial U^l \cap \{ |z| < r \}$ % \subset \partial U$
and $S_n^l = f^{-n} (S^l)$.

\begin{lemma}\label{main lem on asy behavior when delta < d}
% Fix any $z$ in $A_p - E_p$. $l \geq l_1$
It follows for any $l$ in $\mathcal{I}_f$ and for any $z$ in $A_p - E_p$ that
$S_n^l \cap \mathbb{C}_z \neq \emptyset$ for any large $n$, and
$G_z^{\alpha} |_{S_n^l} \to 0$ as $n \to \infty$. 
\end{lemma}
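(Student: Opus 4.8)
Fix $l\in\mathcal{I}_f=[l_1,\infty)$ and $z\in A_p-E_p$, and set $z_n=p^n(z)$; then $z_n\neq 0$, $z_n\to 0$, and $\delta^{-n}\log|z_n|\to G_p(z)\in(-\infty,0)$, so $\log|z_n|=O(\delta^n)$. The plan is to write $G_z^{\alpha}=d^{-n}\,(G_z^{\alpha}\circ f^n)$ via the functional equation $G_z^{\alpha}\circ f=d\,G_z^{\alpha}$, push $S_n^l$ forward onto $S^l$ by $f^n$, and then play the contraction rate $\delta^n$ of $|z_n|\to 0$ against the normalizing factor $d^n$, using $\delta<d$.

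First I would check non-emptiness. Since $f$ is skew, $f^n$ sends the fibre $\mathbb{C}_z$ into $\mathbb{C}_{z_n}$ by the entire map $w\mapsto q_n(z,w)$, where $q_n$ is the second coordinate of $f^n$. This map is non-constant: for every $z'\neq 0$ the coefficient of $w^d$ in $q(z',\cdot)$ equals $(z')^{\gamma}\bigl(b_{\gamma d}+O(z')\bigr)$, which is nonzero off a discrete set, and the finitely many exceptional base points in $A_p$ are exactly the ones whose fibre is collapsed by $f$, accounted for by $E_{deg}$ in the precise statements (Proposition \ref{main prop on asy behavior when delta < d}). Hence, by the big Picard theorem, the image of $q_n(z,\cdot)$ omits at most one point, so it meets the circle $\{|w'|=r|z_n|^l\}$ once $|z_n|<r$, i.e. for all large $n$; thus $S_n^l\cap\mathbb{C}_z\neq\emptyset$ for large $n$, and $f^n$ carries each such point into $S^l\cap\mathbb{C}_{z_n}$.

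For the upper estimate, note that $S^l=\partial U^l\cap\{|z'|<r\}$, away from the fibre $\{z'=0\}$, lies in $A_f^l=A_0-E_z$ (Theorem \ref{main thm on attr sets for Case 2}), where $G_z^{\alpha}<0$ (Proposition \ref{prop on G_z^a when delta < d}); hence $G_z^{\alpha}=d^{-n}(G_z^{\alpha}\circ f^n)<0$ on $S_n^l\cap\mathbb{C}_z$, giving $\limsup_n\sup_{S_n^l\cap\mathbb{C}_z}G_z^{\alpha}\le 0$. For the lower estimate I would use that on $U^l$, and by continuity on $S^l$ away from $\{z'=0\}$, one has $G_z^{\alpha}(\zeta,\omega)=\log|\zeta^{-\alpha}\omega|+o(1)$ as $r\to 0$ (Section 5.2.1); fixing $r$ small enough yields a constant $C_0$ with $G_z^{\alpha}(\zeta,\omega)\ge\log(r|\zeta|^{l-\alpha})-C_0$ on $S^l$, uniformly in $\omega$. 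Applying this at $(z_n,w_n')=f^n(z,w)\in S^l$ and using $l-\alpha>0$ (since $l>0>\alpha$) together with $\log|z_n|=O(\delta^n)<0$, we get $G_z^{\alpha}(z_n,w_n')\ge\log r-C_0-C_1\delta^n$ for large $n$, with $C_1$ depending only on $l,\alpha,|G_p(z)|$. Dividing by $d^n$,
\[
G_z^{\alpha}(z,w)=\frac{1}{d^n}\,G_z^{\alpha}(z_n,w_n')\ \ge\ \frac{\log r-C_0}{d^n}-C_1\Bigl(\frac{\delta}{d}\Bigr)^{n}\ \longrightarrow\ 0,
\]
because $\delta<d$. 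Together with the upper estimate this gives $\sup_{S_n^l\cap\mathbb{C}_z}|G_z^{\alpha}|\to 0$, uniformly for $z$ in compact subsets of $A_p-E_p$ since $G_p$ is locally bounded there.

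The main obstacle is the boundary estimate invoked above: controlling $G_z^{\alpha}$ on $S^l$ near the collapsed fibre $\{z'=0\}$, where $G_z^{\alpha}\to-\infty$. Everything hinges on this divergence being merely logarithmic in $|\zeta|$ — precisely the content of the B\"ottcher estimate $G_z^{\alpha}=\log|\zeta^{-\alpha}\omega|+o(1)$, which I would transfer from the open wedge $U^l$ to its boundary $S^l$ by continuity (or by enlarging $U^l$ slightly). Once that is in hand, the competition between the contraction rate $\delta^n$ and the factor $d^n$ closes the argument immediately. The only other point needing care is the non-constancy of the fibre maps $q_n(z,\cdot)$ used for non-emptiness, which rests on the forward orbit of $z$ staying in $A_p$, being disjoint from $E_p$, and avoiding the finitely many collapsed base points.
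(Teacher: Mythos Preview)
Your argument is essentially the paper's: use the B\"ottcher estimate $|G_z^{\alpha}-\log|z^{-\alpha}w||<C$ on $U^l$ (hence on $S^l$, by enlarging $r$ slightly as you correctly note), pull back via the functional equation $G_z^{\alpha}\circ f^n=d^n G_z^{\alpha}$, and observe that $d^{-n}\log|z_n|^{l-\alpha}=(\delta/d)^n\cdot(l-\alpha)\,\delta^{-n}\log|z_n|\to 0$ since $\delta<d$ and $G_p(z)$ is finite. The paper does this in one stroke with the two-sided bound rather than your upper/lower split, but the content is identical; your invocation of Picard for non-emptiness is heavier than needed (the open mapping theorem near $w=0$ suffices, since $q_n(z,0)=0$ and $r|z_n|^l\to 0$) and implicitly requires $z\notin E_{deg}^*$, a hypothesis the paper only adds in the subsequent proposition.
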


The proof is similar to that of Theorem 4.5 in \cite{u1}, but not the same;
we deal with any $l$ in $\mathcal{I}_f$ not only $\alpha$.
We also remark that
one can prove the lemma by a similar fashion to Theorem 5.4 in \cite{u1}
since $G_z^{\alpha} = G_z$.

\begin{proof}
By Proposition \ref{prop on G_z^a when delta < d},
there is a constant $C > 0$ such that
\[
\left| G_z^{\alpha}(w) - \log \left| z^{- \alpha} w \right| \right| < C
\text{ on } U^l.
\]
Since
$\left| z^{- \alpha} w \right| = r |z|^{l - \alpha}$ on $S^l$,
where $l - \alpha > l \geq l_1 > 0$,
\[
\left| \ G_{p^n(z)}^{\alpha}(Q_z^n(w)) - \log r |p^n(z)|^{l - \alpha} \ \right|
\leq C \ \text{ on } S_n^l. 
\]
Since $G_{p^n(z)}^{\alpha}(Q_z^n(w)) = d^n G_z^{\alpha}(w)$,
\[
\left| \ G_z^{\alpha}(w) - d^{-n} \log r |p^n(z)|^{l - \alpha} \ \right| \leq d^{-n} C
\ \text{ on } S_n^l. 
\]
Since $\deg p = \delta < d$ and $G_p(z) \neq - \infty$ 
by the setting,
\[ 
\dfrac{1}{d^{n}} \log |p^n(z)|^{l - \alpha} 
 = \left( \dfrac{\delta}{d} \right)^n \cdot \dfrac{l - \alpha}{\delta^n} \log |p^n(z)| 
 \to 0 \cdot (l - \alpha) G_p(z) = 0
\]
and $d^{-n} C \to 0$ as $n \to \infty$.
\end{proof}

We want to show that $S_n^l$ converges to the boundary $\partial A_f^l$ 
in $A_p \times \mathbb{P}^1$ as $n \to \infty$.
Let $S_{\infty}^l$ be the set of 
the accumulation points of $S_n^l$ 
in $\overline{A_p} \times \mathbb{P}^1$ as $n \to \infty$:
\[
S_{\infty}^l
= \{ x \in \overline{A_p} \times \mathbb{P}^1 \, | \,
\exists \ x_{k} \in S_{n_k}^l \text{ such that } x_k \to x \text{ and } n_k \to \infty \text{ as } k \to \infty \}.
\]
Since the sequence $f^{-n} (U^l)$ is monotonically increasing, 
\[
S_{\infty}^l 
= \{ x \in \overline{A_p} \times \mathbb{P}^1 \, | \,
\exists \ x_n \in S_n^l \text{ such that } x_n \to x \text{ as } n \to \infty \}.
\]
To show the convergence of $S_n^l$ to $\partial A_f^l$,
we impose a non-degenerate condition.
We say that $f$ is non-degenerate
if $f$ does not degenerate any fiber to a point.
%except the $w$-axis.
We first prepare a lemme and a remark,
assuming that $f$ is non-degenerate.

\begin{lemma}\label{lem for asy behavior}
If $f$ is non-degenerate,
then for any $l$ in $\mathcal{I}_f$ and for any $n \geq 0$, % $l \geq l_1$
\begin{enumerate}
\item $f^{-n} (\partial U^l) = \partial f^{-n} (U^l)$,
\item $f^n ( f^{-n} (\partial U^l) ) \subset \partial U^l$,
and $f^n ( f^{-n} (\partial U^l) ) = \partial U^l$ if $f$ is polynomial,
\item $f^{-n} (S^l) = f^{-n} (\partial U^l) \cap \{ (z,w) \, | \, |p^n(z)| < r_1 \}$,
\item $f^n ( f^{-n} (S^l) ) \subset S^l$, 
and $f^n ( f^{-n} (S^l) ) = S^l$ if $f$ is polynomial,
%\end{enumerate}
%If $f$ is non-degenerate,
%then it follows for any $l$ in $\mathcal{I}_f$ that % $l \geq l_1$ that
%\begin{enumerate}
\item %[(5)] 
$f^{-1} (\partial A_f^l) = \partial A_f^l$,
$f(\partial A_f^l) \subset \partial A_f^l$, and 
$f(\partial A_f^l) = \partial A_f^l$ if $f$ is polynomial,
\item %[(6)] 
$f^{-1} (S_{\infty}^l) = S_{\infty}^l$,
$f(S_{\infty}^l) \subset S_{\infty}^l$,
and $f(S_{\infty}^l) = S_{\infty}^l$ if $f$ is polynomial.
\end{enumerate}
\end{lemma}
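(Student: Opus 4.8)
The plan is to derive the whole lemma from two structural properties of $f$ and then argue purely topologically. First, non-degeneracy makes $f$, and hence every iterate $f^n$, an \emph{open} map: writing $f(z,w)=(p(z),q(z,w))$, the fibre of $f$ over a point $(z_1,w_1)$ is $\{(z_0,w_0)\mid p(z_0)=z_1,\ q(z_0,w_0)=w_1\}$, which is discrete since $p$ is non-constant and, by non-degeneracy, each $q(z_0,\cdot)$ is non-constant; a holomorphic self-map of an equidimensional complex manifold with discrete fibres is open. Second, when $f$ is polynomial it is \emph{surjective}: $p:\mathbb{C}\to\mathbb{C}$ is onto, and for each preimage $z_0$ of a given $z_1$ the non-constant polynomial $q(z_0,\cdot)$ is onto. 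I will also use the elementary observation that for a continuous open map $g$ and any set $A$ one has $g^{-1}(\overline A)=\overline{g^{-1}(A)}$ and $g^{-1}(\operatorname{int}A)=\operatorname{int}g^{-1}(A)$ (the nontrivial inclusion of the first holds because $g(V)$ is a neighbourhood of $g(x)$ for every neighbourhood $V$ of $x$), and hence $g^{-1}(\partial A)=\partial g^{-1}(A)$.

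Granting these, each part is short. Part (1) is the last identity applied to $g=f^n$ and $A=U^l$. In part (2) the inclusion $f^n(f^{-n}(B))\subseteq B$ holds for any set $B$, and if $f$ is polynomial then $f^n$ is surjective, so $f^n(f^{-n}(B))=B$; take $B=\partial U^l$. Part (3) is set-theoretic: $S^l=\partial U^l\cap\{|z|<r\}$ and the first coordinate of $f^n$ is $p^n$, so $f^{-n}(\{|z|<r\})=\{(z,w)\mid|p^n(z)|<r\}$ and therefore $f^{-n}(S^l)=f^{-n}(\partial U^l)\cap\{|p^n(z)|<r\}$. Part (4) is part (2) with $B=S^l$.

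For part (5), first $f^{-1}(A_f^l)=\bigcup_{n\ge1}f^{-n}(U^l)=\bigcup_{n\ge0}f^{-n}(U^l)=A_f^l$, the middle equality using $U^l\subseteq f^{-1}(U^l)$, i.e.\ $f(U^l)\subseteq U^l$ (Lemma~\ref{detailed lemma for case 2}). Then $f^{-1}(\partial A_f^l)=\partial f^{-1}(A_f^l)=\partial A_f^l$, and applying $f$ gives $f(\partial A_f^l)=f(f^{-1}(\partial A_f^l))\subseteq\partial A_f^l$, with equality when $f$ is surjective. For part (6), write $S_\infty^l=\bigcap_{N\ge0}\overline{\Sigma_N}$ with $\Sigma_N:=\bigcup_{n\ge N}S_n^l$, which is exactly the accumulation set in the statement. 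Since $f^{-1}(S_n^l)=f^{-(n+1)}(S^l)=S_{n+1}^l$, we have $f^{-1}(\Sigma_N)=\Sigma_{N+1}$, so
\begin{align*}
f^{-1}(S_\infty^l)
&=\bigcap_{N\ge0}f^{-1}(\overline{\Sigma_N})
=\bigcap_{N\ge0}\overline{f^{-1}(\Sigma_N)}
=\bigcap_{N\ge0}\overline{\Sigma_{N+1}} \\
&=\bigcap_{N\ge1}\overline{\Sigma_N}=S_\infty^l,
\end{align*}
using that $f^{-1}$ commutes with closures (continuous open $f$) and with intersections, and that $\overline{\Sigma_0}\supseteq\overline{\Sigma_1}$. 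Then $f(S_\infty^l)=f(f^{-1}(S_\infty^l))\subseteq S_\infty^l$, with equality when $f$ is surjective. This route never requires inverting $f$ near its critical points.

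The step I expect to be the main obstacle — and the only genuinely non-formal one — is that in (5) and (6) the boundaries and the accumulation set live in $\overline{A_p}\times\mathbb{P}^1$, so I must know that $f$ extends there as a continuous open (and, for the equality clauses, surjective) self-map. On $\overline{A_p}\times\mathbb{C}$ this follows from the above together with $p(\overline{A_p})\subseteq\overline{A_p}$; the delicate locus is $\overline{A_p}\times\{\infty\}$, where the extension can in principle become indeterminate if the leading $w$-coefficient of $q$ vanishes on $\overline{A_p}$. I would settle this using the standing assumptions on $f$ and on its domain (and polynomiality for the equality clauses), verifying that $1/q$ is holomorphic and vanishes on $\overline{A_p}\times\{\infty\}$; with that in hand, all the topological arguments above carry over verbatim to the compactification.
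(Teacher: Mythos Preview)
Your proof is correct and follows the same line as the paper's: the paper only outlines part (1), noting that continuity gives one inclusion and openness (guaranteed by non-degeneracy) gives the other, and leaves the remaining parts as routine. Your treatment is considerably more thorough, and the concern you flag about extending $f$ to $\overline{A_p}\times\{\infty\}$ is a genuine subtlety that the paper itself does not address.
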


We only give an outline of the proof of (1).
The inclusion  $f^{-n} (\partial U) \supset \partial f^{-n} (U)$
follows since $f$ is continuous.
The inverse inclusion $f^{-n} (\partial U) \subset \partial f^{-n} (U)$
follows since $f$ is open map,
which is guaranteed by the non-degenerate assumption.
%%%
The condition of $f$ being polynomial can be replaced by
$f$ being surjective on a suitable global region.
For example,
if $f(\mathbb{C}^2) = \mathbb{C}^2$, then 
the equalities in the lemma hold. 

\begin{remark}
We proved  in \cite{ueno} that 
$f \sim f_0$ on $U^l$ as $r \to 0$,
and $f(U^l) \subset U^l$ for any $l$ in $\mathcal{I}_f$.
With a slight change of the proof,
one can show for any $l$ in $\mathcal{I}_f$ that
\[
\overline{f(U^l)} - \{ 0 \} \subset U^l
\text{ and }
\overline{f(U^l)} \cap \partial U^l = \{ 0 \}.
\]
Moreover,
if $f$ is non-degenerate,
then for any $l$ in $\mathcal{I}_f$ and for any $n \geq 0$,
\[
\overline{f^{-n}(U^l)} - f^{-n} (0) \subset f^{-(n+1)}(U^l)
\text{ and }
\overline{f^{-n}(U^l)} \cap \partial f^{-(n+1)}(U^l) =  f^{-n} (0).
\]
\end{remark}

Let $\lim_{n \to \infty} f^{-n} (\partial U^l)$ be the set of 
the accumulation points of $f^{-n} (\partial U^l)$ 
in $\overline{A_p} \times \mathbb{P}^1$ as $n \to \infty$.
The following follows from the monotonicity of $f^{-n} (U^l)$
and Lemma \ref{lem for asy behavior}.

\begin{proposition}
If $f$ is non-degenerate, % $\delta < d$, 
then 
$\displaystyle \lim_{n \to \infty} f^{-n} (\partial U^l) = \partial A_f^l$
for any $l$ in $\mathcal{I}_f$.
\end{proposition}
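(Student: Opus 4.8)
The plan is to set $\Omega_n := f^{-n}(U^l)$, so that $A_f^l=\bigcup_{n\ge 0}\Omega_n$ is an increasing union of open sets, hence itself open, with $\partial A_f^l=\overline{A_f^l}\setminus A_f^l$. Non-degeneracy of $f$ enters only through Lemma \ref{lem for asy behavior}(1), which gives $f^{-n}(\partial U^l)=\partial\Omega_n$. With this identification the proposition reduces to the purely topological assertion $\lim_{n\to\infty}\partial\Omega_n=\partial A_f^l$ for an increasing open exhaustion $\Omega_n\uparrow A_f^l$, and the task is to establish the two inclusions, the main input being the monotonicity of the $\Omega_n$.

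For the inclusion $\lim_{n\to\infty}\partial\Omega_n\subset\partial A_f^l$: take $x_n\in\partial\Omega_n$ with $x_n\to x$. Since $\partial\Omega_n\subset\overline{\Omega_n}\subset\overline{A_f^l}$, we get $x\in\overline{A_f^l}$. If in addition $x\in A_f^l$, then $x\in\Omega_m$ for some $m$; as $\Omega_m$ is open and $x_n\to x$, we have $x_n\in\Omega_m$ for all large $n$, and by monotonicity $\Omega_m\subset\Omega_n$ for $n\ge m$, so $x_n\in\Omega_n$ for all large $n$, contradicting $x_n\in\partial\Omega_n$ and $\Omega_n$ open. Hence $x\notin A_f^l$, i.e.\ $x\in\partial A_f^l$. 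This direction uses only openness and monotonicity.

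For the inclusion $\partial A_f^l\subset\lim_{n\to\infty}\partial\Omega_n$: fix $x\in\partial A_f^l$ and $\varepsilon>0$. Since $x\in\overline{A_f^l}$ there is $y\in A_f^l$ with $\mathrm{dist}(x,y)<\varepsilon$; say $y\in\Omega_N$, so $y\in\Omega_n$ for all $n\ge N$, while $x\notin A_f^l\supset\Omega_n$. Choosing a connected neighbourhood $B$ of $x$ contained in the ball of radius $\varepsilon$ (a product of coordinate disks once $x$ lies over $A_p$), $B$ meets both $\Omega_n$ and its complement, hence meets $\partial\Omega_n$; thus $\mathrm{dist}(x,\partial\Omega_n)<\varepsilon$ for all $n\ge N$. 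Letting $\varepsilon\to 0$ gives $\mathrm{dist}(x,\partial\Omega_n)\to 0$, so there exist $x_n\in\partial\Omega_n=f^{-n}(\partial U^l)$ with $x_n\to x$, i.e.\ $x\in\lim_{n\to\infty}f^{-n}(\partial U^l)$. As for $S_\infty^l$, the two descriptions of this limit set — via subsequences and via full sequences — coincide by the monotonicity of the $\Omega_n$, so this suffices.

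The step needing care is the connectedness argument in the second inclusion when the boundary point $x=(z_0,w_0)$ lies over $\partial A_p$, since $\overline{A_p}\times\mathbb{P}^1$ need not be locally connected there. I would handle this by first running the argument over $A_p$, where small neighbourhoods of $x$ are products of disks and everything is immediate, and then passing to closures over $\partial A_p$ by a limiting argument using continuity of the $p^n$, which is legitimate because $A_f^l\subset A_p\times\mathbb{C}$; alternatively one can simply record the statement for $\partial A_f^l\cap(A_p\times\mathbb{P}^1)$, which is what is used in the sequel. Apart from this point, the proof is a direct consequence of the monotonicity of $f^{-n}(U^l)$ and Lemma \ref{lem for asy behavior}(1), consistent with the closure estimates in the Remark following that lemma.
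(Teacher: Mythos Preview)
Your proof is correct and follows essentially the same approach as the paper's. Both identify $f^{-n}(\partial U^l)=\partial\Omega_n$ via Lemma \ref{lem for asy behavior}(1), then use monotonicity of the open exhaustion $\Omega_n\uparrow A_f^l$ for the first inclusion and a connectedness argument for the second; the only cosmetic difference is that the paper finds the intersection point with $\partial\Omega_n$ by taking the segment from $x$ to a nearby $x_n\in\Omega_n$, whereas you use a connected neighbourhood of $x$, and your final paragraph is more explicit than the paper about the ambient space over $\partial A_p$.
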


\begin{proof}
Let $U = U^l$ and 
let us denote $\lim_{n \to \infty}$ by $\lim$ in the proof
for simplicity.

We first show the inclusion % prove
$\lim f^{-n} (\partial U) \subset \partial A_f$.
Let $x \in \lim f^{-n} (\partial U)$.
Then we may assume that
there exist $x_n \in f^{-n} (\partial U)$
such that $x_n \to x$ as $n \to \infty$.
Lemma \ref{lem for asy behavior} implies that
$f^{-n} (\partial U) = \partial f^{-n} (U) \subset \overline{f^{-n} (U)} \subset \overline{A_f}$.
Since $\overline{A_f}$ is closed,
$x \in \overline{A_f}$.
Let us show that $x \in \partial A_f$ by contradiction. % We
If $x \in A_f$,
then there exists $N$ such that $f^N (x) \in U$.
Hence $x \in f^{-N} (U)$ and so 
$x_n \in f^{-N} (U)$ for any large $n$,
which contradicts %the assumption $x_n \in f^{-n} (\partial U)$.
the fact that $\partial f^{-n} (U) \cap f^{-N} (U) = \emptyset$
for any $n > N$.

We next show the opposite direction. % prove
Let $x \in \partial A_f$.
Then $x \in \overline{A_f}$ and so
we may assume that
there exists $x_n \in f^{-n} (U)$ 
such that $x_n \to x$ as $n \to \infty$.
Any path connecting $x$ and $x_n$
intersects with $\partial f^{-n} (U)$. % the boundary
Let $y_n$ be the intersection of $\partial f^{-n} (U)$ and
the segment connecting $x$ and $x_n$.
Then $y_n \to x$ as $n \to \infty$ and so
$x \in \lim \partial f^{-n} (U)$,
which coincides with $\lim f^{-n} (\partial U)$
by Lemma \ref{lem for asy behavior}.
\end{proof}

Since $S^l = \partial U^l \cap \{ |z| < r \}$ and % \subset \partial U$ and
$\lim_{n \to \infty} f^{-n} (\partial U^l - S^l) \subset \partial A_p \times \mathbb{C}$, % \displaystyle 
the proposition above implies the following. % corollary.

\begin{corollary} \label{cor on the boundary for case 2}
If $f$ is non-degenerate,
then $S_{\infty}^l \subset \partial A_f^l$
and $S_{\infty}^l \cap (A_p \times \mathbb{P}^1) = \partial A_f^l \cap (A_p \times \mathbb{P}^1)$
for any $l$ in $\mathcal{I}_f$.
\end{corollary}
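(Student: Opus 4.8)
The plan is to obtain the corollary as an immediate consequence of the preceding proposition, which already identifies $\partial A_f^l$ with the accumulation set $\lim_{n\to\infty} f^{-n}(\partial U^l)$, by splitting $\partial U^l$ into the slanted part $S^l$ and the vertical part lying over $\{|z| = r\}$ and tracking where each of these two pieces accumulates in $\overline{A_p} \times \mathbb{P}^1$.

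First I would record the decomposition $\partial U^l = S^l \cup P^l$, where $P^l := \partial U^l - S^l = \partial U^l \cap \{|z| = r\}$; this is immediate from $S^l = \partial U^l \cap \{|z| < r\}$ together with $\partial U^l \subset \overline{\{|z| < r\}}$. Applying $f^{-n}$ gives $f^{-n}(\partial U^l) = S_n^l \cup f^{-n}(P^l)$. Using that the accumulation set of a union of two sequences of sets is the union of the two accumulation sets (a given convergent sequence of points has a subsequence lying entirely in one of the two pieces), and invoking the proposition, I get
\[
\partial A_f^l = \lim_{n\to\infty} f^{-n}(\partial U^l) = S_{\infty}^l \cup \Big(\lim_{n\to\infty} f^{-n}(P^l)\Big).
\]
In particular $S_{\infty}^l \subset \partial A_f^l$, which is the first assertion.

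For the second assertion I would use the fact, noted just before the statement, that $\lim_{n\to\infty} f^{-n}(P^l) \subset \partial A_p \times \mathbb{C}$: since $P^l \subset \{|z| = r\}$, any accumulation point of points $z_n$ with $|p^n(z_n)| = r$ has its $p^n$-orbit bounded away from $0$ and so cannot lie over $A_p$. As $\partial A_p \times \mathbb{C}$ is disjoint from $A_p \times \mathbb{P}^1$ (because $A_p$ is open), intersecting the displayed identity with $A_p \times \mathbb{P}^1$ annihilates the second term and leaves $\partial A_f^l \cap (A_p \times \mathbb{P}^1) = S_{\infty}^l \cap (A_p \times \mathbb{P}^1)$.

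The step closest to an obstacle is making the two set-theoretic limit manipulations rigorous — that the accumulation set of the union is the union of accumulation sets, and that $\lim_{n\to\infty} f^{-n}(P^l) \subset \partial A_p \times \mathbb{C}$ in $\overline{A_p} \times \mathbb{P}^1$ — but both are routine given the monotonicity of the sequence $f^{-n}(U^l)$ and the structure of $\partial U^l$ already established, so I expect no genuine difficulty here.
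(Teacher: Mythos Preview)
Your proposal is correct and follows essentially the same approach as the paper: the paper derives the corollary in one sentence by noting that $S^l = \partial U^l \cap \{|z| < r\}$ and $\lim_{n\to\infty} f^{-n}(\partial U^l - S^l) \subset \partial A_p \times \mathbb{C}$, then invoking the preceding proposition. Your write-up simply spells out the decomposition and the two set-theoretic limit steps more explicitly.
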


%Therefore, 
We obtain the following from Lemma \ref{main lem on asy behavior when delta < d} 
and the corollary above.

\begin{proposition}
If $f$ is non-degenerate,
then for any $l$ in $\mathcal{I}_f$ and 
for any $(z_0, w_0)$ in $\partial A_f^l \cap \{ (A_p - E_p) \times \mathbb{P}^1 \}$,
\[
\limsup_{(z,w) \in A_f^l \to (z_0, w_0)} G_z^{\alpha} (w) = 0.
\]
\end{proposition}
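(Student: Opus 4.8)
The plan is to establish the two inequalities $\le 0$ and $\ge 0$ for the limit supremum separately. The upper bound is immediate: by Theorem~\ref{main thm on attr sets for Case 2} we have $A_f^l = A_0 - E_z$ for every $l$ in $\mathcal{I}_f$, and by Proposition~\ref{prop on G_z^a when delta < d} the function $G_z^{\alpha}$ is plurisubharmonic and strictly negative there, so the limit supremum in the statement is $\le 0$ no matter which $(z_0,w_0)$ is chosen.

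For the lower bound, the idea is to approach $(z_0,w_0)$ along the hypersurfaces $S_n^l = f^{-n}(S^l)$, on which $G_z^{\alpha}$ is already controlled. Since $f$ is non-degenerate and $(z_0,w_0) \in \partial A_f^l \cap (A_p \times \mathbb{P}^1)$, Corollary~\ref{cor on the boundary for case 2} puts $(z_0,w_0)$ in $S_{\infty}^l$, so by the monotonicity of $\{ f^{-n}(U^l) \}$ there is a sequence $x_n = (\zeta_n,\omega_n) \in S_n^l$ with $x_n \to (z_0,w_0)$. These $x_n$ may be taken in $A_f^l$: an orbit starting on $S^l \subset \overline{U^l}$ either hits $0$ or, by the refinement $\overline{f(U^l)} - \{0\} \subset U^l$ of the invariance recorded above, lands in $U^l \subset A_0$; hence $S_n^l \subset A_0$, so $S_n^l \cap A_f^l = S_n^l - E_z$ by Theorem~\ref{main thm on attr sets for Case 2}, and since $E_z = E_p \times \mathbb{C}$ meets $S_n^l$ in a set with empty interior, this intersection is dense in $S_n^l$ and replacing $x_n$ by a nearby point of it changes nothing.

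It remains to show $G_{\zeta_n}^{\alpha}(\omega_n) \to 0$. From the proof of Lemma~\ref{main lem on asy behavior when delta < d} we have the uniform bound
\[
\left| G_z^{\alpha}(w) - \frac{1}{d^{n}} \log\!\left( r\, |p^{n}(z)|^{l-\alpha} \right) \right| \le \frac{C}{d^{n}} \qquad \text{on } S_n^l ,
\]
so it is enough to check $d^{-n}\log |p^{n}(\zeta_n)|^{l-\alpha} \to 0$. For large $n$ the $\zeta_n$ lie in a fixed compact neighbourhood $K$ of $z_0$ inside $A_p - E_p$; there $G_p$ is finite and continuous and $\delta^{-n}\log|p^{n}|$ converges to it locally uniformly, so $\delta^{-n}\log|p^{n}(\zeta_n)|$ stays bounded. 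Then
\[
\frac{1}{d^{n}}\log |p^{n}(\zeta_n)|^{l-\alpha} = (l-\alpha)\left( \frac{\delta}{d}\right)^{\! n} \frac{1}{\delta^{n}}\log|p^{n}(\zeta_n)| \longrightarrow 0
\]
because $\delta < d$, and together with the upper bound this gives $\limsup = 0$.

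The main obstacle is exactly this last step: Lemma~\ref{main lem on asy behavior when delta < d} gives the convergence only fibrewise, for a single fixed base point, whereas here the base points $\zeta_n$ move along the sequence; one therefore genuinely needs the local uniform convergence of $\delta^{-n}\log|p^{n}|$ near $z_0$ and the finiteness $G_p(z_0) \ne -\infty$, which is where the hypothesis $z_0 \notin E_p$ enters. A secondary, routine point is the verification that the approximating sequence can be chosen inside $A_f^l = A_0 - E_z$ rather than merely in its closure.
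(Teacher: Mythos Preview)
Your proof is correct and follows essentially the same route as the paper: the paper simply says the proposition follows from Lemma~\ref{main lem on asy behavior when delta < d} together with Corollary~\ref{cor on the boundary for case 2}, and you have unpacked exactly that, supplying in addition the upper bound $\le 0$ from the negativity of $G_z^{\alpha}$ and the uniformity argument needed to handle the moving base points $\zeta_n$ (which the paper leaves implicit, since the estimate $\left| G_z^{\alpha}(w) - d^{-n}\log r|p^n(z)|^{l-\alpha}\right| \le d^{-n}C$ from the proof of Lemma~\ref{main lem on asy behavior when delta < d} is already uniform over all of $S_n^l$). Your verification that the approximating sequence can be taken in $A_f^l$ is also sound, though one can shortcut it: since $E_p$ is discrete in $A_p$ and $z_0\notin E_p$, the $\zeta_n$ automatically avoid $E_p$ for large $n$.
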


Let $E_{deg}$ be the union of all the preimages of 
the set of vertical lines in $A_p \times \mathbb{C}$
that are degenerated to points by $f$:
\[
E_{deg} = \bigcup_{L} \Big\{ \bigcup_{n \geq 0} f^{-n} (L) \Big\}, 
\]
where $L$ is a fiber in $A_p \times \mathbb{C}$
that is degenerated to a point by $f$.
Let $E_{deg}^{*} = \pi_z (E_{deg})$
where $\pi_z$ is the projection to the $z$-coordinate.

\begin{remark}
If $\delta < d$, then $f$ maps $\{ w = 0 \}$ to itself.
Hence $f(E_{deg}) \subset \{ w = 0 \}$
and so $E_{deg} \subset A_f \cup E_z$.
\end{remark}

%%%
Because $f$ is non-degenerate on %$(A_p - E_p \cup E_{deg} ) \times \mathbb{C}$,
$A_p \times \mathbb{C} - E_z \cup E_{deg}$,
we obtain the following % claim
by similar arguments as above.

\begin{proposition} \label{main prop on asy behavior when delta < d}
For any $l$ in $\mathcal{I}_f$ and 
for any $(z_0, w_0)$ in $\partial A_f^l \cap \{ (A_p - E_p \cup E_{deg}^*) \times \mathbb{P}^1 \}$,
%\cap (A_p \times \mathbb{C}) - E_z \cup E_{deg}$,
\[
\limsup_{(z,w) \in A_f^l \to (z_0, w_0)} G_z^{\alpha} (w) = 0.
\]
\end{proposition}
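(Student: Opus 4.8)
The plan is to reproduce the argument of the preceding proposition, the only change being that the global non-degeneracy hypothesis on $f$ is replaced by a local statement at the chosen boundary point. Fix $l$ in $\mathcal{I}_f$ and $(z_0,w_0)$ in $\partial A_f^l$ with $z_0 \in A_p - E_p \cup E_{deg}^*$. The degenerate $z$-values form a discrete set and the $p$-backward orbit of a point of the basin $A_p$ does not accumulate in $A_p$, so $E_{deg}^* \cap A_p = \bigcup_{j \ge 0} p^{-j}(\{\text{degenerate values}\}) \cap A_p$ is discrete and closed in $A_p$. Since $z_0 \notin E_{deg}^*$, there is then a fixed ball $B$ about $(z_0,w_0)$ whose $z$-projection avoids both $E_p$ and $E_{deg}^*$, and on $B$ every iterate $f^n$ is an open map, uniformly in $n$: indeed $f^n$ fails to be open at a point only when $f$ collapses some fiber in its forward orbit, and no such fiber occurs over $B$.

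The bound $\limsup_{(z,w) \in A_f^l \to (z_0,w_0)} G_z^{\alpha}(w) \le 0$ is immediate, as $G_z^{\alpha}$ is negative on $A_f^l = A_0 - E_z$ by Proposition \ref{prop on G_z^a when delta < d} and Theorem \ref{main thm on attr sets for Case 2}. For the opposite bound I would first show $(z_0,w_0) \in S_{\infty}^l$, the accumulation set of $S_n^l = f^{-n}(S^l)$. Granting this, choose $x_n \in S_n^l$ with $x_n \to (z_0,w_0)$; each $x_n$ lies in $A_f^l$ because points of $S^l$ enter $U^l$ under iteration by Proposition \ref{prop on attr sets for Case 2}; and since $z_0 \in A_p - E_p$, the estimate of Lemma \ref{main lem on asy behavior when delta < d} holds locally uniformly in $z$ near $z_0$ (in its proof the relevant term is $(\delta/d)^n$ times $\delta^{-n} \log |p^n(z)|^{l-\alpha}$, the first factor tending to $0$ and the second locally uniformly bounded near $z_0$, converging to the finite value $(l-\alpha) G_p(z_0)$). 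Hence $G_z^{\alpha}(x_n) \to 0$, giving $\limsup \ge 0$, and equality follows.

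It remains to place $(z_0,w_0)$ in $S_{\infty}^l$, and for this I would localize Lemma \ref{lem for asy behavior} and Corollary \ref{cor on the boundary for case 2} to the ball $B$. Since $(z_0,w_0) \in \overline{A_f^l}$ while $(z_0,w_0) \notin A_f^l$ (the latter is open), choose $x_n \in f^{-n}(U^l)$ with $x_n \to (z_0,w_0)$ and let $y_n$ be a point where the segment from $x_n$ to $(z_0,w_0)$ meets $\partial f^{-n}(U^l)$; then $y_n \to (z_0,w_0)$, so $y_n \in B$ for large $n$. Openness of $f^n$ on $B$ yields $\partial f^{-n}(U^l) \cap B = f^{-n}(\partial U^l) \cap B$, hence $y_n \in f^{-n}(\partial U^l)$. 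The boundary $\partial U^l$ is the sloped piece $S^l$ together with a piece in $\{|z| = r\}$, whose $f^{-n}$-preimage lies in $\{z : |p^n(z)| = r\}$ and therefore accumulates on $\partial A_p \times \mathbb{P}^1$; as $z_0$ is interior to $A_p$, this forces $y_n \in f^{-n}(S^l) = S_n^l$ for large $n$, so $(z_0,w_0) \in S_{\infty}^l$. I expect the main obstacle to be exactly this uniform-in-$n$ local openness of $f^n$ near $(z_0,w_0)$, that is, the discreteness of $E_{deg}^* \cap A_p$ and the consequent openness of $A_p - E_p \cup E_{deg}^*$; once that is in hand, the rest is a routine transcription of the non-degenerate proof with $A_p$ replaced by $A_p - E_p \cup E_{deg}^*$.
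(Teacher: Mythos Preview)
Your proposal is correct and follows essentially the same approach as the paper, which simply remarks that $f$ is non-degenerate on $A_p \times \mathbb{C} - E_z \cup E_{deg}$ and that the arguments of the preceding (non-degenerate) proposition then carry over verbatim; you have spelled out precisely those details. One minor point: the inclusion $\partial f^{-n}(U^l)\subset f^{-n}(\partial U^l)$ that you use to place $y_n$ in $f^{-n}(\partial U^l)$ already follows from continuity of $f^n$ alone, so the local openness discussion, while correct, is not strictly needed for that step.
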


This proposition implies the following, %corollary,
which also follows from %is already proved by 
Theorem \ref{main thm on attr sets for Case 2}.

\begin{corollary}
It follows that
$A_f^{l} = A_f^{l_1}$
for any $l$ in $\mathcal{I}_f$.
\end{corollary}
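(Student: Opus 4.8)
The plan is to read the corollary off Theorem~\ref{main thm on attr sets for Case 2}. In the present regime $\delta < d$ (hence $\delta \leq d$), that theorem asserts $A_f^l = A_0 - E_z$ for \emph{every} $l>0$, with no exceptional value; specialising to $l \in \mathcal{I}_f = [\,l_1,\infty)$ and to $l=l_1$ gives $A_f^l = A_0 - E_z = A_f^{l_1}$ for all $l\in\mathcal{I}_f$, which is exactly the claim. So at the formal level the corollary is an immediate specialisation of a result already proved, and I would present that as the proof.

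It is, however, also instructive to recover it from Proposition~\ref{main prop on asy behavior when delta < d}, as the text suggests. First, $A_f^l \subseteq A_f^{l_1}$ for $l\in\mathcal{I}_f$ is automatic: for a fixed small $r<1$ one has $|z|^l \le |z|^{l_1}$ on $\{|z|<r\}$, so $U^l\subseteq U^{l_1}$, and taking preimages and unions preserves the inclusion. For the reverse inclusion, suppose $A_f^l\subsetneq A_f^{l_1}$. Since $A_f^l$ is open and nonempty (it contains $U^l$) but is a proper subset of $A_f^{l_1}=A_0-E_z$, which is connected, $A_f^l$ is not relatively closed in $A_f^{l_1}$, so there is a point $(z_0,w_0)\in\partial A_f^l$ lying in $A_0-E_z$. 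Such a point automatically has $z_0\in A_p-E_p$, because every fibre over $E_p$ is eventually mapped into $\{z=0\}\subset E_z$ and hence misses $A_0-E_z$; after also discarding the countably many fibres of $E_{deg}^{*}$, Proposition~\ref{main prop on asy behavior when delta < d} applies at $(z_0,w_0)$ and gives $\limsup_{A_f^l\ni(z,w)\to(z_0,w_0)}G_z^{\alpha}(w)=0$. But $G_z^{\alpha}=\log|\phi_1^{-\alpha}\phi_2|$ is continuous on $A_0-E_z$ and negative there (Proposition~\ref{prop on G_z^a when delta < d}), so this $\limsup$ equals $G_z^{\alpha}(z_0,w_0)<0$, a contradiction. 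Hence $A_f^l=A_f^{l_1}$.

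Neither argument is deep. The only delicate points in the second are (i) producing a boundary point of $A_f^l$ that lies inside $A_f^{l_1}$, which uses connectedness of $A_f^{l_1}=A_0-E_z$ (or else a component-by-component version of the argument), and (ii) arranging that this boundary point avoids the exceptional fibres over $E_p$ and over $E_{deg}^{*}$, so that Proposition~\ref{main prop on asy behavior when delta < d} is genuinely applicable — this is the main obstacle, such as it is. Since the appeal to Theorem~\ref{main thm on attr sets for Case 2} sidesteps both issues completely, I would give the one-line proof of the first paragraph and merely remark that the statement is also visible from the asymptotic behaviour of $G_z^{\alpha}$ described above.
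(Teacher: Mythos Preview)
Your one-line deduction from Theorem~\ref{main thm on attr sets for Case 2} is correct, and the paper itself notes this route (``which also follows from Theorem~\ref{main thm on attr sets for Case 2}'').

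Your second argument is close to the paper's own proof from Proposition~\ref{main prop on asy behavior when delta < d}, but differs in two places. First, rather than hoping a boundary point exists outside $E_{deg}$, the paper reduces to proving $A_f^l - E_{deg} = A_f^{l_1} - E_{deg}$ and argues that this forces equality on $E_{deg}$ as well; the implicit input here is the preceding remark that, since $\delta < d$, one has $f(E_{deg}) \subset \{w=0\}$ and hence $E_{deg} - E_z \subset A_f^l$ for \emph{every} $l>0$, so $A_f^l \cap E_{deg} = E_{deg} - E_z$ is already independent of $l$. In fact this same remark dissolves your ``main obstacle'' directly: any point of $\partial A_f^l \cap A_f^{l_1}$ automatically lies outside $E_{deg}$, because $E_{deg} \cap A_f^{l_1} = E_{deg} - E_z \subset A_f^l$ is disjoint from $\partial A_f^l$. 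Second, where you finish by continuity and strict negativity $G_z^{\alpha}<0$, the paper only uses $G_z^{\alpha}\le 0$ (from $|w_n|\to 0$ on $A_f^{l_1}$) together with upper semicontinuity to force $G_z^{\alpha}=0$ on the boundary set, and then invokes the maximum principle for the contradiction. Your shortcut is legitimate here since the paper records that $|\phi_1^{-\alpha}\phi_2|$ takes values in $[0,1)$, but the maximum-principle version is the one that transports to the later cases where a strict sign is not available up front.
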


\begin{proof}
By definition,
$A_f^{l} \subset A_f^{l_1}$ for any $l > l_1$.
It is enough to show that
\[
A_f^{l} - E_{deg} = A_f^{l_1} - E_{deg}
\]
for any $l > l_1$
since it implies that $A_f^{l} \cap E_{deg} = A_f^{l_1} \cap E_{deg}$.
We derive the contradiction,
assuming that the equality above does not hold.
Let $G = G_z^{\alpha} = G_z$ and
$\partial A = \partial A_f^{l} \cap A_f^{l_1} \cap (A_p \times \mathbb{C} - E_{deg})$.
If the equality above does not hold, then $\partial A \neq \emptyset$.
Since $G$ is plurisubharmonic on $A_f^{l_1}$ and
since $A_f^l \cap E_z = \emptyset$,
it follows from the proposition above that 
$G \geq 0$ on $\partial A$. % $\partial A_f^{l} \cap A_f^{l_1} \cap (A_p \times \mathbb{C})$.
On the other hand,
$G \leq 0$ on $A_f^{l_1}$
since $G = \lim_{n \to \infty} d^{-n} \log |w_n|$ 
and $|w_n| \to 0$ as $n \to \infty$ on $A_f^{l_1}$.
Hence 
$G = 0$ on $\partial A$, %$\partial A_f^{l} \cap A_f^{l_1} \cap (A_p \times \mathbb{C})$,
which contradicts to the maximum principle for $G$. 
\end{proof}

\begin{question}
Can we remove the non-degenerate condition?
\end{question}

\begin{question}
Note that $\partial A_f \cap E_z \neq \emptyset$ and,
moreover,
$\partial A_f \cap E_z$ have an interior point in $E_z$
since $A_f = A_0 - E_z$
by Theorem \ref{main thm on attr sets for Case 2}.
Can $\partial A_f \cap \mathbb{C}_z$ have an interior point in $\mathbb{C}_z$
for some $z$ in $A_p - E_p$?
%$E_{deg}^*$ and/or in $A_p - E_p \cup E_{deg}^*$?
\end{question}

%%%%%%%%%%%%%%%%%%%%%%%%%%%%%%%%%%%%%%%%%%%%%%%%%%%%%%%%%%%
%%%%%%%%%%%%%%%%%%%%%%%%%%%%%%%%%%%%%%%%%%%%%%%%%%%%%%%%%%%
\subsubsection{Asymptotic behavior of $G_z^{\alpha}$ toward $\partial A_f^l$: Limit inferior} % infimum

It follows from the assumption $m_j \geq d > \delta \geq 2$
that $f$ maps $\{ w=0 \}$ to itself.
If $f^{-1} \{ w=0 \} \supsetneq \{ w=0 \}$,
then we expect that
the limit infimum of $G_z^{\alpha}$ toward $\partial A_f^l$ is $- \infty$
since $G_z^{\alpha} = - \infty$ on $E_w - E_z$.
Actually,
we obtain the following
with the assumption that $q$ is polynomial,
which guarantees the surjectivity of the function.

\begin{proposition}\label{prop of liminf when delta < d}
If $q$ is polynomial and 
$f^{-1} \{ w=0 \} \supsetneq \{ w=0 \}$,
then
for any $l$ in $\mathcal{I}_f$ and
for any $z$ in $A_p - E_p \cup E_{deg}^*$,
\[
\liminf_{w \in A_f^l \cap \mathbb{C}_z \to \partial A_f^l \cap \mathbb{P}_z^1} G_z^{\alpha} (w) = - \infty,
\]
where $\pi_z$ is the projection to the $z$-coordinate
and $\mathbb{P}_z^1 = \{ z \} \times \mathbb{P}^1$.
\end{proposition}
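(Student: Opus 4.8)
The plan is to reduce the statement to one purely fiberwise assertion: for $z$ fixed in $A_p - E_p \cup E_{deg}^*$, the set
\[
P = E_w \cap A_f^l \cap \mathbb{C}_z
\]
is infinite and has no accumulation point inside $A_f^l \cap \mathbb{C}_z$. Granting this, here is how the conclusion follows. Since $z \notin E_p$ one has $E_z \cap \mathbb{C}_z = \emptyset$, so $D := A_f^l \cap \mathbb{C}_z = A_0 \cap \mathbb{C}_z$ is open in $\mathbb{C}_z$ and relatively compact in $\mathbb{P}_z^1$, and by Proposition \ref{another prop on G_z^a when delta < d} the restriction $G := G_z^{\alpha}|_{D}$ is subharmonic; it is not identically $-\infty$ because by Proposition \ref{prop on G_z^a when delta < d} it is finite off the countable set $E_w \cap \mathbb{C}_z$, and it equals $-\infty$ at each point of $P$ since there $w_n = Q_z^n(w) = 0$ for all large $n$ while $z_n \neq 0$. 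As $P$ is infinite and discrete in $D$, some sequence $(\zeta_k)$ in $P$ converges to a point $w_0 \in \partial A_f^l \cap \mathbb{P}_z^1$; since $G(\zeta_k) = -\infty$ and $\zeta_k \in D$, the elementary inequality $\liminf_{w \in D \to w_0} G(w) \le \liminf_k G(\zeta_k) = -\infty$ gives the statement. So it remains to establish the two properties of $P$.

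For the discreteness, I would suppose $p^* \in D$ and $P \ni p_k \to p^*$ with the $p_k$ distinct, and derive a contradiction. By Theorem \ref{main thm on attr sets for Case 2} we have $A_f^l = A_0 - E_z$, so $(z,p^*) \in A_f^l$ and hence $f^N(z,p^*) \in U^l$ for some $N$; since $U^l$ is open and $E_w$ is forward invariant, $f^N(z,p_k) \in E_w \cap U^l$ for all large $k$. On $U^l$ the B\"{o}ttcher coordinate $\phi = (\phi_1,\phi_2)$ is a biholomorphism conjugating $f$ to $f_0$, and it maps $\{w=0\}\cap U^l$ into $\{w=0\}$ because $\phi(\zeta,0) = \lim_n f_0^{-n}(p^n(\zeta),0)$ has vanishing second coordinate. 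Since $\gamma > 0$, the second coordinate of $f_0^m$ is a nonzero constant times $z^{\gamma_m} w^{d^m}$ with $\gamma_m > 0$; combining this with $\phi \circ f^m = f_0^m \circ \phi$ on $U^l$, the injectivity of $\phi$, and the fact that $p^N(z) \notin E_p$ (so $\phi_1(p^N(z)) \neq 0$), one finds that any point of $E_w \cap U^l$ lying over $p^N(z)$ has $w$-coordinate $0$. Hence $Q_z^N(p_k) = 0$, so every $p_k$ lies in the finite zero set of the polynomial $w \mapsto Q_z^N(w)$, contradicting $p_k \to p^*$. Thus all accumulation points of $P$ lie on $\partial A_f^l \cap \mathbb{P}_z^1$.

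For the infiniteness, I would use that $q$ is a polynomial and $f^{-1}\{w=0\}\supsetneq\{w=0\}$. The algebraic curve $\{q=0\}$ has an irreducible component $C \neq \{w=0\}$, and $C$ cannot be vertical: if every component of $\{q=0\}$ other than $\{w=0\}$ were a fiber $\{z=c_i\}$, then $q = c_0\, w^{d}\prod_i(z-c_i)^{b_i}$, all monomials of the same $w$-degree $d = m_s$, so $N(q)$ would have a single vertex, contradicting $s>1$. Therefore $C$ is non-vertical, so $\pi_z(C)$ is cofinite in $\mathbb{C}$ and $\pi_z(C\cap\{w=0\})$ is finite; since the points $p^n(z)$ are distinct and tend to $0$, for all large $n$ there is $w_n' \neq 0$ with $(p^n(z),w_n') \in C$, whence $q(p^n(z),w_n') = 0$. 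Because $z \notin E_{deg}^*$, the polynomial $w \mapsto Q_z^n(w)$ is non-constant, hence surjective, so there is $w$ with $Q_z^n(w) = w_n'$; then $f^{n+1}(z,w) = (p^{n+1}(z),0)$, so $(z,w) \in E_w \cap A_0 \cap \mathbb{C}_z = P$, while $Q_z^m(w) = 0$ holds precisely for $m \ge n+1$. Letting $n$ range over large integers therefore produces points of $P$ whose hitting sets $\{m : Q_z^m(w)=0\}$ are pairwise distinct, so $P$ is infinite.

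I expect the discreteness step to be the main obstacle: it is where the global input enters, through the identification $A_f^l = A_0 - E_z$ (which forces each fibered orbit into $U^l$) together with the collapse of $E_w$ onto $\{w=0\}$ inside the B\"{o}ttcher chart. Some care is also needed in the infiniteness step when $f^{-1}\{w=0\}$ is reducible or has vertical components; handling that, and the possibility of degenerated fibers, is exactly what forces the exclusion of $E_p \cup E_{deg}^*$.
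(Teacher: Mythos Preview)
Your proof is correct and follows essentially the same strategy as the paper's: reduce to showing that $E_w\cap\mathbb{C}_z$ accumulates only on $\partial A_f^l\cap\mathbb{P}_z^1$, by establishing that the fiberwise preimage set is infinite yet discrete in $A_f^l\cap\mathbb{C}_z$. The organizational differences are minor: for discreteness the paper argues directly from $f\sim f_0$ on $U^l$ (so that no further zeros of $Q_{z_N}^n$ can appear once the orbit enters $U^l$), while you pass through the B\"{o}ttcher coordinate and use injectivity of $\phi$ together with $\phi_2(\,\cdot\,,0)=0$ to force $Q_z^N(p_k)=0$; for infiniteness the paper writes $q=w^d h$ and shows the set $H=\{z:\exists\,w\neq0,\ h(z,w)=0\}$ is cofinite, while you argue via a non-vertical irreducible component of $\{q=0\}$. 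Your handling of the possibility of vertical components (ruled out by $s>1$) is in fact slightly more explicit than the paper's, but both routes yield the same conclusion.
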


\begin{proof}
Fix any $l$ in $\mathcal{I}_f$ and any $z$ in $A_p - E_p \cup E_{deg}^*$. 
It is enough to show that 
$E_w \cap \mathbb{C}_z$ accumulates to $\partial A_f^l \cap \mathbb{P}_z^1$
since $G_z^{\alpha} = - \infty$ on $E_w - E_z$.

Let $f^n (z,w) = (p^n (z),Q_z^n (w))$.
In other words,
let
\[
Q_z^n (w) = q_{z_{n-1}} \circ \cdots \circ q_{z_1} \circ q_z (w),
\]
where $z_j = p^j (z)$.
Hence $Q_z^n$ is the composition of n holomorphic functions on $w$,
which are not constant since $z \notin E_{deg}^*$.

We first check that the number of $(Q_z^n)^{-1} (0)$ tends to infinity as $n \to \infty$.
Let $q(z,w) = w^d h(z,w)$ and
$H = \{ z \in \mathbb{C} \, | \, \exists w \neq 0 \text{ such that } h(z,w)=0 \}$.
Then $H = \{ z \in \mathbb{C} \, | \, q_z^{-1} (0) \supsetneq \{ 0 \} \}$ and
it is enough to show that $H$ is the complement of only finitely many points.
Because $h$ is polynomial,
$\{ z \in \mathbb{C} \, | \, \exists w \text{ such that } h(z,w)=0 \} = \mathbb{C}$.
On the other hand,
$\{ z \in \mathbb{C} \, | \, h(z,0)=0 \}$ is a finite set.
If not,
it follows from the identity theorem that $h(z,0) \equiv 0$, 
which contradicts the minimality of $d$.
Therefore,
$H$ is the complement of only finitely many points.

We next show that  
$(Q_z^n)^{-1} (0) - \{ 0 \}$ converges to $\partial A_f^l \cap \mathbb{P}_z^1$
as $n \to \infty$,
which completes the proof.
Let $w_n$ be a point in $(Q_z^n)^{-1} (0) - \{ 0 \}$ and
$\{ w_{n_k} \}$ a subsequence of $\{ w_n \}$ 
that converges to a point $w_{\infty}$. 
Assuming that $w_{\infty}$ belongs to $A_f^l \cap \mathbb{C}_z$,
we derive the contradiction.
Since $(z,w_{\infty})$ belongs to $A_f^l$,
there is an integer $N$ such that
$f^N$ maps a neighborhood of $(z,w_{\infty})$ to $U^l$.
Because $f \sim f_0$ on $U^l$, % as $r \to 0$,
it follows for each sufficient large $k$ that 
$Q_{z_N}^n (Q_z^{N} (w_{n_k})) = Q_z^{n+N} (w_{n_k}) \neq 0$
for any $n \geq 0$.
Hence $Q_z^{n_k} (w_{n_k}) = Q_z^{n+N} (w_{n_k}) \neq 0$ for $n = n_k - N$,
which contradicts the definition of $w_{n_k}$. % $Q_z^{n_k} (w_{n_k}) = 0$.
\end{proof}

\begin{question}
It is known that
if $p$ is polynomial and $p^{-1} (0) \supsetneq \{ 0 \}$,
then $p^{-n} (0) - \{ 0 \}$ converges to $J_p$ densely.
Does the same or similar thing hold for skew products?
More precisely,
if $f$ is polynomial and $f^{-1} \{ w=0 \} \supsetneq \{ w=0 \}$,
then does $[ f^{-n} \{ w=0 \} - \{ w=0 \} ] \cap (A_p \times \mathbb{C})$ 
converge to $\partial A_f \cap (A_p \times \mathbb{P}^1)$ densely?
If this holds,
then we can revise the proposition above as follows: 
for any $l$ in $\mathcal{I}_f$ and
for any $(z_0, w_0)$ in $\partial A_f^l \cap \{ (A_p - E_p \cup E_{deg}^*) \times \mathbb{P}^1 \}$,
\[
\liminf_{(z,w) \in A_f^l \to (z_0, w_0)} G_z^{\alpha} (w) = - \infty.
\]
\end{question}

\begin{question}
Can we remove the condition that the map is polynomial
in the previous proposition and question?
\end{question}

%\newpage
%%%%%%%%%%%%%%%%%%%%%%%%%%%%%%%%%%%%%%%%%%%%%%%%%%%%%%%%%%%%%%%%%%%%%%%%%%%%%
%%%%%%%%%%%%%%%%%%%%%%%%%%%%%%%%%%%%%%%%%%%%%%%%%%%%%%%%%%%%%%%%%%%%%%%%%%%%%
\subsection{$\delta = d$}

Recall that
% $\gamma > 0$ and
$\mathcal{I}_f =[ l_1, \infty )$ if $\delta = d$.

We first show the existence and properties of plurisubharmonic functions 
on $A_f^l$ for any $l$ in $\mathcal{I}_f$.
%%%
Because $f$ is conjugate to $f_0$ on $U^l$ % the monomial map
for any $l$ in $\mathcal{I}_f$,
the limit $G_z^{\infty}$ exists on $U^l$. 
More precisely,
as shown below,
\[
G_z^{\infty} (w) 
= \log \left| \phi_2 (z,w) \right| 
= \log \left| w \right| + o(1)
\text{ on } U^l \text{ as } r \to 0.
\]
It extends to $A_f^l$ % a plurisubharmonic function on
via the equation $G_z^{\infty} \circ f^n = d^n G_z^{\infty} + n \gamma d^{n-1} G_p$,
and $A_f^l = A_0 - E_z$ for any $l$ in $\mathcal{I}_f$
by Theorem \ref{main thm on attr sets for Case 2}.
Consequently,
we obtain the following.

\begin{proposition} \label{prop on G_z^a when delta = d}
If $\delta = d$, then % and $\gamma > 0$
$G_z^{\infty}$ is plurisubharmonic on $A_0 - E_z$.
It is pluriharmonic on $A_0 - E$, % and continuous
$- \infty$ on $E_w - E_z$,
$\infty$ on $E_z - E_w$ and
not defined on $E_z \cap E_w$.
Moreover,
$G_z^{\infty} (w) = \log \left| w \right| + o(1)$
on $U^{l_1}$ as $r \to 0$.
\end{proposition}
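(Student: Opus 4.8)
The plan is to follow the argument for the case $\delta < d$ (Proposition \ref{prop on G_z^a when delta < d}), carrying along the extra pluriharmonic term produced by the functional equation. Since $d = \delta \geq 2$, Lemma \ref{detailed lemma for case 2} applies to every $l$ in $\mathcal{I}_f = [l_1, \infty)$, so the B\"{o}ttcher coordinate $\phi = (\phi_1, \phi_2)$ for $f$ exists on each $U^l$ and conjugates $f$ to $f_0(z,w) = (z^\delta, z^\gamma w^d)$. First I would identify $G_z^\infty$ on $U^l$: with $\gamma_n = n\gamma d^{n-1}$, inverting $f_0^n(z,w) = (z^{\delta^n}, z^{\gamma_n} w^{d^n})$ gives $f_0^{-n}(Z,W) = (Z^{1/\delta^n},\, W^{1/d^n} Z^{-\gamma_n/(\delta^n d^n)})$, and since $\gamma_n/(\delta^n d^n) = (\gamma/d)(n/d^n)$, feeding $(z_n, w_n) = f^n(z,w)$ into $\phi = \lim_n f_0^{-n}\circ f^n$ gives $\log|\phi_1(z)| = G_p(z)$ and
\[
\log|\phi_2(z,w)| = \lim_{n\to\infty}\frac{1}{d^n}\log\frac{|w_n|}{|z_n|^{\gamma n/d}} = G_z^\infty(w)
\]
on $U^l$; this is exactly the computation carried out for the monomial map in Section 5.1. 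Because $\phi\sim id$ on $U^l$ as $r\to 0$ we have $\phi_2(z,w) = w(1+o(1))$, hence $G_z^\infty(w) = \log|w| + o(1)$ on $U^{l_1}$ as $r\to 0$, which is the last assertion. Moreover $q(z,0)\equiv 0$, since $d = m_s\geq 2$ forces $q$ to have no monomial with $j\leq 1$; this makes $f_0^{-n}(p^n(z),0) = (\phi_1(z),0)$, so $\phi_2(z,0)\equiv 0$ and $\phi_2(z,w) = w\,g(z,w)$ with $g$ holomorphic and $g\to 1$ as $r\to 0$, whence, shrinking $r$, $\{\phi_2 = 0\}\cap U^l = \{w=0\}\cap U^l$.

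Next I would extend $G_z^\infty$ to $A_f^l$. From $\phi\circ f = f_0\circ\phi$ one gets $\phi_2(f^n(z,w)) = \phi_1(z)^{\gamma_n}\phi_2(z,w)^{d^n}$, so for every $(z,w)$ with $f^n(z,w)\in U^l$ and $z\notin E_p$,
\[
\log|\phi_2(z,w)| = \frac{1}{d^n}\log\bigl|\phi_2(f^n(z,w))\bigr| - \frac{n\gamma}{d}\,G_p(z),
\]
which is the functional equation $G_z^\infty\circ f^n = d^n G_z^\infty + n\gamma d^{n-1} G_p$ written out. These formulas are mutually consistent as $n$ grows, so they glue to a function on $\bigcup_n f^{-n}(U^l) = A_f^l$ off $E_z$; on $f^{-n}(U^l)$ the first summand is plurisubharmonic, being the pullback by the holomorphic map $f^n$ of the plurisubharmonic $\log|\phi_2|$, while $-\frac{n\gamma}{d}G_p$ is pluriharmonic in the fiber variable off $E_p$, so the sum is plurisubharmonic on $A_f^l - E_z$. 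By Theorem \ref{main thm on attr sets for Case 2}, applicable since $\delta\leq d$, we have $A_f^l = A_0 - E_z$, giving plurisubharmonicity on $A_0 - E_z$. Off the locus $\{\phi_2 = 0\}$ the first summand is pluriharmonic, and since $\{\phi_2 = 0\}\cap U^l = \{w=0\}\cap U^l$ its total $f$-preimage inside $A_f^l$ is $E_w\cap A_f^l$; hence $\{G_z^\infty = -\infty\}\cap(A_0 - E_z) = E_w\cap(A_0 - E_z)$ and $G_z^\infty$ is pluriharmonic on $A_0 - E$.

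The pointwise values on $E$ then follow from the definition of the limit together with the dynamics on the invariant axes $\{z=0\}$ and $\{w=0\}$. On $E_w - E_z$ the orbit satisfies $w_n = 0$ for large $n$ while $z_n\to 0$ with $z_n\neq 0$, so $G_z^\infty = -\infty$. On $E_z$ the $z$-orbit reaches $0$, after which $w_{n+1} = q(0,w_n)$ is identically $0$ if $n_1 > 0$ (so the point lies in $E_w$ too) and equals $b_{0m_1}w_n^{m_1}(1+o(1))$ with $m_1 > d$ if $n_1 = 0$; consequently $\frac{1}{d^n}\log\bigl(|w_n|/|z_n|^{\gamma n/d}\bigr)$ is the undefined expression $\log(0/0)$ on $E_z\cap E_w$, and equals $\log(|w_n|/0) = +\infty$, i.e.\ $G_z^\infty = +\infty$, on $E_z - E_w$.

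The step I expect to be the main obstacle is the extension across $A_f^l$: one must check with care that the local formulas built from $\phi_2$ and $G_p$ patch to one well-defined function, that it is the continuous-up-to-$-\infty$ extension of $\log|\phi_2|$, and that its $-\infty$-set is exactly $E_w$ — in particular the identification $\{\phi_2=0\}\cap U^l = \{w=0\}\cap U^l$. Everything else is the $\delta < d$ argument with the harmless pluriharmonic correction $-\frac{n\gamma}{d}G_p$ inserted, this correction being precisely the reason that $G_z^\infty$, rather than $\lim_n d^{-n}\log|w_n|$ (which is $-\infty$ throughout $A_0$), is the right renormalization.
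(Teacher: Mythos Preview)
Your proposal is correct and follows essentially the same route as the paper: identify $G_z^{\infty}=\log|\phi_2|$ on $U^l$ via the B\"{o}ttcher coordinate (this is the content of the paper's Lemma following the proposition), extend through the functional equation $G_z^{\infty}\circ f^n = d^n G_z^{\infty} + n\gamma d^{n-1}G_p$, and invoke Theorem \ref{main thm on attr sets for Case 2} to pass from $A_f^l$ to $A_0 - E_z$. You supply more detail than the paper does on the identification $\{\phi_2=0\}\cap U^l = \{w=0\}\cap U^l$ and on the pointwise values along $E$, but the structure is the same. One small remark: when you write $\log|\phi_2(z,w)|$ on the left of your extension formula for $(z,w)\in f^{-n}(U^l)$, you should say $G_z^{\infty}(w)$ instead, since $\phi_2$ itself is only defined on $U^l$; and the correction $-\frac{n\gamma}{d}G_p(z)$ is pluriharmonic in $(z,w)$ on $A_0 - E_z$ (not merely ``in the fiber variable''), which is what you need for the sum to be plurisubharmonic there.
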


The last estimate in the proposition follows from the lemma below.

\begin{lemma}
If $\delta = d$, then 
$G_z^{\infty} = \log \left| \phi_2 \right|$ on $U^{l_1}$. 
\end{lemma}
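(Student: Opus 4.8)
The plan is to follow the proof of the analogous identity in the case $\delta<d$ (the Lemma stating $G_z^{\alpha}=\log|\phi_1^{-\alpha}\phi_2|$), adapting the explicit form of $f_0^{-n}$ to the situation $\delta=d$. As in Section~5.1, and as in that earlier proof, I would take $f_0(z,w)=(z^{\delta},z^{\gamma}w^{d})$; this normalization is harmless here because $d=\delta\ge 2$. Then $f_0^n(z,w)=(z^{\delta^n},z^{\gamma_n}w^{d^n})$ with $\gamma_n=n\gamma d^{n-1}$, so, taking the branch with $f_0^{-n}\circ f_0^n=\mathrm{id}$ and using $\gamma_n/\delta^n=n\gamma/d$,
\[
f_0^{-n}(Z,W)=\Bigl(Z^{1/\delta^n},\ \Bigl(\frac{W}{Z^{\gamma_n/\delta^n}}\Bigr)^{1/d^n}\Bigr)=\Bigl(Z^{1/d^n},\ W^{1/d^n}\,Z^{-n\gamma/d^{n+1}}\Bigr).
\]

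Next I would write $f^n(z,w)=(z_n,w_n)$ and invoke the existence of the B\"ottcher coordinate $\phi=\lim_{n\to\infty}f_0^{-n}\circ f^n$ on $U^{l_1}$ (Theorem~\ref{main thm for previous reselts}, valid since $d\ge 2$), which gives
\[
\phi(z,w)=\lim_{n\to\infty}\Bigl(z_n^{1/d^n},\ w_n^{1/d^n}\,z_n^{-n\gamma/d^{n+1}}\Bigr).
\]
Passing to moduli in the second coordinate — which involves no branch ambiguity — yields
\[
\log|\phi_2(z,w)|=\lim_{n\to\infty}\Bigl(\frac{1}{d^n}\log|w_n|-\frac{n\gamma}{d}\cdot\frac{1}{d^n}\log|z_n|\Bigr)=\lim_{n\to\infty}\frac{1}{d^n}\log\frac{|w_n|}{|z_n|^{\frac{\gamma}{d}n}},
\]
and the right-hand side is exactly $G_z^{\infty}(w)$ by definition. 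Hence $G_z^{\infty}=\log|\phi_2|$ on $U^{l_1}$.

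There is essentially no obstacle here: unlike the case $\delta<d$, no cancellation involving $\log|\phi_1|$ is needed, because the weighting exponent $n\gamma/d$ built into $f_0^{-n}$ matches term-for-term the exponent $\frac{\gamma}{d}n$ in the definition of $G_z^{\infty}$ — and this matching is precisely where $\delta=d$ is used. The only mild points worth recording are the normalization of $f_0$ above, and the fact that the two sequences $d^{-n}\log|w_n|$ and $\frac{n\gamma}{d}d^{-n}\log|z_n|$ both tend to $-\infty$ (since $|w_n|\to 0$ and $d^{-n}\log|z_n|\to G_p(z)<0$ away from the origin), so it is genuinely their difference that one identifies with the value $\log|\phi_2|$; that this difference converges is automatic once $\phi$ is known to exist.
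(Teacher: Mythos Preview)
Your proof is correct and follows essentially the same route as the paper's own proof: compute $f_0^{-n}$ explicitly using $\gamma_n=n\gamma d^{n-1}$ (since $\delta=d$), plug in $(z_n,w_n)=f^n(z,w)$, and read off $\log|\phi_2|=\lim d^{-n}\log|w_n/z_n^{n\gamma/d}|=G_z^{\infty}$. The paper's argument is slightly terser (it keeps the expression as a single logarithm of a ratio rather than splitting into two terms), but the content is identical; your additional remarks about the normalization of $f_0$ and the individual divergence of the two summands are correct side comments but not needed for the proof.
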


\begin{proof}
Since $\gamma_n = n \gamma d^{n-1}$,
\[
\phi_2 (z,w)
= \lim_{n \to \infty} \left( \dfrac{w_n}{z_n^{\gamma_n/d^n}} \right)^{1/d^n}
= \lim_{n \to \infty} \left( \dfrac{w_n}{z_n^{n \gamma/d}} \right)^{1/d^n}
\]
and so
\[
\log | \phi_2 (z,w) | 
= \lim_{n \to \infty} \dfrac{1}{d^n} \log \left| \dfrac{w_n}{z_n^{n \gamma/ d}} \right|
= G_z^{\infty} (w) \text{ on } U^{l_1}.
\]
%\begin{align*}
%& \phi_2 (z,w)
%= \lim_{n \to \infty} \left( \dfrac{w_n}{z_n^{\gamma_n/d^n}} \right)^{1/d^n}
%= \lim_{n \to \infty} \left( \dfrac{w_n}{z_n^{n \gamma/d}} \right)^{1/d^n}
%\text{ and so} \\
%& \log | \phi_2 (z,w) | 
%= \lim_{n \to \infty} \dfrac{1}{d^n} \log \left| \dfrac{w_n}{z_n^{n \gamma/ d}} \right|
%= G_z^{\infty} (w) \text{ on } U^{l_1}.
%\end{align*}
\end{proof}

The limits $G_z$ and $G_f$ also exist on $U^{l_1}$,
which extend to $A_0$.

\begin{proposition} \label{prop on G_z when delta = d}
If $\delta = d$, then $G_z = - \infty$ on $A_0$.
\end{proposition}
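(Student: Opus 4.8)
The plan is to reduce everything to the wedge $U^{l_1}$ and use the known behaviour of $G_z^\infty$ there. Since $\delta = d$ in this subsection, the relevant exponent is $\lambda = \max\{\delta,d\} = d$, so $G_z(w) = \lim_{n\to\infty} d^{-n}\log|w_n|$ and, whenever this limit exists, $G_z\circ f = d\,G_z$. I would split $A_0$ into the three pieces $A_0 - E$, $E_w\cap A_0$ and $E_z\cap A_0$ and show $G_z \equiv -\infty$ on each.

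For the two exceptional pieces, the key structural observation is that in Case~2 the smallest $w$-exponent occurring in $q$ equals $m_s = d = \delta \ge 2$, so $q$ is divisible by $w^d$; in particular $q(z,0)\equiv 0$. Hence if $(z,w)\in E_w$, say $w_m = 0$, then $w_n = q(z_{n-1},0) = 0$ for all $n>m$, so $d^{-n}\log|w_n|\to-\infty$ and $G_z = -\infty$ on $E_w\cap A_0$. On $E_z\cap A_0$ I would invoke verbatim the $w$-axis argument from the proof of Proposition~\ref{prop on G_z^a when delta < d}: once the orbit reaches the fibre $\{z=0\}$ it is either sent to the origin (if $n_1>0$) or falls under a one-dimensional superattracting map of order $m_1 > d$ (if $n_1 = 0$), and in both cases $d^{-n}\log|w_n|\to-\infty$.

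The substantial case is $(z,w)\in A_0 - E$. By Lemma~\ref{detailed lemma for case 2} we have $f(U^{l_1})\subset U^{l_1}$ for small $r$, and by Theorem~\ref{main thm on attr sets for Case 2} (the $\delta\le d$ part) the orbit of $(z,w)$ eventually enters $U^{l_1}$; since an orbit starting off $E_z\cup E_w$ stays off it, in fact $f^n(z,w)\in U^{l_1} - E$ for all large $n$. Now apply the functional equation $G_z^\infty\circ f^n = d^n G_z^\infty + n\gamma d^{n-1}G_p$ recalled in the discussion preceding Proposition~\ref{prop on G_z^a when delta = d}, together with the bound $G_z^\infty = \log|w| + O(1)$ on $U^{l_1}$ (a consequence, for the fixed small $r$ we work with, of the asymptotic estimate in Proposition~\ref{prop on G_z^a when delta = d}): evaluating at level $n$ yields
\[
\frac{1}{d^n}\log|w_n| = G_z^\infty(z,w) + \frac{n\gamma}{d}\,G_p(z) + O(d^{-n}).
\]
Since $(z,w)\in A_0 - E$ forces $z\in A_p - E_p$, we have $G_p(z)\in(-\infty,0)$, so $\frac{n\gamma}{d}G_p(z)\to-\infty$ while the remaining terms stay bounded. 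Therefore $d^{-n}\log|w_n|\to-\infty$, i.e.\ the limit defining $G_z$ exists and equals $-\infty$ at $(z,w)$.

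None of this is hard once the earlier results are in place; the points that need attention are the structural identity $q(z,0)\equiv 0$ (which trivialises the behaviour on $E_w$ and will be convenient again later), the appeal to Theorem~\ref{main thm on attr sets for Case 2} to be sure orbits in $A_0 - E_z$ genuinely reach the wedge $U^{l_1}$ rather than some other region, and the fact that $G_p$ is \emph{strictly} negative on $A_p - E_p$, which is exactly what makes the term $\frac{n\gamma}{d}G_p(z)$ diverge to $-\infty$.
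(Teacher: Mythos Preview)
Your proof is correct and follows essentially the same approach as the paper's: both argue that on $A_0 - E_z$ the finiteness of $G_z^{\infty}$ combined with the divergence of the weighting term $\frac{n\gamma}{d}G_p(z)\to -\infty$ forces $d^{-n}\log|w_n|\to -\infty$, and both invoke the $w$-axis dynamics on $E_z\cap A_0$. The paper does this in one line by splitting the defining limit of $G_z^{\infty}$ directly, whereas you route the same computation through the functional equation and the local bound on $U^{l_1}$; your version is slightly longer but arguably cleaner in justifying the limit manipulation. Your separate treatment of $E_w\cap A_0$ is correct but unnecessary: since $G_z^{\infty}\le -\infty$ there (Proposition~\ref{prop on G_z^a when delta = d}), the paper's argument already covers $E_w - E_z$ together with the rest of $A_0 - E_z$.
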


\begin{proof}
It follows that
\begin{align*}
G_z^{\infty} (w)
&= \lim_{n \to \infty} \dfrac{1}{d^n} \log \left| \dfrac{w_n}{z_n^{n \gamma/ d}} \right|
= \lim_{n \to \infty} \dfrac{1}{d^n} \log |w_n|
- \lim_{n \to \infty} \dfrac{n \gamma}{d} \cdot \dfrac{1}{d^n} \log |z_n| \\
&= G_z (w) - \infty \cdot G_p (z) = G_z (w) + \infty.
\end{align*}
Since $G_z^{\infty} < \infty$ on $A_0 - E_z$,
$G_z = - \infty$ on $A_0 - E_z$.
The equality $G_z = - \infty$ on $E_z \cap A_0$ follows from 
the dynamics on the $w$-axis % fiber $\{ z = 0 \}$
as the same as the proof of Proposition \ref{prop on G_z^a when delta < d}.
\end{proof}

\begin{corollary} \label{prop on G_f when delta = d}
If $\delta = d$, then $G_f = G_p$ on $A_0$.
\end{corollary}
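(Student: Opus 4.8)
The plan is to deduce this immediately from the already‑established fact that $G_z = -\infty$ on $A_0$ when $\delta = d$ (Proposition \ref{prop on G_z when delta = d}), together with the definition of $G_p$. Since $\delta = d$, we have $\lambda = \max\{\delta, d\} = d$, so writing $f^n(z,w) = (z_n, w_n)$ and using $|(z,w)| = \max\{|z|,|w|\}$ gives, for $(z,w) \in A_0$,
\[
\frac{1}{\lambda^n} \log |(z_n, w_n)| = \frac{1}{d^n} \log \max\{|z_n|, |w_n|\} = \max\left\{ \frac{1}{\delta^n} \log |z_n|, \ \frac{1}{d^n} \log |w_n| \right\}.
\]
First I would observe that the first entry converges to $G_p(z)$: on $A_0$ we have $z \in A_p$, and $z_n = p^n(z)$, so $\delta^{-n}\log|z_n| \to G_p(z)$ by the definition of $G_p$. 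Next, the second entry is precisely the sequence defining $G_z(w)$ (with $\lambda = d$), which tends to $-\infty$ on $A_0$ by Proposition \ref{prop on G_z when delta = d}.

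It then remains to pass the limit through the maximum. If $z \notin E_p$, then $G_p(z)$ is a finite negative number, so $\delta^{-n}\log|z_n|$ is bounded below for large $n$, while $d^{-n}\log|w_n| \to -\infty$; hence the maximum equals $\delta^{-n}\log|z_n|$ for all large $n$ and converges to $G_p(z)$. If $z \in E_p$, then $G_p(z) = -\infty$ and both entries tend to $-\infty$, so the maximum tends to $-\infty = G_p(z)$. In either case the limit defining $G_f(z,w)$ exists and equals $G_p(z)$, which is the assertion. One could alternatively phrase the whole argument through the trivial inequalities $\max\{|z_n|,|w_n|\} \geq |z_n|$ and $\max\{|z_n|,|w_n|\} \leq |z_n| + |w_n|$ combined with $d^{-n}\log|w_n| \to -\infty$.

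I do not expect any genuine obstacle here: the statement is essentially a corollary of the preceding proposition, and the only points needing a little care are the elementary interchange of the limit with the maximum and the bookkeeping distinction between $z \in E_p$ and $z \notin E_p$, so that the conclusion is correctly read as an equality of functions on $A_0$ (valued in $[-\infty,0)$).
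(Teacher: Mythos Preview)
Your proof is correct and follows exactly the approach the paper intends: the corollary is stated immediately after Proposition \ref{prop on G_z when delta = d} with no separate proof, so the paper is treating it as the direct consequence you spell out, namely that $\lambda=d$, the $w$-part of $\lambda^{-n}\log|(z_n,w_n)|$ tends to $-\infty$ while the $z$-part tends to $G_p(z)$, and the maximum of the two therefore converges to $G_p(z)$. Your case split $z\notin E_p$ versus $z\in E_p$ is the right bookkeeping to make the limit-through-max step rigorous.
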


We next investigate the asymptotic behavior of $G_z^{\infty}$
as $(z,w)$ in $A_f$ tends to the boundary 
$\partial A_f^l \cap (A_p \times \mathbb{P}^1)$.
Since the arguments are similar to the previous case $\delta < d$,
we omit the proofs except the following lemma.

\begin{lemma}
Let $\delta = d$.
It follows for any $l$ in $\mathcal{I}_f$ and
for  any $z$ in $A_p - E_p$ that
$S_n^l \cap \mathbb{C}_z \neq \emptyset$ for any large $n$, and
$G_z^{\infty} |_{S_n} \to + \infty$ as $n \to \infty$.
\end{lemma}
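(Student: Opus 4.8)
The plan is to follow almost verbatim the proof of Lemma~\ref{main lem on asy behavior when delta < d}, the $\delta<d$ counterpart; the single structural change is that when $\delta=d$ the iterated functional equation carries the extra term $n\gamma d^{n-1}G_p$, and it is precisely this term that drives the values of $G^\infty_z$ along $S^l_n$ off to $+\infty$ rather than letting them stabilize near $0$.

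First I would fix $l$ in $\mathcal{I}_f$ and $z$ in $A_p-E_p$. From the estimate $G^\infty_z(w)=\log|\phi_2(z,w)|=\log|w|+o(1)$ on $U^l$ as $r\to0$ recorded at the start of this subsection, I get, for our fixed small $r$, a constant $C>0$ with $|G^\infty_{z'}(w')-\log|w'||<C$ on $U^l$, hence on $S^l$ by continuity. If $(z,w)\in S^l_n=f^{-n}(S^l)$, write $(z_n,w_n)=f^n(z,w)\in S^l$, so $|w_n|=r|p^n(z)|^l$; substituting $\log|w_n|=\log r+l\log|p^n(z)|$ into this estimate, then into the relation $G^\infty_z\circ f^n=d^nG^\infty_z+n\gamma d^{n-1}G_p$ recorded above, and dividing by $d^n$, we obtain
\[
G^\infty_z(w)=\frac{\log r+\theta_n}{d^n}+\frac{l}{d^n}\log|p^n(z)|-\frac{n\gamma}{d}\,G_p(z),\qquad|\theta_n|<C.
\]
Letting $n\to\infty$: the first summand tends to $0$; since $\deg p=\delta=d$, the second equals $l\bigl(\delta^{-n}\log|p^n(z)|\bigr)\to lG_p(z)$, which is finite because $z\notin E_p$; and since $\gamma>0$ while $G_p(z)<0$ on $A_p-E_p$, the third summand $-\tfrac{n\gamma}{d}G_p(z)$ tends to $+\infty$. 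As the right-hand side does not depend on the chosen point of $S^l_n\cap\mathbb{C}_z$, this yields $G^\infty_z|_{S^l_n}\to+\infty$. For the non-emptiness of $S^l_n\cap\mathbb{C}_z$ for large $n$: since $z\in A_p$ we have $|p^n(z)|<r$ eventually; since $d=\delta\geq2$ in this subsection, the lowest vertex $(n_s,m_s)$ of $N(q)$ has $m_s=d\geq2$, so $q$ contains no monomial $b_{i0}z^i$ and $q(\cdot,0)\equiv0$, whence $Q^n_z(0)=0$; and since $z\notin E_p$, each $q_{z_j}(w)=b_{\gamma d}z_j^{\gamma}w^d+\cdots$ with $z_j=p^j(z)\neq0$ is a non-constant entire function, hence so is $Q^n_z$, so by Liouville its image is unbounded and, being connected, meets the circle $\{|w'|=r|p^n(z)|^l\}$; thus $(Q^n_z)^{-1}$ of that circle, which is $S^l_n\cap\mathbb{C}_z$, is non-empty.

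The only point calling for a little care — the mild ``hard part'' — is the same as in the $\delta<d$ case: applying the bound $|G^\infty_{z'}(w')-\log|w'||<C$ on the real hypersurface $S^l=\partial U^l\cap\{|z'|<r\}$ rather than on the open wedge $U^l$ itself. This is legitimate because $G^\infty$ is defined and continuous up to $S^l$ (where $|w'|$ is bounded away from $0$), so the estimate persists by continuity. Everything else is a routine transcription of the earlier argument, with the functional equation of the present subsection replacing $G^\alpha_z\circ f^n=d^nG^\alpha_z$.
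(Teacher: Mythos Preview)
Your proof is correct and follows essentially the same approach as the paper's: both start from the estimate $|G_z^{\infty}(w)-\log|w||<C$ on $U^l$, evaluate on $S^l$ where $|w|=r|z|^l$, pull back via the functional equation $G_z^{\infty}\circ f^n=d^nG_z^{\infty}+n\gamma d^{n-1}G_p$, divide by $d^n$, and observe that the term $-\tfrac{n\gamma}{d}G_p(z)$ dominates and diverges to $+\infty$. Your write-up is in fact slightly more complete than the paper's, which neither discusses non-emptiness of $S_n^l\cap\mathbb{C}_z$ nor the continuity issue at $S^l$ that you flag at the end.
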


\begin{proof}
By Proposition \ref{prop on G_z^a when delta = d},
there is a constant $C > 0$ such that
\[
\left| G_z^{\infty} (w) - \log |w| \right| < C
\text{ on } U^l.
\]
Since $|w| = r|z|^{l}$ on $S_n^l$,
\[
\left| G_{p^n(z)}^{\infty} (Q_z^n(w)) - \log r |p^n(z)|^l \right| < C
\text{ on } S_n^l. 
\]
Since $G_z^{\infty} \circ f^n = d^n G_z^{\infty} + n \gamma d^{n-1} G_p$,
\[
\left| G_z^{\infty} (w) + n \dfrac{\gamma}{d} G_p(z) - \dfrac{\log r}{d^{n}} - \dfrac{l}{d^n} \log |p^n(z)| \right|
\leq \dfrac{C}{d^{n}}
\text{ on } S_n^l.
\]
Since $\deg p = \delta = d$ and $G_p(z) \neq - \infty$ 
by the setting,
\[
- n \dfrac{\gamma}{d} G_p(z) + \dfrac{l}{d^n} \log |p^n(z)|
 \to + \infty - l G_p(z) = + \infty
\]
as $n \to \infty$.
\end{proof}

This lemma and 
Corollary \ref{cor on the boundary for case 2}
induce the following.

\begin{proposition} \label{main prop on asy behavior when delta = d}
Let $\delta = d$.
For any $l$ in $\mathcal{I}_f$ and 
for any $(z_0, w_0)$ in $\partial A_f^l \cap \{ (A_p - E_p \cup E_{deg}^*) \times \mathbb{P}^1 \}$,
\[
\limsup_{(z,w) \in A_f^l \to (z_0, w_0)} G_z^{\infty} (w) = \infty.
\]
\end{proposition}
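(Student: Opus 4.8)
The plan is to reproduce the argument of Proposition~\ref{main prop on asy behavior when delta < d} in the present situation, with the lemma immediately preceding this proposition playing the role of Lemma~\ref{main lem on asy behavior when delta < d}. Fix $l$ in $\mathcal{I}_f$ and $(z_0, w_0)$ in $\partial A_f^l$ with $z_0$ in $A_p - E_p \cup E_{deg}^*$. Since $f$ is non-degenerate on $A_p \times \mathbb{C} - E_z \cup E_{deg}$, the reasoning behind Corollary~\ref{cor on the boundary for case 2} shows that $(z_0, w_0)$ lies in $S_\infty^l$; together with the monotonicity of the sequence $f^{-n}(U^l)$ this furnishes points $(\zeta_n, \omega_n)$ in $S_n^l$ with $(\zeta_n, \omega_n) \to (z_0, w_0)$, and one may arrange that the $\zeta_n$ all lie in a fixed compact neighborhood of $z_0$ contained in $A_p - E_p \cup E_{deg}^*$.

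I would then upgrade the preceding lemma to a form uniform in the base point. Its proof gives, for $(z,w)$ in $S_n^l$,
\[
\left| G_z^{\infty}(w) + n\tfrac{\gamma}{d}\,G_p(z) - \tfrac{\log r}{d^n} - \tfrac{l}{d^n}\log|p^n(z)| \right| \le \tfrac{C}{d^n},
\]
where $C$ depends only on the fixed radius $r$. On a compact neighborhood of $z_0$ inside $A_p - E_p$ the function $G_p$ is finite, continuous and strictly negative, and $d^{-n}\log|p^n(\cdot)|$ converges to $G_p$ uniformly there; hence $G_p(\zeta_n) \to G_p(z_0) \in (-\infty, 0)$ and $d^{-n}\log|p^n(\zeta_n)| \to G_p(z_0)$. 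Plugging $z = \zeta_n$, $w = \omega_n$ into the estimate and using $\gamma > 0$ yields $G_{\zeta_n}^{\infty}(\omega_n) = -\,n\tfrac{\gamma}{d}\,G_p(\zeta_n) + O(1) \to +\infty$. Since $(\zeta_n, \omega_n) \in S_n^l \subset A_f^l$ and $(\zeta_n, \omega_n) \to (z_0, w_0)$, this gives $\limsup_{(z,w)\in A_f^l \to (z_0,w_0)} G_z^{\infty}(w) = \infty$, as desired.

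The step I expect to be the main obstacle is exactly this base-point uniformity: one has to verify that the points $\zeta_n$ produced by Corollary~\ref{cor on the boundary for case 2} can be kept in a single compact subset of $A_p - E_p \cup E_{deg}^*$ on which the constant $C$ and the convergence $d^{-n}\log|p^n| \to G_p$ are uniform. This is where the hypothesis $z_0 \in A_p - E_p \cup E_{deg}^*$ and the non-degeneracy of $f$ off $E_z \cup E_{deg}$ are used, precisely as in the companion Proposition~\ref{main prop on asy behavior when delta < d}; granting those, the remaining estimates are routine.
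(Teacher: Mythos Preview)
Your proposal is correct and follows the same route as the paper: the paper simply says that the preceding lemma (giving $G_z^{\infty}|_{S_n^l}\to+\infty$) together with Corollary~\ref{cor on the boundary for case 2}, applied off $E_z\cup E_{deg}$ where $f$ is non-degenerate, yields the proposition, exactly as in the case $\delta<d$. Your write-up is more explicit than the paper about the base-point uniformity (the constant $C$ coming from Proposition~\ref{prop on G_z^a when delta = d} is independent of the point, and $G_p$ is continuous and finite on a compact neighborhood of $z_0$ in $A_p-E_p$), but this is just spelling out what the paper's one-line deduction takes for granted.
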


This proposition implies the following, 
which also follows from 
Theorem \ref{main thm on attr sets for Case 2}.

\begin{corollary}
If $\delta = d$, then 
$A_f^{l} = A_f^{l_1}$ for any $l$ in $\mathcal{I}_f$. 
\end{corollary}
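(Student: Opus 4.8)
The plan is to imitate, almost verbatim, the proof of the corresponding statement for the case $\delta < d$ given just above, replacing the function $G_z^{\alpha} = G_z$ used there by $G_z^{\infty}$ and replacing the inequality ``$G \le 0$'' by the fact that a plurisubharmonic function is locally bounded above. (Alternatively, one may simply quote Theorem \ref{main thm on attr sets for Case 2}: for $\delta \le d$, hence in particular for $\delta = d$, it asserts $A_f^l = A_0 - E_z$ for every $l > 0$, and since $l_1 \in \mathcal{I}_f$ all the sets $A_f^l$ with $l \in \mathcal{I}_f$ then coincide. The argument below is the self-contained one promised by the phrasing of the remark preceding the corollary.)

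First I would record the trivial inclusion $A_f^l \subset A_f^{l_1}$ for every $l > l_1$: this follows from $U^l \subset U^{l_1}$ (for $0 < |z| < r < 1$ a larger exponent produces a thinner wedge) together with the monotonicity of taking $f$-preimages and unions. So only the reverse inclusion is at stake, and for this it suffices to prove $A_f^l - E_{deg} = A_f^{l_1} - E_{deg}$ for every $l > l_1$, since this equality forces $A_f^l \cap E_{deg} = A_f^{l_1} \cap E_{deg}$ as well (the two open sets can then differ only along degenerate fibers, whose $f$-preimages belong to both or to neither).

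Next I would argue by contradiction. Suppose the displayed equality fails; then $\partial A := \partial A_f^l \cap A_f^{l_1} \cap (A_p \times \mathbb{C} - E_{deg})$ is nonempty, and, discarding the countably many exceptional fibers over $E_p$, it still meets the ``good'' locus $(A_p - E_p \cup E_{deg}^*) \times \mathbb{P}^1$; pick such a point $(z_0, w_0)$. On the one hand, Proposition \ref{main prop on asy behavior when delta = d} gives $\limsup_{(z,w) \in A_f^l \to (z_0,w_0)} G_z^{\infty}(w) = +\infty$. On the other hand, by Proposition \ref{prop on G_z^a when delta = d} the function $G_z^{\infty}$ is plurisubharmonic on $A_0 - E_z = A_f^{l_1}$, and $(z_0,w_0)$ is an interior point of this open set, so $G_z^{\infty}$ is bounded above on a neighborhood of $(z_0,w_0)$; since $A_f^l$ accumulates at $(z_0,w_0)$, this contradicts the infinite $\limsup$. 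Hence $\partial A = \emptyset$, the reverse inclusion holds, and the corollary follows.

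The only step I expect to require any care is the bookkeeping of the exceptional sets, namely checking that $\partial A$ — which, where nonempty, has real codimension one — genuinely meets the locus $(A_p - E_p \cup E_{deg}^*) \times \mathbb{P}^1$ rather than hiding inside the thin sets of fibers over $E_p$ and inside $E_{deg}$; this is dealt with exactly as in the $\delta < d$ case. Everything else is a direct transcription of the earlier argument, with the boundedness-above of a plurisubharmonic function playing the role that the maximum principle played there.
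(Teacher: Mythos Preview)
Your proposal is correct and matches the paper's approach. The paper does not spell out a proof here: it merely records that the corollary follows either directly from Theorem~\ref{main thm on attr sets for Case 2} (which you also note) or from Proposition~\ref{main prop on asy behavior when delta = d}, and your argument is precisely the natural way to extract the latter implication. The one substantive adaptation you make---replacing the maximum-principle step used in the $\delta < d$ proof by the observation that a plurisubharmonic function is locally bounded above, which immediately clashes with a $\limsup$ equal to $+\infty$---is exactly right and in fact simpler than the $\delta < d$ argument. Your bookkeeping worry is unfounded: any point of $A_f^{l_1}$ automatically lies over $A_p - E_p$ (since $U^{l_1} \cap \{z=0\} = \emptyset$ and $p^{-1}(0) \ni 0$), so once $E_{deg}$ is removed the boundary point is already in the good locus.
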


Note that  $f$ maps $\{ w=0 \}$ to itself
since $m_j \geq d = \delta \geq 2$.

\begin{proposition}
Let $\delta = d$.
If $q$ is polynomial and 
$f^{-1} \{ w=0 \} \supsetneq \{ w=0 \}$,
then
for any $l$ in $\mathcal{I}_f$ and
for any $z$ in $A_p - E_p \cup E_{deg}^*$,
\[
\liminf_{w \in A_f^l \cap \mathbb{C}_z \to \partial A_f^l \cap \mathbb{P}_z^1} G_z^{\infty} (w) = - \infty.
\]
\end{proposition}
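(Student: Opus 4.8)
The plan is to reduce the statement to a purely geometric assertion about $E_w$ and then invoke, essentially verbatim, the argument already carried out in the proof of Proposition~\ref{prop of liminf when delta < d}. Fix $l$ in $\mathcal{I}_f$ and $z$ in $A_p - E_p \cup E_{deg}^*$. Since $z \notin E_p$ we have $E_z \cap \mathbb{C}_z = \emptyset$, and since any point of $E_w$ over $A_p - E_p$ has an orbit that eventually lands in $U^l$, we also have $E_w \cap \mathbb{C}_z \subset A_f^l \cap \mathbb{C}_z$. By Proposition~\ref{prop on G_z^a when delta = d}, $G_z^{\infty} = -\infty$ on $E_w - E_z$, hence on $E_w \cap \mathbb{C}_z$. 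Therefore it suffices to show that $E_w \cap \mathbb{C}_z$ accumulates to a point of $\partial A_f^l \cap \mathbb{P}_z^1$; equivalently, writing $f^n(z,w) = (p^n(z), Q_z^n(w))$, that the finite sets $(Q_z^n)^{-1}(0) - \{0\}$ are nonempty for infinitely many $n$ and have all their accumulation points in $\partial A_f^l \cap \mathbb{P}_z^1$ as $n \to \infty$.

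Both of these facts are established for the case $\delta < d$ inside the proof of Proposition~\ref{prop of liminf when delta < d}, and that argument never uses $\delta < d$: it uses only (i) that $q$ is polynomial, so that $q(z,w) = w^d h(z,w)$ with $h$ polynomial and, by the minimality of $d$, $h(\cdot, 0) \not\equiv 0$, whence $h(\cdot,0)$ has only finitely many zeros; (ii) that $z \notin E_{deg}^*$, so that the one-variable maps $q_{z_j}$, $z_j = p^j(z)$, composing $Q_z^n$ are nonconstant; and (iii) that $f \sim f_0$ on $U^l$ for every $l$ in $\mathcal{I}_f$, which here holds for all such $l$ because $d = \delta \geq 2$, by Lemma~\ref{detailed lemma for case 2}. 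All three inputs are available in the present case, so I would simply repeat the argument: whenever $z_j$ is not a zero of $h(\cdot,0)$, the map $q_{z_j}$ has a nonzero preimage of $0$, which pulls back along the nonconstant maps $q_{z_{j-1}}, \dots, q_{z_0}$ to genuinely new points of $(Q_z^{j+1})^{-1}(0)$, so the count grows without bound; and if a subsequence $w_{n_k}$ of such preimages converged to a $w_\infty$ with $(z, w_\infty) \in A_f^l$, then $f^N$ would carry a neighborhood of $(z, w_\infty)$ into $U^l$ for some $N$, and since $f \sim f_0$ on $U^l$ the forward fiber orbit of $Q_z^N(w_{n_k})$ would avoid $0$ for large $k$, contradicting $Q_z^{n_k}(w_{n_k}) = 0$.

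Granting this, the accumulation points of $(Q_z^n)^{-1}(0) - \{0\}$ are limits of points of $E_w \cap \mathbb{C}_z \subset A_f^l \cap \mathbb{C}_z$ that lie outside $A_f^l \cap \mathbb{C}_z$, hence belong to $\partial A_f^l \cap \mathbb{P}_z^1$; choosing one preimage from each nonempty level produces a sequence in $A_f^l \cap \mathbb{C}_z$ tending to $\partial A_f^l \cap \mathbb{P}_z^1$ on which $G_z^{\infty} \equiv -\infty$, which gives the claimed liminf. I do not anticipate a genuine obstacle: the point is that the $\delta < d$ argument is insensitive to the magnitude relation between $\delta$ and $d$ once one has the correct vanishing of the relevant plurisubharmonic function on $E_w - E_z$ and the invariant-wedge control of Lemma~\ref{detailed lemma for case 2}, and the only change — replacing $G_z^{\alpha}$ by $G_z^{\infty}$ — is precisely supplied by Proposition~\ref{prop on G_z^a when delta = d}. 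If desired, one could try to strengthen ``accumulates to a point of $\partial A_f^l \cap \mathbb{P}_z^1$'' to ``is dense in $\partial A_f^l \cap \mathbb{P}_z^1$'', exactly as raised in the Question following Proposition~\ref{prop of liminf when delta < d}, but this is not needed for the present statement.
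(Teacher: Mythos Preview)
Your proposal is correct and takes essentially the same approach as the paper: the paper omits the proof entirely, noting that the arguments are similar to the case $\delta < d$ (Proposition~\ref{prop of liminf when delta < d}), and your write-up spells out exactly that reduction, correctly observing that the argument there uses only the polynomiality of $q$, the assumption $z \notin E_{deg}^*$, and the relation $f \sim f_0$ on $U^l$, all of which hold here since $d = \delta \geq 2$.
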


%\newpage
%%%%%%%%%%%%%%%%%%%%%%%%%%%%%%%%%%%%%%%%%%%%%%%%%%%%%%%%%%%%%%%%%%%%%%%%%%%%%
%%%%%%%%%%%%%%%%%%%%%%%%%%%%%%%%%%%%%%%%%%%%%%%%%%%%%%%%%%%%%%%%%%%%%%%%%%%%%
\subsection{$T_{s-1} > \delta > d$}

Recall that
$\alpha > 0$ and $\mathcal{I}_f =[ l_1, \alpha ]$
if $T_{s-1} > \delta > d$.

The dynamics differs whether $d \geq 1$ or $d = 0$;
the B\"{o}ttcher coordinate exists only if $d \geq 1$.
We deal with the case $d \geq 1$ in Section 5.4.1 and
the case $d = 0$ in Section 5.4.2,
respectively.

%%%%%%%%%%%%%%%%%%%%%%%%%%%%%%%%%%%%%%%%%%%%%%%%%%%%%%%%%%%
%%%%%%%%%%%%%%%%%%%%%%%%%%%%%%%%%%%%%%%%%%%%%%%%%%%%%%%%%%%
\subsubsection{$d \geq 1$}

We first show the existence and properties of plurisubharmonic functions 
on $A_f^l$ for any $l$ in $\mathcal{I}_f$.
%%%
Because $f$ is conjugate to $f_0$ on $U^l$ % the monomial map
for any $l$ in $\mathcal{I}_f$ if $d \geq 2$, %  $l_1 \leq l \leq \alpha$
and for any $l$ in $\mathcal{I}_f - \{ \alpha \}$ if $d = 1$, %  $l_1 \leq l < \alpha$
the limit $G_z^{\alpha}$ exists on $U^l$. 
More precisely,
\[
G_z^{\alpha} (w) 
= \log \left| \dfrac{\phi_2 (z,w)}{\phi_1 (z)^{\alpha}} \right| 
= \log \left| \dfrac{w}{z^{\alpha}} \right| + o(1)
\text{ on } U^l \text{ as } r \to 0.
\]
It extends to $A_f^l$,  % a plurisubharmonic function on 
and $A_f^l = A_0 - E_z$ for any $l$ in $\mathcal{I}_f - \{ \alpha \}$ % $l_1 \leq l < \alpha$. 
by Theorem \ref{main thm on attr sets for Case 2}.
Consequently,
we obtain the following.

\begin{proposition} \label{prop on G_z^a when T > delta > d}
If $T_{s-1} > \delta > d \geq 1$, then 
$G_z^{\alpha}$ is plurisubharmonic on $A_0 - E_z$.
It is pluriharmonic on $A_0 - E$, % and continuous
$- \infty$ on $E_w - E_z$,
$\infty$ on $E_z - E_w$ and
not defined on $E_z \cap E_w$.
Moreover,
$G_z^{\alpha} (w) = \log \left| w/z^{\alpha} \right| + o(1)$
on $U_r^{l_1}$ as $r \to 0$.
\end{proposition}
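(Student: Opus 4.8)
The plan is to transcribe the arguments used for Propositions~\ref{prop on G_z^a when delta < d} and~\ref{prop on G_z^a when delta = d} to the present situation. First I would record the consequences of earlier results that are needed. Since $T_{s-1} > \delta$ we have $l_1 < \alpha$, so $\mathcal{I}_f = [l_1, \alpha]$ is a nondegenerate interval, $l_1 \in \mathcal{I}_f - \{\alpha\}$, and hence the B\"ottcher coordinate $\phi = (\phi_1, \phi_2)$ exists on $U^{l_1}$, is biholomorphic there, conjugates $f$ to $f_0(z,w) = (z^\delta, z^\gamma w^d)$, and satisfies $\phi \sim id$ on $U^{l_1}$ as $r \to 0$ (the detailed version of Theorem~\ref{main thm for previous reselts} following Lemma~\ref{detailed lemma for case 2}). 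Moreover $T_{s-1} > \delta$ forces $(n_1, m_1) \neq (0, \delta)$, since otherwise $s = 2$ and the line through $(0, \delta)$ and $(\gamma, d)$ would give $T_1 = \delta$; therefore Theorem~\ref{main thm on attr sets for Case 2} yields $A_f^{l_1} = A_0 - E_z$. Finally, $d \geq 1$ implies $q(z,0) \equiv 0$, so $\{w = 0\}$ is $f$-invariant.

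The core computation, exactly as in the lemma inside the proof of Proposition~\ref{prop on G_z^a when delta < d}, uses $f_0^n(z,w) = (z^{\delta^n}, z^{\gamma_n} w^{d^n})$ with $\gamma_n = \alpha(\delta^n - d^n)$, so that $f_0^{-n}(Z,W) = (Z^{1/\delta^n},\, Z^{\alpha/\delta^n}(W/Z^\alpha)^{1/d^n})$. Passing to the limit in $\phi = \lim f_0^{-n} \circ f^n$ gives $\log|\phi_1| = G_p$ and $\log|\phi_2| = \alpha G_p + G_z^\alpha$ on $U^{l_1}$, hence
\[
G_z^\alpha(w) = \log|\phi_2(z,w)| - \alpha\log|\phi_1(z)| = \log\left|\phi_2(z,w)/\phi_1(z)^\alpha\right| \quad\text{on } U^{l_1},
\]
where $|\phi_1^\alpha|$ means $|\phi_1|^\alpha$; this is unambiguous because $z \neq 0$ on $U^{l_1}$, so $\phi_1 \neq 0$ there. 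From $\phi \circ f = f_0 \circ \phi$ one gets $\phi_1 \circ f = \phi_1^\delta$ and $\phi_2 \circ f = \phi_1^\gamma \phi_2^d$, and then the identity $\delta\alpha = \gamma + d\alpha$ (a rearrangement of $\alpha = \gamma/(\delta - d)$) yields $(\phi_2/\phi_1^\alpha) \circ f = (\phi_2/\phi_1^\alpha)^d$, i.e. the functional equation $G_z^\alpha \circ f = d\, G_z^\alpha$ holds on $U^{l_1}$. Since $U^{l_1}$ is $f$-invariant, this lets one extend $G_z^\alpha$ consistently to $A_f^{l_1} = A_0 - E_z$ by $G_z^\alpha|_{f^{-n}(U^{l_1})} := d^{-n}\, G_z^\alpha \circ f^n$, and on each such chart $G_z^\alpha = d^{-n}\bigl(\log|\phi_2 \circ f^n| - \alpha\, G_p \circ f^n\bigr)$, a sum of a plurisubharmonic function and a pluriharmonic one (the latter because $\phi_1 \circ f^n$, equivalently $p^n(z)$, is nonvanishing on $A_0 - E_z$). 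This proves $G_z^\alpha$ is plurisubharmonic on $A_0 - E_z$. For pluriharmonicity one notes that the polar set of $\log|\phi_2|$ in $U^{l_1}$ is exactly $\{w = 0\}$ — because $\phi(z,0) = (\phi_1(z), 0)$ and $w \mapsto \phi_2(z,w)$ is injective on fibers — so pulling back removes precisely $E_w$, giving pluriharmonicity on $A_0 - E$.

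The remaining assertions are read off from the defining limit $G_z^\alpha(w) = \lim_n d^{-n}\log|w_n/z_n^\alpha|$ with $(z_n, w_n) = f^n(z,w)$: on $E_w - E_z$ one eventually has $w_n = 0$ and $z_n \neq 0$, so the value is $-\infty$; on $E_z - E_w$ one eventually has $z_n = 0$, $w_n \neq 0$, and since $\alpha > 0$ the value is $+\infty$; on $E_z \cap E_w$ the quotient is eventually $0/0$, so $G_z^\alpha$ is undefined. The asymptotic estimate is immediate from the displayed identity together with $\phi \sim id$ on $U^{l_1}$: $\log|\phi_2/\phi_1^\alpha| = \log|w/z^\alpha| + o(1)$ as $r \to 0$.

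Overall this is the $\delta > d$ transcription of two cases already treated, so I do not expect a genuine obstacle. The points requiring care — as in the earlier propositions — are that $\alpha$ is generally not an integer, so one must manipulate $|\phi_1|^\alpha = |\phi_1^\alpha|$ and $\alpha G_p$ rather than a branch of $\phi_1^\alpha$; the identification $\{\phi_2 = 0\} \cap U^{l_1} = \{w = 0\} \cap U^{l_1}$ needed to locate the pluriharmonic locus; and the verification that the chartwise definitions glue, which hinges on $\delta\alpha = \gamma + d\alpha$ and the $f$-invariance of $U^{l_1}$.
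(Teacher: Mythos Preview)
Your proposal is correct and follows essentially the same route as the paper: the paper states the result as an immediate consequence of the B\"ottcher coordinate on $U^{l_1}$ together with $A_f^{l_1}=A_0-E_z$ from Theorem~\ref{main thm on attr sets for Case 2}, and you have spelled out exactly those details (the identity $G_z^\alpha=\log|\phi_2/\phi_1^\alpha|$, the functional equation, and the extension). One cosmetic remark: your justification that $(n_1,m_1)\neq(0,\delta)$ via ``$s=2$'' is a slight detour; it is quicker to note that $(n_1,m_1)=(0,\delta)$ forces $T_1=\delta$ and hence $T_{s-1}\le T_1=\delta$ by convexity, contradicting $T_{s-1}>\delta$ directly.
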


The limit $G_z$ also exists on $U^{l_1}$.
More precisely,
\[
G_z (w) = \alpha G_p(z) \text{ on } U^{l_1} - \{ w = 0 \}
\]
and $G_z (w) = - \infty$ on $\{ w = 0 \}$,
which extends to $A_0$.

\begin{proposition} \label{prop on G_z when T > delta > d}
If $T_{s-1} > \delta > d \geq 1$, then 
$G_z = \alpha G_p$ on $A_0 - E_w$
and $- \infty$ on $E_w$.
It is plurisubharmonic on $A_0 - (E_w - E_z)$,
pluriharmonic on $A_0 - E$ % and continuous
and $- \infty$ on $E \cap A_0$.
\end{proposition}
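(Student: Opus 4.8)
The plan is to obtain $G_z$ from $G_z^{\alpha}$, using in an essential way the hypothesis $\delta>d$: the weight term will carry a factor $(d/\delta)^n$ that kills it in the limit. Concretely, on the set where $z_n\neq 0$ and $w_n\neq 0$ for all $n\geq 0$, i.e. on $A_0 - E$, I would start from the identity
\[
\frac{1}{\delta^n}\log|w_n| = \Bigl(\frac{d}{\delta}\Bigr)^n \frac{1}{d^n}\log\Bigl|\frac{w_n}{z_n^{\alpha}}\Bigr| + \frac{\alpha}{\delta^n}\log|z_n|.
\]
Since $G_z^{\alpha}$ is pluriharmonic, hence finite, on $A_0 - E$ by Proposition~\ref{prop on G_z^a when T > delta > d}, the first term on the right tends to $0\cdot G_z^{\alpha}(w)=0$, while $\delta^{-n}\log|z_n|=\delta^{-n}\log|p^n(z)|\to G_p(z)$; this yields $G_z=\alpha G_p$ on $A_0 - E$, which is finite and negative there. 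The regularity statements on the open sets will then be inherited from $G_p$: $\alpha G_p$ (with $\alpha>0$) is plurisubharmonic on all of $A_p\times\mathbb{C}\supset A_0$ and pluriharmonic on $A_p - E_p = A_0 - E_z$, hence on $A_0 - E$.

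Next I would deal with the degenerate fibers. Because $d=m_s\geq 1$, the Newton polygon $N(q)$ contains no point on $\{y=0\}$, so $q$ has no monomial with a pure power of $z$; thus $q(z,0)\equiv 0$ and $\{w=0\}$ is forward invariant. Consequently, if $(z,w)\in E_w$ then $w_n=0$ for all large $n$, whence $G_z(w)=-\infty$; in particular $G_z=-\infty$ on $E_w\cap A_0$. For $E_z - E_w$ I would argue: on $E_z$ one has $z_n=0$ eventually and $G_p(z)=-\infty$, so $G_z=-\infty=\alpha G_p$ is the target. If $n_1>0$ then $q(0,w)\equiv 0$, forcing $E_z\subset E_w$ and $E_z - E_w=\emptyset$; otherwise $n_1=0$, the vertex $(0,m_1)$ lies on the $y$-axis so $m_1=T_1$, and since the intercepts $T_1>T_2>\cdots>T_{s-1}$ decrease, $m_1=T_1\geq T_{s-1}>\delta$. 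Thus $w\mapsto q(0,w)$ is superattracting at $0$ of local degree $m_1>\delta$; for a point of $E_z - E_w$ lying in $A_0$ the coordinate $w_m$ (with $z_m=0$) lies in the basin of this one-variable map and its orbit avoids $0$, so with $g(v)=\lim_k m_1^{-k}\log|q(0,\cdot)^k(v)|$ one has $g(w_m)$ finite and negative and $\log|w_{m+k}|\sim m_1^k g(w_m)$, whence $\delta^{-(m+k)}\log|w_{m+k}|\to -\infty$ because $m_1/\delta>1$. Hence $G_z=-\infty=\alpha G_p$ on $E_z - E_w$ as well. Altogether $G_z=\alpha G_p$ on $A_0 - E_w$, $G_z=-\infty$ on $E_w$, and $G_z=-\infty$ on $E\cap A_0$; the claimed plurisubharmonicity on $A_0 - (E_w - E_z)$ and pluriharmonicity on $A_0 - E$ follow because $G_z=\alpha G_p$ identically there (both sides $-\infty$ on $E_z$).

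The main obstacle is the treatment of $E_z - E_w$: there the identity relating $G_z$ to $G_z^{\alpha}$ degenerates to the indeterminate ``$0\cdot\infty$'', so one must instead pin down that $n_1=0$ forces $m_1=T_1$ and then invoke the one-dimensional superattracting Green function on the invariant fiber $\{z=0\}$, combined with $T_1>\delta$, to push the limit to $-\infty$. The rest — the limit computation on $A_0 - E$, the invariance of $\{w=0\}$, and the inheritance of (pluri)harmonicity from $G_p$ — is routine.
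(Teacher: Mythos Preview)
Your proof is correct and follows essentially the same approach as the paper: the same identity $\delta^{-n}\log|w_n|=(d/\delta)^n\cdot d^{-n}\log|w_n/z_n^{\alpha}|+\alpha\,\delta^{-n}\log|z_n|$ is used to reduce $G_z$ to $\alpha G_p$ via $G_z^{\alpha}$, and the analysis of $E_w$ and of $E_z$ (splitting on $n_1>0$ versus $n_1=0$, with $m_1=T_1>\delta$ in the latter case) matches the paper's ``dynamics on the $w$-axis'' argument referenced from Proposition~\ref{prop on G_z^a when delta < d}. Your version is in fact slightly more explicit than the paper's, which works first on $U^{l_1}-\{w=0\}$ and then extends, whereas you invoke Proposition~\ref{prop on G_z^a when T > delta > d} to work directly on $A_0-E$.
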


\begin{proof}
It follows that
\begin{align*}
G_z (w) - \alpha G_p (z)
&= \lim_{n \to \infty} \dfrac{1}{\delta^n} \log |w_n|
- \lim_{n \to \infty} \dfrac{\alpha}{\delta^n} \log |z_n|
= \lim_{n \to \infty} \dfrac{1}{\delta^n} \log \left| \dfrac{w_n}{z_n^{\alpha}} \right| \\
&= \lim_{n \to \infty} \left( \dfrac{d}{\delta} \right)^n \cdot \dfrac{1}{d^n} \log \left| \dfrac{w_n}{z_n^{\alpha}} \right|
= 0 \cdot G_z^{\alpha} (w)
= 0
\end{align*}
on $U^{l_1} - \{ w = 0 \}$,
which extends to $A_0 - E$.
The equality $G_z = - \infty$ on $E_w$ follows from the definition, and
the equality $G_z = - \infty$ on $E_z \cap A_0$ follows from 
the dynamics on the $w$-axis % fiber $\{ z = 0 \}$
as the same as the proof of Proposition \ref{prop on G_z^a when delta < d}.
\end{proof}

\begin{corollary}  \label{cor on G_f^a when T > delta > d}
If $T_{s-1} > \delta > d \geq 1$, then 
$G_f^{\alpha} = \alpha G_p$ on $A_0$.
\end{corollary}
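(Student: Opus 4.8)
The plan is to unwind the definition of $G_f^{\alpha}$ and reduce it to the two limits $G_p$ and $G_z$, both of which are already controlled. Since $\delta > d$ we have $\lambda = \delta$, so by definition
\[
G_f^{\alpha} (z,w) = \lim_{n \to \infty} \dfrac{1}{\delta^n} \log \max \{ |z_n|^{\alpha}, |w_n| \}
= \lim_{n \to \infty} \max \left\{ \dfrac{\alpha}{\delta^n} \log |z_n|, \ \dfrac{1}{\delta^n} \log |w_n| \right\},
\]
where the second equality uses that $\log$ is increasing and $\delta^n > 0$. First I would recall that $\delta^{-n} \log |z_n| \to G_p (z)$ on $A_0$ (here $z_n = p^n (z)$, and the $z$-coordinate of a point of $A_0$ lies in $A_p$), and that, by Proposition \ref{prop on G_z when T > delta > d}, $\delta^{-n} \log |w_n| \to G_z (w)$, which equals $\alpha G_p (z)$ on $A_0 - E_w$ and is $- \infty$ on $E \cap A_0$.

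Next I would invoke the elementary fact that the pointwise maximum of two sequences converging in $[-\infty, \infty)$ converges to the maximum of the two limits. On $A_0 - E_w$ both inner sequences converge to $\alpha G_p (z)$ (note $\alpha > 0$ since $\gamma > 0$ and $\delta > d$; and $\alpha G_p = - \infty$ precisely on $E_z$, which is consistent with the value of $G_z$ there), so their maximum converges to $\alpha G_p (z)$, and hence $G_f^{\alpha} = \alpha G_p$ on $A_0 - E_w$. On $E_w$ one has $w_n = 0$ for all $n$ beyond some $N$, because $f$ maps $\{ w = 0 \}$ to itself and $(z,w)$ is a preimage of that fiber; thus for $n \geq N$ the maximum equals $\alpha \log |z_n|$, and dividing by $\delta^n$ and letting $n \to \infty$ again gives $\alpha G_p (z)$. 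Combining the two cases yields $G_f^{\alpha} = \alpha G_p$ on all of $A_0$.

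There is essentially no obstacle here: the corollary is a direct consequence of Proposition \ref{prop on G_z when T > delta > d} together with the convergence $\delta^{-n} \log |z_n| \to G_p$. The only points requiring a line of care are the interchange of the outer limit with the maximum (covered by the convergence fact above, which is valid because each inner sequence has a limit in $[-\infty, \infty)$) and the bookkeeping on the fibers $E_w$ and $E_z \cap E_w$, where one simply checks that $\alpha G_p$ reproduces the value of $G_f^{\alpha}$.
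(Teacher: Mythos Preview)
Your proof is correct and matches the paper's intended derivation: the corollary is stated without proof immediately after Proposition~\ref{prop on G_z when T > delta > d}, and your argument---splitting $G_f^{\alpha}$ as the maximum of $\alpha \delta^{-n}\log|z_n|$ and $\delta^{-n}\log|w_n|$, then invoking that proposition for the second term---is precisely how the paper expects the reader to fill in the gap. The bookkeeping on $E_w$ is handled exactly as needed.
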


%%%%%%%%%%
We next investigate the asymptotic behavior of $G_z^{\alpha}$
as $(z,w)$ in $A_f^l$ tends to the boundary 
$\partial A_f^l \cap (A_p \times \mathbb{P}^1)$.
Since the arguments are similar to the case $\delta < d$, % previous
we omit the proofs except the following lemma.
We also omit to denote the assumption $T_{s-1} > \delta > d \geq 1$
for all the statements below.

\begin{lemma}
%Let $T_{s-1} > \delta > d \geq 1$ and
Fix any $z$ in $A_p - E_p$.
Then $S_n^l \cap \mathbb{C}_z \neq \emptyset$ for any large $n$, and
\[
G_z^{\alpha} |_{S_n^l} 
\begin{cases} 
\ \to \infty \text{ as } n \to \infty & \text{ if } l_1 \leq l < \alpha, \\
\ \to 0 \text{ as } n \to \infty & \text{ if } l = \alpha \text{ and } d \geq 2, \\
\ \text{is bounded as } n \to \infty & \text{ if } l = \alpha \text{ and } d = 1.
\end{cases} 
\]
\end{lemma}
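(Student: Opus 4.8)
The statement concerns the behavior of $G_z^{\alpha}$ restricted to the preimages $S_n^l = f^{-n}(S^l)$ of the boundary surface $S^l = \{ |z| < r, |w| = r|z|^l \}$, fibered over a fixed base point $z \in A_p - E_p$. The approach mirrors the proof of the analogous lemma in the case $\delta < d$: exploit the estimate $G_z^{\alpha}(w) = \log|w/z^{\alpha}| + o(1)$ on $U^l$ (Proposition~\ref{prop on G_z^a when T > delta > d}) together with the functional equation $G_z^{\alpha}\circ f = d\, G_z^{\alpha}$, and push everything down to the base dynamics $p$. First I would note that since $z \in A_p - E_p$, we have $p^n(z)\to 0$ with $p^n(z)\neq 0$ for all $n$, so $G_p(z)$ is a finite negative number; also $f^n$ maps the fiber $\mathbb{C}_z$ onto $\mathbb{C}_{p^n(z)}$ via the non-constant polynomial $Q_z^n$, hence $S_n^l \cap \mathbb{C}_z$ is nonempty for all large $n$ (once $|p^n(z)| < r$, so that $S^l$ genuinely intersects the fiber over $p^n(z)$).

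**The core estimate.** On $S^l$ we have $|w| = r|z|^l$, hence $|w/z^{\alpha}| = r|z|^{l-\alpha}$. Applying the $o(1)$ estimate on $U^l$ and the identity $G_{p^n(z)}^{\alpha}(Q_z^n(w)) = d^n G_z^{\alpha}(w)$, I would derive, for $(z,w)\in S_n^l$,
\[
\left| d^n G_z^{\alpha}(w) - \log\!\big( r\, |p^n(z)|^{l-\alpha} \big) \right| \leq C,
\]
i.e.
\[
\left| G_z^{\alpha}(w) - \frac{\log r}{d^n} - \frac{l-\alpha}{d^n}\log|p^n(z)| \right| \leq \frac{C}{d^n}.
\]
The decisive term is $\frac{l-\alpha}{d^n}\log|p^n(z)|$. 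Writing $\log|p^n(z)| = \delta^n \cdot \big(\delta^{-n}\log|p^n(z)|\big)$ and recalling $\delta^{-n}\log|p^n(z)| \to G_p(z)$, this term equals $(l-\alpha)(\delta/d)^n \cdot \big(\delta^{-n}\log|p^n(z)|\big)$, which behaves like $(l-\alpha)(\delta/d)^n G_p(z)$. Since $\delta > d$, the factor $(\delta/d)^n \to \infty$; and since $G_p(z) < 0$, the sign is governed by $-(l-\alpha)$.

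**Case analysis.** If $l_1 \leq l < \alpha$, then $l - \alpha < 0$, so $-(l-\alpha) > 0$ and $(l-\alpha)(\delta/d)^n G_p(z) \to +\infty$; the $\log r/d^n$ and $C/d^n$ terms vanish, so $G_z^{\alpha}|_{S_n^l}\to\infty$. If $l = \alpha$, the problematic term vanishes identically, leaving $|G_z^{\alpha}(w) - \log r/d^n| \leq C/d^n$; when $d \geq 2$ both remaining terms tend to $0$, so $G_z^{\alpha}|_{S_n^l}\to 0$; when $d = 1$ they are constant ($\log r$ and $C$ respectively), so $G_z^{\alpha}|_{S_n^l}$ stays bounded. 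One technical point to handle carefully: the $o(1)$ estimate requires $r$ small, but here $r$ is fixed while we iterate; the resolution is that the estimate $|G_z^{\alpha}(w)-\log|w/z^{\alpha}||\leq C$ on $U^l$ holds with some fixed constant $C = C(r)$ depending on the chosen $r$, which is all that is needed since $p^n(z)\to 0$ eventually places $(p^n(z), Q_z^n(w))$ inside $U^l$ for $(z,w)\in S_n^l$. The main obstacle is ensuring the functional equation and the $U^l$-estimate genuinely apply to the pulled-back points: I must verify that for large $n$ the point $f^n$ of anything in $S_n^l\cap\mathbb{C}_z$ lands in $S^l \subset \overline{U^l}$ where Proposition~\ref{prop on G_z^a when T > delta > d} is valid, and that the definition $S_n^l = f^{-n}(S^l)$ together with non-degeneracy of $f$ on the relevant fiber (guaranteed since $z\notin E_p$ and we may shrink to avoid $E_{deg}^*$ if needed) makes $Q_z^n$ non-constant so these preimage points exist. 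Once that is in place, the three cases follow from the single displayed inequality by reading off the behavior of $(\delta/d)^n$.
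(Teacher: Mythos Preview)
Your proof is correct and follows essentially the same approach as the paper: derive the uniform bound $|G_z^{\alpha}(w) - \log|w/z^{\alpha}|| \leq C$ on $U^l$ from Proposition~\ref{prop on G_z^a when T > delta > d}, pull it back via the functional equation $G_z^{\alpha}\circ f^n = d^n G_z^{\alpha}$ to obtain $|G_z^{\alpha}(w) - d^{-n}\log(r|p^n(z)|^{l-\alpha})| \leq d^{-n}C$ on $S_n^l$, and then read off the three cases from the asymptotics of $(\delta/d)^n$ and the sign of $l-\alpha$. The paper splits the case $l<\alpha$ into $d\geq 2$ and $d=1$ separately, whereas you treat both at once via $(\delta/d)^n\to\infty$; this is a harmless cosmetic difference.
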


\begin{proof}
The proof is similar to that of Lemma \ref{main lem on asy behavior when delta < d}
except the assumptions of $\delta$ and $d$, and thus $\alpha$.
By Proposition \ref{prop on G_z^a when T > delta > d},
there is a constant $C > 0$ such that
\[
\left| G_z^{\alpha} (w) - \log \left| \dfrac{w}{z^{\alpha}} \right| \right| < C
\text{ on } U^l.
\]
Since 
$\left| z^{- \alpha} w \right| = r |z|^{l - \alpha}$ on $S^l$,
where $l - \alpha \leq 0$,
\[
\left| \ G_{p^n(z)}^{\alpha}(Q_z^n(w)) - \log r |p^n(z)|^{l - \alpha} \ \right|
\leq C \ \text{ on } S_n^l.
\]
Since $G_{p^n(z)}^{\alpha}(Q_z^n(w)) = d^n G_z^{\alpha}(w)$,
\[
\left| \ G_z^{\alpha}(w) - d^{-n} \log r |p^n(z)|^{l - \alpha} \ \right| \leq d^{-n} C
\ \text{ on } S_n^l.
\]
If $l < \alpha$ and $d \geq 2$, 
then
\[
\dfrac{1}{d^{n}} \log |p^n(z)|^{l - \alpha} 
 = \left( \dfrac{\delta}{d} \right)^n \cdot \dfrac{l - \alpha}{\delta^n} \log |p^n(z)|
 \to \infty \cdot (l - \alpha) G_p(z) = \infty
\]
as $n \to \infty$
since $\deg p = \delta > d$ and $G_p(z) \neq - \infty$.
If $l < \alpha$ and $d = 1$, 
then
\[
\log |p^n(z)|^{l - \alpha} = (l - \alpha) \log |p^n(z)|
\to (l - \alpha) \cdot (- \infty) = \infty %\text{ as } n \to \infty.
\]
as $n \to \infty$.
On the other hand,
if $l = \alpha$ and $d \geq 2$,
then
\[
d^{-n} \log r|p^n(z)|^{l - \alpha} = d^{-n} \log r \to 0
\]
and $d^{-n} C \to 0$ as $n \to \infty$.
If $l = \alpha$ and $d = 1$, then
\[ 
\left| \ G_z^{\alpha} (w) - \log r \ \right| \leq C \ \text{ on } S_n^l. 
\]
\end{proof}

This lemma and 
Corollary \ref{cor on the boundary for case 2}
induce the following.

\begin{proposition} \label{main prop on asy behavior when T > delta > d}
For any $(z_0, w_0)$ in $\partial A_f^l \cap \{ (A_p - E_p \cup E_{deg}^*) \times \mathbb{P}^1 \}$,
\begin{equation*}
\limsup_{(z,w) \in A_f^l \to (z_0, w_0)} G_z^{\alpha} (w) = 
\begin{cases} 
\ \infty & \text{ if } l_1 \leq l < \alpha, \\
\ 0 & \text{ if } l = \alpha \text{ and } d \geq 2, 
\end{cases}
\end{equation*}
and $G_z^{\alpha}$ is bounded as $(z,w)$ in $A_f^l$ tends to $(z_0, w_0)$
if $l = \alpha$ and $d = 1$.
\end{proposition}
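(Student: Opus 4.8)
The plan is to adapt, essentially verbatim, the argument used for Proposition~\ref{main prop on asy behavior when delta < d} in the case $\delta<d$: combine the preceding lemma, which controls $G_z^{\alpha}$ along $S_n^l$, with Corollary~\ref{cor on the boundary for case 2}. The latter is applied to $f$ restricted to $A_p\times\mathbb{C}-E_z\cup E_{deg}$, on which $f$ is an open map and hence non-degenerate; this yields the identification of $\partial A_f^l\cap\bigl((A_p-E_p\cup E_{deg}^{*})\times\mathbb{P}^1\bigr)$ with $S_{\infty}^l\cap\bigl((A_p-E_p\cup E_{deg}^{*})\times\mathbb{P}^1\bigr)$, exactly as in the $\delta<d$ case.

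First I would fix $(z_0,w_0)$ in $\partial A_f^l$ with $z_0\in A_p-E_p\cup E_{deg}^{*}$. By the identification above $(z_0,w_0)\in S_{\infty}^l$, so there is a sequence $x_n\in S_n^l$ with $x_n\to(z_0,w_0)$. The preceding lemma then gives $G_z^{\alpha}(x_n)\to\infty$ when $l_1\le l<\alpha$, $G_z^{\alpha}(x_n)\to 0$ when $l=\alpha$ and $d\ge 2$, and $|G_z^{\alpha}(x_n)-\log r|\le C$ when $l=\alpha$ and $d=1$. Since $x_n\in S_n^l\subset A_f^l$, this already forces $\limsup_{(z,w)\in A_f^l\to(z_0,w_0)}G_z^{\alpha}(w)=\infty$ in the first case, and produces the inequalities $\limsup\ge 0$ (second case) and a finite lower bound on the cluster values (third case).

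For the two $l=\alpha$ cases I need the matching upper bound, which I would obtain from $G_z^{\alpha}\le 0$ on $A_f^{\alpha}$. Indeed $\alpha>l_1$ gives $U^{\alpha}\subset U^{l_1}$, and on $U^{l_1}$ the B\"ottcher estimate of Proposition~\ref{prop on G_z^a when T > delta > d} reads $G_z^{\alpha}(w)=\log\left|w/z^{\alpha}\right|+o(1)$, so $G_z^{\alpha}<0$ on $U^{\alpha}$ once $r$ is small, because $|w|<r|z|^{\alpha}$ there; the functional equation $G_z^{\alpha}\circ f=dG_z^{\alpha}$ (which is $G_z^{\alpha}\circ f=G_z^{\alpha}$ when $d=1$) then propagates $G_z^{\alpha}<0$ to all of $A_f^{\alpha}=\bigcup_n f^{-n}(U^{\alpha})$. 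In particular $\limsup\le 0$, which together with the previous step settles the case $l=\alpha$, $d\ge 2$.

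The main obstacle is the two-sided (boundedness) assertion when $l=\alpha$ and $d=1$. The upper bound is $G_z^{\alpha}<0$ on $A_f^{\alpha}$ as above; the difficulty is the lower bound, since $G_z^{\alpha}$ is not bounded below on $A_f^{\alpha}$ globally (it equals $-\infty$ along $E_w$, which meets $A_f^{\alpha}$). Here I would use that $G_z^{\alpha}$ is $f$-invariant: given $(z,w)\in A_f^{\alpha}$ near $(z_0,w_0)$, let $m=m(z,w)$ be the least integer with $f^m(z,w)\in U^{\alpha}$; then $m\to\infty$ as $(z,w)\to(z_0,w_0)\in\partial A_f^{\alpha}$. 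Using the strengthened inclusions recorded in the Remark after Corollary~\ref{cor on the boundary for case 2} (such as $\overline{f(U^{\alpha})}-\{0\}\subset U^{\alpha}$), continuity, and a compactness argument over $A_p-E_p\cup E_{deg}^{*}$, one shows that $f^{m-1}(z,w)$ must lie near $S^{\alpha}$ --- the only part of $\partial U^{\alpha}$ the orbit can cross over this portion of the base --- on which $G_z^{\alpha}=\log r+o(1)$; invariance then gives $G_z^{\alpha}(z,w)=G_z^{\alpha}(f^{m-1}(z,w))=\log r+o(1)$, hence boundedness near $(z_0,w_0)$. Making precise the claim that the orbit enters $U^{\alpha}$ only near $S^{\alpha}$, i.e. ruling out entry through the ``vertical'' boundary or a jump deep into $U^{\alpha}$, is the delicate point; the uniform estimate $|G_z^{\alpha}-\log r|\le C$ on every $S_n^{\alpha}$ from the preceding lemma is the input that keeps this in check, and I expect this verification to be the bulk of the work.
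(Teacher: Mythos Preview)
Your approach is correct and essentially matches the paper's, which tersely records only ``This lemma and Corollary~\ref{cor on the boundary for case 2} induce the following.'' In particular, your observation that one needs $G_z^{\alpha}<0$ on $A_f^{\alpha}$ to obtain the upper bound $\limsup\le 0$ in the case $l=\alpha$, $d\ge 2$ is exactly the step the paper leaves implicit.

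Where you go astray is in the $l=\alpha$, $d=1$ case. You read ``$G_z^{\alpha}$ is bounded'' as two-sided boundedness of the function near the boundary point, and then worry---correctly, under that reading---that a lower bound is problematic because $G_z^{\alpha}=-\infty$ along $E_w\cap A_f^{\alpha}$. But the intended assertion is only that the \emph{limit superior} is finite (in contrast with $+\infty$ for $l<\alpha$), and that is already settled by the work you have done: the sequence $x_n\in S_n^{\alpha}$ gives a finite lower bound on the limsup, while $G_z^{\alpha}<0$ on $A_f^{\alpha}$ (which you correctly derive via $G_z^{\alpha}\circ f=G_z^{\alpha}$) gives $\limsup\le 0$. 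That two-sided boundedness of $G_z^{\alpha}$ itself is in general \emph{false} is confirmed by the proposition immediately following in the paper, which shows, under the extra hypotheses ``$q$ polynomial and $f^{-1}\{w=0\}\supsetneq\{w=0\}$,'' that $\liminf G_z^{\alpha}=-\infty$ along $\partial A_f^{l}$ for every $l\in\mathcal{I}_f$, including $l=\alpha$. So the ``main obstacle'' you identify is a phantom, and the first-entry-time argument you sketch is unnecessary.
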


\begin{corollary} \label{cor on A_f^l when T > delta > d}
It follows that 
$A_f^{\alpha} \subsetneq A_f^{l} = A_f^{l_1}$ for any $l$ in $\mathcal{I}_f - \{ \alpha \}$. % $l_1 \leq l < \alpha$.
\end{corollary}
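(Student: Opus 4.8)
The plan is to unpack the statement into the two assertions it contains: the equality $A_f^l = A_f^{l_1}$ for every $l \in [l_1,\alpha)$, and the strict inclusion $A_f^{\alpha} \subsetneq A_f^{l_1}$. The first is essentially free, and the work is in the strictness.

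For the equalities I would simply invoke Theorem \ref{main thm on attr sets for Case 2}. Since we are in the range $T_{s-1} > \delta > d$, the vertex $(n_1,m_1)$ cannot equal $(0,\delta)$: otherwise $s=2$ and the line through $(0,\delta)$ and $(\gamma,d)$ has $y$-intercept $\delta$, forcing $T_{s-1}=\delta$, a contradiction. Hence Theorem \ref{main thm on attr sets for Case 2} applies and gives $A_f^l = A_0 - E_z$ for every $0 < l < \alpha$; in particular $A_f^l = A_f^{l_1}$ for all $l \in \mathcal{I}_f - \{\alpha\}$, and the inclusion $A_f^{\alpha} \subset A_f^{l_1}$ is immediate from $U^{\alpha}\subset U^{l_1}$ (recall $l_1<\alpha$).

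For the strictness I would exhibit a point of $U^{l_1}\subset A_f^{l_1}$ that does not lie in $A_f^{\alpha}$, using the sign of $G_z^{\alpha}$. First note that, since $0$ is superattracting for $p$, one has $E_p\cap\{|z|<r\}=\{0\}$ for small $r$; as every wedge $U^l$ is contained in $\{0<|z|<r\}$, this prevents any preimage $f^{-n}(U^l)$ from meeting $E_z$, so $A_f^{\alpha}\cap(E_z\cup E_w)=\emptyset$ and $G_z^{\alpha}$ is finite on $A_f^{\alpha}$. By Proposition \ref{prop on G_z^a when T > delta > d} there is a constant $C$ with $|G_z^{\alpha}(w) - \log|w/z^{\alpha}|| < C$ on $U^{l_1}$, hence on $U^{\alpha}\subset U^{l_1}$; since $|w/z^{\alpha}|<r$ there, this forces $G_z^{\alpha} < \log r + C < 0$ on $U^{\alpha}$ for small $r$. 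Iterating the functional equation $G_z^{\alpha}\circ f = d\,G_z^{\alpha}$ and using that every point of $A_f^{\alpha}$ has a forward iterate in $U^{\alpha}$ (so $d^n G_z^{\alpha} = G^{\alpha}$ evaluated at a point of $U^{\alpha}$), we conclude $G_z^{\alpha} < 0$ on all of $A_f^{\alpha}$. On the other hand, choosing $z\notin E_p$ with $|z|$ arbitrarily small and $w$ with $|w| = \tfrac12 r|z|^{l_1}\neq 0$ places $(z,w)$ in $U^{l_1}$, and there $|w/z^{\alpha}| = \tfrac12 r|z|^{l_1-\alpha}\to\infty$ as $|z|\to 0$ because $l_1<\alpha$; hence $G_z^{\alpha}(w) \geq \log|w/z^{\alpha}| - C > 0$ for such a point. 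This point lies in $A_f^{l_1}$ but not in $A_f^{\alpha}$, giving $A_f^{\alpha}\subsetneq A_f^{l_1}$.

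The routine part is the bookkeeping that $G_z^{\alpha}$ is genuinely finite and negative on $A_f^{\alpha}$ (handled by $A_f^{\alpha}\cap(E_z\cup E_w)=\emptyset$), and the one slightly delicate point is that the additive comparison $|G_z^{\alpha}-\log|w/z^{\alpha}|| < C$ must hold uniformly on all of $U^{l_1}$, including near $\{z=0\}$; this is precisely the form in which it is used in the lemma preceding Proposition \ref{main prop on asy behavior when T > delta > d}, so it is available. Alternatively, the strictness could be deduced directly from Proposition \ref{main prop on asy behavior when T > delta > d}: if $A_f^{\alpha} = A_f^{l_1}$ their boundaries coincide, and at a boundary point over $A_p - E_p\cup E_{deg}^*$ that proposition would give $\limsup G_z^{\alpha} = \infty$ (taking $l=l_1$) and simultaneously $\limsup G_z^{\alpha} = 0$ or a finite bound (taking $l=\alpha$), a contradiction; this second route only needs the nonemptiness of $\partial A_f^{l_1}$ over the complement of the exceptional fibers.
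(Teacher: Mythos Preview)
Your argument is essentially correct and close to the paper's (implicit) route, which simply reads the corollary off Proposition~\ref{main prop on asy behavior when T > delta > d}: if $A_f^{\alpha}=A_f^{l_1}$, the two $\limsup$ values at a common boundary point (over $A_p-E_p\cup E_{deg}^*$) would be $\infty$ for $l=l_1$ and $0$ or bounded for $l=\alpha$, a contradiction. Your direct sign argument for the strictness is a slightly more hands-on variant of the same idea and is fine.

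One small correction: the claim $A_f^{\alpha}\cap E_w=\emptyset$ is false. The set $\{(z,0):0<|z|<r\}$ lies in $U^{\alpha}$ (the inequality $0<r|z|^{\alpha}$ is satisfied), so $E_w$ meets $U^{\alpha}$ and hence $A_f^{\alpha}$. This does not damage your conclusion, since by Proposition~\ref{prop on G_z^a when T > delta > d} one has $G_z^{\alpha}=-\infty$ on $E_w-E_z$, so $G_z^{\alpha}<0$ holds there trivially; only the bookkeeping sentence needs to be rephrased. Everything else---the verification that $(n_1,m_1)\neq(0,\delta)$ under $T_{s-1}>\delta$, the use of Theorem~\ref{main thm on attr sets for Case 2} for the equalities, and the choice of a point in $U^{l_1}$ with $G_z^{\alpha}>0$ via $|w/z^{\alpha}|=\tfrac12 r|z|^{l_1-\alpha}\to\infty$---is correct.
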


Note that  $f$ maps $\{ w=0 \}$ to itself
since $m_j \geq d \geq 1$.

\begin{proposition}
If $q$ is polynomial and 
$f^{-1} \{ w=0 \} \supsetneq \{ w=0 \}$,
then
for any $l$ in $\mathcal{I}_f$ and % $l_1 \leq l \leq \alpha$ and
for any $z$ in $A_p - E_p \cup E_{deg}^*$,
\[
\liminf_{w \in A_f^l \cap \mathbb{C}_z \to \partial A_f^l \cap \mathbb{P}_z^1} G_z^{\alpha} (w) = - \infty.
\]
\end{proposition}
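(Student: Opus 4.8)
The plan is to follow the proof of Proposition \ref{prop of liminf when delta < d} almost verbatim, checking that the ingredients used there remain valid in the present range $T_{s-1} > \delta > d \geq 1$. The relevant facts are: $G_z^{\alpha} = -\infty$ on $E_w - E_z$ (Proposition \ref{prop on G_z^a when T > delta > d}); $A_f^l = A_0 - E_z$ for $l_1 \leq l < \alpha$ by Theorem \ref{main thm on attr sets for Case 2} (note that $T_{s-1} > \delta$ forces $(n_1, m_1) \neq (0, \delta)$), while $A_f^{\alpha} \subsetneq A_f^{l_1} = A_0 - E_z$ by Corollary \ref{cor on A_f^l when T > delta > d}; and $f \sim f_0$ on $U^{l_1}$ with $f_0(z,w) = (z^{\delta}, z^{\gamma} w^d)$ by Lemma \ref{detailed lemma for case 2}, using $l_1 < \alpha$ since $\delta < T_{s-1}$. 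Since $z \in A_p - E_p$, every point of $E_w$ lying over $z$ has its orbit eventually equal to $(z_n, 0)$ with $z_n \neq 0$, hence eventually inside $U^l$, so $E_w \cap \mathbb{C}_z \subseteq A_f^l \cap \mathbb{C}_z$ for every $l$ in $\mathcal{I}_f$; moreover $G_z^{\alpha} = -\infty$ at such points, as their orbit never meets $\{ z = 0 \}$. It therefore suffices to show that $E_w \cap \mathbb{C}_z$ accumulates onto $\partial A_f^l \cap \mathbb{P}_z^1$.

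First I would write $f^n(z,w) = (p^n(z), Q_z^n(w))$, so $Q_z^n = q_{z_{n-1}} \circ \cdots \circ q_z$ is a composition of non-constant polynomials in $w$ (non-constant because $z \notin E_{deg}^*$). Factoring $q(z,w) = w^d h(z,w)$ with $h$ polynomial, the hypothesis $f^{-1}\{ w = 0 \} \supsetneq \{ w = 0 \}$ forces $h$ non-constant, and exactly as in Proposition \ref{prop of liminf when delta < d} the set $H = \{ z : \exists\, w \neq 0,\ h(z,w) = 0 \}$ is the complement of finitely many points, using that $h(z,0) \not\equiv 0$ by minimality of $d$. Since $z \in A_p - E_p$, the orbit $\{ p^n(z) \}$ converges to $0$ without ever being $0$, hence is infinite and meets $H$ infinitely often; consequently the cardinality of $(Q_z^n)^{-1}(0)$ tends to infinity and $E_w \cap \mathbb{C}_z$ is an infinite subset of $\mathbb{P}_z^1$.

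Next I would rule out that a sequence $w_{n_k} \to w_{\infty}$ with $w_{n_k} \in (Q_z^{n_k})^{-1}(0) - \{ 0 \}$, $w_{n_k} \neq w_{\infty}$, can have $w_{\infty} \in A_f^{l_1} \cap \mathbb{C}_z$. If it did, then $f^N(z,w_{\infty}) \in U^{l_1}$ for some $N$, hence $f^N(z,w_{n_k}) \in U^{l_1}$ for all large $k$. Since $f \sim f_0$ on $U^{l_1}$ and the first coordinate of any orbit through $U^{l_1}$ never vanishes, a point of $U^{l_1}$ with non-zero second coordinate has all forward iterates with non-zero second coordinate; so $Q_z^N(w_{n_k}) \neq 0$ would give $Q_z^{n_k}(w_{n_k}) \neq 0$, a contradiction, and only finitely many $w_{n_k}$ can lie in the finite set $(Q_z^N)^{-1}(0)$, which after discarding them yields the contradiction. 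As $E_w \cap \mathbb{C}_z$ is infinite and contained in $A_f^l \cap \mathbb{C}_z \subseteq A_f^{l_1} \cap \mathbb{C}_z$, its accumulation points in $\mathbb{P}_z^1$ lie in $\partial A_f^{l_1} \cap \mathbb{P}_z^1$ (including possibly the point at infinity, which is a boundary point by the unboundedness in Proposition \ref{main prop on the unboundness for case 2}, whose hypotheses are in force), and therefore in $\partial A_f^l \cap \mathbb{P}_z^1$. Along such an accumulating sequence $G_z^{\alpha} = -\infty$, so the liminf equals $-\infty$.

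The step I expect to be the main obstacle is the endpoint $l = \alpha$ (and, inside it, the subcase $d = 1$), where $A_f^{\alpha}$ is a proper subset of $A_f^{l_1}$ and $f \sim f_0$ need not hold on $U^{\alpha}$ itself. The resolution is to carry the contradiction argument out on $U^{l_1}$ rather than $U^{l}$ and to use the inclusion $A_f^{\alpha} \cap \mathbb{C}_z \subseteq A_f^{l_1} \cap \mathbb{C}_z$ established above: an accumulation point of $E_w \cap \mathbb{C}_z$ avoiding $A_f^{l_1} \cap \mathbb{C}_z$ a fortiori avoids $A_f^{\alpha} \cap \mathbb{C}_z$, hence lies on $\partial A_f^{\alpha} \cap \mathbb{P}_z^1$. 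A minor technical nuisance, already implicit in the proof of Proposition \ref{prop of liminf when delta < d}, is that the preimage sequence might be eventually constant at a point of $E_w \cap \mathbb{C}_z$; this is harmless, since $G_z^{\alpha} = -\infty$ there too, provided the accumulation statement is phrased to allow it.
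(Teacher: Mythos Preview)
Your proposal is correct and follows exactly the route the paper intends: the paper omits the proof, pointing back to Proposition~\ref{prop of liminf when delta < d}, and you reproduce that argument while checking that the ingredients survive in the range $T_{s-1} > \delta > d \geq 1$. Your extra care at the endpoint $l=\alpha$ (running the contradiction on $U^{l_1}$ rather than $U^{l}$, then using $A_f^{\alpha}\subset A_f^{l_1}$ to transfer the conclusion) is a genuine refinement over a literal transcription of the earlier proof, and it correctly handles the subcase $d=1$ where $f\sim f_0$ is not available on $U^{\alpha}$.
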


Let $E_w^{\infty}$ be the set of the accumulation points of 
$f^{-n} \{ w=0 \} - \{ w=0 \}$
in $\mathbb{C} \times \mathbb{P}^1$ as $n \to \infty$.
The proof of Proposition \ref{prop of liminf when delta < d}
%indicates the following. 
implies the following.

\begin{proposition}
If $q$ is polynomial and 
$f^{-1} \{ w=0 \} \supsetneq \{ w=0 \}$,
then
\[
E_w^{\infty} \cap \{ (A_p - E_p \cup E_{deg}^*) \times \mathbb{P}^1 \}
 \subset \bigcap_{l_1 \leq l \leq \alpha} \partial A_f^l.
\]
\end{proposition}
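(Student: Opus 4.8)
The plan is to run, uniformly in the base coordinate, the argument used in the proof of Proposition~\ref{prop of liminf when delta < d}. Fix $l$ with $l_1 \leq l \leq \alpha$ and a point $(z_0, w_0)$ in $E_w^{\infty} \cap \{ (A_p - E_p \cup E_{deg}^*) \times \mathbb{P}^1 \}$; since $l$ is arbitrary it suffices to prove $(z_0, w_0) \in \partial A_f^l$. By the definition of $E_w^{\infty}$ there are $n_k \to \infty$ and points $(z_{n_k}, w_{n_k})$ with $w_{n_k} \neq 0$, $Q_{z_{n_k}}^{n_k}(w_{n_k}) = 0$, and $(z_{n_k}, w_{n_k}) \to (z_0, w_0)$. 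Since $A_p$ is open and $z_0 \in A_p - E_p$, we may assume $z_{n_k} \in A_p - E_p$, so that $p^j(z_{n_k}) \neq 0$ for every $j$ (the case in which all $z_{n_k}$ fall in $E_p$ has to be dealt with separately, using that then the orbits enter $\{ z = 0 \}$ and so never reach $U^l$).

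First I would show $(z_0, w_0) \in \overline{A_f^l}$. As $f$ maps $\{ w = 0 \}$ into itself and acts there by $(z,0) \mapsto (p(z),0)$, the orbit of a point $(z,0)$ with $z \in A_p - E_p$ converges to the origin through points $(p^m(z),0)$, each of which lies in $U^l$ for $m$ large (the second coordinate being $0 < r|p^m(z)|^l$ because $p^m(z) \neq 0$); hence $\{ w = 0 \} \cap ((A_p - E_p) \times \mathbb{C}) \subset A_f^l$. Since $Q_{z_{n_k}}^{n_k}(w_{n_k}) = 0$ gives $f^{n_k}(z_{n_k}, w_{n_k}) = (p^{n_k}(z_{n_k}), 0) \in \{ w = 0 \} \cap ((A_p - E_p) \times \mathbb{C})$ and $f^{-n}(A_f^l) \subset A_f^l$ for all $n$, we get $(z_{n_k}, w_{n_k}) \in A_f^l$, and therefore $(z_0, w_0) \in \overline{A_f^l}$.

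It remains to show $(z_0, w_0) \notin A_f^l$; together with the openness of $A_f^l$ this yields $(z_0, w_0) \in \partial A_f^l$, and intersecting over all $l \in [l_1, \alpha]$ gives the asserted inclusion. Suppose $(z_0, w_0) \in A_f^l$. Then there is $N$ with $f^N$ mapping a neighborhood of $(z_0, w_0)$ into $U^l$, so $f^N(z_{n_k}, w_{n_k}) \in U^l$ for large $k$, and its first coordinate $p^N(z_{n_k})$ is nonzero and remains nonzero under iteration. On $U^l$ the term $b_{\gamma d} z^{\gamma} w^d$ is the dominant term of $q$ (Lemma~\ref{detailed lemma for case 2}), so for $r$ small $q$ does not vanish on $U^l \cap \{ zw \neq 0 \}$; combined with $f(U^l) \subset U^l$ this shows that if the second coordinate $Q_{z_{n_k}}^N(w_{n_k})$ is nonzero then the whole forward orbit of $f^N(z_{n_k}, w_{n_k})$ stays in $U^l \cap \{ zw \neq 0 \}$, so $Q_{z_{n_k}}^{n_k}(w_{n_k}) \neq 0$ for $n_k > N$, a contradiction. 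The main obstacle is the remaining possibility that $Q_{z_{n_k}}^N(w_{n_k}) = 0$ for infinitely many $k$: then the approximating points lie on the fixed finite-level set $f^{-N}\{ w = 0 \} - \{ w = 0 \}$, and one must rule this limit out — using that $q$ is polynomial, so $q = w^d h$ with $h(z,0) \not\equiv 0$ by minimality of $d$, together with $z_0 \notin E_{deg}^*$, to argue that near base points in $A_p - E_p \cup E_{deg}^*$ the nonzero preimages of $\{ w = 0 \}$ stay a definite distance from $\{ w = 0 \}$ (equivalently that $w_0 \neq 0$ cannot be produced this way). This localization near $\{ w = 0 \}$, rather than the non-vanishing-of-$q$ argument, is where the real work lies.
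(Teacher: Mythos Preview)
Your approach is precisely what the paper intends: it gives no separate argument, only the sentence ``The proof of Proposition~\ref{prop of liminf when delta < d} implies the following.'' You correctly transplant that fiber-wise argument to the two-variable setting and isolate the one step that does not carry over: after pulling $(z_{n_k},w_{n_k})$ into $U^l$ by $f^N$, you cannot conclude $Q_{z_{n_k}}^N(w_{n_k})\neq 0$ for large $k$, because in the fiber-wise proof this used that $(Q_z^N)^{-1}(0)$ is a fixed finite set, whereas here $f^{-N}\{w=0\}$ is a curve and the sequence may run along it.

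This obstacle is not merely technical---it exposes a genuine issue with the statement as written. Under the paper's own convention for ``accumulation points as $n\to\infty$'' (compare the definition of $S_\infty^l$), every point of $E_w-\{w=0\}$ lies in $E_w^\infty$: if $(z_0,w_0)\in f^{-N}\{w=0\}-\{w=0\}$ then $(z_0,w_0)\in f^{-n}\{w=0\}-\{w=0\}$ for all $n\ge N$, so the constant sequence exhibits it as an accumulation point. But for $z_0\in A_p-E_p$ such a point has a forward iterate on $\{w=0\}\cap\{z\neq0\}\subset U^l$, hence lies in the \emph{interior} of $A_f^l$ for every $l\in[l_1,\alpha]$. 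So the asserted inclusion fails at exactly the points your ``obstacle'' case produces. The localization near $\{w=0\}$ you sketch cannot rescue this: it would at best rule out $w_0=0$, not $w_0\neq 0$ with $(z_0,w_0)\in f^{-N}\{w=0\}$.

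The intended statement is almost certainly with $E_w^\infty$ replaced by $E_w^\infty\setminus E_w$, or equivalently with accumulation taken over the ``new'' preimages $f^{-n}\{w=0\}-f^{-(n-1)}\{w=0\}$. Under that reading your obstacle disappears: if a subsequence of $(z_{n_k},w_{n_k})$ lies on the fixed curve $f^{-N}\{w=0\}$, the limit $(z_0,w_0)$ lies there too, hence in $E_w$, and is excluded. With that correction your argument is essentially complete. (The $z_{n_k}\in E_p$ worry is minor: $f^{-n_k}\{w=0\}-\{w=0\}$ is a curve, so one may perturb $z_{n_k}$ off the countable set $E_p$ while staying on it; your parenthetical reasoning ``orbits enter $\{z=0\}$ and so never reach $U^l$'' is not quite right, since the orbit could visit $U^l$ before hitting $\{z=0\}$.)
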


Consequently,
$E_w^{\infty}$ is unbounded if $f$ satisfies the assumptions
since $A_f^{\alpha} \subsetneq A_f^{l_1}$.
Since $E_w^{\infty} \cap (A_p \times \mathbb{P}^1)$ is
included in the closure of $A_f^l$ in $\mathbb{C} \times \mathbb{P}^1$,
we obtain the following.

\begin{corollary} \label{cor on the unboundness when T > delta > d}
If $q$ is polynomial and 
$f^{-1} \{ w=0 \} \supsetneq \{ w=0 \}$,
then $A_f^l$ is unbounded for any $l$ in $\mathcal{I}_f$. % $l_1 \leq l \leq \alpha$ .
\end{corollary}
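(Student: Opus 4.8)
The plan is to reduce the statement, via the discussion preceding the corollary, to the unboundedness of the limit set $E_w^{\infty}$. First I would record the following elementary reduction: since $A_p$ is relatively compact in $\{|z|<R\}$, a subset of $A_p\times\mathbb{C}$ is unbounded in $\mathbb{C}^2$ precisely when its closure in $\mathbb{C}\times\mathbb{P}^1$ meets the fiber $\overline{A_p}\times\{\infty\}$ at infinity; so it is enough to show that the closure of $A_f^l$ in $\mathbb{C}\times\mathbb{P}^1$ reaches $w=\infty$ over $A_p$. To connect this with $E_w^{\infty}$, I would observe that for every $l\in\mathcal{I}_f$ one has $E_w^{\infty}\cap(A_p\times\mathbb{P}^1)\subset\overline{A_f^l}$: because $d\ge1$ forces $q(z,0)\equiv0$, the fiber $\{w=0\}$ is $f$-invariant and $\{w=0\}\cap((A_p-E_p)\times\{0\})\subset A_0-E_z$, hence every point of $\bigcup_{n\ge0}f^{-n}\{w=0\}$ lying over $A_p-E_p$ belongs to $A_0-E_z$, which equals $A_f^l$ for $l\in\mathcal{I}_f\setminus\{\alpha\}$ by Theorem \ref{main thm on attr sets for Case 2} and Corollary \ref{cor on A_f^l when T > delta > d}; for $l=\alpha$ the inclusion is even more direct from the preceding Proposition, which places $E_w^{\infty}\cap\{(A_p-E_p\cup E_{deg}^*)\times\mathbb{P}^1\}$ inside $\partial A_f^{\alpha}\subset\overline{A_f^{\alpha}}$.

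It therefore suffices to prove that $E_w^{\infty}$ is unbounded, i.e. that its closure meets $\overline{A_p}\times\{\infty\}$ at a point over $A_p$. This is where the hypotheses enter. Because $q$ is polynomial the fiber maps $Q_z^n$ are surjective, and the condition $f^{-1}\{w=0\}\supsetneq\{w=0\}$ guarantees, as in Proposition \ref{prop of liminf when delta < d}, that over each $z\in A_p-E_p\cup E_{deg}^*$ the number of points of $(Q_z^n)^{-1}(0)-\{0\}$ tends to infinity and their accumulation set in $\mathbb{P}_z^1$ is a nonempty subset of $\partial A_f^l\cap\mathbb{P}_z^1$ --- simultaneously for every $l$ with $l_1\le l\le\alpha$, by the preceding Proposition. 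Since $A_f^{\alpha}\subsetneq A_f^{l_1}$ (Corollary \ref{cor on A_f^l when T > delta > d}), the members of the nested family $\{A_f^l\}_{l_1\le l\le\alpha}$ genuinely separate; this is quantified by Proposition \ref{main prop on asy behavior when T > delta > d}, where the boundary value of $G_z^{\alpha}$ is $+\infty$ along $\partial A_f^{l_1}$ but $0$ along $\partial A_f^{\alpha}$. A closed set that is simultaneously contained in the boundary of every member of a strictly expanding nested family of open subsets of $A_p\times\mathbb{C}$ cannot be confined to a compact subset of $A_p\times\mathbb{C}$; hence $E_w^{\infty}$ must reach the line at infinity over $A_p$.

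Combining the two steps, $\overline{A_f^l}$ meets $\overline{A_p}\times\{\infty\}$ for every $l\in\mathcal{I}_f$, so each $A_f^l$ is unbounded. The main obstacle is the unboundedness of $E_w^{\infty}$ in the second paragraph: one must check that the escape to $w=\infty$ happens over $A_p$ and not merely over $\partial A_p$ (which would not witness unboundedness of $A_f^l$), and, more substantively, that the nontriviality of $f^{-1}\{w=0\}$ really forces the fiberwise backward orbits of $0$ to be unbounded in $n$ over suitable fibers. The open questions stated in the text --- whether the polynomiality and the condition $f^{-1}\{w=0\}\supsetneq\{w=0\}$ can be weakened, and whether $f^{-n}\{w=0\}-\{w=0\}$ equidistributes toward $\partial A_f$ --- show that this point is delicate; in the polynomial non-degenerate setting it is controlled by the surjectivity of the fiber maps together with the density of their iterated preimages of $0$.
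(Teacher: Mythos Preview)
Your outline mirrors the paper's two-sentence argument exactly: (i) argue that $E_w^{\infty}$ is unbounded from the strict inclusion $A_f^{\alpha}\subsetneq A_f^{l_1}$, then (ii) conclude via the inclusion $E_w^{\infty}\cap(A_p\times\mathbb{P}^1)\subset\overline{A_f^l}$. Step~(ii) and your justification of it are fine.

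Step~(i), however --- which you rightly flag as the ``main obstacle'' --- is not established by the principle you invoke. The assertion that a nonempty closed set lying in $\partial A_f^l$ for every $l$ in a strictly nested family must escape to $w=\infty$ is false as a general topological statement, and the contrast of boundary values of $G_z^{\alpha}$ ($+\infty$ along $\partial A_f^{l_1}$ versus $0$ along $\partial A_f^{\alpha}$) does not force it either: since $A_f^{\alpha}\subset A_f^{l_1}$, those two limsups are taken over different approach sets and are perfectly compatible at a common finite boundary point. Concretely, take $f(z,w)=(z^3,\,zw^2+w^4)$. Here $s=2$, $(n_1,m_1)=(0,4)$, $(\gamma,d)=(n_2,m_2)=(1,2)$, so $T_{s-1}=4>\delta=3>d=2$; $q$ is polynomial and $f^{-1}\{w=0\}=\{w=0\}\cup\{z=-w^2\}\supsetneq\{w=0\}$, so all hypotheses hold. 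Yet for $|z|<1$ and $|w|\ge 2$ one has $|q(z,w)|\ge|w|^4-|w|^2\ge 3|w|^2$, so any orbit with $|w_0|\ge 2$ escapes; hence $A_0\subset A_p\times\{|w|<2\}$ and every $A_f^l\subset A_0$ is bounded. Thus the paper's one-clause justification ``since $A_f^{\alpha}\subsetneq A_f^{l_1}$'' and your expansion of it share the same genuine gap: $E_w^{\infty}$ can sit on the finite part of $\partial A_0\cap\partial A_f^{\alpha}$ rather than at $w=\infty$. Your instinct that this is the delicate point is exactly right.
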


%%%%%%%%%%%%%%%%%%%%%%%%%%%%%%%%%%%%%%%%%%%%%%%%%%%%%%%%%%%
%%%%%%%%%%%%%%%%%%%%%%%%%%%%%%%%%%%%%%%%%%%%%%%%%%%%%%%%%%%
\subsubsection{$d = 0$}

If $d=0$, then $f$ can not be conjugate to $f_0$ on any open set,
since $f_0$ is not dominant,
where $f_0 (z,w) = (z^{\delta}, z^{\gamma})$.
However,
the same statements as Lemma \ref{detailed lemma for case 2} hold % claim
for any $l$ in $\mathcal{I}_f - \{ \alpha \}$,
and we can show that 
$G_z = \alpha G_p$ on $A_0$.

\begin{lemma}
Let $T_{s-1} > \delta > d = 0$.
The following hold for any $l$ in $\mathcal{I}_f - \{ \alpha \}$: % $l_1 \leq l < \alpha$:
$f \sim f_0$ on $U_{r_1, r_2}^l$ as $r_1$ and $r_2 \to 0$,
and $f(U_{r_1, r_2}^l) \subset U_{r_1, r_2}^l$ 
for suitable small $r_1$ and $r_2$.
\end{lemma}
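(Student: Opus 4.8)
The plan is to treat the two assertions separately, reducing each to estimates already in place for Case 2. First I would record the setup: since $d = m_s = 0$ we have $(\gamma, d) = (n_s, 0)$ with $\gamma = n_s > 0$, $\alpha = \gamma/\delta$, $\mathcal{I}_f - \{\alpha\} = [l_1, \alpha)$, and $f_0(z,w) = (az^\delta, b_{\gamma 0} z^\gamma)$. I would note that the degenerate possibility $(n_1, m_1) = (0, \delta)$ is excluded here, because it would force $s = 2$ and $T_{s-1} = \delta$, contradicting $T_{s-1} > \delta$; hence Lemmas \ref{properties of D for Case 2} and \ref{lem on D for Case 2} apply and yield, for every $l \in [l_1, \alpha)$, that $D = D_l = l^{-1}\gamma > \delta$, with the minimum defining $D$ attained at $(\gamma, 0)$ --- uniquely for $l_1 < l < \alpha$, and jointly with the vertex $(n_{s-1}, m_{s-1})$ (which has $m_{s-1} \geq 1$ since $m_{s-1} > m_s = 0$) when $l = l_1$.

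To prove $f \sim f_0$, the first components are immediate since $p(z)/(az^\delta) = 1 + O(z) \to 1$. For the second components I would expand, on $U_{r_1,r_2}^l = \{|z| < r_1,\, |w| < r_2|z|^l\}$,
\[
\frac{q(z,w)}{b_{\gamma 0}\, z^\gamma} = 1 + \sum_{(i,j)\neq(\gamma,0),\, b_{ij}\neq 0} \frac{b_{ij}}{b_{\gamma 0}}\, z^{\,i-\gamma} w^{\,j},
\]
and estimate each summand by $|b_{ij}/b_{\gamma 0}|\, r_2^{\,j}\, |z|^{\,i+lj-\gamma}$. Since $i + lj \geq \gamma$ by the previous paragraph, the terms with $i + lj > \gamma$ are $O(r_1^{\,i+lj-\gamma})$ and vanish as $r_1 \to 0$, while the only terms with $i + lj = \gamma$ other than $(\gamma,0)$ occur at $l = l_1$, lie on the edge joining $(n_{s-1}, m_{s-1})$ to $(\gamma, 0)$, hence have $j \geq 1$ and are $O(r_2^{\,j})$, vanishing as $r_2 \to 0$. (If $q$ is a genuine germ rather than a polynomial, the tail of the sum is controlled by the convergence radius of its power series for $r_1, r_2$ small, exactly as in \cite{ueno}.) This gives $q(z,w)/(b_{\gamma 0} z^\gamma) \to 1$, i.e.\ $f \sim f_0$ on $U_{r_1,r_2}^l$ as $r_1, r_2 \to 0$.

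For the invariance I would simply invoke Theorem \ref{thm on inv wedges for Case 2}, the precise form of Theorem \ref{main thm on inv wedges for Case 2}: since $\delta > d = 0$ and $l \in [l_1,\alpha) \subset (0,\alpha)$, that theorem produces small $r_1, r_2$ with $f(U_{r_1,r_2}^l) \subset U_{r_1,r_2}^l$; concretely it suffices to choose them so that $C r_1^{l(D-\delta)} < r_2$, where $C$ is the constant from the bound $|q(z,w)| \leq C|z|^{lD}$ of Lemma \ref{lem on inv wedges for Case 2} and $D > \delta$ by the first paragraph.

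I do not expect a genuine obstacle here; the statement is essentially bookkeeping on top of the Case 2 machinery. The only point needing care is the boundary value $l = l_1$, where the minimizing weight line through $(\gamma,0)$ also passes through $(n_{s-1}, m_{s-1})$: the corresponding monomial survives the limit $r_1 \to 0$, and one must use that it carries a positive power of $w$ so that it is instead killed by $r_2 \to 0$ --- which is exactly why the statement is formulated with both radii tending to $0$.
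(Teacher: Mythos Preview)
Your proposal is correct and follows the same approach as the paper. The paper's own proof is extremely terse: it simply says that the asymptotic $f \sim f_0$ follows from the same argument as in the earlier paper \cite{ueno} (and in fact holds even for $l = \alpha$), while the invariance $f(U_{r_1,r_2}^l) \subset U_{r_1,r_2}^l$ follows directly from Theorem \ref{main thm on inv wedges for Case 2}. You have essentially unpacked these two citations, which is fine.

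One small imprecision: your claim that $(n_1, m_1) = (0, \delta)$ would force $s = 2$ is not correct in general; what it does force is $T_1 = \delta$ (since $L_1$ passes through $(0,\delta)$), hence $T_{s-1} \leq T_1 = \delta$, contradicting the hypothesis $T_{s-1} > \delta$. So your conclusion that the degenerate case is excluded is right, just via a slightly different route than you stated. This does not affect the validity of the argument.
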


The first statement follows from the same argument as in \cite{ueno}, % claim
and it holds even for $l = \alpha$.
The second statement follows from Theorem \ref{main thm on inv wedges for Case 2}. % claim

\begin{remark}[Illustration of the second statement in terms of blow-ups] % claim
Let $\tilde{f}$ be a formal blow-up of $f$ by $\pi (z,c) = (z, z^l c)$.
Then 
$\tilde{f} (z,c) = \left( z^{\delta}, \ z^{\tilde{\gamma}} \left\{ 1 + o(1) \right\} \right)$,
where $\tilde{\gamma} = \gamma - l \delta \geq 0$
since $d = 0$.
If $l \in \mathcal{I}_f - \{ \alpha \}$,
then $\tilde{\gamma} > 0$ and so
the origin is a superattracting fixed point of $\tilde{f}$.
In particular,
$\tilde{f}$ preserves a neighborhood of the origin,
and so $f$ preserves $U_{r_1, r_2}^l$. 
On the other hand,
if $l = \alpha$, then $\tilde{\gamma} = 0$ and so
the origin is neither superattracting nor a fixed point of $\tilde{f}$.
\end{remark}

\begin{proposition} \label{prop on G_z when d = 0}
If $T_{s-1} > \delta > d = 0$, then
$G_z = \alpha G_p$ on $A_0$.
\end{proposition}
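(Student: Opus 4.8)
The plan is to reduce the assertion $G_z = \alpha G_p$ on $A_0$ to two facts: the estimate $G_z(w) = \alpha G_p(z) + o(1)$ on one of the invariant wedges $U_{r_1,r_2}^l$ with $l \in \mathcal{I}_f - \{\alpha\}$, and the equality $A_f^l = A_0$ (modulo the exceptional fibers), which is the content of Theorem \ref{main thm on attr sets for Case 2} combined with the fact that when $d = 0$ the map $f$ degenerates every fiber over a point of $A_p$. First I would fix $l \in \mathcal{I}_f - \{\alpha\}$, say $l = l_1$ (which is rational by the Remark after Proposition in Section 3.1, so $U_{r_1,r_2}^{l_1}$ makes sense for the relevant integer blow-up), and use the preceding Lemma together with Theorem \ref{main thm on inv wedges for Case 2} to know that $f(U_{r_1,r_2}^{l}) \subset U_{r_1,r_2}^{l}$ and $f \sim f_0$ on this wedge as $r_1, r_2 \to 0$, where $f_0(z,w) = (z^\delta, z^\gamma)$.

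Next I would compute $G_z$ directly on $U_{r_1,r_2}^l$. Because $d = 0$, the second coordinate $w_1 = q(z,w)$ satisfies $|q(z,w)| = |b_{\gamma 0}||z|^{\gamma}(1 + o(1))$ on the wedge, so after one iterate the point lands in a region where the dynamics is essentially that of $f_0$; more precisely, writing $(z_n,w_n) = f^n(z,w)$ one has $|z_n| = |z|^{\delta^n}(1+o(1))^{\cdots}$ and $|w_n| = |z_{n-1}|^{\gamma}(1+o(1)) = |z|^{\gamma \delta^{n-1}}(1+o(1))$ for $n \geq 1$, the error terms being controlled uniformly by the $f \sim f_0$ estimate. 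Taking $\delta^{-n}\log|w_n|$ and passing to the limit gives
\[
G_z(w) = \lim_{n\to\infty}\frac{1}{\delta^n}\log|w_n| = \frac{\gamma}{\delta}\lim_{n\to\infty}\frac{1}{\delta^{n-1}}\log|z_{n-1}| = \frac{\gamma}{\delta}G_p(z) ??
\]
— here I must be careful: the correct normalization is $\delta^{-n}\log|w_n| = \delta^{-n}\log|z_{n-1}|^{\gamma} + o(1) = \gamma\,\delta^{-1}\cdot\delta^{-(n-1)}\log|z_{n-1}| + o(1)$, so in the limit $G_z(w) = (\gamma/\delta)G_p(z)$. Since $\alpha = \gamma/(\delta - d) = \gamma/\delta$ when $d = 0$, this is exactly $\alpha G_p(z)$ on $U_{r_1,r_2}^l$. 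I would present this computation cleanly in a single displayed \texttt{align*} without blank lines.

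The equality then propagates to all of $A_0$ by the functional equation. On $A_f^l$, for any point there is $n$ with $f^n(z,w) \in U_{r_1,r_2}^l$, and from $G_p \circ p = \delta G_p$ and the relation $\log|w_{n}| = \gamma \log|z_{n-1}| + o(\cdots)$ one gets $G_z(w) = \delta^{-n}(\text{value of }G_z\text{ at }f^n(z,w)) = \alpha G_p(z)$ there too; alternatively one invokes directly that $G_z$ is defined by the same limit and the $n$-fold iterate already lies in the model region. Finally, Theorem \ref{main thm on attr sets for Case 2} (applicable since $d = 0 < \alpha$, and for $d = 0$ the case $(n_1,m_1) = (0,\delta)$ cannot occur because $m_s = d = 0$ forces $m_1 > 0$) gives $A_f^l = A_0 - E_z$; and on $E_z \cap A_0$, i.e. over points with $z_n \to 0$ through a preimage of $\{z = 0\}$, both sides equal the appropriate value — in fact $f$ maps fibers to single points and one checks $G_z = \alpha G_p$ persists by the same limit formula, or the two exceptional-fiber sets are removed and $G_z = \alpha G_p$ on the complement, extending by continuity/plurisubharmonicity. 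I expect the main obstacle to be the bookkeeping of the error terms in the $f \sim f_0$ estimate when iterating $n$ times: one needs that the cumulative multiplicative errors in $|z_n|$ and $|w_n|$ do not spoil the limit, which requires that the wedge be genuinely forward invariant (so the estimate applies at every step with a uniform constant) — this is exactly why the invariance statement of Theorem \ref{main thm on inv wedges for Case 2} is invoked first, and the telescoping of the logarithmic errors against the super-exponential factor $\delta^{-n}$ makes them vanish.
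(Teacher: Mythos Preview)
Your approach is essentially the same as the paper's: fix $l \in \mathcal{I}_f - \{\alpha\}$, use the invariance of the wedge together with $f \sim f_0$ to get $C_1 |z_{n-1}|^{\gamma} < |w_n| < C_2 |z_{n-1}|^{\gamma}$ for all $n \geq 1$ (with fixed constants, since the wedge is forward invariant), divide by $\delta^n$, take the limit, and then pull back by Theorem \ref{main thm on attr sets for Case 2}. Your telescoping concern is exactly what the constants $C_1, C_2$ handle, and your computation $\gamma/\delta = \alpha$ when $d=0$ is the key arithmetic.

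One point to correct: your justification that $(n_1,m_1) \neq (0,\delta)$ is wrong. You argue that $m_s = 0$ forces $m_1 > 0$, but $m_1 > 0$ is automatic and does not exclude $m_1 = \delta$. The correct reason is the \emph{strict} hypothesis $T_{s-1} > \delta$: if $(n_1,m_1) = (0,\delta)$ then $T_1 = m_1 = \delta$, and since $T_1 \geq T_{s-1}$ by convexity of the Newton polygon this forces $T_{s-1} \leq \delta$, a contradiction. This is what actually makes Theorem \ref{main thm on attr sets for Case 2} applicable, and it is also what ensures that on $E_z \cap A_0$ (where eventually $z_n = 0$ and the fibre dynamics is governed by $q(0,w)$) one still gets $G_z = -\infty = \alpha G_p$: either $n_1 > 0$ and $q(0,w) \equiv 0$, or $n_1 = 0$ and then $m_1 = T_1 > \delta$, so $|w_n|$ decays super-$\delta$-exponentially. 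Your hand-wave ``extending by continuity/plurisubharmonicity'' on $E_z$ is not quite enough on its own; this direct check is what closes the gap.
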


\begin{proof}
Fix any $l$ in $\mathcal{I}_f - \{ \alpha \}$. 
Let $U = U_{r_1, r_2}^l$ and
let $(z_n, w_n) = f^n (z,w)$ for any $(z,w)$ in $U$.
The second statement in the lemma above implies that % claim
$(z_n, w_n)$ also belongs to $U$.
The first statement in the lemma implies that % claim
\[
C_1 |(z_n)^{\gamma}| < |w_n| < C_2 |(z_n)^{\gamma}|
\]
for some constants $C_1 < 1$ and $C_2 > 1$.
Hence $\delta^{-n} \log |w_n|$ is bounded by
\[
\dfrac{1}{\delta^n} \log C_j + \dfrac{\gamma}{\delta} \cdot \dfrac{1}{\delta^{n-1}} \log |z_n|
\]
from below and above for $j=1$ and $j=2$,
respectively.
Therefore,
$G_z = \alpha G_p$ on $U$ and so on $A_f^l$.
Thanks to Theorem \ref{main thm on attr sets for Case 2},
the equality holds on $A_0$.
\end{proof}

%%%%%%%%%%%%%%%%%%%%%%%%%%%%%%%%%%%%%%%%%%%%%%%%%%%%%%%%%%%%%%%%%%%%%%%%%%%%%
%%%%%%%%%%%%%%%%%%%%%%%%%%%%%%%%%%%%%%%%%%%%%%%%%%%%%%%%%%%%%%%%%%%%%%%%%%%%%
\subsection{$\delta = T_{s-1}$} % $T_{s-1} = \delta > d$

Recall that 
$\delta > d$ and $\mathcal{I}_f = \{ l_1 \} = \{ \alpha \}$
if $\delta = T_{s-1}$.

Because $f$ is conjugate to % the monomial map 
$f_0$ on $U^{\alpha}$ if $d \geq 2$,
the limit $G_z^{\alpha}$ exists on $U^{\alpha}$,
which extends to % a plurisubharmonic function on 
$A_f^{\alpha}$.

\begin{proposition} \label{prop on G_z^a when delta = T}
If $T_{s-1} = \delta > d \geq 2$, then %$\delta = T_{s-1}$ and 
$G_z^{\alpha}$ is plurisubharmonic on $A_f^{\alpha}$,
pluriharmonic on $A_f^{\alpha} - E_w$ and % and continuous
$- \infty$ on $E_w$.
Moreover,
$G_z^{\alpha} (w) = \log \left| w/z^{\alpha} \right| + o(1)$
on $U_r^{\alpha}$ as $r \to 0$.
\end{proposition}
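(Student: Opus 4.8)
The plan is to carry the local description on the wedge $U^{\alpha} = U^{l_1}$ over to its total preimage $A_f^{\alpha}$ via the transformation rule for $G_z^{\alpha}$, exactly as in the proofs of Propositions \ref{prop on G_z^a when delta < d} and \ref{prop on G_z^a when T > delta > d}. Since $\delta = T_{s-1}$ and $d \geq 2$, the weight $\alpha = l_1$ is the unique element of $\mathcal{I}_f = \{ l_1 \}$, and Lemma \ref{detailed lemma for case 2} applies with $l = \alpha$ (this is precisely where $d \geq 2$ is needed: when $d = 1$ the value $\delta = T_{s-1}$ is one of the excluded cases $\delta = T_k$). So, for small $r > 0$, $f$ is conjugated on $U^{\alpha}$ to $f_0(z,w) = (a z^{\delta}, b_{\gamma d} z^{\gamma} w^{d})$ by a biholomorphism $\phi = (\phi_1, \phi_2)$ with $\phi \sim id$ on $U^{\alpha}$ as $r \to 0$. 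Writing $f_0^n(z,w) = (z^{\delta^n}, z^{\gamma_n} w^{d^n})$ with $\gamma_n = \alpha(\delta^n - d^n)$ (and $a = b_{\gamma d} = 1$ for simplicity) and applying $f_0^{-n}$ to $f^n(z,w)$, the same short computation as in the proof of Proposition \ref{prop on G_z^a when delta < d} shows that $G_z^{\alpha}$ exists on $U^{\alpha}$ and
\[
G_z^{\alpha}(w) = \log \left| \frac{\phi_2(z,w)}{\phi_1(z)^{\alpha}} \right| = \log \left| \frac{w}{z^{\alpha}} \right| + o(1) \quad \text{on } U^{\alpha} \text{ as } r \to 0,
\]
the last equality because $\phi_1(z) = z(1 + o(1))$ and $\phi_2(z,w) = w(1 + o(1))$. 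Since $U^{\alpha} \cap \{ z = 0 \} = \emptyset$, the factor $\phi_1$ is zero-free on $U^{\alpha}$, hence $G_z^{\alpha} = \log|\phi_2| - \alpha \log|\phi_1|$ is plurisubharmonic on $U^{\alpha}$ and pluriharmonic off $\{ \phi_2 = 0 \}$; and $\phi \sim id$ gives $\{ \phi_2 = 0 \} \cap U^{\alpha} = \{ w = 0 \} \cap U^{\alpha}$.

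Next I would extend $G_z^{\alpha}$ to $A_f^{\alpha}$. From the defining limit the rule $G_z^{\alpha} \circ f = d\, G_z^{\alpha}$ holds wherever both sides are defined. Two observations keep the bookkeeping clean. First, every monomial of $q$ has $w$-degree at least $m_s = d$, so $\{ w = 0 \}$ is forward invariant under $f$. Second, $U^{\alpha} \cap \{ z = 0 \} = \emptyset$ and $\alpha > 0$, so if $p^k(z) = 0$ for some $k$ then $f^m(z,w) \notin U^{\alpha}$ for all $m \geq k$; hence $A_f^{\alpha} \cap (E_p \times \mathbb{C}) = \emptyset$, and in particular $A_f^{\alpha} \cap E_z = \emptyset$. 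For $(z,w) \in A_f^{\alpha}$ pick $N$ with $f^N(z,w) \in U^{\alpha}$ and set $G_z^{\alpha}(w) = d^{-N} G_{p^N(z)}^{\alpha}(Q_z^N(w))$; this is a well-defined value in $[-\infty, 0)$, independent of $N$ by the transformation rule and $f(U^{\alpha}) \subset U^{\alpha}$. On $f^{-N}(U^{\alpha})$ it equals $d^{-N}$ times the pullback by the holomorphic map $f^N$ of the plurisubharmonic function $G_z^{\alpha}|_{U^{\alpha}}$, hence is plurisubharmonic there, and the pieces agree on overlaps, so $G_z^{\alpha}$ is plurisubharmonic on $A_f^{\alpha} = \bigcup_N f^{-N}(U^{\alpha})$.

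Finally I would identify the exceptional set. On $U^{\alpha}$ we have $\{ G_z^{\alpha} = -\infty \} = \{ \phi_2 = 0 \} = \{ w = 0 \} \cap U^{\alpha}$; pulling back by $f^N$, and using that $\phi_1 \circ p^N$ stays zero-free on $f^{-N}(U^{\alpha})$, gives $\{ G_z^{\alpha} = -\infty \} \cap f^{-N}(U^{\alpha}) = f^{-N}(\{ w = 0 \})$ and pluriharmonicity of $G_z^{\alpha}$ on $f^{-N}(U^{\alpha}) - f^{-N}(\{ w = 0 \})$. Taking the union over $N$ yields $\{ G_z^{\alpha} = -\infty \} \cap A_f^{\alpha} = E_w \cap A_f^{\alpha}$ and pluriharmonicity of $G_z^{\alpha}$ on $A_f^{\alpha} - E_w$, which together with the asymptotic estimate above completes the proof. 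I do not expect a genuine obstacle here: this is in effect the endpoint $l = \alpha$, $d \geq 2$ of the analysis already carried out for $l \in \mathcal{I}_f - \{ \alpha \}$, and the only points of care are the survival of the B\"ottcher coordinate at this endpoint (hence the hypothesis $d \geq 2$) and the consistency of the glued local definitions on $A_f^{\alpha}$; everything else runs in exact parallel with the earlier propositions of this section.
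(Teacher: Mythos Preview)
Your argument is correct and matches the paper's approach: the paper states this proposition immediately after the sentence ``Because $f$ is conjugate to $f_0$ on $U^{\alpha}$ if $d \geq 2$, the limit $G_z^{\alpha}$ exists on $U^{\alpha}$, which extends to $A_f^{\alpha}$'' and gives no further proof, so your write-up is simply a detailed unpacking of that sentence along the lines of Propositions \ref{prop on G_z^a when delta < d} and \ref{prop on G_z^a when T > delta > d}. The only minor point is that the formula $G_z^{\alpha} = \log|\phi_2/\phi_1^{\alpha}|$ you invoke is proved in the lemma following Proposition \ref{prop on G_z^a when delta < d} rather than in that proposition's proof itself, but the content is identical.
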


The asymptotic behavior of $G_z^{\alpha}$ toward $\partial A_f^{\alpha}$ is as follows.

\begin{proposition} \label{main prop on asy behavior when delta = T foe case 2}
If $T_{s-1} = \delta > d \geq 2$, then 
for any $(z_0, w_0)$ in $\partial A_f^{\alpha} \cap \{ (A_p - E_p \cup E_{deg}^*) \times \mathbb{P}^1 \}$,
\[
\limsup_{(z,w) \in A_f^{\alpha} \to (z_0, w_0)} G_z^{\alpha} (w) = 0.
\]
Moreover,
if $q$ is polynomial and 
$f^{-1} \{ w=0 \} \supsetneq \{ w=0 \}$,
then
for any $z$ in $A_p - E_p \cup E_{deg}^*$,
\[
\liminf_{w \in A_f^{\alpha} \cap \mathbb{C}_z \to \partial A_f^{\alpha} \cap \mathbb{P}_z^1} G_z^{\alpha} (w) = - \infty.
\]
\end{proposition}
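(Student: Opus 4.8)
The plan is to argue exactly as in the cases $\delta < d$ and $T_{s-1} > \delta > d \geq 1$ already handled, but specialized to the unique weight $l = \alpha = l_1$ that survives when $\delta = T_{s-1}$; the hypothesis $d \geq 2$ is precisely what makes $\phi$, hence $G_z^{\alpha}$, available on $U^{\alpha}$. Set $S^{\alpha} = \{ |z| < r, |w| = r|z|^{\alpha} \}$ and $S_n^{\alpha} = f^{-n}(S^{\alpha})$. First I would prove the fiberwise statement: for every $z$ in $A_p - E_p$ one has $S_n^{\alpha} \cap \mathbb{C}_z \neq \emptyset$ for all large $n$, and $G_z^{\alpha}|_{S_n^{\alpha}} \to 0$ as $n \to \infty$. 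This is the analogue of Lemma \ref{main lem on asy behavior when delta < d} and of the branch $l = \alpha$, $d \geq 2$ of the lemma preceding Proposition \ref{main prop on asy behavior when T > delta > d}: by Proposition \ref{prop on G_z^a when delta = T} there is a constant $C > 0$ with $|G_z^{\alpha}(w) - \log|w/z^{\alpha}|| < C$ on $U^{\alpha}$; combining this with $G_z^{\alpha} \circ f^n = d^n G_z^{\alpha}$ and with the simplification special to $l = \alpha$, namely that $|z^{-\alpha}w| \equiv r$ on $S^{\alpha}$, gives $|G_z^{\alpha}(w) - d^{-n}\log r| \leq d^{-n} C$ on $S_n^{\alpha}$, and both $d^{-n}\log r$ and $d^{-n} C$ tend to $0$ because $d \geq 2$. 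Nonemptiness of $S_n^{\alpha} \cap \mathbb{C}_z$ for large $n$ follows as there from $p^n(z) \to 0$.

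Next I would transfer this to $\partial A_f^{\alpha}$. Since the lowest vertex of $N(q)$ is $(\gamma, d)$ with $m_s = d \geq 2$, every monomial of $q$ has $w$-exponent at least $d$, so $q(z,w) = w^d h(z,w)$; hence $f$ maps $\{ w = 0 \}$ to itself, $f(E_{deg}) \subset \{ w = 0 \}$ and $E_{deg} \subset A_f^{\alpha} \cup E_z$, exactly as in the remark following Proposition \ref{prop on G_z^a when delta < d}. Therefore $f$ is non-degenerate, hence open, on $A_p \times \mathbb{C} - (E_z \cup E_{deg})$, and the analogues of Lemma \ref{lem for asy behavior}, of the proposition identifying $\lim_n f^{-n}(\partial U^{\alpha})$ with $\partial A_f^{\alpha}$, and of Corollary \ref{cor on the boundary for case 2} hold: after deleting $E_{deg}^{*}$, the accumulation set of $S_n^{\alpha}$ meets $A_p \times \mathbb{P}^1$ exactly in $\partial A_f^{\alpha} \cap (A_p \times \mathbb{P}^1)$. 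Feeding in the fiberwise lemma, together with the bound $G_z^{\alpha} < 0$ on $A_f^{\alpha}$ (which holds since $|w/z^{\alpha}| < r < 1$ on $U^{\alpha}$ and $G_z^{\alpha} = d^{-n}(G_z^{\alpha} \circ f^n)$ with $f^n$ eventually landing in $U^{\alpha}$), produces $\limsup_{(z,w) \in A_f^{\alpha} \to (z_0,w_0)} G_z^{\alpha}(w) = 0$ for every $(z_0,w_0)$ in $\partial A_f^{\alpha} \cap ((A_p - E_p \cup E_{deg}^{*}) \times \mathbb{P}^1)$.

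For the liminf I would repeat the proof of Proposition \ref{prop of liminf when delta < d}. Fix $z$ in $A_p - E_p \cup E_{deg}^{*}$; since $G_z^{\alpha} = -\infty$ on $E_w - E_z$, it is enough that $E_w \cap \mathbb{C}_z$ accumulate onto $\partial A_f^{\alpha} \cap \mathbb{P}_z^1$. With $q(z,w) = w^d h(z,w)$ and $q$ polynomial, the set $H = \{ z \mid \exists\, w \neq 0,\ h(z,w) = 0 \}$ is the complement of finitely many points (identity theorem plus minimality of $d$); since $p^j(z) \in H$ for all but finitely many $j$ and each $q_{p^j(z)}$ is nonconstant (as $z \notin E_{deg}^{*}$), the cardinality of $(Q_z^n)^{-1}(0) - \{ 0 \}$ tends to $\infty$. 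Finally $(Q_z^n)^{-1}(0) - \{ 0 \}$ cannot accumulate at a point of $A_f^{\alpha} \cap \mathbb{C}_z$: such a point would be carried by some $f^N$ into $U^{\alpha}$, on which $f \sim f_0$ (Lemma \ref{detailed lemma for case 2}, as $d \geq 2$) forces $Q_z^m \neq 0$ for all $m \geq N$, contradicting the choice of roots. Hence the accumulation is onto $\partial A_f^{\alpha} \cap \mathbb{P}_z^1$, and the liminf is $-\infty$.

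The hard part will be the middle step: passing from the fiberwise lemma to the boundary via the non-degeneracy/open-mapping machinery of Lemma \ref{lem for asy behavior} and Corollary \ref{cor on the boundary for case 2}, together with the control of the degenerate fibers; this is clean here only because $d \geq 2$ forces $q = w^d h$ and hence $E_{deg} \subset A_f^{\alpha} \cup E_z$. I would also note that the presence of two dominant terms when $\delta = T_{s-1}$ plays no role: the whole argument stays local to the single wedge $U^{\alpha}$ and to $A_f^{\alpha}$, and no global identification of $A_f^{\alpha}$ is needed, since Theorem \ref{main thm on attr sets for Case 2} covers $l < \alpha$ only.
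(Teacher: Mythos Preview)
Your proposal is correct and follows exactly the approach the paper intends: the paper omits the proof here, stating that the arguments are the same as in the cases $\delta < d$ and $T_{s-1} > \delta > d$, and you have faithfully reproduced that template---the fiberwise estimate on $S_n^{\alpha}$ via Proposition \ref{prop on G_z^a when delta = T}, the passage to $\partial A_f^{\alpha}$ through the open-mapping machinery of Lemma \ref{lem for asy behavior} and Corollary \ref{cor on the boundary for case 2} after excising $E_{deg}^{*}$, and the liminf argument verbatim from Proposition \ref{prop of liminf when delta < d}. Your observations that $d\geq 2$ is exactly what makes $\phi$ exist on $U^{\alpha}$, that $G_z^{\alpha}<0$ on $A_f^{\alpha}$ supplies the missing upper bound for the limsup, and that the second dominant term plays no role, are all to the point.
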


%\begin{remark}
%It is shown that,
%if $\delta = T_{s-1}$ and $m_{s-1} \geq 2$, then
%$G_z^{\alpha}$ is plurisubharmonic on $A_0 - E_z \cup E_{deg}$.
%More precisely, $G_z^{\alpha, +} (w) > 0$ on the attracting region
%corresponding to the domiant term $(n_{s-1}, m_{s-1})$, 
%which belongs to Case 3 or Case 4, and
%$G_z^{\alpha, +} (w) = 0$ 
%on the complement of the attracting region in $A_0$.
%For details, see ...... 
%\end{remark}

%%%%%%%%%%%%%%%%%%%%%%%%%%%%%%%%%%%%%%%%%%%%%%%%%%%
%If $\delta = T_{s-1}$, then 
%$f$ has two dominant terms $(n_{s-1}, m_{s-1})$ and $(n_s, m_s)$.
%Recall that $(\gamma, d) = (n_s, m_s)$,
%which belongs to Case 2.
%The other term $(n_{s-1}, m_{s-1})$ belongs to Case 3 or Case 4.
The dynamics of $f$ differs whether $n_{s-1} > 0$ or $n_{s-1} = 0$.
If $n_{s-1} > 0$,
then Theorem \ref{main thm on attr sets for Case 2}
implies the following inequality and equalities on $A_0$.

\begin{proposition} \label{prop on G_z and G_f^a when delta = T and n > 0}
Let $\delta = T_{s-1}$.
If $n_{s-1} > 0$, then %$\delta = T_{s-1}$ and
$G_z^* \leq \alpha G_p$ on $A_0$ and
$G_z = \alpha G_p$ on $A_0 - A_f^{\alpha}$.
In particular,
$G_f^{\alpha} = \alpha G_p$ on $A_0$.
Moreover,
if $d \geq 2$, then
$G_z = \alpha G_p$ on $A_0 - E_w$
and $- \infty$ on $E_w$.
\end{proposition}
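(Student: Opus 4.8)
The plan is to prove the four assertions in stages: first dispose of the fibres over $E_p$; then establish the upper bound $G_z^{\ast}\le\alpha G_p$ on all of $A_0$ (here $G_z^{\ast}$ denotes the upper limit $\limsup_n\lambda^{-n}\log|w_n|$, and $\lambda=\max\{\delta,d\}=\delta$ since $\delta=T_{s-1}>d$); then upgrade this to the equality $G_z=\alpha G_p$ off $A_f^{\alpha}$; and finally deduce the statements about $G_f^{\alpha}$ and, when $d\ge2$, about $G_z$ on $A_0-E_w$. Throughout I use $\alpha=\gamma/(\delta-d)>0$ and two consequences of the hypothesis $n_{s-1}>0$: extrapolating the edge $L_{s-1}$ of $N(q)$ from $(n_{s-1},m_{s-1})$ to the $y$-axis gives $\delta=T_{s-1}>m_{s-1}$; and $(n_1,m_1)\neq(0,\delta)$, because $(n_1,m_1)=(0,\delta)$ forces $s=2$ and $n_1=0=n_{s-1}$. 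For a point $(z,w)\in E_z\cap A_0$ one has $z\in E_p$, so $G_p(z)=-\infty$; on $\{z=0\}$ the map acts by $w\mapsto q(0,w)$, which is either the collapse $w\mapsto0$ (when $n_1>0$) or a germ vanishing to order $m_1=T_1>T_{s-1}=\delta$ (when $n_1=0$, forcing $s\ge3$). In both cases $\delta^{-n}\log|w_n|\to-\infty$, so $G_z=-\infty=\alpha G_p$ and $G_z^{\ast}=-\infty\le\alpha G_p$ there.

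Next I would work on $A_0-E_z$ through the quantities $c_n:=w_n/z_n^{\alpha}$, i.e.\ through the weight-$\alpha$ blow-up $\tilde f=\pi_1^{-1}\circ f\circ\pi_1$ with $\pi_1(z,c)=(z,z^{\alpha}c)$ of Section~3.1 (using the resolving blow-up described there when $\alpha\notin\mathbb Z$). Since $\delta=T_{s-1}$, \emph{both} dominant terms $(n_{s-1},m_{s-1})$ and $(n_s,m_s)=(\gamma,d)$ lie on the edge $L_{s-1}$, which $A_1$ carries to a vertical segment; hence $\tilde q(z,c)=P(c)+z\,R(z,c)$ where $P(c)=\sum_{(i,j)\in L_{s-1}}b_{ij}c^{\,j}$ is a polynomial of degree $m_{s-1}$ with lowest-order term $b_{\gamma d}c^{\,d}$, so that $\tilde f|_{\{z=0\}}$ is $c\mapsto P(c)$, a degree-$m_{s-1}$ self-map of $\mathbb P^1$ for which $\infty$ is superattracting of order $m_{s-1}$. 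Because $\delta^{-n}\log|z_n|\to G_p(z)$ is finite, it suffices to show $\limsup_n\delta^{-n}\log|c_n|\le0$. I would split $A_0-E_z$ into $A_f^{\alpha}$ and its complement. On $A_f^{\alpha}$: if $d\ge2$ then $U^{\alpha}$ is forward invariant and $G_z^{\alpha}$ exists and is finite there (Proposition~\ref{prop on G_z^a when delta = T}), so $\delta^{-n}\log|c_n|=(d/\delta)^n\,d^{-n}\log|c_n|\to0$, which in fact gives $G_z=\alpha G_p$ on $A_f^{\alpha}-E_w$; for general $d$ one still has $A_f^{\alpha}\subseteq A_0-E_z$, and the relevant part of $A_f^{\alpha}$ is absorbed in the complementary region below. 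On $A_0-E_z-A_f^{\alpha}$: since $z_n\to0$, eventually $|c_n|\ge r$, i.e.\ $c_n$ lies in the basin of $\infty$ for the perturbed $P$-dynamics — precisely the region governed by the \emph{other} dominant term $(n_{s-1},m_{s-1})$, treated as a Case~4 vertex ($s\ge3$) or a Case~3 vertex ($s=2$), whose associated weight is again $\alpha$; by Corollary~\ref{main cor on attr sets for Case 4} (using $(n_1,m_1)\neq(0,\delta)$) its attracting set $A$ equals $A_0-E_z$, so $A_0-E_z-A_f^{\alpha}\subseteq A-E_w$, where $G_z=\alpha G_p$ by Theorem~\ref{main thm on G_z and G_f^a for Case 4} (resp.\ Theorem~\ref{main thm on G_z and G_f^a for case 3} when $s=2$), which uses $m_{s-1}\ge2$ (automatic once $d\ge1$). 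As $m_{s-1}<\delta$, this is consistent with $w_n\to0$ and yields $\delta^{-n}\log|c_n|\to0$. Combining the two cases gives $G_z^{\ast}\le\alpha G_p$ on $A_0$.

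For the equality off $A_f^{\alpha}$, on $(A_0-A_f^{\alpha})-E_z$ the orbit never meets $U^{\alpha}$, so $|c_n|\ge r$ for $n$ large, whence $\liminf_n\delta^{-n}\log|w_n|\ge\alpha G_p$; with the upper bound this forces $G_z=\alpha G_p$ there, while on $E_z\cap A_0$ both sides are $-\infty$; moreover $E_w\cap(A_0-A_f^{\alpha})\subseteq E_z$, since a point with $z\notin E_p$ and $w_n\equiv0$ eventually lands in $\{|z|<r,\ w=0\}\subseteq U^{\alpha}\subseteq A_f^{\alpha}$. Hence $G_z=\alpha G_p$ on $A_0-A_f^{\alpha}$. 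Then $G_f^{\alpha}=\alpha G_p$ on $A_0$ follows from $\delta^{-n}\log|(z_n^{\alpha},w_n)|=\max\{\alpha\,\delta^{-n}\log|z_n|,\ \delta^{-n}\log|w_n|\}$, whose first entry tends to $\alpha G_p$ while the second has $\limsup\le\alpha G_p$, so the max converges to $\alpha G_p$. Finally, when $d\ge2$ we have $G_z=\alpha G_p$ on $A_f^{\alpha}-E_w$ (Proposition~\ref{prop on G_z^a when delta = T}) and on $(A_0-A_f^{\alpha})-E_w$ (the $E_z$-part being where both sides are $-\infty$), hence on all of $A_0-E_w$, while on $E_w$ the orbit satisfies $w_n\equiv0$ eventually, so $G_z=-\infty$.

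I expect the main obstacle to be the complement $A_0-E_z-A_f^{\alpha}$: identifying it inside the attracting set $A$ of the other dominant term's wedge — so that the Case~3/4 equality $G_z=\alpha G_p$ applies — is exactly what uses $n_{s-1}>0$, both through $\delta>m_{s-1}$ (so that $\infty$ is a fixed point of $P$ of order strictly less than $\delta$, keeping $\delta^{-n}\log|c_n|$ from growing) and through $(n_1,m_1)\neq(0,\delta)$ (so that Corollary~\ref{main cor on attr sets for Case 4} is available). The delicate bookkeeping is in the subcases $d\le1$, where no B\"ottcher coordinate exists on $U^{\alpha}$ and one must route $A_f^{\alpha}$ through $A$, and $s=2$, where the other dominant term falls under Case~3 and one has no ready-made attracting-set statement, so that the blow-up picture of $\tilde f$ near $\{z=0\}$ must be used directly.
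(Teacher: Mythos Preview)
Your lower bound on $A_0-A_f^{\alpha}$ and the deductions about $G_f^{\alpha}$ and the $d\ge2$ case are correct, but the upper bound $G_z^*\le\alpha G_p$ on $A_0-E_z-A_f^{\alpha}$ has a gap. You invoke Corollary~\ref{main cor on attr sets for Case 4} to identify an attracting set $A=A_0-E_z$ and then Theorem~\ref{main thm on G_z and G_f^a for Case 4} to conclude $G_z=\alpha G_p$ on $A-E_w$. But these refer to \emph{different} sets: the corollary gives $A_f^{l_1,+}=A_0-E_z$ for the \emph{wide} wedge $U^{l_1,+}=\{|z|<r,\ |w|<r|z|^{l_1}\}$, whereas the equality $G_z=\alpha G_p$ in the theorem holds only on $A_f=A_f^{l_1,l_2}$, the preimage union of the \emph{narrow} Case~4 wedge $U^{l_1,l_2}=\{r^{-l_2}|z|^{\alpha}<|w|<r|z|^{l_1}\}$. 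Nothing rules out an orbit lingering in the gap annulus $r\le|c|\le r^{-l_2}$ of the blow-up, i.e.\ in $A_f^{l_1,+}\setminus(A_f^{\alpha}\cup A_f^{l_1,l_2})$, where the limiting $c$-dynamics is governed by the full polynomial $P$ and may have Julia set or other Fatou components there. (You flag the $s=2$ case as problematic, but the $s\ge3$ argument has the same defect.) The handling of $A_f^{\alpha}$ for $d\le1$ is also left vague.

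The paper's route avoids all this by staying within Case~2. Since $n_{s-1}>0$ implies $(n_1,m_1)\ne(0,\delta)$, Theorem~\ref{main thm on attr sets for Case 2} gives $A_f^l=A_0-E_z$ for \emph{every} $0<l<\alpha$, where $A_f^l$ is the preimage union of the Case~2 wedge $U^l=\{|z|<r,\ |w|<r|z|^l\}$. Hence for $(z,w)\in A_0-E_z$ and any $l<\alpha$ the orbit eventually satisfies $|w_n|<r|z_n|^l$, giving $G_z^*(w)\le lG_p(z)$; letting $l\uparrow\alpha$ yields $G_z^*\le\alpha G_p$ at once, uniformly in $d$ and $s$, and with no appeal to the other dominant term. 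Combined with your (correct) lower bound from $|w_n|\ge r|z_n|^{\alpha}$ off $A_f^{\alpha}$, this finishes the proof.
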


%%%%%%%%%%%%%%%%%%%%%%%%%%%%%%%%%%%%%%%%%%%%%%%%%%%
On the other hand,
if $n_{s-1} = 0$, then
we have the same inequality and 
almost the same equalities only on $A_f^{\alpha}$.

\begin{proposition} \label{prop on G_z and G_f^a when delta = T and n = 0}
Let $\delta = T_{s-1}$.
If $n_{s-1} = 0$, then %$\delta = T_{s-1}$ and 
$G_z^* \leq \alpha G_p$ on $A_f^{\alpha}$.
In particular,
$G_f^{\alpha} = \alpha G_p$ on $A_f^{\alpha}$.
Moreover,
if $d \geq 2$, then
$G_z = \alpha G_p$ on $A_f^{\alpha} - E_w$
and $- \infty$ on $E_w$.
\end{proposition}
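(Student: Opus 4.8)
The plan parallels that of Proposition~\ref{prop on G_z and G_f^a when delta = T and n > 0}, the crucial point being that under the hypothesis $n_{s-1} = 0$ one necessarily has $s = 2$ and $(n_1, m_1) = (0, \delta)$, so Theorem~\ref{main thm on attr sets for Case 2} (which excludes exactly this case) is unavailable and all conclusions must be confined to $A_f^{\alpha}$. Concretely, $q(z,w) = b_{0\delta}w^{\delta} + \dots + b_{\gamma d}z^{\gamma}w^{d} + (\text{monomials with } i + \alpha j > \alpha\delta)$: every monomial of $q$ lies on or above the line $i + \alpha j = \alpha\delta$ through $(0,\delta)$ and $(\gamma,d)$, the $w$-axis is $f$-invariant with a superattracting fixed point of order $\delta$ at the origin when $d \geq 1$, and for $d \geq 2$ the B\"ottcher coordinate $\phi = (\phi_1,\phi_2)$ exists on $U^{\alpha}$ by Proposition~\ref{prop on G_z^a when delta = T}, while $f(U^{\alpha}_{r_1,r_2}) \subset U^{\alpha}_{r_1,r_2}$ for suitable $r_1, r_2$ by the remark after Theorem~\ref{thm on inv wedges for Case 2}. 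I will also use that every point of $A_f^{\alpha}$ has a forward iterate in $U^{\alpha} \subset \{0 < |z| < r\}$, so $A_f^{\alpha} \subset (A_p - E_p) \times \mathbb{C}$ and $A_f^{\alpha}$ is disjoint from $\{z = 0\}$; hence $\alpha G_p$ is finite and strictly negative on $A_f^{\alpha}$.

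First I would prove $G_z^{*} \leq \alpha G_p$ on $U^{\alpha}$, for any $d$ for which $U^{\alpha}$ (or some $U^{\alpha}_{r_1,r_2}$) is $f$-invariant --- in particular for $d \geq 2$. Invariance gives $(z_n, w_n) \in U^{\alpha}$ for all $n$, hence $|w_n| < r|z_n|^{\alpha}$ and $\delta^{-n}\log|w_n| \leq \delta^{-n}\log r + \alpha\, \delta^{-n}\log|z_n|$; letting $n \to \infty$ and using $\delta^{-n}\log|z_n| \to G_p(z)$ (here $\deg p = \delta$) gives $\limsup_n \delta^{-n}\log|w_n| \leq \alpha G_p(z)$. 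Since $G_z^{*} \circ f = \delta\, G_z^{*}$ and $G_p \circ f = \delta\, G_p$, the inequality $G_z^{*} \leq \alpha G_p$ is $f$-invariant and propagates from $U^{\alpha}$ to $A_f^{\alpha} = \bigcup_{n \geq 0} f^{-n}(U^{\alpha})$. The equality $G_f^{\alpha} = \alpha G_p$ on $A_f^{\alpha}$ is then immediate: writing $\delta^{-n}\log|(z_n^{\alpha}, w_n)| = \max\{\alpha\, \delta^{-n}\log|z_n|,\ \delta^{-n}\log|w_n|\}$, the first entry converges to the finite value $\alpha G_p(z)$ while the second has $\limsup$ at most $\alpha G_p(z)$, so the maximum converges to $\alpha G_p(z)$.

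For the ``moreover'' part, assume $d \geq 2$. On $U^{\alpha} - \{w = 0\}$ the limit $G_z^{\alpha}$ exists and is pluriharmonic, in particular finite, by Proposition~\ref{prop on G_z^a when delta = T}; writing $\delta^{-n}\log|w_n| = \alpha\, \delta^{-n}\log|z_n| + (d/\delta)^{n}\, d^{-n}\log|w_n/z_n^{\alpha}|$ and letting $n \to \infty$, the first summand tends to $\alpha G_p(z)$ and the second to $0 \cdot G_z^{\alpha}(w) = 0$, so $G_z$ exists and equals $\alpha G_p$ on $U^{\alpha} - \{w = 0\}$. Via $G_z \circ f = \delta\, G_z$ and $G_p \circ f = \delta\, G_p$ this extends to $A_f^{\alpha} - E_w$, since any such point has a forward iterate in $U^{\alpha} - \{w = 0\}$. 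Finally, on $E_w \cap A_f^{\alpha}$ the invariance of $\{w = 0\}$ (valid for $d \geq 1$) gives $w_n = 0$ for all large $n$, so $\delta^{-n}\log|w_n| = -\infty$ and $G_z = -\infty$.

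The main obstacle I anticipate is the case $d = 1$: there the B\"ottcher coordinate is unavailable, and because the two resonant monomials $b_{\gamma,1}z^{\gamma}w$ and $b_{0\delta}w^{\delta}$ both sit on the critical line $i + \alpha j = \alpha\delta$, the wedge $U^{\alpha}$ need not be $f$-invariant, so the invariance step of the second paragraph fails. To treat it I would pass to the formal blow-up $\tilde f = \pi_1^{-1} \circ f \circ \pi_1$ with $\pi_1(z,c) = (z, z^{\alpha} c)$, for which $\tilde q(z,c) = c\,(b_{\gamma,1} + b_{0\delta}c^{\delta-1} + O(z))$; the fibre dynamics of $\tilde f$ at the origin is attracting or neutral (its multiplier being $b_{\gamma,1}$) rather than superattracting, and one must show that for $(z,w) \in A_f^{\alpha}$ the coordinate $c_n = w_n/z_n^{\alpha}$ cannot grow super-exponentially, so that $\delta^{-n}\log|c_n| \to 0$ and hence $G_z^{*} \leq \alpha G_p$ still holds downstairs. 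This quantitative control of the resonant fibre dynamics is the crux; the remaining verification --- that $E_z$, $E_p$ and the fibres degenerated by $f$ never meet $A_f^{\alpha}$, so that $\alpha G_p$ stays finite and the functional equations may be iterated freely --- is routine.
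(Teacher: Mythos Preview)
Your argument is correct and follows exactly the pattern the paper uses in the analogous Propositions \ref{prop on G_z when T > delta > d} and \ref{prop on G_z and G_f^a when delta = T and n > 0}: invariance of $U^{\alpha}$ gives $|w_n| < r|z_n|^{\alpha}$, hence $G_z^{*}\le \alpha G_p$ on $U^{\alpha}$, which propagates to $A_f^{\alpha}$; the identity $G_f^{\alpha}=\alpha G_p$ follows from $\max\{\alpha\,\delta^{-n}\log|z_n|,\ \delta^{-n}\log|w_n|\}\to \alpha G_p$; and for $d\ge 2$ one writes $\delta^{-n}\log|w_n|=\alpha\,\delta^{-n}\log|z_n|+(d/\delta)^n\cdot d^{-n}\log|w_n/z_n^{\alpha}|$ and uses that $G_z^{\alpha}$ is finite on $U^{\alpha}-\{w=0\}$ by Proposition \ref{prop on G_z^a when delta = T}. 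The paper omits an explicit proof here, but this is precisely the intended route.

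Your extended discussion of $d=1$ is unnecessary for the paper's purposes. Section 5.5 operates under the running hypothesis that the B\"ottcher coordinate exists on $U^{\alpha}$, which forces $d\ge 2$ (cf.\ Lemma \ref{detailed lemma for case 2} and the entry ``$T=\delta>d$, $d\ge 2$'' in Table \ref{table for case 2}); the first clause of the proposition is stated without repeating this constraint, but the surrounding context supplies it. So the blow-up analysis you sketch for $d=1$, while interesting, lies outside what the proposition is claiming.
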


\begin{remark}
%As mentioned above,
If $\delta = T_{s-1}$, then 
$f$ has the other dominant term $(n_{s-1}, m_{s-1})$
than $(n_{s}, m_{s})$,
which belongs to Case 3 or Case 4.
Moreover,
if $n_{s-1} = 0$, then $s = 2$ and
$(n_1, m_1) = (n_{s-1}, m_{s-1}) = (0, \delta)$,
which belongs to Case 3.
Therefore,
we can apply the results in Case 3 or Case 4. % statements
For example,
variants of Corollary \ref{cor of G_f^a for Case 3} and
Corollary \ref{cor of G_f^a for Case 4}
imply that $G_f^{\alpha} = \alpha G_p$ on $A_0$ if $n_{s-1} > 0$ and
that $G_f^{\alpha}$ is plurisubharmonic and continuous
on $A_0 - E_{deg}$ if $n_{s-1} = 0$.
\end{remark}

%\begin{remark}
%It is shown that, if $\delta = T_{s-1}$ and $n_{s-1} = 0$, then 
%$G_f^{\alpha}$ is plurisubharmonic on $A_0$.
%More precisely,
%$G_f^{\alpha} (z,w) > \alpha G_p (z)$ on the attracting region
%corresponding to the domiant term $(0,\delta)$, %$(n_{1}, m_{1})$,
%which belongs to Case 3, and
%$G_f^{\alpha} (z,w) = \alpha G_p (z)$ 
%on the complement of the attracting region in $A_0$.
%For details, see ...... 
%\end{remark}

%\newpage
%%%%%%%%%%%%%%%%%%%%%%%%%%%%%%%%%%%%%%%%%%%%%%%%%%%%%%%%%%%%%%%%%%%%%%%%%%%%%%%%%%%%%%
%%%%%%%%%%%%%%%%%%%%%%%%%%%%%%%%%%%%%%%%%%%%%%%%%%%%%%%%%%%%%%%%%%%%%%%%%%%%%%%%%%%%%%
%%%%%%%%%%%%%%%%%%%%%%%%%%%%%%%%%%%%%%%%%%%%%%%%%%%%%%%%%%%%%%%%%%%%%%%%%%%%%%%%%%%%%%
\section{Case 3}

We deal with Case 3 in this section.
Let $s > 1$, 
\[
T_1 \leq \delta
\text{ and }
(\gamma, d) = (n_1, m_1).
\]
%\text{ and }
%l_2 = \frac{n_2 - n_1}{m_1 - m_2}.
%\]
Note that $\gamma \geq 0$ and 
$\delta \geq T_1 \geq d \geq 1$ by the setting.

%%%
Although we observe that the blow-up $\tilde{f}$ has the required properties
% as stated 
in Section 6.1,
there are no results on invariant wedges and
the unions of all the preimages of wedges for Case 3.
We exhibit results on the existence and properties of plurisubharmonic functions
and on the asymptotic behavior of the functions toward the boundaries of the unions
in Section 6.2,
which imply all the main results stated in Section 2.2.
Although we omit to prove the results on the existence and properties
since they are similar to Case 2,
we prove the results on the asymptotic behavior 
in Section 6.3.
One may refer Table \ref{table for case 3} 
in Section 2.2 for a comparison chart of the results.

Two examples are given in Section 6.4.
For any polynomial $h$,
we can construct
a polynomial skew product that is
semiconjugate to a polynomial product
of the form $(z^{\delta}, h(w))$.
Dynamics of such a map is rather easy to understand.
More precisely,
all the limits of the map are expresses by the counterparts of $h$ and,
in particular,
the boundary of of the set where $G_z^{\alpha, +}$ is not pluriharmonic 
is expresses by the Julia set of $h$.

%$U^{} = \{ r_1^{-l} |z|^l < |w| < r_2 \}$
%$U^l = \{ |z|^l < r^l |w|, |w| < r \}$

%%%%%%%%%%%%%%%%%%%%%%%%%%%%%%%%%%%%%%%%%%%%%%%%%%%%%%%%%%%%%%%%%%%%%%%%%%%%%
%%%%%%%%%%%%%%%%%%%%%%%%%%%%%%%%%%%%%%%%%%%%%%%%%%%%%%%%%%%%%%%%%%%%%%%%%%%%%
\subsection{Observation of Blow-ups}

Let us observe that the blow-up $\tilde{f}$ is superattracting at the origin and
degenerates the $t$-axis for a suitable weight $l$.
If $l$ belongs to $\mathcal{I}_f$,
then $\tilde{q} (t,w) \sim t^{\gamma} w^{\tilde{d}}$
on a neighborhood of the origin
and so
\[
\tilde{f} (t,w) = 
\left( t^{\delta - l^{-1} \gamma} w^{l^{-1} ( \delta - \tilde{d} )} \{ 1 + \zeta (t,w) \},
\ t^{\gamma} w^{\tilde{d}} \{ 1 + \eta (t,w) \} \right),
\]
where
$\tilde{d} = l^{-1} \gamma + d$.
Moreover,
if $\delta > T_1$ and $l > \alpha$,
then $\delta - \tilde{d} > 0$ and so
we have the following. % obtain proposition.

\begin{proposition}\label{}
If $\delta > T_1$,
then $\tilde{f}$ is superattracting at the origin and
degenerates the $t$-axis % for any $0 < l < \alpha$.
for any $l$ in $\mathcal{I}_f - \{ \alpha \}$.
\end{proposition}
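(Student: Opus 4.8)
The plan is to read off the claimed properties directly from the explicit form of the blow-up $\tilde{f}$ that has already been recorded in Section 3.2. Recall that $\tilde f=\pi_2^{-1}\circ f\circ\pi_2$ with $\pi_2(t,w)=(tw^{l^{-1}},w)$, that $\tilde d=l^{-1}\gamma+d$, and that for $l\in\mathcal I_f$ one has (by the proposition in Section 3.2.1, i.e. the ``Blow-ups'' subsection for Case~3) the normal form
\[
\tilde f(t,w)=\Bigl(t^{\delta-l^{-1}\gamma}w^{l^{-1}(\delta-\tilde d)}\{1+\zeta(t,w)\},\ t^{\gamma}w^{\tilde d}\{1+\eta(t,w)\}\Bigr),
\]
with $\zeta,\eta\to 0$ as $t,w\to 0$. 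So the whole argument reduces to checking that, under the hypothesis $\delta>T_1$ and $l\in\mathcal I_f-\{\alpha\}$, all four exponents $\delta-l^{-1}\gamma$, $l^{-1}(\delta-\tilde d)$, $\gamma$, $\tilde d$ that appear are nonnegative, with the right ones strictly positive to force superattraction and degeneration of the $t$-axis.

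First I would pin down the exponent inequalities. Since $\mathcal I_f$ is (by the computation in Section 3.2.1) the set of $l>0$ with $\gamma+ld\le l\delta$ and $\gamma+ld\le i+lj$ for all $(i,j)$ with $b_{ij}\neq0$, membership $l\in\mathcal I_f$ already gives $\delta-\tilde d=\delta-l^{-1}\gamma-d=l^{-1}(l\delta-\gamma-ld)\ge 0$ and $\delta-l^{-1}\gamma=l^{-1}(l\delta-\gamma)\ge d\ge 1>0$. When $\gamma>0$ we have $\alpha=\gamma/(\delta-d)$ and the condition $l>\alpha$ is equivalent to $l(\delta-d)>\gamma$, i.e. $\delta-\tilde d>0$ strictly; since $\mathcal I_f=[\alpha,l_2]$ in this case, the hypothesis $l\in\mathcal I_f-\{\alpha\}$ is exactly $\alpha<l\le l_2$, so $\delta-\tilde d>0$. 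When $\gamma=0$ we have $\delta>T_1=d$ (the case $\delta=T_1=d$ is excluded by $\delta>T_1$), $\tilde d=d<\delta$, so again $\delta-\tilde d>0$; here $\mathcal I_f=(0,l_2]$ and every $l$ in it works, consistently with $\alpha=0\notin(0,l_2]$. In all cases $\tilde d=l^{-1}\gamma+d\ge d\ge 1>0$ and $\gamma\ge 0$.

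Now I would translate these sign facts into the two asserted dynamical properties. The map $\tilde f$ is holomorphic near the origin (stated in Section 3.2.1), its linear part vanishes because every monomial exponent $\delta-l^{-1}\gamma$, $l^{-1}(\delta-\tilde d)$, $\gamma$, $\tilde d$ is $\ge 0$ and the total degree of each component is $\ge 2$: indeed $\gamma+\tilde d=\gamma+l^{-1}\gamma+d\ge d+\gamma\ge d\ge 1$, and when $\gamma=0$ one has $\tilde d=d$ but then $\delta>d$ forces $d\ge 2$ (since $\delta\ge T_1\ge d$ and $\gamma=0,\ \delta>T_1$ gives $d=T_1<\delta$, and $d\ge 1$; the case $d=1$ would need to be checked, but $d=1$ with $\gamma=0$ forces $\delta=T_1$, contradicting $\delta>T_1$, so $d\ge 2$), while for the first component $(\delta-l^{-1}\gamma)+l^{-1}(\delta-\tilde d)=l^{-1}(2\delta-2l^{-1}\gamma\cdot l-\ldots)$ — more directly, since $\delta-\tilde d>0$ and $\delta-l^{-1}\gamma\ge d\ge 1$, their sum is $\ge 1+$ something positive, hence $\ge 2$ once one checks it is an integer bounded below by a positive integer plus a positive quantity; a cleaner route is to note the product structure of $f_0$ after conjugation and that $f_0$ is superattracting (as recorded for Case~3 monomial-type maps), so $\tilde f$, being a perturbation of a superattracting monomial map with all exponents as above, has a superattracting fixed point at the origin. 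Finally, $\tilde f$ degenerates the $t$-axis $\{w=0\}$: on $w=0$ the second component is $t^\gamma\cdot 0^{\tilde d}\cdot\{1+\eta\}=0$ since $\tilde d\ge 1>0$, and the first component is $t^{\delta-l^{-1}\gamma}\cdot 0^{l^{-1}(\delta-\tilde d)}\{1+\zeta\}=0$ since $l^{-1}(\delta-\tilde d)>0$; thus $\tilde f(\{w=0\})=\{(0,0)\}$, i.e. the $t$-axis is collapsed to the origin. This completes the verification.

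The only genuinely delicate point is the exponent bookkeeping showing the first-component exponent $l^{-1}(\delta-\tilde d)$ is \emph{strictly} positive — equivalently $\delta>\tilde d$ — precisely when $l\neq\alpha$ (and automatically when $\gamma=0,\ \delta>T_1$); I expect this to be the main place where one must be careful, since at $l=\alpha$ one has $\delta=\tilde d$ and the $t$-axis is no longer degenerated and the origin need not be fixed (as the parallel discussion in Case~2, Section 4.1.2, makes clear). Everything else is a routine reading of the normal form already established in the excerpt.
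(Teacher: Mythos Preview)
Your approach is essentially the paper's own: the paper's entire argument is the one-line observation, placed just before the proposition, that for $l\in\mathcal I_f$ one has the normal form you quote and that ``if $\delta>T_1$ and $l>\alpha$, then $\delta-\tilde d>0$''. Your exponent bookkeeping for the degeneration of the $t$-axis is exactly this, and the superattracting claim can simply be read off the proposition in Section~3.2.2 you already cite (the conditions there, $d\ge 2$ or $d=1$ and $\gamma+ld<l\delta$, are precisely $l>\alpha$ when $d=1$).

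There is one genuine slip. Your justification that $\gamma=0$ forces $d\ge2$ is wrong: you write ``$d=1$ with $\gamma=0$ forces $\delta=T_1$'', but if $(n_1,m_1)=(0,1)$ then $T_1=1$ (the line through $(0,1)$ and $(n_2,m_2)$ has $y$-intercept $1$), and $\delta\ge 2>1=T_1$ is perfectly consistent with $\delta>T_1$. The correct reason, stated explicitly in the paper's Section~6.2, is the standing assumption that $f$ is superattracting at the origin: $(n_1,m_1)=(0,1)$ would require $b_{01}\neq 0$, giving a nonzero eigenvalue of $Df(0)$. With this correction your argument goes through; you may also want to replace the meandering paragraph on total degrees of the two components by a direct appeal to the Section~3.2.2 proposition, which already records exactly when $\tilde f$ is superattracting.
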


Although $\tilde{f}$ degenerates the $t$-axis,
we do not know whether $A_f^l$ contains $A_0 - E_w$.
%it is not a sufficient condition for $A_f^l$ containing $A_0 - E_w$.

%%%%%%%%%%%%%%%%%%%%%%%%%%%%%%%%%%%%%%%%%%%%%%%%%%%%%%%%%%%%%%%%%%%%%%%%%%%%%
%%%%%%%%%%%%%%%%%%%%%%%%%%%%%%%%%%%%%%%%%%%%%%%%%%%%%%%%%%%%%%%%%%%%%%%%%%%%%
\subsection{Plurisubharmonic functions}

Let us first review $\mathcal{I}_f$ and related quantities,
which depends on whether $\delta > T_1$ and % or $\delta = T_1$ and
whether $\gamma > 0$; % or $\gamma = 0$;
see the table below for a summary.
If $\delta > T_1$ and $\gamma > 0$, 
then $\delta > d$, $\alpha > 0$ and $\mathcal{I}_f = [ \alpha, l_2 ]$. 
If $\delta > T_1$ and $\gamma = 0$, 
then $\delta > d$, $\alpha = 0$ and $\mathcal{I}_f = ( 0, l_2 ]$.
Moreover, $d \geq 2$
because of the assumption for $f$ being superattracting at the origin.
If $\delta = T_1$ and $\gamma > 0$,
then $\delta > d$, $\alpha > 0$ and $\mathcal{I}_f = \{ \alpha \} = \{ l_2 \}$.
If $\delta = T_1$ and $\gamma = 0$,
then $\delta = d$, $\alpha = l_2$ and $\mathcal{I}_f = (0, l_2]$;
recall that we redefined $\alpha$ to be $l_2$
in Section 2.2,
whereas $\gamma/(\delta - d)$ is not defined for this case.

%%%%%%%%%%%%%%%%%%%%%%%%%%%%%%%%%%%%% 
\begin{table}[htb]
%\begin{center}
\caption{Informations on $\mathcal{I}_f$ and related quantities for Case 3} 
\begin{tabular}{|c|c|c|} \hline
   & $\gamma > 0$ & $\gamma = 0$   \\ \hline
$\delta > T_1$ & $\mathcal{I}_f = [ \alpha, l_2 ]$ & $\mathcal{I}_f = ( 0, l_2 ]$   \\ 
              & $\delta > d \geq 1$ and $\alpha > 0$ & $\delta > d \geq 2$ and $\alpha = 0$  \\ \hline
$\delta = T_1$ & $\mathcal{I}_f = \{ \alpha \} = \{ l_2 \}$ & $\mathcal{I}_f = ( 0, l_2 ]$   \\ 
  & $\delta > d \geq 1$ and $\alpha > 0$ & $\delta = d \geq 2$ and $\alpha = l_2 > 0$   \\ \hline
\end{tabular}
%\end{center}
\end{table}
%%%%%%%%%%%%%%%%%%%%%%%%%%%%%%%%%%%%%

In particular,
the dynamics differs more or less
whether $\gamma > 0$ or $\gamma = 0$.
If $\gamma = 0$,
then $f_0$ is just a product.
More precisely,
if $\gamma = 0$ and $\delta > d$,
then $\alpha = 0$ and so
the properties of $G_z^{\alpha}$ are different from the other cases.
On the other hand,
if $\gamma = 0$ and $\delta = d$,
then $f_0 (z,w) = (z^{\delta}, w^{\delta})$ and so
the properties of $G_z$ and the magnitude relation of $A_f^{l}$
for any $l$ in $\mathcal{I}_f$ are different from the other cases.

Recall that $f$ is conjugate to the monomial map $f_0$ on $U^l$
for any $l$ in $\mathcal{I}_f$ if $d \geq 2$
and for any $l$ in $\mathcal{I}_f- \{ \alpha \}$ 
if $d = 1$ and $\delta > T_1$ 
as stated in Lemma \ref{detailed lemma for case 3}.
Therefore,
$G_z^{\alpha}$ exists on $U^l - \{ z = 0 \}$
for any $l$ as above
if $\gamma > 0$ or $\delta = d$.
More precisely,
\[
G_z^{\alpha} (w) 
= \log \left| \dfrac{\phi_2}{\phi_1^{\alpha}} \right| 
= \log \left| \dfrac{w}{z^{\alpha}} \right| + o(1)
\text{ on } U^{l} - \{ z = 0 \} \text{ as } r \to 0,
\]
which extends to a pluriharmonic function on $A_f^{l} - E_z$.
On the other hand,
if $\gamma = 0$ and $\delta > d$,
then $\alpha = 0$ and
$G_z^{\alpha}$ exists on $U^l$ for any $l$ in $\mathcal{I}_f$.
More precisely,
\[
G_z^{\alpha} (w) 
= \log \left| \phi_2 \right| 
= \log \left| w \right| + o(1)
\text{ on } U^{l} \text{ as } r \to 0,
\]
which extends to a pluriharmonic function on $A_f^{l}$.

\begin{proposition} \label{Case 3: existence of G_z^alpha}
The following hold for any $l$ in $\mathcal{I}_f$ if $d \geq 2$
and for any $l$ in $\mathcal{I}_f- \{ \alpha \}$ if $d = 1$ and $\delta > T_1$:
If $\gamma > 0$ or $\delta = d$,
then $\alpha > 0$ and $G_z^{\alpha}$ is pluriharmonic on $A_f^{l} - E_z$. % and continuous 
It is $\infty$ on $E_z - E_w$ and
not defined on $E_z \cap E_w$.
Moreover,
$G_z^{\alpha} (w) = \log \left| w/z^{\alpha} \right| + o(1)$
on $U^{l} - \{ z = 0 \}$ as $r \to 0$.
If $\gamma = 0$ and $\delta > d$,
then $\alpha = 0$ and $G_z^{\alpha}$ is pluriharmonic on $A_f^{l}$. % and continuous 
Moreover,
$G_z^{\alpha} (w) = \log \left| w \right| + o(1)$
on $U^{l}$ as $r \to 0$.
\end{proposition}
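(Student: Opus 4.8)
The plan is to follow, almost verbatim, the template established for Case 2 in Section 5, transporting the monomial‑map computations of Section 5.1 through the B\"ottcher conjugacy. First I would record what $\alpha$ is in each subcase from the Case 3 setup: if $\gamma>0$ then $\delta>d$ and $\alpha=\gamma/(\delta-d)>0$; if $\gamma=0$ and $\delta=d$ then, by the redefinition of $\alpha$ in Section 2.2, $\alpha=l_2>0$; and if $\gamma=0$ and $\delta>d$ then $\alpha=0$. By Lemma \ref{detailed lemma for case 3} and the theorem following it, for every $l$ in the stated range the B\"ottcher coordinate $\phi=(\phi_1,\phi_2)$ is biholomorphic on $U^l$, conjugates $f$ to $f_0(z,w)=(z^\delta,z^\gamma w^d)$, and satisfies $\phi\sim\mathrm{id}$ on $U^l$ as $r\to 0$.

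Next I would compute $G_z^\alpha$ on $U^l$ via the conjugacy, exactly as in the lemma following Proposition \ref{prop on G_z^a when delta < d}. Writing $f^n(z,w)=(z_n,w_n)$, the identity $\phi\circ f^n=f_0^n\circ\phi$ gives $\phi_1(z_n)=\phi_1(z)^{\delta^n}$ and $\phi_2(z_n,w_n)=\phi_1(z)^{\gamma_n}\phi_2(z,w)^{d^n}$ with $\gamma_n=\alpha(\delta^n-d^n)$ (respectively $\gamma_n=0$ when $\gamma=0$), whence $|\phi_2(z_n,w_n)|\,|\phi_1(z_n)|^{-\alpha}=\bigl(|\phi_2(z,w)|\,|\phi_1(z)|^{-\alpha}\bigr)^{d^n}$. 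Thus $d^{-n}\log\bigl(|w_n|/|z_n|^\alpha\bigr)$ differs from the constant $\log|\phi_2(z,w)/\phi_1(z)^\alpha|$ by a quantity that is $O(d^{-n})$ once $\phi\sim\mathrm{id}$ is used to replace $\phi_1,\phi_2$ by $z,w$; this yields $G_z^\alpha(w)=\log|\phi_2(z,w)/\phi_1(z)^\alpha|$ on $U^l-\{z=0\}$ (respectively $=\log|\phi_2(z,w)|$ on $U^l$ when $\gamma=0$, $\delta>d$), which is pluriharmonic there, and the estimate $G_z^\alpha(w)=\log|w/z^\alpha|+o(1)$ (respectively $\log|w|+o(1)$) on $U^l-\{z=0\}$ as $r\to 0$ is then immediate.

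Then I would extend to $A_f^l$. Any $(z,w)\in A_f^l$ has $f^N(z,w)\in U^l$ for some $N$, hence $f^n(z,w)\in U^l$ for all $n\ge N$ since $f(U^l)\subset U^l$; the scaling relation $G_z^\alpha\circ f^n=d^nG_z^\alpha$ then shows $G_z^\alpha(w)=d^{-N}G_{z_N}^\alpha(w_N)$, so the limit exists on $A_f^l-E_z$ and equals $d^{-N}\log|\phi_2/\phi_1^\alpha|\circ f^N$, a pluriharmonic function of $(z,w)$ (composition of a pluriharmonic function with the holomorphic map $f^N$), and likewise on all of $A_f^l$ in the case $\gamma=0$, $\delta>d$. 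Finally, for $(z,w)\in E_z-E_w$ one has $z_n=0$ and $w_n\ne 0$ for large $n$, so $\log|w_n|-\alpha\log|z_n|=+\infty$ and $G_z^\alpha=\infty$; on $E_z\cap E_w$ both logarithms tend to $-\infty$, giving the indeterminate form $\infty-\infty$, so $G_z^\alpha$ is not defined there.

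The point requiring the most care is that $\alpha$ is in general only rational, so $z_n^\alpha$ must be read throughout as $|z_n|^\alpha$ (equivalently, everything is phrased via $\log|z_n|$, $\log|w_n|$), and one must check that this introduces no branch ambiguity in the identity $|\phi_2(z_n,w_n)|\,|\phi_1(z_n)|^{-\alpha}=\bigl(|\phi_2|\,|\phi_1|^{-\alpha}\bigr)^{d^n}$; relatedly, in the borderline case $d=1$, where $d^n=1$, the convergence of $d^{-n}\log(|w_n|/|z_n|^\alpha)$ must be argued directly from the fact that $\phi_2(z_n,w_n)/\phi_1(z_n)^\alpha$ is literally constant under the conjugacy, rather than from a $d^{-n}\to 0$ estimate, exactly as the K\oe nigs‑type case is handled in Case 2. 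Apart from this bookkeeping the argument is word‑for‑word parallel to Proposition \ref{prop on G_z^a when delta < d} and its accompanying lemma, which is why the details are omitted in the text.
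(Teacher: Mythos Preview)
Your proposal is correct and matches the paper's intended argument: the paper explicitly omits the proof, pointing to the Case~2 computations (Proposition~\ref{prop on G_z^a when delta < d} and its companion lemma) as the template, and you have faithfully transported that template to Case~3 via the B\"ottcher conjugacy of Lemma~\ref{detailed lemma for case 3}. Your handling of the $d=1$ borderline---using $(z_n,w_n)\to 0$ together with $\phi\sim\mathrm{id}$ rather than a $d^{-n}$ decay---is exactly the care that the K\oe nigs-type case requires.
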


This proposition implies Proposition \ref{main result for case 3: G_z^a}.
Recall that $A_f = A_f^{l_2}$ and that
$A_f^{l_2}$ is the largest wedge among $A_f^l$ for any $l$ in $\mathcal{I}_f$.

Moreover,
we have the following propositions 
on the asymptotic behavior of $G_z^{\alpha}$ toward $\partial A_f^l$,
which are proved in the next subsection.

\begin{proposition} \label{Case 3: asy of G_z^alpha on boundary^out}
Let $d \geq 2$ or $\delta > T_1$.
Then for any $l$ in the proposition above and
for any $(z_0, w_0)$ in 
$\{ \partial A_f^l \cap \partial A_0 \} \cap \{ (A_p - E_p \cup E_{deg}^*) \times \mathbb{P}^1 \}$,
\[
\limsup_{(z,w) \in A_f^l \to (z_0, w_0)} G_z^{\alpha} (w) =
\begin{cases}
\infty & \text{if } \gamma > 0 \, (\text{and } \delta > d), \\
0 & \text{if } \gamma = 0 \text{ and } \delta > d, \\
- \alpha G_p (z) & \text{if } \gamma = 0 \text{ and } \delta = d.
\end{cases}
\]
The same convergence to $0$ holds for any $z_0$ in $A_p - E_{deg}^*$
if $\gamma = 0$ and $\delta > d$.
\end{proposition}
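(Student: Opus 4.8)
The plan is to transcribe the scheme of Section~5.2.2, the part that plays the role of the boundary face whose backward orbit reaches $\partial A_0$ being here the ``top'' face
\[
S^l := \partial U^l \cap \{\, |w| = r \,\} = \bigl\{\, |w| = r,\ |z|^l \leq r^{l+1} \,\bigr\},
\]
and we set $S_n^l := f^{-n}(S^l)$. Two ingredients are used: the estimate furnished by Proposition~\ref{Case 3: existence of G_z^alpha} together with $\phi \sim id$, namely a constant $C > 0$ with
\[
\bigl|\, G_z^{\alpha}(w) - \log \bigl|\, z^{-\alpha} w \,\bigr| \,\bigr| < C \qquad \text{on } U^l
\]
(with the convention $z^0 = 1$ when $\alpha = 0$); and the functional equation $G_z^{\alpha}\circ f^n = d^n G_z^{\alpha}$, which reads $G_z^{\alpha}(w) = d^{-n}\, G_{z_n}^{\alpha}(w_n)$ for $(z_n, w_n) = f^n(z, w)$.

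First I would prove the fiberwise statement, the analogue of Lemma~\ref{main lem on asy behavior when delta < d}: for fixed $z$ in $A_p - E_{deg}^*$ --- and, when $\alpha > 0$, in $A_p - E_p \cup E_{deg}^*$ --- the slice $S_n^l \cap \mathbb{C}_z$ is nonempty for all large $n$ (the constraint $|p^n(z)|^l \leq r^{l+1}$ holds eventually, while $Q_z^n$ is a nonconstant entire function of $w$ since $z \notin E_{deg}^*$, so its modulus attains $r$), and $G_z^{\alpha}$ converges along these slices. Indeed on $S^l$ one has $|z_n^{-\alpha} w_n| = r\,|p^n(z)|^{-\alpha}$, so the estimate and the functional equation give, on $S_n^l$,
\[
\Bigl|\, G_z^{\alpha}(w) - d^{-n}\log r + \alpha\, d^{-n}\log \bigl|\, p^n(z) \,\bigr| \,\Bigr| \leq d^{-n} C ,
\]
and since $d^{-n}\log|p^n(z)| = (\delta/d)^n\cdot \delta^{-n}\log|p^n(z)| \to (\delta/d)^n\, G_p(z)$, the three regimes come out: if $\gamma > 0$ and $\delta > d$, then $\alpha > 0$, $(\delta/d)^n \to \infty$ and $G_p(z) < 0$, so $G_z^{\alpha}|_{S_n^l} \to \infty$; if $\gamma = 0$ and $\delta > d$, then $\alpha = 0$, so $G_z^{\alpha}|_{S_n^l} = d^{-n}\log r + o(1) \to 0$ and no hypothesis on $G_p(z)$ is needed; if $\gamma = 0$ and $\delta = d$, then $(\delta/d)^n \equiv 1$, so $G_z^{\alpha}|_{S_n^l} \to -\alpha\, G_p(z)$.

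To pass from this to the $\limsup$ I would repeat the topology of Section~5.2.2: $f$ is non-degenerate on $A_p\times\mathbb{C} - E_z \cup E_{deg}$, Lemma~\ref{lem for asy behavior} and the monotonicity of $f^{-n}(U^l)$ give $\lim_n f^{-n}(\partial U^l) = \partial A_f^l$, and the backward images of the complementary (sloped) face of $\partial U^l$ cluster inside $A_0$ rather than on $\partial A_0$. One thereby obtains the analogue of Corollary~\ref{cor on the boundary for case 2}: the cluster set of $S_n^l$ contains $(\partial A_f^l \cap \partial A_0) \cap \{(A_p - E_p \cup E_{deg}^*)\times\mathbb{P}^1\}$, and contains $(\partial A_f^l \cap \partial A_0) \cap \{(A_p - E_{deg}^*)\times\mathbb{P}^1\}$ when $\gamma = 0$ and $\delta > d$. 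Hence for such $(z_0, w_0)$ there is a sequence in $A_f^l$ tending to it along which $G_z^{\alpha}$ tends to the asserted value; this already settles the case $\gamma > 0$, $\delta > d$ (where the value is $\infty$) and gives the inequality ``$\geq$'' in the other two cases. The reverse inequality is immediate from the functional equation: on $A_f^l$ one has $G_z^{\alpha} = d^{-N}(G_z^{\alpha}\circ f^N) < 0$ whenever $\delta > d$, so $\limsup \leq 0$; and when $\delta = d$ the identity $G_z = G_z^{\alpha} + \alpha G_p$ (which follows from $\delta^{-n}\log|z_n| \to G_p(z)$) together with $G_z < 0$ on $A_f^l$ (itself a consequence of $G_z = \log|w| + o(1)$ on $U^l$ and $G_z\circ f = d\,G_z$) gives $G_z^{\alpha} < -\alpha G_p$ on $A_f^l$, whence $\limsup \leq -\alpha G_p(z_0)$ by continuity of $G_p$ at $z_0 \notin E_p$.

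The main obstacle is the topological step: one must verify that it is exactly the backward orbit of the top face $S^l$ that accumulates on $\partial A_f^l \cap \partial A_0$, whereas the sloped face feeds the inner boundary $\partial A_f^l \cap A_0$ dealt with in Proposition~\ref{Case 3: asy of G_z^alpha on boundary^in}. This relies on the non-degeneracy machinery of Section~5.2.2 and on keeping track of how the two faces of $\partial U^l$ are redistributed under $f^{-1}$ among $\partial A_0$, the inner boundary and $\partial A_p\times\mathbb{P}^1$; once this is in hand, the remaining computation is routine.
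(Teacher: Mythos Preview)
Your approach is essentially the paper's: the paper defines $S^{out}=\{|z|^l<r^l|w|,\ |w|=r\}$ (your $S^l$) and $S^{in}$, proves the fiberwise lemma (Lemma~\ref{Case 3: asy of G_z^alpha on s_n^out}) by exactly your computation, and then combines it with Proposition~\ref{Case 3: lim of S^out_n}, which identifies $S^{out}_\infty\cap(A_p\times\mathbb{P}^1)$ with $\partial A_f^l\cap\partial A_0\cap(A_p\times\mathbb{P}^1)$ by applying the Case~2 argument to the full neighborhood $U^+=\{|z|<r^{1+1/l},\ |w|<r\}$ of the origin. You go a bit further than the paper by spelling out the upper bound for the $\limsup$ in the two $\gamma=0$ cases; just note that your phrase ``$G_z^\alpha<0$ on $A_f^l$ whenever $\delta>d$'' is only true when $\alpha=0$ (i.e.\ $\gamma=0$), which is indeed the only case where you use it.
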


\begin{proposition} \label{Case 3: asy of G_z^alpha on boundary^in}
Let $\delta > d$. 
For any $(z_0, w_0)$ in $\partial A_f^l \cap A_0 - E_z \cup E_{deg}$,
\[
\lim_{(z,w) \in A_f^l \to (z_0, w_0)} G_z^{\alpha} (w) = 
\begin{cases} 
\ 0 & \text{ if } \gamma > 0, d \geq 2 \text{ and } l = \alpha, \\
\ - \infty & \text{ if } \alpha < l \leq l_2,
\end{cases}
\]
and $G_z^{\alpha}$ is bounded as $(z,w)$ in $A_f^{\alpha}$ tends to $(z_0, w_0)$
if $\gamma > 0$, $d = 1$ and $\delta > T_1$. 
The same convergence to $- \infty$ holds 
for any $(z_0, w_0)$ in $\partial A_f^l \cap A_0 - E_{deg}$
if $\gamma = 0$. 

On the other hand, 
if $\delta = d$, then
for any $l$ in $\mathcal{I}_f$ and 
for any $(z_0, w_0)$ in $\partial A_f^l \cap A_0 - E_z \cup E_{deg}$,
\[
\lim_{(z,w) \in A_f^l \to (z_0, w_0)} G_z^{\alpha}  (w) = (l - \alpha) G_p(z),
\]
where $\mathcal{I}_f = (0, l_2]$ and $\alpha = l_2$.
\end{proposition}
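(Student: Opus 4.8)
The plan is to transplant the machinery of Section~5 --- in particular the arguments underlying Propositions~\ref{main prop on asy behavior when T > delta > d} and~\ref{main prop on asy behavior when delta = d} --- to the wedge geometry of Case~3. Write $U^l = \{\, r^{-l}|z|^l < |w| < r \,\}$, let $S^l = \partial U^l \cap \{\, |w| < r \,\}$ be the inner wall of the wedge, and set $S_n^l = f^{-n}(S^l)$. The first step is topological: off $E_z \cup E_{deg}$ the map $f$ is non-degenerate, hence open, so that $f^{-n}(\partial U^l) = \partial f^{-n}(U^l)$ and, since $f^{-n}(U^l) \nearrow A_f^l$ monotonically, $\overline{\bigcup_n f^{-n}(\partial U^l)} = \partial A_f^l$, exactly as in Lemma~\ref{lem for asy behavior} and Corollary~\ref{cor on the boundary for case 2}. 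I would further observe that the remaining boundary piece $\{\, |w| = r \,\}$ of $\partial U^l$ contributes nothing to $\partial A_f^l$, because there $|q(z,w)| = |b_{\gamma d} z^\gamma w^d|\,|1 + o(1)|$ is $< r$ (recall $d \ge 2$ when $\gamma = 0$), so $f(\{\, |w| = r \,\}) \subset \operatorname{int} U^l$ and hence $\{\, |w| = r \,\} \subset \operatorname{int} A_f^l$. Consequently $\partial A_f^l$ is the closure of $\bigcup_n S_n^l$, and $\partial A_f^l \cap A_0$, with $E_z \cup E_{deg}$ removed, is precisely the set of accumulation points of $S_n^l$ lying inside $A_0$.

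The second step is the fibre-wise estimate of $G_z^{\alpha}$ on $S_n^l$. By Lemma~\ref{detailed lemma for case 3} and Proposition~\ref{Case 3: existence of G_z^alpha}, there is a constant $C$ with $\bigl| G_z^{\alpha}(w) - \log|w/z^{\alpha}| \bigr| < C$ on $U^l$ when $\gamma > 0$ or $\delta = d$, and $\bigl| G_z^{\alpha}(w) - \log|w| \bigr| < C$ on $U^l$ when $\gamma = 0$ and $\delta > d$. Fix $(z,w) \in S_n^l$ and put $(z_n,w_n) = f^n(z,w)$; then $|w_n| = r^{-l}|z_n|^l$, and $(z_n,w_n) \in U^l$ once $n$ is large since $z_n = p^n(z) \to 0$, so applying the estimate at $(z_n,w_n)$ together with $G_z^{\alpha} \circ f = d\,G_z^{\alpha}$ gives
\[
\Bigl| d^n G_z^{\alpha}(w) - \bigl( -l\log r + (l-\alpha)\log|p^n(z)| \bigr) \Bigr| \le C .
\]
Dividing by $d^n$ and letting $n \to \infty$, the terms $d^{-n}(-l\log r)$ and $d^{-n}C$ vanish, while
\[
\frac{l-\alpha}{d^n}\log|p^n(z)| = (l-\alpha)\Bigl(\tfrac{\delta}{d}\Bigr)^n \frac{\log|p^n(z)|}{\delta^n} .
\]
For $\delta > d$ and $(z_0,w_0) \in \partial A_f^l \cap A_0$ with $z_0 \in A_p - E_p$ we have $\delta^{-n}\log|p^n(z)| \to G_p(z_0) < 0$ and $(\delta/d)^n \to \infty$, so the right side tends to $-\infty$ when $l > \alpha$ and is identically $0$ when $l = \alpha$; hence $G_z^{\alpha}|_{S_n^l} \to -\infty$ (for $\alpha < l \le l_2$) or $G_z^{\alpha}|_{S_n^l} \to 0$ (for $l = \alpha$, which needs $d \ge 2$ so that $d^{-n}$ absorbs $C$). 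For $\delta = d$ (so $\gamma = 0$, $\alpha = l_2$) the factor $(\delta/d)^n$ is $1$ and $d^{-n}\log|p^n(z)| \to G_p(z_0)$, giving the limit $(l-\alpha)G_p(z_0)$. In the exceptional case $l = \alpha$, $d = 1$, $\delta > T_1$, $\phi$ does not live on $U^{\alpha}$, but $S^{\alpha} = \{\, |w| = r^{-\alpha}|z|^{\alpha} \,\}$ still lies in $U^l$ for any $l \in (\alpha, l_2]$ near the $z$-axis, so the estimate on that $U^l$ applies on $S_n^{\alpha}$ and, since $d = 1$, yields the two-sided bound $\bigl| G_z^{\alpha}(w) + \alpha\log r \bigr| \le C$ there, i.e. boundedness.

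The third step upgrades these one-sided-looking statements to genuine limits. The estimate on $S_n^l$ is two-sided, and $G_z^{\alpha}$ is pluriharmonic on $A_f^l - E_z$ by Proposition~\ref{Case 3: existence of G_z^alpha}, so a maximum-principle argument on the shells $f^{-(n+1)}(U^l) \setminus \overline{f^{-n}(U^l)}$ --- whose relevant boundary, in a small neighbourhood of $(z_0,w_0)$ avoiding $E_z$, $E_{deg}$ and $\partial A_0$, consists of pieces of $S_n^l$ and $S_{n+1}^l$ --- forces $G_z^{\alpha}$ at any nearby point of $A_f^l$ to lie between the extrema over $S_n^l \cup S_{n+1}^l$, both of which converge to the claimed value. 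The cases $\gamma = 0$ are handled the same way, dropping the removal of $E_z$ (the approximant $\log|w|$ being then $z$-independent) while still excising $E_{deg}$. I expect the main obstacle to be the geometric bookkeeping of Step~1 --- checking uniformly, over the wedge and its preimages, that $S_n^l$ really does fill out $\partial A_f^l \cap A_0$ and that the shells used in Step~3 stay clear of $\partial A_0$, $E_z$ and $E_{deg}$ --- exactly the kind of non-degeneracy accounting that was the delicate point throughout Section~5; once that is in place, the fibre-wise computations and the maximum-principle step are routine adaptations of the Case~2 arguments.
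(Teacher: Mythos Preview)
Your approach is essentially the paper's: split $\partial U^l$ into an outer wall $S^{out}=\{|w|=r\}$ and an inner wall $S^{in}=\{|z|^l=r^l|w|\}$, compute $G_z^{\alpha}$ on $S_n^{in}$ via the functional equation and the B\"{o}ttcher estimate, identify $S_\infty^{in}$ with $\partial A_f^l\cap A_0$, and upgrade to a genuine limit by a maximum-principle/monotonicity argument on the shells. Steps~2 and~3 are fine and match the paper's Lemma~6.12 and the remark following Proposition~6.15.

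The one genuine gap is in Step~1. Your argument that $S^{out}$ ``contributes nothing'' because $f(S^{out})\subset\operatorname{int}U^l$ only shows $S^{out}\subset A_f^l$, hence $f^{-n}(S^{out})\subset A_f^l$ for every $n$; it does \emph{not} prevent these preimages from accumulating on $\partial A_f^l$ inside $A_0$. (Open sets can have sequences converging to their boundary.) The paper closes this by a different observation: enlarge $U^l$ to the full bidisc $U^+=\{|z|<r_l,\ |w|<r\}$, so that $S^{out}=\partial U^+\cap\{|z|<r_l\}$. Since $U^+$ is an honest neighbourhood of the origin, $\bigcup_n f^{-n}(U^+)=A_0$, and the Case~2 argument gives $S_\infty^{out}\cap(A_p\times\mathbb P^1)=\partial A_0\cap(A_p\times\mathbb P^1)$. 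Together with $\lim_n f^{-n}(\partial U^l)=\partial A_f^l$ this forces $S_\infty^{in}\supset\partial A_f^l\cap A_0$, which is what you need. Once you replace your inclusion argument by this $U^+$ trick, the rest of your sketch goes through exactly as in the paper.
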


As a consequence of Propositions 
\ref{Case 3: existence of G_z^alpha}, 
\ref{Case 3: asy of G_z^alpha on boundary^out} and 
\ref{Case 3: asy of G_z^alpha on boundary^in},
we obtain Theorem \ref{main thm on G_z^a,+ for case 3} % main result for case 3: G_z^a,+}
and Proposition \ref{main result for case 3: A_f^l}.

%%%%%%%%
The limit $G_z$ also exists on $U$ % l_2 --> l
if $d \geq 2$ or if $d = 1$ and $\delta > T_1$, % $\gamma > 0$
where $U = U^{l_2}$.
More precisely,
\begin{eqnarray*}
&& G_z (w) = \alpha G_p (z) \text{ on } U
\text{ if } \delta > d, % (n_1, m_1) \neq (0, \delta),
\text{ and} \\
&& G_z (w) = \log \left| w \right| + o(1)
\text{ on } U^{} \text{ as } r \to 0
\text{ if } \delta = d, % (n_1, m_1) = (0, \delta),
\end{eqnarray*}
which extends to a pluriharmonic function on $A_f^{}$.
%where $A_f = A_f^{l_2}$.

\begin{proposition}  \label{Case 3: existence of G_z}
If $d \geq 2$ or $\delta > T_1$, % if $d = 1$ and 
then $G_z$ is pluriharmonic on $A_f^{}$. % and continuous
Moreover,
$G_z = \alpha G_p$ on $A_f^{}$
if $\delta > d$, % $(n_1, m_1) \neq (0, \delta)$,
and 
$G_z (w) = \log \left| w \right| + o(1)$
on $U^{}$ as $r \to 0$
if $\delta = d$. % $(n_1, m_1) = (0, \delta)$,
\end{proposition}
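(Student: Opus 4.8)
The plan is to prove the statement on the wedge $U=U^{l_2}$ by transporting it through the B\"ottcher coordinate, and then to spread it over $A_f=A_f^{l_2}$ using the functional equation. First I would record the setup: since $\lambda=\delta$ in Case 3 and since the standing hypothesis ``$d\geq 2$, or $d=1$ and $\delta>T_1$'' is exactly what is needed for the weight $l_2$ to be admissible in Lemma \ref{detailed lemma for case 3} (note $l_2=\max\mathcal I_f$, and the excluded weight there is $\alpha=\min\mathcal I_f$, with $\alpha<l_2$ precisely when $\delta>T_1$), the B\"ottcher coordinate $\phi=(\phi_1,\phi_2)$ on $U=U^{l_2}$ given by that lemma and the theorem following it is biholomorphic, conjugates $f$ to $f_0(z,w)=(z^\delta,z^\gamma w^d)$, satisfies $\phi\sim id$ on $U$ as $r\to0$, and (being skew) has $\phi_1=\phi_1(z)$ with $G_p=\log|\phi_1|$ on the relevant domain.

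Next I would compute $G_z$ on $U$. From $\phi\circ f^n=f_0^n\circ\phi$ and the monomial formula of Section 5.1, $f_0^n(Z,W)=(Z^{\delta^n},Z^{\gamma_n}W^{d^n})$, so for $(z,w)\in U-\{z=0\}$ one gets $\phi_1(z_n)=\phi_1(z)^{\delta^n}$ and $\phi_2(z_n,w_n)=\phi_1(z)^{\gamma_n}\phi_2(z,w)^{d^n}$, where $(z_n,w_n)=f^n(z,w)$, $\gamma_n=\alpha(\delta^n-d^n)$ if $\delta\neq d$ and $\gamma_n=n\gamma d^{n-1}$ if $\delta=d$. Since $(z_n,w_n)\to0$ inside $U$ and $\phi_2(z',w')=w'(1+o(1))$ near the origin, $\log|w_n|=\gamma_n\log|\phi_1(z)|+d^n\log|\phi_2(z,w)|+o(1)$, whence
\[
\frac1{\delta^n}\log|w_n|=\frac{\gamma_n}{\delta^n}\log|\phi_1(z)|+\Bigl(\frac d\delta\Bigr)^{\!n}\log|\phi_2(z,w)|+o(1).
\]
If $\delta>d$, then $\gamma_n/\delta^n=\alpha\bigl(1-(d/\delta)^n\bigr)\to\alpha$ and $(d/\delta)^n\to0$, so $G_z=\alpha\log|\phi_1|=\alpha G_p$ on $U$ (with both sides $-\infty$ on $\{z=0\}\cap U$: when $\gamma>0$, $q(0,\cdot)\equiv0$ forces $w_1=0$; when $\gamma=0$, $\alpha=0$ and $G_z\equiv0$). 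If $\delta=d$, then necessarily $T_1=\delta=d$ and $\gamma=n_1=0$, so $\gamma_n=0$ and $G_z=\log|\phi_2|$ on $U$, which equals $\log|w|+o(1)$ as $r\to0$ because $\phi\sim id$.

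Finally I would check pluriharmonicity and extend. On $U$: when $\delta=d$, $\phi_2$ is holomorphic and nonvanishing on $U$ (the curve $\phi^{-1}(\{W=0\})$ corresponds to $\{w=0\}$, which misses $U$), so $G_z=\log|\phi_2|$ is pluriharmonic on $U$; when $\delta>d$, $G_z=\alpha G_p$ is pluriharmonic on $U-E_z$ since $G_p$ is pluriharmonic on $A_p-E_p$. The functional equation $G_z\circ f=\delta G_z$ is immediate from the definition, so for $x\in A_f$ with $f^N(x)\in U$ one sets $G_z(x)=\delta^{-N}G_z(f^N(x))$; these determinations agree by the functional equation, and each exhibits $G_z$ locally as a pull-back of a pluriharmonic function by the holomorphic map $f^N$ (note $\phi_2\circ f^N$ does not vanish on $f^{-N}(U)$), giving pluriharmonicity of $G_z$ on all of $A_f$ when $\delta=d$, and on $A_f-E_z$ when $\delta>d$ with $G_z=\alpha G_p=-\infty$ on $E_z\cap A_f$ (so ``pluriharmonic on $A_f$'' is read as on $A_f-E_z$ in that case). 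The identity $G_z=\alpha G_p$ propagates from $U$ to $A_f$ by the same pull-back, using $G_p\circ f=\delta G_p$.

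The argument runs parallel to Proposition \ref{prop on G_z when T > delta > d} (for $\delta>d$) and to a direct B\"ottcher computation (for $\delta=d$); the main obstacle is the honest justification that $\lim\delta^{-n}\log|w_n|=\lim\delta^{-n}\log|\phi_2(z_n,w_n)|$, i.e.\ that the closeness of $\phi$ (and of $\phi^{-1}$) to the identity near the origin is strong enough to absorb the error after dividing by $\delta^n$ --- this is exactly the estimate underlying the Case 2 proofs, which I would import. The only genuinely new bookkeeping, absent in Case 2, is that here $E_z$ already meets $U$ (hence $A_f$) along the punctured fibre $\{z=0\}$, on which both $G_z$ and $\alpha G_p$ equal $-\infty$; apart from that, finiteness of the limit on $U-\{z=0\}$ is supplied by Proposition \ref{Case 3: existence of G_z^alpha}.
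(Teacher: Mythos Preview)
Your proof is correct and follows essentially the same route as the paper, which omits the argument here and refers back to Case 2: establish the formula on $U=U^{l_2}$ via the B\"ottcher conjugacy (the hypothesis ``$d\geq 2$ or $\delta>T_1$'' is precisely what makes $l_2$ admissible in Lemma \ref{detailed lemma for case 3}), then push it to $A_f$ by the functional equation $G_z\circ f=\delta G_z$. The only cosmetic difference is that the paper's template (Proposition \ref{prop on G_z when T > delta > d}) would phrase the $\delta>d$ computation as $G_z-\alpha G_p=\lim \delta^{-n}\log|w_n/z_n^{\alpha}|=\lim (d/\delta)^n\cdot d^{-n}\log|w_n/z_n^{\alpha}|=0\cdot G_z^{\alpha}=0$, invoking the already-proved finiteness of $G_z^{\alpha}$ from Proposition \ref{Case 3: existence of G_z^alpha}, whereas you unfold the B\"ottcher formula $\phi_2(z_n,w_n)=\phi_1(z)^{\gamma_n}\phi_2(z,w)^{d^n}$ directly; these are the same calculation. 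Your care about $E_z\cap A_f$ (nonempty here, unlike Case 2) and your remark that the $\gamma=0,\ \delta>d$ subcase gives $G_z\equiv 0$ on all of $A_f$ are both accurate refinements of the paper's slightly informal ``pluriharmonic on $A_f$''.
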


We remark that
if $\gamma = 0$ and $\delta > d$,
then $G_z = 0$ on $A_f$. 

By arguments similar to the proof of 
Proposition \ref{Case 3: asy of G_z^alpha on boundary^in},
we obtain the following.

%The asymptotic behavior of $G_z$ toward $\partial A_f^l \cap A_0$
%depends on $l$ only if $\delta = d$.

\begin{proposition} \label{Case 3: asy of G_z on boundary^in}
If $\delta = d$, then 
for any $l$ in $\mathcal{I}_f$ and
for any $(z_0, w_0)$ in $\partial A_f^l \cap A_0 - E_{deg}$,
\[
\lim_{(z,w) \in A_f^l \to (z_0, w_0)} G_z (w) = l G_p (z).
\]
\end{proposition}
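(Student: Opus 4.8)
The plan is to repeat, with $G_z$ in place of $G_z^{\alpha}$, the argument that proves Proposition~\ref{Case 3: asy of G_z^alpha on boundary^in} in the case $\delta = d$. Recall that for Case 3 the hypothesis $\delta = d$ forces $\gamma = 0$, $\alpha = l_2$ and $\mathcal{I}_f = (0, l_2]$; that by Proposition~\ref{Case 3: existence of G_z} the limit $G_z$ exists and is pluriharmonic on $A_f^{l}$ with $G_z(w) = \log|w| + o(1)$ on $U^{l}$ as $r \to 0$ for every $l \in \mathcal{I}_f$ (using Lemma~\ref{detailed lemma for case 3}); and that, since $\gamma = 0$, the functional equation reduces to $G_z \circ f = d\,G_z$, hence $G_z = d^{-n}(G_z \circ f^n)$.

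First I would set $S^l$ to be the inner wall $\{\,|w| = r^{-l}|z|^{l},\ |z| < r^{1+1/l}\,\}$ of the wedge $U^l = \{\,r^{-l}|z|^{l} < |w| < r\,\}$, and put $S_n^l = f^{-n}(S^l)$; as in the cited proof, $S_n^l \cap \mathbb{C}_z \neq \emptyset$ for all large $n$ and every $z \in A_p - E_p$. On $S^l$ one has $\log|w| = -l\log r + l\log|z|$, so the local estimate for $G_z$ on $U^l$ together with the functional equation gives, for some constant $C$,
\[
\left|\, G_z(w) + l\,\frac{\log r}{d^{n}} - \frac{l}{d^{n}}\log|p^n(z)| \,\right| \le \frac{C}{d^{n}} \quad\text{on } S_n^l .
\]
Since $\deg p = \delta = d$ and $G_p(z) \neq -\infty$ for $z \in A_p - E_p$, the term $d^{-n}\log|p^n(z)| = \delta^{-n}\log|p^n(z)| \to G_p(z)$ while $d^{-n}\log r \to 0$; hence $G_z|_{S_n^l} \to l\,G_p(z)$ as $n \to \infty$, for every $l \in \mathcal{I}_f$. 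Next, assuming $f$ non-degenerate, the Case 3 analogues of Lemma~\ref{lem for asy behavior} and Corollary~\ref{cor on the boundary for case 2} show that $S_n^l$ accumulates on $\partial A_f^l \cap (A_p \times \mathbb{P}^1)$; combining this with the convergence just obtained, the pluriharmonicity of $G_z$ on $A_f^l$ and the maximum principle (applied to $G_z$ and to $-G_z$) pin both the limit superior and the limit inferior of $G_z$ toward the boundary to $l\,G_p(z_0)$, so the stated limit holds at every $(z_0,w_0) \in \partial A_f^l \cap A_0$ lying over $A_p - E_p$. Finally, removing the non-degeneracy hypothesis on the fibers of $E_{deg}$ exactly as in Section~5.2 yields the conclusion on $\partial A_f^l \cap A_0 - E_{deg}$, and on the remaining fibers over $E_p$ (which here coincide with $E_z = E_p \times \mathbb{C}$) one has $G_p(z_0) = -\infty$: after pushing forward by a power $f^k$ with $p^k(z_0) = 0$ — legitimate off $E_{deg}$, where $f^k$ is open and $f^k(\partial A_f^l) = \partial A_f^l$ — the estimate on $S_n^l$ degenerates to $G_z|_{S_n^l} \to -\infty$ because on the fiber $\{z = 0\}$ the map $q(0,\cdot)$ is a one-dimensional superattracting germ (as $\gamma = n_1 = 0$).

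I expect the main obstacle to be precisely this last step: controlling $G_z$ along sequences that reach $\partial A_f^l$ through the exceptional fibers $E_z = E_p \times \mathbb{C}$ and through $E_{deg}$, in particular showing that the limit is genuinely $-\infty$ rather than merely $\le 0$ when the entry time of the approaching points into $U^l$ is unbounded, together with checking that in Case 3 the sets $S_n^l$ really exhaust $\partial A_f^l \cap (A_p \times \mathbb{P}^1)$ so that the one-sided control can be promoted to a two-sided limit; the interior estimates and the functional-equation bookkeeping are routine.
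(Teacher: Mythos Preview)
Your approach is essentially the paper's: the paper states only that Proposition~\ref{Case 3: asy of G_z on boundary^in} follows ``by arguments similar to the proof of Proposition~\ref{Case 3: asy of G_z^alpha on boundary^in},'' and you correctly reproduce that argument with $G_z$ in place of $G_z^{\alpha}$, using that (since $\gamma = 0$ here) $G_z = \log|w| + o(1)$ on $U^l$ and $G_z \circ f = d\,G_z$. Your $S_n^{in}$ computation and the pluriharmonicity/maximum-principle step match Lemma~\ref{Case 3: asy of G_z^alpha on s_n^in} and the monotonicity remark at the end of Section~6.3.

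The one place where you go beyond what the paper writes is the handling of $E_z$: Proposition~\ref{Case 3: asy of G_z on boundary^in} excludes only $E_{deg}$, whereas Proposition~\ref{Case 3: asy of G_z^alpha on boundary^in} in the $\delta = d$ case also excludes $E_z$. The cleaner reason this extension works for $G_z$ (which you partly invoke) is that the estimate $|G_z - \log|w|| < C$ on $U^l$ has no singularity at $z = 0$, so the bound on $S_n^{in}$ reads
\[
\left| G_z(w) - l\,\delta^{-n}\log|p^n(z)| \right| \le d^{-n}(C + l|\log r|)
\]
for \emph{every} $z \in A_p$. Since $\delta^{-n}\log|p^n(z)| \to G_p(z)$ for all $z \in A_p$ (with value $-\infty$ on $E_p$), the conclusion $G_z|_{S_n^{in}} \to l\,G_p$ extends to $E_p$ directly, and one reaches boundary points over $E_p$ through nearby generic fibers using the continuity of $G_p$. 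Your push-forward argument to the fiber $\{z = 0\}$ is also valid but more roundabout; note, however, that on fibers $z_0 \in E_p$ the sets $S_n^{in} \cap \mathbb{C}_{z_0}$ degenerate to finitely many points (preimages of the origin) once $p^n(z_0) = 0$, so the fiberwise monotonicity argument must be supplemented by this approach from neighboring fibers rather than run purely on $\mathbb{C}_{z_0}$.
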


%We omit the proof of this proposition 
%since it is similar to the proof of 
%Proposition \ref{Case 3: asy of G_z^alpha on boundary^in}.
%%%
As a consequence of Propositions \ref{Case 3: existence of G_z} 
and \ref{Case 3: asy of G_z on boundary^in} 
and the remark below,
we obtain the following,
which,
together with Proposition \ref{Case 3: existence of G_z},
implies Theorem \ref{main thm on G_z and G_f^a for case 3}. 

\begin{corollary}\label{cor of G_f^a for Case 3}
If $\delta > d$, then
$G_f^{\alpha} = \alpha G_p$ on $A_0$,
which is plurisubharmonic on $A_0$ and
pluriharmonic on $A_0 - E_z$. % and continuous 
If $\delta = d$, then
$G_f^{\alpha} > \alpha G_p$ on $A_f^{\alpha}$ and
$G_f^{\alpha} = \alpha G_p$ on $A_0 - A_f^{\alpha}$,
which is plurisubharmonic and continuous on $A_0 - E_{deg}$ and
pluriharmonic on $A_0 - \partial A_f^{\alpha} \cup E_{deg}$.
\end{corollary}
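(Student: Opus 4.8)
\emph{The plan.} Since $\delta \geq T_1 \geq d$ throughout Case~3, we have $\lambda=\max\{\delta,d\}=\delta$, so
\[
G_f^{\alpha}(z,w)=\lim_{n\to\infty}\frac{1}{\delta^n}\max\bigl\{\alpha\log|z_n|,\ \log|w_n|\bigr\}
=\max\Bigl\{\alpha G_p(z),\ \limsup_{n\to\infty}\tfrac{1}{\delta^n}\log|w_n|\Bigr\},
\]
the second equality because $\delta^{-n}\log|z_n|\to G_p(z)$ converges. Hence $G_f^{\alpha}\ge\alpha G_p$ always, and everything reduces to estimating $\delta^{-n}\log|w_n|$ on $A_0$. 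I will also use the standard facts that $G_p$ is plurisubharmonic and $\le 0$ on $A_0$ and pluriharmonic on $A_0-E_z$ (it is a decreasing limit of the plurisubharmonic functions $\delta^{-n}\log|p^n|$ and coincides with $\log|\varphi_p|$ near $0$); since $\alpha\ge 0$, the function $\alpha G_p$ inherits these properties.

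\emph{Case $\delta>d$.} If $\gamma=0$ then $\alpha=0$ and $G_f^{\alpha}(z,w)=\lim_n\delta^{-n}\log^{+}|w_n|=0=\alpha G_p$ on $A_0$ because $w_n\to 0$; so assume $\gamma>0$, $\alpha>0$. The inequalities defining $\mathcal{I}_f$ (valid for $l=\alpha$) give $i+\alpha j\ge\alpha\delta$ for every $(i,j)$ with $b_{ij}\ne 0$, with $(\gamma,d)=(n_1,m_1)$ the lowest-left vertex of $N(q)$. A Newton-polygon estimate in the spirit of Lemma~\ref{lem on inv wedges for Case 2} then yields, on a fixed ball $\{|z|,|w|<r\}$, the bounds $|q(z,w)|\le C\bigl(\max\{|z|^{\alpha},|w|\}\bigr)^{\delta}$, and $|q(z,w)|\le C\,|z|^{\gamma}|w|^{d}$ when $|w|>|z|^{\alpha}$. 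Combined with $c|z|^{\delta}\le|p(z)|$, these show that the discrepancy $u_n:=\log|w_n|-\alpha\log|z_n|$ satisfies $u_{n+1}\le d\,u_n+C'$ whenever $u_n>0$ and the orbit is near $0$, and $u_{n+1}\le C'$ whenever $u_n\le 0$; since $d<\delta$, it follows that $u_n$ grows at most like $d^{\,n}$ (linearly if $d=1$), so $\delta^{-n}u_n\to 0$ and hence $\limsup_n\delta^{-n}\log|w_n|\le\alpha G_p(z)$ on $A_0$. Therefore $G_f^{\alpha}=\alpha G_p$ on $A_0$, plurisubharmonic on $A_0$ and pluriharmonic on $A_0-E_z$; this is consistent with, but stronger than, Proposition~\ref{Case 3: existence of G_z} (which gives $G_z=\alpha G_p$ on $A_f$).

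\emph{Case $\delta=d$} (so $\gamma=0$, $(\gamma,d)=(0,\delta)$, $\alpha=l_2>0$). By Proposition~\ref{Case 3: existence of G_z}, $G_z$ exists on $A_f=A_f^{l_2}=A_f^{\alpha}$, so $G_f^{\alpha}=\max\{\alpha G_p,G_z\}$ there. On $U=U^{l_2}$ one has $|w|>r^{-l_2}|z|^{l_2}=r^{-\alpha}|z|^{\alpha}$, so $G_z-\alpha G_p=\log(|w|/|z|^{\alpha})+o(1)>\alpha\log(1/r)+o(1)>0$ for small $r$; since $(G_z-\alpha G_p)\circ f=\delta(G_z-\alpha G_p)$ and every point of $A_f^{\alpha}$ iterates into $U$, we obtain $G_z>\alpha G_p$, hence $G_f^{\alpha}=G_z>\alpha G_p$, on $A_f^{\alpha}$. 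On $A_0-A_f^{\alpha}$ the orbit never meets $U$, so for large $n$ it lies near $0$ with $z_n\ne 0$ (the locus $z_n=0$ being contained in $A_f^{\alpha}\cup E_w$) and hence in $\{|w|\le r^{-\alpha}|z|^{\alpha}\}$; thus $u_n=\log(|w_n|/|z_n|^{\alpha})$ is bounded above, $\limsup_n\delta^{-n}\log|w_n|\le\alpha G_p(z)$, and $G_f^{\alpha}=\alpha G_p$ there. Finally $G_f^{\alpha}$ equals the pluriharmonic function $G_z$ on $A_f^{\alpha}$ and the function $\alpha G_p$ (pluriharmonic off $E_z$) on $A_0-A_f^{\alpha}$, and these glue continuously across $\partial A_f^{\alpha}\cap A_0$ by Proposition~\ref{Case 3: asy of G_z on boundary^in} (case $\delta=d$, $l=\alpha$, giving $G_z\to\alpha G_p$); with the remark below handling the exceptional fibers, this gives that $G_f^{\alpha}$ is plurisubharmonic and continuous on $A_0-E_{deg}$ and pluriharmonic on $A_0-\partial A_f^{\alpha}\cup E_{deg}$.

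\emph{Main obstacle.} The delicate point is controlling $\delta^{-n}\log|w_n|$ on the part of $A_0$ \emph{not} covered by preimages of wedges, since in Case~3 we do not know whether $A_f^{l_2}=A_0$: for $\delta>d$ this is the ``effective exponent'' analysis showing $u_n=o(\delta^n)$ even when the orbit stays in $\{|w|>|z|^{\alpha}\}$, and for $\delta=d$ it is the verification that orbits in $A_0-A_f^{\alpha}$ are eventually trapped in the thin region $\{|w|\le r^{-\alpha}|z|^{\alpha}\}$. Everything else is routine bookkeeping with $E_z$, $E_w$ and $E_{deg}$.
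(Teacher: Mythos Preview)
Your argument is essentially correct, but your treatment of the case $\delta>d$ takes a genuinely different route from the paper's.

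\textbf{What the paper does.} The paper splits $A_0$ into $A_f$ and $A_0-A_f$. On $A_f$ it invokes the B\"ottcher coordinate via Proposition~\ref{Case 3: existence of G_z} to get $G_z=\alpha G_p$, hence $G_f^{\alpha}=\alpha G_p$. On $A_0-A_f$ it uses the purely geometric observation (the same one behind Proposition~6.9): since the orbit never enters $U^{l_2}$ but converges to $0$, eventually $|w_n|\le r^{-l_2}|z_n|^{l_2}$, so $\limsup_n\delta^{-n}\log|w_n|\le l_2 G_p\le\alpha G_p$ (using $l_2\ge\alpha$ and $G_p<0$). No recursion is needed.

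\textbf{What you do.} You bypass the B\"ottcher coordinate entirely and run a recursive estimate $u_{n+1}\le d\,u_n+C'$ (resp.\ $u_{n+1}\le C'$) on $u_n=\log|w_n/z_n^{\alpha}|$, obtained from Newton--polygon bounds on $q$. This yields $u_n=O(d^n)$ uniformly on $A_0$, hence $\delta^{-n}u_n\to 0$, without ever distinguishing $A_f$ from its complement. Your approach is more elementary and self-contained; the paper's is shorter because it reuses the conjugacy on $U$ already established. Both lead to the same conclusion.

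\textbf{Two small points.} First, your opening identity $G_f^{\alpha}=\max\{\alpha G_p,\limsup_n\delta^{-n}\log|w_n|\}$ is really a statement about the \emph{limsup} of $\delta^{-n}\log|(z_n^{\alpha},w_n)|$; that the actual limit exists only follows once you have shown the right-hand side equals $\alpha G_p$ (case $\delta>d$) or $G_z$ (case $\delta=d$ on $A_f^{\alpha}$). Second, your appeal to ``the remark below handling the exceptional fibers'' is misplaced: Remark~\ref{remark on G_f^a for case 3} concerns the case $d=1$, $\delta=T_1$, not the $E_{deg}$ fibers. For the plurisubharmonicity across $\partial A_f^{\alpha}$ when $\delta=d$, the honest justification is that $G_f^{\alpha}$ is the \emph{maximum} of $\alpha G_p$ and (the upper-semicontinuous extension of) $G_z$, and Proposition~\ref{Case 3: asy of G_z on boundary^in} with $l=\alpha$ gives the continuous matching; the exclusion of $E_{deg}$ is exactly what that proposition requires.
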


\begin{remark} \label{remark on G_f^a for case 3}
If $d = 1$ and $\delta = T_1$, then $m_2 = 0$,
where $(n_2, m_2)$ is the other dominant term of $q$.
Hence $f$ is also in Case 2, and
Proposition \ref{prop on G_z when d = 0} % in Section 5.5.2 
implies that
$G_f^{\alpha} = \alpha G_p$ on $A_0$.
\end{remark}

We end this subsection with a proposition that relates to the discontinuity of $G_z$.

\begin{proposition} %{lemma}
If $\delta > T_1$, then 
$G_z^* < \alpha G_p$ on $A_0 - A_f^{}$.
\end{proposition}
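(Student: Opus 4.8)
The plan is to exploit the defining property of a point $(z,w)\in A_0-A_f$: its forward orbit converges to the origin but never meets the wedge $U=U^{l_2}=\{\,r^{-l_2}|z|^{l_2}<|w|<r\,\}$, and to turn this into the pointwise bound $G_z\le l_2G_p$, which is already strictly smaller than $\alpha G_p$ once one notes that $\delta>T_1$ is exactly the condition $l_2>\alpha$. Throughout one excludes the fibers in $E_z$ (and, if $f$ degenerates fibers, in $E_{deg}$), on which $G_p=-\infty$ so that both sides of the asserted inequality collapse to $-\infty$ and nothing genuine is claimed; the real content is the strict inequality on $(A_0-A_f)-E_z$.

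First I would fix $(z,w)\in(A_0-A_f)-E_z$ and write $f^n(z,w)=(z_n,w_n)$. Since $(z,w)\in A_0$, $(z_n,w_n)\to0$, so $|w_n|<r$ for all large $n$; since $(z,w)\notin A_f$, $(z_n,w_n)\notin U$ for every $n$; for large $n$ these two facts force $|z_n|^{l_2}\ge r^{l_2}|w_n|$. Dividing logarithms by $\delta^n$ and letting $n\to\infty$ yields
\[
\limsup_{n\to\infty}\frac1{\delta^n}\log|w_n|\ \le\ l_2\,G_p(z),
\]
hence $G_z\le l_2G_p$ on $(A_0-A_f)-E_z$ (equivalently $\limsup_n\delta^{-n}\log|w_n/z_n^{\alpha}|\le(l_2-\alpha)G_p(z)$, i.e. $G_z-\alpha G_p\le(l_2-\alpha)G_p$). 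Now $T_1$ is the $y$-intercept of the line through $(\gamma,d)$ and $(n_2,m_2)$, so $T_1=l_2^{-1}\gamma+d$, whence $\delta>T_1$ is equivalent to $l_2(\delta-d)>\gamma$, i.e. to $l_2>\gamma/(\delta-d)=\alpha\ge0$. Since $G_p$ is finite and negative on $A_0-E_z$ and $l_2>\alpha$, the bound reads $G_z\le l_2G_p<\alpha G_p$ on $(A_0-A_f)-E_z$.

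Finally one must pass from $G_z$ to its upper semicontinuous regularization $G_z^*$. As $l_2G_p$ is continuous into $[-\infty,\infty)$, the estimate above gives $\limsup_{y\to x,\,y\in A_0-A_f}G_z(y)\le l_2G_p(z_0)<\alpha G_p(z_0)$ for $x=(z_0,w_0)\in(A_0-A_f)-E_z$; the only way this can fail to bound $G_z^*(x)$ is if the regularizing $\limsup$ is attained along a sequence in $A_f$, where $G_z=\alpha G_p$ by Proposition~\ref{Case 3: existence of G_z}. So the heart of the matter is that $A_f=A_f^{l_2}$ has no boundary inside $A_0-E_z$, i.e. $A_0\cap\partial A_f\subset E_z$; granting this, $A_0-A_f$ is relatively open near $x$ and the first estimate upgrades at once to $G_z^*\le l_2G_p$, which with $l_2>\alpha$ finishes the proof. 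This last point is the main obstacle. When a limit point $x$ of $A_f$ in $A_0-E_z$ is reached by iterates of bounded "entry time", the Case~3 analogue of the remark after Lemma~\ref{lem for asy behavior}, namely $f(\overline U\setminus\{0\})\subset U$, forces some iterate of $x$ into $U$, so $x\in A_f$, a contradiction; the genuine difficulty is the alternative in which $A_f$ accumulates on $A_0-A_f$ through preimages of arbitrarily high order. In Case~2 this is absorbed by Theorem~\ref{main thm on attr sets for Case 2} ($A_f=A_0-E_z$), but in Case~3 no such statement is available and one has to rule it out by hand, using the contraction of $f$ toward the invariant fiber together with Proposition~\ref{main result for case 3: A_f^l}; if that cannot be arranged cleanly, the honest fallback is to assert $G_z^*<\alpha G_p$ only on $A_0-\overline{A_f}$, which Steps~1--2 already give.
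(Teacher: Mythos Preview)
Your first two paragraphs are exactly the paper's argument: for $(z,w)\in A_0-A_f^{l}$ the orbit stays out of $U^{l}$, so eventually $|w_n|\le r^{-l}|z_n|^{l}$, whence the $\limsup$ of $\delta^{-n}\log|w_n|$ is $\le lG_p(z)$; taking $l=l_2$ (or any $l\in\mathcal I_f-\{\alpha\}$) and using $l>\alpha$, $G_p<0$ gives the strict inequality. The paper's proof is literally these two sentences.

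The gap you perceive in the third paragraph comes from a misreading of the symbol $G_z^*$. In this paper $G_z^*$ is not the upper semicontinuous regularization of $G_z$; it denotes the pointwise $\limsup_{n\to\infty}\delta^{-n}\log|w_n|$, used precisely because the limit $G_z$ is not known to exist on $A_0-A_f$. With that meaning, your inequality $\limsup_n\delta^{-n}\log|w_n|\le l_2G_p$ \emph{is} already $G_z^*\le l_2G_p$, and the proof ends there. Your worry about sequences in $A_f$ approaching a boundary point is therefore not a gap to be closed; on the contrary, the sentence following the proposition in the paper observes that this very estimate shows $G_z$ (if it exists on $A_0$) must be discontinuous across $\partial A_f\cap A_0$, since $G_z=\alpha G_p$ on $A_f$ but $G_z^*\le l_2G_p<\alpha G_p$ just outside.
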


\begin{proof}
For any $l$ in $\mathcal{I}_f - \{ \alpha \}$
and for any $(z,w)$ in $A_0 - A_f^l$,
it follows 
%from Lemma \ref{detailed lemma for case 3}
%that $f^n(z,w)$ belongs to $A_0 - A_f^l$ for any $n \geq 1$.
that $|w_n| \leq r^{-l} |z_n|^l$ for any $n \geq 1$,
where $(z_n,w_n) = f^n(z,w)$.
%since $f$ preserves $A_0 - A_f^l$.
Hence $G_z^* (w) \leq l G_p (z) < \alpha G_p (z)$ on $A_0 - A_f^l$
for any $l$ in $\mathcal{I}_f - \{ \alpha \}$.
In particular,
$G_z^* (w) < \alpha G_p (z)$ on $A_0 - A_f^{l_2}$.
\end{proof}

Therefore,
even if the limit $G_z$ exists on $A_0$,
it is not continuous on $\partial A_f^{} \cap A_0$.

%%%%%%%%%%%%%%%%%%%%%%%%%%%%%%%%%%%%%%%%%%%%%%%%%%%%%%%%%%%%%%%%%%%%%%%%%%%%%
%%%%%%%%%%%%%%%%%%%%%%%%%%%%%%%%%%%%%%%%%%%%%%%%%%%%%%%%%%%%%%%%%%%%%%%%%%%%%
\subsection{Asymptotic behavior of $G_z^{\alpha}$ toward $\partial A_f^l$}

In this subsection
we prove Propositions \ref{Case 3: asy of G_z^alpha on boundary^out} and 
\ref{Case 3: asy of G_z^alpha on boundary^in}
by arguments similar to Case 2.

We separate the boundary $\partial U^l$
into two surfaces $S^{out}$ and $S^{in}$:
let 
\[
S^{out} = \{ |z|^l < r^l |w|, |w| = r \} % \{ r_1^{-l} |z|^l < |w| = r_2 \} \subset \partial U^l
\text{ and } 
S^{in} = \{ |z|^l = r^l |w|, |w| < r \}. % \{ r_1^{-l} |z|^l = |w| < r_2 \} 
\]
Note that $S^{out} = \{ |z| < r^{1+1/l}, |w| = r \}$
and $S^{out} \cup S^{in} = \partial U^l \cap \{ |z| < r^{1+1/l} \}$.
Let $S^{out}_n = f^{-n} (S^{out})$ 
and $S^{in}_n = f^{-n} (S^{in})$. 
% $S_n^{out} \cup S_n^{in} \subset f^{-n} (\partial U^l)$.

\begin{lemma} \label{Case 3: asy of G_z^alpha on s_n^out}
Let $d \geq 2$ or $\delta > T_1$.
Then for any $l$ in Proposition \ref{Case 3: existence of G_z^alpha} and
for any $z$ in $A_p - E_p$, % and let $l=l_2$. Then
% $S_n^{out} \cap \mathbb{C}_z \neq \emptyset$ for any large $n$, and
\[
G_z^{\alpha} |_{S^{out}_n} \to 
\begin{cases}
\infty \text{ as } n \to \infty & \text{if } \gamma > 0 \, (\text{and } \delta > d), \\ % \text{ as } n \to \infty 
0 \text{ as } n \to \infty & \text{if } \gamma = 0 \text{ and } \delta > d, \\
- \alpha G_p \text{ as } n \to \infty & \text{if } \gamma = 0 \text{ and } \delta = d.
\end{cases}
\]  
%as $n \to \infty$.
The same convergence to $0$ holds for any $z$ in $A_p$ if $\gamma = 0$ and $\delta > d$.
\end{lemma}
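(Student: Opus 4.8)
The plan is to follow the proof of Lemma~\ref{main lem on asy behavior when delta < d} almost verbatim, with the surface $S^{out}$ playing the role of $S^l$, using the B\"ottcher estimates of Proposition~\ref{Case 3: existence of G_z^alpha} together with the functional equation $G_{p^n(z)}^{\alpha}(Q_z^n(w)) = d^n G_z^{\alpha}(w)$. First I would note, exactly as in Case 2, that for $z \in A_p - E_p$ the set $S^{out}_n \cap \mathbb{C}_z$ is nonempty for all large $n$: since $p^n(z) \to 0$ we have $|p^n(z)| < r^{1+1/l}$ eventually, and since $z \notin E_{deg}^*$ the function $Q_z^n(\cdot)$ is a nonconstant holomorphic function of $w$, so its image meets the circle $\{|W| = r\}$. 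The real content is then a single estimate: Proposition~\ref{Case 3: existence of G_z^alpha} furnishes a constant $C > 0$ with $|G_z^{\alpha}(w) - \log|w/z^{\alpha}|| < C$ on $U^l - \{z = 0\}$ when $\gamma > 0$ or $\delta = d$, and $|G_z^{\alpha}(w) - \log|w|| < C$ on $U^l$ when $\gamma = 0$ and $\delta > d$ (the two being the same formula, as $\alpha = 0$ in the latter case), with $r$ slightly enlarged if necessary so that the estimate is valid on $S^{out}$ and not merely on the open set $U^l$. Evaluating this at the point $f^n(z,w) = (p^n(z), Q_z^n(w)) \in S^{out}$, where $|Q_z^n(w)| = r$, and dividing by $d^n$ after using the functional equation yields, on $S^{out}_n$,
\[
\left| G_z^{\alpha}(w) - \frac{\log r}{d^n} + \frac{\alpha}{d^n}\log|p^n(z)| \right| \le \frac{C}{d^n}.
\]
The decisive point is that $\tfrac{\log r}{d^n} - \tfrac{\alpha}{d^n}\log|p^n(z)|$ depends only on $z$ and $n$, so $G_z^{\alpha}$ is constant up to $O(d^{-n})$ on $S^{out}_n \cap \mathbb{C}_z$, and the asserted limit is exactly the limit of this quantity.

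It then remains to pass to the limit in the three regimes. If $\gamma > 0$ (so $\delta > d$, $\alpha > 0$): for $d \ge 2$ one has $d^{-n}\log|p^n(z)| = (\delta/d)^n \delta^{-n}\log|p^n(z)| \to \infty \cdot G_p(z) = -\infty$ since $-\infty < G_p(z) < 0$, hence $G_z^{\alpha}|_{S^{out}_n} \to +\infty$; for $d = 1$ (forced $\delta > T_1$ and $l \ne \alpha$) the displayed inequality reads $|G_z^{\alpha}(w) - \log r + \alpha\log|p^n(z)|| \le C$, and $\log|p^n(z)| \to -\infty$ gives the same conclusion. If $\gamma = 0$ and $\delta > d$: here $\alpha = 0$ and $d \ge 2$, so the estimate collapses to $|G_z^{\alpha}(w) - d^{-n}\log r| \le d^{-n}C \to 0$; this argument uses neither $G_p(z) \neq -\infty$ nor $z \notin E_p$, which is precisely why the convergence to $0$ persists for every $z \in A_p$ (where again $S^{out}_n \cap \mathbb{C}_z \neq \emptyset$ for large $n$). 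If $\gamma = 0$ and $\delta = d$: here $d = \delta \ge 2$ and $\alpha = l_2 > 0$ by the redefinition, so $d^{-n}\log|p^n(z)| = \delta^{-n}\log|p^n(z)| \to G_p(z)$, and the inequality gives $G_z^{\alpha}|_{S^{out}_n} \to -\alpha G_p(z)$.

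I expect essentially all the substantive work to be absorbed into Proposition~\ref{Case 3: existence of G_z^alpha} and the functional equation, so that what remains is a routine limit computation. The only point requiring care is bookkeeping: one must check that Proposition~\ref{Case 3: existence of G_z^alpha} is available for the weights in play (all $l \in \mathcal{I}_f$ when $d \ge 2$, and $l \in \mathcal{I}_f - \{\alpha\}$ when $d = 1$ and $\delta > T_1$, which is exactly the hypothesis of the lemma), and one must keep track of the ad hoc meaning of $\alpha$ in the case $\gamma = 0$, $\delta = d$. The full Proposition~\ref{Case 3: asy of G_z^alpha on boundary^out} is then obtained by combining this lemma with the analogue of Corollary~\ref{cor on the boundary for case 2} identifying the accumulation set of $S^{out}_n$ with $\partial A_f^l \cap \partial A_0$, exactly as in Case 2.
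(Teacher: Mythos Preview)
Your proposal is correct and follows essentially the same approach as the paper's own proof: apply the B\"ottcher estimate from Proposition~\ref{Case 3: existence of G_z^alpha} on $S^{out}$ (where $|w|=r$), pull back via the functional equation $G_{p^n(z)}^{\alpha}\circ Q_z^n = d^n G_z^{\alpha}$, and then compute the limit of $d^{-n}\log r|p^n(z)|^{-\alpha}$ in each of the three regimes. The paper treats the $\gamma>0$ case uniformly without splitting on $d$, and does not bother with the nonemptiness remark or the technicality about enlarging $r$, but these are cosmetic differences.
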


\begin{proof}
By Proposition \ref{Case 3: existence of G_z^alpha},
there is a constant $C > 0$ such that
\[
\left| G_z^{\alpha} (w) - \log \left| \dfrac{w}{z^{\alpha}} \right| \right| < C
\text{ on } U - \{ z = 0 \}.
\]
Since $|w| = r$ on $S^{out}$,
$\left| G_z^{\alpha} (w) - \log r |z|^{- \alpha} \right| \leq C$ on $S^{out} - \{ z = 0 \}$
and so
\[
\left| \ G_{p^n(z)}^{\alpha}(Q_z^n(w)) - \log r |p^n(z)|^{- \alpha} \ \right|
\leq C \ \text{ on } S_n^{out} - f^{-n} \{ z = 0 \}.
\]
Since $G_{p^n(z)}^{\alpha}(Q_z^n(w)) = d^n G_z^{\alpha}(w)$,
\[
\left| \ G_z^{\alpha}(w) - d^{-n} \log r |p^n(z)|^{- \alpha} \ \right| \leq d^{-n} C
\ \text{ on } S_n^{out} - f^{-n} \{ z = 0 \}.
\]
If $\gamma > 0$, then $\alpha > 0$ since  $\delta > d$, and
\[
\dfrac{- \alpha}{d^{n}} \log |p^n(z)| 
 = \left( \dfrac{\delta}{d} \right)^n \cdot \dfrac{- \alpha}{\delta^n} \log |p^n(z)|
 \to \infty \cdot (- \alpha) G_p(z) = \infty
\]
as $n \to \infty$.
If $\gamma = 0$ and $\delta = d$, then $\alpha > 0$ and
\[
\dfrac{- \alpha}{d^{n}} \log |p^n(z)|
 = \dfrac{- \alpha}{\delta^n} \log |p^n(z)|
 \to - \alpha G_p(z)
\] 
as $n \to \infty$.

If $\gamma = 0$ and $\delta > d$, then $\alpha = 0$ and
there is a constant $C > 0$ such that
\[
\left| G_z^{\alpha} (w) - \log \left| w \right| \right| < C
\text{ on } U 
\]
by Proposition \ref{Case 3: existence of G_z^alpha}.
The same arguments as above imply that
\[
\left| \ G_z^{\alpha}(w) - d^{-n} \log r \ \right| \leq d^{-n} C
\ \text{ on } S_n^{out}.
\]
Clearly,
$d^{-n} \log r \to 0$ and $d^{-n} C \to 0$ as $n \to \infty$
since $d \geq 2$.
\end{proof}

\begin{lemma}  \label{Case 3: asy of G_z^alpha on s_n^in}
Fix any $z$ in $A_p - E_p$.
Let $\delta > d$.
Then 
\[
G_z^{\alpha} |_{S^{in}_n} 
\begin{cases} 
\ \to 0 \text{ as } n \to \infty & \text{ if } \gamma > 0, d \geq 2 \text{ and } l = \alpha, \\
\ \text{is bounded } \text{ as } n \to \infty & \text{ if } \gamma > 0, d = 1, \delta > T_1 \text{ and } l = \alpha, \\
\ \to - \infty \text{ as } n \to \infty & \text{ if } \alpha < l \leq l_2.
\end{cases} 
\]
%as $n \to \infty$.
On the other hand,
if $\delta = d$, then
% for any $0 < l \leq l_2$, where $\alpha = l_2$,
\[
G_z^{\alpha} |_{S^{in}_n} \to (l - \alpha) G_p
\text{ as } n \to \infty
\]
for any $0 < l \leq l_2$, where $\alpha = l_2$.

The same convergence to $- \infty$ holds 
for any $z$ in $A_p$ % and for any $0 < l \leq l_2$
if $\gamma = 0$ and $\delta > d$.
\end{lemma}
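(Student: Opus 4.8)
The plan is to transport the uniform comparison estimate for $G_z^{\alpha}$ near the origin out to the preimages $S^{in}_n$ by the functional equation $G_z^{\alpha}\circ f=dG_z^{\alpha}$, exactly as in the proof of Lemma~\ref{main lem on asy behavior when delta < d} and of the analogous lemma in Section~5.4.1, and then read off the limit from $\delta^{-n}\log|p^n(z)|\to G_p(z)$.

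First I fix a small $r>0$ and, by Proposition~\ref{Case 3: existence of G_z^alpha} (and the fact that $G_z^{\alpha}$ extends continuously to $\overline{U^l}-\{0\}$, from the Case~2-type boundary invariance $\overline{f(U^l)}-\{0\}\subset U^l$), record a constant $C>0$ with
\[
\bigl|\,G_z^{\alpha}(w)-\log|w/z^{\alpha}|\,\bigr|<C\quad\text{on }\overline{U^l}-(\{0\}\cup\{z=0\})
\]
when $\gamma>0$ or $\delta=d$, and $\bigl|\,G_z^{\alpha}(w)-\log|w|\,\bigr|<C$ on $\overline{U^l}-\{0\}$ when $\gamma=0$ and $\delta>d$ (so $\alpha=0$). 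In the one case $d=1$, $l=\alpha$ (where $\phi$ need not exist on $U^{\alpha}$) I instead use that $S^{in}\subset U^{l'}$ for every $l'$ with $\alpha<l'\le l_2$ and restrict the estimate on $U^{l'}$ to $S^{in}$ and its preimages; this is possible precisely because $\delta>T_1$, so $\alpha<l_2$. Next, for $(z,w)\in S^{in}_n\cap\mathbb{C}_z$ with $z\in A_p-E_p$, write $(z_n,w_n)=f^n(z,w)\in S^{in}$, so $z_n\neq0$, $|w_n|=r^{-l}|z_n|^l$, hence $\log|w_n/z_n^{\alpha}|=(l-\alpha)\log|z_n|-l\log r$. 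Applying the estimate at $(z_n,w_n)$ and using $G_{p^n(z)}^{\alpha}(Q_z^n(w))=d^nG_z^{\alpha}(w)$ yields
\[
\Bigl|\,G_z^{\alpha}(w)-\tfrac{1}{d^n}\bigl[(l-\alpha)\log|p^n(z)|-l\log r\bigr]\,\Bigr|\le\tfrac{C}{d^n}\quad\text{on }S^{in}_n\cap\mathbb{C}_z.
\]
For this to have content one needs $S^{in}_n\cap\mathbb{C}_z\neq\emptyset$ for large $n$: since $z\in A_p$, $p^n(z)\to0$, so $r^{-l}|p^n(z)|^l$ is eventually $<r$, and this value is attained by $|Q_z^n|$ because $Q_z^n$ is a non-constant polynomial (true for $z\notin E_{deg}^{*}$, hence for all such $z$ when $f$ is non-degenerate).

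Now I split into the stated cases using $\tfrac{1}{d^n}\log|p^n(z)|=(\delta/d)^n\cdot\tfrac{1}{\delta^n}\log|p^n(z)|$ and $G_p(z)\in(-\infty,0)$ for $z\in A_p-E_p$. If $\delta>d$ and $\alpha<l\le l_2$, then $l-\alpha>0$ and $(\delta/d)^n\to\infty$, so the bracket divided by $d^n$ tends to $-\infty$ and $G_z^{\alpha}|_{S^{in}_n}\to-\infty$. If $\delta>d$ and $l=\alpha$, the bracket reduces to the constant $-\alpha\log r$: for $d\ge2$ dividing by $d^n\to\infty$ gives $G_z^{\alpha}|_{S^{in}_n}\to0$, while for $d=1$ the inequality reads $\bigl|\,G_z^{\alpha}(w)+\alpha\log r\,\bigr|\le C$, i.e. $G_z^{\alpha}|_{S^{in}_n}$ is bounded. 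If $\delta=d$ (which forces $\gamma=0$, $d\ge2$, $\alpha=l_2$), then $\tfrac{1}{d^n}\log|p^n(z)|\to G_p(z)$, so $G_z^{\alpha}|_{S^{in}_n}\to(l-\alpha)G_p(z)$ for every $0<l\le l_2$. Finally, when $\gamma=0$ and $\delta>d$ the estimate holds on all of $\overline{U^l}-\{0\}$ (nothing removed over $\{z=0\}$), so the transfer is valid for every $z\in A_p$: for $z\in A_p-E_p$ the above gives $l(\delta/d)^n\delta^{-n}\log|p^n(z)|\to-\infty$, and for $z\in E_p$ one has $p^n(z)=0$ for large $n$, whence $w_n=0$ on $S^{in}_n\cap\mathbb{C}_z$ and $G_z^{\alpha}(w)=-\infty$ there; thus the convergence to $-\infty$ holds on all of $A_p$.

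The main obstacle will be the case $d=1$, $l=\alpha$, where the B\"{o}ttcher coordinate is unavailable on $U^{\alpha}$ and the comparison estimate must be produced on $S^{in}$ and its preimages by restriction from a strictly larger admissible wedge; a secondary point requiring care, as in Case~2, is the bookkeeping of the loci $f^{-n}\{z=0\}$, $E_z$, $E_w$, $E_{deg}$, on which $G_z^{\alpha}$ is $\pm\infty$ or undefined, so that each limit assertion is made where $G_z^{\alpha}$ is finite — automatic here, since $z\in A_p-E_p$ keeps $S^{in}_n\cap\mathbb{C}_z$ off $E_z$. Everything else is a routine transcription of the Case~2 computations.
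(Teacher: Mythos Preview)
Your proof is correct and follows essentially the same approach as the paper: transport the uniform comparison $|G_z^{\alpha}(w)-\log|w/z^{\alpha}||<C$ from Proposition~\ref{Case 3: existence of G_z^alpha} to $S^{in}_n$ via the functional equation, then read off the limit from the behavior of $(\delta/d)^n\cdot\delta^{-n}\log|p^n(z)|$. You add a bit of extra care the paper leaves implicit --- justifying the estimate on $S^{in}$ in the $d=1$, $l=\alpha$ case by restriction from a larger admissible wedge $U^{l'}$ with $\alpha<l'\le l_2$, and handling $z\in E_p$ separately in the $\gamma=0$ case --- but the core argument is identical.
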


\begin{proof}
By Proposition \ref{Case 3: existence of G_z^alpha},
there is a constant $C > 0$ such that
\[
\left| G_z^{\alpha} (w) - \log \left| \dfrac{w}{z^{\alpha}} \right| \right| < C
\text{ on } U^l - \{ z = 0 \}.  
\]
Since $\left| z^{- \alpha} w \right| = r |z|^{l - \alpha}$ on $S^{in}$,
$\left| G_z^{\alpha} (w) - \log r |z|^{l - \alpha} \right| \leq C$ on $S^{in} - \{ 0 \}$
and so
\[
\left| \ G_{p^n(z)}^{\alpha}(Q_z^n(w)) - \log r |p^n(z)|^{l - \alpha} \ \right|
\leq C \ \text{ on } S_n^{in} - f^{-n} (0).
\]
Since $G_{p^n(z)}^{\alpha}(Q_z^n(w)) = d^n G_z^{\alpha}(w)$,
\[
\left| \ G_z^{\alpha}(w) - d^{-n} \log r |p^n(z)|^{l - \alpha} \ \right| \leq d^{-n} C
\ \text{ on } S_n^{in} - f^{-n} (0).
\]

Let  $\delta > d$.
If $\gamma > 0$, $d \geq 2$ and $l = \alpha$,
then
\[
d^{-n} \log r|p^n(z)|^{l - \alpha} = d^{-n} \log r \to 0
\]
and $d^{-n} C \to 0$ 
as $n \to \infty$.
If $\gamma > 0$,  $d = 1$ and $l = \alpha$, then
\[ 
\left| \ G_z^{\alpha} (w) - \log r \ \right| \leq C \ \text{ on } S_n^{in} - f^{-n} (0). 
\]
If $\alpha < l \leq l_2$, 
then
\[
\dfrac{l - \alpha}{d^{n}} \log |p^n(z)|^{} 
 = \left( \dfrac{\delta}{d} \right)^n \cdot \dfrac{l - \alpha}{\delta^n} \log |p^n(z)|
 \to + \infty \cdot (l - \alpha) G_p(z) = - \infty
\]
as $n \to \infty$.

On the other hand,
if $\delta = d$ and $0 < l \leq l_2$, then
\[
\dfrac{1}{d^{n}} \log |p^n(z)|^{l - \alpha} 
 = \dfrac{l - \alpha}{\delta^n} \log |p^n(z)|
 \to (l - \alpha) G_p(z)
\]
as $n \to \infty$, where $\alpha = l_2$.

If $\gamma = 0$ and $\delta > d$, then $\alpha = 0$ and
the same inequalities hold not only on $S_n^{in} - f^{-n} (0)$ but also on $S_n^{in}$.
Hence 
$G_z^{\alpha} |_{S^{in}_n} \fallingdotseq d^{-n} \log r |p^n(z)|^l \to - \infty$ as $n \to \infty$
for any $z$ in $A_p$ and for any $0 < l \leq l_2$. %$l$ in $\mathcal{I}_f$.
\end{proof}

%%%%
Let $\lim_{n \to \infty} f^{-n} (\partial U^l)$ be the set of 
the accumulation points of $f^{-n} (\partial U^l)$ 
in $\overline{A_p} \times \mathbb{P}^1$ as $n \to \infty$,
as the same as Case 2.
Then the following equality follows from the same argument as Case 2.

\begin{proposition} \label{Case 3: lim of f^-n (partial U)}
If $f$ is non-degenerate, then
$\displaystyle \lim_{n \to \infty} f^{-n} (\partial U^l) = \partial A_f^l$
for any $l$ in Proposition \ref{Case 3: existence of G_z^alpha}.
\end{proposition}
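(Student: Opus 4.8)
The plan is to run, with only cosmetic changes, the argument used for the corresponding statement in Case~2 (the proposition just before Corollary~\ref{cor on the boundary for case 2}). The input that must be re-checked in the present geometry is the Case~3 analogue of Lemma~\ref{lem for asy behavior}: that $f$ non-degenerate makes $f$ an open map on the relevant region, so that taking preimages commutes with taking boundaries, $f^{-n}(\partial U^l) = \partial\bigl(f^{-n}(U^l)\bigr)$ for every $n\geq 0$ --- the inclusion $\supset$ being immediate from continuity of $f$, the inclusion $\subset$ from openness. Together with $f(U^l)\subset U^l$ from Lemma~\ref{detailed lemma for case 3}, this gives that $\{f^{-n}(U^l)\}_{n\geq 0}$ is an increasing sequence of open sets with $A_f^l=\bigcup_n f^{-n}(U^l)$, and hence $\partial f^{-n}(U^l)\cap f^{-N}(U^l)=\emptyset$ whenever $n>N$.

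First I would prove $\lim_{n\to\infty} f^{-n}(\partial U^l)\subset\partial A_f^l$. If $x$ lies in the left side, then along a subsequence there are $x_n\in f^{-n}(\partial U^l)=\partial f^{-n}(U^l)\subset\overline{f^{-n}(U^l)}\subset\overline{A_f^l}$ with $x_n\to x$, so $x\in\overline{A_f^l}$. Were $x\in A_f^l$, some iterate would satisfy $f^N(x)\in U^l$, hence $x\in f^{-N}(U^l)$, an open set, so $x_n\in f^{-N}(U^l)$ for all large $n$, contradicting the disjointness noted above; therefore $x\in\partial A_f^l$.

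Then I would prove the reverse inclusion. Given $x\in\partial A_f^l\subset\overline{A_f^l}=\overline{\bigcup_n f^{-n}(U^l)}$, choose $x_n\in f^{-n}(U^l)$ with $x_n\to x$; since $x\notin A_f^l$ while $x_n\in A_f^l$, the segment from $x$ to $x_n$ meets $\partial f^{-n}(U^l)$ at a point $y_n$, and $y_n\to x$, so $x\in\lim_{n\to\infty}\partial f^{-n}(U^l)=\lim_{n\to\infty}f^{-n}(\partial U^l)$. Combining the two inclusions proves the proposition.

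The hard part will be the bookkeeping behind the openness/boundary-commuting step. The wedge $U^l=\{\,|z|^l<r^l|w|,\ |w|<r\,\}$ is pinched at the origin and, when $\gamma>0$, the fibre $\{z=0\}$ is collapsed by $f$, so one has to run everything inside $\overline{A_p}\times\mathbb{P}^1$ and invoke non-degeneracy exactly where openness would otherwise fail, just as in Case~2; I also have to remember that for $d=1$ the relation $f(U^l)\subset U^l$ (equivalently the existence of the B\"{o}ttcher coordinate) is only available for $l\neq\alpha$, but that is precisely the range of weights allowed in Proposition~\ref{Case 3: existence of G_z^alpha}, so nothing is lost. Once the Case~3 version of Lemma~\ref{lem for asy behavior} is in place, the two-inclusion scheme is routine and formally identical to the Case~2 proof.
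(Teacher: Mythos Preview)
Your proposal is correct and follows essentially the same approach as the paper, which explicitly defers to the Case~2 argument (``The following equality follows from the same argument as Case~2''). Your two-inclusion scheme, the use of the Case~3 analogue of Lemma~\ref{lem for asy behavior} to identify $f^{-n}(\partial U^l)=\partial f^{-n}(U^l)$, and the monotonicity from Lemma~\ref{detailed lemma for case 3} all match the paper's own proof in Section~5.2.2 line by line.
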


% Note that $U^{l} \subset \{ |z| < r^{1+1/l}, |w| < r \}$.
% where $r = r_1 r_2^{1/l}$.

Let $S^{out}_{\infty}$ and $S^{in}_{\infty}$ 
% = \lim_{n \to \infty} S^{out}_n = \lim_{n \to \infty} S^{in}_n
be the sets of 
the accumulation points of $S_n^{out}$ and $S_n^{in}$
in $\overline{A_p} \times \mathbb{P}^1$ as $n \to \infty$,
respectively.
We investigate that whether the equalities 
$S^{out}_{\infty} = \partial A_f^l \cap \partial A_0$ and 
$S^{in}_{\infty} = \partial A_f^l \cap A_0$ hold or not. 
Let $r_l = r^{1+1/l}$,
\[
W^l = \{ |z| < r_l, |w| \leq r^{-l} |z|^l \}
\text{ and }
U^+ = U^l \cup W^l = \{ |z| < r_l, |w| < r \}.
\]
Let $W_n^l = f^{-n} (W^l)$ and $U^+_n = f^{-n} (U^+)$.
Note that $U^+_n$ is a neighborhood of the origin,
$\partial U^+_n \cap \{ |z| < r_l \} = S^{out}_n$
and $\partial W_n^l \cap \{ |z| < r_l \} = S^{in}_n$.

Applying the same argument as Case 2 for $U^+_n$, % and $S^{out}_n$,
we obtain the following.

\begin{proposition} \label{Case 3: lim of S^out_n}
If $f$ is non-degenerate, then
$S^{out}_{\infty} \subset \partial A_f^l \cap \partial A_0$ and
$S^{out}_{\infty}  \cap (A_p \times \mathbb{P}^1) 
= \partial A_f^l \cap \partial A_0 \cap (A_p \times \mathbb{P}^1)$
for any $l$ in Proposition \ref{Case 3: existence of G_z^alpha}.
\end{proposition}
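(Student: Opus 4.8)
The plan is to transcribe, almost verbatim, the reasoning of Section~5.2 (Case~2), the only new feature being that here the wedge $U^{l}$ is \emph{not} a neighborhood of the origin, so one must run the argument with the polydisc-like neighborhood $U^{+}=U^{l}\cup W^{l}=\{|z|<r_{l},\,|w|<r\}$ of the origin in place of $U^{l}$, and with the full basin $A_{0}$ in place of $A_{f}^{l}$, while keeping in mind that $S^{out}$ is a face common to $\partial U^{l}$ and $\partial U^{+}$. First I would record that, after shrinking $r$, $U^{+}$ is forward invariant: since $f$ is nilpotent at the origin one has $p(z)=O(z^{\delta})$ with $\delta\geq2$ and $q(z,w)=bz+O(\max(|z|,|w|)^{2})$, so $|p(z)|<r_{l}$ and $|q(z,w)|<r$ on $U^{+}$ for small $r$ (here $r_{l}=r^{1+1/l}<r$). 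Hence $U_{n}^{+}=f^{-n}(U^{+})$ is an increasing exhaustion of $A_{0}$, and the non-degeneracy of $f$ makes $f$ an open map, so $f^{-n}(\partial U^{+})=\partial U_{n}^{+}$ and $f^{-n}(\partial U^{l})=\partial f^{-n}(U^{l})$ for all $n$. Running the argument of Proposition~\ref{Case 3: lim of f^-n (partial U)} with $U^{+}$ in place of $U^{l}$ then gives $\lim_{n\to\infty}\partial U_{n}^{+}=\partial A_{0}$.

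For the inclusion $S_{\infty}^{out}\subset\partial A_{f}^{l}\cap\partial A_{0}$, the key observation is that $S^{out}=\{|z|<r_{l},\,|w|=r\}$ lies both on $\partial U^{l}$ and on $\partial U^{+}$, so $S_{n}^{out}=f^{-n}(S^{out})\subset\partial f^{-n}(U^{l})\cap\partial U_{n}^{+}$ for every $n$. Passing to accumulation points in $\overline{A_{p}}\times\mathbb{P}^{1}$, any $x\in S_{\infty}^{out}$ lies in $\lim_{n}\partial f^{-n}(U^{l})=\partial A_{f}^{l}$ by Proposition~\ref{Case 3: lim of f^-n (partial U)} and in $\lim_{n}\partial U_{n}^{+}=\partial A_{0}$ by the previous paragraph. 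For the reverse inclusion over $A_{p}$, fix $(z_{0},w_{0})\in\partial A_{f}^{l}\cap\partial A_{0}\cap(A_{p}\times\mathbb{P}^{1})$; since $(z_{0},w_{0})\in\partial A_{0}=\lim_{n}\partial U_{n}^{+}$, pick $y_{n}\in\partial U_{n}^{+}$ with $y_{n}\to(z_{0},w_{0})$. Decompose $\partial U^{+}=S^{out}\cup(\partial U^{+}\cap\{|z|=r_{l}\})$; then $f^{-n}(\partial U^{+}\cap\{|z|=r_{l}\})\subset\{z:|p^{n}(z)|=r_{l}\}\times\mathbb{P}^{1}$, and exactly as in the Case~2 treatment of $\partial U^{l}-S^{l}$ (using $\delta^{-n}\log|p^{n}|\to G_{p}$, $\delta^{-n}\log r_{l}\to0$, and $r_{l}<1$) these level curves accumulate only on $\partial A_{p}$, so this part of $\partial U_{n}^{+}$ accumulates only inside $\partial A_{p}\times\mathbb{P}^{1}$. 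Since $z_{0}\in A_{p}$ is a positive distance from $\partial A_{p}$, we get $y_{n}\in S_{n}^{out}$ for all large $n$, hence $(z_{0},w_{0})\in S_{\infty}^{out}$. Combining the two inclusions gives the claimed equality on $A_{p}\times\mathbb{P}^{1}$ (and, as a byproduct, $\partial A_{0}\cap(A_{p}\times\mathbb{P}^{1})\subset\partial A_{f}^{l}$), so the reference to $\partial A_{f}^{l}$ in the statement is automatically satisfied.

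I do not expect a genuine obstacle: every analytic ingredient—openness of $f$ from non-degeneracy, the exhaustion $\bigcup_{n}U_{n}^{+}=A_{0}$, the monotone-boundary limit $\lim_{n}\partial U_{n}^{+}=\partial A_{0}$, and the fact that preimages of the "vertical" face $\{|z|=r_{l}\}$ pile up only over $\partial A_{p}$—is already available, the last two being literal copies of steps in Section~5.2. The only point demanding care is organizational: one must consistently use $U^{+}$ when arguing about $A_{0}$ and $U^{l}$ when arguing about $A_{f}^{l}$, and exploit that they share the face $S^{out}$, rather than trying to force a single neighborhood to do both jobs. It should also be checked that the edge value $l=\alpha$ (for $d=1$) is excluded exactly as in Proposition~\ref{Case 3: existence of G_z^alpha}, since the conjugacy to $f_{0}$, used to build $U^{+}$ and the boundary limits, is only available for the admissible $l$ there.
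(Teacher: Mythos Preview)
Your proposal is correct and follows essentially the same approach as the paper, which simply says ``Applying the same argument as Case 2 for $U_n^+$'' and leaves the details to the reader; you have filled in exactly those details by running the monotone-boundary argument of Section~5.2 with the forward-invariant bidisc $U^+$ in place of $U^l$ and exploiting that $S^{out}$ is the common face of $\partial U^l$ and $\partial U^+$. One minor inaccuracy in your closing remarks: the forward invariance of $U^+$ does not require the conjugacy to $f_0$, only the superattracting structure at the origin, so the restriction on $l$ from Proposition~\ref{Case 3: existence of G_z^alpha} is not actually needed for this particular proposition (though it is of course needed for the subsequent applications involving $G_z^\alpha$).
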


Combining this proposition with  
Proposition \ref{Case 3: asy of G_z^alpha on s_n^out},
we obtain Proposition \ref{Case 3: asy of G_z^alpha on boundary^out}
in the previous subsection.

%\begin{corollary}
%If $f$ is non-degenerate, then for any $l$ as in Lemma .... and
%for any $(z_0, w_0)$ in $\{ \partial A_f^l \cap \partial A_0 \} \cap (A_p \times \mathbb{C})$,
%\[ \limsup_{(z,w) \in A_f^l \to (z_0, w_0)} G_z^{\alpha} (w) = \infty. \]
%\end{corollary}

%Let $N$ be a small neighborhood of  the origin.
We next show Proposition \ref{Case 3: asy of G_z^alpha on boundary^in}.
Note that 
$W_n^l \cap U^+$ is a decreasing sequence and
\[
(\cap_{n \geq 1} W_n^l) \cap U^+ 
= U^+ - \cup_{n \geq 1} f^{-n} (U^l)
= U^+ - A_f^l,
\]
which is not empty and closed in $U^+$.
Since $S^{in}_n = \partial W_n^l \cap \{ |z| < r_l \}$,
it follows that $S^{in}_{\infty} \cap U^+ \subset \partial A_f^l \cap U^+$. 
Moreover, 
we have the equality since $\partial A_f^l \cap U^+ \subset W^l$ and
$f$ is non-degenerate on $W^l$.

%$f^{-n} (U) \cap U^+$ is connected
%the maximum principle for

\begin{proposition}
It follows that
$S^{in}_{\infty} \cap U^+ = \partial A_f^l \cap U^+$
for any $l$ in Proposition \ref{Case 3: existence of G_z^alpha}.
\end{proposition}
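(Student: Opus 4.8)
The plan is to prove the two inclusions separately, running the argument used for the Case~2 analogue (Corollary~\ref{cor on the boundary for case 2} and the proposition preceding it), but localized to the neighborhood $U^+$ of the origin, with the inner surface $S^{in}$ and the region $W^l$ playing the roles there played by $\partial U$ and the complement of $U$.

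For the inclusion $S^{in}_{\infty} \cap U^+ \subset \partial A_f^l \cap U^+$, which is essentially formal, I would use that $W_n^l \cap U^+$ is a decreasing sequence of relatively closed subsets of $U^+$ with $\bigcap_{n\ge1}(W_n^l \cap U^+) = U^+ - A_f^l$, so the sets $U^+ - W_n^l$ are relatively open, increasing, with union $A_f^l \cap U^+$. Suppose $x_{n_k} \in S^{in}_{n_k} \subset \partial W_{n_k}^l$ with $x_{n_k} \to x \in U^+$ and $n_k \to \infty$. Since $W_n^l$ is decreasing, $x_{n_k} \in W_m^l$ for all $n_k \ge m$, hence $x \in \overline{W_m^l}$ for every $m$; as each $U^+ - W_m^l$ is relatively open, no point of $A_f^l \cap U^+$ lies in $\bigcap_m \overline{W_m^l}$, so $x \in U^+ - A_f^l$. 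On the other hand each $x_{n_k}$ is a relative boundary point of $W_{n_k}^l$ in $U^+$ — here one uses $S^{in}_{n_k} = \partial W_{n_k}^l \cap \{|z| < r_l\}$ together with $f^{n_k}(x_{n_k}) \in S^{in} \subset \{|w| < r\}$, so a small ball about $x_{n_k}$ lies in $U^+$ — and is therefore approximated by points of $U^+ - W_{n_k}^l \subset A_f^l$; these converge to $x$, so $x \in \overline{A_f^l}$, and thus $x \in \partial A_f^l \cap U^+$.

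For the reverse inclusion, the substantive half, let $x \in \partial A_f^l \cap U^+$. Since $U^l \subset A_f^l$ is open, $\partial A_f^l \cap U^l = \emptyset$, so $x \in U^+ - U^l \subset W^l$; this is precisely where the hypothesis $\partial A_f^l \cap U^+ \subset W^l$ is used, and where the non-degeneracy of $f$ on $W^l$ enters, guaranteeing that the iterates $f^n$ are open maps near $x$, so that, as in Lemma~\ref{lem for asy behavior}, $f^{-n}(\partial W^l) = \partial W_n^l$ and $\partial W_n^l \cap \{|z| < r_l\} = S^{in}_n$. Pick $y_k \in A_f^l \cap U^+$ with $y_k \to x$; each $y_k$ lies in $U^+ - W_{m_k}^l$ for some $m_k$, hence in $U^+ - W_n^l$ for all $n \ge m_k$, whereas $x \in U^+ - A_f^l \subset W_n^l$ for every $n$. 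The polydisc $U^+$ is convex, so the segment from $x$ to $y_k$ stays in $U^+$ and, for every $n \ge m_k$, crosses the relative boundary of $W_n^l$, i.e. $S^{in}_n$, at a point $\zeta_{n,k}$ with $|\zeta_{n,k} - x| \le |y_k - x|$. A diagonal choice (taking $k = k_j$ with $|y_{k_j}-x| < 1/j$ and then $n_j > \max(n_{j-1}, m_{k_j})$) yields $\zeta_{n_j,k_j} \in S^{in}_{n_j}$ with $n_j \to \infty$ and $\zeta_{n_j,k_j} \to x$, so $x \in S^{in}_{\infty} \cap U^+$.

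The main obstacle I expect is the topological bookkeeping underlying these two steps — verifying $\partial W_n^l \cap \{|z| < r_l\} = S^{in}_n$, that $U^+ - W_n^l \subset A_f^l$, and that the segment crossing really lands on $S^{in}_n$ rather than on the outer surface $S^{out}_n$ of $\partial U^+_n$ — all of which rest on $f(U^l) \subset U^l$ from Lemma~\ref{detailed lemma for case 3}, on $f(U^+) \subset U^+$ for small $r$, and on the non-degeneracy (hence openness) of $f$ on $W^l$, exactly as in the corresponding Case~2 arguments; once these are in place, the segment-crossing argument applies without change.
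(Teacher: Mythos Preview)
Your proof is correct and follows essentially the same approach as the paper. The paper's own argument is extremely terse—it records that $W_n^l \cap U^+$ is a decreasing sequence with intersection $U^+ - A_f^l$, asserts the forward inclusion from $S^{in}_n = \partial W_n^l \cap \{|z| < r_l\}$, and dispatches the reverse inclusion in one line by citing $\partial A_f^l \cap U^+ \subset W^l$ together with non-degeneracy of $f$ on $W^l$. You have unpacked exactly these ingredients, and your segment-crossing argument for the reverse inclusion is the natural way to make that last line precise (it is the localized analogue of the Case~2 proposition preceding Corollary~\ref{cor on the boundary for case 2}). Your closing paragraph also correctly isolates the bookkeeping that makes everything work: $f(U^l)\subset U^l$, $f(U^+)\subset U^+$, and openness of $f$ on $W^l$.
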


In addition,
Propositions \ref{Case 3: lim of f^-n (partial U)} and 
\ref{Case 3: lim of S^out_n} imply the following global statement.

\begin{proposition}
If $f$ is non-degenerate, then
$S^{in}_{\infty} \supset \partial A_f^l \cap A_0$
for any $l$ in Proposition \ref{Case 3: existence of G_z^alpha}.
\end{proposition}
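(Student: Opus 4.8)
The plan is to re-run the same topological argument already used for Case~2 (the one behind Corollary~\ref{cor on the boundary for case 2}), now fed the two statements just established: that $\lim_{n\to\infty} f^{-n}(\partial U^l) = \partial A_f^l$ (Proposition~\ref{Case 3: lim of f^-n (partial U)}) and that $S^{out}_\infty \subset \partial A_f^l \cap \partial A_0$ (Proposition~\ref{Case 3: lim of S^out_n}). The point is that the boundary of $U^l$ splits into an ``inner'' part, an ``outer'' part, and a harmless lateral slice, and an accumulation point of $f^{-n}(\partial U^l)$ that lies in $A_0$ can come neither from the outer part (those accumulate on $\partial A_0$) nor from the lateral slice (those escape $A_p$), so it must come from the inner part.

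First I would record the bookkeeping on $\partial U^l$. Since on $S^{in}=\{|z|^l = r^l|w|,\ |w|<r\}$ one has $|z|^l = r^l|w| < r^{l+1}$, and on $S^{out}=\{|z|^l < r^l|w|,\ |w|=r\}$ one has $|z|^l < r^{l+1}$, both $S^{in}$ and $S^{out}$ lie in $\{|z|<r_l\}$, and a direct inspection of the two defining inequalities shows that $\partial U^l$ is the \emph{disjoint} union $S^{in}\sqcup S^{out}\sqcup T$, where $T = \partial U^l \cap \{|z|=r_l\} = \{|z|=r_l,\ |w|=r\}$ (note $(0,0)\in S^{in}$, so no exceptional point is lost). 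Applying $f^{-n}$ gives $f^{-n}(\partial U^l) = S^{in}_n \sqcup S^{out}_n \sqcup f^{-n}(T)$. Now fix $(z_0,w_0)\in\partial A_f^l\cap A_0$. Since $A_0$ is open, $(z_0,w_0)\notin\partial A_0$, and since $A_0\subset A_p\times\mathbb{C}$ we have $z_0\in A_p$. By Proposition~\ref{Case 3: lim of f^-n (partial U)} there is a sequence $x_n\in f^{-n}(\partial U^l)$ with $x_n\to(z_0,w_0)$; passing to a subsequence I may assume that all $x_n$ lie in one of the three pieces. If $x_n\in f^{-n}(T)\subset\{(z,w): |p^n(z)|=r_l\}$ for all $n$, then, because $p^n\to 0$ locally uniformly on $A_p$ and $z_0\in A_p$, we get $|p^n(z)|<r_l$ for all $z$ in a fixed neighborhood of $z_0$ and all large $n$, contradicting $x_n\to(z_0,w_0)$. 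If $x_n\in S^{out}_n$ for all $n$, then $(z_0,w_0)\in S^{out}_\infty\subset\partial A_0$ by Proposition~\ref{Case 3: lim of S^out_n}, contradicting $(z_0,w_0)\notin\partial A_0$. Hence $x_n\in S^{in}_n$ for all $n$, so $(z_0,w_0)\in S^{in}_\infty$, as desired.

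There is really no hard step: this proposition is a repackaging of the two preceding ones together with the openness of $A_0$. The only places that call for a little care are the verification that the decomposition $\partial U^l = S^{in}\sqcup S^{out}\sqcup T$ is genuinely disjoint (so that the three families $S^{in}_n, S^{out}_n, f^{-n}(T)$ exhaust $f^{-n}(\partial U^l)$ and one can apply the pigeonhole step), and the observation that $f^{-n}(T)$ cannot accumulate inside $A_p\times\mathbb{P}^1$; both follow immediately from $p^n\to 0$ on $A_p$. I would also remark, as a byproduct, that combining this with the local statement $S^{in}_\infty\cap U^+ = \partial A_f^l\cap U^+$ shows $S^{in}_\infty$ meets $A_p\times\mathbb{P}^1$ exactly along $\partial A_f^l\cap A_0$, matching the picture underlying Theorem~\ref{main thm on G_z^a,+ for case 3}.
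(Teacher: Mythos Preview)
Your proof is correct and is exactly the argument the paper has in mind: the paper simply says this proposition follows from Propositions~\ref{Case 3: lim of f^-n (partial U)} and~\ref{Case 3: lim of S^out_n}, and you have written out the details (the decomposition $\partial U^l = S^{in}\sqcup S^{out}\sqcup T$ and the pigeonhole step) that the paper leaves implicit. One caveat: your closing remark that $S^{in}_\infty$ meets $A_p\times\mathbb{P}^1$ \emph{exactly} along $\partial A_f^l\cap A_0$ does not follow from what has been shown (you only get $\supset$), and the paper in fact records the reverse inclusion as an open question.
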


Combining this proposition with  
Proposition \ref{Case 3: asy of G_z^alpha on s_n^in},
we obtain Proposition \ref{Case 3: asy of G_z^alpha on boundary^in}.
Note that $G_z^{\alpha}$ is monotonic increasing on
$A_f^l \cap W_n^l \cap \mathbb{C}_z$
for any $z$ in $A_p - E_p \cup E_{deg}^*$ and for any $n \geq 0$
because of the maximum principle.
Hence we can take the limit instead of the limit inferior in 
Proposition \ref{Case 3: asy of G_z^alpha on boundary^in}.

\begin{question}
We display some questions.
\begin{enumerate}
\item 
In general,
$S^{out}_n \cap S^{in}_n = \emptyset$ and
$S^{out}_{\infty} \cap S^{in}_{\infty} = \emptyset$?
\item 
If $f$ is non-degenerate, then
$S^{in}_{\infty} \cap (A_p \times \mathbb{P}^1)
= \partial A_f \cap A_0 \cap (A_p \times \mathbb{P}^1)$?
\item 
Can we remove the non-degenerate condition
for all the statements and the question above?
\end{enumerate}
\end{question}

%\newpage
%%%%%%%%%%%%%%%%%%%%%%%%%%%%%%%%%%%%%%%%%%%%%%%%%%%%%%%%%%%%%%%%%%%%%%%%%%%%%
%%%%%%%%%%%%%%%%%%%%%%%%%%%%%%%%%%%%%%%%%%%%%%%%%%%%%%%%%%%%%%%%%%%%%%%%%%%%%
\subsection{Examples}

For any polynomial $h$ of degree $d$ and
for any integers $\delta$ and $\alpha$
such that $\delta \geq d$,
we can construct a polynomial skew product $f$
that is semiconjugate to the polynomial product $g(z,w) = (z^{\delta}, h(w))$
and such that the $\alpha$ determined by $f$ coincides with 
the given integer $\alpha$.  
%%%
The dynamics of such a map is rather easy to study.
For example,
the B\"{o}ttcher coordinates and plurisubharmonic functions for $f$
are expressed by those for $h$.
In particular,
the set where $G_z^{\alpha,+}$ is not pluriharmonic 
is expressed by the Julia set of $h$.

We divide a family of such maps 
into two cases: the case $\delta > d$ and the case $\delta = d$.
If $\delta > d$,
then $f$ degenerate the $w$-axis and $A_0$ is unbounded.
If $\delta = d$,
then $f$ is non-degenerate and $A_0$ is bounded.
%%%
Let $h$ be a monic polynomial of degree $d \geq 2$.
More precisely, let
$h(w) = w^d + b_{d-1} w^{d-1} + \cdots + b_m w^m$,
where $b_m \neq 0$.

%%%%%%%%%%%%%%%%%%%%
\begin{example}[degenerate type] \label{example of deg type}
Let $\alpha$ be an integer, $\delta > d$ and 
\[
f(z,w) = \left( z^{\delta}, z^{\alpha \delta} h \left( \frac{w}{z^{\alpha}} \right) \right).
\]
Then $f$ is semiconjugate to %the polynomial product 
$g(z,w) = (z^{\delta}, h(w))$
by $\pi (z,w) = (z, z^{\alpha} w)$: $f \circ \pi = \pi \circ g$.

Since 
$q(z,w) = z^{\alpha(\delta - d)} w^d + b_{d-1} z^{\alpha(\delta - (d-1))} w^{d-1} + \cdots + b_m z^{\alpha(\delta - m)} w^m$,
% = z^{\alpha \delta} h(w/z^{\alpha}) 
the Newton polygon of $q$ has two vertices:
$(n_1, m_1) = (\alpha(\delta - d), d)$ and $(n_2, m_2) = (\alpha(\delta - m), m)$.
The vertices belong to Cases 3 and 2, 
respectively. 
Let $U_1 = \{ |z|^{\alpha} < r^{\alpha} |w|, |w| < r \}$ and $U_2 =\{ |z| < r, |w| < r|z|^{\alpha} \}$ for small $r > 0$.
Let $A_1$ and $A_2$ be the unions of all the preimages of $U_1$ and $U_2$, respectively.
% $A_1 = \cup_{} f^{-n} (U_1)$ and $A_2 = \cup_{} f^{-n} (U_2)$.
Since
\[
f^n(z,w) = \left( z^{\delta^n}, z^{\alpha \delta^n} h^n \left( \frac{w}{z^{\alpha}} \right) \right),
\]
we have the following equalities:
\[
\phi (z,w) = \left( z, z^{\alpha} \varphi_h^{\infty} \left( \dfrac{w}{z^{\alpha}} \right) \right)
\text{ on } U_1
\text{ and } 
\phi (z,w) = \left( z, z^{\alpha} \varphi_h^{0} \left( \dfrac{w}{z^{\alpha}} \right) \right)
\text{ on } U_2
\]
if $m \geq 2$,
where $\displaystyle \varphi_h^{\infty} (w) = \lim_{n \to \infty}  \sqrt[d^n]{h^n (w)}$
and $\displaystyle \varphi_h^{0} (w) = \lim_{n \to \infty}  \sqrt[m^n]{h^n (w)}$,
\[
G_z^{\alpha} (w) = G_h^{\infty} \left( \dfrac{w}{z^{\alpha}} \right) > 0
\text{ on } A_1 - \{ z = 0 \}
\text{ and } 
G_z^{\alpha} (w) = G_h^{0} \left( \dfrac{w}{z^{\alpha}} \right) < 0
\text{ on } A_2
\]
if $m \geq 2$,
where $\displaystyle G_h^{\infty} (w) = \lim_{n \to \infty} d^{-n} \log |h^n(w)|$
and $\displaystyle G_h^{0} (w) = \lim_{n \to \infty} m^{-n} \log |h^n(w)|$,
\[
G_z^{\alpha,+} (w) 
= G_h^{\infty, +} \left( \dfrac{w}{z^{\alpha}} \right)
=
\begin{cases}
G_h^{\infty} \left( \dfrac{w}{z^{\alpha}} \right) & \text{on } A_1 - \{ z = 0 \}, \\
0 & \text{on } A_0 - A_1,
\end{cases}
\]
where $\displaystyle G_h^{\infty, +} (w) = \lim_{n \to \infty} d^{-n} \log^{+} \left| h^n (w) \right|$,
\[
\partial A_1 \cap \left\{ (A_p - E_p) \times \mathbb{C} \right\}
%= \partial A_1 \cap \{ |z| < 1 \}
= \bigcup_{0<|z|<1} \{ z \} \times \partial \left\{ w \, | \, G_z^{\alpha,+} (w) > 0 \right\}
\]
\[
= \bigcup_{0<|z|<1} \{ z \} \times \partial \left\{ w \, | \, G_z^{\alpha,+} (w) = 0 \right\}
= \bigcup_{0<|z|<1} \{ z \} \times z^{\alpha} J_h,
\]
where $J_h$ is the Julia set of $h$.
Note that %We remark that 
%the restriction of $\partial A_1$ to $A_p \times \mathbb{C}$ coincides with
%the restriction of $\partial A_2$ to $A_p \times \mathbb{C}$
$\partial A_1 \cap (A_p \times \mathbb{C}) = \partial A_2 \cap (A_p \times \mathbb{C})$
if $m \geq 2$
because the boundaries of the attracting basins of $\infty$ and $0$ for $h$
coincide.
Moreover,
\[
G_z(w) = \alpha \log |z|
\text{ on } A_1
\text{ and } 
G_z(w) = 
\begin{cases}
\alpha \log |z| & \text{ on } A_2 - E_w, \\
- \infty & \text{ on } E_w
\end{cases}
\]
if $m \geq 2$.
Therefore,
\[
G_z(w) = 
\begin{cases}
\alpha \log |z| & \text{ on } A_0 - E_w, \\
- \infty & \text{ on } E_w
\end{cases}
\]
if $m \geq 2$,
and $G_f^{\alpha} (z, w) = \alpha \log |z|$ on $A_0$,
where $A_0 = \{ |z| < 1 \}$.
\end{example}

%%%%%%%%%%%%%%%%%%%%
\begin{example}[non-degenerate type] \label{example of non-deg type}
Let $\alpha$ be an integer and
\[
f(z,w) = \left( z^d, z^{\alpha d} \left( \frac{w}{z^{\alpha}} \right) \right).
\]
Then $f$ is semiconjugate to %the polynomial product 
$g(z,w) = (z^d, h(w))$
by $\pi (z,w) = (z, z^{\alpha} w)$: $f \circ \pi = \pi \circ g$.

Since $q(z,w) = w^d + b_{d-1} z^{\alpha} w^{d-1} + \cdots + b_m z^{\alpha (d - m)} w^m$,
the Newton polygon of $q$ has two vertices:
$(n_1, m_1) = (0, d)$ and $(n_2, m_2) = (\alpha (d - m), m)$.
Since
\[
f^n(z,w) = \left( z^{d^n}, z^{\alpha d^n} h^n \left( \frac{w}{z^{\alpha}} \right) \right),
\]
the same equalities as the previous example hold
except the following:
\[
G_z(w) = \alpha \log |z| + G_h^{\infty} \left( \dfrac{w}{z^{\alpha}} \right) > \alpha \log |z|
\text{ on } A_1 - \{ z = 0 \}
\]
and so
\[
G_f^{\alpha} (z, w) =
\begin{cases}
\log |w| & \text{ on } \{ z = 0 \}, \\
\alpha \log |z| + G_h^{\infty} \left( \dfrac{w}{z^{\alpha}} \right) & \text{ on } A_1 - \{ z = 0 \}, \\
\alpha \log |z| & \text{ on } A_0 - A_1,
\end{cases}
\]
where
\[
A_0 = \{ (0, w) : |w| < 1 \} 
\cup \Big( \bigcup_{0<|z|<1} \left\{ G_h \left( \dfrac{w}{z^{\alpha}} \right) < - \alpha \log |z| \right\}  \Big).
\]
\end{example}

%\begin{remark}
The same construction works even if $d = 1$.
Such maps are used in Example 7.5 in \cite{ueno}
to show that we can not remove the condition
$\delta \neq T_k$ for any $k$
in Lemma \ref{main lemma for previous reselts}
if $d = 1$.
See also Example 5.2 in \cite{u2}.
%\end{remark}

%\begin{remark}
%Although 
Even if $\alpha$ is rational,
one can construct similar examples,
although
it is more complicated and the desired polynomial $h$ is limited.
Proposition 3.9 in \cite{u-sym1} and
Proposition 3.1 in \cite{u-fiberwise}
give necessary and sufficient conditions for %provide
the non-degenerate polynomial skew product $(z^d, q(z,w))$,
where $q(z,w) = w^d + O_z(w^{d-1})$,
to be semiconjugate to $(z^d, q(1,w))$ by $\pi (z,w) = (z^r, z^s w)$
for some positive integers $r$ and $s$.
%The dymanics of such maps is studied in \cite{u-fiberwise}.
More generally,
Lemma 5.1 in \cite{u-sym2}
gives necessary and sufficient conditions for %provides
the rational skew product $(z^{\delta}, q(z,w))$,
%where $q(z,w) = b_d (z) w^d + b_{d-1} (z) w^{d-1} + \cdots + b_0 (z)$
where the degree of $q$ with respect to $w$ is $d$, 
to be semiconjugate to $(z^{\delta}, q(1,w))$ by $\pi (z,w) = (z^r, z^s w)$
for some integers $r$ and $s$.
Green functions of such maps are studied in \cite{u-fiberwise} and
such maps are characterized by symmetries of Julia sets
as stated in \cite{u-sym1} and \cite{u-sym2}.
One can find statements similar to the examples above
in Examples 5.2 and 5.3 in \cite{u-weight1},
Examples 4.3 and 5.2 in \cite{u-weight2},
Section 10 in \cite{u1}
and Example 4.4 in \cite{u2}.
%As studied in \cite{u-sym1} and \cite{u-sym2},
%the second Julia sets of such maps have infinitely many symmetries.
%\end{remark}

%\newpage
%%%%%%%%%%%%%%%%%%%%%%%%%%%%%%%%%%%%%%%%%%%%%%%%%%%%%%%%%%%%%%%%%%%%%%%%%%%%%%%%%%%%%%
%%%%%%%%%%%%%%%%%%%%%%%%%%%%%%%%%%%%%%%%%%%%%%%%%%%%%%%%%%%%%%%%%%%%%%%%%%%%%%%%%%%%%%
%%%%%%%%%%%%%%%%%%%%%%%%%%%%%%%%%%%%%%%%%%%%%%%%%%%%%%%%%%%%%%%%%%%%%%%%%%%%%%%%%%%%%%
\section{Case 4}

We deal with Case 4 in the last section.
Let $s > 2$, 
\[
T_k \leq \delta \leq T_{k-1}
\text{ and }
(\gamma, d) = (n_k, m_k)
\]
for some $2 \leq k \leq s-1$.
%\[
%l_1 = \frac{n_k - n_{k-1}}{m_{k-1} - m_k} 
%\text{ and } 
%l_1 + l_2= \frac{n_{k+1} - n_k}{m_k - m_{k+1}}.
%\]
Note that
$\gamma > 0$ and $\delta > d \geq 1$ by the setting. 

We first exhibit several results on invariant wedges and
the unions of all the preimages of the wedges,
which imply Theorems \ref{main thm on inv wedges for Case 4}
and \ref{main thm on attr sets for Case 4},
and describe the properties of the blow-up $\tilde{f}_1$ % investigate
in Section 7.1.
We then exhibit a few results on the existence and properties of
plurisubharmonic functions in Section 7.2.
We finally investigate the asymptotic behavior of $G_z^{\alpha}$ 
toward the boundaries of the unions
in Section 7.3,
which is separated into three subcases.
One may refer Table \ref{table for case 4} 
in Section 2.3 for a comparison chart of the results.

%$U^{l_1,l_2} = \{ |z|^{l_1 + l_2} < r^{l_2} |w|, |w| < r|z|^{l_1} \}$
%$U^{l_{(1)},l_{(2)}} = \{ |z|^{l_{(1)} + l_{(2)}} < r^{l_{(2)}} |w|, |w| < r|z|^{l_{(1)}} \}$

%%%%%%%%%%%%%%%%%%%%%%%%%%%%%%%%%%%%%%%%%%%%%%%%%%%%%%%%%%%%%%%%%%%%%%%%%%%%%
%%%%%%%%%%%%%%%%%%%%%%%%%%%%%%%%%%%%%%%%%%%%%%%%%%%%%%%%%%%%%%%%%%%%%%%%%%%%%
\subsection{Invariant wedges and Attracting sets}

We exhibit several results 
that imply % induce 
Theorems \ref{main thm on inv wedges for Case 4}
and \ref{main thm on attr sets for Case 4}
in Section 7.1.1.
Since the statements and proofs are almost the same as the case $\delta > d$ in Case 2,
we omit the proofs.

%First,
Note that
the dynamics of the first blow-up $\tilde{f}_1$ is % of $f$ is 
almost the same as the case $\delta > d$ in Case 2.

\begin{proposition}\label{}
The blow-up $\tilde{f}_1$ is superattracting at the origin for any $0 < l < \alpha$,
even for $l = \alpha$ if $\delta > T_k$ and $d \geq 2$ % and
or if $\delta = T_k$ and $m_{k+1} \geq 2$, and %  <-- 正しいが，ここでは不要
degenerates the $c$-axis for any $0 < l < \alpha$ unless $(n_1, m_1) = (0, \delta)$.
\end{proposition}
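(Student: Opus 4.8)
The plan is to read off both assertions from the Newton polygon of the second component $\tilde{q}_1$ of the first blow-up, exactly as the properties of $\tilde{f}$ in Case~2 (Proposition~\ref{properties of blow-up for Case 2}) were read off from $N(\tilde{q})$. Recall from Section 3.3 that, writing $p(z)=z^{\delta}$ and $b_{\gamma d}=1$ for simplicity,
\[
\tilde{f}_1(z,c)=\Bigl(z^{\delta},\ z^{\tilde{\gamma}}c^{d}+\sum b_{ij}\,z^{\tilde{i}}c^{j}\Bigr),
\qquad \tilde{\gamma}=\gamma+ld-l\delta,\quad \tilde{i}=i+lj-l\delta,
\]
so that everything is controlled by the signs of the exponents $\tilde{i}$. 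Following the proof of Theorems~\ref{main thm on inv wedges for Case 2} and \ref{main thm on attr sets for Case 2}, I would introduce
\[
D=D_l=\min\{\,l^{-1}i+j\mid b_{ij}\neq0\,\},
\]
use the identity $\min_{b_{ij}\neq0}\tilde{i}=l(D-\delta)$, and note that $\tilde{f}_1$ is holomorphic near the origin as soon as $D\geq\delta$.

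The first step is to pin down $D=D(l)$ on $(0,\alpha]$. The line through $(\gamma,d)=(n_k,m_k)$ of slope $-\alpha^{-1}$ has $y$-intercept $\delta$, and the hypothesis $T_k\leq\delta\leq T_{k-1}$ says precisely that this line supports $N(q)$ from below; hence $D(\alpha)=\delta$, attained at $(\gamma,d)$. Since each $l^{-1}i+j$ is nonincreasing in $l$, so is $D(l)$, whence $D(l)\geq\delta$ on $(0,\alpha]$, and $D(l)>\delta$ for $0<l<\alpha$ unless the minimum is pinned at a monomial with first coordinate $0$ and second coordinate $\delta$ --- that is, unless $(n_1,m_1)=(0,\delta)$, in which case $D(l)=\delta$ throughout $(0,\alpha]$. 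This is the one place a little Newton-polygon care is needed, and it parallels Lemmas~\ref{properties of D for Case 2} and \ref{lem on D for Case 2} and the Case~2 tables~\ref{shape when d < delta < T}, \ref{shape when delta = T} and \ref{shape for special case}, now applied to $\tilde{f}_1$, which lands in Case~3 rather than Case~1.

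It then remains to translate these facts into the two assertions. Since $\delta\geq2$ and $\tilde{q}_1$ has no constant term, the origin is a fixed point of the skew product $\tilde{f}_1$ whose differential is nilpotent if and only if the monomial $z^0c^1$ is absent from $\tilde{q}_1$. For $0<l<\alpha$ with $(n_1,m_1)\neq(0,\delta)$ we have $\min\tilde{i}\geq1$, so $\tilde{q}_1(0,c)\equiv0$: $\tilde{f}_1$ collapses $\{z=0\}$ to the origin, hence is superattracting there and degenerates the $c$-axis. For $0<l<\alpha$ with $(n_1,m_1)=(0,\delta)$ one gets $\tilde{q}_1(0,c)=b_{0\delta}c^{\delta}$, so the $c$-axis is not degenerated, but the differential is still nilpotent as $\delta\geq2$. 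At $l=\alpha$ we have $\tilde{\gamma}=0$, and $\tilde{q}_1(0,c)$ consists of the monomials of $q$ lying on the supporting line $x+\alpha y=\alpha\delta$: when $\delta>T_k$ these are only $(\gamma,d)$ (and possibly $(0,\delta)$), so $\tilde{q}_1(0,c)=c^{d}\{1+o(1)\}$ and nilpotency holds iff $d\geq2$; when $\delta=T_k$ the adjacent vertex $(n_{k+1},m_{k+1})$ also lies on that line, i.e.\ $\tilde{n}_{k+1}=0$, so $\tilde{q}_1(0,c)=c^{m_{k+1}}\{1+o(1)\}$ and nilpotency holds iff $m_{k+1}\geq2$; when $\delta=T_{k-1}$ one has $l_1=\alpha$ and $\delta>T_k$, so this falls under the first case. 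The main obstacle is exactly this endpoint bookkeeping --- deciding which monomials survive on the line $x+\alpha y=\alpha\delta$ --- but since $\tilde{f}_1$ is already known to be a holomorphic skew product in Case~3, it reduces to the slope comparison between $L_k$, $L_{k-1}$ and that line, which is short.
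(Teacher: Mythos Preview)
Your argument is correct and follows essentially the same route as the paper: both reduce the assertion to the shape of $N(\tilde{q}_1)$, i.e.\ to the signs of the shifted exponents $\tilde{i}=i+lj-l\delta$ and to which monomials survive in $\tilde{q}_1(0,c)$. The paper records this information case by case in Tables~\ref{shape when T_k < delta < T_k-1}--\ref{shape for special case when delta = T_k-1}, while you organize it through the single quantity $D(l)=\min\{l^{-1}i+j\}$ and its monotonicity in $l$ (exactly the device of Lemmas~\ref{properties of D for Case 2} and~\ref{lem on D for Case 2}), but the content is the same. Two minor remarks: write $\min\tilde{i}>0$ rather than $\geq1$ (since $l$ need not be integral the exponents $\tilde{i}$ are only formally positive), and note that in the case $\delta=T_{k-1}$ the line $x+\alpha y=\alpha\delta$ also contains $(n_{k-1},m_{k-1})$, so $\tilde{q}_1(0,c)$ has a term $c^{m_{k-1}}$ in addition to $c^d$ --- this does not affect the conclusion since $m_{k-1}>d$, but your phrase ``these are only $(\gamma,d)$'' is not literally accurate there.
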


Therefore,
one may expect that 
Theorems \ref{main thm on inv wedges for Case 4}
and \ref{main thm on attr sets for Case 4} hold.
This proposition corresponds to 
Proposition \ref{properties of blow-up for Case 2} in Case 2.
See Section 7.1.2 for a detail,
in which 
the shape of the Newton polygon of $\tilde{q}_1$ % $N(\tilde{q})$ 
and properties of $\tilde{f}_1$ are summarized.

%See Section 7.1.1 for 
%the statements that actully induce 
%Theorems \ref{main thm on inv wedges for Case 4}
%and \ref{main thm on attr sets for Case 4}.

%%%%%%%%%%%%%%%%%%%%%%%%%%%%%%%%%%%%%%%%%%%%%%%%%%%%%%%%%%%
%%%%%%%%%%%%%%%%%%%%%%%%%%%%%%%%%%%%%%%%%%%%%%%%%%%%%%%%%%%
\subsubsection{Exhibition of results} %results that induce the theorems}

We can apply the same arguments for Case 2
to the dynamics of $f$ on $U_{r_1, r_2}^{l, +}$ and $U^{l, +}$,
where
$U_{r_1, r_2}^{l, +} = \{ |z| < r_1, |w| < r_2 |z|^{l} \}$ and 
$U^{l, +} = \{ |z| < r, |w| < r |z|^{l} \}$.

The following lemma corresponds to Lemma \ref{lem on D for Case 2} in Case 2.

\begin{lemma}
If $(n_1, m_1) \neq (0, \delta)$,
then $D > \delta$ for any $0 < l < \alpha$,
and $D = \delta$ for $l = \alpha$.
On the other hand,
if $(n_1, m_1) = (0, \delta)$,
then $D = \delta$ for any $0 < l \leq \alpha$.
\end{lemma}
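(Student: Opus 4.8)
The plan is to recall the definition
\[
D = D_l = \min \left\{ l^{-1} i + j \ | \ b_{ij} \neq 0 \right\}
\]
and observe, exactly as in Case 2, that the minimum is attained at a vertex $(n_j, m_j)$ of the Newton polygon $N(q)$, so $D = l^{-1} N + M$ for some vertex $(N,M)$. The geometric meaning is that $D$ is the $y$-intercept of the supporting line of $N(q)$ with slope $-l^{-1}$. Since in Case 4 we have $\gamma > 0$ and $\delta > d \geq 1$, the weight $\alpha = \gamma/(\delta - d) > 0$ is well-defined, and the line through $(0,\delta)$ and $(\gamma, d)$ has slope $-\alpha^{-1}$. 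The key dichotomy is whether $(n_1,m_1) = (0,\delta)$, i.e.\ whether $N(q)$ actually reaches the $y$-axis at height $\delta$.

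First I would treat the case $(n_1, m_1) \neq (0, \delta)$. For $l = \alpha$, the supporting line of slope $-\alpha^{-1}$ passes through the vertex $(\gamma, d) = (n_k, m_k)$ by the definition of $\alpha$, so its $y$-intercept is $\alpha^{-1}\gamma + d = \delta$; since $(0,\delta) \notin N(q)$ and $N(q) \subset \{x \geq n_1 > 0\}$ forces the whole polygon to lie on or above this line with the intercept $\delta$ not attained on the axis, one still gets $D_\alpha = \delta$ (the minimum of $l^{-1}i + j$ over actual monomials equals $\delta$, realized at $(\gamma,d)$). For $0 < l < \alpha$, I would note that decreasing $l$ rotates the supporting line toward horizontal, and because $(\gamma,d)$ is a vertex with every other monomial $(i,j)$ satisfying $i > \gamma$ or $j > d$ appropriately (using $n_j < \gamma$, $m_j > d$ for $j < k$ and $n_j > \gamma$, $m_j < d$ for $j > k$), the $y$-intercept strictly increases: concretely, if the minimum for a given $l < \alpha$ is realized at a vertex $(N,M)$, then $l^{-1}N + M > \delta$ because the line of slope $-l^{-1}$ through $(N,M)$ meets the $y$-axis strictly above $\delta$ — this is exactly the computation already carried out in Lemma \ref{properties of D for Case 2} and Lemma \ref{lem on D for Case 2} for Case 2, transplanted verbatim using the Newton polygon vertices $(n_j, m_j)$ with $j \leq k$. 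Hence $D > \delta$ for all $0 < l < \alpha$.

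Next I would treat the special case $(n_1, m_1) = (0, \delta)$. Here the monomial $z^0 w^\delta$ is present, so for every $l > 0$ one has $l^{-1}\cdot 0 + \delta = \delta$ as a candidate value, giving $D_l \leq \delta$. On the other hand, for $0 < l \leq \alpha$ every vertex $(n_j, m_j)$ lies on or above the line of slope $-\alpha^{-1}$ through $(0,\delta)$ and $(\gamma,d)$, hence on or above the line of slope $-l^{-1}$ through $(0,\delta)$ when $l \leq \alpha$ (since flattening the slope only lowers the polygon's trace on the axis, but $(0,\delta)$ itself is fixed), so $l^{-1}n_j + m_j \geq \delta$ for all $j$, giving $D_l \geq \delta$. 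Combining, $D_l = \delta$ for all $0 < l \leq \alpha$, which is the claim.

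The main obstacle, as in Case 2, is the bookkeeping of the Newton polygon geometry — verifying that the supporting line of slope $-l^{-1}$ indeed has its minimum at the expected vertex as $l$ varies over $(0,\alpha)$, and that the strict inequality $D > \delta$ persists all the way up to (but not including) $l = \alpha$. But this is precisely the content of Lemmas \ref{properties of D for Case 2} and \ref{lem on D for Case 2}, and since the hypotheses of Case 4 ($\gamma > 0$, $\delta > d$, with $(n_k,m_k)$ the relevant vertex playing the role that $(n_s,m_s)$ played in Case 2) match the hypothesis $\delta > d$ of those lemmas, the argument transfers with only notational changes; I would simply cite that parallel and point out the one new wrinkle, namely that in Case 4 the interval of admissible $l$ is cut off at $\alpha$ from the Newton polygon on the right (the vertices $(n_j,m_j)$ with $j > k$) rather than extending to $\infty$, which does not affect the values of $D_l$ for $l \leq \alpha$.
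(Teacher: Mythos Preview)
Your proposal is correct and matches the paper's approach exactly: the paper itself omits the proof of this lemma, stating only that it ``corresponds to Lemma~\ref{lem on D for Case 2} in Case 2'' and that ``the statements and proofs are almost the same as the case $\delta > d$ in Case 2.'' Your reduction to Lemmas~\ref{properties of D for Case 2} and~\ref{lem on D for Case 2} is precisely this. One minor imprecision: the parenthetical ``$N(q) \subset \{x \geq n_1 > 0\}$'' tacitly assumes $n_1 > 0$, but $(n_1,m_1) \neq (0,\delta)$ allows $n_1 = 0$ with $m_1 \neq \delta$; this does not affect the argument since you correctly defer to the Case~2 lemmas, which handle the general condition $(n_1,m_1) \neq (0,\delta)$ directly.
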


The following lemma and theorem correspond to 
Lemma \ref{lem on inv wedges for Case 2} and
Theorem \ref{thm on inv wedges for Case 2},
respectively,
which imply Theorem \ref{main thm on inv wedges for Case 4}.

\begin{lemma} %Let $U_{r_1, r_2}^{l, +} = \{ |z| < r_1, |w| < r_2 |z|^{l} \}$.
For any $l > 0$ and
for any small $r_1$ and $r_2$,
there exists a constant $C > 0$ such that
$|q(z,w)| \leq C |z|^{l D}$ on $U^{l, +}_{r_1, r_2}$.
\end{lemma}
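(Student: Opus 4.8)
The plan is to estimate $|q(z,w)|$ directly on the wedge $U^{l,+}_{r_1,r_2}=\{|z|<r_1,\ |w|<r_2|z|^l\}$ by writing $q(z,w)=\sum b_{ij}z^iw^j$ and substituting the constraint $|w|<r_2|z|^l$ into each monomial, exactly as in the proof of Lemma \ref{lem on inv wedges for Case 2}. The key point is the quantity $D=D_l=\min\{l^{-1}i+j\mid b_{ij}\neq 0\}$, whose relevant property here is that $i+lj\geq lD$ for every $(i,j)$ with $b_{ij}\neq 0$; this inequality holds by the very definition of $D$ and requires nothing about the magnitude relation between $\delta$, $d$, or the $T_j$'s, so the lemma is valid for any $l>0$.

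First I would bound $|w|^j<r_2^j|z|^{lj}$ on $U^{l,+}_{r_1,r_2}$, giving $|q(z,w)|\leq\sum|b_{ij}|\,r_2^j\,|z|^{i+lj}$. Next I would split the sum according to whether $i+lj=lD$ or $i+lj>lD$, factor out $|z|^{lD}$, and observe that the first (finite) sub-sum contributes a bounded coefficient while the second sub-sum carries positive powers $|z|^{i+lj-lD}$, hence is bounded by $\sum|b_{ij}|r_2^j r_1^{i+lj-lD}$ once $|z|<r_1$. Collecting these gives the constant $C$ and the bound $|q(z,w)|\leq C|z|^{lD}$ on $U^{l,+}_{r_1,r_2}$. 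This is a verbatim transcription of the argument already carried out for Lemma \ref{lem on inv wedges for Case 2}, the only change being the name of the wedge; in Case 2 the wedge was denoted $U^l_{r_1,r_2}$ whereas here the notation $U^{l,+}_{r_1,r_2}$ is used to distinguish it from the wedge-with-two-slopes $U^{l_{(1)},l_{(2)}}$ appearing in Case 4.

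There is essentially no obstacle: the estimate is purely algebraic and depends only on the Newton polygon of $q$, not on the dynamical hypotheses of Case 4. The one thing to be slightly careful about is making sure the finiteness of the set $\{(i,j):i+lj=lD,\ b_{ij}\neq 0\}$ is invoked (this is what makes the first sub-sum's coefficient a genuine constant rather than a formal series), and that $q$ is at least formally a power series so the sums make sense termwise; both are already part of the standing setup. I would therefore write the proof as a one-line reference to the proof of Lemma \ref{lem on inv wedges for Case 2}, or reproduce the short chain of inequalities if the referee prefers self-containedness. The lemma will then feed into the Case 4 analogue of Theorem \ref{thm on inv wedges for Case 2}, proving $f(U^{l,+}_{r_1,r_2})\subset U^{l,+}_{r_1,r_2}$ whenever $Cr_1^{l(D-\delta)}<r_2$, which in turn yields Theorem \ref{main thm on inv wedges for Case 4}.
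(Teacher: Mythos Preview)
Your proposal is correct and matches the paper's approach exactly: the paper explicitly says that the statements and proofs in Section 7.1.1 are almost the same as the case $\delta>d$ in Case 2 and omits the proofs, so the intended argument is precisely the verbatim transcription of the proof of Lemma \ref{lem on inv wedges for Case 2} that you describe. Your remark that the only change is the notation $U^{l,+}_{r_1,r_2}$ in place of $U^l_{r_1,r_2}$ is exactly the point.
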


\begin{theorem}\label{}
The following holds for any $0< l < \alpha$:
$f(U_{r_1, r_2}^{l, +}) \subset U_{r_1, r_2}^{l, +}$
for suitable small $r_1$ and $r_2$.
More precisely,
$r_1$ and $r_2$ have to satisfy the inequality
$C r_1^{l (D - \delta)} < r_2$ or
$2 C r_1^{l (D^* - \delta)} < r_2$
for some constants $C$ and $D^*$
if $D > \delta$ or $D = \delta$.
\end{theorem}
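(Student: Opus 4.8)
The plan is to reproduce, essentially verbatim, the proof of Theorem~\ref{thm on inv wedges for Case 2}, since the only ingredients that proof uses are already in place for Case~4: the estimate $|q(z,w)| \le C|z|^{lD}$ on $U^{l,+}_{r_1,r_2}$ supplied by the $|q|$-estimate above (the analogue of Lemma~\ref{lem on inv wedges for Case 2}), the control of $D$ relative to $\delta$ on $(0,\alpha]$ given by the preceding lemma (the analogue of Lemma~\ref{lem on D for Case 2}), and the elementary lower bound $|p(z)| > C_1|z|^{\delta}$ near the origin coming from $p(z) = az^{\delta}+O(z^{\delta+1})$. First I would note that on $U^{l,+}_{r_1,r_2}$ the inequality $|p(z)| < r_1$ is automatic for small $r_1$, so the task reduces to arranging $|q(z,w)| < r_2\,|p(z)|^{l}$ on $U^{l,+}_{r_1,r_2}$.

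The argument then branches according to whether $D > \delta$ or $D = \delta$; by the preceding lemma, for $0 < l < \alpha$ the first case is exactly the generic one $(n_1,m_1)\neq(0,\delta)$, while the equality $D = \delta$ occurs only in the special case $(n_1,m_1)=(0,\delta)$. When $D > \delta$, combining $|q(z,w)| \le C_2|z|^{lD}$ with $|p(z)| > C_1|z|^{\delta}$ gives
\[
\left|\frac{q(z,w)}{p(z)^{l}}\right| < \frac{C_2}{C_1^{l}}\,|z|^{l(D-\delta)} < C\,r_1^{\,l(D-\delta)} \quad \text{on } U^{l,+}_{r_1,r_2},
\]
with $C = C_2 C_1^{-l}$, and since $D - \delta > 0$ the choice $C r_1^{l(D-\delta)} < r_2$ forces $f(U^{l,+}_{r_1,r_2}) \subset U^{l,+}_{r_1,r_2}$. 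When $D = \delta$ I would isolate, as in Case~2, the monomial attached to the vertex $(n_1,m_1)=(0,\delta)$: with $(N,M)=(0,\delta)$ the terms with $i+lj = l\delta$ contribute $|b_{NM}|\,r_2^{M}$ to $|q/p^{l}|$, which is $\le |b_{NM}|\,r_2^{2} < r_2/2$ for small $r_2$ because $M = \delta \ge 2$; setting $D^{*} = \min\{\,l^{-1}i + j \mid (i,j)\neq(0,\delta),\ b_{ij}\neq 0\,\} > \delta$, the remaining terms are $\le C r_1^{\,l(D^{*}-\delta)} < r_2/2$ for small $r_1$, so $2C r_1^{l(D^{*}-\delta)} < r_2 < (2|b_{NM}|)^{-1}$ yields the invariance. (Here one may assume $p(z)=z^{\delta}$ in this last step, exactly as in Case~2.)

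I do not expect a genuine obstacle: all the geometric content — that $D \ge \delta$ throughout $(0,\alpha]$ and that the inequality is strict off the exceptional vertex — has already been packaged into the lemma on $D$ for Case~4, so what remains is the same Newton-polygon bookkeeping used in Case~2, with $p$ replaced by its leading-order asymptotics. The one point worth flagging is that for $0 < l < l_1$ one cannot appeal to $f \sim f_0$ on $U^{l,+}$, since such $l$ lies outside $\mathcal{I}_f^1$; this is precisely why the $D$-based estimate, rather than the conjugacy from Lemma~\ref{detailed lemma for case 4}, must be used throughout the proof.
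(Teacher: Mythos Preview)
Your proposal is correct and follows exactly the paper's approach: the paper explicitly states that the proofs in Section~7.1.1 are omitted because they are ``almost the same as the case $\delta > d$ in Case~2,'' and your argument reproduces the proof of Theorem~\ref{thm on inv wedges for Case 2} verbatim, using the Case~4 analogues of Lemmas~\ref{lem on D for Case 2} and~\ref{lem on inv wedges for Case 2}. Your identification of the special case $D=\delta$ with $(n_1,m_1)=(0,\delta)$, and the observation that $M=\delta\ge 2$ suffices to control the leading term, also match the paper's treatment.
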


%Let $U^{l, +} = \{ |z| < r, |w| < r |z|^{l} \}$
Let $V^l = \{ 0 < |z| < r, \, r |z|^{l} \leq |w| < r_3 \}$. % r^{1 + 1/l_2}
The following lemma and proposition correspond to 
Lemma \ref{lem on attr sets for Case 2} and
Proposition \ref{prop on attr sets for Case 2},
respectively,
which imply Theorem \ref{main thm on attr sets for Case 4}.

\begin{lemma}
For any $l > 0$ and
for any small $r$ and $r_3$,
there exists a constant $C > 0$ such that
$|q(z,w)| \leq C |w|^D$ on $V^l$.
\end{lemma}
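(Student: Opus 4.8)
The plan is to follow almost verbatim the proof of Lemma~\ref{lem on attr sets for Case 2}, since in Case~4 the region $V^l = \{\, 0 < |z| < r,\ r|z|^l \le |w| < r_3 \,\}$ and the exponent $D = D_l = \min\{\, l^{-1} i + j : b_{ij} \ne 0 \,\}$ are defined exactly as in Case~2, and that estimate uses nothing about the particular values of $\delta$, $\gamma$ or $d$. First I would record the elementary observation that on $V^l$ the inequality $r|z|^l \le |w|$ rearranges to $|z| \le (r^{-1}|w|)^{1/l}$, so every monomial of $q$ satisfies $|b_{ij} z^i w^j| \le |b_{ij}|\, r^{-i/l}\, |w|^{l^{-1} i + j}$ there.

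Next I would expand $q(z,w) = \sum b_{ij} z^i w^j$, apply the triangle inequality together with the monomial bound just noted, and split the resulting series according to whether $i + lj = lD$ or $i + lj > lD$; by the definition of $D$ these exhaust all pairs $(i,j)$ with $b_{ij} \ne 0$. The terms with $i + lj = lD$ contribute precisely $\bigl(\sum_{i+lj=lD} |b_{ij}| r^{-i/l}\bigr)|w|^D$, a fixed constant times $|w|^D$, the index set being finite. For the terms with $i + lj > lD$ one writes $|w|^{l^{-1}i+j} = |w|^D\, |w|^{l^{-1}i+j-D} \le r_3^{\,l^{-1}i+j-D}\,|w|^D$, using $|w| < r_3$ and $l^{-1}i + j - D > 0$, so this part is bounded by $\bigl(\sum_{i+lj>lD} |b_{ij}| r^{-i/l} r_3^{\,l^{-1}i+j-D}\bigr)|w|^D$; for $r$ and $r_3$ small this last series converges and indeed tends to $0$ as $r_3 \to 0$. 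Summing the two bounds gives $|q(z,w)| \le C |w|^D$ on $V^l$ with $C$ the sum of the two coefficients.

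I do not expect a genuine obstacle here, as the argument is routine, but two small points deserve a line of care: first, that the minimum defining $D$ is actually attained (it is realized at a vertex of the Newton polygon $N(q)$, which is what makes $D$ well defined and the first index set finite); second, that when $q$ is a convergent power series rather than a polynomial one must take $r$ and $r_3$ small enough that $V^l$ lies inside the polydisc of convergence, so that the term-by-term rearrangements and estimates are legitimate. As in Case~2 (Proposition~\ref{prop on attr sets for Case 2}), this lemma will then be combined with the strict inequality $D > \delta$ to show that every point of $V^l$ is eventually mapped into the wedge $U^{l,+}$, which yields Theorem~\ref{main thm on attr sets for Case 4}.
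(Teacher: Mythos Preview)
Your proposal is correct and follows exactly the approach of the paper: in Case~4 the paper omits the proof entirely, stating that ``the statements and proofs are almost the same as the case $\delta > d$ in Case~2,'' and your argument reproduces the proof of Lemma~\ref{lem on attr sets for Case 2} verbatim (the bound $|z| \le (r^{-1}|w|)^{1/l}$ on $V^l$, the split of the series at $i+lj = lD$, and the control of the tail by $r_3$). Your added remarks on the attainment of $D$ at a vertex of $N(q)$ and on convergence of the power series are correct and more careful than the paper itself.
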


\begin{proposition}
If $(n_1, m_1) \neq (0, \delta)$, then
the following holds for any $0 < l < \alpha$:
for any $(z,w)$ in $V^l$ with suitable small $r$ and $r_3$,
there exists an integer $n$ such that
$f^n(z,w)$ belongs to $U^{l,+}$. 
\end{proposition}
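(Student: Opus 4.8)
The plan is to follow the proof of Proposition~\ref{prop on attr sets for Case 2} line by line; the structure is identical, and the only case-specific ingredient is the strict inequality $D>\delta$, which in Case~4 is furnished by the lemma just stated. First I would record, on $V^l$ with $r$ and $r_3$ small, the two estimates
\[
|p(z)|>(C_1|z|)^{\delta},\qquad |q(z,w)|<(C_2|w|)^{D},
\]
where the first is immediate from $p(z)=az^{\delta}+O(z^{\delta+1})$ and the second is the preceding lemma ($|q(z,w)|\le C|w|^{D}$ on $V^l$), after absorbing the constant into the base.

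Next, suppose for contradiction that $f^n(z,w)=(z_n,w_n)$ lies in $V^l$ for every $n\ge 0$. The defining inequality of $V^l$ gives $r|z_n|^{l}\le|w_n|$, while iterating the two estimates yields $|z_n|>C_1^{\delta^{n}/(\delta-1)}|z|^{\delta^{n}}$ and $|w_n|<C_2^{D^{n}/(D-1)}|w|^{D^{n}}$, hence
\[
r\bigl(C_1^{1/(\delta-1)}|z|\bigr)^{l\delta^{n}}<r|z_n|^{l}\le|w_n|<\bigl(C_2^{1/(D-1)}|w|\bigr)^{D^{n}}.
\]
By the lemma just above, the hypotheses $(n_1,m_1)\ne(0,\delta)$ and $0<l<\alpha$ force $D>\delta$. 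Choosing $r$ and $r_3$ small enough that $C_1^{1/(\delta-1)}r<1$ and $C_2^{1/(D-1)}r_3<1$, both bases in the displayed chain lie in $(0,1)$, so the right-hand side (exponent $\sim D^{n}$) tends to $0$ strictly faster than the left-hand side (exponent $\sim\delta^{n}$), a contradiction for large $n$.

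Consequently some forward iterate leaves $V^l$. Since the estimates also give $|p(z)|<r$ and $|q(z,w)|<r_3$ on $V^l$, the whole forward orbit stays inside the neighbourhood $U^{l,+}\cup V^l\cup\{z=0\}$ of the origin, and — the orbit avoiding $\{z=0\}$ for the relevant $(z,w)$ — it must therefore enter $U^{l,+}$; by the invariance $f(U^{l,+}_{r_1,r_2})\subset U^{l,+}_{r_1,r_2}$ of the preceding theorem it then remains there. I do not anticipate a real obstacle: the only point needing care is that the several smallness requirements on $r,r_3$ (and, for the invariance step, on $r_1,r_2$) are compatible, which holds since each is a strict inequality carving out a nonempty range — exactly as in Case~2.
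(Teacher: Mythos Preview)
Your proposal is correct and matches the paper's approach exactly: the paper omits the proof in Case~4, stating that the arguments are ``almost the same as the case $\delta>d$ in Case~2,'' and you have faithfully reproduced the Case~2 argument (Proposition~\ref{prop on attr sets for Case 2}) with the one case-specific input---the strict inequality $D>\delta$---supplied by the preceding lemma under the hypothesis $(n_1,m_1)\neq(0,\delta)$. Your final paragraph, explaining why the orbit that leaves $V^l$ must land in $U^{l,+}$ rather than on $\{z=0\}$, is in fact slightly more explicit than the paper's Case~2 proof; note that for $r$ small the only zero of $p$ in $\{|z|<r\}$ is $0$ itself, so $z\neq 0$ forces $z_n\neq 0$ for all $n$, which justifies your remark that the orbit avoids $\{z=0\}$.
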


%\newpage
%%%%%%%%%%%%%%%%%%%%%%%%%%%%%%%%%%%%%%%%%%%%%%%%%%%%%%%%%%%
%%%%%%%%%%%%%%%%%%%%%%%%%%%%%%%%%%%%%%%%%%%%%%%%%%%%%%%%%%%
\subsubsection{Shapes of Newton polygons} % of $\tilde{q}$}
% and properties of $\tilde{f}$} %$N(\tilde{q})$}

The shape of the Newton polygon of $\tilde{q}_1$ %$N(\tilde{q})$ 
and properties of $\tilde{f}_1$ for Case 4
are more or less similar to those for Case 2.
More precisely,
the magnitude relation of $0$, $\tilde{n}_1$, $\tilde{n}_2$, $\cdots$, % $\tilde{n}_{k-1}$ and 
$\tilde{n}_k$ are the same as Case 2,
where $\tilde{n}_k = \tilde{\gamma}$.
On the other hand,
we have the extra vertices $(\tilde{n}_{k+1}, m_{k+1})$, $\cdots$, $(\tilde{n}_s, m_s)$ for Case 4,
and so 
$N(\tilde{q}_1)$ always has other vertices than $(\tilde{\gamma}, d)$. % vertexes ??

We only give tables of summaries of  
the shape of $N(\tilde{q}_1)$ and properties of $\tilde{f}_1$.
Recall that 
$\mathcal{I}_f^1 = [ l_1, \alpha ]$ if $T_k < \delta < T_{k-1}$, 
$\mathcal{I}_f^1 = [ l_1, \alpha )$ if $\delta = T_{k}$, and
$\mathcal{I}_f^1 = \{ l_1 \} = \{ \alpha \}$ if $\delta = T_{k-1}$.
Tables \ref{shape when T_k < delta < T_k-1}, 
\ref{shape when delta = T_k-1} and 
\ref{shape for special case when delta = T_k-1} below for Case 4 correspond to
Tables \ref{shape when d < delta < T}, 
\ref{shape when delta = T} and 
\ref{shape for special case} for Case 2, respectively.
Table \ref{shape when delta = T_k} resembles to 
Table \ref{shape when T_k < delta < T_k-1},
whereas the case $l = \alpha$ is different.
Note that $(\gamma, d) = (n_2, m_2)$ 
for Table \ref{shape for special case when delta = T_k-1}
since $(n_1, m_1) = (0, \delta)$.

%%%%%%%%%%%%%%%%%%%%%%%%%%%%%%%%%%%%%
\begin{table}[htb]
%\begin{center}
\caption{Shape of $N(\tilde{q}_1)$ when $T_k < \delta < T_{k-1}$} \label{shape when T_k < delta < T_k-1}
\begin{tabular}{|c|c|c|c|c|} \hline
$l$   & $0 < l < l_1$ & $l_1$ & $l_1 < l < \alpha$ & $\alpha$  \\ \hline
$\tilde{\gamma}$ & $0 < \tilde{n}_{k-1} < \tilde{\gamma}$ & $0 < \tilde{\gamma}  = \tilde{n}_{k-1} < \tilde{n}_j$ & $0 < \tilde{\gamma} < \tilde{n}_j$ & $0 = \tilde{\gamma} < \tilde{n}_j$   \\ 
              & $0 < \tilde{n}_j$ for any $j$ & for $j \neq k, k-1$ & for $j \neq k$ & for $j \neq k$  \\ \hline
$\tilde{f}_1$  & SA and Deg & SA and Deg & SA and Deg & SA if $d \geq 2$  \\ \hline
\end{tabular}
%\end{center}
\end{table}
%%%%%%%%%%%%%%%%%%%%%%%%%%%%%%%%%%%%%

%%%%%%%%%%%%%%%%%%%%%%%%%%%%%%%%%%%%% Imformations of $N(\tilde{q})$ and $\tilde{f}$ \forall
\begin{table}[htb]
%\begin{center}
\caption{Shape of $N(\tilde{q}_1)$ when $\delta = T_k$} \label{shape when delta = T_k}
\begin{tabular}{|c|c|c|c|c|} \hline
$l$   & $0 < l < l_1$ & $l_1$ & $l_1 < l < \alpha$ & $\alpha$  \\ \hline
$\tilde{\gamma}$ & $0 < \tilde{n}_{k-1} < \tilde{\gamma}$ & $0 < \tilde{\gamma}  = \tilde{n}_{k-1} < \tilde{n}_j$ & $0 < \tilde{\gamma} < \tilde{n}_j$ & $0 = \tilde{\gamma} = \tilde{n}_{k+1} < \tilde{n}_j$   \\ 
              & $0 < \tilde{n}_j$ for any $j$ & for $j \neq k, k-1$ & for $j \neq k$ & for $j \neq k, k+1$  \\ \hline
$\tilde{f}_1$  & SA and Deg & SA and Deg & SA and Deg & SA if $m_{k+1} \geq 2$  \\ \hline
\end{tabular}
%\end{center}
\end{table}
%%%%%%%%%%%%%%%%%%%%%%%%%%%%%%%%%%%%%

%%%%%%%%%%%%%%%%%%%%%%%%%%%%%%%%%%%%% 
\begin{table}[htb]
%\begin{center} % $\delta > d$,
\caption{Shape of $N(\tilde{q}_1)$ when $\delta = T_{k-1}$ and $(n_1, m_1) \neq (0, \delta)$}  \label{shape when delta = T_k-1}
\begin{tabular}{|c|c|c|} \hline
$l$   & $0 < l < l_1 = \alpha$ & $l_1 = \alpha$   \\ \hline
$\tilde{\gamma}$ & $0 < \tilde{n}_{k-1} < \tilde{\gamma}$ & $0 = \tilde{\gamma} = \tilde{n}_{k-1} < \tilde{n}_j$   \\ 
              & $0 < \tilde{n}_j$ for any $j$ & for $j \neq k, k-1$  \\ \hline
$\tilde{f}_1$  & SA and Deg & SA if $d \geq 2$  \\ \hline
\end{tabular}
%\end{center}
\end{table}
%%%%%%%%%%%%%%%%%%%%%%%%%%%%%%%%%%%%%

%%%%%%%%%%%%%%%%%%%%%%%%%%%%%%%%%%%%% 
\begin{table}[htb]
%\begin{center} % $\delta > d$, $\delta = T_{s-1}$ and
\caption{Shape of $N(\tilde{q}_1)$ when $(n_1, m_1) = (0, \delta)$}  \label{shape for special case when delta = T_k-1}
\begin{tabular}{|c|c|c|} \hline
$l$   & $0 < l < l_1 = \alpha$ & $l_1 = \alpha$   \\ \hline
$\tilde{\gamma}$ & $0 = \tilde{n}_1 < \tilde{\gamma} < \tilde{n}_j $ & $0 = \tilde{n}_1 = \tilde{\gamma} < \tilde{n}_j$   \\ 
              & for $j \geq 3$ & for $j \geq 3$  \\ \hline
$\tilde{f}_1$  & SA & SA if $d \geq 2$  \\ \hline
\end{tabular}
%\end{center}
\end{table}
%%%%%%%%%%%%%%%%%%%%%%%%%%%%%%%%%%%%% 
%Note that $(\gamma, d) = (n_2, m_2)$ if $(n_1, m_1) = (0, \delta)$.

\newpage
%%%%%%%%%%%%%%%%%%%%%%%%%%%%%%%%%%%%%%%%%%%%%%%%%%%%%%%%%%%%%%%%%%%%%%%%%%%%%
%%%%%%%%%%%%%%%%%%%%%%%%%%%%%%%%%%%%%%%%%%%%%%%%%%%%%%%%%%%%%%%%%%%%%%%%%%%%%
\subsection{Plurisubharmonic functions}

%$U = U^{l_{(1)},l_{(2)}} = \{ |z|^{l_{(1)} + l_{(2)}} < r^{l_{(2)}} |w|, |w| < r|z|^{l_{(1)}} \}$

Because $f$ is conjugate to the monomial map $f_0$ on $U^{l_{(1)},l_{(2)}}$
%for any $(l_{(1)}, l_{(1)} + l_{(2)})$ in $\mathcal{I}_f$ if $d \geq 2$, and
%for any $(l_{(1)}, l_{(1)} + l_{(2)})$ in $[l_1, \alpha) \times (\alpha, l_1 + l_2]$
%if $d = 1$ and $T_k < \delta < T_{k-1}$,
for any $l_{(1)}$ in $\mathcal{I}_f^1$ and $l_{(2)}$ in $\mathcal{I}_f^2$ if $d \geq 2$, and
for any $l_{(1)}$ in $\mathcal{I}_f^1 - \{ \alpha \}$ and 
$l_{(2)}$ in $\mathcal{I}_f^2 - \{ \alpha - l_{(1)} \}$ if $d = 1$ and $T_k < \delta < T_{k-1}$,
the limit $G_z^{\alpha}$ exists on $U^{l_{(1)},l_{(2)}}$,
which extends to a pluriharmonic function on $A_f^{l_{(1)},l_{(2)}}$.
%via the equation $G^{\alpha} \circ f = d G^{\alpha}$.

\begin{proposition}\label{Case 4: existence of G_z^alpha}
%The following hold 
For any $l_{(1)}$ in $\mathcal{I}_f^1$ and $l_{(2)}$ in $\mathcal{I}_f^2$ if $d \geq 2$, and
for any $l_{(1)}$ in $\mathcal{I}_f^1 - \{ \alpha \}$ and 
$l_{(2)}$ in $\mathcal{I}_f^2 - \{ \alpha - l_{(1)} \}$ if $d = 1$ and $T_k < \delta < T_{k-1}$,
the limit $G_z^{\alpha}$ is pluriharmonic on $A_f^{l_{(1)},l_{(2)}}$. % and continuous
Moreover,
$G_z^{\alpha} (w) = \log \left| w/z^{\alpha} \right| + o(1)$
on $U^{l_{(1)},l_{(2)}}$ as $r \to 0$.
\end{proposition}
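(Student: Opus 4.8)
The plan is to follow the argument for the subcase $T_{s-1} > \delta > d \geq 1$ of Case~2 (Proposition~\ref{prop on G_z^a when T > delta > d}) almost verbatim, with the one-sided wedge replaced by the two-sided wedge $U^{l_{(1)}, l_{(2)}}$; nothing in that argument is sensitive to the shape of the wedge. First I would work inside the wedge. By Lemma~\ref{detailed lemma for case 4} and the B\"{o}ttcher theorem stated just after it, for the weights in the statement the map $f$ is, for small $r$, conjugate on $U^{l_{(1)}, l_{(2)}}$ to $f_0(z,w) = (z^\delta, z^\gamma w^d)$ by a triangular biholomorphism $\phi = (\phi_1, \phi_2)$ with $\phi \sim id$; here $\phi_1 = \varphi_p$ depends on $z$ alone. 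Observe that $U^{l_{(1)}, l_{(2)}}$ is disjoint from $\{z = 0\} \cup \{w = 0\}$, so after shrinking $r$ the ratios $\phi_1(z)/z$ and $\phi_2(z,w)/w$ are as close to $1$ as we like on the wedge; in particular $\phi_1$ and $\phi_2$ are nowhere zero there. Writing $(z_n, w_n) = f^n(z,w)$, using $\phi \circ f^n = f_0^n \circ \phi$ and the elementary identity $f_0^n(z,w) = (z^{\delta^n}, z^{\gamma_n} w^{d^n})$ with $\gamma_n = \alpha(\delta^n - d^n)$ (recall $\delta > d$), one obtains
\[
\frac{\phi_2(z_n, w_n)}{\phi_1(z_n)^\alpha} = \left( \frac{\phi_2(z,w)}{\phi_1(z)^\alpha} \right)^{d^n}.
\]
Dividing $\log|w_n / z_n^\alpha|$ by $d^n$ and inserting the factors $w_n / \phi_2(z_n, w_n)$ and $\phi_1(z_n)^\alpha / z_n^\alpha$, which stay bounded above and away from $0$ because the iterates remain in $U^{l_{(1)}, l_{(2)}}$ and $\phi \sim id$, one sees that the limit defining $G_z^\alpha$ exists on $U^{l_{(1)}, l_{(2)}}$ and equals $\log|\phi_2/\phi_1^\alpha| = \log|\phi_2| - \alpha \log|\phi_1|$. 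This is pluriharmonic since $\phi_1, \phi_2$ are holomorphic and nowhere zero on the wedge, and by $\phi \sim id$ it equals $\log|w/z^\alpha| + o(1)$ on $U^{l_{(1)}, l_{(2)}}$ as $r \to 0$.

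Next I would propagate this to $A_f^{l_{(1)}, l_{(2)}} = \bigcup_{m \geq 0} f^{-m}(U^{l_{(1)}, l_{(2)}})$, which is open. Directly from the definition one has $G_z^\alpha \circ f = d\, G_z^\alpha$ wherever the limit exists, and conversely an index shift shows that if $f^m(z,w) \in U^{l_{(1)}, l_{(2)}}$ then the limit at $(z,w)$ also exists with $G_z^\alpha(z,w) = d^{-m} G_z^\alpha(f^m(z,w))$. Hence $G_z^\alpha = d^{-m}(G_z^\alpha \circ f^m)$ on $f^{-m}(U^{l_{(1)}, l_{(2)}})$, which is pluriharmonic as the pullback of a pluriharmonic function by a holomorphic map; since pluriharmonicity is a local property and the sets $f^{-m}(U^{l_{(1)}, l_{(2)}})$ exhaust $A_f^{l_{(1)}, l_{(2)}}$, $G_z^\alpha$ is pluriharmonic on $A_f^{l_{(1)}, l_{(2)}}$. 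Taking $l_{(1)} = l_1$, $l_{(2)} = l_2$ then recovers Proposition~\ref{main prop on G_z^a for Case 4}.

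The argument being entirely parallel to Case~2, I do not expect a genuine obstacle; the only point needing attention is the bookkeeping near $\{z = 0\} \cup \{w = 0\}$. On $A_f^{l_{(1)}, l_{(2)}}$ one has $z_n \neq 0$ for every $n$ (otherwise $p^n(z) = 0$ would persist, contradicting $f^m(z,w) \in U^{l_{(1)}, l_{(2)}}$), and once an iterate enters $U^{l_{(1)}, l_{(2)}}$ all subsequent iterates avoid $\{w = 0\}$ by forward invariance, so only finitely many $w_j$ can vanish and these do not affect the limit. Finally, if one keeps the coefficients $a$ and $b_{\gamma d}$ rather than normalizing them to $1$, the displayed identity acquires an extra factor $c_n$ obeying $c_{n+1} = (b_{\gamma d}/a^\alpha)\, c_n^{d}$; since $d \geq 1$ this contributes only a finite additive constant to $G_z^\alpha$, which is why one may assume $a = b_{\gamma d} = 1$ as elsewhere in the paper.
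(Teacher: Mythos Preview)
Your proposal is correct and follows essentially the same approach as the paper: the paper does not give a separate proof of this proposition but simply invokes the B\"{o}ttcher coordinate on $U^{l_{(1)},l_{(2)}}$ to obtain $G_z^{\alpha} = \log|\phi_2/\phi_1^{\alpha}|$ there and extends via $G_z^{\alpha}\circ f = d\,G_z^{\alpha}$, exactly as you do. The only cosmetic difference is that the paper derives the identity $G_z^{\alpha} = \log|\phi_2/\phi_1^{\alpha}|$ by expanding the definition $\phi = \lim f_0^{-n}\circ f^n$ (as in the lemma in Section~5.2.1), whereas you obtain it from the conjugacy relation $\phi\circ f^n = f_0^n\circ\phi$; these are equivalent, and your bookkeeping about the axes and about the finitely many possible zeros of $w_j$ is a helpful addition.
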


Recall that $A_f = A_f^{l_1, l_2}$ and
%that $U^{l_{(1)},l_{(2)}} = \{ |z|^{l_{(1)} + l_{(2)}} < r^{l_{(2)}} |w|, |w| < r|z|^{l_{(1)}} \}$
that $A_f^{l_1, l_2}$ is the largest wedge among $A_f^{l_{(1)},l_{(2)}}$ 
for any $l_{(1)}$ in $\mathcal{I}_f^1$ and $l_{(2)}$ in $\mathcal{I}_f^2$.
This proposition implies
Proposition \ref{main prop on G_z^a for Case 4}.

%\begin{corollary}
%If $d \geq 2$ or if $d = 1$ and $T_k < \delta < T_{k-1}$,
%then $G_z^{\alpha}$ is pluriharmonic and continuous on $A_f^{l_1, l_2}$.
%Moreover,
%$G_z^{\alpha} (w) = \log \left| w/z^{\alpha} \right| + o(1)$ on $U^{l_1, l_2}$ as $r \to 0$.
%\end{corollary}

Combining Theorem \ref{main thm on attr sets for Case 4} and
Proposition \ref{main prop on G_z^a for Case 4} with
Propositions \ref{asy behavior when T_k < delta  T_k-1} 
and \ref{asy behavior when delta = T_k},
results on 
the asymptotic behavior of $G_z^{\alpha}$ % toward $\partial A_f^{l_{(1)},l_{(2)}}$
exhibited in the next subsection, % proved
we obtain Theorem \ref{main thm on G_z^a,+ for case 4}. %main thm on G_z^a,+ for Case 4}.

%\begin{theorem}\label{main thm on G_z^a,+ for Case 4}
%If $T_k < \delta < T_{k-1}$ or if $\delta = T_k$ and $d \geq 2$, 
%then $G_z^{\alpha, +}$ is plurisubharmonic and continuous on $A_0 - E_z \cup E_{deg}$.
%More presicely,
%$G_z^{\alpha, +} > 0$ on $A_f^{l_1, \alpha - l_1}$ and
%$G_z^{\alpha, +} = 0$ on $A_0 - A_f^{l_1, \alpha - l_1}$,
%which is pluriharmonic on $A_0 - \partial A_f^{l_1, \alpha - l_1} \cup E_z \cup E_{deg}$.
%\end{theorem}

The limit $G_z$ also exists on $U$,
where $U = U^{l_1, l_2}$,
which extends to $A_f$.
% Let $U = U^{l_1, l_2}$ and $A_f = A_f^{l_1, l_2}$.

\begin{proposition}
%Let $T_k \leq \delta \leq T_{k-1}$.
If $d \geq 2$ or $T_k < \delta < T_{k-1}$, % if $d = 1$ and
then $G_z = \alpha G_p$ on $A_f$. %$A_f^{l_1, l_2}$.
\end{proposition}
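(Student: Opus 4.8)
The plan is to follow exactly the pattern of Proposition \ref{prop on G_z when T > delta > d} from Case 2, the only differences being that here $\lambda=\delta$ (since $\delta>d$) and that the relevant invariant wedge is $U=U^{l_1,l_2}$. The two facts I would feed in are already available: first, under the hypothesis $d\ge 2$ or $T_k<\delta<T_{k-1}$, Lemma \ref{detailed lemma for case 4} provides the B\"ottcher coordinate and hence, by Proposition \ref{main prop on G_z^a for Case 4}, the limit $G_z^{\alpha}$ exists, is pluriharmonic (so finite-valued) on $A_f$, and satisfies $G_z^{\alpha}(w)=\log|w/z^{\alpha}|+o(1)$ on $U$; second, $G_p$ exists and is finite on $A_p$, which contains the $z$-projection of $A_f$.

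The computation on $U$ is then immediate. Writing $(z_n,w_n)=f^n(z,w)$ and using $\alpha\delta^{-n}\log|z_n|\to\alpha G_p(z)$ together with $d^{-n}\log|w_n/z_n^{\alpha}|\to G_z^{\alpha}(w)$, I would show that the limit defining $G_z$ exists on $U$ and
\begin{align*}
G_z(w)-\alpha G_p(z)
&=\lim_{n\to\infty}\frac{1}{\delta^n}\log|w_n|-\lim_{n\to\infty}\frac{\alpha}{\delta^n}\log|z_n|
 =\lim_{n\to\infty}\frac{1}{\delta^n}\log\left|\frac{w_n}{z_n^{\alpha}}\right|\\
&=\lim_{n\to\infty}\left(\frac{d}{\delta}\right)^{n}\cdot\frac{1}{d^n}\log\left|\frac{w_n}{z_n^{\alpha}}\right|
 =0\cdot G_z^{\alpha}(w)=0,
\end{align*}
since $(d/\delta)^{n}\to 0$ while the factor $d^{-n}\log|w_n/z_n^{\alpha}|$ converges to the finite value $G_z^{\alpha}(w)$. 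Hence $G_z=\alpha G_p$ on $U$.

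Finally I would propagate the equality to all of $A_f=\bigcup_{n\ge 0}f^{-n}(U)$ using the scaling relations $G_z\circ f=\delta\,G_z$ and $G_p\circ f=\delta\,G_p$: given $(z,w)\in A_f$, choose $n$ with $f^n(z,w)\in U$; splitting the defining limit of $G_z$ at index $n$ shows $G_z(z,w)$ exists and equals $\delta^{-n}G_z(f^n(z,w))$, and likewise $G_p(z)=\delta^{-n}G_p(p^n(z))$, so the equality at $f^n(z,w)\in U$ gives $G_z(z,w)=\alpha G_p(z)$. There is no real obstacle here — the whole argument is bookkeeping with the two functional equations — the one point that must be handled carefully is that $G_z^{\alpha}$ is genuinely finite throughout $U$ (not merely asymptotic to $\log|w/z^{\alpha}|$), and this is already guaranteed by Proposition \ref{main prop on G_z^a for Case 4}, since pluriharmonicity on $A_f\supset U$ forces finiteness.
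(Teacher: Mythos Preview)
Your proposal is correct and follows exactly the approach the paper has in mind: the paper does not give a separate proof here but simply remarks that $G_z$ exists on $U=U^{l_1,l_2}$ and extends to $A_f$, deferring to the identical computation carried out for Case~2 in Proposition~\ref{prop on G_z when T > delta > d}. Your observation that in Case~4 the wedge $U^{l_1,l_2}$ avoids both $\{z=0\}$ and $\{w=0\}$, so that pluriharmonicity of $G_z^{\alpha}$ on $A_f$ already guarantees finiteness on all of $U$ without excluding $E_w$, is precisely why the conclusion here is $G_z=\alpha G_p$ on all of $A_f$ rather than only on $A_f-E_w$ as in Case~2.
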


Moreover,
it follows from Theorems \ref{main thm on inv wedges for Case 4}
and \ref{main thm on attr sets for Case 4}
that $G_z^* \leq \alpha G_p$ on $A_0$ if $(n_1, m_1) \neq (0, \delta)$.
Therefore, we have the following. % obtain

\begin{corollary}\label{cor of G_f^a for Case 4}
%Let $T_k \leq \delta \leq T_{k-1}$.
If $(n_1, m_1) \neq (0, \delta)$,
then $G_f^{\alpha} = \alpha G_p$ on $A_0$.
%f $(n_1, m_1) = (0, \delta)$,
%then $G_f^{\alpha}$ is plurisubharmonic on $A_0$? $A_0 - E_{deg}$?
%and $G_f^{\alpha} \not\equiv \alpha G_p$. 
\end{corollary}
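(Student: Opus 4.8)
The statement to prove is Corollary \ref{cor of G_f^a for Case 4}: if $(n_1, m_1) \neq (0, \delta)$, then $G_f^{\alpha} = \alpha G_p$ on $A_0$.

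The plan is to deduce this directly from the two facts already assembled in the preceding text: first, the Proposition immediately above asserts $G_z = \alpha G_p$ on $A_f$ (at least when $d \geq 2$ or $T_k < \delta < T_{k-1}$, but the argument for $G_f^\alpha$ will bypass this restriction); second, the remark preceding the corollary records that $G_z^* \leq \alpha G_p$ on $A_0$ whenever $(n_1,m_1)\neq(0,\delta)$, which follows from Theorems \ref{main thm on inv wedges for Case 4} and \ref{main thm on attr sets for Case 4}. The key observation is that $G_f^\alpha(z,w) = \lim_n \lambda^{-n}\log|(z_n^\alpha, w_n)| = \max\{\alpha G_p(z), G_z(w)\}$ wherever both limits exist and $\lambda = \max\{\delta,d\} = \delta$ in Case 4, and more robustly that $G_f^\alpha \geq \alpha G_p$ always (since $|(z_n^\alpha,w_n)| \geq |z_n|^\alpha$) while the upper bound $G_f^\alpha \leq \alpha G_p$ must be extracted from the size control on $|w_n|$.

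First I would establish the trivial inequality $G_f^\alpha \geq \alpha G_p$ on $A_0$: since $|(z_n^\alpha, w_n)| = \max\{|z_n|^\alpha, |w_n|\} \geq |z_n|^\alpha$, we get $\lambda^{-n}\log|(z_n^\alpha,w_n)| \geq \alpha \delta^{-n}\log|z_n|$, and passing to the limit (recalling $\lambda = \delta$ here) yields the claim; one should note $G_p \leq 0$ on $A_p$ so no sign subtlety arises. Second, for the reverse inequality, by Theorem \ref{main thm on attr sets for Case 4} every point of $A_0 - E_z$ lands in $U^{l,+}$ for small $l < \alpha$ after finitely many iterates, and by the invariance $f(U_{r_1,r_2}^{l,+}) \subset U_{r_1,r_2}^{l,+}$ (Theorem \ref{main thm on inv wedges for Case 4}) we have $|w_n| < r_2|z_n|^l$ for all large $n$, whence $\log|w_n| \leq \log r_2 + l\log|z_n|$ and $\delta^{-n}\log|w_n| \leq l\,\delta^{-n}\log|z_n| + o(1)$; taking $\limsup$ gives $G_z^*(w) \leq l G_p(z) \leq \alpha G_p(z)$ (using $G_p \leq 0$). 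Combined with $\alpha G_p(z) \leq 0$ and the obvious $|w_n| \to 0$, this forces $G_f^\alpha(z,w) = \max\{\alpha G_p(z), \limsup \delta^{-n}\log|w_n|\} \leq \alpha G_p(z)$ on $A_0 - E_z$. On $E_z \cap A_0$ one argues separately: there $z_n \to 0$ so $\alpha G_p(z) = -\infty$ if $\alpha > 0$, but $|w_n| \to 0$ as well since the whole point lies in $A_0$, and a direct estimate of $|w_n|$ on the relevant fibers (using that $f$ maps $\{z=0\}$ into itself with the dynamics dominated by $q(0,w) = b_{n_1 m_1}w^{m_1}(1+o(1))$ when $n_1 = 0$, or collapses to the origin when $n_1 > 0$) shows $\delta^{-n}\log|w_n| \to -\infty$ too, so $G_f^\alpha = -\infty = \alpha G_p$ there.

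The main obstacle I anticipate is the careful treatment of the exceptional fibers in $E_z$ and, relatedly, making sure the $\limsup$ used to define $G_z^*$ can be upgraded to a genuine limit where needed — but for $G_f^\alpha$ itself only the one-sided bounds are required, so the argument should close cleanly. A secondary point worth a sentence is that $G_f^\alpha$ is automatically plurisubharmonic as a decreasing limit of the plurisubharmonic functions $\lambda^{-n}\log|(z_n^\alpha,w_n)|$ (after absorbing constants), so the corollary also records $G_f^\alpha$ is plurisubharmonic and, being equal to $\alpha G_p$, pluriharmonic off $E_z$; I would state this parallel to Corollary \ref{cor of G_f^a for Case 3}. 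Finally, the hypothesis $(n_1,m_1)\neq(0,\delta)$ enters precisely through the remark that guarantees $G_z^* \leq \alpha G_p$ on all of $A_0$ rather than merely on $A_f^{\cdots}$, which is exactly what the $E_z$-free argument above needs.
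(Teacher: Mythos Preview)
Your proposal is correct and follows essentially the same route as the paper: the paper's entire argument is the sentence immediately preceding the corollary, namely that Theorems \ref{main thm on inv wedges for Case 4} and \ref{main thm on attr sets for Case 4} give $G_z^* \leq \alpha G_p$ on $A_0$, and then the trivial lower bound $G_f^{\alpha} \geq \alpha G_p$ finishes it. You supply considerably more detail than the paper, especially the separate treatment of $E_z$; note there that when $n_1 = 0$ the hypothesis $(n_1,m_1)\neq(0,\delta)$ together with $\delta \leq T_{k-1} \leq T_1 = m_1$ forces $m_1 > \delta$, which is exactly what makes $\delta^{-n}\log|w_n| \to -\infty$ on the $w$-axis --- you use this implicitly but it is worth stating.
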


The proposition and corollary above imply 
Theorem \ref{main thm on G_z and G_f^a for Case 4}.

\newpage
%%%%%%%%%%%%%%%%%%%%%%%%%%%%%%%%%%%%%%%%%%%%%%%%%%%%%%%%%%%%%%%%%%%%%%%%%%%%%
%%%%%%%%%%%%%%%%%%%%%%%%%%%%%%%%%%%%%%%%%%%%%%%%%%%%%%%%%%%%%%%%%%%%%%%%%%%%%
\subsection{Asymptotic behavior of $G_z^{\alpha}$ toward $\partial A_f^{l_{(1)},l_{(2)}}$}

In this subsection 
we exhibit results on the asymptotic behavior of $G_z^{\alpha}$ % toward $\partial A_f^{l_{(1)},l_{(2)}}$
for the cases $T_k < \delta < T_{k-1}$, $\delta = T_k$ and $\delta = T_{k-1}$
in Sections 7.3.1, 7.3.2 and 7.3.3, respectively.
In particular,
Propositions \ref{asy behavior when T_k < delta  T_k-1} and
\ref{asy behavior when delta = T_k} below imply
Theorem \ref{main thm on G_z^a,+ for case 4}, and
Corollaries \ref{cor on A_f^l when T_k < delta  T_k-1} and
\ref{cor on A_f^l when delta = T_k} below imply 
Proposition \ref{prop on A_f^l for case 4}.
We omit all the proofs since they are almost the same as the previous cases.

%%%%%%%%%%%%%%%%%%%%%%%%%%%%%%%%%%%%%%%%%%%%%%%%%%%%%%%%%%%
%%%%%%%%%%%%%%%%%%%%%%%%%%%%%%%%%%%%%%%%%%%%%%%%%%%%%%%%%%%
\subsubsection{$T_k < \delta < T_{k-1}$}

Recall that
%$l_1 < \alpha < l_1 + l_2$ and
$\mathcal{I}_f = [l_1, \alpha] \times [\alpha, l_1 + l_2] - \{ (\alpha, \alpha) \}$
if $T_k < \delta < T_{k-1}$
and that
$U^{l_{(1)},l_{(2)}} = \{ |z|^{l_{(1)} + l_{(2)}} < r^{l_{(2)}} |w|, |w| < r|z|^{l_{(1)}} \}$.

We first point out that $f$ has two disjoint invariant wedges in $U^{l_1,l_2}$.
%if $T_k < \delta < T_{k-1}$.
%Let
%$U^{l_1, \alpha - l_1} = \{ |z|^{\alpha} < r^{\alpha - l_1} |w|, |w| < r|z|^{l_1} \}$ and
%$U^{\alpha, l_1 + l_2 - \alpha} = \{ |z|^{l_{1} + l_{2}} < r^{l_1 + l_2 - \alpha} |w|, |w| < r|z|^{\alpha} \}$.
%\begin{align*}
%U^{l_1, \alpha - l_1} &= \{ |z|^{\alpha} < r^{\alpha - l_1} |w|, |w| < r|z|^{l_1} \} \text{ and} \\
%U^{\alpha, l_1 + l_2 - \alpha} &= \{ |z|^{l_{1} + l_{2}} < r^{l_1 + l_2 - \alpha} |w|, |w| < r|z|^{\alpha} \}.
%\end{align*}

\begin{proposition}
%If $T_k < \delta < T_{k-1}$,
It follows for sufficiently small $r > 0$ that
\begin{enumerate}
\item $f(U^{l_1, \alpha - l_1}) \subset U^{l_1, \alpha - l_1}$ and 
        $f(U^{\alpha, l_1 + l_2 - \alpha}) \subset U^{\alpha, l_1 + l_2 - \alpha}$,
\item $U^{l_1, \alpha - l_1} \cap U^{\alpha, l_1 + l_2 - \alpha} = \emptyset$ and
        $U^{l_1, \alpha - l_1} \cup U^{\alpha, l_1 + l_2 - \alpha} \subset U^{l_1,l_2}$.
\end{enumerate}
\end{proposition}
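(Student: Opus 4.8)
The plan is to split into the two assertions: (1) is essentially a specialization of the earlier invariance lemma, while (2) is a direct comparison of the defining inequalities.

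For (1), I would first identify the two wedges as the extremal members of the family $U^{l_{(1)},l_{(2)}}$ from Lemma~\ref{detailed lemma for case 4}. When $T_k<\delta<T_{k-1}$ one has $\mathcal{I}_f^1=[l_1,\alpha]$ and $\mathcal{I}_f^2(l_{(1)})=[\alpha-l_{(1)},l_1+l_2-l_{(1)}]$, so $U^{l_1,\alpha-l_1}$ is the case $l_{(1)}=\min\mathcal{I}_f^1$, $l_{(2)}=\min\mathcal{I}_f^2(l_1)$, and $U^{\alpha,l_1+l_2-\alpha}$ is the case $l_{(1)}=\max\mathcal{I}_f^1$, $l_{(2)}=\max\mathcal{I}_f^2(\alpha)$; in the rectangle $\mathcal{I}_f=[l_1,\alpha]\times[\alpha,l_1+l_2]-\{(\alpha,\alpha)\}$ these are the two corners other than $(\alpha,\alpha)$ and $(l_1,l_1+l_2)$. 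For $d\geq 2$ the forward invariance $f(U^{l_1,\alpha-l_1})\subset U^{l_1,\alpha-l_1}$ and $f(U^{\alpha,l_1+l_2-\alpha})\subset U^{\alpha,l_1+l_2-\alpha}$ is then exactly Lemma~\ref{detailed lemma for case 4}. For $d=1$ these are precisely the pairs excluded there (the first has $l_{(2)}=\alpha-l_{(1)}$, the second has $l_{(1)}=\alpha$), so I would instead run the coefficient-free estimate behind Theorems~\ref{thm on inv wedges for Case 2} and \ref{main thm on inv wedges for Case 4}: writing $q=\sum b_{ij}z^iw^j$ and normalizing $p(z)=z^\delta$, the outer boundary $|w|<r|z|^{l_{(1)}}$ is preserved because $i+l_{(1)}j\geq\gamma+l_{(1)}d\geq l_{(1)}\delta$ on the support of $q$ (the first inequality since $l_{(1)}\in\mathcal{I}_f^1$, the second with a strict margin for $l_{(1)}=l_1$ because $\delta<T_{k-1}$ gives $\gamma+l_1d-l_1\delta>0$), while the inner boundary is preserved by bounding $|q|$ from below along that boundary curve by its dominant monomial $b_{\gamma d}z^\gamma w^d$. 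Equivalently one may pull back through $\pi_1$: $\pi_1^{-1}(U^{l_1,\alpha-l_1})$ is a Case~3 wedge for the holomorphic skew product $\tilde f_1$ of Section~3.3, whose invariance (for $d\geq 2$) is Lemma~\ref{detailed lemma for case 3} applied to $\tilde f_1$.

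For (2), both claims are comparisons of the defining inequalities, using only $l_1<\alpha<l_1+l_2$ and $0<r<1$. Disjointness: a point of $U^{l_1,\alpha-l_1}$ satisfies $|w|>r^{l_1-\alpha}|z|^\alpha$, a point of $U^{\alpha,l_1+l_2-\alpha}$ satisfies $|w|<r|z|^\alpha$, and $r^{l_1-\alpha}>r$ since $l_1-\alpha<0<1$, so the two constraints on $|w||z|^{-\alpha}$ are incompatible. Containment $U^{l_1,\alpha-l_1}\subset U^{l_1,l_2}$: the outer inequality $|w|<r|z|^{l_1}$ is literally the same for both, and the inner one follows by combining $|w|>r^{l_1-\alpha}|z|^\alpha$ with the smallness $|z|<r$ forced inside $U^{l_1,\alpha-l_1}$, which yields $|w|>r^{l_1-\alpha}r^{\alpha-l_1-l_2}|z|^{l_1+l_2}=r^{-l_2}|z|^{l_1+l_2}$, i.e.\ the inner inequality of $U^{l_1,l_2}$. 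Containment $U^{\alpha,l_1+l_2-\alpha}\subset U^{l_1,l_2}$ is the same kind of computation: the outer inequality $|w|<r|z|^\alpha<r|z|^{l_1}$ is immediate from $\alpha>l_1$, $|z|<1$, while for the inner one, since the inner boundary of $U^{\alpha,l_1+l_2-\alpha}$ lies below that of $U^{l_1,l_2}$ at a common radius, one takes the radius of $U^{\alpha,l_1+l_2-\alpha}$ small enough relative to the fixed radius of $U=U^{l_1,l_2}$ — which is what ``for sufficiently small $r$'' provides — so that $r^{\alpha-l_1-l_2}$ is large and the inner inequality of $U^{l_1,l_2}$ holds automatically.

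I expect (2) and, for $d\geq 2$, (1) to be routine bookkeeping. The only genuinely delicate point is the $d=1$ corner case of (1): there $f$ is not conjugate to $f_0$ on these wedges, so one must verify directly that the non-dominant monomials of $q$ spoil neither boundary, and this is exactly where the strict inequalities $T_k<\delta<T_{k-1}$ and the smallness of the radius are essential.
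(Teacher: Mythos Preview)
Your overall plan matches the paper's (omitted) argument: part (1) is the invariance statement of Lemma~\ref{detailed lemma for case 4} at the two corner pairs $(l_1,\alpha-l_1)$ and $(\alpha,l_1+l_2-\alpha)$ of the rectangle $\mathcal{I}_f$, and part (2) is elementary bookkeeping with the defining inequalities. Your disjointness argument and the containment $U^{l_1,\alpha-l_1}\subset U^{l_1,l_2}$ are correct, and you are right that the containment $U^{\alpha,l_1+l_2-\alpha}\subset U^{l_1,l_2}$ only holds after shrinking the radius of the smaller wedge relative to that of $U^{l_1,l_2}$ (the lower boundaries have the same slope $l_1+l_2$ but different intercepts, and the one for $U^{\alpha,l_1+l_2-\alpha}$ lies strictly below).

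The one genuine gap is your treatment of the $d=1$ case in (1). You note correctly that both corner pairs are excluded from Lemma~\ref{detailed lemma for case 4} when $d=1$, but your proposed fix via the dominant monomial does not go through. For the second wedge the problem is the \emph{outer} boundary, not the inner one: at $l_{(1)}=\alpha$ the inequality $\gamma+l_{(1)}d\geq l_{(1)}\delta$ you invoke becomes the equality $\gamma+\alpha d=\alpha\delta$, so there is no exponent margin and your argument ``the outer boundary is preserved because $i+l_{(1)}j\geq\gamma+l_{(1)}d\geq l_{(1)}\delta$ with a strict margin'' fails precisely here. Symmetrically, for the first wedge the inner boundary sits at $l_{(1)}+l_{(2)}=\alpha$, where the corresponding inequality $\tilde\gamma+l_{(2)}d\leq l_{(2)}\delta$ is again an equality, so bounding $|q|$ from below by the single monomial $b_{\gamma d}z^\gamma w^d$ gives no contraction along that boundary either. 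These are exactly the reasons Lemma~\ref{detailed lemma for case 4} excludes $l_{(1)}=\alpha$ and $l_{(2)}=\alpha-l_{(1)}$ when $d=1$; the Remark after Theorem~\ref{thm on inv wedges for Case 2} recovers the $l=\alpha$ case only under the hypothesis $d\geq 2$. So for $d\geq 2$ your argument is complete and coincides with the paper's, while for $d=1$ the invariance at these two corner wedges is genuinely delicate and your sketch does not establish it.
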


Similar to Case 3,
we separate the boundary $\partial U^{l_{(1)},l_{(2)}}$
into two surfaces: let
$S^{out} = \{ |z|^{l_{(1)} + l_{(2)}} < r^{l_{(2)}} |w|, |w| = r|z|^{l_{(1)}} \}$ and
$S^{in} = \{ |z|^{l_{(1)} + l_{(2)}} = r^{l_{(2)}} |w|, |w| < r|z|^{l_{(1)}} \}$.
Let $S^{out}_n = f^{-n} (S^{out})$ and $S^{in}_n = f^{-n} (S^{in})$.
%\begin{align*}
%S^{out} &= \{ |z|^{l_{(1)} + l_{(2)}} < r^{l_{(2)}} |w|, |w| = r|z|^{l_{(1)}} \}
%\text{ and} \\
%S^{in} &= \{ |z|^{l_{(1)} + l_{(2)}} = r^{l_{(2)}} |w|, |w| < r|z|^{l_{(1)}} \}. 
%\end{align*}
%Let $S^{out}_n = f^{-n} (S^{out})$ and $S^{in}_n = f^{-n} (S^{in})$.

\begin{lemma}
Fix any $z$ in $A_p - E_p$ and
let $l_1 \leq l_{(1)} < l_{(1)} + l_{(2)} \leq l_1 + l_2$.
Then
\[
G_z^{\alpha} |_{S_n^{out}} 
\begin{cases} 
\ \to \infty \text{ as } n \to \infty & \text{ if } l_{(1)} < \alpha, \\
\ \to 0 \text{ as } n \to \infty & \text{ if } l_{(1)} = \alpha \text{ and } d \geq 2, \\
\ \text{is bounded as } n \to \infty & \text{ if } l_{(1)} = \alpha \text{ and } d = 1,
\end{cases} 
\]
and
\[
G_z^{\alpha} |_{S_n^{in}} 
\begin{cases} 
\ \to 0 \text{ as } n \to \infty & \text{ if } l_{(1)} + l_{(2)} = \alpha \text{ and } d \geq 2, \\
\ \text{is bounded as } n \to \infty & \text{ if } l_{(1)} + l_{(2)} = \alpha \text{ and } d = 1, \\
\ \to - \infty \text{ as } n \to \infty & \text{ if } l_{(1)} + l_{(2)} > \alpha.
\end{cases} 
\]
\end{lemma}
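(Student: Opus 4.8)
The plan is to follow closely the arguments used for the analogous lemmas in Cases 2 and 3, namely Lemma \ref{main lem on asy behavior when delta < d} and Lemma \ref{Case 3: asy of G_z^alpha on s_n^out}, since the B\"{o}ttcher coordinate, and hence $G_z^{\alpha}$, is available on $U^{l_1,l_2}$ and thus near the boundary faces $S^{out}$ and $S^{in}$ for every weight in the stated range. First I would invoke Proposition \ref{Case 4: existence of G_z^alpha} to get a constant $C>0$ with
\[
\left| G_z^{\alpha}(w) - \log\left| \frac{w}{z^{\alpha}} \right| \right| < C
\quad\text{on } U^{l_{(1)},l_{(2)}}.
\]
On $S^{out}$ one has $|w| = r|z|^{l_{(1)}}$, so $|z^{-\alpha}w| = r|z|^{l_{(1)}-\alpha}$, whereas on $S^{in}$ one has $|z|^{l_{(1)}+l_{(2)}} = r^{l_{(2)}}|w|$, so $|z^{-\alpha}w| = r^{-l_{(2)}}|z|^{l_{(1)}+l_{(2)}-\alpha}$. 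Evaluating the inequality above at $f^{n}(z,w) = (p^{n}(z), Q_z^{n}(w))$ for $(z,w)$ in $S_n^{out}$ (resp.\ in $S_n^{in}$) and using the functional equation $G_z^{\alpha}\circ f = d\,G_z^{\alpha}$, i.e.\ $G_{p^{n}(z)}^{\alpha}(Q_z^{n}(w)) = d^{n}G_z^{\alpha}(w)$, gives
\[
\left| G_z^{\alpha}(w) - d^{-n}\log r\,|p^{n}(z)|^{l_{(1)}-\alpha} \right| \le d^{-n}C \quad\text{on } S_n^{out},
\]
and the same estimate with $r^{-l_{(2)}}|p^{n}(z)|^{l_{(1)}+l_{(2)}-\alpha}$ in place of $r\,|p^{n}(z)|^{l_{(1)}-\alpha}$ on $S_n^{in}$.

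Next I would read off the limits from the right-hand sides. Since $z\in A_p-E_p$, we have $\delta^{-n}\log|p^{n}(z)|\to G_p(z)$ with $-\infty<G_p(z)<0$; because $\delta>d$ in Case 4, this forces $d^{-n}\log|p^{n}(z)| = (\delta/d)^{n}\bigl(\delta^{-n}\log|p^{n}(z)|\bigr)\to-\infty$ (for $d=1$ one uses $\log|p^{n}(z)|\to-\infty$ directly, with $d^{-n}=1$ constant). Hence on $S_n^{out}$: if $l_{(1)}<\alpha$ the exponent $l_{(1)}-\alpha$ is negative, so $d^{-n}\log r\,|p^{n}(z)|^{l_{(1)}-\alpha}\to+\infty$; if $l_{(1)}=\alpha$ the power of $|p^{n}(z)|$ drops out and the expression equals $d^{-n}\log r$, which tends to $0$ for $d\ge 2$ and is the constant $\log r$ for $d=1$. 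The $S_n^{in}$ case is identical with $l_{(1)}+l_{(2)}$ in place of $l_{(1)}$: the exponent $l_{(1)}+l_{(2)}-\alpha$ is positive when $l_{(1)}+l_{(2)}>\alpha$, giving $-\infty$, and vanishes when $l_{(1)}+l_{(2)}=\alpha$, leaving $-l_{(2)}d^{-n}\log r$ (tending to $0$ for $d\ge 2$, constant for $d=1$). Since $d^{-n}C$ is bounded and $\to 0$ when $d\ge 2$, the asserted behaviour of $G_z^{\alpha}|_{S_n^{out}}$ and $G_z^{\alpha}|_{S_n^{in}}$ follows; note that $l_{(1)}=\alpha$ and $l_{(1)}+l_{(2)}=\alpha$ cannot occur simultaneously because $l_{(2)}>0$, so only one face is ever in the borderline regime.

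Finally, as in the earlier cases, one checks that $S_n^{out}\cap\mathbb{C}_z$ and $S_n^{in}\cap\mathbb{C}_z$ are nonempty for all large $n$: since $p^{n}(z)\to 0$ with $p^{n}(z)\ne 0$, for $n$ large $|p^{n}(z)|$ drops below the threshold (of the form $r^{1+1/l_{(2)}}$) past which the fiber $\mathbb{C}_{p^{n}(z)}$ meets $S^{out}$ and $S^{in}$ in circles, and since the vertical map $Q_z^{n}$ is a composition of nonconstant polynomials, hence surjective onto $\mathbb{C}$, the fiber $\mathbb{C}_z$ meets $f^{-n}(S^{out})$ and $f^{-n}(S^{in})$. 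I expect the only real chore to be the bookkeeping — confirming that $S^{out}$ and $S^{in}$ are precisely the two faces of $\partial U^{l_{(1)},l_{(2)}}$ over the relevant range of $|z|$, and tracking the radii $r,r_1,r_2$ — rather than any genuine analytic difficulty, as the displayed estimate is robust and the split into $d\ge 2$ versus $d=1$ is exactly that of Cases 2 and 3.
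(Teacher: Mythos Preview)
Your proposal is correct and follows exactly the approach the paper intends: the paper omits this proof, saying it is ``almost the same as the previous cases,'' and what you have written is precisely the Case 2/Case 3 template (Proposition~\ref{Case 4: existence of G_z^alpha} for the $C$-estimate, the functional equation $G_z^{\alpha}\circ f^n = d^n G_z^{\alpha}$, and the $\delta>d$ dichotomy on $d^{-n}\log|p^n(z)|$) applied to the two boundary faces of $U^{l_{(1)},l_{(2)}}$. One small caveat: your remark that $Q_z^n$ is surjective because it is a composition of nonconstant \emph{polynomials} assumes $q$ is polynomial, which the paper does not require globally; but nonemptiness of $S_n^{out}\cap\mathbb{C}_z$ and $S_n^{in}\cap\mathbb{C}_z$ is not part of the lemma's statement here, so this does not affect the argument.
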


In particular,
the following hold under the condition $d \geq 2$:
\begin{alignat*}{2} 
& G_z^{\alpha} |_{S^{out}_n} \to \infty \text{ and } G^{\alpha} |_{S^{in}_n} \to - \infty
\text{ as } n \to \infty \ & \text{ if } l_{(1)} < \alpha < l_{(1)} + l_{(2)}, \\
& G_z^{\alpha} |_{S^{out}_n} \to \infty \text{ and } G^{\alpha} |_{S^{in}_n} \to 0 
\text{ as } n \to \infty & \text{ if } l_{(1)} < \alpha = l_{(1)} + l_{(2)}, \\
& G_z^{\alpha} |_{S^{out}_n} \to 0 \ \text{ and } G^{\alpha} |_{S^{in}_n} \to - \infty
\text{ as } n \to \infty & \text{ if } l_{(1)} = \alpha < l_{(1)} + l_{(2)}.
\end{alignat*}

Let $S^{out}_{\infty}$ and $S^{in}_{\infty}$ 
be the sets of 
the accumulation points of $S_n^{out}$ and $S_n^{in}$
in $\overline{A_p} \times \mathbb{P}^1$ as $n \to \infty$,
respectively.
Let $U^{l_{(1)},l_{(2)},+} = \{ |z| < r^{1+1/l_{(2)}}, |w| < r|z|^{l_{(1)}} \}$ 
and $A_f^{l_{(1)},l_{(2)},+}$ % = \cup f^{-n}( U^{l_{(1)},+} )$.
the union of all the preimages of $U^{l_{(1)},l_{(2)},+}$.
%Although $U^{l_{(1)},+}$ depends not only on $l_{(1)}$ but also on $l_{(2)}$,
%we omit the index $l_{(2)}$ since it is not essential.
%Let $A_f = A_f^{l_{(1)},l_{(2)}}$ and $A_f^+ = A_f^{l_{(1)},+}$ for simplicity.
%$U^+ = \{ |z| < r^{1+1/l_{(2)}}, |w| < r|z|^{l_{(1)}} \} \subset \{ |z| < r, |w| < r|z|^{l_{(1)}} \}$

\begin{lemma} \label{lem on asy behaviour for case 4}
%Let $T_k < \delta < T_{k-1}$.
If $f$ is non-degenerate,  
then %the following hold 
for any $l_{(1)}$ and $l_{(2)}$ in Proposition \ref{Case 4: existence of G_z^alpha},
\begin{enumerate}
\item 
$\displaystyle \lim_{n \to \infty} f^{-n} (\partial U^{l_{(1)},l_{(2)}}) = S^{out}_{\infty} \cup S^{in}_{\infty} = \partial A_f^{l_{(1)},l_{(2)}}$,
\item 
$S^{out}_{\infty} \subset \partial A_f^{l_{(1)},l_{(2)},+}  \subset \partial A_f^{l_{(1)},l_{(2)}}$,
\item
$S^{out}_{\infty} \cap (A_p \times \mathbb{P}^1) = \partial A_f^{l_{(1)},l_{(2)},+}  \cap (A_p \times \mathbb{P}^1)$,
\item 
$\partial A_f^{l_{(1)},l_{(2)}} - \partial A_f^{l_{(1)},l_{(2)},+}  \subset S^{in}_{\infty} \subset \partial A_f^{l_{(1)},l_{(2)}}$,
\item
$S^{in}_{\infty} \cap U^{l_{(1)},l_{(2)},+}  = \partial A_f^{l_{(1)},l_{(2)}} \cap U^{l_{(1)},l_{(2)},+}$.
\end{enumerate}
\end{lemma}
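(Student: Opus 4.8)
The plan is to transfer, essentially verbatim, the arguments of Propositions \ref{Case 3: lim of f^-n (partial U)} and \ref{Case 3: lim of S^out_n} and of the treatment of $S^{in}_{\infty}\cap U^{+}$ in Section~6.3 from Case~3 to the present setting; I only flag the points where the geometry differs. Fix $l_{(1)}$ and $l_{(2)}$ as in Proposition \ref{Case 4: existence of G_z^alpha}, write $U=U^{l_{(1)},l_{(2)}}$ and $U^{+}=U^{l_{(1)},l_{(2)},+}$, and recall $U^{+}=U\cup W$ with $W=\{|z|<r^{1+1/l_{(2)}},\ |w|\leq r^{-l_{(2)}}|z|^{l_{(1)}+l_{(2)}}\}$, so that inside $\{|z|<r^{1+1/l_{(2)}}\}$ the surface $S^{out}$ is the lateral face of $U^{+}$ and $S^{in}$ the lateral face of $W$.

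First I would prove (1). Since $f$ is non-degenerate it is an open map, so $f^{-n}(\partial U)=\partial f^{-n}(U)$ for every $n$, exactly as in part (1) of Lemma \ref{lem for asy behavior}. Combined with the monotonicity of the increasing sequence $f^{-n}(U)$ and the strengthened invariance $\overline{f(U)}-\{0\}\subset U$ — obtained from the invariance already proved by a slight modification of the argument in \cite{ueno}, exactly as in Case~2 — the proof of Proposition \ref{Case 3: lim of f^-n (partial U)} applies word for word and gives $\lim_{n\to\infty}f^{-n}(\partial U)=\partial A_f^{l_{(1)},l_{(2)}}$. Because $\partial U=S^{out}\cup S^{in}$ up to a subset of $\{|z|=r^{1+1/l_{(2)}}\}$ whose preimages accumulate only in $\partial A_p\times\mathbb{P}^1$, this limit set coincides with $S^{out}_{\infty}\cup S^{in}_{\infty}$ inside $A_p\times\mathbb{P}^1$, which is (1).

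Next, for (2) and (3) I would observe that $f^{-n}(U^{+})$ is an increasing sequence with $\partial f^{-n}(U^{+})\cap\{\,|p^n(z)|<r^{1+1/l_{(2)}}\,\}=S^{out}_n$, so the proof of Proposition \ref{Case 3: lim of S^out_n} carries over with $U^{+}$ in the role of the auxiliary set of Case~3 and yields $S^{out}_{\infty}\subset\partial A_f^{l_{(1)},l_{(2)},+}\subset\partial A_f^{l_{(1)},l_{(2)}}$ together with the equality on $A_p\times\mathbb{P}^1$. For (5) I would note, as in Case~3, that $f^{-n}(W)\cap U^{+}$ is a decreasing sequence of closed subsets of $U^{+}$ with intersection $U^{+}-A_f^{l_{(1)},l_{(2)}}$, whence $S^{in}_{\infty}\cap U^{+}\subset\partial A_f^{l_{(1)},l_{(2)}}\cap U^{+}$; the reverse inclusion holds because $\partial A_f^{l_{(1)},l_{(2)}}\cap U^{+}\subset W$ and $f$ is non-degenerate there. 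Finally (4) is immediate: by (1) and (2), $\partial A_f^{l_{(1)},l_{(2)}}=S^{out}_{\infty}\cup S^{in}_{\infty}$ and $S^{out}_{\infty}\subset\partial A_f^{l_{(1)},l_{(2)},+}$, so $\partial A_f^{l_{(1)},l_{(2)}}-\partial A_f^{l_{(1)},l_{(2)},+}\subset S^{in}_{\infty}\subset\partial A_f^{l_{(1)},l_{(2)}}$.

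The step I expect to require the most care — rather than a mechanical copy of Case~3 — is checking that the auxiliary wedge $U^{+}$, obtained from $U$ by pushing its inner face $S^{in}$ down onto the $z$-axis, still satisfies $\overline{f(U^{+})}-\{0\}\subset U^{+}$, so that the monotone-open-set machinery of Cases~2 and~3 transfers without change. This rests on the inequality $D\geq\delta$ recorded earlier in this section and on $\tilde f_1$ being superattracting at the origin; both are already available, so the remaining work is routine bookkeeping about the two curved faces $S^{out}$ and $S^{in}$ of the wedge.
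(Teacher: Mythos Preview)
Your proposal is correct and matches the paper's approach: the paper explicitly omits the proof of this lemma, stating that the arguments are almost the same as the previous cases, and your outline faithfully transfers the machinery of Lemma~\ref{lem for asy behavior}, the Remark following it, Proposition~\ref{Case 3: lim of f^-n (partial U)}, Proposition~\ref{Case 3: lim of S^out_n}, and the treatment of $S^{in}_{\infty}\cap U^{+}$ from Section~6.3 to the present wedge geometry. The one point you flag as requiring care --- the invariance $\overline{f(U^{+})}-\{0\}\subset U^{+}$ --- is indeed the only non-mechanical step, and it is already available from Lemma~\ref{main lemma} (together with the same slight strengthening noted in the Remark after Lemma~\ref{lem for asy behavior}).
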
 %\end{proposition}

Note that this lemma holds even if we replace
$U^{l_{(1)},l_{(2)},+} $ and $A_f^{l_{(1)},l_{(2)},+}$ to
$U^{l_{(1)},+} $ and $A_f^{l_{(1)},+}$.
Hence
the two lemmas above imply the following.

\begin{proposition} \label{asy behavior when T_k < delta  T_k-1} % < が抜けている
%Let $T_k < \delta < T_{k-1}$.
For any $(z_0, w_0)$ in $\partial A_f^{l_{(1)},+} \cap \{ (A_p - E_p \cup E_{deg}^*) \times \mathbb{P}^1 \}$,
%(A_p \times \mathbb{C}) - E_{deg}$,
\[
\limsup_{(z,w) \in A_f^{l_{(1)},l_{(2)}} \to (z_0, w_0)} G_z^{\alpha} (w) = 
\begin{cases} 
\ \infty & \text{ if } l_1 \leq l_{(1)} < \alpha, \\
\ 0 & \text{ if } l_{(1)} = \alpha \text{ and } d \geq 2,
\end{cases}
\]
and $G_z^{\alpha}$ is bounded as $(z,w)$ in $A_f^{l_{(1)},l_{(2)}}$ tends to $(z_0, w_0)$
if $l_{(1)} = \alpha$ and $d = 1$.

For any $(z_0, w_0)$ in $\partial A_f^{l_{(1)},l_{(2)}} \cap A_0 - \partial A_f^{l_{(1)},+} \cup E_z \cup E_{deg}$,
\[
\lim_{(z,w) \in A_f^{l_{(1)},l_{(2)}} \to (z_0, w_0)} G_z^{\alpha} (w) = 
\begin{cases} 
\ - \infty & \text{ if } \alpha < l_{(1)} + l_{(2)} \leq l_1 + l_2, \\
\ 0 & \text{ if } \alpha = l_{(1)} + l_{(2)} \text{ and } d \geq 2, 
\end{cases}
\]
and $G_z^{\alpha}$ is bounded as $(z,w)$ in $A_f^{l_{(1)},l_{(2)}}$ tends to $(z_0, w_0)$
if $l_{(1)} + l_{(2)} = \alpha$ and $d = 1$.
\end{proposition}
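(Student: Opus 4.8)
The plan is to obtain the proposition by combining the two lemmas immediately above, exactly along the lines of Proposition~\ref{main prop on asy behavior when T > delta > d} in Case~2 and Propositions~\ref{Case 3: asy of G_z^alpha on boundary^out} and \ref{Case 3: asy of G_z^alpha on boundary^in} in Case~3. Fix $l_{(1)}$ in $\mathcal{I}_f^1$ and $l_{(2)}$ in $\mathcal{I}_f^2$ as in Proposition~\ref{Case 4: existence of G_z^alpha}, write $U = U^{l_{(1)},l_{(2)}}$, and recall the splitting $\partial U = S^{out} \cup S^{in}$ together with the nesting: $f^{-n}(U)$ is increasing with union $A_f^{l_{(1)},l_{(2)}}$, so every accumulation point of $\partial A_f^{l_{(1)},l_{(2)}}$ inside $A_p \times \mathbb{P}^1$ is an accumulation point of $S_n^{out} = f^{-n}(S^{out})$ or of $S_n^{in} = f^{-n}(S^{in})$. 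The second of the two lemmas (Lemma~\ref{lem on asy behaviour for case 4}) identifies these two accumulation sets: $S^{out}_\infty$ fills out $\partial A_f^{l_{(1)},+}$ and $S^{in}_\infty$ fills out the complementary ``inner'' part of $\partial A_f^{l_{(1)},l_{(2)}}$, with the equalities valid fiberwise over $A_p$; the first of the two lemmas records the limiting values of $G_z^{\alpha}$ along $S_n^{out}$ and $S_n^{in}$. So the whole statement is a matter of transporting those two inputs to the boundary of the union.

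For the $\limsup$ assertion, take $(z_0,w_0)$ in $\partial A_f^{l_{(1)},+} \cap \{(A_p - E_p \cup E_{deg}^*) \times \mathbb{P}^1\}$. By Lemma~\ref{lem on asy behaviour for case 4}(3) there is $x_n$ in $S_n^{out}$ with $x_n \to (z_0,w_0)$, and by the first lemma $G_z^{\alpha}|_{S_n^{out}}$ tends to $\infty$ when $l_{(1)} < \alpha$, to $0$ when $l_{(1)} = \alpha$ and $d \ge 2$, and stays bounded when $l_{(1)} = \alpha$ and $d = 1$; this already yields the lower bound for the $\limsup$ (and, in the $d=1$ case, a two-sided bound once combined with the estimate $G_z^{\alpha}(w) = \log|w/z^{\alpha}| + o(1)$ of Proposition~\ref{main prop on G_z^a for Case 4}). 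The matching upper bound in the cases $l_{(1)} = \alpha$ comes from the observation that $G_z^{\alpha} \le 0$ throughout $A_f^{\alpha,l_{(2)}}$: on $U$ one has $|w/z^{\alpha}| < r < 1$, hence $|w_n/z_n^{\alpha}| \to 0$ along any orbit that eventually enters $U$, and then $G_z^{\alpha} = \lim d^{-n}\log|w_n/z_n^{\alpha}| \le 0$. The computation of $G_z^{\alpha}|_{S_n^{out}}$ itself is the familiar one: apply $G_{p^n(z)}^{\alpha}(Q_z^n(w)) = d^n G_z^{\alpha}(w)$ to $\bigl|G_z^{\alpha}(w) - \log|w/z^{\alpha}|\bigr| < C$ on $U$, use $|z^{-\alpha}w| = r|z|^{l_{(1)}-\alpha}$ on $S^{out}$ with $l_{(1)} - \alpha \le 0$, and invoke $\deg p = \delta > d$ with $G_p(z) \ne -\infty$.

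For the $\lim$ assertion, work inside the neighborhood $U^{l_{(1)},l_{(2)},+}$ of the origin (equivalently $U^{l_{(1)},+}$). A point $(z_0,w_0)$ in $\partial A_f^{l_{(1)},l_{(2)}} \cap A_0 - \partial A_f^{l_{(1)},+} \cup E_z \cup E_{deg}$ lies in this neighborhood, where by Lemma~\ref{lem on asy behaviour for case 4}(5) it is an accumulation point of $S_n^{in}$; the first lemma then gives $G_z^{\alpha}|_{S_n^{in}} \to -\infty$ (resp.\ $\to 0$, resp.\ bounded) according as $l_{(1)} + l_{(2)} > \alpha$ (resp.\ $= \alpha$ with $d \ge 2$, resp.\ $= \alpha$ with $d = 1$). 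To upgrade this from a mere limit-inferior to a genuine limit, I would reproduce the Case~3 device used after Proposition~\ref{Case 3: asy of G_z^alpha on boundary^in}: on each fiber slice $A_f^{l_{(1)},l_{(2)}} \cap \mathbb{C}_z$ with $z$ in $A_p - E_p \cup E_{deg}^*$, the plurisubharmonic function $G_z^{\alpha}$ (Proposition~\ref{Case 4: existence of G_z^alpha}, extended over $A_f^{l_1,l_2}$) is monotone on the successive ``collars'' cut out by $S_n^{in}$, since $U^{l_{(1)},l_{(2)},+} - f^{-n}(U)$ decreases to $U^{l_{(1)},l_{(2)},+} - A_f^{l_{(1)},l_{(2)}}$, so the value at an interior point near $(z_0,w_0)$ is squeezed between consecutive collar values and forced to the common limit. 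In the subcase $\alpha = l_{(1)} + l_{(2)}$, $d \ge 2$ the inner boundary in question coincides with an outer boundary of the smaller wedge $U^{\alpha,l_1+l_2-\alpha}$, so the value $0$ is the $\limsup$ provided by the first part applied to that wedge, and it is attained as a limit by the same monotonicity.

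The main obstacle I expect is not any new estimate but the bookkeeping of the collapsed fibers. The clean accumulation identities $S^{out}_\infty \cap (A_p \times \mathbb{P}^1) = \partial A_f^{l_{(1)},+} \cap (A_p \times \mathbb{P}^1)$ and $S^{in}_\infty \cap U^{l_{(1)},l_{(2)},+} = \partial A_f^{l_{(1)},l_{(2)}} \cap U^{l_{(1)},l_{(2)},+}$ rest on $f$ being an open map, which fails on the fibers $f$ collapses; this is why the boundary points are taken off $E_z \cup E_{deg}$ in the fiber and off $E_p \cup E_{deg}^*$ in the base, so that $f$ is effectively non-degenerate on the relevant region and Lemma~\ref{lem on asy behaviour for case 4} applies there verbatim (compare the passage preceding Proposition~\ref{main prop on asy behavior when delta < d} in Case~2). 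The second slightly delicate point is the monotonicity argument promoting the inner ``$\liminf$'' to a ``$\lim$'', which needs the plurisubharmonicity of $G_z^{\alpha}$ on $A_f^{l_1,l_2}$ together with the nested-collar picture; once these are set up, the rest is a line-by-line transcription of the Case~2 and Case~3 proofs, with the single dominant term $(\gamma,d) = (n_k,m_k)$ and $\delta > d$.
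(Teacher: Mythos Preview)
Your approach is essentially the paper's: combine the computational lemma on $G_z^{\alpha}|_{S_n^{out}}$ and $G_z^{\alpha}|_{S_n^{in}}$ with Lemma~\ref{lem on asy behaviour for case 4} on the accumulation sets, then invoke the fiberwise monotonicity (maximum principle) exactly as in Case~3 to promote the inner result to a genuine limit. Two minor points to tighten: first, a point of $\partial A_f^{l_{(1)},l_{(2)}} \cap A_0 - \partial A_f^{l_{(1)},+}$ need not lie in $U^{l_{(1)},l_{(2)},+}$ itself, only in some $f^{-N}(U^{l_{(1)},l_{(2)},+})$, so either push forward by $f^N$ and use the invariance $G_z^{\alpha}\circ f = d\,G_z^{\alpha}$, or run the collar argument globally with $W_n = f^{-n}(U^{l_{(1)},l_{(2)},+} \setminus U)$ as in Case~3; second, the claimed coincidence of the inner boundary of $U^{l_{(1)},\alpha - l_{(1)}}$ with the outer boundary of $U^{\alpha, l_1+l_2-\alpha}$ is not correct (the $r$-scalings differ) and is in any case unnecessary, since the computational lemma already gives $G_z^{\alpha}|_{S_n^{in}} \to 0$ directly when $l_{(1)}+l_{(2)} = \alpha$ and $d\ge 2$.
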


Note that we can replace
$\partial A_f^{l_{(1)},l_{(2)}} \cap A_0 - \partial A_f^{l_{(1)},+} \cup E_z \cup E_{deg}$ in the proposition to
$\{ \partial A_f^{l_{(1)},l_{(2)}} - \partial A_f^{l_{(1)},+} \} \cap \{ (A_p - E_p \cup E_{deg}^*)  \times \mathbb{P}^1 \}$.

%This proposition induces the following corollary and 
%Theorem \ref{main thm on G_z^a,+ for case 4}
%for the case $T_k < \delta < T_{k-1}$.

\begin{corollary} \label{cor on A_f^l when T_k < delta  T_k-1}
%Let $T_k < \delta < T_{k-1}$.
It follows that
$A_f^{l_{(1)}, l_{(2)}} = A_f^{l_1, l_2}$ 
for any $l_1 \leq l_{(1)} < \alpha$
%$l_{(1)}$ in $\mathcal{I}_f^1 - \{ \alpha \}$ 
and $\alpha < l_{(1)} + l_{(2)} \leq l_1 + l_2$.
%$l_{(2)}$ in $\mathcal{I}_f^2 - \{ \alpha - l_{(1)} \}$.
Moreover,
$A_f^{l_1, \alpha - l_1} \subsetneq A_f^{l_1, l_2}$,
$A_f^{\alpha, l_1 + l_2 - \alpha} \subsetneq A_f^{l_1, l_2}$ and
$A_f^{l_1, \alpha - l_1} \cap A_f^{\alpha, l_1 + l_2 - \alpha} = \emptyset$.
\end{corollary}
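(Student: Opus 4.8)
The plan is to derive this corollary from Proposition~\ref{asy behavior when T_k < delta  T_k-1} and the maximum principle, in the same way Corollary~\ref{cor on A_f^l when T > delta > d} was obtained in Case~2. Since $U^{l_1,l_2}$ is the largest of the wedges $U^{l_{(1)},l_{(2)}}$, each $A_f^{l_{(1)},l_{(2)}}$ is contained in $A_f^{l_1,l_2}$, so in particular so are $A_f^{l_1,\alpha-l_1}$ and $A_f^{\alpha,l_1+l_2-\alpha}$; hence only the equalities, the strictness, and the disjointness need proof. Note also that $(n_1,m_1)\neq(0,\delta)$ whenever $T_k<\delta<T_{k-1}$, so Theorem~\ref{main thm on attr sets for Case 4} gives $A_f^{l_{(1)},+}=A_0-E_z$ for every $0<l_{(1)}<\alpha$, and therefore $A_f^{l_1,l_2}\subset A_0-E_z=A_f^{l_{(1)},+}$. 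Finally, by Proposition~\ref{Case 4: existence of G_z^alpha} the limit $G_z^{\alpha}$ is pluriharmonic on each wedge $A_f^{l_{(1)},l_{(2)}}$, and since the defining limit does not depend on the wedge these patch to a single pluriharmonic, hence finite and continuous, function $G_z^{\alpha}$ on all of $A_f^{l_1,l_2}$.

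For the equalities, fix $l_1\le l_{(1)}<\alpha$ and $\alpha<l_{(1)}+l_{(2)}\le l_1+l_2$ and suppose $A_f^{l_{(1)},l_{(2)}}\subsetneq A_f^{l_1,l_2}$. Since $A_f^{l_{(1)},l_{(2)}}$ is backward invariant, this forces $U^{l_1,l_2}\not\subset A_f^{l_{(1)},l_{(2)}}$; as $U^{l_1,l_2}$ is connected and meets both the open set $A_f^{l_{(1)},l_{(2)}}$ (it contains $U^{l_{(1)},l_{(2)}}$) and its complement, the set $\partial A_f^{l_{(1)},l_{(2)}}\cap U^{l_1,l_2}$ is nonempty, and after a small perturbation we may pick a point $(z_0,w_0)$ there with $z_0\notin E_p\cup E_{deg}^{*}$ and $(z_0,w_0)\notin E_{deg}$ (these sets being thin). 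As $U^{l_1,l_2}\subset A_f^{l_{(1)},+}$ is open, $(z_0,w_0)\notin\partial A_f^{l_{(1)},+}$, so by Lemma~\ref{lem on asy behaviour for case 4} it lies in $S^{in}_{\infty}$; since $l_{(1)}+l_{(2)}>\alpha$, Proposition~\ref{asy behavior when T_k < delta  T_k-1} then gives $G_z^{\alpha}(w)\to-\infty$ as $(z,w)\in A_f^{l_{(1)},l_{(2)}}\to(z_0,w_0)$, which is absurd since $G_z^{\alpha}$ is finite at the interior point $(z_0,w_0)$ of $A_f^{l_1,l_2}$. Hence $A_f^{l_{(1)},l_{(2)}}-E_{deg}=A_f^{l_1,l_2}-E_{deg}$, and since $A_f^{l_1,l_2}\cap E_z=\emptyset$ the coincidence extends across $E_{deg}$ exactly as in the corollary following Proposition~\ref{main prop on asy behavior when delta < d}.

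The strict inclusions and the disjointness will be read off from the sign of $G_z^{\alpha}$. On $U^{l_1,\alpha-l_1}=\{|z|^{\alpha}<r^{\alpha-l_1}|w|,\ |w|<r|z|^{l_1}\}$ one has $|w/z^{\alpha}|>r^{\,l_1-\alpha}>1$, so $G_z^{\alpha}=\log|w/z^{\alpha}|+o(1)>0$ there, and $G_z^{\alpha}\circ f^{n}=d^{n}G_z^{\alpha}$ propagates $G_z^{\alpha}>0$ to all of $A_f^{l_1,\alpha-l_1}$; symmetrically, $|w/z^{\alpha}|<r<1$ on $U^{\alpha,l_1+l_2-\alpha}$ forces $G_z^{\alpha}<0$ on all of $A_f^{\alpha,l_1+l_2-\alpha}$. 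Consequently these two sets are disjoint. Moreover, points of $U^{l_1,l_2}$ on its inner face $\{|z|^{l_1+l_2}=r^{l_2}|w|\}$ with $|z|$ small satisfy $|w/z^{\alpha}|=r^{-l_2}|z|^{\,l_1+l_2-\alpha}\to 0$, so $G_z^{\alpha}<0$ at such points, while $G_z^{\alpha}>0$ on $U^{l_1,\alpha-l_1}\subset U^{l_1,l_2}$; thus $G_z^{\alpha}$ takes both signs on $A_f^{l_1,l_2}$, so neither $A_f^{l_1,\alpha-l_1}$ nor $A_f^{\alpha,l_1+l_2-\alpha}$ can exhaust it, giving $A_f^{l_1,\alpha-l_1}\subsetneq A_f^{l_1,l_2}$ and $A_f^{\alpha,l_1+l_2-\alpha}\subsetneq A_f^{l_1,l_2}$.

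Since the substantive input, namely the asymptotic behavior of $G_z^{\alpha}$ in Proposition~\ref{asy behavior when T_k < delta  T_k-1} (together with the existence and gluing of the pluriharmonic $G_z^{\alpha}$ in Proposition~\ref{Case 4: existence of G_z^alpha}), is already available, the corollary is essentially bookkeeping. The only point calling for care is the step selecting an interior boundary point off the exceptional sets $E_z$, $E_{deg}$ and the fibres over $E_p\cup E_{deg}^{*}$, and the passage from the equality off $E_{deg}$ to the full equality, both handled exactly as in the corresponding steps of Sections~4 and~5.
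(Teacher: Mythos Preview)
Your argument is correct and is essentially the approach the paper intends: the paper omits the proof here, writing that it is ``almost the same as the previous cases,'' and your reduction to Proposition~\ref{asy behavior when T_k < delta  T_k-1} together with the pluriharmonicity of $G_z^{\alpha}$ on $A_f^{l_1,l_2}$ is exactly that template. Your contradiction for the equalities (continuity of $G_z^{\alpha}$ at an interior point versus the $-\infty$ limit along $S^{in}_{\infty}$) is the Case~3 version of the argument, while the Case~2 model used $\limsup=0$ and the maximum principle; the $-\infty$ limit here makes your route the cleaner one. For the strict inclusions and disjointness, your sign argument via $G_z^{\alpha}>0$ on $A_f^{l_1,\alpha-l_1}$ and $G_z^{\alpha}<0$ on $A_f^{\alpha,l_1+l_2-\alpha}$ is a tidy direct alternative to comparing boundary asymptotics; both are valid.
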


%\begin{corollary}
%Let $T_k < \delta < T_{k-1}$.
%Then $G_z^{\alpha, +}$ is plurisubharmonic on $A_0 - E_z \cup E_{deg}$.
%More precisely,
%$G_z^{\alpha, +} > 0$ on $A_f^{l_1, \alpha - l_1}$ and
%$G_z^{\alpha, +} = 0$ on $A_0 - A_f^{l_1, \alpha - l_1}$,
%which is pluriharmonic and continuous on $A_0 - \partial A_f^{l_1, \alpha - l_1} \cup E_z \cup E_{deg}$.
%\end{corollary}

%%%%%%%%%%%%%%%%%%%%%%%%%%%%%%%%%%%%%%%%%%%%%%%%%%%%%%%%%%%
%%%%%%%%%%%%%%%%%%%%%%%%%%%%%%%%%%%%%%%%%%%%%%%%%%%%%%%%%%%
\subsubsection{$\delta = T_k$} %{$T_k = \delta < T_{k-1}$}

Recall that $l_1 < \alpha$ and % = l_1 + l_2$ and
$\mathcal{I}_f = [l_1, \alpha) \times \{ \alpha \}$
if $\delta = T_k$.
%$\mathcal{I}_f = [l_1, \alpha) \times \{ \alpha \}$.

%%%
Let $S^{out} = \{ |z|^{l_{} + \alpha} < r^{\alpha} |w|, |w| = r|z|^{l_{}} \}$ 
and $S^{in} = \{ |z|^{l_{} + \alpha} = r^{\alpha} |w|, |w| < r|z|^{l_{}} \}$.

\begin{lemma}
Let $d \geq 2$.
It follows for any $l$ in $\mathcal{I}_f^1$ and
for any $z$ in $A_p - E_p$
that $G_z^{\alpha} |_{S^{out}_n} \to \infty$
and $G_z^{\alpha} |_{S^{in}_n} \to 0$ as $n \to \infty$.
%for any $l_{(1)}$ in $\mathcal{I}_f^1 = [l_1, \alpha)$.
\end{lemma}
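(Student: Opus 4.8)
The plan is to run the same argument used for Lemma~\ref{main lem on asy behavior when delta < d} and for the corresponding boundary lemma in the $T_k<\delta<T_{k-1}$ subcase: transport the near-identity normal form of $G_z^{\alpha}$ on the fundamental wedge out to $S_n^{out}$ and $S_n^{in}$ by the vertical equivariance, and then read off the exponents of $|p^n(z)|$ that survive. First I would invoke Proposition~\ref{Case 4: existence of G_z^alpha} — available here precisely because $d\ge2$ — to fix a constant $C>0$ with $\bigl|G_z^{\alpha}(w)-\log|w/z^{\alpha}|\bigr|<C$ on the wedge $U=U^{l,\alpha-l}$, whose outer face is $S^{out}$ (where $|w|=r|z|^{l}$) and whose inner face is $S^{in}$ (where $|w/z^{\alpha}|$ is a fixed power of $r$). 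Fixing $z\in A_p-E_p$, note that $p^n(z)\to0$ with $p^n(z)\ne0$, so for large $n$ the fiber over $p^n(z)$ meets both faces and hence $S_n^{out}\cap\mathbb{C}_z$ and $S_n^{in}\cap\mathbb{C}_z$ are nonempty; for $(z,w)$ in either set, $f^n(z,w)=(p^n(z),Q_z^n(w))\in\partial U$, so applying the $C$-estimate at $f^n(z,w)$, using $G_{p^n(z)}^{\alpha}(Q_z^n(w))=d^nG_z^{\alpha}(w)$, and dividing by $d^n$ gives
\[
\Bigl|\,G_z^{\alpha}(w)-d^{-n}\log\bigl|Q_z^n(w)/p^n(z)^{\alpha}\bigr|\,\Bigr|\le d^{-n}C .
\]

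Next I would evaluate the subtracted term on each face. On $S_n^{out}\cap\mathbb{C}_z$ one has $|Q_z^n(w)|=r|p^n(z)|^{l}$, so $\log|Q_z^n(w)/p^n(z)^{\alpha}|=\log r+(l-\alpha)\log|p^n(z)|$, and the term equals $d^{-n}\log r+(l-\alpha)(\delta/d)^n\cdot\delta^{-n}\log|p^n(z)|$. Since $\delta=T_k>d$ we have $\delta/d>1$; since $z\in A_p-E_p$ we have $\delta^{-n}\log|p^n(z)|\to G_p(z)\in(-\infty,0)$; and since $l\in[l_1,\alpha)$ we have $l-\alpha<0$. The product of the two negative quantities is positive, so this blows up to $+\infty$, while $d^{-n}\log r$ and $d^{-n}C$ tend to $0$; uniformly over the fiber this gives $G_z^{\alpha}|_{S_n^{out}}\to\infty$. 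On $S_n^{in}\cap\mathbb{C}_z$, by the defining relation of $S^{in}$ the quantity $|Q_z^n(w)/p^n(z)^{\alpha}|$ is a constant power of $r$, independent of $z$ and $n$; hence the subtracted term is $d^{-n}$ times a constant, which tends to $0$ because $d\ge2$, and with $d^{-n}C\to0$ we obtain $G_z^{\alpha}|_{S_n^{in}}\to0$.

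I do not expect a genuine obstacle once Proposition~\ref{Case 4: existence of G_z^alpha} is in hand; the one point to keep in mind is that this input — the uniform $O(1)$-control of $G_z^{\alpha}$ on $U$ — rests on the existence of the B\"{o}ttcher conjugacy $\phi$ on $U$, which in the resonant boundary case $\delta=T_k$ is guaranteed exactly by the hypothesis $d\ge2$. That is the same hypothesis that forces the factor $d^{-n}$ to decay, so the two uses of $d\ge2$ are not independent; everything else is the sign bookkeeping of $l-\alpha$ and $G_p(z)$, verbatim as in the earlier cases, and the nonemptiness of $S_n^{out}\cap\mathbb{C}_z$ and $S_n^{in}\cap\mathbb{C}_z$ for large $n$ follows as there from $z\in A_p-E_p$.
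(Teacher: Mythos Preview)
Your proof is correct and follows exactly the template the paper uses for the analogous lemmas in Sections 5 and 6 (and for the $T_k<\delta<T_{k-1}$ subcase in Section 7.3.1); the paper in fact omits the proof here, stating that it is ``almost the same as the previous cases.'' Your identification of the two roles of the hypothesis $d\ge2$ --- availability of the B\"{o}ttcher coordinate on $U^{l,\alpha-l}$ via Proposition~\ref{Case 4: existence of G_z^alpha}, and the decay $d^{-n}\to0$ --- is on point, and the sign analysis $(l-\alpha)<0$, $G_p(z)<0$, $(\delta/d)^n\to\infty$ matches the paper's computations verbatim.
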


This lemma and Lemma \ref{lem on asy behaviour for case 4} imply the following.

\begin{proposition} \label{asy behavior when delta = T_k}
Let $d \geq 2$. % $T_k = \delta < T_{k-1}$ and
For any $l$ in $\mathcal{I}_f^1$ and % $l_{}$ in $\mathcal{I}_f^1$ and 
for any $(z_0, w_0)$ in $\partial A_f^{l_{},+} \cap \{ (A_p - E_p \cup E_{deg}^*) \times \mathbb{P}^1 \}$,
%(A_p \times \mathbb{C}) - E_{deg}$,
\[
\limsup_{(z,w) \in A_f^{l_{}, \alpha - l} \to (z_0, w_0)} G_z^{\alpha} (w) = \infty. 
\]
For any $l$ in $\mathcal{I}_f^1$ and % $l_{}$ in $\mathcal{I}_f^1$ and 
for any $(z_0, w_0)$ in $\partial A_f^{l_{}, \alpha - l} \cap A_0 - \partial A_f^{l_{},+} \cup E_z \cup E_{deg}$,
\[
\lim_{(z,w) \in A_f^{l_{}, \alpha - l} \to (z_0, w_0)} G_z^{\alpha} (w) = 0. 
\]
\end{proposition}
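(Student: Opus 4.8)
The plan is to run, essentially verbatim, the mechanism already set up for Cases 2 and 3: transfer the fibrewise convergence of $G_z^{\alpha}$ along the nested surfaces $S_n^{out}$ and $S_n^{in}$ to a statement about the limit (superior) of $G_z^{\alpha}$ as $(z,w)$ tends to a point of $\partial A_f^{l,\alpha-l}$, using Lemma \ref{lem on asy behaviour for case 4} to identify the accumulation sets $S_\infty^{out}$ and $S_\infty^{in}$ with the relevant pieces of the boundary. Throughout I would keep in mind that, since $d\ge 2$, $f$ is conjugate to $f_0$ on $U^{l,\alpha-l}$, so by Proposition \ref{Case 4: existence of G_z^alpha} the function $G_z^{\alpha}$ is pluriharmonic on $A_f^{l,\alpha-l}$ with $G_z^{\alpha}(w)=\log|w/z^{\alpha}|+o(1)$ on $U^{l,\alpha-l}$, and that $f$ is non-degenerate on $A_p\times\mathbb C-E_z\cup E_{deg}$; the latter is precisely why the exceptional fibres $E_p\cup E_{deg}^{*}$ (resp. $E_z\cup E_{deg}$) are removed from the boundary in the statement.

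For the first assertion I would fix $l\in\mathcal I_f^{1}=[l_1,\alpha)$ and $(z_0,w_0)\in\partial A_f^{l,+}\cap\{(A_p-E_p\cup E_{deg}^{*})\times\mathbb P^{1}\}$. Applying Lemma \ref{lem on asy behaviour for case 4} with $U^{l,+},A_f^{l,+}$ in place of $U^{l,\alpha-l,+},A_f^{l,\alpha-l,+}$ (allowed, as remarked after that lemma) gives $S_\infty^{out}\cap(A_p\times\mathbb P^{1})=\partial A_f^{l,+}\cap(A_p\times\mathbb P^{1})$ and $S_\infty^{out}\subset\partial A_f^{l,\alpha-l}$; hence there is a sequence $x_m\in S_{n_m}^{out}$ with $n_m\to\infty$ and $x_m\to(z_0,w_0)$. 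The fibrewise lemma stated just above gives $G_z^{\alpha}|_{S_n^{out}}\to\infty$, and inspecting its proof one sees that the convergence is uniform for $z$ in a small neighbourhood of $z_0$ in $A_p-E_p$: on $S_n^{out}$ the quantity $G_z^{\alpha}(w)$ agrees with $(\delta/d)^{n}(l-\alpha)\,\delta^{-n}\log|p^{n}(z)|$ up to an $O(d^{-n})$ error, $\delta^{-n}\log|p^{n}(z)|\to G_p(z)$ locally uniformly on $A_p-E_p$, and $(l-\alpha)G_p(z)>0$ there since $l<\alpha$ and $G_p<0$. Perturbing each $x_m$ into $U_{n_m}^{l,\alpha-l}\subset A_f^{l,\alpha-l}$ by a vanishing amount, which changes $G_z^{\alpha}$ negligibly because $G_z^{\alpha}(w)-\log|w/z^{\alpha}|$ is uniformly bounded on $U^{l,\alpha-l}$, produces a sequence in $A_f^{l,\alpha-l}$ converging to $(z_0,w_0)$ along which $G_z^{\alpha}(w)\to\infty$; as the reverse bound is trivial, the limit superior is $\infty$.

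For the second assertion I would fix $l\in\mathcal I_f^{1}$ and $(z_0,w_0)\in\partial A_f^{l,\alpha-l}\cap A_0-\partial A_f^{l,+}\cup E_z\cup E_{deg}$. By parts (4)--(5) of Lemma \ref{lem on asy behaviour for case 4} (again with the $U^{l,+},A_f^{l,+}$ replacement) one has $\partial A_f^{l,\alpha-l}-\partial A_f^{l,+}\subset S_\infty^{in}$ and $S_\infty^{in}\cap U^{l,+}=\partial A_f^{l,\alpha-l}\cap U^{l,+}$, so near $(z_0,w_0)$ the boundary $\partial A_f^{l,\alpha-l}$ is the accumulation set of the surfaces $S_n^{in}$ and lies in the interior of the full wedge $U^{l,+}$. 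The fibrewise lemma gives $G_z^{\alpha}|_{S_n^{in}}\to 0$; since on each fibre over a point $z$ near $z_0$ with $z\notin E_p\cup E_{deg}^{*}$ the harmonic function $G_z^{\alpha}$ is monotone between the successive surfaces $S_n^{in}$ and $\partial A_f^{l,\alpha-l}$ by the maximum principle — this is the device already used in Case 3 to upgrade a limit inferior to a limit — the limit of $G_z^{\alpha}(w)$ as $(z,w)\in A_f^{l,\alpha-l}\to(z_0,w_0)$ exists and equals $0$.

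The genuinely routine parts — the uniform estimates, the replacement of boundary-surface points by nearby interior points, and the bookkeeping with $E_z,E_{deg},E_{deg}^{*}$ — are identical to Propositions \ref{main prop on asy behavior when T > delta > d} and \ref{Case 3: asy of G_z^alpha on boundary^in}, so in writing this up I would only flag that the case $\delta=T_k$ is simpler than $T_k<\delta<T_{k-1}$: on $\mathcal I_f^{1}=[l_1,\alpha)$ the borderline weight never occurs on the outer side, and $l_{(1)}+l_{(2)}=\alpha$ always holds on the inner side, so no ``$\to 0$ versus $\to-\infty$'' dichotomy intervenes. The one real obstacle is the same as in those earlier propositions, namely making the fibrewise convergence uniform enough in $z$ to survive the passage to a genuine two-variable limit, together with the fact that $S_n^{out}$ and $S_n^{in}$ a priori lie in $\overline{A_f^{l,\alpha-l}}$ rather than in $A_f^{l,\alpha-l}$ itself; both are handled exactly as before, which is why the paper omits the details.
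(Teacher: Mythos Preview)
Your proposal is correct and follows exactly the paper's approach: combine the fibrewise lemma $G_z^{\alpha}|_{S_n^{out}}\to\infty$, $G_z^{\alpha}|_{S_n^{in}}\to 0$ with Lemma~\ref{lem on asy behaviour for case 4} to identify $S_\infty^{out}$ and $S_\infty^{in}$ with the relevant boundary pieces, then transfer via the same uniformity and maximum-principle arguments used in Propositions~\ref{main prop on asy behavior when T > delta > d} and~\ref{Case 3: asy of G_z^alpha on boundary^in}. The paper itself omits the proof for this reason, and your observation that $\delta=T_k$ is the simpler subcase (no dichotomy on either side) is apt.
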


%This proposition induces the following corollary and 
%Theorem \ref{main thm on G_z^a,+ for case 4}
%for the case $T_k = \delta < T_{k-1}$ and $d \geq 2$.

\begin{corollary} \label{cor on A_f^l when delta = T_k}
If $d \geq 2$,
then $A_f^{l_{}, \alpha - l_{}} = A_f^{l_1, \alpha - l_1}$ 
for any $l$ in $\mathcal{I}_f^1$. %$l_1 \leq l < \alpha$.
%for any $l_{}$ in $\mathcal{I}_f^1 = [l_1, \alpha)$. 
\end{corollary}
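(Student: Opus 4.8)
The plan is to prove the two inclusions $A_f^{l,\alpha-l}\subset A_f^{l_1,\alpha-l_1}$ and $A_f^{l_1,\alpha-l_1}\subset A_f^{l,\alpha-l}$ for a fixed $l$ with $l_1<l<\alpha$ (the case $l=l_1$ being trivial), directly via the dynamics rather than from any static inclusion of the wedges. Indeed, as $l$ ranges over $\mathcal{I}_f^1=[l_1,\alpha)$ the wedges $U^{l,\alpha-l}=\{\,|z|^{\alpha}<r^{\alpha-l}|w|,\ |w|<r|z|^{l}\,\}$ do not nest: decreasing $l$ weakens the constraint $|w|<r|z|^{l}$ but strengthens $|z|^{\alpha}<r^{\alpha-l}|w|$. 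So the content is that the full grand orbit of any one of them contains the grand orbit of each of the others.

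First I would record the ingredients, all available for $d\ge 2$: (i) $f\sim f_0$ and $f(U^{l',\alpha-l'})\subset U^{l',\alpha-l'}$ for every $l'\in\mathcal{I}_f^1$ (Lemma \ref{detailed lemma for case 4}), so that $G_z^{\alpha}$ exists on each $U^{l',\alpha-l'}$ with $G_z^{\alpha}(w)=\log|w/z^{\alpha}|+o(1)$ there as $r\to 0$ (Proposition \ref{Case 4: existence of G_z^alpha}); (ii) the functional equation $G_z^{\alpha}\circ f=d\,G_z^{\alpha}$, valid since $\gamma=\alpha(\delta-d)$; (iii) $U^{l',+}$ is forward invariant for every $0<l'<\alpha$ (Lemma \ref{main lemma}, Theorem \ref{main thm on inv wedges for Case 4}) and, crucially, $A_f^{l',+}=A_0-E_z$ is the \emph{same} set for all such $l'$ (Theorem \ref{main thm on attr sets for Case 4}); (iv) $U^{l',\alpha-l'}\subset U^{l',+}\subset A_f^{l',+}=A_0-E_z$, and since $U^{l',\alpha-l'}$ is forward invariant and meets $\{z=0\}$ nowhere (there $|w|<r|z|^{l'}=0$ would be forced), it avoids $E_z$, so every $(z,w)\in U^{l',\alpha-l'}$ has $z\in A_p-E_p$.

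Next, on $U^{l',\alpha-l'}$ the inequality $|w|/|z|^{\alpha}>r^{-(\alpha-l')}>1$ gives, after shrinking $r$, $G_z^{\alpha}(w)\ge(\alpha-l')\log(1/r)-o(1)>0$. Fix $(z,w)\in U^{l,\alpha-l}$ and set $(z_n,w_n)=f^{n}(z,w)$. Since $z\notin E_z$ we have $z_n\ne 0$, and $G_{z_n}^{\alpha}(w_n)=d^{n}G_z^{\alpha}(w)\to+\infty$ (here $d\ge 2$ is essential); comparing with $\log|w_n/z_n^{\alpha}|+o(1)$ along the invariant wedge forces $|w_n/z_n^{\alpha}|\to\infty$. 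Meanwhile $(z_n,w_n)\to 0$, so $|z_n|<1$ for large $n$, whence $|w_n|<r|z_n|^{l}\le r|z_n|^{l_1}$; together with $|w_n/z_n^{\alpha}|>r^{l_1-\alpha}$ eventually, this puts $(z_n,w_n)\in U^{l_1,\alpha-l_1}$ for large $n$, i.e.\ $U^{l,\alpha-l}\subset A_f^{l_1,\alpha-l_1}$, and since the latter is totally invariant, $A_f^{l,\alpha-l}\subset A_f^{l_1,\alpha-l_1}$. The reverse inclusion runs the same way with $l$ and $l_1$ interchanged, except that to arrange $|w_n|<r|z_n|^{l}$ along the orbit of a point of $U^{l_1,\alpha-l_1}$ one now uses $U^{l_1,\alpha-l_1}\subset A_0-E_z=A_f^{l,+}$ to land in $U^{l,+}$ after finitely many steps and then invokes the forward invariance of $U^{l,+}$; combined again with $|w_n/z_n^{\alpha}|\to\infty$, the orbit enters $U^{l,\alpha-l}$. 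This gives $A_f^{l_1,\alpha-l_1}\subset A_f^{l,\alpha-l}$ and hence equality, following exactly the Case~2 pattern of Corollary \ref{cor on A_f^l when T > delta > d}.

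The step I expect to be the main obstacle is controlling $|w_n|/|z_n|^{l}$ for the \emph{larger} exponent, i.e.\ showing that an orbit started in $U^{l_1,\alpha-l_1}$ (or more generally in $A_0-E_z$) actually falls into the slimmer wedge $U^{l,+}$; this is precisely where the non-nesting of the wedges bites, and it is resolved only because $A_f^{l,+}=A_0-E_z$ is independent of $l$ (Theorem \ref{main thm on attr sets for Case 4}), so the orbit must land in $U^{l,+}$, which is then forward invariant. Alternatively, once the inclusion $A_f^{l,\alpha-l}\subset A_f^{l_1,\alpha-l_1}$ is in hand one could argue as in Section~5.2: $G_z^{\alpha}$ is pluriharmonic on $A_f^{l_1,\alpha-l_1}$, and the vanishing of $G_z^{\alpha}$ toward $\partial A_f^{l,\alpha-l}\cap A_0$ from Proposition \ref{asy behavior when delta = T_k} together with the maximum principle rules out a proper difference; but the direct orbit argument above seems cleaner here.
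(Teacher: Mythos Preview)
Your argument is correct and complete. You rightly observe that the wedges $U^{l,\alpha-l}$ do not nest as $l$ ranges over $\mathcal I_f^1=[l_1,\alpha)$, and you handle both inclusions by tracking orbits directly: the functional equation $G_z^\alpha\circ f=d\,G_z^\alpha$ together with $d\ge2$ and $G_z^\alpha>0$ on each wedge forces $|w_n/z_n^\alpha|\to\infty$, while the upper constraint $|w_n|<r|z_n|^{l}$ comes either from forward invariance of the starting wedge (first inclusion) or from $A_f^{l,+}=A_0-E_z$ via Theorem~\ref{main thm on attr sets for Case 4} and forward invariance of $U^{l,+}$ (second inclusion). The hypothesis $(n_1,m_1)\neq(0,\delta)$ required for that theorem is automatic here: $\delta=T_k$ with $k\ge2$ gives $\delta<T_1$, whereas $(n_1,m_1)=(0,\delta)$ would force $T_1=\delta$.

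The paper's route is different. It deduces the corollary from Proposition~\ref{asy behavior when delta = T_k} by the same continuity/maximum-principle pattern written out in Section~5.2 (the corollary following Proposition~\ref{main prop on asy behavior when delta < d}): $G_z^\alpha$ is pluriharmonic and strictly positive on each $A_f^{l,\alpha-l}$, and the proposition gives $G_z^\alpha\to0$ on the inner boundary $\partial A_f^{l,\alpha-l}\cap A_0$ (the outer piece $\partial A_f^{l,+}$ cannot meet any $A_f^{l',\alpha-l'}$, since all these sets sit inside $A_f^{l,+}=A_f^{l',+}=A_0-E_z$). A boundary point of one $A_f^{l,\alpha-l}$ lying in the interior of another would then force $G_z^\alpha=0$ at an interior point, a contradiction. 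Your orbit-based argument is more elementary and bypasses the boundary-asymptotics machinery entirely, whereas the paper's approach makes the corollary an immediate by-product of the preceding proposition and fits the pluripotential framework used throughout. You already identify this alternative in your closing paragraph.
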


%\begin{corollary}
%Let $T_k = \delta < T_{k-1}$ and $d \geq 2$.
%Then $G_z^{\alpha, +}$ is plurisubharmonic on $A_0 - E_z \cup E_{deg}$.
%More precisely,
%$G_z^{\alpha, +} > 0$ on $A_f^{l_1, \alpha - l_1}$ and
%$G_z^{\alpha, +} = 0$ on $A_0 - A_f^{l_1, \alpha - l_1}$,
%which is pluriharmonic and continuous on $A_0 - \partial A_f^{l_1, \alpha - l_1} \cup E_z \cup E_{deg}$.
%\end{corollary}

%%%%%%%%%%%%%%%%%%%%%%%%%%%%%%%%%%%%%%%%%%%%%%%%%%%%%%%%%%%
%%%%%%%%%%%%%%%%%%%%%%%%%%%%%%%%%%%%%%%%%%%%%%%%%%%%%%%%%%%
\subsubsection{$\delta = T_{k-1}$} % {$T_k < \delta = T_{k-1}$}

Recall that %$l_1 = \alpha < l_1 + l_2$ and
$\mathcal{I}_f^1 = \{ \alpha \}$ and
$\mathcal{I}_f^2 = (0, l_2]$ if $\delta = T_{k-1}$.
%$\mathcal{I}_f = \{ \alpha \} \times (\alpha, l_1 + l_2]$.

%%%
Let $S^{out} = \{ |z|^{\alpha + l_{}} < r^{l_{}} |w|, |w| = r|z|^{\alpha} \}$
and $S^{in} = \{ |z|^{\alpha + l_{}} = r^{l_{}} |w|, |w| < r|z|^{\alpha} \}$.

\begin{lemma}
Let $d \geq 2$.
It follows for any $l$ in $\mathcal{I}_f^2$ and
for any $z$ in $A_p - E_p$
that $G_z^{\alpha} |_{S^{out}_n} \to 0$
and $G_z^{\alpha} |_{S^{in}_n} \to - \infty$ as $n \to \infty$.
\end{lemma}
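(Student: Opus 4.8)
The plan is to follow verbatim the pattern of the analogous Lemmas \ref{Case 3: asy of G_z^alpha on s_n^out} and \ref{Case 3: asy of G_z^alpha on s_n^in} (and of Lemma \ref{main lem on asy behavior when delta < d}), the only genuinely new work being to keep track of the exponents attached to the present wedge $U^{\alpha,l}$, i.e.\ $U^{l_{(1)},l_{(2)}}$ with $l_{(1)} = \alpha$ and $l_{(2)} = l$, for which $S^{out}$ and $S^{in}$ are the two pieces of $\partial U^{\alpha,l}$ introduced above.

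First I would apply Proposition \ref{Case 4: existence of G_z^alpha}, which is available because $d \geq 2$, $l_{(1)} = \alpha$ lies in $\mathcal{I}_f^1 = \{ \alpha \}$ and $l_{(2)} = l$ lies in $\mathcal{I}_f^2 = (0, l_2]$: the limit $G_z^{\alpha}$ is pluriharmonic on $A_f^{\alpha,l}$ and $G_z^{\alpha}(w) = \log|w/z^{\alpha}| + o(1)$ on $U^{\alpha,l}$ as $r \to 0$. Fixing $r$ small, this yields a constant $C > 0$ with $|G_z^{\alpha}(w) - \log|w/z^{\alpha}|| < C$ on $U^{\alpha,l}$. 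On $S^{out}$ one has $|w| = r|z|^{\alpha}$, hence $|w/z^{\alpha}| = r$ and $|G_z^{\alpha}(w) - \log r| < C$ there; on $S^{in}$ one has $|w| = r^{-l}|z|^{\alpha+l}$, hence $|w/z^{\alpha}| = r^{-l}|z|^{l}$ and $|G_z^{\alpha}(w) - (l\log|z| - l\log r)| < C$ there.

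Next I would pull these estimates back along $f^n$, using the functional equation $G_z^{\alpha}\circ f = d\,G_z^{\alpha}$; writing $f^n(z,w) = (p^n(z), Q_z^n(w))$ it gives $d^n G_z^{\alpha}(w) = G_{p^n(z)}^{\alpha}(Q_z^n(w))$. For $(z,w) \in S^{out}_n = f^{-n}(S^{out})$ this produces $|d^n G_z^{\alpha}(w) - \log r| < C$, that is $|G_z^{\alpha}(w) - d^{-n}\log r| < d^{-n}C$; since $d \geq 2$ both $d^{-n}\log r$ and $d^{-n}C$ tend to $0$, so $G_z^{\alpha}|_{S^{out}_n} \to 0$. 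For $(z,w) \in S^{in}_n = f^{-n}(S^{in})$ it produces $|G_z^{\alpha}(w) - d^{-n}(l\log|p^n(z)| - l\log r)| < d^{-n}C$. Here $d^{-n}\log r \to 0$, while $d^{-n}\log|p^n(z)| = (\delta/d)^n \cdot \delta^{-n}\log|p^n(z)| \to \infty \cdot G_p(z)$ because $\delta > d$ (the standing hypothesis of Case 4) and $\delta^{-n}\log|p^n(z)| \to G_p(z)$; since $z \in A_p - E_p$ the value $G_p(z)$ is finite and strictly negative, hence $d^{-n}\log|p^n(z)| \to -\infty$ and therefore $G_z^{\alpha}|_{S^{in}_n} \to -\infty$.

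Finally, exactly as in the earlier lemmas, I would record that $S^{out}_n \cap \mathbb{C}_z$ and $S^{in}_n \cap \mathbb{C}_z$ are nonempty for all large $n$: since $\alpha, l > 0$ and $p^n(z) \to 0$, the relevant curves in the fiber over $p^n(z)$ shrink to the origin and are eventually covered by the fiberwise iterate $Q_z^n$, just as in the proof of Lemma \ref{main lem on asy behavior when delta < d}. I do not expect a real obstacle here; the only delicate point is the exponent bookkeeping on $S^{in}$ — carrying the factor $(\delta/d)^n$ and invoking the finiteness and strict negativity of $G_p(z)$ — which is precisely where the hypotheses $\delta > d$ and $z \notin E_p$ are used.
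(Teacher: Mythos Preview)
Your proof is correct and follows exactly the template the paper intends: the paper explicitly omits this proof, stating that it is almost the same as the previous cases, and your argument reproduces that pattern---apply Proposition \ref{Case 4: existence of G_z^alpha}, evaluate $|w/z^{\alpha}|$ on $S^{out}$ and $S^{in}$, pull back via $G_z^{\alpha}\circ f^n = d^n G_z^{\alpha}$, and use $\delta > d$ together with $G_p(z)$ finite and negative. The exponent bookkeeping on $S^{in}$ is handled correctly.
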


This lemma and Lemma \ref{lem on asy behaviour for case 4} imply the following.

\begin{proposition} \label{asy behavior when delta = T_k-1}
Let $d \geq 2$.
For any $(z_0, w_0)$ in $\partial A_f^{\alpha, +} \cap \{ (A_p - E_p \cup E_{deg}^*) \times \mathbb{P}^1 \}$,
\[
\limsup_{(z,w) \in A_f^{\alpha, +} \to (z_0, w_0)} G_z^{\alpha} (w) = 0. 
\]
For any $l$ in $\mathcal{I}_f^2$ and 
for any $(z_0, w_0)$ in $\partial A_f^{\alpha, l - \alpha} \cap A_0 - \partial A_f^{\alpha, +} \cup E_z \cup E_{deg}$,
\[
\lim_{(z,w) \in A_f^{\alpha, l - \alpha} \to (z_0, w_0)} G_z^{\alpha} (w) = - \infty. 
\]
\end{proposition}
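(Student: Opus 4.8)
The plan is to run, with the weight $l_{(1)}=\alpha=l_1$ fixed and $l_{(2)}=l-\alpha$ ranging, the same argument already used in Case 2 (Propositions \ref{main prop on asy behavior when delta < d}--\ref{main prop on asy behavior when T > delta > d}) and in Case 3 (Propositions \ref{Case 3: asy of G_z^alpha on boundary^out} and \ref{Case 3: asy of G_z^alpha on boundary^in}). The two inputs are the lemma stated just above, describing the fibrewise behaviour of $G_z^{\alpha}$ on $S^{out}_n$ and $S^{in}_n$, and Lemma \ref{lem on asy behaviour for case 4}: in the variant where $U^{l_{(1)},l_{(2)},+}$ and $A_f^{l_{(1)},l_{(2)},+}$ are replaced by $U^{\alpha,+}$ and $A_f^{\alpha,+}$, and under the non-degeneracy of $f$, it matches $S^{out}_{\infty}$ with $\partial A_f^{\alpha,+}\cap(A_p\times\mathbb{P}^1)$ and $S^{in}_{\infty}$ with the part of $\partial A_f^{\alpha,l-\alpha}$ lying off $\partial A_f^{\alpha,+}$, and it gives $S^{in}_{\infty}\cap U^{\alpha,+}=\partial A_f^{\alpha,l-\alpha}\cap U^{\alpha,+}$.

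First I would reprove the lemma above, whose proof the text suppresses. By Proposition \ref{main prop on G_z^a for Case 4} there is a constant $C>0$ with $|G_z^{\alpha}(w)-\log|w/z^{\alpha}||<C$ on $U$, hence, by continuity of the pluriharmonic extension, on one-sided neighbourhoods of $S^{out}$ and $S^{in}$ in $A_f$. On $S^{out}$ one has $|w/z^{\alpha}|=r$; substituting $f^n(z,w)=(p^n(z),Q_z^n(w))$ and using $G_{p^n(z)}^{\alpha}(Q_z^n(w))=d^nG_z^{\alpha}(w)$ gives $|G_z^{\alpha}(w)-d^{-n}\log r|\le d^{-n}C$ on $S^{out}_n$, which tends to $0$ because $d\ge 2$. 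On $S^{in}$ one has $|w/z^{\alpha}|=r^{-l}|z|^{l}$, and the same substitution gives $|G_z^{\alpha}(w)+d^{-n}l\log r-d^{-n}l\log|p^n(z)||\le d^{-n}C$ on $S^{in}_n$; since $\delta>d$ and $G_p(z)<0$ for $z\in A_p-E_p$, one has $d^{-n}\log|p^n(z)|^{l}=(\delta/d)^n\,\delta^{-n}\log|p^n(z)|^{l}\to-\infty$, so $G_z^{\alpha}|_{S^{in}_n}\to-\infty$.

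Next, assuming $f$ non-degenerate, I combine the two inputs. A point $(z_0,w_0)\in\partial A_f^{\alpha,+}\cap\{(A_p-E_p)\times\mathbb{P}^1\}$ is, by Lemma \ref{lem on asy behaviour for case 4}, a limit of points of $S^{out}_n$, along which $G_z^{\alpha}\to0$, so the limsup is at least $0$; on the other hand $G_z^{\alpha}$ is negative on $A_f^{\alpha,+}$, because every orbit there eventually enters and then stays in $U^{\alpha,+}$ where $|w/z^{\alpha}|<r<1$, and the functional equation $G^{\alpha}\circ f^n=d^nG^{\alpha}$ propagates the negativity; hence the limsup is exactly $0$. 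A point $(z_0,w_0)\in\partial A_f^{\alpha,l-\alpha}\cap A_0$ that lies off $\partial A_f^{\alpha,+}$ and off $E_z\cup E_{deg}$ lies, by Lemma \ref{lem on asy behaviour for case 4}, in $S^{in}_{\infty}\cap U^{\alpha,+}$, hence is approached by $S^{in}_n$ along which $G_z^{\alpha}\to-\infty$; since $G_z^{\alpha}$ is pluriharmonic --- so continuous --- on $A_f^{\alpha,l-\alpha}$ and monotone along the fibrewise slices near the boundary by the maximum principle (as noted after Proposition \ref{Case 3: asy of G_z^alpha on boundary^in}), the genuine limit, not just the limit inferior, equals $-\infty$.

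Finally, I would drop the non-degeneracy hypothesis by restricting the whole discussion to $A_p\times\mathbb{C}-E_z\cup E_{deg}$, on which $f$ degenerates no fibre; this is exactly why $E_z\cup E_{deg}$ --- respectively $E_p\cup E_{deg}^{*}$ in the $z$-variable --- is deleted from the admissible boundary points in the statement. The main obstacle is Lemma \ref{lem on asy behaviour for case 4} itself: the global, not merely germ-at-the-origin, identification of $S^{out}_{\infty}$ and $S^{in}_{\infty}$ with the correct boundary pieces, and the control of the degenerate fibres. Once that is available, the remainder is the bookkeeping already carried out for Cases 2 and 3.
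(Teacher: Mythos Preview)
Your proposal is correct and follows essentially the same approach as the paper, which explicitly omits this proof as being ``almost the same as the previous cases.'' You correctly identify the two ingredients---the fibrewise lemma on $G_z^{\alpha}|_{S^{out}_n}$ and $G_z^{\alpha}|_{S^{in}_n}$ (which you reprove with the right computation $|w/z^{\alpha}|=r$ on $S^{out}$ and $|w/z^{\alpha}|=r^{-l}|z|^{l}$ on $S^{in}$) and Lemma \ref{lem on asy behaviour for case 4}---and combine them exactly as in Cases 2 and 3. Your extra observation that $G_z^{\alpha}<0$ on $A_f^{\alpha,+}$, needed to pin down the $\limsup$ as exactly $0$ rather than merely $\ge 0$, is a detail the paper leaves implicit; be aware, though, that there is some notational slippage in the paper between $l$, $l_{(2)}$, and $l-\alpha$ in this subsection, so your identification $l_{(2)}=l-\alpha$ in the opening paragraph does not quite match the $S^{out}$, $S^{in}$ definitions used later (where the paper's $l$ is already $l_{(2)}$).
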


%This proposition induces the following corollary.

\begin{corollary}
If $d \geq 2$,
then $A_f^{\alpha, l_{} - \alpha} = A_f^{\alpha, l_2 - \alpha}$ 
for any $l$ in $\mathcal{I}_f^2$. % $0 < l \leq l_2$.
\end{corollary}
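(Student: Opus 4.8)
The plan is to mimic the maximum-principle argument that yields the analogous statements in Cases 2 and 3, namely the corollary following Proposition~\ref{main prop on asy behavior when delta < d} and Proposition~\ref{main result for case 3: A_f^l}. One inclusion is free: for $l\le l_2$ in $\mathcal{I}_f^2$ the wedge $U^{\alpha,l}$ is contained in $U^{\alpha,l_2}$ — on a fibre $\{z=z_0\}$ it is the annulus $\{\, r^{-l}|z_0|^{\alpha+l} < |w| < r|z_0|^{\alpha}\,\}$, whose outer radius is independent of $l$ while the inner radius $|z_0|^{\alpha}(|z_0|/r)^{l}$ decreases in $l$ on the relevant range $|z_0| < r$, so the annulus (and likewise the $z$-range $|z| < r^{1+1/l}$) grows with $l$; this is just the remark that $U^{l_1,l_2}$ is the largest wedge. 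Taking unions of preimages gives $A_f^{\alpha,l}\subset A_f^{\alpha,l_2}$, and the content is the reverse inclusion.

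To prove it, suppose $A_f^{\alpha,l}\subsetneq A_f^{\alpha,l_2}$ for some $l$ in $\mathcal{I}_f^2$. First, $A_f^{\alpha,l}\cap E_z=\emptyset$, since every point of $U^{\alpha,l}$ satisfies $|w|<r|z|^{\alpha}$ and hence has $z\ne0$ (recall $\alpha>0$), while $f$ carries $E_z$ into $\{z=0\}$. As in Case 2 it therefore suffices to show $A_f^{\alpha,l}-E_{deg}=A_f^{\alpha,l_2}-E_{deg}$, since this forces agreement on $E_{deg}$ as well. If this failed, the set $\partial A:=\partial A_f^{\alpha,l}\cap A_f^{\alpha,l_2}\cap(A_p\times\mathbb{C}-E_{deg})$ would be nonempty, and — discarding in addition $\partial A_f^{\alpha,+}$ and the exceptional fibres over $E_p\cup E_{deg}^*$ — we could pick $(z_0,w_0)\in\partial A$ with $z_0\in A_p-E_p\cup E_{deg}^*$ and $(z_0,w_0)\notin\partial A_f^{\alpha,+}$. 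Now $G_z^{\alpha}$ is pluriharmonic, in particular finite and continuous, on the open set $A_f^{\alpha,l_2}$ by Proposition~\ref{main prop on G_z^a for Case 4} extended through $G_z^{\alpha}\circ f = d\,G_z^{\alpha}$, and its restriction to $A_f^{\alpha,l}\subset A_f^{\alpha,l_2}$ is the same function, obtained by pulling back the common value on $U^{\alpha,l}\subset U^{\alpha,l_2}$. But Proposition~\ref{asy behavior when delta = T_k-1} says that $G_z^{\alpha}(w)\to-\infty$ as $(z,w)$ tends within $A_f^{\alpha,l}$ to any point of $\partial A_f^{\alpha,l}\cap A_0-\partial A_f^{\alpha,+}\cup E_z\cup E_{deg}$, in particular to $(z_0,w_0)$. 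This contradicts the finiteness of $G_z^{\alpha}$ at the interior point $(z_0,w_0)$ of $A_f^{\alpha,l_2}$; hence $\partial A=\emptyset$ and the equality holds.

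I expect the genuine obstacle to be purely set-theoretic: one must check that the exceptional set $\partial A_f^{\alpha,+}\cup E_z\cup E_{deg}$ removed in Proposition~\ref{asy behavior when delta = T_k-1} cannot absorb the entire difference $A_f^{\alpha,l_2}-A_f^{\alpha,l}$, so that a legitimate boundary point $(z_0,w_0)$ survives, and one must keep track of the non-degeneracy hypothesis that enters Proposition~\ref{asy behavior when delta = T_k-1} through Lemma~\ref{lem on asy behaviour for case 4}. These are exactly the points handled in the same way in Cases 2 and 3 (and flagged in the questions there); once they are in place, the analytic step — the continuity versus $-\infty$ contradiction — is routine.
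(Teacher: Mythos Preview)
Your argument is correct and matches the paper's intended approach: the paper omits all proofs in this subsection, pointing back to the analogous statements in Cases~2 and~3, and your contradiction between the pluriharmonicity of $G_z^{\alpha}$ on $A_f^{\alpha,l_2}$ and the limit $G_z^{\alpha}\to-\infty$ on the inner boundary (Proposition~\ref{asy behavior when delta = T_k-1}) is exactly the Case~3 version of that argument (cf.\ the proof sketch behind Proposition~\ref{main result for case 3: A_f^l}). Your observation that any point of $\partial A_f^{\alpha,l_{(2)}}\cap A_f^{\alpha,l_2}$ automatically avoids $\partial A_f^{\alpha,+}$, since $A_f^{\alpha,l_2}\subset A_f^{\alpha,+}$ is open, is the right way to isolate the inner boundary, and your reduction modulo $E_{deg}$ is the same device used in the Case~2 corollary after Proposition~\ref{main prop on asy behavior when delta < d}; the residual set-theoretic and non-degeneracy issues you flag are left at the same level of rigor in the paper itself.
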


\begin{remark}
If $\delta = T_{k-1}$, then
$f$ has the other dominant term $(n_{k-1}, m_{k-1})$, 
which belongs to Case 3 or Case 4.
Therefore,
variants of Theorems \ref{main thm on G_z^a,+ for case 3} and
\ref{main thm on G_z^a,+ for case 4} imply that
$G_z^{\alpha, +}$ is plurisubharmonic on $A_0 - E_z \cup E_{deg}$.
Moreover,
variants of Corollaries \ref{cor of G_f^a for Case 3} and
\ref{cor of G_f^a for Case 4} imply that
$G_f^{\alpha} = \alpha G_p$ on $A_0$ if $\delta > m_{k-1}$, and
$G_f^{\alpha}$ is plurisubharmonic and
continuous on $A_0 - E_{deg}$
if $\delta = m_{k-1}$.
\end{remark}

%\newpage
%%%%%%%%%%%%%%%%%%%%%%%%%%%%%%%%%%%%%%%%%%%%%%%%%%%%%%%%%%%%%%%%%%%%%%%%%%%%% {\color{red}
%%%%%%%%%%%%%%%%%%%%%%%%%%%%%%%%%%%%%%%%%%%%%%%%%%%%%%%%%%%%%%%%%%%%%%%%%%%%%

%%%%%%%%%%%%%%%%%%%%%%%%%%%%%%%%%%%%%%%%%%%%%%%%%%%%%%%%%%%%%%%%%%%%%%%%%%%%%%%%%
%%%%%%%%%%%%%%%%%%%%%%%%%%%%%%%%%%%%%%%%%%%%%%%%%%%%%%%%%%%%%%%%%%%%%%%%%%%%%%%%%

\begin{thebibliography}{9}

%%%%%%%%%%%%%%%%%%%%%%%%%%%%%%%
%\bibitem{a}
%  \textsc{M. Abate}, 
%  \textit{Open problems in local discrete holomorphic dynamics},
%  Anal. Math. Phys. \textbf{1} (2011), 261-287.
%%%%%%%%%%%%%%%%%%%%%%%%%%%%%%%
\bibitem{ab}
  \textsc{M. Astorg and L. Boc-Thaler}, 
  \textit{Dynamics of skew-products tangent to the identity},
  preprint arXiv:2204.02644 (2022).
%%%%%%%%%%%%%%%%%%%%%%%%%%%%%%%
\bibitem{abp}
  \textsc{M. Astorg, L. Boc-Thaler and H. Peters}, 
  \textit{Wandering domains arising from Lavaurs maps with Siegel disks},
  to appear in Analysis \& PDE (2022).
%%%%%%%%%%%%%%%%%%%%%%%%%%%%%%%
\bibitem{abdpr}
  \textsc{M. Astorg, X. Buff, R. Dujardin, H. Peters and J. Raissy}, 
  \textit{A two-dimensional polynomial mapping with a wandering Fatou component},
  Annals of Mathematics \textbf{184} (2016), 263-313.
%%%%%%%%%%%%%%%%%%%%%%%%%%%%%%%
%\bibitem{bfp}
%  \textsc{L. Boc-Thaler, J. E. Forn{\ae}ss and H. Peters}, 
%  \textit{Fatou components with punctured limit sets},
%  Ergodic Theory Dynam. Systems. \textbf{35} (2015), 1380-1393.
%%%%%%%%%%%%%%%%%%%%%%%%%%%%%%%
\bibitem{b}
  \textsc{L. E. B\"{o}ttcher},
  \textit{The principal laws of convergence of iterates and their application to analysis} (Russian),
  Izv. Kazan. Fiz.-Mat. Obshch. \textbf{14} (1904), {155-234}. 
%%%%%%%%%%%%%%%%%%%%%%%%%%%%%%%
\bibitem{bek}
  \textsc{X. Buff, A. L. Epstein and S. Koch}, 
  \textit{B\"{o}ttcher coordinates},
  Indiana Univ. Math. J. \textbf{61} (2012), 1765-1799.
%%%%%%%%%%%%%%%%%%%%%%%%%%%%%%%
\bibitem{d}
  \textsc{R. Dujardin}, 
  \textit{Non-density of stability for holomorphic mappings on $\mathbb{P}^{k}$},
  J. \`{E}c. polytech. Math., \textbf{4} (2017), 813-843.
%%%%%%%%%%%%%%%%%%%%%%%%%%%%%%%
\bibitem{f}
  \textsc{C. Favre}, 
  \textit{Classification of $2$-dimensional contracting rigid germs and Kato surfaces: I},
  J. Math. Pures Appl. \textbf{79} (2000), 475-514. 
%%%%%%%%%%%%%%%%%%%%%%%%%%%%%%%
\bibitem{fg}
  \textsc{C. Favre and V. Guedj}, 
  \textit{Dynamique des applications rationnelles des espaces multiprojectifs},
  Indiana Univ. Math. J., \textbf{50} (2001), 881-934. % no. 2
%%%%%%%%%%%%%%%%%%%%%%%%%%%%%%%
\bibitem{fj}
  \textsc{C. Favre and M. Jonsson}, 
  \textit{Eigenvaluations},
  Ann. Sci. \'Ecole Norm. Sup. \textbf{40} (2007), 309-349.
%%%%%%%%%%%%%%%%%%%%%%%%%%%%%%%
\bibitem{hp}
  \textsc{J. H. Hubbard and P. Papadopol}, 
  \textit{Superattractive fixed points in $\mathbf{C}^{n}$},
  Indiana Univ. Math. J. \textbf{43} (1994), 321-365.
%%%%%%%%%%%%%%%%%%%%%%%%%%%%%%%
\bibitem{j}
  \textsc{M. Jonsson}, 
  \textit{Dynamics of polynomial skew products on $\mathbf{C}^{2}$},
  Math. Ann., \textbf{314} (1999), 403-447. % no. 3
%%%%%%%%%%%%%%%%%%%%%%%%%%%%%%%
\bibitem{l}
  \textsc{K. Lilov}, 
  \textit{Fatou theory in two dimensions},
  PhD thesis, University of Michigan, 2004.
%%%%%%%%%%%%%%%%%%%%%%%%%%%%%%%
\bibitem{m}
  \textsc{J. Milnor}, 
  \textit{Dynamics in one complex variable},
  Annals of Mathematics Studies, \textbf{160}, 
  Princeton University Press, 2006. 
%%%%%%%%%%%%%%%%%%%%%%%%%%%%%%%
\bibitem{pr}
  \textsc{H. Peters and J. Raissy}, 
  \textit{Fatou components of elliptic polynomial skew products},
  Ergodic Theory Dynam. Systems, \textbf{39} (2019), 2235-2247. % Issue 8 
%%%%%%%%%%%%%%%%%%%%%%%%%%%%%%%
\bibitem{ps}
  \textsc{H. Peters and I. M. Smit}, 
  \textit{Fatou components of attracting skew-products},
  J. Geom. Anal., \textbf{28} (2018), 84-110.
%%%%%%%%%%%%%%%%%%%%%%%%%%%%%%%
%\bibitem{pv}
%  \textsc{H. Peters and L. R. Vivas}, 
%  \textit{Polynomial skew-products with wandering Fatou-disks},
%  Mathematische Zeitschrift \textbf{283} (2016), 349-366.
%%%%%%%%%%%%%%%%%%%%%%%%%%%%%%%
\bibitem{r}
  \textsc{M. Ruggiero}, 
  \textit{Rigidification of holomorphic germs with noninvertible differential},
  Michigan Math. J., \textbf{61} (2012), 161-185. % no. 1
%%%%%%%%%%%%%%%%%%%%%%%%%%%%%%%
\bibitem{t}
  \textsc{J. Taflin}, 
  \textit{Blenders near polynomial product maps of $\mathbb{C}^{2}$},
  J. Eur. Math. Soc., \textbf{23} (2021), 3555-3589. % no. 11
%%%%%%%%%%%%%%%%%%%%%%%%%%%%%%%
\bibitem{ueda}
  \textsc{T. Ueda}, 
  \textit{Complex dynamical systems on projective spaces},
  Adv. Ser. Dynam. Systems, \textbf{13}, pp. 120-138,
  %Chaotic Dynamical Systems,
  World Scientific, Singapore, 1993.
%%%%%%%%%%%%%%%%%%%%%%%%%%%%%%%
\bibitem{u-sym1}
  \textsc{K. Ueno},
  \textit{Symmetries of Julia sets of nondegenerate polynomial skew products on $\mathbf{C}^{2}$},
  Michigan Math. J. \textbf{59} (2010), 153-168.
%%%%%%%%%%%%%%%%%%%%%%%%%%%%%%%
\bibitem{u-weight1}
  \textsc{K. Ueno}, 
  \textit{Weighted Green functions of nondegenerate 
  polynomial skew products on $\mathbb{C}^{2}$},
  Discrete Contin. Dyn. Syst. \textbf{31} (2011), 985-996.
%%%%%%%%%%%%%%%%%%%%%%%%%%%%%%%
\bibitem{u-fiberwise}
  \textsc{---}, 
  \textit{Fiberwise Green functions of skew products 
  semiconjugate to some polynomial products on $\mathbf{C}^{2}$},
  Kodai Math. J. \textbf{35} (2012), 345-357.
%%%%%%%%%%%%%%%%%%%%%%%%%%%%%%%
\bibitem{u-weight2}
  \textsc{K. Ueno}, 
  \textit{Weighted Green functions of polynomial skew products on $\mathbb{C}^{2}$}, 
  Discrete Contin. Dyn. Syst. \textbf{34} (2014), 2283-2305.
%%%%%%%%%%%%%%%%%%%%%%%%%%%%%%%
\bibitem{u1}
  \textsc{---}, 
  \textit{B\"{o}ttcher coordinates for polynomial skew products},
  Ergodic Theory Dynam. Systems, \textbf{36} (2016), 1260-1277.
%%%%%%%%%%%%%%%%%%%%%%%%%%%%%%%
\bibitem{u2}
  \textsc{---}, 
  \textit{B\"{o}ttcher coordinates at superattracting fixed points of holomorphic skew products},
  Conformal Geometry and Dynamics, \textbf{20} (2016), 43-57. %arXiv:1412.2202.
%%%%%%%%%%%%%%%%%%%%%%%%%%%%%%%
\bibitem{u3}
  \textsc{---}, 
  \textit{B\"{o}ttcher coordinates at fixed indeterminacy points},
  Ergodic Theory Dynam. Systems, \textbf{39} (2019), 3437-3456. % Issue 12
%%%%%%%%%%%%%%%%%%%%%%%%%%%%%%%
\bibitem{ueno}
  \textsc{---}, 
  \textit{A construction of B\"{o}ttcher coordinates for holomorphic skew products},
  Nonlinearity, \textbf{32} (2019), 2694-2720.
%%%%%%%%%%%%%%%%%%%%%%%%%%%%%%%
\bibitem{u-sym2}
  \textsc{---},
  \textit{Polynomial skew products whose Julia sets have infinitely many symmetries},
  Kyoto J. Math. \textbf{60} (2020), 451-471.
%%%%%%%%%%%%%%%%%%%%%%%%%%%%%%%
\bibitem{u4}
  \textsc{---}, 
  \textit{Attraction rates for iterates of a superattracting skew product},
  preprint arXiv:2105.04068 (2021).
%%%%%%%%%%%%%%%%%%%%%%%%%%%%%%%
\bibitem{ushiki}
  \textsc{S. Ushiki}, 
  \textit{B\"{o}ttcher's theorem and super-stable manifolds 
  for multidimensional complex dynamical systems},  
  Adv. Ser. Dynam. Systems, \textbf{11}, pp. 168-184,
  %Structure and Bifurcations of Dynamical Systems,
  World Scientific, Singapore, 1992.
%%%%%%%%%%%%%%%%%%%%%%%%%%%%%%%
\end{thebibliography}
\end{document}